\newsavebox{\myimage}
\newcommand{\sk}{\smallskip}
\newcommand{\mk}{\medskip}
\newcommand{\bk}{\bigskip}
\newcommand{\xleftrightarrow}[2][]{\ext@arrow 3359\leftrightarrowfill@{#1}{#2}}
\newcommand{\xdasharrow}[2][->]{
\tikz[baseline=-\the\dimexpr\fontdimen22\textfont2\relax]{
\node[anchor=south,font=\scriptsize, inner ysep=1.5pt,outer xsep=2.2pt](x){#2};
\draw[shorten <=3.4pt,shorten >=3.4pt,dashed,#1](x.south west)--(x.south east);
}
}
\newcommand\reallywidehat[1]{%
\savestack{\tmpbox}{\stretchto{%
  \scaleto{%
    \scalerel*[\widthof{\ensuremath{#1}}]{\kern-.6pt\bigwedge\kern-.6pt}%
    {\rule[-\textheight/2]{1ex}{\textheight}}
  }{\textheight}%
}{0.5ex}}%
\stackon[1pt]{#1}{\tmpbox}%
}
\newtheorem{thm}{Theorem}[section]
\newtheorem{cor}[thm]{Corollary}
\newtheorem{lem}[thm]{Lemma}
\newtheorem{exm}{Example}
\newtheorem{prop}[thm]{Proposition}
\newtheorem{fact}[thm]{Fact}
\newtheorem{rem}[thm]{Remark}
\newtheorem{defn-prop}[thm]{Definition-Proposition}
\newtheorem{conjecture}[thm]{Conjecture}
\newtheorem{question}[thm]{Question}
\newtheorem{questions}[thm]{Questions}
 \newcommand{\eq}[1][r]
   {\ar@<-3pt>@{-}[#1]
    \ar@<-1pt>@{}[#1]|<{}="gauche"
    \ar@<+0pt>@{}[#1]|-{}="milieu"
    \ar@<+1pt>@{}[#1]|>{}="droite"
    \ar@/^2pt/@{-}"gauche";"milieu"
    \ar@/_2pt/@{-}"milieu";"droite"}
 \newcommand{\incl}[1][r]
  {\ar@<-0.2pc>@{^(-}[#1] \ar@<+0.2pc>@{-}[#1]}
\author[L. Pirio]{Luc Pirio\textsuperscript{$\dagger$}}
\thanks{${}^{}$\hspace{-0.4cm}\textsuperscript{$\dagger$}\href{mailto:luc.pirio@uvsq.fr}{L.\,Pirio}, Laboratoire de Math\'ematiques de Versailles, Univ.\,Paris-Saclay -- UVSQ, CNRS, 78000 Versailles, France.}
\title[]
 {On the $(n+3)$-webs by rational curves induced by  \\ the  forgetful maps on the moduli spaces $\mathcal M_{0,n+3}$}
\begin{document}


%
%
\maketitle

\begin{abstract}
For $n\geq 2$, we discuss the curvilinear web $\boldsymbol{\mathcal W}_{0,n+3}$ on the moduli space  $\mathcal M_{0,n+3}$ 
 defined by the $n+3$ forgetful maps $
\mathcal M_{0,n+3}\rightarrow \mathcal M_{0,n+2}$. We recall classical results (first obtained by Room) which show that this web is linearizable when $n$ is odd, or is equivalent to a web by conics when $n$ is even.  We then turn to the abelian relations (ARs) of these webs. After recalling the classical and well-known case when $n=2$ (related to the famous 5-terms functional identity of the dilogarithm), we focus on the case of the 6-web $\boldsymbol{\mathcal W}_{{0,6}}$.  
We show that this web is isomorphic to the web formed by the projective lines contained in  Segre's cubic primal $\boldsymbol{S}\subset \mathbf P^4$ and that a kind of 'Abel's theorem' allows to describe the  ARs of $\boldsymbol{\mathcal W}_{{0,6}}$ by means of the  abelian 2-forms on the Fano surface  $F_1(\boldsymbol{S})\subset G_1(\mathbf P^4)$ of lines contained in $\boldsymbol{S}$. We deduce from this that $\boldsymbol{\mathcal W}_{{0,6}}$ has maximal rank with all its AR rational, and that these span a space which is an irreducible $\mathfrak S_6$-module. 
Then we take up an approach due to Damiano that we correct in the case when $n$ is odd: it leads to an abstract description of the space of ARs of $\boldsymbol{\mathcal W}_{0,n+3}$ as a $\mathfrak S_{n+3}$-representation. In particular, we obtain that this web has maximal rank  for any $n\geq 2$.  
Finally,  we consider  `Euler's abelian relation $\boldsymbol{\mathcal E}_n$', 
a particular AR for $\boldsymbol{\mathcal W}_{0,n+3}$ constructed by Damiano from a characteristic class on the grassmannian of 2-planes in $\mathbf R^{n+3}$ by means of Gelfand-MacPherson theory of polylogarithmic forms. We give an explicit conjectural formula for the components of $\boldsymbol{\mathcal E}_n$, which 
involves only rational (resp.\,rational and logarithmic) terms for $n$ odd (resp.\,for $n$ even).  By means of direct computations, we prove that our explicit formulas are indeed correct for $n$ less than or equal to $ 12$. 
\end{abstract}




\section{Introduction}
We work with analytic objects in the whole paper. The usual setting when studying webs 
with regard to their abelian relations and their rank is the complex analytic setting, but some basic constructions in \cite{D} rely on real analytic objects ({\it e.g.}\,real grassmannians, differential forms on these) hence both cases will be considered here, but mainly the latter. 
In most parts, the base field will be $\mathbf R$ but in some (which will be pointed out), it will be more natural to work over $\mathbf C$. In the sequel, $\mathbf K$ will stand for one of these two fields.\mk 

In the  Introduction, we first introduce the topic (webs, abelian relations, etc) before giving a quick overview of the content of Damiano's paper \cite{D}. We then state our results.  

\subsection{Curvilinear webs}
\label{SS:webs}
We introduce  basic notions about curvilinear webs. Our presentation below is very similar to 
those of   \cite{DThesis} and \cite{D}. For other references on webs (but where the focus is more on 1-codimensional webs), see the famous `Blaschke-Bol book' \cite{BB} and the more recent one \cite{Coloquio}. 

\subsubsection{\bf Webs.} 
\label{SSS:webs}
Let $U$ be a non empty domain of $\mathbf K^n$ and $d$ a positive integer. \sk 

A {\bf (curvilinear)  $d$-web} on $U$  is a $d$-tuple $\boldsymbol{\mathcal W}=(\mathcal F_1,\ldots,\mathcal F_d)$ of analytic foliations on $U$ whose leaves are in general position,  at any point of $U$ say (one can and one often only requires that this holds true generically on $U$). 
Since $U$ is simply connected, each foliation $\mathcal F_i$ of $\boldsymbol{\mathcal W}$ is defined by a global first integral, that is there exists a regular submersion 
$u_i:U\rightarrow \mathbf A^{n-1}$ the  fibers  of which are connected and coincide with the leaves of $\mathcal F_i$.   In this case, we also use the notation $\boldsymbol{\mathcal W}=\boldsymbol{\mathcal W}(u_1,\ldots,u_d)$.   For any $i=1,\ldots,d$, let 
$X_i$ be a non-vanishing vector field on $\Omega$ which generates the tangent distribution $T_{\mathcal F_i}={\rm Ker}(du_i) \subset T_\Omega$.  The general position hypothesis which  is required for the foliations of $\boldsymbol{\mathcal W}$  is that 
for any strictly increasing $n$-tuple $(i_1,\ldots,i_n)$ 
 of $\{1,\ldots,d\}$, the $n$ associated 1-dimensional distributions $T_{\mathcal F_{i_1}},\ldots,T_{\mathcal F_{i_n}}$ are in direct sum in $T_\Omega$, 
 which results in the more analytical fact that the field of  $n$-tangent vectors 
 $X_{i_1}\wedge \ldots \wedge X_{i_n}$ does not vanish on $U$. \sk 
 
Another (curvilinear)  $d$-web  $\boldsymbol{\mathcal W}'$ defined on another domain $U'$ is said to be {\bf equivalent}' to $\boldsymbol{\mathcal W}$ if there exists a germ of analytic isomorphism $\varphi: (U,u)\rightarrow (U',u')$  such that $\varphi^*(\boldsymbol{\mathcal W}')$ coincides with the germ of $\boldsymbol{\mathcal W}$ at $u$ 
 (possibly only up to relabeling the foliations).  {\bf Web geometry} consists in the study of webs up to this notion of equivalence. \sk 
 
 As a first  example of webs, but which is important for what is to come, let 
 us consider $n+1$ points $p_0,p_1,\ldots,p_n$ in general position in $\mathbf P^n$. Denoting by $\mathcal L_{p_i}$ the linear family of projective lines passing through $p_i$ for $i=0,\ldots,n$, 
  we get a linear   web $\boldsymbol{\mathcal L \mathcal  W}_{p_0,\ldots,p_n}=(\mathcal L_{p_0},\ldots,\mathcal L_{p_n})$ on $\mathbf P^n$\footnote{But the general position assumption holding true only on a certain Zariski open subset of $\mathbf P^n$.}. By definition, a  {\bf quadrilateral web} is a $(n+1)$-web which is isomorphic to a web of this kind.  
Assuming that the $p_i$'s are the vertices $e_0,\ldots,e_n$ of the standard  $n$-simplex in $\mathbf P^n$, one can give a simple explicit analytic model for 
the {\bf standard quadrilateral web}
$\boldsymbol{\mathcal Q \mathcal  W}=\boldsymbol{\mathcal L \mathcal  W}_{e_0,\ldots,e_n}$: if $x_1,\ldots,x_n$ stand for the standard affine  coordinates, then 
for $i=1,\ldots,n$, 
the $i$-th linear standard projection  
$\pi_i: \mathbf A^n\rightarrow \mathbf A^{n-1}, \, (x_s)_{s=1}^n\mapsto (x_1,\ldots,\widehat{x_{i}},\ldots,x_n)$ 
is a first integral for $\mathcal L_{e_i}$ whereas 
$\pi_{0}: (x_s)_{s=1}^n\mapsto (x_t/x_n)_{t=1}^{n-1}$ works for $\mathcal L_{e_0}$. 
 \mk 
  
  A $d$-web with $d\geq n+1$ is said to be quadrilateral if all its $(n+1)$-subwebs are quadrilateral. Finally, for a planar 3-web, being quadrilateral can be characterized by the closure of any small `hexagonal figures' which can be traced on the definition domain of the considered web by traveling along the leaves of its foliations (see for instance  \cite[\S1.2]{Coloquio}). Hence the term   `quadrilateral' never appears when considering planar webs, one  uses `hexagonal' instead.
  
\subsubsection{\bf Abelian relations and ranks.} 
\label{SSS:AR-rank}
 An important notion for the study of webs is that of `abelian relation' (ab.\,AR). This notion  
  is interesting  first since it is well-behaved (invariant) modulo equivalences and also because it is linked to classical objects and results of projective algebraic geometry.\footnote{The web-theoretic notion of abelian relation is related to that of  abelian differentials and 
to fundamental results of algebraic geometry such   
as   Abel's addition theorem and its converse, etc. See \cite{Coloquio}  and the references therein  for an overview on this perspective on web geometry.} In order to recall this notion, given a web $\boldsymbol{\mathcal W}=\boldsymbol{\mathcal W}(u_1,\ldots,u_d)$, one denotes by $u_i^1,\ldots, u_i^{n-1}$ the components of the first integral $u_i$ for each $i$. 
 Given $k\in \{0,\ldots,n-1\}$, we set $\boldsymbol{I}_n^k$ (or just $\boldsymbol{I}$ when both $n$ and $k$ are  unambiguously fixed) for the set of $k$-tuples  $I=(i_1,\dots,i_k)$ with $1\leq i_1<i_2<\cdots< i_k\leq n$. 
  Then for any $i=1,\ldots,d$ and any tuple $I\in \boldsymbol{I}$,  we set $\wedge^{I} \!du_i=du_i^{i_1} \wedge\cdots \wedge du_i^{i_k} \in \Omega^k(U)$ and we denote by  $u_i^*(\Omega^k)$  the $\mathbf K$-vector space of differential $k$-forms on $U$ spanned by $\{ \wedge^{I} \!du_i\, \lvert \, I\in \boldsymbol{I} \}$. We then define the
{\bf space of $k$-th abelian relations} (ARs) for $\boldsymbol{\mathcal W}$ as the $\mathbf K$-vector space, denoted by  $ \boldsymbol{AR}^{(k)}(\boldsymbol{\mathcal W})$,   of $d$-tuples of elements $\omega_i^k\in 
u_i^*(\Omega^k)$ summing up to 0: 
$$
 \boldsymbol{AR}^{(k)}\big(\boldsymbol{\mathcal W}\big)=\left\{
 (\omega_i^k)_{i=1}^d\in \prod_{i=1}^d u_i^*\big(\Omega^k\big) 
 \, \big\lvert \, \sum_{i=1}^d \omega_i^k=0 \mbox{ in } \, \Omega^k(U)\, 
 \right\}\, . 
$$
It is easily seen that the above definition does not really depend on the first integrals $u_i$ but only on the associated foliations. A $k$-AR can be written quite explicitly in terms of the first integrals $u_i$: such an object corresponds to a family $(F_i^I)$  indexed by pairs $(i,I)\in \underline{d}\times \boldsymbol{I}$, 
of functions $F_i^I: {\rm Im}(u_i)\rightarrow \mathbf K$, 
  such that the following relation between differential forms holds true identically: 
$$
\sum_{i=1}^d \sum_{I\in \boldsymbol{I} } F^I_i(u_i) \,\big( \wedge^{I} \hspace{-0.1cm} du_i \big) = \sum_{i=1}^d
\hspace{-0.1cm}
 \sum_{\substack{ I=(i_1,\ldots,i_k)\\ 1\leq i_1<\cdots< i_k\leq n } }
\hspace{-0.2cm}
 F^I_i(u_i)  \,\Big( du_i^{i_1}\wedge  \cdots \wedge du_i^{i_k}\Big)= 0\, . 
$$

By definition, the {\bf $\boldsymbol{k}$-th rank} of $\boldsymbol{\mathcal W}$ is 
$${\rm rk}^{k}(\boldsymbol{\mathcal W})=\dim_{\mathbf K}   \boldsymbol{AR}^{k}\big(\boldsymbol{\mathcal W}\big) \in \mathbf N\cup \{\infty \}\, .$$ 
It is an invariant attached to $\boldsymbol{\mathcal W}$ (two equivalent webs have the same $k$-rank).  
\mk

In this text, we will essentially deal only with ARs of top degree, {\it i.e.} in the case when $k=n-1$. Except in other situations in which the complete notation will be used,  we will drop the superscript $n-1$ everywhere and just speak of ARs of webs. \sk 

\label{Page-3}
An important property of ARs (at least of top degree) is that they extend globally but  as multivalued objects.\footnote{The proof of this is standard and is an easy generalization of that in the case of planar webs ({\it cf.}\,\cite[\S1.2.2]{PirioThese}). Details are left to the reader.} This is of importance when considering webs globally defined on varieties with non trivial topology, as are those under scrutiny in this paper (see \S\ref{SSS:Webs-W_0-n+3} below). For such a web, the ARs organize themselves into a local system whose monodromy may be non trivial.
\begin{center}
$\star$\sk 
\end{center}

Let us illustrate the notion of 
abelian relation 
 with an explicit example, which will happen to be important in the whole article. For $i=0,\ldots,n$, we denote by $\pi_i^k$  (with $k=1,\ldots,n-1$)   the components of the first integrals  $\pi_i$ given above for the standard quadrilateral web  $\boldsymbol{\mathcal L \mathcal  W}_{p_0,\ldots,p_n}$ and we set 
$\Pi_i=d\pi_i^1\wedge \cdots \wedge d\pi_i^{n-1} \in \pi_i^*(\Omega^{n-1})$.  It is then not difficult to verify that the following relation is identically satisfied 
\begin{equation}
\label{Eq:QuadrilateralWeb}
\sum_{i=0}^n (-1)^i \frac{\Pi_i  }{\pi_i^1\cdots \pi_i^{n-1}}
=0\, 
\end{equation}
which is equivalent to saying that 
 $\big( (-1)^i \Pi_i/(\pi_i^1\cdots \pi_i^{n-1})\big)_{i=0}^n$ is an element of  $\boldsymbol{AR}\big(\boldsymbol{\mathcal L \mathcal  W}_{p_0,\ldots,p_n}\big) $.\mk

As for many kinds of webs, there are universal bounds on the rank of curvilinear webs. 
The following result has been obtained by Damiano, and is of crucial importance considering the purpose of this paper:\footnote{This result is only stated in \cite{D}. However a detailed proof is given  \S3.3 in Damiano's thesis  \cite{DThesis}. In the case when $n=3$ and $d\leq 5$, the majoration \eqref{Eq:QuadrilateralWeb} has been obtained long before 
by K\"ahler, see ${\boldsymbol{\mathsf S}_4}$  in \cite{Blaschke-1-Rank}.}

\begin{prop}
\label{Prop:Damiano'sBound}
 For any curvilinear $d$-web $\boldsymbol{\mathcal  W}_d$ on a domain of $\mathbf C^n$, one has: 
\begin{equation}
\label{Eq:Bound-on-the-rank}
{\rm rk}
\big(\boldsymbol{\mathcal  W}_d\big)\leq  \sum_{\sigma=0}^{d-n-1} {  n-2+\sigma \choose  \sigma } \Big( d-n-\sigma \Big)^+
\, 
\end{equation}
where $m^+$ stands for $\max\{ 0, m\}$ for any $m\in \mathbf Z$. 
\end{prop}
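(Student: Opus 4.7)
The plan is to adapt the Castelnuovo-style argument due to Chern (originally for planar webs, $n=2$) to the curvilinear setting in arbitrary dimension by analyzing a jet-theoretic filtration of the AR space at a generic base point. Fix a generic $p\in U$ and, after translation, assume $u_i(p)=0$ for all $i$. A top-degree AR is a tuple $(F_i(u_i)\Pi_i)_{i=1}^d$ with $F_i$ analytic at $0\in\mathbf{K}^{n-1}$; by analyticity, the AR is determined by the Taylor series of the $F_i$'s at $0$, so $\boldsymbol{AR}(\boldsymbol{\mathcal W}_d)$ injects into the space $\mathcal A$ of formal solutions to the AR equation at $p$.

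For each $s\geq 0$, let $\mathcal A^{(s)}$ be the image of $\mathcal A$ under truncation of each $F_i$ modulo degree $s+1$. Further truncation yields surjections $\mathcal A^{(s)}\twoheadrightarrow \mathcal A^{(s-1)}$, whose kernels consist of tuples $(P_i)_{i=1}^d$ with each $P_i$ a homogeneous polynomial of degree $s$ in the $n-1$ variables on $\mathrm{Im}(u_i)$. Extracting the leading order of the AR equation $\sum_i F_i(u_i)\Pi_i=0$, one finds that such a tuple must lie in the kernel of the linear map
\[
\Phi_s\colon (P_1,\ldots,P_d)\;\longmapsto\; \sum_{i=1}^d L_i^*(P_i)\otimes \Pi_i(p)\,\in\, \mathrm{Sym}^s(T_p^*U)\otimes \Lambda^{n-1}(T_p^*U),
\]
where $L_i = du_i(p)\colon T_pU\twoheadrightarrow \mathbf{K}^{n-1}$. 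Consequently, $\dim \boldsymbol{AR}(\boldsymbol{\mathcal W}_d)\leq \sum_{s\geq 0}\dim\ker\Phi_s$.

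The heart of the proof is then the linear-algebraic estimate
\[
\dim\ker\Phi_s \;\leq\; \binom{n-2+s}{s}(d-n-s)^+,
\]
valid at a generic $p$ under the general position hypothesis on the web. For $s=0$ it is simply the statement that any $n$ of the vectors $\Pi_i(p)\in\Lambda^{n-1}T_p^*U\cong\mathbf{K}^n$ form a basis, which is precisely the general position of the web. For $s\geq 1$, it reflects a higher-order general-position property of the pairs $(L_i,\Pi_i(p))$: one must exhibit $n+s$ foliations whose contributions to the image of $\Phi_s$, each of dimension $\binom{n-2+s}{s}$, are in direct sum inside $\mathrm{Sym}^s(T_p^*U)\otimes\Lambda^{n-1}(T_p^*U)$. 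Summing the bounds for $s=0,\ldots,d-n-1$ (the summand vanishes beyond) yields the desired inequality \eqref{Eq:Bound-on-the-rank}.

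The main obstacle is the rank estimate for $\Phi_s$ when $s\geq 1$. The standard general position hypothesis on a curvilinear web is a zero-order (pointwise) condition on tangent directions, whereas the rank bound requires a symmetric-tensor refinement at order $s$. A natural strategy is induction on $s$, peeling off one foliation along its tangent direction $X_i$ to reduce to an analogous estimate in dimension $n-1$ with $d-1$ foliations and shifted degree. Verifying that the required enhanced general-position condition holds automatically at a generic $p$---because it is an open, nonempty condition whenever the base-level general position does---is the technical crux of the argument.
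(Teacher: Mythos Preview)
The paper does not prove this proposition; it only states it and refers to \S3.3 of Damiano's thesis \cite{DThesis} for a detailed proof (noting also that the case $n=3$, $d\leq 5$ goes back to K\"ahler). So there is no proof in the paper to compare against directly.

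Your outline is the correct Castelnuovo-style strategy, and is essentially what one expects Damiano's argument to be: filter the AR space by order of vanishing of the $F_i$ at a generic point, bound each graded piece by the kernel of the pointwise linear map $\Phi_s$, and sum. Your numerology and the description of $\Phi_s$ are accurate, and the $s=0$ case is indeed just the general position hypothesis.

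That said, there is a genuine gap, which you yourself identify but do not close: the rank estimate $\mathrm{rank}\,\Phi_s \geq \min(d,n+s)\binom{n-2+s}{s}$ for $s\geq 1$. Your justification (``open, nonempty condition whenever the base-level general position does'') is not sufficient. The point is that $\Phi_s$ depends only on the $d$ tangent lines at $p$, so the rank condition is an algebraic condition on $d$-tuples of lines in $\mathbf C^n$; but the web hypothesis is the \emph{specific} open condition that any $n$ of these lines span. You must show that this particular condition implies the rank bound, not merely that the rank bound holds for some (or even for Zariski-generic) configurations. This is a concrete linear-algebra lemma---a higher-dimensional analogue of the independence of powers of distinct linear forms that underlies the planar case---and it requires an actual argument (e.g.\ the inductive ``peeling'' you allude to, made precise). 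Without it, the proof is an outline rather than a proof.
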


The case of a quadrilateral web is interesting. When $d=n+1$, the RHS of \eqref{Eq:Bound-on-the-rank} is equal to 1 for any $n\geq 2$ and since \eqref{Eq:QuadrilateralWeb} corresponds to a non-trivial AR for $\boldsymbol{\mathcal Q \mathcal  W}$, it comes that $\boldsymbol{\mathcal Q \mathcal  W}$ has {\bf maximal rank}, that is is such that \eqref{Eq:Bound-on-the-rank} actually is an equality. 

\subsubsection{\bf Algebraic webs.} 
\label{SSS:Algebraic-webs}
What makes the relevance of the notion of web of  maximal rank is that, by means of a classical construction relying on basic and very well-known results of algebraic geometry, one can associate such a web to any plane algebraic curve.\mk 

The construction, which goes back to the early developments of web geometry \cite{Blaschke1933}, goes as follows: let $C\subset \mathbf P^2$ be
any reduced algebraic curve of degree $d\geq 3$. For any generic line $L_0$ intersecting $C$ transversally, one can find $d$ germs of algebraic maps $P_i: (\check{\mathbf P}^2,L_0)\rightarrow C$ such that, as 0-cycles, one has $C\cdot L=\sum_{i=1}^d P_i(L)$ for any line $L$ sufficiently close to $L_0$ (see Figure \ref{fig:method} below). The $P_i$'s are first integrals of the (germ at $L_0$ of the)  {\bf algebraic web  associated to $\boldsymbol{C}$}, denoted by $\boldsymbol{\mathcal  W}_C$.\sk

\begin{figure}[h!]
\centering
\includegraphics[width=70mm]{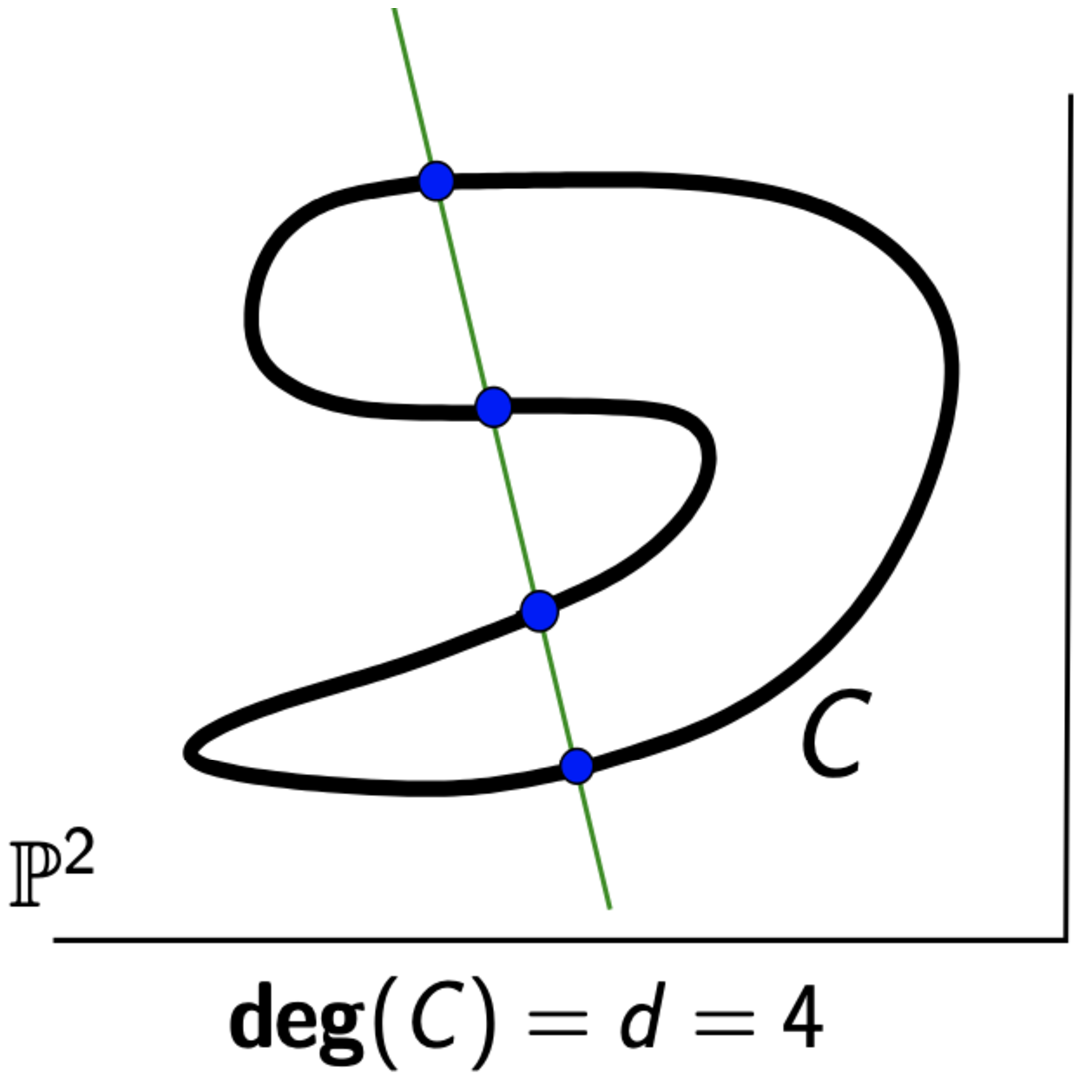}
\vspace{-0.3cm}
\caption{}
\label{fig:method}
\end{figure}

 Then it follows from Abel's addition theorem that for any abelian differential $\omega$ on $C$, one has $\sum_{i=1}^dP_i^*(\omega)=0$ from which it can be deduced that  
 $\omega\mapsto (P_i^*(\omega))_{i=1}^d$ is a well-defined injective linear map $\boldsymbol{H}^0(C,\omega^1_C)\rightarrow  \boldsymbol{AR}\big(\boldsymbol{\mathcal W}_C\big)$. It follows that $\boldsymbol{\mathcal W}_C$ has rank at least equal to $h^0(\omega^1_C)=p_a(C)=(d-1)(d-2)/2$, a quantity which coincides with the RHS of \eqref{Eq:Bound-on-the-rank} when $n=2$. It follows that the algebraic web 
 $\boldsymbol{\mathcal  W}_C$ 
  is always of maximal rank.\mk 
  
 What makes the interest of the notions of abelian relation and rank in the case of planar webs, is that these allow, in some cases, to prove algebraization results for webs of maximal rank ({\it cf.}\,\cite{Coloquio} for an extensive exposition from this perspective).  
 But the real interest of all these notions  lies precisely  in the fact that there exist planar webs of maximal rank which however do not come from a plane algebraic curve.


\subsubsection{\bf Abel's identity of the dilogarithm and Bol's web.}
Several authors of the XIXth and XXth centuries have independently discovered equivalent versions of the nowadays so-called Abel's 5-terms equation
$$
\big(\mathcal Ab\big) 
\hspace{3cm}
R(x)-R(y)-R\left(\frac{x}{y}\right)-R\left(\frac{1-y}{1-x}\right)
+R\left(\frac{x(1-y)}{y(1-x)}\right)=0\,
\hspace{3cm} {}^{}
$$
which is identically satisfied for  any $(x,y)\in \mathbf R^2$ such that $0<x<y<1$, by the famous \href{https://mathworld.wolfram.com/RogersL-Function.html}{\it Rogers' dilogarithm} $R$ defined by 
\begin{equation}
\label{Eq:R}
R(x)= {\bf L}{\rm i}_2(x) + \frac{1}{2}\log(x)\log(1 - x) - 
{\pi^2}/{6}\, 
 \end{equation}
for $x\in ]0,1[$, where ${\bf L}{\rm i}_2$ stands for the classical bilogarithm.\footnote{We recall that ${\bf L}{\rm i}_2(z)$ is defined  as the sum of the series $\sum_{k\geq 0} z^k/k^2$ which converges on the unit disk $\{ z\in \mathbf C\,, \, \lvert z\lvert<1\, \}$.} \mk 

Abel's identity (or more precisely, its total derivative with respect to $x$ and $y$) gives rise to an AR, denoted by $\boldsymbol{\mathcal Ab}$, for the so-called {\bf Bol's web $\boldsymbol{\mathcal B}$} which is the planar 5-web defined by the rational functions appearing as arguments of $R$ in $(\mathcal Ab)$: 
\begin{equation}
\label{Eq:B}
\boldsymbol{\mathcal B}=\boldsymbol{\mathcal W}\left( \, x
\, , \,  y 
\, , \,   \frac{x}{y}
\, , \,  \frac{1-y}{1-x}\, , \, 
 \frac{x(1-y)}{y(1-x)}\, 
\right)\, . 
\end{equation}
Of course Bol's web admits other ARs than the dilogarithmic AR $\boldsymbol{\mathcal Ab}$: for instance, its 3-subweb $\boldsymbol{\mathcal W}(  x
 ,  y  , {x}/{y}) $ is hexagonal and carries the AR associated to the basic functional equation 
 $${\rm Log}(x)-{\rm Log}(y)-{\rm Log}
 \big(x/y\big)
  =0$$
 of the logarithm (the total derivative of which is exactly the identity \eqref{Eq:QuadrilateralWeb} in the case when $n=2$).\mk

Bol's web is very particular and it has been known for a long time that it enjoys the following remarkable properties as a planar web: \mk
\begin{equation}
\label{Page:Properties-Bol's-Web}
\begin{tabular}{l}
{\bf [\,Hexagonality\hspace{0.03cm}].} {\it Bol's web is hexagonal and all its  3-terms ARs are logarithmic.}\\{\it  Moreover, the space $\boldsymbol{AR}_{Log}(\boldsymbol{\mathcal B})$ that these 
 abelian relations span  is 5-dimensional.}\mk \\ 
{\bf [\,Maximality of the rank\hspace{0.03cm}].}
{\it Since  the 5-terms abelian relation $\boldsymbol{\mathcal Ab}$ is dilogarithmic, it}\\
{\it does not belong to 
the space of logarithmic ARs hence 
Bol's web 
has maximal rank 6}\\
{\it  and there is a decomposition in direct sum} \sk\\ 
${}^{}$ 
\hspace{5cm} 
$\boldsymbol{AR}\big(\boldsymbol{\mathcal B}\big)=\boldsymbol{AR}_{Log}\big(\boldsymbol{\mathcal B}\big)\oplus \big\langle \boldsymbol{\mathcal Ab} \big\rangle \, .
$\mk \\
{\bf [\,Non linearizability\hspace{0.03cm}].} {\it $\boldsymbol{\mathcal B}$ is not linearizable hence not equivalent to an algebraic web.}\mk \\
{\bf [\,Characterization\hspace{0.03cm}].} {\it For $d\geq 3$, a  hexagonal planar $d$-web  either is linearizable and}\\ {\it  equivalent to a web formed by $d$ pencils of lines  or $d=5$ and it is equivalent to $\boldsymbol{\mathcal B}$.}
\end{tabular}
\end{equation}
%
%
\sk 

Bol's web has been the first known example of an {\bf exceptional web}, that is a web of maximal rank but which is not equivalent to an algebraic one.  Since its discovery in the 1930s by Blaschke and Bol (see \cite[\S4]{Blaschke1933} and \cite{Bol}), several new examples of planar exceptional webs have been discovered (see the sixth chapter of \cite{Coloquio} for a recent overview).  \mk 

The interest of the notion of `exceptional web' comes from the facts that first, such webs exist in numbers; and second, that one can mimic for them some very classical constructions of algebraic geometry  (canonical map, etc) which ask many interesting questions 
about the similarity between plane algebraic curves and planar exceptional webs. This is possibly what motivated several authors (such as Chern and Griffiths, {\it cf.}\,\cite[p.\,83]{CG2}) to qualify as one of the main problems in web geometry the following one: 
\begin{equation}
\begin{tabular}{l}
{\it Determine/classify the webs of maximal rank, especially the non-algebraic ones.}
\end{tabular}
\end{equation}

Most of the works regarding this problem concerned webs of codimension 1. For curvilinear webs, we are only aware of old (but quite remarkable) results by Blaschke and Walberer about skew curvilinear 3-webs 
regarding their $k$-ranks \cite{BlaschkeK,BW}, and the recent results of Damiano  which we return to in this article.\mk 

We note that what might be an `algebraic curvilinear web' has still not been precisely defined, even if it is known since \cite{BW} that some webs certainly deserve to be 
characterized as such. We will shed more light (but just a little) about this later on in the Introduction.

%


\subsubsection{\bf The webs $\mathcal W_{{0,n+3}}$ for $n\geq 2$}
\label{SSS:Webs-W_0-n+3}
Its is well-known and very easy to verify that Bol's web admits also the following nice geometric description: the web defined by the five rational first integrals in \eqref{Eq:B} is nothing else but a model in local coordinates,   of the web on the 2-dimensional moduli space ${\mathcal M_{0,5}}$ defined by the five 
rational fibrations  ${\mathcal M_{0,5}}\rightarrow {\mathcal M_{0,4}}\simeq \mathbf P^1\setminus \{0,1,\infty\}$ induced by forgetting a point among the five of any configuration element of 
${\mathcal M_{0,5}}$. 
\mk

Describing Bol's web in such a geometric way is interesting since it suggests immediately the following generalization:  for  $n\geq 2$ fixed and any $i=1,\ldots,n+3$, the forgetful map
$$\varphi_i: {\mathcal M_{0,n+3}}\rightarrow {\mathcal M_{0,n+2}} 
 $$ consisting in forgetting the $i$-th point of a configuration of $n+3$ points on $\mathbf P^1$, is a surjective rational map whose fibers are smooth rational curves with $n+2$ punctures.  The corresponding foliations of dimension 1 satisfy the 
 `general position assumption' of \S\ref{SSS:webs} hence form a curvilinear $(n+3)$-web on ${\mathcal M_{0,n+3}}$, which we will denote by 
\begin{equation} 
\label{Eq:W-0-n+3}
\boldsymbol{\mathcal W}_{{0,n+3}}=
\boldsymbol{\mathcal W}\big( \, \varphi_1\, ,  \ldots, 
 \varphi_{n+3}\, \big)\, . 
 \end{equation} 
 
 As explained just above, $\boldsymbol{\mathcal W}_{{0,5}}$ is a geometric model for Bol's web $\boldsymbol{\mathcal B}$. The latter being so important  regarding the study of non-linearizable webs of maximal rank, it appears quite natural to study the whole family of webs 
 $\boldsymbol{\mathcal W}_{{0,n+3}}$'s from the same perspective. \mk 
 
 The webs  $\boldsymbol{\mathcal W}_{{0,n+3}}$'s have been first mentioned by Burau in 
 \cite{Burau}, a paper on which we will come back further in \S\ref{SS:RoomBurau}. The study of the 
 $\boldsymbol{\mathcal W}_{{0,n+3}}$'s  for $n\geq 2$ arbitrary 
 with regard to their abelian relations, their ranks, etc, is  much more recent and  is due to Damiano hence we are going to  describe Damiano's work in some detail in the next subsection. \mk 
 
 To end this short presentation, let us mention the following obvious generalizations of the webs   $\boldsymbol{\mathcal W}_{{0,n+3}}$'s: for any fixed $k\in \{1,\ldots,n-1\}$, 
 there are  ${ n+3 \choose k}$ ways to forget $k$ points among all those of a configuration of $n+3$ points on $\mathbf P^1$ hence the corresponding forgetful maps 
 $ {\mathcal M_{0,n+3}}\rightarrow {\mathcal M_{0,n+3-k}}$  are the first integrals of 
 a ${ n+3 \choose k}$-web of dimension $k$ on $ {\mathcal M_{0,n+3}}$, denoted by 
 $\boldsymbol{\mathcal W}_{{0,n+3}}^k$.   The 1-codimensional webs $\boldsymbol{\mathcal W}_{{0,n+3}}^{n-1}$ have been studied in \cite{Pereira} where the author proved that they carry many logarithmic and dilogarithmic ARs and that  these are sufficiently many to obtain that, in some sense,  each such web has maximal rank.\footnote{Using the terminology introduced in \cite{ClusterWebs}, one can say that each web $\boldsymbol{\mathcal W}_{{0,n+3}}^{n-1}$ is  `AMP'.}
 \sk 
 
 It would be interesting to study and even quite better, to give a complete description of the $\ell$-ARs and the $\ell$-ranks of the webs $\boldsymbol{\mathcal W}_{{0,n+3}}^k$, this  for any $n\geq 2$, $k\leq n-1$ and $\ell\leq n-k$.  The present paper can be seen as a contribution to this wide open 
 question, in the specific case when $k=1$ and $\ell=n-1$. 
 
\subsection{Damiano's work}
\label{SS:Damiano's-work}
In \cite{GelfandMacPherson}, Gelfand and MacPherson describe a geometric construction of some differential forms on the spaces of projective configurations in (real) projective spaces from characteristic classes. Their general construction admits as a particular and very interesting case (the differential version of) Abel's dilogarithmic identity  $(\boldsymbol{\mathcal Ab})$. \sk

In his thesis \cite{DThesis} under the supervision of MacPherson (published in condensed form as the paper \cite{D}), Damiano applies Gelfand-Macpherson's approach to study the 
webs $\boldsymbol{\mathcal W}_{0,n+3}$'s for $n$ arbitrary, especially from the point of view of their rank and abelian relations. After having established several basic results about curvilinear webs such as Proposition \ref{Prop:Damiano'sBound} and studied carefully quadrilateral $(n+1)$-webs of curves and their abelian relations, Damiano focused on  the web $\mathcal W_{0,n+3}$'s for $n\geq 2$. We give below a short overview of the very interesting results he got/claims he got about these webs. 
 \sk 

He started by  noticing that any  $(n+1)$-subweb $\boldsymbol{\mathcal W}_{\widehat{\imath\jmath}}$ of $\mathcal W_{0,n+3}$ given by disregarding only the $i$-th and  $j$-th foliations of 
$\boldsymbol{\mathcal W}_{0,n+3}$,  is quadrilateral 
 and consequently has rank 1 (according to the remark just after Proposition \ref{Prop:Damiano'sBound} above). 
The corresponding ${ n+3\choose 2}=(n+3)(n+2)/2$ ARs  are called `combinatorial abelian relations' of $\boldsymbol{\mathcal W}_{0,n+3}$ and their span is denoted by $ \boldsymbol{AR}_{C}\big(\boldsymbol{\mathcal W}_{{0,n+3}}\big)$.  This is an a priori  quite  big subspace of the total space of ARs of $\boldsymbol{\mathcal W}_{{0,n+3}}$ on which the symmetric group 
$\mathfrak S_{n+3}$ naturally acts linearly. In the case when $n=2$ and up to  the equivalence of webs $ \boldsymbol{\mathcal W}_{{0,5}} \simeq  \boldsymbol{\mathcal B}$,  
$ \boldsymbol{AR}_{C}\big(\boldsymbol{\mathcal W}_{{0,5}}\big)$ coincides with the 
 space $ \boldsymbol{AR}_{Log}\big( \boldsymbol{\mathcal B} \big)$ 
considered above in \eqref{Page:Properties-Bol's-Web}. 
\sk 

Next, Damiano applies Gelfand-MacPherson approach 
to the construction of another AR for $\boldsymbol{\mathcal W}_{0,n+3}$, not a combinatorial one. The geometric starting point is a special case of the {\it Gelfand-MacPherson correspondence}, namely 
 that the moduli space $\mathcal M_{0,n+3}$ identifies naturally with the quotient 
 of a certain dense open subset $\widetilde {G}_{2}(\mathbf R^{n+3})$ of the grassmannian variety of 2-planes in $\mathbf R^{n+3}$ 
by the linear action of the identity component $H_{n+3}^0\simeq (\mathbf R_{>0})^{n+2}$ of the Cartan torus $H_{n+3}\subset 
 {\rm GL}_{n+3}(\mathbf R^{n+3})$. 
Actually (and this is important for what is to come) likely in order to work uniformly regarding the parity of $n$, Damiano works with the corresponding open subset 
$\widetilde {G}_{2}^{or}(\mathbf R^{n+3})$ of the grassmannian variety of oriented 2-planes in $\mathbf R^{n+3}$ and the associated quotient $\mathcal E\hspace{-0.02cm}\mathcal M_{0,n+3}= \widetilde {G}_{2}^{or}(\mathbf R^{n+3})/H_{n+3}^0$ which is seen as a space of `{\it enhanced  configurations}' of $n+3$ points on $\mathbf P^1$ 
(coming with a covering map $\mathcal E\hspace{-0.02cm}\mathcal M_{0,n+3}\rightarrow \mathcal M_{0,n+3}$).   
\sk 

The forgetful maps $\varphi_i: \mathcal M_{0,n+3}\rightarrow \mathcal M_{0,n+2}$ admit natural lifts between the corresponding spaces of enhanced configurations (denoted by $\varphi_i$ as well) and also between the (open subsets of the) corresponding oriented grassmannians, denoted by 
$\tilde \varphi_i: \widetilde {G}_{2}^{or}(\mathbf R^{n+3})\rightarrow \widetilde {G}_{2}^{or}(\mathbf R^{n+2}_i)$ where ${G}_{2}^{or}(\mathbf R^{n+2}_i)$ stands for the grassmann subvariety formed by oriented 2-planes contained in the $i$-th coordinate hyperplane $\mathbf R^{n+2}_i=\{ 
 \, x_i=0\, \}$ of $\mathbf R^{n+3}$. \sk
 
 Denoting by $\pi_{n+3}$ the quotient map $ \widetilde {G}_{2}^{or}\big(\mathbf R^{n+3}\big)\rightarrow \mathcal E\hspace{-0.02cm}\mathcal M_{0,n+3}=
 \widetilde {G}_{2}^{or}\big(\mathbf R^{n+3}\big)/ H_{n+3}^0$, it can be verified that the latter factors through $\pi_{n+2}$ when restricted along $ \widetilde {G}_{2}^{or}(\mathbf R^{n+2}_i)$  and these quotient maps together with the forgetful maps defined in the previous paragraph all  fit into the following commutative diagram: 
$$
  \xymatrix@R=0.4cm@C=1.5cm{ 
  \widetilde {G}_{2}^{or}\big(\mathbf R^{n+3}\big) 
\ar@{->}[dd]_{\pi_{n+3}}  \ar@{->}[r]^{ \hspace{0.1cm}\widetilde \varphi_i} & \widetilde {G}_{2}^{or}\big(\mathbf R^{n+2}_i\big)\, 
\ar@{->}[dd]^{\pi_{n+2}}
\\ & \\
\mathcal E\hspace{-0.02cm}\mathcal M_{0,n+3} \ar@{->}[d] 
 \ar@{->}[r]^{ \hspace{0.1cm} 
 \varphi_i }  & \mathcal E\hspace{-0.02cm}\mathcal M_{0,n+2}\ar@{->}[d] 
 \\
\mathcal M_{0,n+3}  \ar@{->}[r]_{ \hspace{0.1cm}\varphi_i} 
& \mathcal M_{0,n+2} \, . 
  }
$$

Let $\mathcal T$ be  the tautological rank 2 vector bundle over ${G}_{2}^{or}(\mathbf R^{n+3}$. Its \href{https://en.wikipedia.org/wiki/Euler_class}{Euler class}  is a characteristic class element of $ H^2\big({G}_{2}^{or}(\mathbf R^{n+3}),\mathbf R\big)$  which can be represented by a closed ${\rm SO}_{n+3}$-invariant 2-form, denoted by $\boldsymbol{e}_{n}$.  For any $k\geq 1$, the integral
 along the fibers of $\pi_{n+3}$ of the $k$-th wedge power $\boldsymbol{e}_{n}^k$ exists and is a $(2k-n-2)$-differential form on $ \mathcal E\hspace{-0.02cm}\mathcal M_{0,n}$, denoted by 
$\widetilde{\boldsymbol{e}}_{n}^k
=\pi_*(\boldsymbol{e}_{n}^k)$. It is an example of  a {\it `generalized dilogarithm form'} according to the terminology introduced in  \cite{GelfandMacPherson}. \sk

Damiano proves that (1) $\widetilde{\boldsymbol{e}}_{n}^k$ vanishes, except  when $k=n+1$ since it this case $\boldsymbol{e}_{n}^{n+1}$ is an invariant volume form on 
${G}_{2}^{or}(\mathbf R^{n+3})$; (2) for any $i$, the restriction
 of $\boldsymbol{e}_{n}$ along 
${G}_{2}^{or}(\mathbf R^{n+2}_i)$ 
 coincides (up to a sign corresponding to the compatibility between natural orientations) with the 2-form $\boldsymbol{e}_{n-1}$ intrinsically associated to this oriented grassmannian. Combining this (in the case when $k=n$) with the key technical result of 
 \cite{GelfandMacPherson} which is a differential identity between generalized dilogarithms, Damiano obtains that 
 $$ 
\sum_{i=1}^{n+3} (-1)^{i} \varphi_i^*\big( \,\, \widetilde{\boldsymbol{e}}_{n-1}^n\,  \big)=0
$$
holds true identically on any connected components of $\mathcal E\hspace{-0.02cm}\mathcal M$. This allows to consider  the $(n+3)$-tuple of $(n-1)$-differential forms  $ \big( (-1)^{i }\varphi_i^*( \, \widetilde{\boldsymbol{e}}_{n-1}^n\, ) \big)_{i=1}^{n+3}$  as an AR for $\boldsymbol{\mathcal W}_{0,n+3}$, called the {\bf Euler abelian relation} in \cite{D} and which we will denote by $ \boldsymbol{\mathcal E}_n$ here. When $n=2$, it can be verified (see \cite{GelfandMacPherson}, \cite[\S8.4]{DThesis} or \S\ref{SS:En-integral-representation} below) that $ \boldsymbol{\mathcal E}_2$ coincides with 
Abel's abelian relation $\boldsymbol{\mathcal Ab}$ up to the natural identification of 
$\boldsymbol{\mathcal W}_{0,5}$ with Bol's web $\boldsymbol{\mathcal B}$. 
\mk 

The material above being introduced, it is now possible to state the main results claimed by Damiano about the webs $\boldsymbol{\mathcal W}_{0,n+3}$'s as the following  ones: 
\mk

\label{MainClaims}
\hspace{-0.4cm}{\bf Main claims in \cite{D}.} {\it 
Let $n$ be an integer bigger than or equal to 2. 
\begin{enumerate}
\item[{\bf 1.}] 
 The web $\boldsymbol{\mathcal W}_{{0,n+3}}$ is quadrilateral and non linearizable.\sk 
\item[{\bf 2.}] $ \boldsymbol{AR}_{C}\big(\boldsymbol{\mathcal W}_{{0,n+3}}\big)$  has dimension bigger than or equal to 
$n(n+3)/2$.  \sk 
\item[{\bf 3.}] The Euler abelian relation $\boldsymbol{\mathcal E}_n$ is a non-trivial element  of $ \boldsymbol{AR}\big(\boldsymbol{\mathcal W}_{{0,n+3}}\big)$.
\sk
\item[{\bf 4.}] 
 There is a decomposition in direct sum 
 \begin{equation}
 \label{Eq:Direct-Sum}
 \boldsymbol{AR}\big(\boldsymbol{\mathcal W}_{{0,n+3}}\big)=
 \boldsymbol{AR}_{C}\big(\boldsymbol{\mathcal W}_{{0,n+3}}\big)\oplus \big\langle  \boldsymbol{\mathcal E}_n\big\rangle 
\end{equation}
  and consequently $\dim \boldsymbol{AR}_{C}\big(\boldsymbol{\mathcal W}_{{0,n+3}}\big)= {n(n+3)}/{2}$ and  $\boldsymbol{\mathcal W}_{{0,n+3}}$ has maximal rank, {\it i.e.} 
  $${\rm rk}\big(\boldsymbol{\mathcal W}_{{0,n+3}}\big)=(n+1)(n+2)/2\, .$$ 
\item[{\bf 5.}] The natural action of 
 $\mathfrak S_{n+3}$ on $\mathcal M_{0,n+3}$ induces an action on 
the space of abelian relations of $\boldsymbol{\mathcal W}_{{0,n+3}}$
 whose decomposition in irreducible factors corresponds to the decomposition in direct sum \eqref{Eq:Direct-Sum}. Moreover:  \sk 
 \begin{enumerate}
 \item[(i).] $\boldsymbol{AR}_{C}\big(\boldsymbol{\mathcal W}_{{0,n+3}}\big)$ is the $\mathfrak S_{n+3}$-representation with Young diagram $\big[221^{n-1}\big]$;\sk
 \item[(ii).] $\big\langle  \boldsymbol{\mathcal E}_n \big\rangle 
$ is the sign $\mathfrak S_{n+3}$-representation 
(with Young diagram $\big[1^{n+3}\big]${\rm )}.
 \end{enumerate}\mk
\item[{\bf 6.}] A quadrilateral curvilinear $d$-web either is equivalent to a web formed by 
the lines passing through $d$ points in $\mathbf P^n$ 
or $d=n+3$ and the considered web is equivalent to $\boldsymbol{\mathcal W}_{0,n+3}$.  
\end{enumerate}
}\mk 

These remarkable statements are generalizations (with precisions/refinements) of 
the well-known properties of Bol's web listed in \eqref{Page:Properties-Bol's-Web}.  
However, some of them were previously known\footnote{For instance, {\bf 6.}\,was already known to Blaschke in the case when $n=3$,  as the reading of \cite[\S51]{Blaschke1955} shows.} and even worse, some  are not correct. It is what we are going to explain by stating our results in the next subsection.

\subsection{Results}
The purpose of this text is to revisit the statements discussed above. 

In short, mainly we 
\begin{itemize}
\item recall a classical result implying that any web $\boldsymbol{\mathcal W}_{0,n+3}$ is linearizable when $n$ is odd; 
\mk 
\item  discuss in depht the web $\boldsymbol{\mathcal W}_{0,6}$ (case when $n=3$): 
realizing that it is isomorphic to the web by lines on Segre's cubic hypersurface $\boldsymbol{S}\subset \mathbf P^4$, we describe its ARs algebraically in terms of the global sections of the dualizing sheaf $\omega_\Sigma^2$ of the associated Fano surface 
$\Sigma=F_1(\boldsymbol{S})\subset G_1(\mathbf P^4)$. In particular, we get that 
it is quite justified to say that 
$\boldsymbol{\mathcal W}_{0,6}$ is (equivalent to) an `algebraic web'. 
\mk 
\item  prove that some of the claims \eqref{Eq:Direct-Sum} are not correct 
for any odd integer  $n\geq 3$. In this case,  we give 
corrected versions of them;  
\mk 
\item   give an explicit expression for the (components of the) Euler abelian relation $\boldsymbol{\mathcal E}_n$, which is rational when $n$ is odd, whereas it involves logarithmic terms as well when $n$ is even.  This formula  is conjectural in full generality but we have verified that it indeed holds true  for $n$ less than or equal to $12$.
\end{itemize}
\mk

We describe below in more detail the results obtained (or just conjectured) in this text and how they are related  to the claims \eqref{Eq:Direct-Sum}. 

\subsubsection{\bf Linearization}
First we revisit the linearization problem for $\boldsymbol{\mathcal W}_{{0,n+3}}$ by recalling a nice and rather simple (but seemingly forgotten) construction and results  by Room  rediscovered  independently by 
Burau more than thirty years later (precise references will be given in \S\ref{SS:RoomBurau}). \mk



For any integer $n$, we set $\delta_n=1$ if $n$ is odd and $\delta_n=2$ if it is even. 
\bk

\hspace{-0.45cm}{\bf Theorem A.} (Room-Burau) 
{\it {\bf 1.} For any $n\geq 2$, the web $\boldsymbol{\mathcal W}_{{0,n+3}}$ is realizable 
 as a web of rational curves of degree $\delta_n$ on a certain projective variety $V_n$.}\sk 
 
{\it {\bf 2.} In particular, $\boldsymbol{\mathcal W}_{{0,n+3}}$ is linearizable for any odd integer $n\geq 3$.}
\bk

%

The second point of this theorem shows that the second part of assertion 
\eqref{Eq:Direct-Sum}.{\bf 1.}
is wrong  when $n$ is odd  
(see \S\ref{SSS:linearizability}   below for a description of the main flaw in the arguments advanced in \cite{DThesis}  for proving  \eqref{Eq:Direct-Sum}.{\bf 1}). 
 We believe that $\boldsymbol{\mathcal W}_{{0,n+3}}$ is indeed non linearizable when $n$ is even but this is still conjectural. 
\sk 

An interesting feature of the preceding theorem (which was of course stated by Room and by Burau only in terms of projective algebraic geometry),  is that it is constructive: there exists an explicit linear system $\mathcal L_n$ on $\mathbf P^n$ 
giving rise to a map 
$\varphi_{n}: \mathbf P^n\dashrightarrow  
 \mathbf P^{N_n} $
  birational onto its image 
$V_n={\rm Im}(\varphi_{n})$ such that 
$$\boldsymbol{W}_{0,n+3}=\big(\varphi_{n}\big)_*\big(\boldsymbol{\mathcal W}_{{0,n+3}}\big)$$ 
is a web by rational curves of degree $\delta_n$ on $V_n$.\sk

The $V_n$'s together with the associated $\mathcal L_n$'s
form an interesting  family of projective varieties and linear systems, studied in classical papers and in a few recent publications as well. The first two cases $n=2$ and $n=3$ are very classical and have really been studied a lot: for instance, $V_2$ is the \href{https://en.wikipedia.org/wiki/Del_Pezzo_surface}{\it del Pezzo quintic surface} in $\mathbf P^5$ and 
$\boldsymbol{W}_{0,5}=\big(\varphi_{2}\big)_*\big(\boldsymbol{\mathcal W}_{{0,5}}\big)$ is the web  formed by the five fibrations by conics on it.
The case $n=3$ is not less classical than the previous one, and given the importance it will have regarding our approach, it deserves to be stated as the following 
\bk 

\hspace{-0.45cm}{\bf Proposition B.}  
{\it The variety $V_3$ is \href{https://en.wikipedia.org/wiki/Segre_cubic}{\it Segre's cubic primal} $\boldsymbol{S}$, that is the 
(projectively unique) irreducible cubic hypersurface  in $\mathbf P^4$ with 10 nodes. And the push-forward web $\boldsymbol{W}_{0,6}=\big(\varphi_{3}\big)_*\big(\boldsymbol{\mathcal W}_{{0,6}}\big)$ coincides with the linear web  on 
$\boldsymbol{S}$  formed by the six covering families of lines contained in it.
}

\subsubsection{\bf The web $\mathcal W_{0,6}$, its abelian relations and Segre's cubic primal}
What makes the previous result important for us is that once aware of it, it becomes unavoidable to relate the web-theoretic questions we are interested in to some very nice and nowadays well-know results about 3-dimensional cubic hypersurfaces.  We recall this material below and explain how it is related to curvilinear webs, referring to 
\S\ref{SS:Cubic-Hypersurfaces}
 further for more details and references.\mk 

Let $X\subset \mathbf P^4$ be a cubic hypersurface which we assume here to be smooth (essentially for simplicity). It is known that  through a general point $x$ of $X$ pass
 six pairwise distinct lines 
 included in $X$.  
  Since  three such lines necessarily span a 3-plane\footnote{This follows easily from Bezout's theorem.}, the general position assumption of  \S\ref{SSS:webs} is satisfied hence these lines are the leaves of a linear 6-web on  (a certain Zariski open subset $X^0$ of) $X$, denoted by $\boldsymbol{\mathcal L\hspace{-0.05cm}\mathcal W}_X$, and which is canonically defined on $X$.  
Looking at the global geometric picture is quite relevant here, and can be better understood by considering the \href{https://encyclopediaofmath.org/wiki/Fano_scheme}{Fano scheme} $F=F_1(X)$ of lines contained in $X$,  which is a 
surface naturally embedded in $G_1(\mathbf P^4)$. 
\sk

\vspace{-0.5cm}
 First,  remark that $F$ is naturally the space of leaves of the foliations composing 
 $\boldsymbol{\mathcal L\hspace{-0.05cm}\mathcal W}_X$ locally: 
 at any $x_0\in X^0$, one can define regular germs $L_i: (X,x_0)\rightarrow F$ for $i=1,\ldots,6$,  such that for any $x\in X$ sufficiently close to $x_0$,  the $L_i(x)$'s correspond to the six lines contained in $X$ and passing through $x$. These six (germs of) maps are local first integrals for  $\boldsymbol{\mathcal L\hspace{-0.05cm}\mathcal W}_X$: locally  at   $x_0$, one has 
 $$\boldsymbol{\mathcal L\hspace{-0.05cm}\mathcal W}_X=\boldsymbol{\mathcal W}\Big(L_1,\ldots,L_6\Big)\, .$$
 In general (for instance when  $X$ is non singular), $F$ is a smooth irreducible surface with remarkable properties some of which allow to describe the ARs of $\boldsymbol{\mathcal L\hspace{-0.05cm}\mathcal W}_X$ quite nicely.  The properties which will be crucial 
for our purpose are the following two: 
 \begin{itemize}
    \item[{\it (i).}] the space $\boldsymbol{H}^0(F,\Omega_F^2)$ of global holomorphic 2-forms on $F$  has dimension 10;
    \sk
 \item[{\it (ii).}] for any $\omega \in \boldsymbol{H}^0(F,\Omega_F^2)$, its trace, defined locally 
 by ${\rm Tr}(\omega)=\sum_{i=1}^6 L_i^*(\omega)$, 
  vanishes on $X$.

 \end{itemize}
 The second property has to be seen as an analogue, for the incidence between point and lines included in the hypercubic $X$, of the classical Abel's addition theorem for the abelian differentials on algebraic curves mentioned in \S\ref{SSS:Algebraic-webs} above. As in the case of any algebraic planar web, one first deduce from {\it (ii)} that the following `Trace map' 
 \begin{align}
 \label{Al:Trace-map}
 {\rm Tr} : \boldsymbol{H}^0\big(F,\Omega_F^2\big) & \longrightarrow \boldsymbol{AR}\Big( 
\boldsymbol{\mathcal L\hspace{-0.05cm}\mathcal W}_X
 \Big)\\
 \omega & \longmapsto \big(\,  L_i^*(\omega)\,\big)_{i=1}^6
 \nonumber
 \end{align}
is well-defined. Since it is obviously injective, it follows that 
the rank of $\boldsymbol{\mathcal L\hspace{-0.05cm}\mathcal W}_X$ 
is bigger than or equal to $h^0(F,\Omega_F^2)$, which is 10 according to {\it (i).} Hence  considering the majoration \eqref{Eq:Bound-on-the-rank} in case $n=3$ and $d=6$, we deduce that ${\rm rk}\big( \boldsymbol{\mathcal L\hspace{-0.05cm}\mathcal W}_X\big)=h^0(F,\Omega_F^2)=10$ is maximal. 
 \mk 

What has been obtained  above can be stated in condensed form as the following: 
 \bk
 
\hspace{-0.45cm}{\bf Proposition C.}  
{\it For any   sufficiently general ({\it e.g.}\,smooth) cubic hypersurface $X\subset \mathbf P^4$:  
\sk 

1. The map \eqref{Al:Trace-map} induces a linear isomorphism 
$\boldsymbol{H}^0(F,\Omega^2_F)\simeq \boldsymbol{AR}(\boldsymbol{\mathcal L\hspace{-0.05cm}\mathcal W}_X)$; 
\sk 

2. Consequently $\boldsymbol{\mathcal L\hspace{-0.05cm}\mathcal W}_X$ is a linear 6-web of maximal 2-rank $10=h^0\big(F,\Omega_F^2\big)$. }
\bk

Considering the way a web of the form  $(\boldsymbol{\mathcal L\hspace{-0.05cm}\mathcal W}_X)$ is defined and in view of the preceding result, it is more than reasonable 
 to say that such webs are `{\it algebraic}'.\footnote{{\it Cf.}\,the generalization of the notion of `algebraic web' introduced in \cite[\S1.3]{EAW}.}
 \mk

{The preceding proposition shows in particular that, as a linear web, $\boldsymbol{W}_{0,6}=\boldsymbol{\mathcal L\hspace{-0.05cm}\mathcal W}_{\boldsymbol{S}}$ is just a special element of a family of algebraic webs, which generically are of maximal rank with all their ARs coming from holomorphic 2-forms on the corresponding Fano surfaces. A natural question which immediately arises is whether the isomorphism \eqref{Al:Trace-map} also has a specialization for $\boldsymbol{S}$. \sk

\vspace{-0.4cm}
It is known that many results/constructions concerning smooth hypercubics in $\mathbf P^4$ generalize to some singular cubics. In particular, some cases of nodal cubics have been considered by several authors, especially the case of 1-nodal cubics threefolds.  The case of Segre's cubic is quite specific. For instance, 
the Fano surface $\Sigma=F_1(\boldsymbol{S})$ of Segre's cubic is a rational surface with 21 irreducible components, which is in sharp contrast to the case of the Fano surface of 
a smooth hypercubic.  We have not been able to localize 
in the huge existing literature 
on the subject 
a place  where the suitable generalization of property {\it (ii)} above which could apply to the case of Segre's cubic is proved. However, by explicit computations, we have verified that it is indeed the case, which implies in particular that 
$\boldsymbol{AR}(\boldsymbol{\mathcal W}_{{0,6}})$
 is isomorphic to the space $\boldsymbol{H}^0(\Sigma,\omega_\Sigma^2)$
 of global abelian 2-forms  on $\Sigma$. 
\sk


 Actually, it is easily seen that the isomorphism  of complex vector spaces 
\begin{equation}
\label{Eq:ARWM06-H0KSigma}
\boldsymbol{H}^0\Big(\Sigma,\omega_\Sigma^2\Big)
\simeq 
\boldsymbol{AR}\Big(\boldsymbol{\mathcal W}_{{0,6}}\Big)
\end{equation}
 induced by \eqref{Al:Trace-map} 
 is an isomorphism of $\mathfrak S_6$-modules, a fact from which interesting consequences 
 can be deduced. For instance, it is classically known that 
 $\boldsymbol{H}^0(\Sigma,\omega_\Sigma^2)$ hence $\boldsymbol{AR}(\boldsymbol{\mathcal W}_{{0,6}})$, is irreducible as a $\mathfrak S_6$-representation,  an observation which immediately appears  as contradicting statement {\bf 5.}\,in 
 \eqref{Eq:Direct-Sum}.  But actually even more can be obtained:  from an explicit description that one can give of a basis of $\boldsymbol{H}^0\big(\Sigma,\omega_\Sigma^2\big)$
 and using the fact that $
 \boldsymbol{\mathcal W}_{{0,6}}=\varphi_3^*\big( \boldsymbol{\mathcal L\hspace{-0.05cm}\mathcal W}_{\boldsymbol{S}}\big) $, one obtains the following 
\mk 

 \noindent 
{\bf {Theorem D}.}  \, 
{\it {$\bf 1.$}  One has  $
 \boldsymbol{\mathcal W}_{{0,6}}=\varphi_3^*\big( \boldsymbol{\mathcal L\hspace{-0.05cm}\mathcal W}_{\boldsymbol{S}}\big) $ hence  $ \boldsymbol{\mathcal W}_{{0,6}}$ is an algebraizable (hence linearizable) web of maximal rank 10, and \eqref{Eq:ARWM06-H0KSigma}
actually   is an isomorphism of $\mathfrak S_6$-modules. \sk
}
  
{\it 
{$\bf 2.$}  Regarding the abelian relations of $\boldsymbol{\mathcal W}_{{0,6}}$, 
the following assertions hold true: \sk
\begin{enumerate}
\item[{\bf a.}]
\vspace{-0.15cm}
  One has $\boldsymbol{AR}_C\big( \boldsymbol{\mathcal W}_{{0,6}} \big)=\boldsymbol{AR} \big(\boldsymbol{\mathcal W}_{{0,6}} \big)$; in particular Euler's AR  $\boldsymbol{\mathcal E}_3$  is combinatorial;
\sk
\item[{\bf b.}] As a $\mathfrak S_{6}$-module, $\boldsymbol{AR}_{C}\big( \boldsymbol{\mathcal W}_{{0,6}} \big)=\boldsymbol{AR}\big( \boldsymbol{\mathcal W}_{{0,6}} \big)$ is irreducible with Young diagram $\big[31^3\big]$. 
\end{enumerate} 
}
}

 {Assertions {\bf a.} and {\bf b.} above contradict statements  {\bf 4.} and {\bf 5.} of \eqref{Eq:Direct-Sum} in the case when $n=3$.}

\subsubsection{\bf The general case}
Motivated by the preceding result, we have investigated the general case taking up and verifying Damiano's approach. 
  We correct his main theorem 
   by establishing 
\sk

\noindent
{\bf {Theorem E}.}  \, {\it Let $n$ be any integer bigger than or equal to 2.}

\begin{itemize}
\item[]  ${}^{}$ \hspace{-1cm} {\bf 1.} \, {\it When $n$ is even, all the claims in \eqref{Eq:Direct-Sum} hold true}.
\mk 
 
 \item[]  ${}^{}$ \hspace{-1cm} {\bf 2.}\,  {\it On the other hand, this is not true when $n$ is odd since  then the 
 following holds true:} \mk
{\it
 \begin{enumerate}
  \item[{\bf a.}] The web $ \boldsymbol{\mathcal W}_{0,n+3}$ is linearizable.\mk 
 \item[{\bf b.}] The  space of combinatorial ARs 
  has dimension $(n+1)(n+2)/2$ hence the rank of  $ \boldsymbol{\mathcal W}_{0,n+3}$ is  indeed maximal but one has $\boldsymbol{AR}\big( \boldsymbol{\mathcal W}_{0,n+3}\big)=\boldsymbol{AR}_C\big( \boldsymbol{\mathcal W}_{0,n+3}\big)$.
 \mk 
 \item[{\bf c.}]  The Euler abelian relation is combinatorial: {\it i.e.} one has $\boldsymbol{\mathcal E}_n\in \boldsymbol{AR}_C\big( \boldsymbol{\mathcal W}_{0,n+3}\big)$.
 \mk 
 \item[{\bf d.}] The representation of 
$\mathfrak S_{n+3}$ on $\boldsymbol{AR}_C\big( \boldsymbol{\mathcal W}_{0,n+3}\big)= \boldsymbol{AR}\big( \boldsymbol{\mathcal W}_{0,n+3}\big)$ is irreducible with associated Young symbol $\big[3,1^n\big]$.
 \end{enumerate}}
 \end{itemize}
 \mk 
 
 Of course, the first part of this theorem is fully due to Damiano. Only the second one (when $n$ is odd) is new. However, it is fair to mention that it is proved just by  adding an elementary (but new) fact to Damiano's argumentation.

\subsubsection{\bf Explicit formulas for Euler's abelian relation}

We have also investigated more in depth Euler's abelian relation $\boldsymbol{\mathcal E}_n$.  Here are the main results we have obtained about it,  relatively to a certain explicit choice of rational first integrals for $\boldsymbol{\mathcal W}_{{0,n}} $: 
{\it \begin{itemize}
\item  We give a simple closed integral formula for the components of $\boldsymbol{\mathcal E}_n$ for any $n\geq 2$;
\mk 
\item  
For any odd integer $n\geq 3$, we give an explicit conjectural rational formula for the components of $\boldsymbol{\mathcal E}_n$. We prove this formula for $n$ sufficiently small (e.g.  for $n\leq 11$);
\mk 
\item  For any even integer $n\geq 2$,  we give an explicit conjectural formula for the components  of $\boldsymbol{\mathcal E}_n$, involving only rational and logarithmic quantities. We prove that this formula is valid for $n$ small enough (e.g. for $n\leq 12$). 
\end{itemize}
} 

We refer respectively to Proposition \ref{P:Prop-em-integral-formula}, Proposition \ref{P:5.20} and Proposition \ref{P:properties--en} for more precise statements, 
and to  \eqref{Eq:e-m(u)}, 
\eqref{Eq:tilde-e}
and 
\eqref{Eq:Euler-varepsilon}
for the corresponding formulas.

\subsection{Plan of the paper}
The  current Introduction  constitutes the first section of this paper. 

 The sequel 
  is organized as follows: 
\begin{itemize}
\item{} In Section \S\ref{S:M0n+3-and-W0n+3}, we start by discussing a few basic facts about the moduli spaces $\mathcal M_{0,n+3}$ and the webs $\mathcal W_{\mathcal M_{0,n+3}}$, for $n\geq 2$ arbitrary. We recall  Room and Burau's results in \S\ref{SS:RoomBurau}, 
from which one deduces Theorem A.  The corresponding flaw in the argumentation of  \cite{DThesis} is 
briefly discussed in 
\S\ref{SSS:linearizability}. 
\mk 
\item Section \S\ref{S:n=3} is entirely devoted to the case $n=3$ which is studied in depth. 
We start by discussing in 
\S\ref{SS:Cubic-Hypersurfaces}
the 6-web by projective lines carried by a general cubic hypersurface in $\mathbf P^4$. 
We explain that Proposition C follows from well-known results of Clemens and Griffiths about cubic threefolds.   The case of Segre's cubic $\boldsymbol{S}$ is discussed in \S\ref{SS:Segre's-cubic}. By means of elementary  explicit computations,  we prove  that, as for smooth cubics, the trace gives rise to the isomorphism \eqref{Eq:ARWM06-H0KSigma} for $\boldsymbol{S}$ as well. From this, we deduce  an explicit basis of 
$\boldsymbol{AR}(\boldsymbol{\mathcal W}_{0,6})$, all the elements of which are rational abelian relations. 
The structure of $\boldsymbol{AR}(\boldsymbol{\mathcal W}_{0,6})$ as a 
 ${\mathfrak S}_6$-module 
 is studied in \S\ref{SS:ARW06-as-a-S6-module}. Theorem D is proved there. 
\mk 
\item  In Section \S\ref{S:Conjectures-ARs}, 
Damiano's approach for studying  
$\boldsymbol{AR}(\boldsymbol{\mathcal W}_{0,n+3})$ as a $\mathfrak S_{n+3}$-module 
 is taken up in detail. 
Essentially all the material here is taken from \cite{DThesis}, the single novelty being Lemma \ref{L:n-odd}. Although its statement as well as its proof are elementary, the second part of Theorem E follows quite easily from it.  We get an explicit basis for the 
space $\boldsymbol{AR}_C\big( \boldsymbol{\mathcal W}_{0,n+3}\big)$ of combinatorial ARs.
\mk 
\item Section \S\ref{S:Euler-AR} is about Euler's abelian relation, 
the construction of which is taken up in detail.  
After having given a concise integral formula for the components of $\boldsymbol{\mathcal E}_n$ for $n$  arbitrary in \S\ref{SS:En-integral-representation}, 
$\boldsymbol{\mathcal E}_n$,  we deduce from 
some dihedral invariance properties of $\boldsymbol{\mathcal E}_n$ two transformations formulas that its components must satisfy (in \S\ref{SSS:Dihedral-Invariance-Properties}). 
We then turn to the case when $n$ is odd: in \S\ref{SSS:En-n-odd}, using the two 
just mentioned transformation formulas, we give an explicit rational expression for the 
components of $\boldsymbol{\mathcal E}_n$, which is conjectural in full generality but is proved to be the right one for any odd integer $n\leq 11$.  The case when $n$ is even is considered just after in \S\ref{SS:En-n-even}. 
We start by dealing with the case $n=4$. We first remark that Abel's method for solving abelian functional equations applies quite well as well for determining the ARs of any given curvilinear web. Applying this approach, we are able to give an explicit formula for the components of 
$\boldsymbol{\mathcal E}_4$ ({\it cf.}\,\eqref{Eq:E4(x2x3x4)}), from which we conjecture a closed formula involving  only rational and logarithmic quantities for any even integer $n\geq 2$ (see \eqref{Eq:Euler-varepsilon}). We prove that this formula is indeed valid for any even integer $n\leq 12$. 
\mk 
\item In the last section \S\ref{S:Problems} of the paper,  we formulate some questions  that we find interesting about curvilinear webs. Some are in relation to what has been discussed before, others are not. 
A few of them  concern the study of some projective varieties a better understanding of which could enlighten us  on the abelian relations of the 
$\boldsymbol{\mathcal W}_{{0,n+3}}$'s when $n$ is odd. \sk
\end{itemize}

Two appendices have been added at the end:
\begin{itemize}
\item In Appendix A, we investigate the 
 1-abelian relations of $\boldsymbol{\mathcal W}_{0,6}$ corresponding to the abelian 1-differentials on (a certain desingularization $\widetilde \Sigma$ of) the Fano surface  $\Sigma$ of Segre's cubic $\boldsymbol{S}$. 
 After explaining conceptually why the global sections of $\omega^1_{\widetilde \Sigma}$ give rise to 1-abelian relations for $\boldsymbol{\mathcal W}_{0,6}$  (via the specialization to Segre's cubic of a certain version of `Abel's theorem' for the holomorphic 1-forms on Fano surfaces of smooth cubics in $\mathbf P^4$), we explicitly determine these latter 
 using the $\mathfrak S_6$-structure of the space 
 $\boldsymbol{AR}^{(1)}\big(\boldsymbol{\mathcal W}_{0,6} \big)$
 they span.  
\sk 
\item Finally in Appendix B,  
we study the web $
\boldsymbol{\mathcal L\hspace{-0.05cm} \mathcal W}_{\mathscr C}$  formed by the lines contained in the so-called `chordal cubic' $\mathscr C\subset \mathbf P^4$.  This is a degenerate case in which
$\boldsymbol{\mathcal L \hspace{-0.05cm}\mathcal W}_{\mathscr C}$
is a 3-web (and not a 6-web as in the 
case of a 
generic cubic threefold). Using Abel's method, we determine the 1-abelian relations of $
\boldsymbol{\mathcal L\hspace{-0.05cm} \mathcal W}_{\mathscr C}$. It turns out that for this specific example, this web is equivalent to Blaschke-Walberer web defined on the variety of triangles included in $\mathscr C$. We take the opportunity offered by this coincidence to recall here the very nice classical but seemingly forgotten  results by Blaschke and Walberer about curvilinear 3-webs in dimension 3 with maximal rank.
\end{itemize}
\bk

\subsection*{\bf Acknowledgements.} The author is grateful to 
\href{https://imag.umontpellier.fr/~bolognesi/}{Michele Bolognesi} and 
\href{https://sites.google.com/view/castravet/home}{Ana-Maria Castravet}
for several interesting discussions about the geometry of the moduli spaces of marked rational curves.
  
Enfin, et bien s\^ur, l'auteur est  \'egalement tr\`es reconnaissant et redevable \`a Brubru pour ses  relectures nombreuses et ses corrections passionn\'ees.


%
%
%

%
%

%
%
%
%

\newpage
\section{\bf The moduli spaces $\mathcal M_{0,n+3}$ and 
the  webs $\mathcal W_{{0,n+3}}$ on them}
\label{S:M0n+3-and-W0n+3}
We start by recalling some basic facts about the moduli spaces $\mathcal M_{0,n+3}$ before 
describing  in several ways the webs ${\boldsymbol{\mathcal W}}_{{0,n+3}}$ on them. In \S\ref{SS:RoomBurau}, we recall Room-Burau's result and its consequence (namely Theorem A) for the webs ${\boldsymbol{\mathcal W}}_{{0,n+3}}$'s. We finish in \S\ref{SSS:linearizability}  by briefly  discussing the flawed reasoning used in \cite{DThesis} to wrongly conclude  that none of webs ${\boldsymbol{\mathcal W}}_{{0,n+3}}$ is linearizable. \mk 


All the material presented in this section is fairly standard and well known or well referenced in the literature. For this reason, no proof is given below, it seemed preferable to give specific references instead.

\subsection{\bf Basic facts.}
\label{SS:Basic-Facts}
We start by discussing $\mathcal M_{0,n+3}$ (for $n\geq 1$ arbitrary) over the field of complex numbers before discussing the web $W_{0,n+3}$ on it.  We end by 
considering its version over the reals, this in order to fit better with the setting of 
\cite{D}. Everything in \S\ref{SS:Basic-Facts} is standard and well-known.
\sk 

\vspace{-0.5cm}
In what follows, $n$ stands for a fixed integer bigger than or equal to 1.

\subsubsection{\bf The moduli space $\mathcal M_{0,n+3}$.}  
Working over $\mathbf C$, $\mathcal M_{0,n+3}$ stands for the moduli space of projective configurations of $n+3$ pairwise distinct points on the complex projective line $\mathbf P^1$: 
$$
\mathcal M_{0,n+3}=\Big\{ \, \big( z_i\big)_{i=1}^{n+3}\in (\mathbf P^1\big)^{n+3}\, 
\big\lvert \hspace{0.15cm} z_i\neq z_j\, \mbox{ for all }\, i\neq j
\hspace{0.15cm} 
 \Big\}_{\Big/\, {\rm PGL_2(\mathbf C)}}\, .
$$
As is well known, it is a smooth irreducible rational affine complex variety of dimension $n$. 
In order to get an  an explicit rational coordinates system on $\mathcal M_{0,n+3}$, let us consider the affine arrangement in 
 $\mathbf C^n$, denoted by $A_n$,  defined as the union of the $2n+n(n-1)/2=n(n+3)/2$ affine hyperplanes cut out by the equations 
  $X_i=0$, $X_i-1=0$ and $X_i-X_j=0$ 
 for $i,j=1,\ldots,n$ with  $i<j$. 
 Then, as is well known, the following rational map 
 \begin{align}
 \label{Eq:Psi-n}
 \psi_n : \, \mathbf C^n \setminus A_n &\longrightarrow   \mathcal M_{0,n+3}\\
 \nonumber
 \big(x_i\big)_{i=1}^n  & \longmapsto \Big[ \, 0,1,\infty ,x_1,\ldots,x_n \, \Big]
 \end{align}
is an isomorphism of affine varieties whose inverse map can be make explicit quite easily: 
for  
 $\boldsymbol{z}=[z_1,\ldots,z_{n+3}]\in \mathcal M_{0,n+3}$, 
 let $g_{\boldsymbol{z}}$ be the projective automorphism of $\mathbf P^1$ 
such that $g_{\boldsymbol{z}}(z_1)=0$, $g_{\boldsymbol{z}}(z_2)=1$ and $g_{\boldsymbol{z}}(z_3)=\infty$, namely 
$$
g_{\boldsymbol{z}}(\zeta)
=
\frac{(\zeta-z_1)(z_2-z_3)}{(\zeta-z_3)(z_2-z_1)}
$$
for any $\zeta \in \mathbf P^1$ (where this formula must be suitably interpreted when one of the $z_i$'s involved is equal to $\infty$).
Then all the $g_{\boldsymbol{z}}(z_k)$'s for $k=4,\ldots,n+3$ belong to $\mathbf C\setminus \{0,1\}$ and are pairwise distinct. 
Consequently, the map 
 \begin{align*}
 \phi_n : \, \mathcal M_{0,n+3}  &\longrightarrow  \mathbf C^n \setminus A_n   \\
 \boldsymbol{z}=\big[ z_i \big]_{i=1}^{n+3}  & \longmapsto \Big( g_{\boldsymbol{z}}(z_k) \Big)_{k=4}^{n+3}
 \end{align*}
is well-defined and  the composition $\phi_n\circ \psi_n$ obviously is the restriction of the identity to $\mathbf C^n \setminus A_n$.\mk

\subsubsection{\bf Action of $\mathfrak S_{n+3}$ by automorphisms and birational transformations.}
\label{SSS:S3-action}
For any $\sigma \in \mathfrak S_{n+3}$ and $\zeta=(\zeta_i)_{i=1}^{n+3}\in (\mathbf P^1)^{n+3}$, one sets $\sigma\zeta=(\zeta_{\sigma(i)}\big)_{i=1}^{n+3}$. Clearly, the map 
$\zeta=(\zeta_i)_{i=1}^{n+3}\mapsto \sigma\zeta=(\zeta_{\sigma(i)}\big)_{i=1}^{n+3}$ on $(n+3)$-tuples of points of $\mathbf P^1$ is ${\rm PGL}_2(\mathbf C)$-equivariant hence induces an automorphism of $\mathcal M_{0,n+3}$, denoted by $\gamma_\sigma$.  One gets a morphism of groups 
\begin{align*}
\mathfrak  S_{n+3} & \longrightarrow {\rm Aut}\big(\mathcal M_{0,n+3}\big)\\
\sigma & \longmapsto \gamma_\sigma
\end{align*}
which turns out to be an isomorphism.\footnote{This is a classical  fact of Teichm\"uller therory, see the remark at the very end of \cite[\S4]{Nag} for instance.}
\sk

Conjugating by the isomorphism $\phi_n$, one gets a group  embedding of 
$\mathfrak  S_{n+3}$ into the group ${\bf Bir}_n$ of birational maps in $n$ variables
\begin{align}
\label{Eq:G-sigma}
\mathfrak S_{n+3} & 
\longhookrightarrow
 {\bf Bir}_n\\
\sigma & \longmapsto  G_{\sigma}=\phi_n\circ  \gamma_\sigma \circ  \psi_n 
\nonumber 
\end{align}
 which is easy to describe explicitly: given a permutation $\sigma\in 
\mathfrak S_{n+3}$, for any 
   $x=(x_i)_{i=1}^n\in \mathbf C^n$ we set  
$\xi=(\xi_s)_{s=1}^{n+3}=(0,1,\infty,x_1,\ldots,x_n)$ and 
   $\sigma \xi=(\xi_{\sigma(s)})_{s=1}^{n+3}$.
  It is then immediate to verify that 
   the birational map $G_\sigma: \mathbf C^n\dashrightarrow \mathbf C^n$ is given by 
\begin{align}
\label{Eq:GG} 
G_\sigma (x)=
 \Bigg( 
\frac{\big(\xi_{\sigma(k)}-\xi_{\sigma(1)}\big)\big(\xi_{\sigma(2)}-\xi_{\sigma(3)}\big)}{\big(\xi_{\sigma(k)}-\xi_{\sigma(3)}\big)\big(\xi_{\sigma(2)}-\xi_{\sigma(1)}\big)}
\Bigg)_{k=4}^{n+3}\, .
\end{align}
\sk
\begin{exm} 
1. By way of illustration, let us consider the case of the $(n+3)$-cycle $c=(1\ldots n+3)$. For $\zeta=(0,1,\infty,x_1,\ldots,x_n)$ with $x=(x_1,\ldots,x_n)\in \mathbf C^n$, one has $c\zeta=(x_n,0,1,\infty,x_1,\ldots,x_{n-1})$ hence 
$g_{c\zeta}$ is given by $g_{c \zeta }(y)=(y-x_n)/(x_n (y-1))$ {\rm (}for $y\in \mathbf P^1${\rm )}.
It follows that $G_c$ is given 
 by
\begin{equation*}
G_c(x)=
\left( \, \frac{1}{x_n} \, , \,  \frac{x_1-x_n}{x_n(x_1-1)}\, , \, 
\ldots,  \frac{x_{n-1}-x_n}{x_n(x_{n-1}-1)}\,
\right) \, . 
\end{equation*}\sk\\
2. Some other explicit examples of birational maps $G_\sigma$ are given in Table  \ref{Table=Tr(Msigma)} of Appendix A.
\end{exm}

Formula \eqref{Eq:GG} is of importance for our purpose since at many places in the sequel, 
it will allow us 
 to explicitly compute  pull-backs of abelian relations on $\mathcal M_{0,n+3}$ under automorphisms $\gamma_\sigma$ for $\sigma\in \mathfrak S_{0,n+3}$,  in the rational coordinates $x_i$'s defined by means of \eqref{Eq:Psi-n}.

\subsubsection{\bf The web $\boldsymbol{\mathcal W}_{{0,n+3}}$.} 
\label{SSS:W0n+3}
We now assume that $n\geq 2$. 
In the Introduction, $\boldsymbol{\mathcal W}_{{0,n+3}}$ has been defined as the curvilinear $(n+3)$-web on $\mathcal M_{{0,n+3}}$ whose $i$-th foliation is the fibration  (by $(n+3)$-punctured smooth rational curves) induced by the map  consisting in forgetting the $i$-th point
\begin{equation}
\label{Eq:Forget-map-phi-i}
 \varphi_i: \mathcal M_{{0,n+3}}\longrightarrow \mathcal M_{{0,n+2}}\, .
 \end{equation}
   Here we want to make explicit an affine birational model of $\boldsymbol{\mathcal W}_{{0,n+3}}$.  
\sk 

Let $p_1,\ldots,p_{n+2}$ be $n+2$ fixed points in general position in $\mathbf P^n$. A natural choice for the $p_i$'s is to take the vertices of the standard simplex in $\mathbf P^n$, namely  
\begin{equation}
\label{Eq:points-p-i}
p_i=\big[ \,\delta_i^1:\cdots: \delta_i^{n+2}\,\big]\quad \mbox{for }\; i=1,\ldots,n+2 
\qquad \mbox{ and } \qquad p_{n+2}=\big[\,1:\cdots:1\,\big]\, .
\end{equation}
Given a generic point $p\in \mathbf P^n$, there exists a unique rational normal curve (RNC) of degree $n$ in $\mathbf P^n$ passing through all the $p_i$'s and through $p$. Denoting by $\nu_p: \mathbf P^1\rightarrow \mathbf P^n$ a projective parametrization of this curve, one obtains 
a $(n+3)$-tuple of points $\mu(p)=\big(\nu_p^{-1}(p_1),\ldots, \nu_p^{-1}(p_{n+2}), \nu_p^{-1}(p)\big)\in (\mathbf P^1)^{n+3}$ whose class  modulo ${\rm PGL}_2(\mathbf C)$ only depends on $p$. This gives us a well-defined map  
\begin{align}
\label{Eq:bb}  
\xymatrix@R=0.1cm@C=0.4cm{
\mathbf P^n \ar@{-->}[rr]& & \mathcal M_{0,n+3} \\
p  
\ar@{|->}[rr]& &
 \big[\mu(p)\big]\, .
}
\end{align}
Conversely, given $\boldsymbol{z}=[z_1:\cdots:z_{n+3}]\in \mathcal M_{0,n+3}$, the map 
$v_{\boldsymbol{z}} : \mathbf P^1\rightarrow \mathbf P^n,\, t\mapsto \big[   (z_{n+2}-z_i)/(t-z_i)\big]_{i=1}^{n+1}$ is a bijective parametrization of a RNC of degree $n$ which sends 
$z_i$ to $p_i$ for any $i$ ranging from $1$ to $n+2$. We get that way a rational  map
$\mathcal M_{0,n+3}\dashrightarrow \mathbf P^n,\, \boldsymbol{z}\mapsto v_{\boldsymbol{z}}(z_{n+3})=\big[   (z_{n+2}-z_i)/(z_{n+3}-z_i)\big]_{i=1}^{n+1}$ which is easily verified to be the inverse map of \eqref{Eq:bb}  which, as a result, is proved to be birational. 
\sk

\vspace{-0.4cm}
We will now give an explicit description of the pull-back of $\boldsymbol{\mathcal W}_{0,n+3}$ on $\mathbf P^n$ under  \eqref{Eq:bb}, that we will denote (a bit abusively) using 
 the same notation.  First 
remark that the projection from one  of its points of
a degree $n$ RNC in $\mathbf P^n$,  is a RNC of degree one less (namely $n-1$)  in $\mathbf P^{n-1}$. From this, one deduces easily that for $i=1,\ldots,n+2$,  
what corresponds on $\mathbf P^n$ (modulo  \eqref{Eq:bb}) to the $i$-th forgeful map \eqref{Eq:Forget-map-phi-i} is nothing else but the linear projection $\pi_{p_i}: \mathbf P^n\dashrightarrow \mathbf P^{n-1}$ from $p_i$. Finally, one verifies easily that the leaf of the $(n+3)$-th foliation of $\boldsymbol{\mathcal W}_{0,n+3}$ on $\mathbf P^n$ trough a generic point is precisely the RNC of degree $n$ passing through this point as well as through all the $p_i$'s. \sk 

  We thus get the following geometric description of the pull-back  of  $\boldsymbol{\mathcal W}_{0,n+3}$ under \eqref{Eq:bb}: 
\begin{equation}
\label{Eq:Exceptional-web}
\begin{tabular}{l}
{\it considered on $\mathbf P^n$, the web $\boldsymbol{\mathcal W}_{0,n+3}$ is formed 
 by the 
  families of lines passing through}\\
 {\it      one of the  $p_i$'s (for $i=1,\ldots,n+2$) plus the 
 family of degree $n$ rational normal curves }\\
 {\it   containing all  these points.}
\end{tabular}
\end{equation}  

From this description of $\boldsymbol{\mathcal W}_{0,n+3}$, it is straightforward to verify 
  that in the affine coordinates associated to the embedding $\mathbf C^n\hookrightarrow \mathbf P^n$, $u=(u_i)_{i=1}^n\mapsto [u_1:\cdots:u_n:1]$, this web  admits the rational functions $U_i$ 
   as first integrals, 
 where 
$U_i(u)=(u_1,\ldots,\widehat{u_i},\ldots,u_n)$ for $ i=1,\ldots,n$ and 
\begin{equation}
\label{Eq:U-n+1-n+2-n+3}
U_{n+1}(u)=
\left(  \frac{u_j}{u_n}
\right)_{j=1}^{n-1}
,\qquad 
U_{n+2}(u)=
\left(  \frac{u_j-1}{u_n-1}
\right)_{j=1}^{n-1}
\qquad \mbox{and}\qquad 
U_{n+3}(u)=
\left(  \frac{u_n(u_j-1)}{u_j(u_n-1)}
\right)_{j=1}^{n-1}. \sk
\end{equation}

It is obvious that  $\boldsymbol{\mathcal W}_{0,n+3}$ is invariant (as an unordered web) by the whole automorphism group ${\rm Aut}(\mathcal M_{0,n+3}) \simeq \mathfrak S_{n+3} $ and that 
its action on the set of foliations of this web is isomorphic to the standard action of the symmetric group on $\{1,\ldots,n+3\}$.  

\subsubsection{\bf Combinatorial abelian relations.}
For any distinct elements $i,j\in \{1,\ldots,n+3\}$ (say such that $i<j$), we denote by $\boldsymbol{\mathcal W}_{i,j}$ the $(n+1)$-subweb of $\boldsymbol{\mathcal W}_{0,n+3}$ formed by all its foliations except the $i$-th and the $j$-th ones.  Considering the last paragraph of the preceding subsection, it follows that all the subwebs $\boldsymbol{\mathcal W}_{i,j}$'s are equivalent (up to global automorphisms of $\mathcal M_{0,n+3}$).
\sk

Working with the affine coordinates $u_i$'s, it follows from \eqref{Eq:U-n+1-n+2-n+3} that 
$\boldsymbol{\mathcal W}_{n+2,n+3}$ coincides with the standard quadrilateral web considered in \S\ref{SSS:AR-rank}.  Hence it has rank equal to 1 
({\it cf.}\,\eqref{Eq:QuadrilateralWeb}) and since all the $(n+1)$-subwebs of $\boldsymbol{\mathcal W}_{0,n+3}$  are equivalent, any of them has rank 1 too:  for any $i,j$ with $i< j$, 
$\boldsymbol{\mathcal W}_{i,j}$ carries a nontrivial abelian relation, which will de denoted by $AR_{i,j}$. This AR is unique up to multiplication by a nonzero scalar (hence 
there is some ambiguity in its definition, but that doesn't really matter and therefore we will not pay attention to it).\sk

The $(n+3)(n+2)/2$ abelian relations $AR_{i,j}$'s  are all rational\footnote{By this we mean that  when expressed in the $u_i$'s, the components of these ARs are 
 rational 
  $(n-1)$-forms. } and span  what Damiano calls the subspace of `combinatorial abelian relations' of $\boldsymbol{\mathcal W}_{0,n+3}$, denoted by 
$$
\boldsymbol{AR}_C\big( \, \boldsymbol{\mathcal W}_{0,n+3} \, \big)=\left\langle \, 
AR_{i,j}
\, \big\lvert\, 1\leq i<j\leq n+3\,  
\right\rangle \subset \boldsymbol{AR}\big( \, \boldsymbol{\mathcal W}_{0,n+3} \, \big)\, .
$$

The term `combinatorial' comes from the fact that in their span, the $AR_{i,j}$'s satisfy relations of combinatorial nature which give rise to a matroid (see \S4.1 and \S8.2 in \cite{DThesis}). 
Although this aspect is interesting, we will not consider it here.

\subsubsection{\bf Real locus.}
\label{SSS:Real-Locus}
While most of the considerations in this paper are within the complex framework (which according to us is most natural), it is the real framework that was considered in both \cite{DThesis} and \cite{D}.  Working over $\mathbf R$ requires some subtleties, for instance regarding the domain on which the webs we are working with are considered, which were not very precisely explicited according to us. Hence we discuss briefly this aspect below.\footnote{Note that 
  it has some importance, for instance for  properly defining the $\mathfrak S_{n+3}$-action on  $\boldsymbol{AR}(  \boldsymbol{\mathcal W}_{0,n+3})$ or, as explained in 
  \S\ref{SS:Damiano's-work} (see also \S\ref{S:Euler-AR} further), when performing integration along fibers to obtain generalized dilogarithmic forms \`a la Gelfand and McPherson.} For some recent references, we mention \cite{Brown}, \cite[\S2.3]{FockGoncharov-XInfinity}
 as well as  the recent preprint \cite{AHL}.\mk

  The real moduli space $\mathcal M_{0,n+3}({\mathbf R})$ is the set of $(n+3)$-tuples $(x_1,\ldots,x_{n+3})$ of pairwise distinct points on the real projective line $\mathbf P^1_{\mathbf R}$ (that we identify with a circle $S^1$), modulo the diagonal action of ${\rm PSL}_2(\mathbf R)$. Its is a smooth  manifold of dimension $n$, but with several connected components contrarily to its complex version $\mathcal M_{0,n+3}$.   Clearly, $\mathcal M_{0,n+3}({\mathbf R})$ is invariant by the action of $\mathfrak S_{n+3}$ discussed in \S\ref{SSS:S3-action} and this group acts by real-analytic diffeomorphisms on it. 
\sk 
  
 By definition, the {\it `positive part'}
 $\mathcal M_{0,n+3}^{\,\,  >0}$ of  $\mathcal M_{0,n+3}(\mathbf R)$  is the subset  formed by {\it `positive configurations'} that is configurations coming from  tuples $(x_1,\ldots,x_{n+3})$ 
 positively cyclically ordered on the circle $S^1$ (with respect to a chosen orientation of $S^1$, previously  fixed once for all).  It is a connected component of $\mathcal M_{0,n+3}(\mathbf R)$ which is isomorphic to a ball whose topological closure in 
 a certain compactification 
  of the full real moduli space, is isomorphic (as a stratified real-analytic manifold) to a polytope, the  well-known  $n$-th \href{https://en.wikipedia.org/wiki/Associahedron}{\it `associahedron'} (or {\it `Stasheff polytope'}). \sk

The positive part $\mathcal M_{0,n+3}^{\, \, >0}$ is a connected component of $\mathcal M_{0,n+3}(\mathbf R)$,  from which all the others can be constructed.  
 Indeed, first one verifies that the set of permutations  elements of $\mathfrak S_{n+3}$ 
 letting $\mathcal M_{0,n+3}^{\,\,  >0}$ globally invariant is a  subgroup $D_{0,n+3}\subset \mathfrak S_{n+3}$ isomorphic to the dihedral group of order $2(n+3)$.  Consequently, 
 for each element $\boldsymbol{\sigma}$ of the coset space 
 $\mathfrak K_n=\mathfrak S_{n+3}/D_{0,n+3}$, there exists a  connected component $\mathcal M_{0,n+3}^{\, \, >0} ({\boldsymbol{\sigma}})$ of 
 $\mathcal M_{0,n+3}(\mathbf R)$, defined as the image of $\mathcal M_{0,n+3}^{\, \, >0}$ by any representative of $\boldsymbol{\sigma}$ in $ \mathfrak S_{n+3}$ (it is obviously well-defined): one has 
 $$
 \mathcal M_{0,n+3}^{\, \, >0} ({\boldsymbol{\sigma}})=\sigma\left( 
  \mathcal M_{0,n+3}^{\, \, >0}
 \right) \subset  \mathcal M_{0,n+3}(\mathbf R)\, .
 $$
 When $n$ is fixed and that there is no risk of any ambiguity, we will denote
 $ \mathcal M_{0,n+3}^{\, \, >0} ({\boldsymbol{\sigma}})$  by 
  $\mathcal M({\boldsymbol{\sigma}})$. 
 
For $\boldsymbol{\sigma}$ ranging in $\mathfrak S_{n+3}$, 
one gets  all the connected components of $\mathcal M_{0,n+3}(\mathbf R)$. 
  In other terms, there is  a disjoint union
 \begin{equation}
  \label{Eq:M0n+3>0}
  \mathcal M_{0,n+3}(\mathbf R)
=   \bigsqcup_{ {\boldsymbol{\sigma}} \in \mathfrak K_n}
  \mathcal M_{0,n+3}^{\, \, >0} ({\boldsymbol{\sigma}})
\end{equation}
 where any of  the $\lvert \mathfrak K_n\lvert=(n+2)!/2$ components $ \mathcal M_{0,n+3}^{\, \, >0} ({\boldsymbol{\sigma}})$ is  isomorphic to the positive part $\mathcal M_{0,n+3}^{\, \, >0}$  hence is topologically trivial. \sk
 
 \begin{exm}
 \label{Ex:n=2-on-IR}
 By way of illustration, let us consider the case when $n=2$ in more detail. Let $\mathscr A_2$ be the arrangement of five lines in $\mathbf R^2$ cut out by $xy(x-1)(y-1)(x- y)=0$. 
 Then $F : \mathbf R^2\setminus \mathscr A_2 \rightarrow 
\mathcal M_{0,5}(\mathbf R),\, (x,y)\mapsto [0: x : y:1:\infty]$ is an isomorphism and the 12 connected components of  $\mathcal M_{0,5}(\mathbf R)$ correspond to the 12 regions  $U_L $ with label $L$ ranging from $I$ to $XII$ on Figure \ref{Fig:FFIIGG} below. 
\mk 

\begin{figure}[h]
\begin{center}
\resizebox{3in}{2.5in}{
 \includegraphics{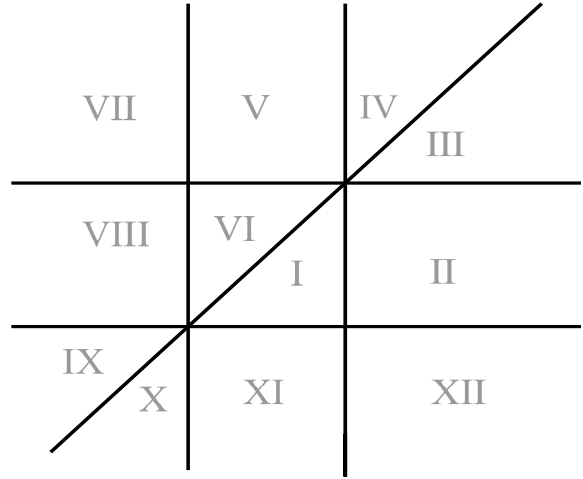}}
 \vspace{-0.3cm}
 \caption[caption]{Affine realization of the real moduli space $\mathcal M_{0,5}(\mathbf R)$ with \\\hspace{\textwidth} ${}^{}$ \hspace{1.4cm} a labeling from I to XII of its connected components.}
\label{Fig:FFIIGG}
\end{center}
\end{figure}

The region $U_I$ is the triangle $\{ (x,y)\in \mathbf R^2\, \big\lvert \, 0<x<y<1\, \}$ and it corresponds to (that is,  $F(U_I)$ is) the positive part $\mathcal M_{0,5}^{\, \, >0}$.  
  Both the cyclic shift $c: p=[p_1,\ldots,p_5]\longmapsto [p_5,p_1,\ldots, p_4]$ and  the dihedral reflection 
$r : [p_1,\ldots,p_5]\longmapsto [p_4,p_3,p_2,p_1,p_5]$ let  $\mathcal M_{0,5}^{\, \, >0}$ invariant and their pull-backs under $F$ 
 are respectively 
the order 5 birational map $C:(x,y)\mapsto \big( 1-y , (1-y)/(1-x)\big)$ and the affine involution $R: (x,y)\mapsto (1-y,1-x)$. The two maps $C$ and $R$  generate a subgroup of ${\bf Bir}_2$  isomorphic to
the automorphism group of a pentagon, that is the  dihedral group of order 10.
\end{exm}

 Being aware of the decomposition \eqref{Eq:M0n+3>0}
makes some of the subtleties arising when studying  the real version of 
$\boldsymbol{\mathcal W}_{0,n+3}$ on $\mathcal M_{0,n+3}(\mathbf R)$ 
more apparent, which was not sufficiently discussed in \cite{D} in our opinion. Indeed, \eqref{Eq:M0n+3>0} indicates that there is no global web in the real setting but several, namely 
one for each $\boldsymbol{\sigma}\in \mathfrak K_n$, denoted by $\boldsymbol{\mathcal W}_{0,n+3}^{\,  \boldsymbol{\sigma}}$,
 which is the one defined by the (restriction of the) forgetful maps 
\eqref{Eq:Forget-map-phi-i} 
on the component $ \mathcal M_{0,n+3}^{\, \, >0} ({\boldsymbol{\sigma}})$.  
 For each such $\boldsymbol{\sigma}$, 
let  $\boldsymbol{AR}(\boldsymbol{\sigma})=
\boldsymbol{AR}(\boldsymbol{\mathcal W}_{0,n+3}^{\,  \boldsymbol{\sigma}})$ be the space of abelian relations of $\boldsymbol{\mathcal W}_{0,n+3}^{\,  \boldsymbol{\sigma}}$
 which are globally defined on the whole definition domain of  this web. Moreover, since   
 this domain is  (homeomorphic to ) a ball, $\boldsymbol{AR}(\boldsymbol{\sigma})$ is naturally identified with the space of ARs of the restriction of 
 $\boldsymbol{\mathcal W}_{0,n+3}$ on any open subset of $ \mathcal M_{0,n+3}^{\, \, >0} ({\boldsymbol{\sigma}})$. \sk 
 
 Since $ \mathcal M_{0,n+3}^{\, \, >0} ({\boldsymbol{\sigma}})$  is stable under the action of 
 a subgroup $D_{0,n+3}({\boldsymbol{\sigma}})$ conjugate to $D_{0,n+3}$, each 
 $\boldsymbol{AR}(\boldsymbol{\sigma})$ carries 
a structure of  $D_{0,n+3}$-representation (well-defined up to conjugations).  But since there is no a priori natural identification between $\boldsymbol{AR}(\boldsymbol{\sigma})$ and
$\boldsymbol{AR}(\boldsymbol{1})$, it is not clear at all whether the 
 structure of $D_{0,n+3}$-module of $\boldsymbol{AR}(\boldsymbol{1})$ extends to a representation of the whole symmetric group.  To put it briefly,  without any supplementary argument, it is not clear how  to make  $\mathfrak S_{n+3}$ act on  a space of ARs ({\it eg.}\,$\boldsymbol{AR}(\boldsymbol{1})$) in order to get a linear representation of this group. 
\sk

But the situation has some subtleties 
even in the complex setting.   In this case indeed,   the ARs  of $\boldsymbol{\mathcal W}_{0,n+3}$ are sections of a local system on 
$\mathcal M_{0,n+3}$ ({\it cf.}\,page \pageref{Page-3}). But since its  topology  is not trivial and because there is no configurations invariant by all elements of $\mathfrak S_{n+3}$, 
one cannot straightaway define an action of $\mathfrak S_{n+3}$ (induced by pull-backs) on a  vector space  constructed from some  ARs of $\mathcal M_{0,n+3}$. 
 \begin{center}
\vspace{-0.3cm}
 $\star$
 \mk 
 \end{center}
   Note that,  in contrast with the whole space
   of abelian relations (whether in a real or complex setting), the situation is quite simpler 
 if one restricts to    the
  subspace of combinatorial ones. Indeed, since all the components of \eqref{Eq:QuadrilateralWeb} are rational functions, all the combinatorial abelian relations $AR_{i,j}$'s are rational hence 
$\mathfrak S_{n+3}$ naturally acts on   $\boldsymbol{AR}_C(\boldsymbol{\mathcal W}_{0,n+3})$. One verifies easily that this representation is defined over $\mathbf Q$ (but we will not use this arithmetic fact in the sequel). 
\sk

In the complex setting, a possibly bigger $\mathfrak S_{n+3}$-representation (actually obtained from the one of the previous paragraph by successive extensions), may be obtained by considering the local system of ARs of $\boldsymbol{\mathcal W}_{0,n+3}$ with unipotent monodromy, denoted by $\boldsymbol{{AR}}_{U}(\boldsymbol{\mathcal W}_{0,n+3})$.  The latter comes with an increasing filtration $ \boldsymbol{{AR}}_{U}^\bullet(\boldsymbol{\mathcal W}_{0,n+3})$ 
whose $0$-th piece is the vector space 
$\boldsymbol{AR}_C(\boldsymbol{\mathcal W}_{0,n+3})$  and where it is 
not difficult  to figure how are defined the pieces of higher degree.\footnote{Very summarily, for any $k\geq 1$, the stalk  $ \boldsymbol{{AR}}_{U,\boldsymbol{z}}^{\, \, \leq k}(\boldsymbol{\mathcal W}_{0,n+3})$ of the $k$-th piece of this filtration at any configuration $\boldsymbol{z}\in \mathcal M_{0,n+3}$ is the space of (germs of) 
abelian relations 
$\boldsymbol{\Omega}$ 
at this point whose analytic continuation  along any loop $\gamma$ based at $\boldsymbol{z}$, denoted by  $\boldsymbol{\Omega}^\gamma$, is such that 
$\boldsymbol{\Omega}^\gamma- \boldsymbol{\Omega}\in  \boldsymbol{{AR}}_{U,\boldsymbol{z}}^{\,   \leq k-1}(\boldsymbol{\mathcal W}_{0,n+3})$. Details are left to the reader.} Then one verifies easily that the associated graded space ${\bf Gr}^k 
\boldsymbol{{AR}}_{U}^\bullet(\boldsymbol{\mathcal W}_{0,n+3})$ is a  $\mathfrak S_{n+3}$-module for any $k\geq 0$ hence one obtains a representation 
\begin{equation}
\label{Eq:Gr}
{\bf Gr}
\boldsymbol{{AR}}_{U}^\bullet(\boldsymbol{\mathcal W}_{0,n+3})
= \oplus_{k\geq 0}  {\bf Gr}^k 
\boldsymbol{{AR}}_{U}^\bullet(\boldsymbol{\mathcal W}_{0,n+3})
\end{equation}
which is a priori bigger than the representation $\boldsymbol{AR}_C(\boldsymbol{\mathcal W}_{0,n+3})$ discussed in the preceding paragraph since obviously the latter coincides with the  $0$-th piece of \eqref{Eq:Gr}. 

\begin{exm} 
\label{Ex:Gr-n=2}
Let us again consider the web $\boldsymbol{\mathcal W}_{0,5}$, 
but this time within the complex setting (compare with Example \ref{Ex:n=2-on-IR} just above).  In this case, up to the natural identification of 
$\boldsymbol{\mathcal W}_{0,5}$ with Bol's web $\boldsymbol{\mathcal B}$ already mentioned above, 
$\boldsymbol{{AR}}_{C}(\boldsymbol{\mathcal W}_{0,5})$ corresponds to 
what was noted by $\boldsymbol{{AR}}_{Log}(\boldsymbol{\mathcal B})$ in the Introduction and the filtration $\boldsymbol{{AR}}_{U}^\bullet(\boldsymbol{\mathcal W}_{0,5})$ has only two pieces, with $\boldsymbol{{AR}}_{U}^{\, \leq 1}(\boldsymbol{\mathcal W}_{0,5})$ being obtained by adjoining the AR corresponding to the Abel's identity $(\boldsymbol{{\mathcal A}b})$ to the combinatorial ARs. 
\sk

\vspace{-0.35cm}
In the case under scrutiny, we then have $
{\bf Gr}
\boldsymbol{{AR}}_{U}^\bullet(\boldsymbol{\mathcal W}_{0,5})=
\boldsymbol{{AR}}_{C}(\boldsymbol{\mathcal W}_{0,5})\oplus \langle \boldsymbol{{\mathcal A}b} \rangle $ which shows that the decomposition in direct sum  
$ \boldsymbol{{AR}}(\boldsymbol{\mathcal B})=
\boldsymbol{{AR}}_{Log}(\boldsymbol{\mathcal B})\oplus \langle \boldsymbol{{\mathcal A}b} \rangle $
mentioned in  \eqref{Page:Properties-Bol's-Web} can actually be interpreted as a decomposition of $\mathfrak S_5$-representations.
\end{exm}

\subsection{Room-Burau's linear systems and consequences regarding the linearizability of the ${\mathcal W}_{0,n+3}$'s.}
\label{SS:RoomBurau}
Here we recall some nice constructions/results of classical algebraic geometry.
 Then we explain some rather immediate 
consequences of them for the webs ${\mathcal W}_{0,n+3}$'s, in what concerns their realizability as webs by rational curves of low degree.

Albeit different in certain aspects, the cases  when $n$ is even or $n$ is odd share some similarities as well, henceforth both  will be discussed alongside. As before, the material below already appeared in several references, hence no proof is given. Classical references are: the paper by Room \cite{Room1934}, the content of which has been generalized in arbitrary dimension in his book \cite{RoomBook} and Burau's article \cite{Burau}.  For  much more recent papers (without any reference to Room's or Burau's classical works), see \cite{Kumar2003} and 
\cite{BM}.

\subsubsection{}
\label{SSS:Ln-varphin}
  We continue to use the notation introduced before:  $p_1,\ldots,p_{n+2}$ are $n+2$ points in general position in $\mathbf P^n$ with $n\geq 2$, 
   etc.  
Our goal is to recall the definition of a linear system $\lvert \boldsymbol{\mathcal L}_n\lvert$ on $\mathbf P^n$, first considered by Room and Burau for $n$ arbitrary, the associated rational map of which will allow to give a nice projective model for ${\mathcal W}_{0,n+3}$. 
\sk

We set 
$P= \sum_{i=1}^{n+2} p_i $ (as a $0$-cycle of degree $n+2$ on $\mathbf P^n$). The linear system $ \boldsymbol{\mathcal L}_n$  we are interested in is defined as follows, according to the parity of $n$: one has
\begin{align*}
 \boldsymbol{\mathcal L}_n =& \bigg\lvert 
\, d_n H-\nu_n
 P  \, 
\bigg\lvert=\left\{ 
\begin{tabular}{l}
hypersurfaces of degree $d_n$ in $\mathbf P^n$ vanishing\\
 at the order $\nu_n$ at $p_i$ for all $i=1,\ldots,n+2$
\end{tabular}
\right\}\, \subset \big\lvert 
\mathcal O_{\mathbf P^n}\big(d_n\big)
\big\lvert \, , 
\end{align*}
with 
$$
\big(\,  d_n\, , \, \nu_n\, \big)=\begin{cases}\,\,
\big(n+1,n-1\big) \hspace{1.6cm} \mbox{if } n \mbox{ is even}\, ;\vspace{0.15cm}\\
\,\, \Big(  {(n+1)}/{2}  \, ,  {(n-1)}/{2}\Big)\hspace{0.3cm} \mbox{if } n \mbox{ is odd}\, .
\end{cases}
$$


We denote by $N_n$ (or just by $N$ when there is no ambiguity) the projective dimension of $\lvert \boldsymbol{\mathcal L}_n\lvert$ 
 and by 
$\varphi_n$ the associated rational map: 
$$\varphi_n = \varphi_{ \boldsymbol{\mathcal L}_n}: \, \mathbf P^n \dashrightarrow \mathbf P^N \simeq \big\lvert \mathcal L_n\big\lvert^\vee\, . $$
Finally, we set 
$$V_n={\rm Im}\big(\varphi_n\big) =\varphi_n\big(\mathbf P^n\big)\subset \mathbf P^N\, .$$
 It is a non degenerate projective variety in $\mathbf P^N$ which satisfies several nice properties, that we have all gathered in the following result 
(recall that $\delta_n$ is 1 if $n$ is odd, and  $2$ if $n$ is even).


\begin{prop} 
\label{Pr:Vn}
The following assertions hold true for any  $n\geq 2$. 
\begin{enumerate}
\item[{\bf 1.}]  The map $\varphi_n: \mathbf P^n \dashrightarrow V_n$ is birational  and its image $V_n$ is isomorphic to $(\mathbf P^1)^{n+3}\hspace{-0.06cm}/\hspace{-0.06cm}/{\rm PGL}_2(\mathbf C)$ (the GIT quotient  of $n+3$ copies of
 the projective line
  by the diagonal action of ${\rm PGL}_2(\mathbf C)${\rm )}.
\mk 
\vspace{-0.4cm}
\item[{\bf 2.}]  For any foliation $\mathcal F$ of $\boldsymbol{\mathcal W}_{0,n+3}$,  its image $(\varphi_n)_*(\mathcal F) $ by $\varphi_n$ is a curvilinear foliation on $V_n$, whose general leaf is an irreducible rational curve of degree $\delta_n$. These are exactly the covering families of $V_n$ by lines (for $n$ odd) or by conics (for $n$ even). 
\mk 
\item[{\bf 3.}]  For any $n\geq 2$, the automorphism group 
${\rm Aut}(V_n)$ of $V_n$ identifies with $\mathfrak S_{n+3}$.
\mk 
\item[{\bf 4.}] When $n$ is odd,  one has 
$$N=N_n={n+1 \choose (n+1)/2}-{n+1\choose (n+1)/2-2}-1\, $$ 
and  the following statements are satisfied: 
\begin{itemize}
\item[$(i).$] $V_n$ is singular, with singular set consisting of ${ n+2 \choose (n+1)/2}$ isolated singular points.
\item[$(ii).$] 
${\rm Pic}(V_n)$ is free of rank 1. 
Moreover, $K_{V_n}\simeq \mathcal O_{V_n}(-1)$ hence $V_n$ is a Fano variety.
\end{itemize}
\mk 
\item[{\bf 5.}] When $n$ is even one has 
\begin{equation}
\label{N:n-even}
N=N_n=\sum_{s=0}^{ \lfloor (n+1)/3\rfloor}
(-1)^s {n+2\choose s }{2n-3s+1  \choose n }-1
\end{equation}
and  the following statements are satisfied:
\begin{itemize}
\item[$(i).$] $V_n$ is smooth; 
\item[$(ii).$] one has ${\rm Pic}(V_n)\simeq \mathbf Z^{n+3}$. Moreover $K_{V_n}\simeq \mathcal O_{V_n}(-2)$ hence $V_n$ is a Fano manifold.
\end{itemize} 
\bk
\end{enumerate}
\end{prop}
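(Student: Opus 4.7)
The plan is to identify $V_n$ with the GIT quotient $M_n = (\mathbf{P}^1)^{n+3}\!/\!/\mathrm{PGL}_2(\mathbf{C})$ via the Kapranov-style birational map \eqref{Eq:bb}, and then read off every claim of the proposition from the geometry of $M_n$ together with straightforward intersection-theoretic computations on $\mathbf{P}^n$. The key technical input is that the linear system $|\boldsymbol{\mathcal L}_n|$ is (birationally) a complete linear system pulled back from $M_n$. To see this I would blow up $\mathbf{P}^n$ at $p_1,\ldots,p_{n+2}$ to obtain $\widetilde{\mathbf{P}}^n$ with exceptional divisors $E_i$, so that $|\boldsymbol{\mathcal L}_n|$ corresponds to the complete linear system $|d_n\widetilde H - \nu_n\sum E_i|$, and verify (using the dominance of \eqref{Eq:bb}) that this linear system realizes the rational map to $M_n$. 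This identification gives Part {\bf 1} directly.

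For Part {\bf 2}, I would compute intersection numbers with the strict transforms of the leaves. A generic line $\ell$ through $p_i$ meets the base locus of $|\boldsymbol{\mathcal L}_n|$ only at $p_i$ with multiplicity $\nu_n$, so the residual degree is $d_n-\nu_n = \delta_n$, and hence $\varphi_n(\ell)$ is a rational curve of degree $\delta_n$ in $V_n$. Analogously, for the $(n+3)$-rd foliation, a rational normal curve $C$ of degree $n$ through all $n+2$ base points meets a generic element of $|\boldsymbol{\mathcal L}_n|$ in $n d_n-(n+2)\nu_n$ residual points, and a direct check gives $nd_n-(n+2)\nu_n = \delta_n$ for both parities of $n$; thus $(\varphi_n)_\ast(\mathcal F_{n+3})$ is also a covering family of degree $\delta_n$ rational curves. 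That these exhaust the covering families by lines (resp.\ conics) follows from the dictionary with $M_n$: under the isomorphism $V_n \simeq M_n$ these are precisely the $n+3$ canonical forgetful fibrations. For Part {\bf 3}, the inclusion $\mathfrak S_{n+3}\hookrightarrow \mathrm{Aut}(V_n)$ is immediate from the diagonal action on $(\mathbf{P}^1)^{n+3}$, and the reverse inclusion follows from the classical fact that $\mathrm{Out}(\mathcal M_{0,n+3}) = \mathfrak S_{n+3}$ together with the observation that every biregular automorphism of $V_n$ restricts to a biregular automorphism of the dense open subset $\mathcal M_{0,n+3} \subset M_n \simeq V_n$.

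The hard part will be Parts {\bf 4} and {\bf 5}, which require explicit control on cohomology of the ideal sheaf $\mathcal I_P^{\nu_n}$. For the dimension $N=N_n$, on $\widetilde{\mathbf{P}}^n$ one has $H^0\big(d_n \widetilde H - \nu_n \sum E_i\big) \simeq H^0\big(\mathcal O_{\mathbf{P}^n}(d_n)\otimes \mathcal I_P^{\nu_n}\big)$, and I would resolve $\mathcal I_P^{\nu_n}$ by a Koszul-type complex reflecting the syzygies among the $n+2$ fat base points: for $n$ odd the $(n+2)$ fat points impose independent conditions of expected codimension, giving the clean binomial expression; for $n$ even one needs one extra step because higher-order syzygies contribute, which is the origin of the alternating sum \eqref{N:n-even}. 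For the Picard group and canonical class, on the blow-up I would compute $K_{\widetilde{\mathbf{P}}^n} = -(n+1)\widetilde H + (n-1)\sum E_i$ and match it against $\boldsymbol{\mathcal L}_n$ to get $K_{V_n}\simeq \mathcal O_{V_n}(-1)$ (for $n$ odd) or $\mathcal O_{V_n}(-2)$ (for $n$ even), which in particular forces $V_n$ to be Fano. Finally, smoothness versus singularity is governed by GIT stability: when $n$ is even, $n+3$ is odd and every semistable configuration in $(\mathbf{P}^1)^{n+3}$ is automatically stable (no mass-balanced partition exists), so $M_n=V_n$ is smooth and $\mathrm{Pic}(V_n)=\mathbf{Z}^{n+3}$ is generated by the boundary divisors of the forgetful fibrations; when $n$ is odd, strictly semistable configurations correspond to partitions of $\{1,\ldots,n+3\}$ into two subsets of equal size $(n+3)/2$, and each gives rise to exactly one isolated node of $V_n$, accounting for the $\binom{n+2}{(n+1)/2}$ singular points and for $\mathrm{Pic}(V_n)=\mathbf{Z}$ (the partitioning symmetry collapses all boundary classes into a single one).
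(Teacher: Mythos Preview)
The paper does not prove this proposition: it explicitly states at the start of \S2 that ``the material presented in this section is fairly standard and well known or well referenced in the literature. For this reason, no proof is given below, it seemed preferable to give specific references instead.'' After the statement, the paper simply catalogs where each assertion can be found: birationality in \cite{Kumar2003} (Theorems~3.1 and~3.4), the dimension formulas in \cite{RoomBook}, \cite{Burau} and \cite{Kumar2003}, and the remaining claims (GIT identification, automorphism group, singularities, Picard group, canonical class) in \cite{BM}.

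Your outline therefore does more than the paper itself, and it is essentially a sketch of the arguments in those references---particularly the GIT approach of \cite{BM} and \cite{Kumar2003}. The intersection-number computations for Part~{\bf 2} are correct (indeed $d_n-\nu_n=\delta_n$ and $nd_n-(n+2)\nu_n=\delta_n$ in both parities) and constitute exactly what the paper calls ``rather immediate once it is known that $\varphi_n$ is birational onto its image.'' Two points deserve more care. For Part~{\bf 3} with $n$ even, $V_n$ is smooth, so you cannot characterize $\mathcal M_{0,n+3}\subset V_n$ as the regular locus; you need another argument to show that any automorphism preserves the boundary (e.g., via the Mori cone or the structure of $\mathrm{Pic}(V_n)$, which is how \cite{BM} proceeds). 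For the Picard groups, your one-line justifications are too loose: the rank $n+3$ for $n$ even really comes from $V_n$ being (isomorphic to) the blow-up of $\mathbf P^n$ at $n+2$ points, not from ``boundary divisors of the forgetful fibrations''; and the rank-$1$ statement for $n$ odd is a genuine result about the class group of the singular GIT quotient (see \cite{BM}) that does not follow from a symmetry argument alone.
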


The results above are quite nice generalizations for $n$ even and odd respectively, of some  well-known properties of  the del Pezzo quintic surface in $\mathbf P^5$ (case $n=2$) and of Segre's cubic hypersurface $\boldsymbol{S}$ in $\mathbf P^4$ (case $n=3$, discussed in more details \S\ref{SS:Segre's-cubic} further).   
If we will only use the first two points of this proposition in this article, we have mentioned the others  to show that the $V_n$'s form an interesting family of Fano varieties generalizing very classical examples, 
 which should be studied further in our opinion. 
 Focusing especially on the case $n=5$, we will briefly come back  on this  at the end in \S\ref{SS:Case-n=5}.\sk

 In order to help the reader who might be interested in the statements above, 
let us indicate quickly where these can be found in the existing (classical or more recent) literature.  First, the  case $n=5$  is studied by Room  in \cite{Room1934} by classical projective methods. For $n$ arbitrary, the fact 
that $\varphi_n$ is birational onto its image is not explicitly stated in \cite{RoomBook} or \cite{Burau} but we think that it is because this fact was more or less obvious to these authors. 
Another classical reference for the case when $n$ is odd is \cite{Coble}.
For a modern treatment, see Theorem 3.1 (for $n$ odd) and Theorem 3.4 (for all cases) in \cite{Kumar2003}.  Point {\it 2.} is rather immediate once it is known that $\varphi_n$ is birational onto its image. 
Regarding the determination of the dimension $N$ as a function of $n$, see \cite[p.\,363]{RoomBook}, \cite[\S4.a]{Burau} and \cite[\S3.1]{Kumar2003} for $n$ odd. 
A formula for $N$ in the case when $n$ is even is a bit more involved and can be found in \cite[\S4.b]{Burau} or in  \cite[\S3.3]{Kumar2003}.
 The other mentioned statements, namely that $V_n$ coincides with $(\mathbf P^1)^{n+3}\hspace{-0.06cm}/\hspace{-0.06cm}/{\rm PGL}_2(\mathbf C)$, that ${\rm Aut}(V_n)$ is naturally isomorphic to $ \mathfrak S_{n+3}$, or the description of the singularities, of the Picard group and of the  canonical class of $V_n$,  all have been obtained quite recently,  in \cite{BM}.

\subsubsection{}
\label{SSS:linearizability}
From the first two points of Proposition \ref{Pr:Vn}, one imediately deduces the 
\begin{cor} 
\label{Cor:Linearizable}
{\bf 1.} For any $n\geq 2$, the push-forward $\boldsymbol{W}_{0,n+3}=(\varphi_n)_*\big(\boldsymbol{\mathcal W}_{0,n+3}\big)$  is a web on $V_n$ which is  equivalent to $\boldsymbol{\mathcal W}_{0,n+3}$ and  whose leaves are rational curves of degree $\delta_n$.\sk 
 
{\bf 2.} In particular, $\boldsymbol{\mathcal W}_{0,n+3}$ is linearizable when $n$ is odd.
\end{cor}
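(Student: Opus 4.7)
The plan is to extract the corollary as an essentially immediate consequence of Proposition \ref{Pr:Vn}, so the work reduces to verifying that its points 1 and 2 deliver the two assertions cleanly.

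For the first assertion, I would invoke point 1 of Proposition \ref{Pr:Vn}: since $\varphi_n$ is birational onto its image $V_n$, it restricts to an analytic isomorphism between Zariski-open subsets of $\mathbf P^n$ and $V_n$.  By the very definition of equivalence of webs recalled in \S\ref{SSS:webs}, pushing forward $\boldsymbol{\mathcal W}_{0,n+3}$ under this local isomorphism yields a web on $V_n$ equivalent to $\boldsymbol{\mathcal W}_{0,n+3}$.  That the leaves of $\boldsymbol{W}_{0,n+3}=(\varphi_n)_*(\boldsymbol{\mathcal W}_{0,n+3})$ are rational curves of degree $\delta_n$ is then precisely the content of point 2 of the proposition.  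This is all that is needed for part~1.

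For part 2, assume $n$ is odd, so that $\delta_n=1$ and every leaf of $\boldsymbol{W}_{0,n+3}$ is a projective line contained in $V_n\subset\mathbf P^{N_n}$.  Since $V_n$ is $n$-dimensional and, by point 4(i) of the proposition, smooth away from finitely many isolated points, I would pick a sufficiently general linear projection $\pi:\mathbf P^{N_n}\dashrightarrow\mathbf P^n$ whose restriction to $V_n$ is a local isomorphism near a generic smooth point.  Since a general linear projection sends lines to lines, the composition $\pi\circ\varphi_n:\mathbf P^n\dashrightarrow\mathbf P^n$ is a birational transformation carrying $\boldsymbol{\mathcal W}_{0,n+3}$ onto a web on an open subset of $\mathbf P^n$ all of whose leaves are contained in straight lines.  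This is exactly the statement that $\boldsymbol{\mathcal W}_{0,n+3}$ is linearizable.

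The argument is routine once Proposition \ref{Pr:Vn} is granted; the only step that requires more than a mere citation is the reduction, in the odd case, from the embedding $V_n\subset\mathbf P^{N_n}$ (where $N_n>n$ in general) to an ambient $\mathbf P^n$ of the right dimension, handled by the generic linear projection argument above.  The even case is not addressed by the corollary, since then $\delta_n=2$ and the push-forward web is by conics, which is not directly sufficient to conclude linearizability.
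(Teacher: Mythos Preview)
Your argument is correct and matches the paper's own view: the text introduces the corollary with ``From the first two points of Proposition \ref{Pr:Vn}, one immediately deduces the\ldots'' and gives no further proof. Your linear-projection step for part~2 is exactly the device the paper uses elsewhere (see the proof of Proposition~\ref{P:LW-X}) to pass from a web by lines on an $n$-fold in a larger projective space to a linear web in $\mathbf P^n$, so you have filled in precisely the detail the paper leaves implicit. A minor remark: invoking point~4(i) (isolated singularities) is harmless but unnecessary, since any variety is smooth at a generic point.
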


The second point contradicts a result claimed by Damiano, namely that 
$\boldsymbol{\mathcal W}_{0,n+3}$ is not linearizable for any $n\geq 2$,  
which is one of the points stated in  
 Proposition 1.6 of \cite{D}.  
 But not even an indication of proof appears in the whole paper,  except in the classical case $n=2$  for which  standard references are given (namely \S29, \S30 and \S31 of \cite{BB}).  More details about the reasoning leading Damiano to state this 
can be found in his thesis. Since realizing that $\boldsymbol{\mathcal W}_{0,6}$ actually is linearizable has been the departure point of the work leading to the present paper, we believe it is worth explaining precisely where Damiano's reasoning is not correct  according to our understanding.\sk 

Actually, only a few lines are devoted in 
 \cite{DThesis}
to  establish that 
$\boldsymbol{\mathcal W}_{0,n+3}$ is not linearizable for any $n$ and can be found pages 20 and 21 of Damiano's dissertation. His reasoning is by {\it reductio ad absurdum} by means of  an induction on $n$ and  can be decomposed as follows:  
\begin{enumerate}
\item[{\it 1.}]  {\it If $\boldsymbol{\mathcal W}_{0,n+3}$ were linearizable, 
it would admit a model $\boldsymbol{\mathfrak W}_n$ in $\mathbf R^n$ with linear leaves;}
\sk 
\item[{\it 2.}] {\it Then 
the $(n+2)$-web $\boldsymbol{\mathfrak W}_n'$ in $H\simeq \mathbf R^{n-1}$ obtained by 
projecting $\boldsymbol{\mathfrak W}_n$ onto a hyperplane $H$ transverse to one of the foliations (and then disregarding this foliation)  would also be linearizable;}\sk 
\item[{\it 3.}] {\it But $\boldsymbol{\mathfrak W}_n'$ is equivalent to $\boldsymbol{\mathcal W}_{0,n+2}$ (case $n-1$);}
\sk
\item[{\it 4.}] {\it Hence assuming that $\boldsymbol{\mathcal W}_{0,n+3}$ is linearizable would allow to deduce by induction that $\boldsymbol{\mathcal W}_{0,5}$ is linearizable as well, but this is not the case.}\mk
\end{enumerate}

%
%
%

The idea for arguing by recurrence on $n$ (step {\it 3.}) is justified by the following fact which is easy to prove (details are left to the reader): if  $\pi_{{\mathcal F}_i}$ stands for the map corresponding to quotienting by the $i$-th foliation $\mathcal F_i$ of $\boldsymbol{\mathcal W}_{0,n+3}$, then $(\pi_{{\mathcal F}_i})_*(\boldsymbol{\mathcal W}_{0,n+3})=\big( 
(\pi_{{\mathcal F}_i})_*({\mathcal F}_j)
\big)_{j\neq i}$  is a $(n+2)$-web in dimension $n-1$ which is equivalent to 
$\boldsymbol{\mathcal W}_{0,n+2}$. The problem in the reasoning above lies in the second step: the projection onto a hyperplane $H$ which is considered therein is taken for a model of a  quotient map  with respect to a foliation $\mathfrak F_i$ of the linear 
model $\boldsymbol{\mathfrak W}_n$ of $\boldsymbol{\mathcal W}_{0,n+3}$. But for that to be the case, it is necessary that this foliation be precisely that of lines through a point (possibly at infinity) and it is not obvious at all that $\boldsymbol{\mathcal W}_{0,n+3}$ admits  (even locally) a linear model in $\mathbf R^n$ whose (at least)  one of the foliations enjoys this property. And it turns out that it is not the case for instance when $n=3$ since  $\boldsymbol{\mathcal W}_{0,6}$ is linearizable (according to 
Corollary \ref{Cor:Linearizable}) whereas $\boldsymbol{\mathcal W}_{0,5}$ is not. 
\sk

We believe that what might have been the source of the mistake 
discussed in the previous paragraph is the model \eqref{Eq:Exceptional-web} of $\boldsymbol{\mathcal W}_{0,n+3}$ in $\mathbf P^n$: at the exception of the last one, all its foliations are formed by the lines going through one of the $p_i$'s hence for this model, quotienting by such a foliation (which in this case makes sense even globally and corresponds to the $i$-th forgeful map \eqref{Eq:Forget-map-phi-i}) is indeed given by the linear projection from the corresponding $p_i$ and onto a hyperplane. But of course, this projection does not linearize the last foliation since the generic leaf of the latter is a RNC of degree $n$.  

All our remarks above show that, although it is a very appealing approach,  one has to be careful when trying to investigate the 
webs  $\boldsymbol{\mathcal W}_{0,n+3}$'s inductively by quotienting along one of its foliations.
\mk 

A general idea that emerges from the results of the present article 
is that the $\boldsymbol{\mathcal W}_{0,n+3}$'s 
seem to form two distinct classes of webs according to the parity of $n$, and those 
belonging to the same class seem to share most of their properties. For this reason, the validity of the following conjecture (already mentioned in \cite{Burau}) seems more than plausible: 
%

\begin{conjecture}[Burau]  For any $n\geq 2$ even, the web 
$\boldsymbol{\mathcal W}_{0,n+3}$ is not linearizable.
\end{conjecture}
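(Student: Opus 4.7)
The plan has two stages: first, an elementary diophantine argument ruling out any rational linearization; second, an analytic rigidity argument upgrading this to exclude local analytic linearizations.

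For the first stage, I would work with the explicit projective model \eqref{Eq:Exceptional-web} of $\boldsymbol{\mathcal W}_{0,n+3}$. Suppose $\varphi : \mathbf P^n \dashrightarrow V \subset \mathbf P^M$ is a rational map, birational onto its image, which transforms this web into a web of lines on $V$. Let $|\mathcal L|$ denote the pull-back of the hyperplane linear system: $|\mathcal L| \subset |\mathcal O_{\mathbf P^n}(d)|$ for some degree $d$, with base multiplicities $\nu_i$ at the $p_i$'s. The fact that $\varphi$ cannot be regular at any $p_i$ (otherwise, preserving a whole pencil of lines through $p_i$ would force $\varphi$ to be locally projective near $p_i$, contradicting that the RNC foliation, whose leaves have degree $n\geq 2$, becomes linear) gives $\nu_i \geq 1$. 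A generic line $\ell$ of the $i$-th pencil maps under $\varphi$ birationally onto a line of $V$, so the moving part of $|\mathcal L|_\ell$ must have degree one, yielding $d - \nu_i = 1$, i.e.\ $\nu_i = d-1$ for all $i=1,\ldots,n+2$. Applying the same birationality on a generic rational normal curve $C$ of degree $n$ through all the $p_i$'s gives
\[
\deg\,\text{moving}(|\mathcal L|_C) \,=\, dn - \sum_{i=1}^{n+2}\nu_i \,=\, dn - (d-1)(n+2) \,=\, n+2-2d \,=\, 1,
\]
forcing $d = (n+1)/2$. When $n$ is even this is not an integer, so no such rational linearization can exist. (Reassuringly, for $n$ odd this equation is satisfied by $(d,\nu)=((n+1)/2,(n-1)/2)$, precisely the Room--Burau linear system of Proposition~\ref{Pr:Vn}.)

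For the second stage, one needs to upgrade this rigidity to the nonexistence of any local analytic linearization. The natural approach is projective rigidity: if $\psi$ is a local analytic diffeomorphism from $(\mathbf P^n,x_0)$ into a projective variety such that each of several generic pencils of lines maps to pencils of lines, then $\psi$ should be the restriction of a rational map, after which Stage 1 applies. Classical results of this kind are known in dimension two --- they underlie Bol's proof of non-linearizability of $\boldsymbol{\mathcal B}$ --- but the extension to arbitrary $n$, in the presence of the $n+2$ pencil foliations of \eqref{Eq:Exceptional-web} plus the RNC foliation, is delicate. An alternative is to exhibit a direct differential obstruction: compute an Akivis-type curvature invariant of a well-chosen 3-subweb of $\boldsymbol{\mathcal W}_{0,n+3}$ (e.g.\ two pencils and the RNC foliation) and verify it is nonzero precisely when $n$ is even.

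The principal obstacle is this analytic upgrade. The diophantine argument of Stage 1 is completely explicit and rigid, but analytic linearizations a priori enjoy far more freedom than rational ones. Establishing a sufficiently strong projective rigidity theorem for local diffeomorphisms of $\mathbf P^n$ preserving many pencils of lines, or alternatively carrying out a full curvature computation for curvilinear webs in dimension $n\geq 2$, requires technical input going beyond the elementary constructions used elsewhere in this paper. A third, less conceptual route would be an induction on $n$ reducing to the base case $n=2$, but as the paper notes (see \S\ref{SSS:linearizability}), naive inductive schemes via quotienting by a single foliation are precisely what led to the flaw in Damiano's argument, so any such reduction must handle the RNC foliation with particular care.
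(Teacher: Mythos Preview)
The paper does \emph{not} prove this statement: it is explicitly presented as an open conjecture. Immediately after stating it the author writes that beyond the classical case $n=2$ ``no progress has been done regarding this conjecture'' and ``we confess having no idea how to handle it, even in the special case $n=4$.'' So there is no paper proof to compare against; your proposal is an attempt at a genuinely open problem.

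On its merits: your Stage~1 diophantine computation is appealing and does recover the Room--Burau parameters $(d,\nu)=((n+1)/2,(n-1)/2)$ in the odd case, but it has a real gap. The equalities $d-\nu_i=1$ and $dn-\sum\nu_i=1$ presuppose that the base locus of $|\mathcal L|$ is supported exactly on $\{p_1,\ldots,p_{n+2}\}$; in general there may be further base points (or positive-dimensional base components) met by a generic line through $p_i$ or by a generic RNC, which would turn your equalities into inequalities and destroy the parity obstruction. Your argument that $\varphi$ must be singular at each $p_i$ is also shakier than you suggest: the claim that a local biholomorphism sending a single pencil of lines to a pencil of lines must be projective is not a standard result (the classical fundamental theorem needs all lines, or at least an open set of them). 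One can instead get $\nu_i\geq 1$ a posteriori from the constraints themselves, but then the base-locus issue above remains.

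As you yourself acknowledge, Stage~2 is the heart of the matter and you do not carry it out. The paper's discussion in \S\ref{SSS:linearizability} makes clear that exactly this kind of local-to-global rigidity is subtle here --- Damiano's mistake came from assuming too much projective structure on a putative linearization --- and neither of the two routes you sketch (a general projective rigidity theorem for maps preserving several pencils, or a curvature computation for curvilinear webs in dimension $n$) is available in the literature. So as written this is an outline with substantial missing pieces, not a proof.
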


That $\boldsymbol{\mathcal W}_{0,5}$ is not linearizable is very classical and goes back to the early beginning of web geometry ({\it cf.}\,\cite[p.\,262]{BB}). Since then, no progress has been done regarding this conjecture. We confess having no idea how to handle it, even in the special case $n=4$.

\begin{rem} 
As we finished writing this article, we realized that $\boldsymbol{\mathcal W}_{0,6}$ is linearizable is explicitly mentioned in the booklet on webs  \cite{Blaschke1955} published by Blaschke in 1955, see  the last paragraph of \S54 in it. It is unfortunate that Damiano missed this reference as well as Burau's paper \cite{Burau}.
\end{rem}


\section{\bf A thorough study of the case $n=3$.}
\label{S:n=3}
Our goal in this section is to study in depth the web $\boldsymbol{\mathcal W}_{0,6}$ 
by showing it can be seen as a particular case of a quite large class of interesting curvilinear webs coming from algebraic geometry. \sk

\vspace{-0.4cm}
We first recall below some well-known facts about lines on cubic threefolds and then discuss how these facts are interpreted in terms of webs. The study of the Fano surface of lines in a cubic hypersurface  $X$ of 
$\mathbf P^4$ is  a very nice and nowadays classical part of algebraic geometry when $X$ is smooth.  In subsection \S\ref{SS:Cubic-Hypersurfaces} below, we first recall  some points of this theory  and explain their consequences for web geometry. Most of these results have been extended to the case of singular cubic hypersurfaces, especially nodal cubics.  But as far as we are aware of, only the case of cubics with only a few nodes (typically one node) has been concretely considered in the literature.\footnote{However we would not be surprised 
if  all nodal cases, from zero (smooth) to ten nodes (Segre's cubic) were 
considered as fully understood by experts.}
For this reason, we discuss in detail the case of Segre's cubic (which carries 10 nodes, which is the maximal number of nodes that a hypercubic of $\mathbf P^4$ can have) in a separate subsection, namely in \S\ref{SS:Segre's-cubic}.

\subsection{Webs by lines on smooth cubic hypersurfaces of $\mathbf P^4$}
\label{SS:Cubic-Hypersurfaces}
The theory of lines on smooth cubic threefolds has been  considered in 
many papers. 
In addition of Fano's original paper \cite{Fano}, 
we mention \cite{Gherardelli}, 
\cite{BombieriSwinnerton-Dyer}, 
\cite{ClemensGriffiths},  
\cite{Tyurin}, 
\cite{Murre} 
and 
\cite{AltmanKleiman}. 
Among these, our main references are 
 \cite{ClemensGriffiths} and \cite{AltmanKleiman},  to which the reader may refer for justifications of the assertions below.
\bk

Here $X$ stands for a smooth cubic hypersurface in $\mathbf P^4$. The underlying set of its Fano scheme $F_1(X)$, denoted below just by $F$ to simplify, is 
the subset of the grassmannian variety of lines in the ambiant 
$\mathbf P^4$
 formed by the lines contained in $X$: one has 
$$
F=F_1(X)=\Big\{ \hspace{0.15cm}  \ell \in G_1\big(\mathbf P^4\big) \hspace{0.15cm}  \big\lvert \hspace{0.15cm}  \ell \subset X \hspace{0.15cm}  \Big\}\, . 
$$
It is not difficult to exhibit polynomial equations cutting $F$ in affine charts of $G_1(\mathbf P^4)$, which shows that $F$ is naturally an algebraic subvariety of this grassmannian ({\it cf.}\,\cite[(1.14)]{AltmanKleiman}).  Defining the associated incidence variety as the 
 algebraic subvariety $T
 =\big\{\, \big( x,\ell \big) \in X\times F \, \big\lvert \, x\in \ell\, 
\big\}$ of $X\times F$, both $X$ and $F$ naturally fit into the following incidence diagram: 
\begin{equation}
\label{Eq:Incid}
\xymatrix@R=0.4cm@C=0.4cm{
&  \ar@{->}[dl]_p \ar@{->}[dr]^q T
&   \\
X & & F
}
\end{equation}
where $p$ and $q$ stand   respectively for the restrictions to $T$ of the standard projections of $X\times F$ onto its first and second factors.  It will be useful to use the following notation below: we assume that the 
ambiant projective space is the projectivization of a  5-dimensional vector space $V$: $\mathbf P^4=\mathbf P(V)$.  To simplify, we will write $G$
for the grassmanian variety $G_1(\mathbf P^4)=G_2(V)$.  We denote by $\mathcal T_G$ the rank 2 \href{https://en.wikipedia.org/wiki/Tautological_bundle}{tautological bundle} over $G$ and $\rho: G\hookrightarrow \mathbf P(\wedge ^2 V)=\mathbf P^9$ stands for the Pl\"ucker embedding.  Finally,  $\iota: F\hookrightarrow G$ denotes the natural inclusion of $F$ into the grassmannian and we write 
$\varrho= \rho\circ \iota: F\hookrightarrow \mathbf P^9$ for the corresponding embedding of $F$ into  $\mathbf P^9$.

\subsubsection{}
\label{SSS:PropertiesOfF}
Here is a list of properties of $F$ and of the incidence correspondance \eqref{Eq:Incid},  many of which will have interesting consequences when interpreted in terms of webs: \mk 
\begin{enumerate}
\item[{\bf i.}] A line $\ell \subset X$ can be of two types, according to decomposition of its normal bundle in $X$: up to the identification $\ell\simeq \mathbf P^1$, either $N_{\ell/X}=\mathcal O_{\mathbf P^1}(0)^{\oplus 2}$ and $\ell$ is of the `{\it first type}', or 
$N_{\ell/X}=\mathcal O_{\mathbf P^1}(-1){\oplus}\mathcal O_{\mathbf P^1}(1)$ and 
$\ell$ is of the `{\it second type'} ({\it cf.}\,\cite[Proposition 6.19]{ClemensGriffiths}). In both cases, one has $h^0(N_{\ell/X})=2$ and $h^1(N_{\ell/X})=0$.
\mk
\item[{\bf ii.}] The 
\href{https://en.wikipedia.org/wiki/Fano_surface}{Fano variety}
$F$ is a smooth irreducible surface (see \cite[Lemma 7.7]{ClemensGriffiths}).  Hence $T$ is smooth as well. Moreover $q$ makes of $T$ a $\mathbf P^1$-bundle over $F$ hence  $T$ has dimension 3.\mk
\item[{\bf iii.}]  Through a general point $x$ of $X$ pass exactly 6 pairwise distinct lines 
 included in $X$, which we will denote (with arbitrary labels) by $\ell_1(x),\ldots,\ell_6(x)$ (see \cite[(1.7)]{AltmanKleiman}). In other terms, the map $p:T\rightarrow X$ in 
\eqref{Eq:Incid} 
  is generically 6 to 1.
 \mk
\item[{\bf iv.}]  The embedding $\varrho: F\hookrightarrow \mathbf P^9$ is canonical: 
if  $\mathcal O_F(1)=\varrho^*\big( 
\mathcal O_{\mathbf P^9}(1)\big)$ stands for the corresponding  line bundle, 
one has an isomorphism: 
$$
K_F=\Omega^2_F \simeq \mathcal O_F(1)
$$
 (see \cite[(1.8)]{AltmanKleiman}). 
 Moreover, $\iota(F)$ is non degenerate in $\mathbf P^9$ ({\it i.e.}\,no hyperplane of 
 $\mathbf P^9$ contains 
 $\iota(F)$, see \cite[Lemma 10.2]{ClemensGriffiths}) hence 
 there is a natural identification of vector spaces
\begin{equation}
H^0\big( F,\Omega_F^2\big)\simeq H^0\big( \mathbf P^9,\mathcal O_{\mathbf P^9}(1)\big)=\big(\wedge^2 V\,\big)^\vee\, 
\end{equation}
from which it follows that  $h^0\big(\Omega_F^2\big)
=10$ (see \cite[Lemma 10.2]{ClemensGriffiths}). \mk
\label{lala}
\item[{\bf v.}]  The pull-back under $\iota$ 
  of the dual of the tautological bundle 
$\mathcal T=\mathcal T_G$ 
 identifies with $\Omega_F^1$: 
\begin{equation}
\label{Eq:TBT}
\Omega^1_F= \iota^*\big(  \mathcal T^\vee \big)
\end{equation}
 (see \cite[(1.10)-(ii)]{AltmanKleiman}).\footnote{The isomorphism 
 \eqref {Eq:TBT} is known as the {\it `tangent bundle theorem'}. It is considered as one of the deepest results about the hypercubic $X$ and its Fano variety $F$ and has been obtained by several authors: originally by Fano (in a rough form). Modern proofs have been given independently by Clemens-Griffiths (see \cite[Corollary 12.38]{ClemensGriffiths}) and Tyurin.} Moreover,  the restriction map $H^0\big(G,\mathcal T^\vee\big)\rightarrow H^0\big(F, \mathcal T^\vee\lvert_F\big)$ induces an isomorphism 
 \begin{equation*}
V^\vee=H^0\big(G,\mathcal T^\vee\big)\simeq H^0\big(F, \Omega_F^1\big)\, .
\end{equation*}
In particular, one has 
$h^0\big(F,\Omega^1_F\big)=5$.\sk
\mk
\item[{\bf vi.}]  It follows that the \href{https://encyclopediaofmath.org/wiki/Albanese_variety}{Albanese variety} ${\rm Alb}(F)= H^0(F,\Omega^1_F)\big)^\vee / H_1(F, \mathbf Z)
$ is of dimension 5. Moreover (for any base point $x_0\in F$,) the Albanese map 
$a=a_{x_0} \, :\, F \rightarrow {\rm Alb}(F)$, $x\longmapsto \big(  a^x=a_{x_0}^x: H^0(F,\Omega^1_F)\big)\longrightarrow \mathbf C,\, \omega\mapsto \int_{x_0}^x \omega \big)$ 
is an embedding.
\mk 
\item[{\bf vii.}]  The cup-product mapping $H^0\big(F,\Omega_F^1\big)
\wedge 
H^0\big(F,\Omega_F^1\big)\rightarrow H^0\big(F,\Omega_F^2\big)$ is injective  (\cite[Lemma 9.13]{ClemensGriffiths}) hence it is an isomorphism (for dimensional reasons). 
\sk
\item[{\bf viii.}] Actually,  for $k=1,2$, the Albanese mapping 
$a :\, F \rightarrow {\rm Alb}(F)$
induces an  isomorphism 
\begin{equation*}
\label{Eq:a-Isom}
a^* :\, H^0\Big( {\rm Alb}(F), \Omega^k _{ {\rm Alb}(F)}\Big) \simeq 
H^0\big(F, 
\Omega^1_F\big)\, . 
\end{equation*}
\item[{\bf ix.}]  A line $\ell\subset X$ is of the second type if and only if there exists a 2-plane $P\subset \mathbf P^4$ such that $X\cdot P=2\ell+\ell'$ as 1-cycles on $X$, for another line $\ell'\subset X$ ({\it cf.}\,\cite[Lemma (1.14)]{Murre}).  The  lines $\ell \subset X$ of second type  form a non-singular subset $C$ of  $F$, of dimension 1 (see \cite[Corollary (1.9)]{Murre}). 
\mk 
\item[{\bf x.}]   The set $\mathcal E$ of `Eckardt points' on $X$, that is points $x\in X$ such that  $p^{-1}(x)=\big \{\, \ell\in F\, \lvert \, x\in \ell\,\big\}$ has positive dimension, is finite ({\it cf.}\,\cite[Lemma 8.1]{ClemensGriffiths}).\footnote{Actually, it is known that ${\rm Card}(\mathcal E)\leq 30$ and that  $X$ is the Fermat cubic threefold in case of equality.}
\sk
\item[{\bf xi.}]  One sets $X'=X\setminus \mathcal E$ and $T'=p^{-1}(X')$. 
Then  the restriction $p': T'\rightarrow X'$ of $p$ over $X'$  
is  unramified outside $S'=q^{-1}(C)\cap T'$ and 
has simple ramification along a Zariski open subset of each irreducible component of $S'$ 
 (see \cite[Lemma 10.18]{ClemensGriffiths}).
\sk 
\item[{\bf xii.}]  The map $\mathscr T: X'\rightarrow {\rm Alb}(F)$, $x\mapsto \sum_{i=1} a\big( \ell_i(x)\big)$ 
is  well-defined and holomorphic.  Since $\mathcal E$ is finite, it extends holomorphically to the whole $X$. Since the latter is simply-connected, it follows that the map  $\mathscr T$ is  constant 
({\it cf.}\,(11.10) in \cite{ClemensGriffiths}). 
\sk 
\item[{\bf xiii.}]  For $k=1,2$, there is a global trace map 
$${\rm Tr}^{(k)}=p_*\circ  q^* : \Omega^k_{\mathbf C(F)}\rightarrow  \Omega^k_{\mathbf C(X)}
\footnotemark
\footnotetext{Here for any algebraic variety $Z$ and any $k\geq 1$, $\Omega^k_{\mathbf C(Z)}$ stands for the $\mathbf C(Z)$-module of global rational $k$-forms on $Z$.}
$$ which locally (on a neighborhood of any $x\in X'$) is written ${\rm Tr}^{(k)}(\omega)=\sum_{i=1}^6 \ell_i^*(\omega)$. 
\mk 
\item[{\bf xiv.}]
The map $\mathscr T: X'\rightarrow {\rm Alb}(F)$, $x\mapsto \sum_{i=1} a\big( \ell_i(x)\big)$ 
is  well-defined and holomorphic.  Since $\mathcal E$ is finite, it extends holomorphically to the whole $X$. Since the latter is simply-connected, it follows that the map  $\mathscr T$ is  constant 
({\it cf.}\,(11.10) in \cite{ClemensGriffiths}). 
\mk 
\label{lolo}
\item[{\bf xv.}] For $k=1,2$ and for any global holomorphic $k$-differential $\omega\in H^0\big(F,\Omega_F^k\big)$, 
the $k$-form ${\rm Tr}^{(k)}(\omega)$ (which can be written ${\rm Tr}^{(k)}(\omega)=\sum_{i=1}^6 \ell_i^*(\omega)$ locally) vanishes identically on $X$. 
\mk 
\item[{\bf xvi.}]  Let $\overline{\Delta}_X$ be the variety of triangles in $X$, namely the 3-dimensional subvariety of $F^3$ 
formed by the 3-tuples  of pairwise distinct lines $(l_1,l_2,l_3)$ such that $l_1+l_2+l_3=X\cdot P$ (equality between 1-cycles) for a certain 2-plane $P\subset \mathbf P^4$.  Then 
  there 
exists $\alpha_1\in {\rm Alb}(F)$ such that $a(l_1)+a(l_2)+a(l_3)=\alpha_1$ for any generic triangle $(l_1,l_2,l_3) \in \overline{\Delta}_X$  ({\it cf.}\,\cite[(11.9)]{ClemensGriffiths}).  
\mk 
\end{enumerate}

From {\bf xi.}, it follows that $X^*=X'\setminus p(S')$ is a Zariski open set of $X$ (it is the complement of a finite union of surfaces contained in $X$) such that through any $x^*\in X^*$, pass  exactly six pairwise distinct lines included in $X$. Consequently, the linear 6-web 
$\boldsymbol{\mathcal L\hspace{-0.05cm}\mathcal W}_X$ defined by the lines included in $X$ is 
well-defined on  $X^*$.  Now, it is straightforward to state in web-theoretic terms for the web $\boldsymbol{\mathcal L\hspace{-0.05cm}\mathcal W}_X$, some of the sixteen properties {\bf i.}\,to {\bf xvi.}\,stated just above. We get the following 
\begin{prop}
\label{P:LW-X}
\begin{enumerate}
\item[] 
${}^{}$\hspace{-1cm}{\bf 1.}
 $\boldsymbol{\mathcal L\hspace{-0.05cm}\mathcal W}_X$ is an irreducible 6-web which is skew and  linearizable.
\mk 
\item[{\bf 2.}]  
 For $k=1,2$ and $\omega\in {H}^k\big(  F , \Omega^k_F\big)$, one has 
${\rm Tr}^{(k)}(\omega)=0$ hence the linear map  
\begin{align*}
{\rm Tr}^{(k)} \, : \, 
{H}^0\big(  F , \Omega^k_F\big) & 
\longrightarrow \boldsymbol{AR}^{(k)}\big( \boldsymbol{\mathcal L\hspace{-0.05cm}\mathcal W}_X\big)\\
 \omega &\longmapsto \Big( \ell_i^*\big(\omega\big)\Big)_{i=1}^6
\end{align*}
is well-defined and injective.  \mk 
\item[{\bf 3.}]  Consequently,  the following statements hold true: \mk

\begin{enumerate}
\item[{\bf a.}] The second trace map ${\rm Tr}^{(2)}$ induces an isomorphism 
$\boldsymbol{H}^0\big(  F , \Omega^2_F\big)\simeq \boldsymbol{AR}^{(2)}\big( \boldsymbol{\mathcal L\hspace{-0.05cm}\mathcal W}_X\big)$.
These spaces are of dimension 10 hence $\boldsymbol{\mathcal L\hspace{-0.05cm}\mathcal W}_X$ has maximal 2-rank;
\mk  
\item[{\bf b.}]  The image $\boldsymbol{AR}^{(1)}_{ab}\big( \boldsymbol{\mathcal L\hspace{-0.05cm}\mathcal W}_X\big)={\rm Im}({\rm Tr}^{(1)})$ of 
the first  trace map
is a 5-dimensional subspace of 
$\boldsymbol{AR}^{(1)}\big( \boldsymbol{\mathcal L\hspace{-0.05cm}\mathcal W}_X\big)$,
which is such that the wedge map 
\begin{align*}\wedge^2   \boldsymbol{AR}^{(1)}_{ab}\big( \boldsymbol{\mathcal L\hspace{-0.05cm}\mathcal W}_X\big)& \longrightarrow \boldsymbol{AR}^{(2)}\big( \boldsymbol{\mathcal L\hspace{-0.05cm}\mathcal W}_X\big)\\ 
\big(\omega_i \big)_{i=1}^6 \wedge  \big(\varpi_i \big)_{i=1}^6\hspace{0.1cm}   & \longmapsto \hspace{0.2cm}  \big(\omega_i \wedge \varpi_i \big)_{i=1}^6
\end{align*}
 is well-defined and is an isomorphism. 
\end{enumerate}
\end{enumerate}
\end{prop}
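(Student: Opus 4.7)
The plan is to deduce Proposition~\ref{P:LW-X} essentially as a direct translation of the sixteen properties \textbf{i}--\textbf{xvi} listed above into the language of curvilinear webs, with Damiano's bound \eqref{Eq:Bound-on-the-rank} supplying the upper estimate needed to conclude maximality.

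For point \textbf{1}, I would first pin down the open set $X^{*}$ on which the web is actually defined. Combining \textbf{iii} (the map $p\colon T\to X$ is generically 6-to-1) with \textbf{x} (finiteness of the Eckardt locus $\mathcal{E}$) and \textbf{xi} (simple ramification of $p'$ along $S'$), one gets a Zariski-open $X^{*}\subset X$ through each point of which pass six pairwise distinct lines of $X$. Over $X^{*}$, the inverse image of $p$ consists of six germs of sections $\ell_i\colon(X,x_0)\to F$ whose tangent directions are those of the six lines and are therefore pairwise skew, so the general position condition of \S\ref{SSS:webs} is satisfied and $\boldsymbol{\mathcal L\hspace{-0.05cm}\mathcal W}_X$ is a genuine curvilinear $6$-web. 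Its leaves are lines, hence the web is linearizable by definition; and irreducibility of the foliations follows from irreducibility of $F$ (property \textbf{ii}) together with a standard monodromy argument on the étale six-sheeted cover $p'\colon T'\to X'$.

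For point \textbf{2}, the vanishing $\mathrm{Tr}^{(k)}(\omega)=0$ on $X$ for $k=1,2$ and $\omega\in H^0(F,\Omega_F^k)$ is exactly property \textbf{xv}. Since this trace computes locally on $X^{*}$ as $\sum_{i=1}^{6}\ell_i^{*}(\omega)$, the tuple $(\ell_i^{*}(\omega))_{i=1}^{6}$ is by construction an element of $\boldsymbol{AR}^{(k)}(\boldsymbol{\mathcal L\hspace{-0.05cm}\mathcal W}_X)$, so $\mathrm{Tr}^{(k)}$ is well-defined. Injectivity comes for free: each $\ell_i$ is a submersion from the $3$-dimensional $X$ to the $2$-dimensional $F$, so $\ell_i(X)$ contains an open subset of the irreducible surface $F$; therefore $\ell_i^{*}$ already kills no non-zero holomorphic $k$-form on $F$, and in particular the product map $\omega\mapsto(\ell_i^{*}(\omega))_i$ is injective.

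For point \textbf{3.a}, combine property \textbf{iv}, which gives $h^0(F,\Omega_F^2)=10$, with Damiano's bound \eqref{Eq:Bound-on-the-rank} specialised to $n=3$, $d=6$, $k=2$: the right-hand side is $10$. Steps~1 and~2 already exhibit a $10$-dimensional subspace of $\boldsymbol{AR}^{(2)}(\boldsymbol{\mathcal L\hspace{-0.05cm}\mathcal W}_X)$ inside $\mathrm{Im}(\mathrm{Tr}^{(2)})$, forcing equality and the maximality of the $2$-rank. For \textbf{3.b}, property \textbf{v} gives $h^0(F,\Omega_F^1)=5$, so $\mathrm{Im}(\mathrm{Tr}^{(1)})$ has dimension $5$ by the injectivity of Step~2. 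The wedge map is well-defined because exterior product commutes with pull-back componentwise, $\ell_i^{*}(\omega)\wedge\ell_i^{*}(\varpi)=\ell_i^{*}(\omega\wedge\varpi)$, so it factors the cup-product $\wedge^{2}H^0(F,\Omega_F^1)\to H^0(F,\Omega_F^2)$ through the isomorphism $\mathrm{Tr}^{(2)}$ of part~\textbf{a}. By property \textbf{vii} this cup-product is an isomorphism, and the proof is complete.

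The substantive content of the proposition is entirely concentrated in property \textbf{xv}; since we are allowed to invoke it, the plan above is essentially bookkeeping. Were one to prove \textbf{xv} from scratch, the main obstacle would be precisely this Abel-type vanishing, which in Clemens--Griffiths is obtained via the tangent bundle theorem \textbf{v} together with the constancy of the Albanese sum $\mathscr{T}$ (\textbf{xiv}); this is the real reason the trace of a global holomorphic form on $F$ vanishes on $X$.
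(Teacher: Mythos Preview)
Your treatment of points \textbf{2} and \textbf{3} is essentially the paper's own: both reduce to invoking properties \textbf{iv}, \textbf{v}, \textbf{vii}, \textbf{xv} and the bound~\eqref{Eq:Bound-on-the-rank}. Your injectivity argument for $\mathrm{Tr}^{(k)}$ (each $\ell_i$ is a submersion onto an open piece of the irreducible $F$) is actually a little more explicit than what the paper writes.

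Point \textbf{1}, however, has real gaps. First, you conflate two distinct notions. Saying the six tangent directions are ``pairwise skew'' does not give the general position assumption of \S\ref{SSS:webs}: for a curvilinear $6$-web in dimension $3$ one needs that any \emph{three} of the six directions span $T_xX$, not merely that they are pairwise distinct. The paper proves this by observing that the six tangent directions at $x$ all lie on the second fundamental form conic $II_{X,x}\subset\mathbf{P}(T_xX)$; if three were collinear, that line would meet the conic in $\geq 3$ points, forcing $II_{X,x}$ to be degenerate, which contradicts the well-known fact that a smooth projective hypersurface has non-degenerate second fundamental form at a generic point. Second, you never address skewness of the web itself, which is a separate assertion (and not the same thing as general position). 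The paper argues by contradiction: a non-skew $\boldsymbol{\mathcal{LW}}_X$ would force a doubly-ruled surface through a general point of $X$; such a surface must be a quadric spanning a hyperplane, whose residual intersection with $X$ would be a $2$-plane contained in $X$, contradicting the finiteness of Eckardt points. Third, ``linearizable by definition'' is too quick: the leaves are lines in $\mathbf{P}^4$, but the ambient $X$ is a curved threefold; one linearizes by projecting from a generic point of $\mathbf{P}^4$ to $\mathbf{P}^3$, as the paper notes.
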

\begin{proof} 
The first point of {\bf 1} is rather obvious: the irreducibility of 
$\boldsymbol{\mathcal L\hspace{-0.05cm}\mathcal W}_X$ as a web follows immediately from that of $F$ as an algebraic surface. Regarding the linearizability of $\boldsymbol{\mathcal L\hspace{-0.05cm}\mathcal W}_X$, 
it suffices to consider its local image in $\mathbf P^3$ by a generic linear projection from a point of $\mathbf P^4$.   As for the 
skewness of $\boldsymbol{\mathcal L\hspace{-0.05cm}\mathcal W}_X$, 
it is also easy to establish: assuming that this web is not skew would imply that 
through a general point of $X$ passes a doubly-ruled surface $\Sigma $ included in $X$. Since 
$\Sigma$ is not a plane (otherwise the set $\mathcal E$ of Eckardt points of $X$ would be infinite), it has to be a quadric surface spanning a hyperplane. But then $ \langle \Sigma\rangle \cap X=\Sigma\cup P$  for a 2-plane $P\subset X$ residual to $\Sigma$, which is not possible since, again,  $\mathcal E$ is known to be finite.\footnote{A more direct way to conclude may have been to recall that a 
 cubic containing a 2-plane is necessariky singular (see the paragraph just before Proposition 2.2 in \cite{DolgachevSegre}.}
\sk 

To end the proof of {\bf 1}, it remains to show that the lines  included in $X$ passing through a general point $x$, denoted by $\ell_1(x),\ldots,\ell_6(x)$,  are in general position. To do so, we recall that the tangent directions at $x$ of the 
$\ell_i(x)$'s all six belong to the second fundamental form $II_{X,x}$ of $X$ at the considered point, that 
we see here as a conic in the projectivized tangent space $\mathbf P(T_xX)\simeq \mathbf P^2$. 
Then assume  that $x\in X$ is generic and such that there are three pairwise distinct lines in $X$ through $x$ which are coplanar.  Then their associated tangent directions  at this point span a line in $\mathbf P(T_xX)$, denoted by $L_x$.  Because  $L_x \cap II_{X,x}$ has cardinality (at least) three, it follows that $L_x  \subset II_{X,x}$, which gives us that the conic 
$II_{X,x}$ 
is not reducible hence has rank $\leq 2$. But this would contradict (in the case when $n=3$) the well-known fact that, 
given a $n$-dimensional smooth complex projective hypersurface with $n\geq 2$, its second fundamental form at a generic point, viewed as a quadric in $\mathbf P^{n-1}$,    has maximal rank equal to 
$n$.\footnote{It is worth mentioning that this `well-known fact' 
 is rather subtle and in any case less obvious than it seems at first sight.
 For instance, all the considered hypotheses are necessary: there exist complex affine 
or real projective 
hypersurfaces which are smooth but with a degenerate second fundamental form at any point.}   It follows that $\boldsymbol{\mathcal L\hspace{-0.05cm}\mathcal W}_X$ satisfies the general position assumption of  \S\ref{SS:webs} which finishes the proof of {\bf 1}.
\sk

The second and third points of the proposition are nothing else but reinterpretations in terms of  $\boldsymbol{\mathcal L\hspace{-0.05cm}\mathcal W}_X$ of some the points  stated above (mainly the points 
{\bf iv}, {\bf vi}, {\bf vii} and  {\bf xv}).
\end{proof}
Let $\boldsymbol{\mathcal L\hspace{-0.05cm}\mathcal W}_{X,x}$ be the germ of 
$\boldsymbol{\mathcal L\hspace{-0.05cm}\mathcal W}_{X}$ at a generic point $x\in X^*$.  
Using linear projections centered at sufficiently generic points of  $\mathbf P^4$, one gets a five dimensional family of linear models for $\boldsymbol{\mathcal L\hspace{-0.05cm}\mathcal W}_{X,x}$.  It seems more than likely that two such local linear models are generically not projectively equivalent  hence that this (germ of) web admits a 4-dimensional family of non projectively equivalent linearizations. A rigourous proof that it is indeed the case  would be welcome. \sk

Another property of $\boldsymbol{\mathcal L\hspace{-0.05cm}\mathcal W}_{X}$ which is not completely clear yet concerns its 1-rank: from {\bf 3.a.}\,in the above proposition, we know that ${\rm rk}^{(1)}\big( \boldsymbol{\mathcal L\hspace{-0.05cm}\mathcal W}_{X}\big)\geq h^0(\Omega_F^1)=5$. Is this majoration actually an equality? In other terms, 
does the space $\boldsymbol{AR}^{(1)}_{ab}\big( \boldsymbol{\mathcal L\hspace{-0.05cm}\mathcal W}_X\big)$ 
 coincide with the whole space 
$\boldsymbol{AR}^{(1)}\big( \boldsymbol{\mathcal L\hspace{-0.05cm}\mathcal W}_X\big)$ 
of 1-ARs of the web $ \boldsymbol{\mathcal L\hspace{-0.05cm}\mathcal W}_X$? 
We believe that the answer is affirmative (still assuming that $X$ is smooth) but we don't have any argument to offer  justifying this. 
\begin{center}
$\star$
\end{center}

In view of Proposition \ref{P:LW-X} and considering to which extent it is very similar to the very classical corresponding statement  holding true for classical `algebraic webs', we believe it is justified to say that  $
 \boldsymbol{\mathcal L\hspace{-0.05cm}\mathcal W}_X$ is `algebraic as well' (see also 
 our discussion in \S1.3 of \cite{EAW}). \sk 
 
 Since the web $\boldsymbol{\mathcal W}_{0,6}$ is (birationally) equivalent to the web 
$
 \boldsymbol{\mathcal L\hspace{-0.05cm}\mathcal W}_{\boldsymbol{\mathcal S}}$ formed by the six covering families of lines in Segre's cubic $\boldsymbol{\mathcal S}$ and because the latter can be seen as a degeneration of smooth cubic hypersurfaces, it is natural to wonder 
 whether it is justified to say that it is an algebraic web as well.  If this is obvious in what concerns its geometric definition (see \S\ref{SS:Segre's-cubic} just below) this is not as clear with regard to the ARs of $  \boldsymbol{\mathcal L\hspace{-0.05cm}\mathcal W}_{\boldsymbol{\mathcal S}}$ since it is not evident that  the statements in \S\ref{SSS:PropertiesOfF} used to obtain Proposition \ref{P:LW-X} admit counterparts for a singular cubic.  Indeed, some of these properties are not satisfied by singular cubics, as the first example considered in the next subsection shows.

\subsection{The case of (some) singular cubics.}
\label{SS:je-sais-pas-quoi}
 We first discuss the case of a special  cubic threefold  with a singular set of dimension 1. Then 
  we say a few words about those cubics with only nodal singularities. We will focus 
   on the case of Segre's cubic just after in \S\ref{SS:Segre's-cubic}. \mk

\subsubsection{\bf The chordal cubic.}
\label{SS:je-sais-pas-quoi}
Let  $[Z_0 : \cdots  : Z_4]$ stand for fixed homogeneous coordinates  on $\mathbf P^4$ 
 and denote by $[\cdot]: \mathbf C^5\setminus \{0\}\rightarrow \mathbf P^4$ the projectivisation map. We fix a rational normal quartic curve $\Gamma\subset \mathbf P^4$. Then the 2-secants to $\Gamma$  (which are the lines intersecting $\Gamma$ in two points counted with multiplicity) fill out a 
cubic hypersurface $\mathscr C\subset \mathbf P^4$, known as the {\it `chordal cubic threefold'} (see \cite{Segre,Collino1}{\rm )}. Taking the Zariski closure of  the projectivisation of the image of $
\nu_4: \mathbf C\ni t\mapsto (1,t,t^2,t^3,t^4)\in \mathbf C^5$ for the quartic curve $\Gamma$, one gets that $\mathscr C$ is cut out 
by the following cubic equation
\begin{equation}
\label{Eq:Chordal-cubic-equation}
\det \begin{bmatrix}
Z_0 & Z_1 & Z_2 \\
Z_1 & Z_2 & Z_3 \\
Z_2 & Z_3 & Z_4 \\
\end{bmatrix}  = Z_0\big( Z_2Z_4-Z_3^2\big) -
\big( Z_2^3 -2Z_1Z_2Z_3-Z_2^3+Z_1^2Z_4\big)= 0\, .
\end{equation}

The hypersurface $\mathscr C$ is singular along $\Gamma$ and is the unique cubic threefold of $\mathbf P^4$ singular along a rational quartic curve (up to projective equivalence). 
  Note also that $\mathscr C$  is semistable and that it appears as a special degenerate point in the GIT moduli space of cubic threefolds (see \cite{Allcock}).  The chordal cubic is discussed in the paragraphs  \S44 and \S45 of C. Segre's memoir \cite{Segre} and in the first part of the recent paper \cite{Collino1}, two references the reader interested by more details is invited to consult.  
 \mk

It is known 
that the Fano surface $F=F_1(\mathscr C)$ of lines included in $\mathscr C$ has two components denoted by $F'$ and $F''$ : the former $F'$ is the family of lines which are 2-secant with $\Gamma$ and the latter $F''$ is the set of lines $\ell\subset \mathbf P^4$ such that the restriction to $\Gamma$ of the linear projection $\pi_\ell : \mathbf P^4\dashrightarrow \mathbf P^2$ from $\ell$ has a ramification divisor of degree 2 on $\Gamma$ (or equivalently, $\pi_\ell(\Gamma)$ is a conic).  Both $F'$ and $F''$ are isomorphic to the symmetric square of $\Gamma$ hence both are isomorphic to $
\Gamma^{[2]}\simeq
(\mathbf P^1)^{[2]}\simeq \mathbf P^2$.\footnote{For any set $A$, $A^{[2]}$ stands for its symmetric product, that is  $A^{[2]}=(A\times A)/\mathfrak S_2$.} The two components $F'$ and $F''$  intersect along the smooth rational curve ${ \mathcal K}\subset G_1(\mathbf P^4)$ whose points are the lines tangent to   $\Gamma$ (${ \mathcal K}$ is the image of the Gauss map 
$\Gamma\rightarrow G_1(\mathbf P^4)$, $\gamma \mapsto T_\gamma \,\Gamma$).  Of course, one has 
${ \mathcal K} \simeq { \Gamma} \simeq \mathbf P^1$.
\sk 

As a scheme, it can be verified that $F_1(\mathscr C)$ contains $F'$ and $F''$ with multiplicity 4 and 1 respectively ({\it cf.}\,the bottom of p.\,214 in \cite{Collino1}{\rm )} from which it follows that through a general point  $c$ of $ \mathscr C$ pass exactly three pairwise distinct lines included in $ \mathscr C$: two lines  $\ell_1''(c)$ and $\ell_2''(c)$ corresponding to two distinct points of $F''$ and the 2-secant $\ell'(c)$ to 
$\Gamma$ passing through $c$, which comes with multiplicity 4.  From this it follows that  
when smooth cubics degenerate to $\mathscr C$, the associated 
6-webs by lines degenerate onto a web that it is justified to denote by  $  \boldsymbol{\mathcal L\hspace{-0.05cm}\mathcal W}_{\mathscr C}$ but which is a 3-web. \sk

\vspace{-0.3cm}
It is not difficult to make  $  \boldsymbol{\mathcal L\hspace{-0.05cm}\mathcal W}_{\mathscr C}$ explicit. 
Indeed, from the very definition of $\mathscr C$ it follows that 
\begin{equation}
\label{Eq:mu}
 \mu : \mathbf C^3\longrightarrow \mathbf P^4\, , \hspace{0.2cm} (s,t)=\big(s,t_1,t_2\big)\longmapsto \Big[ \nu_4(t_1)+s^2\nu_4(t_2)\Big]
 \end{equation}
  is an affine parametrization\footnote{The reason why 
the square of $s$ is used for parametrizing the secant between $\nu_4(t_1)$ and $\nu_4(t_2)$ is that it allows to get rational first integrals for the web $  \boldsymbol{\mathcal L\hspace{-0.05cm}\mathcal W}_{\boldsymbol{\mathscr C}}$ in the variables $t_1,t_2$ and $s$  ({\it cf.}\,\eqref{Eq:UUU}). Note that $\mu$ is a  2-1 ramified covering onto its image 
$\mu(\mathbf C^3)\subset  \mathscr C$ 
with ramification locus the coordinate hyperplane $s=0$.} 
 of (a Zariski-open subset of) $\mathscr C$ and it can be verified that 
the three lines contained in $\mathscr C$ and passing through a generic point $c=\mu(s,t)$ 
are 
\begin{itemize}
\item the  line $\ell'(c)$ with parametrization $\mathbf P^1 \ni \sigma \mapsto \mu(\sigma,t)\in \mathscr C\subset \mathbf P^4$ which is the 2-secant line $\langle  
\nu_4(t_1),\nu_4(t_2)
\rangle$. It corresponds to a 
point  of $F'=\Gamma^{[2]}$ hence comes with multiplicity 4;\sk
\item two lines elements of $F''$, denoted by $\ell_+''(c)$ and   $\ell_-''(c)$. These lines are those   joining $c$ to a point $C_\pm(s,t)\in \mathbf P^4$ whose coordinates are in $\mathbf Z[i](s,t)$ and can be explicited (moreover, one can manage things in such a way that 
$C_{-}(s,t)=\overline{C_+(s,t)}$). 
\end{itemize}

From the explicit formulas one can get for the lines  $\ell''_\pm(c)$ and $\ell'(c)$  for $c=\mu(s,t)$ generic in $\mathscr C$, it is not difficult to deduce some explicit rational first integrals on $\mathbf C^3$ for 
the pull-back under $\mu$ of the web $  \boldsymbol{\mathcal L\hspace{-0.05cm}\mathcal W}_{\boldsymbol{\mathscr C}}$. Indeed, it can be verified that this pull-back   (again denoted with the same notation to simplify), admits the following three rational first integrals in the coordinates $s,t_1,t_2$: 
\begin{equation}
\label{Eq:UUU}
U'= \big(t_1,t_2\big) \, , \hspace{0.3cm}  U''_+= \left( 
\frac{s\,t_2+i \,t_1 }{s+i} \, , \, \frac{s\,t_2^2+i \,t_1^2}{s\,t_2+i\,t_1}
 \right) 
 \hspace{0.35cm} \mbox{and} 
\hspace{0.35cm}
 U''_-=\overline{U''_+}= \left( 
\frac{s\,t_2-i \,t_1 }{s-i} \, , \, \frac{s\,t_2^2-i\, t_1^2}{s\,t_2-i\,t_1}
 \right) 
 \, . 
\end{equation}
From these explicit formulas, one gets that $  \boldsymbol{\mathcal L\hspace{-0.05cm}\mathcal W}_{\boldsymbol{\mathscr C}}$ is a 3-web by rational curves on $\mathbf C^3$ and it is straightforward to verify that some rational vector fields defining it  are 
$X'$,  $X''_+$ and $X''_-=\overline{X''_+}$ with 
$$
X'=\frac{\partial }{\partial s}\qquad \mbox{ and } \qquad 
X''_+=\ 2\,\frac{\partial }{\partial s}+ \left(\frac{t_1-t_2}{s+i}\right)\frac{\partial }{\partial t_1} + i
 \left( \frac{t_1-t_2}{s(s+i)}\right)\frac{\partial }{\partial t_2} 
 \, .
 $$
By direct computations, one gets the following formulas for the Lie brackets of these vector fields: 
\begin{align}
\big[  X_+''\, ,\, X_-''  \big] = & \, 
\frac{3s+i}{s(s+i)}\,X_+''- \frac{3s-i}{s(s-i)}\,X_-''- \frac{8i}{1+s^2}\,X'
\nonumber
\\
\big[  X_+''\, ,\, X'  \big] = & \, 
\frac{3s+i}{2s(s+i)}\,X_+''- \frac{s-i}{2s(s+i)}\,X_-''- \frac{2}{s}\,X'
\label{Eq:LW-C--skew}
\\
\mbox{and } \quad 
\big[  X_-''\, ,\, X'  \big] = & \, 
\frac{3s-i}{2s(s-i)}\,X_+''- \frac{s+i}{2s(s-i)}\,X_-''- \frac{2}{s}\,X'\, 
\nonumber
\end{align}
from which one deduces that  the web $  \boldsymbol{\mathcal L\hspace{-0.05cm}\mathcal W}_{\boldsymbol{\mathscr C}}$ is skew.  Its pull-back   by $\mu$ 
is formed by three global foliations on $\mathbf C^3$ but it is not the same for $  \boldsymbol{\mathcal L\hspace{-0.05cm}\mathcal W}_{\boldsymbol{\mathscr C}}$ itself
which is the union on (its definition domain in) $\mathscr C$ of the  foliation with global rational  
first integral $\ell' : X\dashrightarrow F'$, $c\mapsto \ell'(c)$ and of the global irreducible 2-web 
whose leaves through a general point $c\in \mathscr C$ are the two lines $\ell''_\pm(c)$.
\mk 

The remarks above indicate 
which statement corresponds to the first point of 
Proposition 
\ref{P:LW-X} when the cubic threefold $X$ specializes to the singular one $\mathscr C$. 
The other points of this Proposition concern abelian relations and clearly everything  stated in Proposition \ref{P:LW-X} about 2-ARs does not admit a counterpart for $  \boldsymbol{\mathcal L\hspace{-0.05cm}\mathcal W}_{\boldsymbol{\mathscr C}}$ since it is known that the 2-rank of any curvilinear 3-web in dimension 3 is zero.  This example shows that what has to be the statement corresponding to Proposition 
\ref{P:LW-X} when the considered cubic is singular is not self-evident. \mk 

Even if it is a bit off topic, it is interesting to consider the 1-ARs of $  \boldsymbol{\mathcal L\hspace{-0.05cm}\mathcal W}_{\boldsymbol{\mathscr C}}$, since it turns out that this web has maximal 1-rank and that its 1-ARs as well come from the abelian 1-forms on the Fano surface of $\mathscr C$. We will 
discuss this a little further in Appendix B. 
\begin{center}
$\star$
\end{center}

We have taken time to discuss the case of the chordal cubic $\mathscr C$ first because 
we find it interesting per se\footnote{It is worth mentioning that in \cite{Collino1},  Collino  has used degenerations towards the chordal cubic  $\mathscr C$ in order to deduce informations about the fundamental group of Fano surfaces of smooth cubic threefolds.}, but also  because it is not self-evident to show how to extend the nice theory discussed above in \S\ref{SSS:PropertiesOfF} to singular cubics as well as its web-theoretic consequence, namely Proposition \ref{P:LW-X}.  However we believe that in what concerns the webs associated to cubic threefolds, there is not much difference between the smooth case and that of cubics with only finitely many singular points. 
This is briefly discussed in the next subsection before considering more thoroughly the case  of Segre's cubic in \S\ref{SS:Segre's-cubic}.

\subsubsection{\bf Cubic threefolds with finitely many singular points.}
\label{SS:Nodal-cubics}


It is known that cubic hypersurfaces
in $\mathbf P^4$ with only finitely many singularities share many properties with the smooth cubics,
but may also present some distinct features.\sk 

The first and perhaps main difference between smooth and singular cubics is about their rationality. The smooth cubics are known to be irrational according to a famous result of the field (due
to Clemens and Griffiths \cite{ClemensGriffiths})
\footnote{In several publications circa 1945, Fano claimed having established that a general cubic hypersurface in $\mathbf P^4$ is not
rational. But his proof does not seem to have been accepted as correct ({\it cf.}\,the discussion in 
\cite[\S2.1]{Pukhlikov}).}, contrarily to any singular cubic for which the projection from
one of its singular points induces a birational map from it onto $\mathbf P^3$.
Another example of distinct feature depending on the singularities a cubic threefold can have,
is given by considering the case of cubics with finitely many ordinary nodes. Indeed, for such
a cubic $X$, it is known that the number $k$ of nodes is less than or equal to 10 and that the Fano
surface of lines $F_1(X)$ is generally irreducible when $k \leq  5$ but is not for any $k \geq  6$.\mk 

If classical geometers considered the case of cubic threefolds with arbitrary many ordinary (or
even more involved) nodes\footnote{See for instance   Segre's memoir \cite{Segre} (from \S12 to \S27),  Fano's papers \cite{Fano1} and \cite[\S9]{Fano} or Snyder's article 
\cite{Snyder},  where some of the assertions mentioned above are proved by means of classical geometric methods.} this is not the case in more recent papers. The results of 
Hodge-theoretic
nature listed in  \ref{SSS:PropertiesOfF}
  were established in view of proving the main result of the field,
namely the non rationality of a smooth cubic threefold and therefore were essentially considered
for smooth cubic threefolds. Actually, most of the techniques used by modern authors actually
apply to the so-called 'Lefschetz cubic hypersurfaces', that is cubics with at most one ordinary node as singularity. And indeed, in the recent papers where the material of \ref{SSS:PropertiesOfF}  is discussed in the 
case of a singular cubic, only the case of cubic threefolds with one ordinary node has been really
considered (in addition to \cite{ClemensGriffiths}) 
  where the case of Lefschetz cubics of $\mathbf P^4$ is explicitly considered,
see for instance \cite{CollinoMurre} or \cite{GeerKouvidakis}). Recently, cubic threefolds with more involved singularities
have been considered (for instance in some papers by Allock (2003) or Casalaina-Martin \& Laza (2009)) but with regard to their stability in the aim of studying their moduli theory. In particular, as far as we know, the theory thoroughly described in \ref{SSS:PropertiesOfF}  is  not discussed in a systematic
way for singular cubics with more than one ordinary node in the existing literature, especially
some points (such as {\bf xv.} page \pageref{lolo}, which has to be seen as a kind of Abel's theorem within this
context) which are crucial in view of establishing a result similar to Proposition \ref{P:LW-X} for the webs of lines on singular cubics.\mk

We consider below the case of singular cubics with only finitely many singular points. We do
not assume that the singular points are ordinary nodes, just that they are isolated. We deduce
the 'Abel's addition result for lines included in such a cubic' from the corresponding one holding true 
for smooth cubics first by means of a very natural deformation argument and also by using the fact
that the Fano surfaces of such cubics satisfy properties similar to some listed in \S\ref{SSS:PropertiesOfF} for smooth
cubics ({\it cf.} \eqref{Eq:X-singular-Properties}).  
Our arguments below are rather standard and easy to follow hence we believe
that very likely the 'Abel's addition result for lines on a cubic threefold with isolated
singularities' we prove below will appear to the experts as a well-known folkloric generalization of
the corresponding result for smooth cubics, which is classical. Proposition  \ref{P:LW-X-singular}  and its proof have been included here not for claiming any kind of originality but just for the sake of completeness.\bk

\paragraph{\bf  Abel's theorem for abelian 2-forms on the Fano surface.} 
Let us set precisely the notation and
assumptions we will deal with. Here $ X \subset  \mathbf P^4$  stands for a fixed irreducible cubic threefold with
finitely many singularities. We moreover assume that $  \boldsymbol{\mathcal L\hspace{-0.05cm}\mathcal W}_{X}$ is a well-defined 6-web on $X$: for $x \in  X$  generic, the six lines $\ell_1(x),\ldots,\ell_6(x) $ passing through $x$ and contained in $X$ are pairwise
distinct and are in general position, namely any three of them span a hyperplane in the ambiant $\mathbf P^4$. Here again $F = F_1(X)$ stands for the Fano scheme parametrizing the lines included in $X$. The
key fact behind the proof of Proposition \ref{P:LW-X-singular} 
 discussed below is the following generalization of
iv. of \S\ref{SSS:PropertiesOfF}  to the case of cubic threefolds with isolated singularities. Indeed, according to  \cite{AltmanKleiman} 
 ({\it cf.}\,(1.4), (1.8) and (1.15) therein), although not smooth, $F$ satisfies the following properties:
\begin{equation}
\label{Eq:X-singular-Properties}
\begin{tabular}{l}
 \hspace{-0.1cm} $1.$ $F=F_1(X)\subset G_1(\mathbf P^4)$ is still a reduced algebraic surface   
 (but with singularities and
 \\ 
${}^{}$ \hspace{0.0cm}
 even possibly several irreducible components);\mk\\
 \hspace{-0.1cm} $2.$  one has $\omega_F^2=\mathcal O_F(1)$, {\it i.e.}\,the dualizing sheaf of $F$ coincides with 
 the invertible  sheaf \\ 
 ${}^{}$ \hspace{0.1cm}
  induced by $\mathcal O_{\mathbf P^9}(1)$ via the Pl\"ucker 
embedding $F\subset G_1(\mathbf P^4)\hookrightarrow \mathbf P^9$;\mk  \\
 \hspace{-0.1cm} $3.$  the canonical map $\boldsymbol{H}^0(\mathbf P^9,\mathcal O_{\mathbf P^9}(1))\rightarrow \boldsymbol{H}^0(F,\mathcal O_{F}(1))\simeq \boldsymbol{H}^0(F,\omega^2_{F}) $ is an isomorphism.
\end{tabular}
\end{equation}
\mk

As is well-known (see \cite{Barlet}), the canonical sheaf 
$\omega_F^2$  can be identified with that of abelian differential
2-forms on $F$, namely the subsheaf of meromorphic 2-forms giving rise to $\overline{\partial}$-closed currents
on their definition domain. In particular, 
$\boldsymbol{H}^0(F,\omega^2_{F}) $ 
 can naturally be seen as a subspace of the vector
space $\Omega^2_{\mathbf 
C(F)}$  of rational 2-forms on $F$.  Given $\omega \in \boldsymbol{H}^0(F,\omega^2_{F}) $, the 
 sum $\sum_{i=1}^6 \ell_i^*(\omega)$  (where now
$\ell_i: (X, x) \rightarrow  F$ are considered as local algebraic first integrals for 
 $  \boldsymbol{\mathcal L\hspace{-0.05cm}\mathcal W}_{X}$) is the germ at $x$ of a
global rational 2-form on $X$,  called the 'trace of $\omega$' and denoted by ${\rm Tr}(\omega)$.

Our aim here is to establish that the points 2 and 3.a of Proposition  \ref{P:LW-X} 
 hold true as well
for the singular cubic under scrutiny:
 
\begin{prop}
\label{P:LW-X-singular} 
  \begin{enumerate}
\item[] ${}^{}$ \hspace{-1.2cm}1. For any abelian differential 2-form $\omega \in \boldsymbol{H}^0(F,\omega^2_{F})$, one has ${\rm Tr}(\omega)=0$.\mk 
  \item[{\it 2.}]  The well-defined induced linear map ${\rm Tr} : \boldsymbol{H}^0(F,\omega^2_{F}) 
  \rightarrow 
 \boldsymbol{AR}\big(    \boldsymbol{\mathcal L\hspace{-0.05cm}\mathcal W}_{X}\big)  $ is injective.
  \end{enumerate}
\end{prop}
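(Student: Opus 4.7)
The plan is to reduce Part~1 to the smooth case (Proposition~\ref{P:LW-X}.2) via a flat deformation argument, and then to derive Part~2 by an elementary pullback argument.

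For Part~1, I would embed $X$ in a one-parameter flat family $\pi : \mathcal{X} \to \Delta$ over a small disk, with central fibre $\mathcal{X}_0 = X$ and smooth nearby fibres $\mathcal{X}_t$ (for $t \neq 0$). Such a family exists because the universal cubic hypersurface over $\lvert \mathcal{O}_{\mathbf{P}^4}(3) \rvert$ is irreducible with smooth generic member, so any $X$ lies in a generic pencil whose other members are smooth. Associated to this family are the relative Fano scheme $\mathcal{F} = F_1(\mathcal{X}/\Delta) \to \Delta$ and the relative incidence diagram $\mathcal{X} \xleftarrow{p} \mathcal{T} \xrightarrow{q} \mathcal{F}$. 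The crucial observation is that, by combining \eqref{Eq:X-singular-Properties} for $t=0$ with point \textbf{iv} of \S\ref{SSS:PropertiesOfF} for $t \neq 0$, the Pl\"ucker embedding provides, for every $t \in \Delta$, a canonical identification $H^0(\mathcal{F}_t, \omega^2_{\mathcal{F}_t}) \simeq (\wedge^2 V)^\vee$, a vector space of dimension $10$ independent of $t$. Consequently, any prescribed $\omega_0 \in H^0(F, \omega^2_F)$ extends canonically to a relative section $(\omega_t)_{t \in \Delta}$ of $\omega^2_{\mathcal{F}/\Delta}$.

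The trace $\mathrm{Tr}_t(\omega_t) = p_{t*} \, q_t^*(\omega_t)$ is a rational $2$-form on $\mathcal{X}_t$ depending holomorphically on $t$ on the regular locus of $p_t$. For every $t \neq 0$, Proposition~\ref{P:LW-X} applied to the smooth cubic $\mathcal{X}_t$ gives $\mathrm{Tr}_t(\omega_t) \equiv 0$ on $\mathcal{X}_t$, and specializing at $t=0$ via flatness yields $\mathrm{Tr}_0(\omega_0) \equiv 0$ on $\mathcal{X}_0 = X$, which is Part~1. Part~2 then follows formally: if $\mathrm{Tr}(\omega) = 0$ then each component $\ell_i^*(\omega)$ vanishes identically on $X$; at a generic point $x$ the map $\ell_i : (X, x) \to F$ is a submersion from a $3$-dimensional source onto the $2$-dimensional surface $F$ (this is precisely what it means for $\boldsymbol{\mathcal L\hspace{-0.05cm}\mathcal W}_X$ to be a bona fide web), so $\ell_i^*$ is injective on top-degree forms and $\omega$ must vanish on the open image of each $\ell_i$. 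Since every line in $X$ passes through a generic point of $X$ and is hit by at least one $\ell_i$, these images together cover $F$ and $\omega = 0$.

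The main technical obstacle is Step~3 above: one must verify that the relative incidence $\mathcal{T} \to \Delta$ is flat, that the ramification loci of the maps $p_t$ do not degenerate onto an entire component of $\mathcal{F}_0$, and that the rational trace extends continuously through $t=0$ (rather than, say, acquiring poles along a divisor which then contracts). These facts should follow from the standing hypotheses that $X$ has only isolated singularities and that $\boldsymbol{\mathcal L\hspace{-0.05cm}\mathcal W}_X$ is a well-defined $6$-web, perhaps by appealing to a Hartogs-type extension argument on the singular fibre once $\mathrm{Tr}_0(\omega_0)$ is known to be regular outside $\mathrm{Sing}(X) \cup p_0(S'_0)$, a codimension-$\geq 2$ subset of $X$ carrying no obstruction for a rational $2$-form to extend as zero.
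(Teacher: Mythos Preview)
Your approach to Part~1 is essentially the paper's: embed $X$ in a smoothing family, use that $h^0(F_t,\omega^2_{F_t})=10$ for all $t$ (smooth or not, by \eqref{Eq:X-singular-Properties}) to deduce that the pushforward $\pi_*(\omega^2_{\mathcal F/\Delta})$ is locally free, extend $\omega$ to a relative section, and specialize the vanishing trace from $t\neq 0$. The technical worries you raise in your last paragraph are resolved precisely by this local-freeness observation; you do not need separate flatness or ramification control.

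Your argument for Part~2, however, has a real gap. You assert that ``every line in $X$ passes through a generic point of $X$'' so that the images of the $\ell_i$ cover $F$. This is false when $X$ is singular: the Fano surface $F$ can be reducible, and some of its irreducible components parametrize lines that \emph{never} meet a general point of $X$ (for instance lines lying in a $2$-plane contained in $X$, or lines through a singular point). The images of the local first integrals $\ell_i$ sweep out only the union $F^{gen}$ of those components whose generic member does meet a general point. So from $\ell_i^*(\omega)=0$ for all $i$ you only conclude $\omega\lvert_{F^{gen}}=0$. Since $\omega^2_F\simeq\mathcal O_F(1)$, the class $\omega$ corresponds to a hyperplane in $\mathbf P^9$; to force $\omega=0$ you need to know that the Pl\"ucker image $\widetilde F^{gen}\subset\mathbf P^9$ is non-degenerate. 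That is the content of Lemma~\ref{L:X-6lines-PG}, which the paper proves by a direct argument using the general-position hypothesis on the six lines through a generic point and the fact that $X$ has isolated singularities. This step is not automatic and is precisely what your proposal is missing.
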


  Since $10 = h^0  \big( F,\omega^2_{F} \big)\leq {\bf rk}\big(    \boldsymbol{\mathcal L\hspace{-0.05cm}\mathcal W}_{X}\big) \leq 10$, one gets immediately the
\begin{cor}
\label{P:LW-X-singular} 
 One has  $\boldsymbol{H}^0(F,\omega^2_{F}) 
  \simeq 
 \boldsymbol{AR}\big(    \boldsymbol{\mathcal L\hspace{-0.05cm}\mathcal W}_{X}\big)$ 
 hence the web $\boldsymbol{\mathcal L\hspace{-0.05cm}\mathcal W}_{X}$ has maximal 2-rank.
\end{cor}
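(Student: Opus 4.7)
The Corollary will follow by a short dimension-counting sandwich: on one side, the injection supplied by the preceding Proposition gives a lower bound on the rank; on the other, Damiano's bound \eqref{Eq:Bound-on-the-rank} gives a matching upper bound. The key numerical input is $h^0(F,\omega_F^2)=10$, which I would read off from \eqref{Eq:X-singular-Properties}: item~2 identifies $\omega_F^2$ with $\mathcal O_F(1)$ coming from the Pl\"ucker embedding, while item~3 says the canonical map $H^0(\mathbf P^9,\mathcal O_{\mathbf P^9}(1))\to H^0(F,\mathcal O_F(1))$ is an isomorphism. Since $\dim H^0(\mathbf P^9,\mathcal O_{\mathbf P^9}(1))=10$, the same holds for $H^0(F,\omega_F^2)$.

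Next I would bring in Damiano's bound from Proposition~\ref{Prop:Damiano'sBound} applied to our web: for $n=3$ and $d=6$ the right-hand side of \eqref{Eq:Bound-on-the-rank} evaluates to
\[
\binom{1}{0}\cdot 3+\binom{2}{1}\cdot 2+\binom{3}{2}\cdot 1=3+4+3=10,
\]
whence ${\bf rk}(\boldsymbol{\mathcal L\hspace{-0.05cm}\mathcal W}_X)\leq 10$. Part~2 of the preceding Proposition provides an injective linear map $\mathrm{Tr}\colon H^0(F,\omega_F^2)\hookrightarrow \boldsymbol{AR}(\boldsymbol{\mathcal L\hspace{-0.05cm}\mathcal W}_X)$, which yields the reverse inequality ${\bf rk}(\boldsymbol{\mathcal L\hspace{-0.05cm}\mathcal W}_X)\geq h^0(F,\omega_F^2)=10$. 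Both inequalities must therefore be equalities, so the web has maximal $2$-rank, and $\mathrm{Tr}$, being an injective linear map between two vector spaces of the same finite dimension $10$, is automatically an isomorphism.

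No genuine obstacle remains at this last step: the entire substance has been absorbed into the preceding Proposition, whose proof of the trace vanishing $\mathrm{Tr}(\omega)=0$ for abelian $2$-forms on the (possibly singular) Fano surface $F_1(X)$ constitutes the true heart of the argument, and relies, as the author indicates, on a specialization/deformation from the classical smooth-cubic case where the Clemens--Griffiths version of `Abel's theorem' is available. Once those ingredients are in hand, the Corollary itself reduces to the clean numerical squeeze $10\leq{\bf rk}(\boldsymbol{\mathcal L\hspace{-0.05cm}\mathcal W}_X)\leq 10$.
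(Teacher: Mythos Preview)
Your proof is correct and follows exactly the paper's approach: the paper dispatches the Corollary in a single line, ``Since $10 = h^0(F,\omega^2_F)\leq {\bf rk}(\boldsymbol{\mathcal L\hspace{-0.05cm}\mathcal W}_X)\leq 10$,'' which is precisely the dimension squeeze you spell out. Your explicit evaluation of Damiano's bound and your citation of \eqref{Eq:X-singular-Properties} for $h^0(\omega_F^2)=10$ are the intended ingredients.
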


In view of Proposition \ref{P:LW-X}, it is natural to wonder whether the statements therein about the 1-forms of the Fano surface/the 1-ARs of the web by lines on the cubic are also verified when X
has isolated singularities or not. This is more subtle than it is for 2-forms/2-ARs but we think that it is indeed the case. We will discuss this in the next paragraph.\sk 
 
The key fact behind Proposition \ref{P:LW-X-singular} is the point {\bf xv} of 
 \S\ref{SSS:PropertiesOfF} which follows from the fact that
when X is smooth, the trace of a global holomorphic 2-form on $F_1(X)$ admits a holomorphic
extension to the whole $X$ hence necessarily vanishes.
 \footnote{That a smooth cubic threefold $X$ does not carry any non trivial global holomorphic 2-from ({\it i.e.}\,that $h^0(
\Omega^2_
X) = 0$)
follows easily from Hodge theory (computation of the Hodge number $h^{p,q}(X)$'s). 
It is worth mentioning that the vanishing of the trace of any 2-forms on $F_1(X)$ also follows from a famous theorem by Mumford, {\it cf.}\,\cite[p.\,200]{Mumford}.} 
  It is not immediately clear in which way this approach
can be extended to singular cubics. For instance, even for a cubic with simple singularities (nodes) such as Segre's cubic $\boldsymbol{S}$, proving a priori that the trace of an abelian differential 2-form on  $\Sigma=
F_1(\boldsymbol{S})$ is abelian seems a bit delicate. We have chosen to follow another approach to prove the proposition above, which is quite natural since it relies on the smooth case via a local deformation
of the considered cubic.\sk 

We continue to use the notations introduced before in the proof below: 

\label{zolo}
\begin{proof}[Proof of Proposition \ref{P:LW-X-singular}]   For $\eta \in  \boldsymbol{H}^0(F,\omega^2_{F}) $
arbitrary, we first want to prove that $\sum_{i=1}^6 \ell_i^*(\eta)$ is identically zero. This can be obtained
quite easily by considering a smoothing of $(X, x)$ as a pointed cubic hypersurface. So let
$\{(X_t, x_t)\}_{t\in (\mathbf C,0)}$  be a local analytic family of pointed cubic threefolds in $\mathbf P^4$ such that $X_0 = X$ and
$x_0 = x$, with $X_t$  smooth for $t \neq  0$ (the local existence of such a smoothing family is obvious). 
Then the germ
$\boldsymbol{\mathcal L\hspace{-0.05cm}\mathcal W}_{X_t,x_t}$ of 
$\boldsymbol{\mathcal L\hspace{-0.05cm}\mathcal W}_{X_t}$  at $x_t$ is a well-defined germ of linear 6-web for $t$ sufficiently close to $0$,  and the set 
of them form an (local) analytic deformation of 
$\boldsymbol{\mathcal L\hspace{-0.05cm}\mathcal W}_{X,x}= \boldsymbol{\mathcal L\hspace{-0.05cm}\mathcal W}_{X_0,x_0}$. 
\footnote{This is not rigorously correct since the germs of webs $\boldsymbol{\mathcal L\hspace{-0.05cm}\mathcal W}_{X_t,x_t}$ do not live on the same space. But it is easy
to solve this, for instance by considering a generic linear projection $P : \mathbf P^4\rightarrow  \mathbf P^3$  and an analytic family $\{g_t\}_{t\in (\mathbf C,0)}$  of
projective transforms $g_t \in  {\rm PGL}_4(\mathbf C)$ such that $g_0 = {\rm Id}$ and $g_t(P(x_t)) = P(x)$ for any $t \in  (\mathbf C, 0)$. Then the push-forward
germs of webs $(g_t \circ P)_*(\boldsymbol{\mathcal L\hspace{-0.05cm}\mathcal W}_{X_t,x_t})$  form a genuine local holomorphic deformation of
$P_*\big( \boldsymbol{\mathcal L\hspace{-0.05cm}\mathcal W}_{X,x}\big) $ 
 on $(\mathbf P^3, P(x)) \simeq  (\mathbf C^3, 0)$.}
 From this, we deduce that
the germs of maps $\ell_{i,t} :  (X_t, x_t) \rightarrow   F_t = F_1(X_t)$ which are the first integrals defining $\boldsymbol{\mathcal L\hspace{-0.05cm}\mathcal W}_{X_t,x_t}$  for
any $t \in  (\mathbf C, 0)$, depend holomorphically on it.\sk

For proving that
$\sum_{i=1}^6 \ell_i^*(\eta)=0$,  it suffices to show that the considered $\eta$  fits into an analytic
family $\{\eta_t\}$ where $\eta_t$  is a global holomorphic 2-form on $X_t$ for any $t$  sufficiently close to 0. Indeed,
in this case ${\rm Tr}(\eta_t) =
\sum_{i=1}^6 \ell_i^*(\eta_t)$ 
is holomorphic with respect to t from one hand, but is such
that ${\rm Tr}(\eta_t) \equiv  0$ for every $t \in  (\mathbf C, 0)$ distinct from 0 (thanks to Proposition \ref{P:LW-X}, since all the $X_t$'s
are smooth for $t \neq  0$. Specializing at $t = 0$ gives us the wanted relation. But that such an
$\eta$  can be (locally) holomorphically deformed to the smooth deformations $X_t$ of $X$  follows from standard facts of analytic geometry: the Fano surfaces $F_t = F_1(X_t) \subset  G_1(\mathbf P^4)$ for $t$ sufficiently
close to 0, organize themselves into an analytic family of surfaces $\pi : \mathcal F \rightarrow  (\mathbf C, 0)$ ({\it cf.}\,\cite{AltmanKleiman} or \cite[\S3]{Casalaina-Martin}). The direct
image by $\pi$ of the relative dualizing sheaf 
$\omega^2_{\mathcal F/ (\mathbf C,0)}
$  has fiber 
$\boldsymbol{H}^0(F_t,\omega^2_{F_t})$  
at any $t$. Since all these
fibers are 10-dimensional (according to \eqref{Eq:X-singular-Properties}), it follows that 
$\pi_*\big(\omega^2_{\mathcal F/ (\mathbf C,0)} \big) $ 
is locally-free. Thus any
$\eta\in \boldsymbol{H}^0(F_0,\omega^2_{F_0}) $ 
can be extended into an analytic section 
$t \rightarrow  \eta_t \in \boldsymbol{H}^0(F_t,\omega^2_{F_t})$. As explained just
above, this is sufficient to ensure that the following map is well-defined:
\begin{equation}
\label{Al:Tr}
{\rm Tr}  : \boldsymbol{H}^0(F,\omega^2_{F}) \longrightarrow 
 \boldsymbol{AR}\big(    \boldsymbol{\mathcal L\hspace{-0.05cm}\mathcal W}_{X,x}\big)
\, , \quad 
\eta\longmapsto 
 \big( \ell_i^*(\eta)\big)_{i=1}^6\,  .
\end{equation}
There is a last subtlety to deal with which concerns the injectivity of this map, which is not
completely obvious. Let us briefly discuss this matter. For $i = 1,\ldots, 6$, let $F'_i$  be the irreducible
component of $F$ which contains ${\rm Im}(\ell_i)$. Some of the $F'_i$'s may coincide, hence these components
can be univocally labeled $F_j$ for $j$ in some set $J$ of cardinality at most 6. Let $F^{gen}$  be the union $\cup_{j\in J}  F_j$  and denote by ${\widetilde F}^{gen}$ its image in $\mathbf P^9$ by the Pl\"ucker embedding $G_1(\mathbf P^4) \subset  \mathbf P^9$ (the notation
with the upper {\it gen} comes from the fact that $F^{gen}$ can equivalently be defined as the union of the
irreducible components of $F$ whose generic element meets the generic point of $X$).\sk 

The following statements are easily seen to be equivalent:
\begin{equation}
\label{Eq:3-equivalent-properties}
\begin{tabular}{l}
 \hspace{-0.3cm} $\bullet$ \,the map \eqref{Al:Tr} is injective;\sk   \\ 
 \hspace{-0.3cm} $\bullet$ \,the restriction map $\boldsymbol{H}^0(F,\omega_F^2)\rightarrow \, \oplus_{j\in J} \Omega^2_{\mathbf C(F_j)}$, $\omega\mapsto \big( \omega\lvert_{F_j}\big)_{j\in J}$ is injective; \sk\\ 
 \hspace{-0.3cm} $\bullet$ \,the surface $\widetilde F^{gen}$ is non degenerate in $\mathbf P^9$, {\it i.e.} $\big\langle \widetilde F^{gen}\big\rangle= \mathbf P^9$. 
\end{tabular}
\end{equation}

That these equivalent properties hold true is immediate for instance when $F$ is irreducible, but needs to be justified for an arbitrary cubic with finitely many singular points. 
This is the content of the lemma just below. The proposition is fully proved. \end{proof}

\begin{lem}
\label{L:X-6lines-PG}
For $X$ as above (that is, $X$ is an irreducible cubic threefold with finitely many singular
points and such that the lines it contains form a genuine 6-web whose leaves are generically
in general position), then $\widetilde F^{gen}$ is non degenerate in $\mathbf P^9$.
\end{lem}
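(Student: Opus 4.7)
\textbf{Proof plan for Lemma \ref{L:X-6lines-PG}.} The plan is to argue by contradiction. Under the identification $\mathbf{P}^9 = \mathbf{P}(\wedge^2 V)$, any hyperplane of $\mathbf{P}^9$ is of the form $H_\alpha = \{[\omega]\,:\,\alpha(\omega) = 0\}$ for some nonzero $\alpha \in \wedge^2 V^\vee$, and the inclusion $\widetilde F^{gen}\subset H_\alpha$ translates, at a generic point $x \in X$ with affine lift $\tilde x \in V$, into the assertion that each of the six lines $\ell_i(x) = \mathbf{P}(\langle \tilde x, y_i\rangle) \subset X$ is $\alpha$-isotropic, i.e.\ $\alpha(\tilde x, y_i) = 0$ for $i = 1,\ldots,6$. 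The strategy is to combine this pointwise isotropy with the general-position hypothesis of the $6$-web and then invoke the Gauss map of $X$.

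For generic $\tilde x$ one has $\tilde x \notin \ker\alpha$, so $\bar\alpha_{\tilde x} := \alpha(\tilde x,\cdot)$ is a nonzero linear form on $V/\langle\tilde x\rangle$, whose kernel projectivizes to a $\mathbf{P}^2$ inside $\mathbf{P}^3 = \mathbf{P}(V/\langle\tilde x\rangle)$. The six tangent directions $[y_i]$ lie in this $\mathbf{P}^2$ and also in $\mathbf{P}(T_x X) \cong \mathbf{P}^2$ (since $\ell_i \subset X$). If these two $\mathbf{P}^2$'s inside $\mathbf{P}^3$ were distinct, their intersection would be a single line containing all six $[y_i]$; then three of the $[y_i]$'s would be collinear in $\mathbf{P}(T_x X)$, contradicting the general-position hypothesis of the $6$-web (which demands that any three of the tangent lines span the $3$-dimensional $T_x X$). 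Therefore the two $\mathbf{P}^2$'s coincide, i.e.\ $\alpha(\tilde x,\cdot)$ vanishes on the affine tangent hyperplane $\ker df_{\tilde x}$, and the two nonzero linear forms $\alpha(\tilde x,\cdot)$ and $df_{\tilde x}$ must be proportional, defining the same point of $\mathbf{P}(V^\vee)$ for generic $\tilde x \in X$.

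The main (and essentially only) non-routine step is then to derive a contradiction from this coincidence. The map $\tilde\alpha : V \to V^\vee$, $\tilde x \mapsto \alpha(\tilde x,\cdot)$, is linear with nontrivial kernel $\ker\alpha \neq 0$, so its projectivization takes values in a proper linear subspace $L \subsetneq \mathbf{P}(V^\vee)$; whereas the Gauss map $\nabla f : X^{\mathrm{sm}} \to \mathbf{P}(V^\vee)$, $\tilde x \mapsto [df_{\tilde x}]$, has for image the dual variety $X^\vee$. Hence the equality of these two maps on $X$ forces $X^\vee \subset L$, i.e.\ $X^\vee$ is contained in a hyperplane of $\mathbf{P}(V^\vee)$. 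By biduality this would force $X$ to be a cone; but a cubic $3$-fold cone in $\mathbf{P}^4$ has only one line through a generic point (the generator of the ruling through it), which is incompatible with the $6$-web hypothesis. The resulting contradiction proves that no such $\alpha$ exists, so $\widetilde F^{gen}$ is non-degenerate in $\mathbf{P}^9$.
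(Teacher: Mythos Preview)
Your proof is correct and follows the same overall architecture as the paper's: argue by contradiction, fix a hyperplane $H_\alpha$ in $\mathbf P^9$ containing $\widetilde F^{gen}$, and at a generic smooth point $x\in X$ split into two cases according to whether the linear form $\alpha(\tilde x,\cdot)$ and the differential $df_{\tilde x}$ are proportional. The ``non-proportional'' case is handled identically in both proofs (the six tangent directions would lie on a line in $\mathbf P(T_xX)$, contradicting general position).

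Where you diverge is in the ``proportional'' case. The paper argues by degree: since $\alpha(p,\cdot)$ is linear in $p$ while $df_p$ is quadratic, the proportionality factor $\Lambda(p)$ must be linear, and then $X\cap\{\Lambda=0\}$ lies in the singular locus, contradicting isolated singularities. Your route instead exploits the parity observation that a skew form on the $5$-dimensional $V$ is automatically degenerate, so the image of $\tilde\alpha$ is contained in a hyperplane of $V^\vee$; the identification of the Gauss map with $[\tilde\alpha]$ then forces $X^\vee$ to be degenerate, hence $X$ a cone. This is a genuinely different and rather elegant endgame: it trades the degree bookkeeping for a one-line rank argument, and contradicts the $6$-web hypothesis rather than the isolated-singularity hypothesis.

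One minor imprecision: your claim that a cubic cone has ``only one line through a generic point'' is not literally true if the base cubic surface is ruled (then through generic $p$ there are infinitely many lines, coming from a $2$-plane in $X$). But this does not affect the argument: in either case the number of lines through a generic point is $1$ or $\infty$, never exactly $6$, so the $6$-web hypothesis is still violated.
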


\begin{proof} 
The proof is easy and goes by reduction ad absurdum. Recall that the Pl\"ucker embedding
$P : G_1(\mathbf P^4) \subset \mathbf P^9$ has for coordinates the Pl\"ucker coordinates
$\Delta_{ij}$  for $i, j$ such that $1 \leq  i < j \leq  5$:
if $L$ is the line passing through $[p]$ and $[q]$ for two distinct points $p, q \in \mathbf  C^5$  with coordinates $p_i$ 
and $q_i$  respectively, then by definition $ \Delta_{ij}(L)= \Delta_{ij}(p, q) = p_iq_j-p_jq_i$ 
  for any $i,j$ as above and the
 point 
 $ \big[ \Delta_{ij}(L)
 \big]_{1\leq i < j \leq 5} \in \mathbf P^9$ is well-defined (that is depends only on $L$) and corresponds to $P(L)$.\mk

That 
 ${\widetilde{F}}^{gen}= P(F^{gen}) \subset  \mathbf P^9$  
 is degenerate means that there exist scalars 
defining a hyperplane 
for $[p] \in  X$ generic and any $[q]$ such that the line $L_{p,q} = \langle [p], [q]\rangle $  joining $[p]$ to $[q]$ is contained
in $X$, then the following relation is satisfied
\begin{equation}
\label{Eq:HP(L)}
\Big\langle 
H, P(L_{p,q})\Big\rangle 
=\sum_{1\leq i < j \leq 5} 
h_{ij} \, \Delta_{i j}(p, q) = 0 .
\end{equation}

Let $F$ be a cubic homogeneous equation in five variables such that $F = 0$ be an equation of the affine cone over $X$ in $\mathbf C^5$ and denote by 
$F( \cdot , \cdot , \cdot)$ the associated symmetric trilinearization. For $[p] \in  X$  generic, the $q$'s such that the line $L_{p,q}$ is included in $X$ are those satisfying the following set of homogeneous equations (of degree 1,2 and 3 in q respectively): 
$$ F(p, p, q) = 2dF_p(q) = 0\, , \qquad F(p, q, q) = 0 \quad \mbox{ and } \quad  F(q, q, q) = 6F(q) = 0\, .$$ 
 Assuming that \eqref{Eq:HP(L)} is satisfied as soon as these preceding equations hold true leads to two possibilities, depending whether the  two $q$-linear equations $dF_p(q) = 0$ and \eqref{Eq:HP(L)}  are colinear or not. If it is the case, then there exists $\Lambda(p) \in \mathbf  C^*$  such that the equality between linear forms $ \Lambda(p)\cdot 
 \big\langle 
H, P(L_{p,\cdot })\big\rangle = dF_p(\cdot )$  holds true. Since this identity holds for $[p]$ generic in $X$ which is non-degenerate in $\mathbf P^4$  and because $dF_p(\cdot)$ is homogeneous of degree 2 with respect to $p$, it follows that the $\Lambda(p)$'s can be taken to be such that $p \mapsto \Lambda(p)$  be a linear form. But then this would imply
that $dF = 0$ on the hyperplane section $ X \cap  \{\Lambda=0 \}$, contradicting the assumption that $X$ has
isolated singularities. \sk 
 
Thus necessarily \eqref{Eq:HP(L)}  cuts a genuine line in the projectivized tangent space $\mathbf P(T_pX)$ which
obviously contains all the tangent directions at p of the lines 
$\ell_i(p)$'s. But this would imply in particular
that these six tangent directions span a 2-plane in $T_pX$, contradicting the general position
assumption made about the leaves of $ \boldsymbol{\mathcal L\hspace{-0.05cm}\mathcal W}_{X}$ at a generic point. The lemma follows. \end{proof}

\begin{rem}
The proof above actually shows that the conclusion of Lemma 3.4 is still satisfied
under the weaker assumptions that $\dim\big(X_{sing}\big) \leq  1$, $\dim\big(F_1(X)\big)=2$  and that among the lines
contained in $X$ through a generic point, at least three are in general position. 
For instance, the chordal cubic $\mathscr C$  satisfies these hypotheses (but in this case $F^{gen}$ coincides with $F_1(\mathscr C )$ and it was
known that its Pl\"ucker image in $\mathbf P^9$ is non degenerate according to the discussion in \S\ref{SS:je-sais-pas-quoi}).
\end{rem}
\mk

\paragraph{\bf On Abel's theorem for abelian 1-forms on the Fano surface.} 
\label{Abel's-1-form-Fano-Surface}
We believe that  some meaningful parts of 
Proposition  \ref{P:LW-X} generalize in a straightforward way to any cubic threefold $X$ with isolated singular points, at least as soon as the lines included in it define a genuine 6-web on it.\footnote{Note that for $X$ a smooth cubic with isolated singularities, it may happen that 
$ \boldsymbol{\mathcal L\hspace{-0.05cm}\mathcal W}_{X}$ be a genuine 6-web but not a skew one (this precisely happens for Segre's cubic $\boldsymbol{S}$ for instance). In such a situation, 
the space of 1-ARs of $ \boldsymbol{\mathcal L\hspace{-0.05cm}\mathcal W}_{X}$ may be of infinite dimension hence 
one cannot expect the whole  Proposition  \ref{P:LW-X} to be generalized as it is to the case of such cubic threefolds.}  However, as is already the case for smooth cubics ({\it cf.}\,\cite[p.\,338]{ClemensGriffiths}), everything that concerns 
the links between some differential $k$-forms on the Fano surface $F=F_1(X)$ and the $k$-ARs of   $ \boldsymbol{\mathcal L\hspace{-0.05cm}\mathcal W}_{X}$ is much more subtle when $k=1$ than when $k=2$. We discuss below several approaches which could give a proof of the extension for cubics with isolated singularities of the fact that the first trace map may induce an isomorphism between a space of 1-differential forms on the associated Fano surface and 
the  space of 1-ARs of the corresponding web of lines on the cubic.  We warn the reader that nothing is proved below,  rigorous investigations on this are left for a hypothetical future work.  \mk 

In what follows, $X$ stands for a fixed cubic threefold in $\mathbf P^4$ with isolated singularities.
\begin{center}
$\star$
\end{center}

As a first attempt, one might naively want to extend in a straightforward manner 
the approach discussed in the preceding paragraph to the case when $X$ has isolated singularities: 
considering a smoothing $\pi: \mathcal X\rightarrow B$ of $X=X_0=\pi^{-1}(b_0)$ over a smooth 1-dimensional base $B$,  one defines a coherent sheaf of `relative abelian 1-forms'  $\omega^1_{\mathcal F/B}$ on the total space of the associated family of Fano surfaces 
$\pi: \mathcal F\rightarrow B$  by requiring that  the following sequence of sheaves is exact 
$0\rightarrow \pi^*(\Omega_B^1)\rightarrow \omega_{\mathcal F}^1\rightarrow \omega^1_{\mathcal F/B}\rightarrow 0$.\footnote{Here  $\omega_{\mathcal F}^1$ 
stands for the sheaf of `{\it Abelian} (or {\it Barlet} \cite{Barlet}) {\it differential 1-forms on $\mathcal F$}', which 
can be characterized as the 
 meromorphic 1-forms on (analytic open subsets of) $\mathcal F$ inducing $\overline{\partial}$-closed currents on their definition domains.} 
  Then  setting $F_0=F_1(X_0)=F_1(X)$, the same arguments as in the preceding paragraph 
 would give that the trace induces an injective  linear map  $
\boldsymbol{H}^0(F_0,\omega_{F_0}^1) \rightarrow 
\boldsymbol{AR}^{(1)}\big( \boldsymbol{\mathcal L\hspace{-0.05cm}\mathcal W}_X\big)$, 
provided that the two following facts hold true: 
$$(i).\, \mbox{ one has }\,   \big(\omega^1_{\mathcal F/B}\big)\big\lvert_{F_0}=\omega^1_{F_0}\, ; 
\qquad \mbox{ 
and  } \qquad (ii).\hspace{0.4cm} 
h^0\big(F_0,\omega^1_{F_0}\big)=5\, .$$
 If one expects that these two facts are indeed satisfied, 
 verifying it is not obvious. Abelian differentials are not known to behave particularly well under pull-back hence $(i).$ needs to be justified. As for $(ii).$, it leads to the following interesting questions: 
according to \cite[(1.10).(ii)]{AltmanKleiman}, 
denoting by $F_0^*$ the smooth locus of $F_0$, 
one has $\Omega^1_{F_0^*}\simeq \big( \mathcal T^\vee\big)\lvert_{F_0^*}$ (compare with 
\eqref{Eq:TBT} in the smooth case). Does this identification extend through the singularities of $F_0$ into an isomorphism $\omega^1_{F_0}\simeq \big( \mathcal T^\vee\big)\lvert_{F_0}$? And if so, 
is  the restriction map $V^\vee \simeq \boldsymbol{H}^0\big(G_1(\mathbf P(V)), \mathcal T^\vee\big)\rightarrow 
\boldsymbol{H}^0(F_0, \omega^1_{F_0})$  injective or even  an 
isomorphism?  It does not seem easy to answer these questions, a reason for that being that 
$F_0$ has rather involved singularities (in particular, its singular locus $F_0\setminus F_0^*$ is 1-dimensional).  It is known that $h^1(\mathcal O_{F_0})=5$ ({\it cf.}\,\cite[Prop.\,(1.15)]{AltmanKleiman}) but one cannot deduce directly from this that $h^0(\omega^1_{F_0})=5$ 
since it is not clear if/how the pure Hodge structures on the first cohomology spaces 
$\boldsymbol{H}^1(F_b,\mathbf C)= \boldsymbol{H}^{1,0}(F_b)\oplus \boldsymbol{H}^{0,1}(F_b)$ for $b\in B$ with  $F_b$ smooth degenerate when $b$ tends to $b_0$. 
\sk

It seems to us that one might be able to deduce from the (rather technical) Hodge-theoretic considerations in \cite{Collino1} that everything discussed above indeed holds true when considering the chordal cubic $\mathscr C$ and its non irreducible hence singular Fano surface $F_1(\mathscr C)$ (this independently of the fact that the lines in $\mathscr C$ do not define a 6-web in it, but only a 3-web). This makes us confident that everything go as sketched above in the simpler case of cubics with only finitely many  singular points.
\begin{center}
\vspace{-0.35cm}$\star$
\end{center}\sk

Constructing the 1-ARs of $ \boldsymbol{\mathcal L\hspace{-0.05cm}\mathcal W}_{X}$ by means of the abelian 1-forms of the Fano surface $F$ is certainly interesting conceptually,  but not really necessary if mainly interested in the 1-rank of this web. 
Considering a smoothing $\pi: \mathcal X\rightarrow B$ as above, 
it would already be satisfying to be able to describe an analytic family of spaces $\boldsymbol{H}^1_b\subset \Omega_{\mathbf C(F_b)}^1 $ of rational 1-forms on $F_b=F_1(X_b)$ with 
$X_b=\pi^{-1}(b)$ for any $b\in B$, with $\boldsymbol{H}^1_b\simeq \boldsymbol{H}^0(F_b,\Omega_{F_b}^1)$ for any $b$ such that $X_b$ is smooth and  such that 
 the trace induces an injective map $\boldsymbol{H}^1_b\rightarrow 
\boldsymbol{AR}^{(1)}\big( \boldsymbol{\mathcal L\hspace{-0.05cm}\mathcal W}_{X_b}\big)$ for every $b \in B$, in particular for $b=b_0$.  We see several ways to  construct  such an analytic family of vector spaces $\{ \boldsymbol{H}^1_b\}_{b \in B}$: 
\begin{itemize}
\item  If $X_b$ is smooth, the map $V^\vee\rightarrow \boldsymbol{H}^0(F_b,\Omega_{F_b}^1)$ of {\bf v} page \pageref{lala} above can be made rather explicit (see around (12.8) in \cite{ClemensGriffiths}). If by chance the construction by means of residues might be extended to $X_0$, it would possibly give a map $V^\vee\rightarrow \boldsymbol{H}^0\big(F_0^*,\Omega^1_{F_0^*}\big)$ whose elements   in the  image extend as rational 1-forms on $F_0$ and  deform into holomorphic 1-forms on the smooth nearby fibers $X_b$ for $b\in (B,b_0)$. Denoting by $\boldsymbol{H}^1_{b_0}\subset  \Omega_{\mathbf C(F_0)}^1$ their span, we would thus obtain a linear map  $\boldsymbol{H}^1_{b_0}\rightarrow \boldsymbol{AR}^{(1)}\big( \boldsymbol{\mathcal L\hspace{-0.05cm}\mathcal W}_{X}\big)
$ whose image is expected to be 5-dimensional. \mk 
\item  Instead of trying to deal with 1-forms on the singular Fano surface $F_0$, one can rather consider working with the global holomorphic forms on a semi-abelian (hence smooth) variety 
naturally associated with it.  Indeed, {\it 1.} for $b\in B$ with $X_b$ smooth, the Albanese map ${\bf alb}: F_b\rightarrow {\rm Alb}(F_b)$  induces an isomorphism ${\bf alb}^*: \boldsymbol{H}^0\big(\Omega^1_{{\rm Alb}(F_b)}\big)
\rightarrow \boldsymbol{H}^0(F_b,\Omega_{F_b}^1)$; {\it 2.} an Albanese variety with an associated Albanese map has been constructed by several authors for cubics with singularities and this construction has been proven to behave well for suitable degenerating families (see {\it e.g.}\,\cite{GwenaThesis} for cubic with ordinary nodes only,  or \cite{Casalaina-Martin} and the reference therein for more involved isolated singularities).\footnote{Actually, in most papers dealing with singular cubics, what is constructed is a `generalized intermediate Jacobian' $J(X_0)$ associated to a suitable singular cubic threefold $X_0$, which is a semi-abelian variety of dimension 5 (actually, the semi-abelian intermediate Jacobian $J(X_0)$ is constructed by relating it to a Prym variety 
of a non ramified 2-1 cover of a plane quintic curve associated (in a non canonical way) to $X_0$, see \cite[\S3.2]{Casalaina-Martin}). For a smooth cubic $X$, there is a well-understood 
isomorphism $J(X)\simeq {\rm Alb}(F_1(X))$ and it is expected (and proved in many cases) that this extends to singular cubics, or at least to some of them.  The fact that most of these constructions for singular cubics are in terms of `intermediate Jacobians' and not of `Albanese varieties' actually is not important for our purpose.}. In the case under scrutiny, $ \{ {\rm Alb}(F_1(X_b))\}_{b\in B}$  would be an analytic family of 
5-dimensional semi-abelian varieties from which it should be possible to construct an analytic flat family of spaces $\boldsymbol{H}^1_b$ with the required properties.  We believe that such an approach should work for cubics with ordinary nodes \cite{GwenaThesis} or for those with `{\it allowable singularities}' according to the terminology used in \cite{Casalaina-Martin}.
\mk 
\item  In the case when $X_0$ has isolated singularities of the simplest type, namely ordinary nodes, an approach might be to find a nice desingularization $\tilde \pi : \widetilde{\mathcal F}\rightarrow B$ of the relative Fano surface $ \pi : \mathcal F\rightarrow B$ such that the central fiber $\tilde F_0=\tilde \pi^{-1}(0)$ be a reduced surface with normal crossing.  In such a situation, the $(1,0)$-component of the Hodge structure on $\boldsymbol{H}^1(\tilde F_0,\mathbf C)$ obtained as the limit of the standard one of the smooth Fano surfaces $\tilde  F_b$ for $b\in (B,b_0)$ coincides  with the space of global sections of a certain sheaf $\Lambda_{\tilde F_0}^1$
of meromorphic 1-forms on 
 $\tilde F_0$ (see \cite[\S3]{Friedman}).  The arguments sketched at the beginning of this paragraph ({\it cf.}\,our `first attempt' above), if applied not to 
$\mathcal F$ but  to  the desingularized family $\tilde \pi : \widetilde{\mathcal F}\rightarrow B$, 
should provide the sought-after injective morphism $\boldsymbol{H}^0(\tilde F_0,\Lambda_{\tilde F_0}^1\big)\rightarrow 
\boldsymbol{AR}^{(1)}\big( \boldsymbol{\mathcal L\hspace{-0.05cm}\mathcal W}_{X}\big)$.

 \label{Colino-Desingularization}
 In  \cite{Collino2}, Collino describes quite explicitly the desingularization $\tilde \pi : \widetilde{\mathcal F}\rightarrow B$ when the initial non smooth $X_0$ is Segre's cubic primal and the limit Hodge structure on $\tilde F_0$ (see Appendix B at the end of this text for more details in the case of Segre's cubic).  We believe that most of the results in this paper may be adapted to follow 
 the strategy sketched just above when $X_0$ is any cubic threefold with ordinary nodes. 
\end{itemize}
\begin{center}
\vspace{-0.05cm}$\star$
\end{center}\sk 

Finally, as a last  possible way to construct the 1-ARs of 
$\boldsymbol{\mathcal L\hspace{-0.05cm}\mathcal W}_{X}$, we would like to mention the old and not well-known but very interesting paper \cite{BW}.   
In it (in \S6 more precisely),  Blaschke and Walberer  construct quite directly from any cubic homogeneous form  defining a given cubic threefold 
$X\subset \mathbf P^4$, the 1-ARs not of the web by lines on $X$ but of an associated 3-web, namely the curvilinear 3-web $\boldsymbol{\mathcal T\hspace{-0.07cm}\mathcal W}_{X}$ naturally defined on the variety $\boldsymbol{\mathcal T}_{\hspace{-0.05cm}X}$ of `triangles included in $X$'.\footnote{We will define this variety and discuss the  3-web 
naturally defined on it 
in the case of the chordal cubic $\mathscr C$  in Appendix B to which we refer the reader for more details.} 
It would be interesting to check whether their approach can be adapted to build by hand an explicit family of dimension 5 of 1-RAs, but for $\boldsymbol{\mathcal L\hspace{-0.05cm}\mathcal W}_{X}$ this time.  Note that in  \cite{BW}, essentially no assumption  about the singularities of $X$ are made; it  seems to be  only required  that $\boldsymbol{\mathcal T\hspace{-0.07cm}\mathcal W}_{X}$ is a skew curvilinear 3-web whose leaves are in general position generically. 
  It would be interesting to revisit \cite{BW} with a  modern and rigorous approach, for instance to determine precisely under which hypothesis the results contained in it hold true.

\subsection{Segre's cubic (case $n=3$).}
\label{SS:Segre's-cubic}
We now focus on the case of Segre's cubic. Since it is a very well-known and studied threefold, 
no proof of its properties stated below will be given, the interested reader will refer to the specific references indicated in the following subsection.







\subsubsection{\bf References.}
Segre's cubic has been discussed in many papers since its discovery by Segre in 1887. 
As classical references, in addition to Segre's original paper
\cite{Segre} (\S24 to \S27 therein), one can mention  \S19 to \S31 in `Capitolo $8^{\rm o}$' of Bertini's book \cite{Bertini}, \cite{Snyder}, \cite[Chap.\,VIII, \S2.32]{SempleRoth}. 
 As for other recent references, the following ones are interesting considering our purpose but there may exist many others we are not aware of:  
Segre's cubic primal is discussed in several papers by Dolgachev, 
such as \cite{DolgachevSegre} (in particular \S2 and \S4 therein) or
\cite[\S5]{Dolgachev15}.  Other modern references are:  \cite[\S3.2]{Hunt}, 
\cite{GwenaThesis,Gwena}, 
\cite{Collino1},   \cite{Collino2} and  \cite[\S2]{DolgachevFarbLooijenga}.
Among the previous references, \cite{Collino2} is perhaps the most relevant one  since  Collino describes there the Fano surface of lines of Segre's cubic quite explicitly and studies some of its features. It is  unfortunate that such an  interesting text has not be published in proper form.


\subsubsection{\bf Segre's cubic}
We recall well-known classical facts about Segre's cubic. 

First, we introduce some notation: 
Let $[U_1,U_2,U_3,U_4]$ be some fixed homogeneous coordinates on $\mathbf P^3$ and 
  $x,y,z$ stand for  the affine coordinates on $\mathbf C^3= \mathbf P^3\setminus \{ U_4=0\}$ corresponding to the 
  embedding $(x,y,z)\mapsto [x:y:z:1]$.  
  As above,  the $p_i$'s (for $i=1,\ldots,5$) stand for the following points  in 
  $\mathbf P^3$:  $p_i=[\delta^i_j]_{j=1}^4$ for $i=1,\ldots,4$ and $p_5=[1,\ldots,1]$.  
  We set $P=\{ p_i\}_{i=1}^5$.
  \sk 

 In what follows, whitout any supplementary precision, $i,j,k,l,m$ will stand for pairwise distinct  elements of $\{1,\ldots,5\}$ (hence such that $\{i,j,k,l,m\}=\{1,\ldots,5\}$).  For such indices, 
 we denote by 
\begin{itemize}
\item $L_{ij}$  the line   passing through $p_i$ and $p_j$: 
{\it i.e.} $L_{ij}=\big\langle p_i,p_j  \big\rangle\subset \mathbf P^3$;
\item $P_{ijk}$ or $P^{lm}$  the 2-plane  containing $p_i,p_j$ and $p_k$: 
{\it i.e.}
$P_{ijk}=P^{lm}=\big\langle  p_i,p_j,p_k\big\rangle \subset \mathbf P^3$.
\end{itemize}
\mk 

Recall ({\it cf.}\,\S\ref{SSS:Ln-varphin} above) that 
$\boldsymbol{\mathcal L}_3$ is the linear system of quadric surfaces in $\mathbf P^3$ passing through all the $p_i$'s: 
$ \boldsymbol{\mathcal L}_3=
 \lvert 2H-\sum_{i=1}^5 p_i \lvert$.  Its elements are exactly the surfaces in $\mathbf P^3$ cut out by homogeneous equations $\sum_{i<j} a_{ij} U_iU_j=0$ for some coefficients $a_{ij}$'s whose total sum is zero. 

 In the coordinates $x,y,z$, the rational map associated to  $
\boldsymbol{\mathcal L}_3$ can be taken to be 
\begin{equation}
\label{Eq:varphi-3-xyz}
\varphi_3: (x,y,z)\dashrightarrow \big[ \, x(z-y)\, :\, 
x(1-y) \, :\,
y(z-x) \, :\,
y(1-x) \, :\,
z-xy\,
\big]\, . 
\end{equation}
Its base-points in $\mathbf C^3$ are origin $\boldsymbol{0}=(0,0,0)$ and the point 
$\boldsymbol{1}=(1,1,1)$ which correspond to $p_4$ and $p_5$ respectively.  We will again denote by $\varphi_3$ the extension to $\mathbf P^3$ of the above map (as a rational map). 
 By definition, the variety $V_3$ is the closure of $ \varphi_3\big( \mathbf P^3\setminus P\big)$ in $ \mathbf P^4$.  It can be verified that it is the cubic hypersurface cut out by 
the following equation: 
$$ X_1X_2X_3 - X_1X_2X_4 - X_1X_3X_4 + X_1X_4X_5 + X_2X_3X_4 - X_2X_3X_5=0 \, .$$

For any distinct $p_i$ and $p_j$, the line $L_{ij}=\langle p_i,p_j\rangle $ is contracted onto a singular point of $V_3$, which will be denoted by 
$p_{ij}$.  The $p_{ij}$'s are easy to determine: they are the six vertices of the canonical simplex in $\mathbf P^4$ (namely the points $\big[\delta_i^j\big]_{j=1}^5$ for $i=1,\ldots,5$ and  $[  1  :1  :  1 : 1: 1]$) and the following four other points 
$[  0:  0   :  1   :   1  :  1 ]$, $[  0 :  1   :  0   :   1  :  1 ]$, $[ 1  : 1  :   0  :  0  :  1 ]$ and $[ 1  : 0  :   1 :  0   : 1 ]$.  
Hence the $p_{ij}$'s are 10 pairwise distinct singular points of $V_3$. On another hand, the map $\varphi_3$ has rank 3 outside the union of the 10 lines $L_{ij}$'s, hence $V_3$ is smooth outside the $p_{ij}$'s. Because these singular points  all are ordinary nodes (as an easy analysis 
shows),  one deduces that $V_3$ coincides with Segre's 10-nodal cubic hypersurface. For that reason,  we will denote this threefold by $\boldsymbol{S}$ from now on. We will write $N_{\boldsymbol{S}}=\{\, p_{ij} \, \lvert \, 1\leq i<j\leq 5\, \}$ for the set of its nodes.
\sk 

Let us describe the 2-planes  and some lines contained in $\boldsymbol{S}$ and their incidence with  the nodes.  For any $i,j,k$ (pairwise distinct), the quadric of $\boldsymbol{\mathcal L}_3$ passing through a generic point $p\in P_{ijk}$ is the union of $P_{ijk}$ itself with the 2-plane passing through $p_l$, $p_m$ and $p$. It follows that $\varphi_3$ maps $P_{ijk}$ onto a plane in $\boldsymbol{S}$, that we will denote $P_{ijk}$ too. 
These  ten 2-planes $P_{ijk}$'s are pairwise distinct.  In addition to them, $\boldsymbol{S}$ contains five other planes, which can be described as follows: let $\pi: \widetilde{\boldsymbol{S}}={\rm Bl}_P \mathbf P^3\rightarrow \mathbf P^3$ be the blow-up at the five base-points of $\lvert  \boldsymbol{\mathcal L}_3\lvert$. Then $\widetilde{\varphi}_3=\varphi_3\circ\pi : \widetilde{\boldsymbol{S}} \rightarrow \boldsymbol{S}$ is a morphism and if $E_i$ stands for the exceptional divisor corresponding to $p_i$ ({\it i.e.}\,$E_i=\pi^{-1}(p_i)$) for any $i$, then 
$P_i=\widetilde{\varphi}_3(E_i)$ is a 2-plane in Segre's cubic.   In this way, we get five other planes included in $\boldsymbol{S}$ which with the 15 described above form the whole set of 2-planes contained in Segre's cubic, which therefore has cardinality 16.  
In order to have an uniform notation, we set $P^{ij}=P_{klm}$ and $P^{i6}=P_i$, here 
for any $i,j$ such that $1\leq i\leq j\leq 5$. 
\begin{rem}
There is a nice uniform interpretation of the planes $P^{u,v}$ for any $1\leq u<v\leq 6$ 
in terms of Knudsen-Mumford modular compactification $\overline{\mathcal M}_{0,6}$. 
According to a well-known result of Kapranov, the blow-up of 
$\widetilde{\boldsymbol{S}}$ along the union of the strict transforms  by $\pi$ of the lines $L_{i,j}$ is isomorphic to $\overline{\mathcal M}_{0,6}$ and 
any  plane $P^{uv}$ in Segre's cubic is the image by the morphism $\overline{\mathcal M}_{0,6}\rightarrow \boldsymbol{S}$ associated to $\varphi_3$, of the boundary divisor 
$\Delta_{uv} $ of $ \overline{\mathcal M}_{0,6}$ (naturally isomorphic to $\overline{\mathcal M}_{0,5}$) formed by limits of 
configurations of 6 points on the projective line obtained when 
making the $u$-th and $v$-points of the configurations collapse. 
A generic point of $\Delta^{uv}$ is a rational curve $C_1\cup C_2$ with two irreducible components $C_1,C_2$ intersecting at a node, with three marked points on $C_1$, namely the $u$-th,  the $v$-th and the node.
%
%
%
\end{rem}

One easily determines the incidence relations between the nodes of $\boldsymbol{S}$ and the 2-planes included in it: for any 
indices $i,j,k,l,m$ such that their set coincides with $\{1,\ldots,5\}$, one has 
\begin{itemize}
\item $P^{ij}\cap N_{\boldsymbol{S}}=\{\, p_{ij}, p_{kl}, p_{km}, p_{lm} \,\}$ and 
$P^{i6}\cap N_{\boldsymbol{S}}=\{\, p_{ij}, p_{ik}, p_{il}, p_{im} \,\}$; moreover, the four nodes in a plane included in $\boldsymbol{S}$ are in general position in it.
\sk 
\item The set of 2-planes in $\boldsymbol{S}$ adjacent to $p_{ij}$ is $ \{\, P^{ij}, P^{kl}, P^{km}, P^{lm}, P^{i6}, P^{j6}\, \}$.
\end{itemize}
\sk

Finally, for any distinct pairs $(i,j)$ and $(k,l)$ (with $1\leq i<j\leq 5$ and 
similarly for $k$ and $l$), the line  $\ell_{ij,kl}=\langle  p_{ij}, p_{kl} \rangle\subset \mathbf P^4$ is included in $\boldsymbol{S}$ as well. From the preceding point, it follows that these lines are  pairwise distinct  hence are 45 in number.  These lines serve to describe the intersections of two distinct  planes of Segre's cubic. For $i,j,k,l$ in $\{1,\ldots,5\}$ such that $\{i,j\}\neq \{k,l\}$, the intersection of $P^{ij}$ with $P^{kl}$ is either a point or a line: 
\begin{equation}
\label{Eq:takolo}
\begin{tabular}{l}
 $P^{ij}\cap P^{i6}=\{ \, p_{ij}\, \}$, $P^{i6}\cap P^{j6}=\{ \, p_{ij}\, \}$ and $P^{ij}\cap P^{ik}=\{ \, p_{lm}\, \}$ \hspace{0.27cm} (intersection of dim. 0);
 \vspace{0.1cm}
 \\
$P^{ij}\cap P^{kl}=\ell_{ij,kl}= \langle  \, p_{ij}, p_{kl} \rangle$, 
 $P^{ij}\cap P^{k6}=\ell_{kl,km}=\langle  \, p_{kl}, p_{km} \rangle$ 
 \hspace{0.2cm} (intersection of dim. 1).
\end{tabular}
\end{equation}


\begin{rem}
 It may be helpful to indicate how the lines $\ell_{ij,kl}$'s can be realized `at the source' (that is, in terms of the $\mathbf P^3$ serving as source of the rational parametrization $\varphi_3$ of $\boldsymbol{S}$). When $\{i,j\}\cap \{k,l\}$ is empty, $\ell_{ij,kl}$ is just the image of the line $P^{ij}\cap P^{kl}\subset \mathbf P^3$ by $\varphi_3$.  Giving an interpretation of $\ell_{ij,ik}$ for $i,j,k$ pairwise distinct is a bit more involved: the closure of the set of tangent directions at $p_i$ of the lines joining $p_i$ to 
 a generic point of $\langle p_j,p_k\rangle$ is a line in $\mathbf P(T_{p_i}\mathbf P^3)$. We denote by $l_{i,jk}$ the corresponding line in the exceptional divisor $E_i=\pi^{-1}(p_i)\subset \widetilde{\boldsymbol{S}}$. Then 
 an easy computation shows that, by restriction,  
 $\widetilde{\varphi}_3$ induces a linear isomorphism from $l_{i,jk}$ onto $\ell_{ij,ik}\subset \boldsymbol{S}$.\sk
 
Beware that our notations above for the lines through two nodes of $\boldsymbol{S}$ 
are lightly different from the ones, namely $L[(ij),(kl)]$, used  in \cite{Collino2}. In Collino's text, 
$i,j,k,l$ stand for indices ranging from 1 to 6 (with $\{i,j\}\neq \{k,l\}$ of course). 
When all fourth are less than or equal to 5, then Collino's $L[(ij),(kl)]$ coincides with the line we denote by $\ell_{ij,kl}$.  But if one among these indices is equal to 6, for example $l$, then one has instead 
$L[(ij),(k6)]=\langle p_{ik},p_{jk}\rangle=\ell_{ik,jk}$.
%
\end{rem}

\subsubsection{\bf Fano surface of Segre's cubic} 
\label{SSS:Fano-Segre-cubic}
Everything here is taken from \cite[\S2.2.8]{Collino2}.  
For a line $L$ in $\mathbf P^4$, we write $[L]$ for the point of the Grassmann manifold $G_1(\mathbf P^4)$ to which it corresponds.  From the preceding description of the nodes and of the planes in $\boldsymbol{S}$, one can deduce a combinatorial description of its Fano surface denoted by $\Sigma$, that we will often identify with its image by the  
Pl\"ucker embedding
\begin{equation}
\label{Eq:Plucker-F1(Segre)}
\Sigma=F_1(\boldsymbol{S})\subset G_1(\mathbf PV)
\hookrightarrow  \mathbf P \big(\wedge^2V\big) \simeq \mathbf P^{9}\, .
\end{equation}

It is a non irreducible surface with 21 irreducible components, all of which are rational and can be  described easily: 
\begin{itemize}
\item for $1\leq i< j\leq 6$, the set of lines of  $P^{ij}$ forms an irreducible component of $\Sigma$, denoted by  $\Sigma^{ij}$. Any such component is isomorphic to $\mathbf P^2$ and  there are 15 components of this type; 
\mk
\item  for $i=1,\ldots,6$,  the images by $\varphi_3$  of the lines through 
 $p_i$ in $\mathbf P^3$ if $i\leq 5$, or of the twisted cubics through all the $p_k$'s when $i=6$, give rise to an irreducible component of $\Sigma$, denoted by  $\Sigma(i)$.  For $i\leq 5$, it  identifies with the blow up at the $[L_{ij}]$'s (with $j\in \{1,\ldots,5\}\setminus \{i\}$) of the 
 projective plane parametrizing the lines of $\mathbf P^3$ passing through $p_i$.  Therefore  $\Sigma(i)$ is isomorphic  to $\overline{\mathcal M}_{0,5}$. It can be verified that the same holds true for $\Sigma(6)$ too;
\mk 
\item the images 
 of the components of $\Sigma$ by the Pl\"ucker embedding are not difficult to describe: 
 \sk
\begin{itemize}
\item 
\vspace{-0.3cm}
for any $i,j$ distinct, $\Sigma^{ij}$ is  the set of lines of a 2-plane  in $\mathbf PV\simeq \mathbf P^4$ hence its image in $\mathbf P^9$ is a 2-plane as well; 
\sk
\item  for each $i$,  the restriction of  \eqref{Eq:Plucker-F1(Segre)} to 
$\Sigma(i)$ coincides with the anticanonical embedding hence  
its image is a quintic del Pezzo surface (which spans a 5-plane in $\mathbf P^9$).
\end{itemize}
\end{itemize}
\sk

Our notations are unfortunately not well suited to deal with the birational $\mathfrak S_{6}$-symmetries of $\boldsymbol{S}$ and of its Fano surfaces. In order to describe the  incidence relations between the 21 components of $\Sigma$,  
it is much more convenient to use Collino's notations  $L[(ij),(kl)]$ 
(recalled above) 
for the lines joigning two  nodes of $\boldsymbol{S}$. 
Then the intersections between the different components of $\Sigma$ 
can be 
seen to be as follows where now $i,j,k,l,m,n$ stand for elements ranging in $\{1,\ldots,6\}$ 
(with the pairs $\{i,j\}$ and $\{k,l\}$ distinct, etc) : 
\begin{itemize}
\item[$-$] the relations  \eqref{Eq:takolo} can be stated in equivalent terms for the $\Sigma^{ij}$'s: the two planes  $\Sigma^{ij}$ and $ \Sigma^{kl}$ do not meet if 
 $\{i,j\}\cap \{k,l\}\neq \emptyset $ whereas one has 
$\Sigma^{ij}\cap \Sigma^{kl}=\{ \,L[(ij),(kl)] \, \}$ otherwise;
\sk  
\item[$-$] as for the intersection of two del Pezzo components,  
when $i$ and $j$ are distinct then $\Sigma(i)\cap\Sigma(j)$ is the set of three points of $\Sigma$ 
corresponding to the lines $L[(kl),(mn)]$ for all $k,l,m,n$ such that $\{i,j,k,l,m,n\}=\{1,\ldots,6\}$;
\mk  
\item[$-$] there are two possibilities for the intersection of a plane $\Sigma^{ij}$ with a del Pezzo component $\Sigma(k)$:  first,  one obviously has $\Sigma^{ij}\cap \Sigma(k)=\emptyset$
if $k\in \{i,j\}$.   At the opposite, when $i,j$ and $k$ are pairwise distinct, then  $\Sigma(k)$ intersects 
$\Sigma^{ij}$ along a line denoted by $R(k,ij)$, which is isomorphic to $\overline{\mathcal{M}}_{0,4}$.  In our non symmetric presentation,  for $k<6$ the line  $R(ij,k)$ corresponds to the (images by $\varphi_3$ of the) lines joining $p_k$ to a point of $L_{lm}=\langle p_l,p_m\rangle \subset \mathbf P^3$ and the three boundary points  correspond to the lines
passing through $p_l$, $p_m$ and the point of intersection  in $\mathbf P^3$ of the line $L_{ij}=\langle p_i,p_j\rangle$ with the plane $P^{ij}=\langle p_k,p_l,p_m\rangle$. 
\mk  
\end{itemize}
A noteworthy last point concerns the incidence between the lines joining two nodes of $\boldsymbol{S}$ but considered as points of the Fano surface $\Sigma$, and the 2-dimensional components of the latter:  using Collino's notation, 
when $i,j,k,l,m,n$ stand for the elements of $\{1,\ldots,6\}$ then the two del Pezzo's
$\Sigma(m)$, $\Sigma(n)$ and the two planes $\Sigma^{ij}$ and $\Sigma^{kl}$ are exactly 
 the components of $\Sigma$ to which $L[(ij),(kl)]\in \Sigma$ belongs.  Essentially  quoting Collino here, one concludes by saying that 
{\it `this shows that $\Sigma=F(\boldsymbol{S})$ is not a normal crossing surface: at any point $L[(ij),(kl)]$, four irreducible components of $\Sigma$ meet, and moreover these components split in two couples which intersect locally only at
this point'}.

\subsubsection{\bf Abelian relations and rank of $\mathcal W_{{0,6}}$}
\label{SSS:2-ARs-W-06}
We have seen above that $\boldsymbol{\mathcal W}_{0,6} $ identifies with the 6-web by lines 
$\boldsymbol{\mathcal L\hspace{-0.05cm} \mathcal W}_{\boldsymbol{S}}$ on Segre's cubic. 
Our goal here is to establish that Proposition C of the Introduction, which is stated for a `sufficiently generic' hypercubic of $\mathbf P^4$, holds true for $\boldsymbol{S}$ 
as well: 
\begin{prop} 
\label{P:2-rank-W06}
1. The trace induces an isomorphism ${\rm Tr}: \boldsymbol{H}^0\big(\Sigma,\omega_\Sigma^2\big)\simeq 
\boldsymbol{AR}\big( 
\boldsymbol{\mathcal L\hspace{-0.05cm} \mathcal W}_{\boldsymbol{S}}
\big)$. 
\vspace{0.05cm}

2. Therefore $\boldsymbol{\mathcal W}_{0,6}$
is an algebraizable 6-web of maximal 2-rank $
{\bf rk}\big(\boldsymbol{\mathcal W}_{0,6}\big)=
h^0\big(\Sigma,\omega_\Sigma^2\big)=10$. 
\end{prop}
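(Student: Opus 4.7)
First I would observe that Segre's cubic $\boldsymbol{S}$ satisfies the hypotheses of the general proposition established earlier for cubic threefolds with isolated singular points: $\boldsymbol{S}$ is irreducible, its singular locus consists of only ten nodes, and the birational identification $\boldsymbol{\mathcal{W}}_{0,6} \simeq \boldsymbol{\mathcal{LW}}_{\boldsymbol{S}}$ together with the general position property of the forgetful maps of $\mathcal{M}_{0,6}$ ensures that the six lines through a generic point $s \in \boldsymbol{S}$ form a genuine $6$-web in general position. Consequently, the isomorphism ${\rm Tr}\colon \boldsymbol{H}^0(\Sigma, \omega_\Sigma^2) \simeq \boldsymbol{AR}(\boldsymbol{\mathcal{LW}}_{\boldsymbol{S}})$ formally follows from the singular-case proposition. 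I would then supplement this with an explicit, self-contained verification tailored to $\boldsymbol{S}$, since it furnishes concrete formulas for a basis of abelian relations that will be used in the subsequent study of the $\mathfrak{S}_6$-module structure.

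For the explicit computation, I would start from the identification $\omega_\Sigma^2 \simeq \mathcal{O}_\Sigma(1)$ via the Pl\"ucker embedding $\Sigma \hookrightarrow \mathbf{P}(\wedge^2 V) = \mathbf{P}^9$, together with the fact that the restriction $\boldsymbol{H}^0(\mathbf{P}^9, \mathcal{O}(1)) \to \boldsymbol{H}^0(\Sigma, \omega_\Sigma^2)$ is an isomorphism. This provides a canonical basis of $\boldsymbol{H}^0(\Sigma, \omega_\Sigma^2)$ given by the ten Pl\"ucker coordinates $\Delta_{ij}$ for $1 \le i < j \le 5$. For each $\Delta_{ij}$, I would explicitly compute the trace $\sum_{k=1}^{6} \ell_k^{*}(\Delta_{ij})$ as follows: use the affine coordinates $u = (u_1, u_2, u_3)$ defined earlier, through the rational parametrization $\varphi_3$ of $\boldsymbol{S}$; parametrize the six lines through $\varphi_3(u)$, namely the five lines coming from the pencils through the points $p_1, \ldots, p_5$ (obtained via $\varphi_3$ applied to the line in $\mathbf{P}^3$ joining $p_k$ to the source point) and the sixth being the image of the rational normal cubic through $p_1, \ldots, p_5$ and $u$; read off their Pl\"ucker coordinates as rational functions of $u$; and verify by direct differentiation that the sum of the six pullbacks vanishes identically.

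Injectivity of the trace map would then follow from the non-degeneracy assertion of the lemma on generic lines through a point of an isolated-singularity cubic: if ${\rm Tr}(\omega) = 0$, then $\omega$ vanishes on the union of the irreducible components of $\Sigma$ generically visited by the $\ell_k$'s; since the Pl\"ucker image of this union spans all of $\mathbf{P}^9$ (as Segre's cubic has no special plane absorbing any of the six families generically), $\omega = 0$. Combined with $\dim \boldsymbol{H}^0(\Sigma, \omega_\Sigma^2) = 10$ and Damiano's upper bound, which specializes at $(n,d) = (3,6)$ to
\begin{equation*}
{\rm rk}(\boldsymbol{\mathcal{W}}_{0,6}) \;\le\; \textstyle\binom{1}{0}\cdot 3 + \binom{2}{1}\cdot 2 + \binom{3}{2}\cdot 1 \;=\; 10,
\end{equation*}
we obtain that both maps in the chain $\boldsymbol{H}^0(\Sigma, \omega_\Sigma^2) \hookrightarrow \boldsymbol{AR}(\boldsymbol{\mathcal{LW}}_{\boldsymbol{S}}) \subseteq \boldsymbol{AR}(\boldsymbol{\mathcal{W}}_{0,6})$ must be isomorphisms, and that $\boldsymbol{\mathcal{W}}_{0,6}$ has maximal rank $10$.

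The main obstacle is the bookkeeping in the explicit trace computation: the Fano surface $\Sigma$ has $21$ irreducible components meeting in a rather intricate pattern, and the six local first integrals $\ell_k$ land in generically distinct components (five planes and one del Pezzo surface), so writing down their Pl\"ucker coordinates and summing the pullbacks is delicate. The $\mathfrak{S}_6$-equivariance of the whole picture substantially reduces the labor: the ten Pl\"ucker generators and the six local first integrals are each permuted by $\mathfrak{S}_6$, so it suffices to verify the trace identity for a single orbit representative of each $\mathfrak{S}_6$-type of $\Delta_{ij}$ and then invoke equivariance.
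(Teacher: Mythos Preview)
Your overall two-pronged strategy matches the paper's: invoke the general result for cubics with isolated singularities (which indeed covers $\boldsymbol S$), and then carry out an explicit Pl\"ucker-coordinate computation to get a concrete basis of abelian relations. Two points deserve correction, however.

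First, a factual slip: the six covering families of lines on $\boldsymbol S$ do \emph{not} land in ``five planes and one del Pezzo surface.'' All six local first integrals $\ell_k$ take values in the six del Pezzo components $\Sigma(1),\ldots,\Sigma(6)$ of $\Sigma$ (each isomorphic to $\overline{\mathcal M}_{0,5}$); the fifteen $\mathbf P^2$-components $\Sigma^{ij}$ parametrize lines lying in the fifteen planes $P^{ij}\subset\boldsymbol S$ and are \emph{not} covering families. This does not break your argument---you still need non-degeneracy of the union of the $\Sigma(k)$'s in $\mathbf P^9$, which the paper verifies---but the picture you sketch is wrong.

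Second, and more substantively, your description of the explicit verification skips a genuine step. The Pl\"ucker minors $\Delta_{a,b}(M_k)$ are functions on $\Sigma(k)$, not $2$-forms; the abstract isomorphism $\omega_\Sigma^2\simeq\mathcal O_\Sigma(1)$ tells you that the abelian $2$-form restricted to $\Sigma(k)$ must have the shape $\Delta_{a,b}(M_k)\cdot\sigma_k\,du_k\wedge dv_k$ for some rational function $\sigma_k(u_k,v_k)$ which is \emph{not} determined a priori. The paper's computation is precisely the determination of $\sigma_k$: one finds that the unique (up to scalar) choice making all ten traces vanish is
\[
\sigma_k(u,v)=\frac{1}{m_5(u,v)}=\frac{1}{uv(u-1)(v-1)(u-v)}\,.
\]
Without this, ``verify that the sum of the six pullbacks vanishes'' is not a well-posed instruction. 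Your suggestion to exploit $\mathfrak S_6$-equivariance to cut down the computation is sensible, but it reduces the number of $(a,b)$'s to check, not the need to pin down $\sigma_k$.
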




\paragraph{\bf An elementary and explicit proof of Proposition \ref{P:2-rank-W06}}
The key fact behind the elementary effective approach  discussed below is   
point 2.\,in \eqref{Eq:X-singular-Properties}, namely that  the dualizing sheaf $\omega_F^2$ of the Fano 
surface $F$ of a cubic threefold $X$ with finitely many ordinary singular points 
 coincides with the pull-back of $\mathcal O_{\mathbf P^9}(1)$ under the
   Pl\"ucker 
embedding $F\subset G_1(\mathbf P^4)\hookrightarrow \mathbf P^9$. 
Since the dualizing sheaf $\omega_F$ naturally identifies with the sheaf  $\omega^2_F$ of abelian differential 2-forms on $F$ ({\it cf.}\,\cite{Barlet}), the aforementioned  fact allows to get 
explicit expressions for a basis of $\boldsymbol{H}^0(F,\omega^2_{F})$ 
as soon as the embedding $F\hookrightarrow \mathbf P^9$ is explicitly known.  One is reduced to verify whether the elements of this basis give rise to abelian relations for 
the associated linear web $\boldsymbol{\mathcal L\hspace{-0.05cm} \mathcal W}_{{X}}$, which can be done through direct computations.  In the sequel, we apply this approach to 
Segre's cubic $\boldsymbol{S}$ in order to describe 
the ARs then to compute  the rank of $\boldsymbol{\mathcal W}_{0,6}\simeq \boldsymbol{\mathcal L\hspace{-0.05cm} \mathcal W}_{{\boldsymbol{S}}}$. 

Recall that to a rank 2 matrix $M\in {\rm Mat}_{2,5}(\mathbf C)$, one can associate the line in $\mathbf P^4$, denoted by $\langle M\rangle $, passing through the two points whose homogeneous coordinates are the two lines of $M$. hereafter, we will use the well-known fact that    
$
{\rm Mat}_{2,5}(\mathbf C)\dashrightarrow G_1(\mathbf P^4)$, $
M\mapsto \langle M\rangle $ is a birational chart.
\sk


Let $(x,y,z)$ stand for the coordinates of a generic point $p$ of $\mathbf P^3$;  here  are the rational first integrals for $\boldsymbol{\mathcal W}_{0,6}$ we will work with: 
$U_1=(y,z)$, $U_2=(x,z)$, $U_3=(x,y)$, $U_4=(x/z,y/z)$, $U_5=((x-1)/(z-1),(y-1)/(z-1))$ and  
$U_6=(x(z-1)/(z(x-1)) ,y(z-1)/(z(y-1)))$. \sk 

 For each $i=1,\dots,6$, solving $U_i(x,y,z)=(u_i,v_i)$  with $u_i,v_i\in \mathbf Q(x,y,z)$, one gets easily a parametrization $\xi_{i,p}: \mathbf P^1\rightarrow \mathbf P^3 $ of the $i$-th leaf of $\boldsymbol{\mathcal W}_{0,6}$ through $p$, whose coefficients are rational in $u_i$ and $v_i$. Then evaluating $\varphi\circ \xi_{i,p}: \mathbf P^1\rightarrow \mathbf P^4$ at two generic points, for instance $0$ and $\infty$, one easily computes a $2\times 5 $ matrix $M_i\in {\rm Mat}_{2,5}\big(  \mathbf Q(u_i,v_i)
 \big)$ such that the $i$-th leaf of $\boldsymbol{\mathcal L\hspace{-0.05cm} \mathcal W}_{{\boldsymbol{S}}}$ through $\varphi_3(p)$ is the line passing through the two points whose homogeneous coordinates are the two lines of $M_i$.   Of course, the $M_i$'s are not unique but 
some elementary computations give that they 
 can be taken to be the following ones: 
\begin{align*}
M_1=& \,
 \begin{bmatrix}
0 & 0& u_1v_1& u_1& v_1 \\
u_1-v_1& u_1-1& u_1& u_1& u_1
\end{bmatrix}  \\ 
M_2=& \, \begin{bmatrix}
u_2v_2& u_2& 0& 0& v_2 \\
u_2& u_2& u_2-v_2 & u_2-1 & u_2
\end{bmatrix}
\\
M_3=&\, \begin{bmatrix}
u_3v_3 &  u_3( v_3-1) &  u_3v_3 & v_3(u_3-1) &  v_3 \\
u_3 & 0 & v_3 & 0 & 1
\end{bmatrix}
\\
 M_4=&\, \begin{bmatrix}
0 & u_4& 0 & v_4& 1 \\
u_4(1-v_4)& -u_4v_4& v_4(1-u_4)& -u_4v_4& - u_4v_4
\end{bmatrix}
 \\
M_5=&\, 
\begin{bmatrix}
u_5(v_5 - 1)&  u_5v_5& v_5(u_5 - 1)& u_5v_5
& u_5v_5 \\ 
(v_5 - 1)(1 - u_5)& v_5(1 - u_5)& (v_5 - 1)(1 - u_5)
&   u_5(1-v_5 )& (v_5 - 1)(1 - u_5)
\end{bmatrix}
%
\\
\mbox{and } \quad M_6=&\, 
\begin{bmatrix}
 0 &  u_6 & 0& v_6 & 1 \\
 u_6(v_6 - 1)&  0& v_6(u_6-1) &  0&  (v_6 - 1)(u_6-1) 
\end{bmatrix}\, . 
\end{align*}

The map $\mathbf C^2\dashrightarrow \Sigma(i)$, $(u_i,v_i)\mapsto \langle M_i\rangle$ is easily seen to be birational, from which it follows that $(u_i,v_i)$ can be taken as global rational coordinates on $\Sigma(i)$ for any $i=1,\ldots,6$.  It is then immediate to compute a parametrization of the image $\widetilde \Sigma(i)$  of 
the component $\Sigma(i)\subset \Sigma$ by the Pl\"ucker embedding $\rho : \Sigma\subset G_1(\mathbf P^4)\hookrightarrow \mathbf P^9$. Indeed,  denoting by $\Delta_{a,b}(M_i)$ 
the $2\times 2$ minor of $M_i$ corresponding to its $a$-th and  $b$-th columns (in this order) for  any $(a,b)$ such that $1\leq a<b \leq 5$, the aforementioned parametrization is given by 
\begin{align*} 
\kappa_i : \big(u_i,v_i\big)    \dashrightarrow  \Big[ \Delta_{a,b}(M_i) \Big]_{1\leq a < b \leq 5}\in \mathbf P^9\, .
\end{align*}

Since $\kappa_i$ corresponds to the expression, in the rational coordinates 
$u_i,v_i$, of 
the restriction on $\Sigma(i)$ of the canonical map $\kappa=\kappa_{\lvert \omega^2_\Sigma
\lvert 
}: \Sigma \dashrightarrow \mathbf PH^0(\Sigma,\omega^2_\Sigma)^\vee$, there exists a rational function $\sigma_i=\sigma_i(u_i,v_i)$ (to be made explicit further below) such that  
$$
\Big\{\, \Delta_{a,b}(M_i)  \, \big( \sigma_i \, du_i\wedge dv_i\big) \, \big\lvert \,  1\leq a <b\leq b\, \Big\}
$$
spans of the space of rational 2-forms $\iota_i^* \big(\boldsymbol{H}^0(\Sigma,\omega_\Sigma^2)\big)$ on $\Sigma(i)$.\footnote{More prosaically, $\iota_i^* \big(\boldsymbol{H}^0(\Sigma,\omega_\Sigma^2)\big)$ is the image of the restriction map 
$\boldsymbol{H}^0\big(\Sigma,\omega_\Sigma^2\big)\rightarrow \Omega^2_{\mathbf C(\Sigma(i))}$, $\omega\mapsto \omega\lvert_{\Sigma(i)}$.} Actually, working a bit further, one verifies that 
\begin{enumerate}
\item for any $i$, the composition $\rho \circ \iota_i : \Sigma(i)\subset \Sigma \rightarrow \mathbf P^9$ is given by the complete anticanonical linear system $\lvert
K_{\Sigma(i)}^{-1}
\lvert $, hence $\widetilde \Sigma(i)=\rho(\Sigma(i))$ 
is a quintic del Pezzo surface which spans 
 a $\mathbf P^5$ in the ambiant   $\mathbf P^9$;\sk 
\item the union of the $\widetilde \Sigma(i)$'s spans the whole ambiant projective space, that is  $\big\langle 
\cup_{i=1}^6 \widetilde \Sigma(i)\big\rangle= \mathbf P^9$. It follows that  the linear map 
$\boldsymbol{H}^0\big(\Sigma,\omega_\Sigma^2\big)  \longrightarrow \oplus_{i=1}^6 \Omega^2_{\mathbf C(\Sigma(i))}$, $\omega  \longmapsto  \big( \omega\lvert_{\Sigma(i)}\big)_{i=1}^6$
is injective and consequently, the set of 6-tuples of rational 2-forms 
\begin{equation}
\label{Eq:basis}
\bigg\{\,  \Big( \Delta_{a,b}(M_i)  \, \big( \sigma_i \, du_i\wedge dv_i\big) \Big)_{i=1}^6 
\in \oplus_{i=1}^6 \Omega^2_{\mathbf C(\Sigma(i))}
\, \big\lvert \,  1\leq a <b\leq b\, \bigg\}
\end{equation}
 can be identified with a basis of the space of global abelian differential 2-forms on $\Sigma$. 
\end{enumerate}\mk 
At this point, it is straightforward to give an explicit expression in the rational coordinates $x,y,z$,  for the trace of the 2-form $\omega_{a,b}\in \boldsymbol{H}^0\big(\Sigma,\omega_\Sigma^2\big)$ corresponding to the  6-tuple in \eqref{Eq:basis} associated to the $(a,b)$-minor: 
for any $a,b$ such that  $1\leq a<b\leq 5$, one has 
\begin{equation}
\label{Eq:TrOmega-ab}
{\rm Tr}\big( \omega_{a,b}\big)=\sum_{i=1}^6 \Delta_{a,b}(M_i)  \, \big( \sigma_i \, du_i\wedge dv_i\big)
\end{equation}
where all the $u_i$'s and $v_i$'s are now  viewed as rational functions of $x,y,z$, hence 
any term of the right-hand side sum has to be considered as a rational 2-form  in the variables $x,y$ and $z$. \mk 

An easy computation gives that the only 6-tuple of functions $(\sigma_i)_{i=1}^6 $ with $\sigma_i=\sigma_i(u_i,v_i)$ such that 
the RHS of any expression \eqref{Eq:TrOmega-ab} vanishes identically is a nonzero constant multiple of 
$$
\left( \frac{1}{m_5(u_i,v_i)}\right)_{i=1}^6 
$$
where $m_5$ stands for the `$\mathcal M_{0,5}$-polynomial' defined by $m_5(u,v)=uv(u-1)(v-1)(u-v)$. 
\mk 

We thus have obtained the following result which has to be seen as an explicit version of 
Proposition \ref{P:2-rank-W06}: 
\begin{prop}
In the coordinates $x,y,z$ and relatively to the first integrals $U_i=(u_i,v_i)$, $i=1,\ldots,6$, the ARs of $\boldsymbol{\mathcal W}_{0,6}$ are all rational and their space admits as a basis the set of $6$-tuples of rational 2-forms (for all $a,b$ such that $1\leq a < b \leq 5$)
\begin{equation}
\label{AR:DDab}
\left(  \frac{ \Delta_{a,b}(M_i) }{m_5(u_i,v_i)} \Big( du_i\wedge dv_i\Big)
\right)_{i=1}^6 
\end{equation}
which all satisfy the  relation $
\sum_{i=1}^6 
\big({ \Delta_{a,b}(M_i) }/{m_5(u_i,v_i)}\big) \big( du_i\wedge dv_i\big)=0  
$ identically.
\end{prop}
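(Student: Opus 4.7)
The strategy is to combine two inputs already available in the excerpt: the identification $\omega_F^2\simeq \mathcal O_F(1)$ of \eqref{Eq:X-singular-Properties} applied to $F=\Sigma$, and the explicit parametrizations $\kappa_i:(u_i,v_i)\dashrightarrow[\Delta_{a,b}(M_i)]\in\mathbf P^9$ of the components $\widetilde\Sigma(i)=\rho(\Sigma(i))$ computed from the $M_i$. The plan is to realize the $\omega_{a,b}$ as the pullbacks of hyperplane sections of $\mathbf P^9=\mathbf P H^0(\Sigma,\omega_\Sigma^2)^\vee$ via the canonical embedding $\rho\circ\iota$, then to verify by direct computation that they are sent to abelian relations by the trace map, and finally to use linear independence on a single component.

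First I would pin down the factor $\sigma_i=1/m_5(u_i,v_i)$. Since $\rho\circ\iota_i:\Sigma(i)\hookrightarrow\mathbf P^9$ is anticanonical, the pullback of any hyperplane section is a section of $-K_{\Sigma(i)}=\omega_{\Sigma(i)}^{-1}$, which, on a quintic del Pezzo realized as the blow-up of $\mathbf P^2$ at four general points, has base divisor precisely the five $(-1)$-curves that together make the boundary of $\overline{\mathcal M}_{0,5}$; in the coordinates $(u_i,v_i)$ this boundary is cut out by $m_5(u_i,v_i)=u_iv_i(u_i-1)(v_i-1)(u_i-v_i)$. Hence the restriction map $H^0(\Sigma,\omega_\Sigma^2)\to\Omega^2_{\mathbf C(\Sigma(i))}$ sends the hyperplane section with coordinate $\Delta_{a,b}$ to $\bigl(\Delta_{a,b}(M_i)/m_5(u_i,v_i)\bigr)\,du_i\wedge dv_i$ (up to a global nonzero scalar that can be absorbed). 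This identifies the $6$-tuples in \eqref{AR:DDab} with the images in $\bigoplus_{i=1}^6\Omega^2_{\mathbf C(\Sigma(i))}$ of a basis of the $10$-dimensional space $H^0(\Sigma,\omega_\Sigma^2)$.

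Next I would invoke Proposition \ref{P:LW-X-singular} applied to $X=\boldsymbol{S}$ (whose hypotheses hold since $\boldsymbol{S}$ has only $10$ isolated ordinary nodes and $\boldsymbol{\mathcal L\hspace{-0.05cm}\mathcal W}_{\boldsymbol S}$ is a genuine $6$-web in general position away from a proper subvariety): the trace induces an \emph{injection} $\mathrm{Tr}:H^0(\Sigma,\omega_\Sigma^2)\hookrightarrow\boldsymbol{AR}(\boldsymbol{\mathcal L\hspace{-0.05cm}\mathcal W}_{\boldsymbol S})$. Consequently each $\omega_{a,b}$ produces the abelian relation \eqref{AR:DDab}, and these ten relations are linearly independent. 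Combined with Damiano's bound \eqref{Eq:Bound-on-the-rank}, which gives $\mathrm{rk}(\boldsymbol{\mathcal W}_{0,6})\le 10$, the injection is forced to be a surjection, proving both that the ten tuples span $\boldsymbol{AR}(\boldsymbol{\mathcal W}_{0,6})$ and that every AR is rational (being a combination of rational tuples).

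The main obstacle will be justifying the determination of $\sigma_i$, i.e.\ identifying the explicit equation of the anticanonical boundary in the coordinates $(u_i,v_i)$ attached to each $M_i$ and checking that a single normalization works simultaneously for all six components. I would handle this either (i) by computing on one $\Sigma(i)$ (say $i=3$, where $M_3$ is particularly transparent and $(u_3,v_3)=(x,y)$), pinning down $\sigma_3=c/m_5(u_3,v_3)$ from the five boundary $(-1)$-curves corresponding to the exceptional divisors and strict transforms of the four lines, and then deducing the other $\sigma_i$ by $\mathfrak S_6$-equivariance (the action permuting the $\Sigma(i)$'s and the coordinates $(u_i,v_i)$ accordingly), or (ii) by a brute-force check: substitute the six pairs $(u_i(x,y,z),v_i(x,y,z))$ into the right-hand side of \eqref{Eq:TrOmega-ab} and verify that the sum vanishes identically in $(x,y,z)$ for one value of $(a,b)$; the uniqueness of $(\sigma_i)$ up to a scalar (granted by the injectivity of the restriction map $H^0(\Sigma,\omega_\Sigma^2)\to\bigoplus_i\Omega^2_{\mathbf C(\Sigma(i))}$) then forces the formula for all $(a,b)$. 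Either route is a finite explicit computation whose outcome is dictated by the geometric input, and together with the dimensional argument above it completes the proof.
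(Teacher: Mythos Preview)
Your overall architecture matches the paper's exactly: use $\omega_\Sigma^2\simeq\mathcal O_\Sigma(1)$ to identify the ten hyperplane sections with a basis of abelian $2$-forms, push them through the trace to get abelian relations, and close with the rank bound. The only substantive issue is the determination of $\sigma_i$, and both routes you propose have problems.

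Route~(i) is geometrically confused. The anticanonical system $|-K_{\Sigma(i)}|$ on a quintic del Pezzo is base-point-free (indeed very ample), so there is no ``base divisor''. Moreover $\overline{\mathcal M}_{0,5}$ has \emph{ten} boundary components (all $(-1)$-curves), not five; the five lines of the braid arrangement $m_5=0$ are only those boundary divisors meeting the chosen affine chart. What actually forces the factor $1/m_5$ is not a base locus but the comparison between the line bundle $\omega_\Sigma^2|_{\Sigma(i)}$ (whose sections, viewed as meromorphic $2$-forms on the rational component $\Sigma(i)$, acquire poles along the intersections with the other components of $\Sigma$) and the trivialization $du_i\wedge dv_i$ of $K_{\Sigma(i)}$ in that chart. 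This is more delicate than you indicate, in particular because $\Sigma$ is not normal crossing.

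Route~(ii) also has a gap: verifying $\sum_i(\Delta_{a,b}(M_i)/m_5(u_i,v_i))\,du_i\wedge dv_i=0$ for \emph{one} pair $(a,b)$ does not pin down the six functions $\sigma_i$ up to a common scalar, and the injectivity of the restriction map you cite is the wrong uniqueness statement for this purpose. The paper does not argue this way. It observes (as you do) that \emph{some} rational $\sigma_i$ must exist, and then simply solves the full system: an easy explicit computation shows that the only $6$-tuple $(\sigma_i)$ for which \emph{all ten} trace expressions \eqref{Eq:TrOmega-ab} vanish is, up to a nonzero constant, $(1/m_5(u_i,v_i))_{i=1}^6$. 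This single computation both identifies $\sigma_i$ and certifies the abelian relations, so the appeal to Proposition~\ref{P:LW-X-singular} is not needed here (though it gives the same conclusion). The fix to your argument is therefore just to do the check for all ten $(a,b)$, or equivalently to solve the linear system they impose on the unknown $\sigma_i$.
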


Since the $U_i$'s as well as the $M_i$'s are all explicit, is it just an elementary computational matter to make the ARs \eqref{AR:DDab} explicit. For instance, the one corresponding to $(a,b)=(1,2)$ is  
\begin{align*}
\frac{ u_2\, du_2\wedge dv_2}
 {
 v_2(u_2-1)(u_2-v_2)}
- \frac{ 
  u_3 \, du_3\wedge dv_3} 
  {v_3(u_3-1)(u_3 - v_3)}+  
\frac{   u_4\, du_4\wedge dv_4}{v_4(u_4-1)(u_4 - v_4)} 
-\frac{u_6\, du_6\wedge dv_6}{v_6(u_6-1)(u_6 - v_6)}
=0
\end{align*}
and that associated to $(a,b)=(4,5)$ is 
\begin{align}
\label{Eq:RA-Delta-45}
\frac{du_1\wedge dv_1}{v_1(u_1-1)(v_1 - 1)}
 -& \frac{du_2\wedge dv_2}{ u_2(v_2 - 1)(u_2-v_2)}
+
  \frac{du_3\wedge dv_3}{ u_3(v_3-1)(u_3 - v_3)} \\
   - & \frac{du_4\wedge dv_4}{(u_4 - 1)(u_4 - v_4)}
+
 \frac{du_5\wedge dv_5}{(u_5-1)(u_5 - v_5)}
+
 \frac{du_6\wedge dv_6}{u_6(u_6 - v_6)}
 =0 \, .\nonumber 
 \end{align} 
 \mk



Realizing that all the ARs of $\boldsymbol{\mathcal W}_{0,6}$ were rational in some natural coordinates led us to doubt about the validity of the decomposition 
\eqref{Eq:Direct-Sum} of $\boldsymbol{AR}(\boldsymbol{\mathcal W}_{0,6})$ as a $\mathfrak S_6$-module claimed in \cite{D}.  In order to clear this up,  we have investigated 
the birational action of $\mathfrak S_6$ on the ARs \eqref{AR:DDab}
 to come to the conclusion that there was indeed an error on this point.


\subsubsection{\bf The representation of $\mathfrak S_{6}$ on ${AR}_C(\mathcal W_{{0,6}})$}
\label{SS:ARW06-as-a-S6-module}
We use below notations 
almost identical to those used in the preceding subsection.  
Here $x,y,z$ stand for the rational coordinates corresponding to the map $\mathbf C^3\dashrightarrow \boldsymbol{\mathcal M}_{0,6}$, $(x,y,z)\mapsto [\infty,0,1,x,y,z]$. 
The first integrals we work with here for $\boldsymbol{\mathcal W}_{{0,6}}$
are the  six rational first integrals $U_i=(u_i,v_i): \mathbf C^3 \dashrightarrow \mathbf C^2$ 
of the previous subsection, with the only difference that we have exchanged $U_1$ and $U_3$ (namely, one takes $U_1= (x,y)$ and $U_3=(y,z)$ below while the others $U_i$'s are left unchanged).
\mk 

The associated normals $\Omega_i=du_i\wedge dv_i$  for $i=1,\ldots,6$ are easy to compute: the sheaf of 2-forms on $\mathbf C^2$ is globally spanned by 
$\Omega_1= dx\wedge dy$, $\Omega_2=dx\wedge dz$ and $\Omega_3=dy\wedge dz$ 
 and as global rational 2-forms,  the others $\Omega_i$'s are determined by the following matricial relation: 
\begin{equation}
\label{Eq:RelationsOmega-i}
\begin{bmatrix}
\Omega_4\\
\Omega_5\\
\Omega_6
\end{bmatrix}
=
\begin{bmatrix}
 \frac{1}{z^2}& -\frac{y}{z^3}& \frac{x}{z^3} \\
 \frac{1}{(z-1)^2}& -\frac{y-1}{(z-1)^3}&
\frac{x-1}{(z-1)^3} \\ 
\frac{(z-1)^2}{(x-1)^2(y-1)^2z^2}& -\frac{y(z-1)}{(x-1)^2(y-1)z^3}
&
\frac{x(z-1)}{(x-1)(y-1)^2 z^3}
\end{bmatrix} \, \begin{bmatrix}
\Omega_1\\
\Omega_2\\
\Omega_3
\end{bmatrix}
\end{equation}

The transformations  
$$
\mathscr T: \, 
[p_i]_{i=1}^6 \longmapsto [p_1,p_2,p_3,p_5,p_4,p_6]
\qquad \mbox{ and } 
\qquad 
\mathscr C:\,  [p_i]_{i=1}^6 
\longmapsto [p_6,p_1,\ldots,p_5]
$$
are 
two automorphisms of $\boldsymbol{\mathcal M}_{0,6}$ 
which correspond to the transposition  $\tau=(45)$ and to the cycle $c=(1,\ldots,6)$ of $\mathfrak S_6$ respectively. The corresponding rational maps in the affine coordinates $(x,y,z)$ are given respectively by  
\begin{equation}
\label{Eq:TC}
\mathcal T: (x,y,z)\mapsto (y,x,z) 
\qquad \mbox{ and } 
\qquad 
\mathcal C: (x,y,z)\mapsto \left(  \frac{1}{x}\, , \, \frac{x-z}{(x-1)z}\, , \, \frac{y-z}{(y-1)z}
\right)\, .
\end{equation}

Clearly,  $\mathcal T$ and $\mathcal C$ generate the image  of 
${\rm Aut}(\boldsymbol{\mathcal M}_{0,6}) \simeq \mathfrak S_6$ in ${\bf Bir}_3$.
Their actions under pull-backs  on the $\Omega_i$'s can be easily computed. One verifies that they are given by the following relations:
\begin{align}
\label{Eq:Pull-Backs}
\Big( \mathcal T^*\big(\Omega_i\big)\Big)_{i=1}^6= & \, 
\Big( \, -\Omega_1\, , \, \Omega_3\, , \, \Omega_2\, , \, 
-\Omega_4\, , \,
-\Omega_5\, , \,
-\Omega_6\, 
\Big) \\
\mbox{and }\quad 
\Big( \mathcal C^*\big(\Omega_i\big)\Big)_{i=1}^6= & \, 
\Big( \, 
\eta_2\,\Omega_2\, , \, \eta_3\, \Omega_3\, , \, \Omega_6\, ,\,    
\widetilde \eta_5\, 
\Omega_5
\, , \,  
\eta_1\,
\Omega_1\, , \, 
\widetilde \eta_4\, 
\Omega_4
\Big)\, ,  \nonumber
\end{align}
with $\eta_i=\eta(U_i)
 =\eta(u_i,v_i)$ for any $i$ (and similarly for $\widetilde \eta_i$), 
 where $\eta$ and $\widetilde \eta$ are standing for the rational functions  
 $\eta(u,v)=(v-1)/(v^3(u-1)^2)$ and  $\widetilde \eta(u,v)=\eta(1+u,1-v)
=v/(u^2(v-1)^3)$. 
%
\sk 

From \eqref{Eq:Pull-Backs},  it follows  that the action 
on $\boldsymbol{\mathcal W}_{0,n+3}=(\mathcal F_i)_{i=1}^6$
induced by $\mathcal T$ and $\mathcal C$ 
are given respectively by the transposition $\tau=(23)$ and the 6-cycle $c=(123645)$: for $i=1,\ldots,6$, one has 
\begin{equation}
\label{Eq:tau-and-c}
\mathcal T^*\big(\mathcal F_i\big)=\mathcal F_{\tau(i)} 
\qquad \mbox{ and }\qquad \mathcal C^*\big(\mathcal F_i\big)=\mathcal F_{c(i)}\, .
\end{equation}


\paragraph{\bf An explicit combinatorial basis of  $\boldsymbol{AR}(\boldsymbol{\mathcal W}_{0,6})$.}  
Some of the ARs \eqref{AR:DDab} are combinatorial but not all of them ({\it cf.}\,\eqref{Eq:RA-Delta-45}).  But since these abelian relations form a basis of $\boldsymbol{AR}(\boldsymbol{\mathcal W}_{0,6})$, it is just a matter of computing to construct all the combinatorial ARs of 
$\boldsymbol{\mathcal W}_{0,6}$. We want to do that explicitly here.\sk

We set $I=\{1,\ldots,6\}$ and $I^2_\neq$ stands for the set of all pairs $(i,j)\in I^2$ such that $i<j$.  For any $(i,j)$ in this set, one denotes by $AR_{ij}$ the non trivial AR for the 4-subweb $\boldsymbol{\mathcal W}_{\widehat{\imath \jmath}}=\boldsymbol{\mathcal W}(U_k \, \big\lvert \, k\in I\setminus \{i,j\}\,\big)$ of $\boldsymbol{\mathcal W}_{0,6}$. Actually, $AR_{ij}$ is only defined 
up to multiplication by a non zero scalar but this ambiguity 
will  be irrelevant with regard to what we are going to  discuss in the sequel. \mk 

As it can easily be verified, each row of the matrix page \pageref{Page:AR-matrix}, denoted by $M_{AR}$ in what follows, encodes an abelian relation $AR_{ij}$ (where therefore $i$ and $j$ correspond to the zero coefficients):  denoting by $m_{ab}$ (for $a$ and $b$ ranging from 1 to 10 and 6 respectively) its coefficients, then $m_{ab}$ is a function of $U_b$ for any $b$ (hence one can write $m_{ab}=m_{ab}(U_b)$) and for any $a=1,\ldots,10$, the following relation  $ \sum_{b=1}^{10} m_{ab}(U_b)\, \Omega_b= 0 $ holds true identically. 

One denotes by $AR(a)$ the combinatorial AR corresponding to 
the previous relation
(hence $AR(1)=AR_{13}$, $\ldots$, $AR(10)=AR_{56}$). By elementary linear algebra,  one verifies that the $AR(a)$'s for $a=1,\ldots,10$ are linearly independent abelian relations, from which we get the 
\begin{prop} 
\label{Prop:tololo}
1. The space $\boldsymbol{AR}_C(\boldsymbol{\mathcal W}_{0,6})$ has dimension 10.\sk 

2. Consequently $\boldsymbol{AR}(\boldsymbol{\mathcal W}_{0,6})=\boldsymbol{AR}_C(\boldsymbol{\mathcal W}_{0,6})$ and $\boldsymbol{\mathcal W}_{0,6}$ has maximal rank.
\end{prop}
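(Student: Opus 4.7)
The plan is to establish part 1 by a direct linear-algebra argument on the explicit matrix $M_{AR}$, and then to deduce part 2 by comparing with the already-established total dimension of $\boldsymbol{AR}(\boldsymbol{\mathcal W}_{0,6})$.

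For part 1, I would proceed as follows. Each row of $M_{AR}$ encodes a combinatorial abelian relation $AR_{ij}$, with zero entries exactly in the two columns indexed by $i$ and $j$ (since $AR_{ij}$ is the AR of the quadrilateral $4$-subweb $\boldsymbol{\mathcal W}_{\widehat{\imath\jmath}}$). Suppose $\sum_{(i,j)\in I^2_{\neq}}\lambda_{ij}\,AR_{ij}=0$ in $\boldsymbol{AR}(\boldsymbol{\mathcal W}_{0,6})$. Projecting this identity onto the $k$-th component (that is, onto $u_k^*(\Omega^2)$) yields, for each $k\in\{1,\ldots,6\}$, a rational $2$-form identity of the form
\[
\Big(\sum_{(i,j):\,k\notin\{i,j\}}\lambda_{ij}\,\alpha^k_{ij}(U_k)\Big)\,\Omega_k=0,
\]
where $\alpha^k_{ij}(U_k)$ is the explicit coefficient of $\Omega_k$ in $AR_{ij}$ read from $M_{AR}$. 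Each such identity forces a rational-function identity in $U_k$, from which one extracts scalar constraints on the $\lambda_{ij}$. Assembling these six groups of constraints (one per column of $M_{AR}$) gives a homogeneous linear system in the ten unknowns $\lambda_{ij}$. To show the system has only the trivial solution, I would exhibit a non-vanishing $10\times 10$ minor of the coefficient array, most cleanly by evaluating the differential coefficients at a convenient generic point of $\mathcal M_{0,6}$ (for instance a specific numerical value of $(x,y,z)$) and checking numerically that the resulting $10\times 10$ matrix has full rank. This is the main computational step, but it is bounded and explicit since $M_{AR}$ is written out in the paper.

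Once linear independence of the ten $AR_{ij}$ is established, we have $\dim \boldsymbol{AR}_C(\boldsymbol{\mathcal W}_{0,6})\geq 10$, and the reverse inequality $\dim \boldsymbol{AR}_C\leq \dim \boldsymbol{AR}\leq 10$ comes for free from Proposition \ref{P:2-rank-W06}, which identifies $\boldsymbol{AR}(\boldsymbol{\mathcal W}_{0,6})$ with $\boldsymbol{H}^0(\Sigma,\omega_\Sigma^2)$ of dimension $10$. This yields $\dim\boldsymbol{AR}_C(\boldsymbol{\mathcal W}_{0,6})=10$, proving part 1.

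Part 2 is then immediate: since $\boldsymbol{AR}_C\subseteq \boldsymbol{AR}$ and both spaces have dimension $10$, they coincide. Maximality follows from Damiano's bound (Proposition \ref{Prop:Damiano'sBound}) which, specialized to $n=3$ and $d=6$, evaluates to
\[
\sum_{\sigma=0}^{2}\binom{1+\sigma}{\sigma}(3-\sigma)^+ = 3+4+3 = 10,
\]
so the rank equals its a priori upper bound.

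The main obstacle is the linear-independence step: although `elementary', it does require either a clean combinatorial argument (exploiting that each $AR_{ij}$ misses exactly two foliations, hence the supports of the ten ARs in the $6$-column layout are distinct) or a concrete numerical rank check of $M_{AR}$. The combinatorial route is the most conceptually satisfying — one can organize the ten ARs so that for each foliation index $k$, the collection of $AR_{ij}$ with $k\notin\{i,j\}$ consists of $\binom{5}{2}=10$ ARs, but each is constrained by the unique-up-to-scalar AR of the corresponding quadrilateral subweb — and it is this combinatorial rigidity that ultimately forces $\lambda_{ij}=0$ for all $(i,j)$.
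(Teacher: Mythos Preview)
Your approach is correct and essentially matches the paper's: the paper simply states that ``by elementary linear algebra, one verifies that the $AR(a)$'s for $a=1,\ldots,10$ are linearly independent,'' and your proposal spells out concretely how such a verification goes (project to each $\Omega_k$-component, obtain rational-function identities, and check full rank at a generic numerical point). One small caution: your closing combinatorial remark is a bit off --- the ten rows of $M_{AR}$ are a specific choice of ten out of the fifteen $AR_{ij}$'s, not the ten ARs hitting a given column, and distinct support patterns alone do not force linear independence; stick with the explicit rank check, which is what the paper does.
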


We remark that the five combinatorial abelian relations $AR_{ij}$ which do not correspond to one of the lines  of 
$M_{AR}$ 
correspond to the pairs (12), (23), (36), (46) and (45) hence one deduces a more invariant way 
to describe the basis of $\boldsymbol{AR}(\mathcal W_{0,6})$ formed by the $AR(a)$'s for $a=1,\ldots, 10$: 
\begin{fact} 
The family of $AR_{ij}$'s for all pairs $(i,j)\in I^2_\neq $  
minus the five abelian relations $(\mathcal C^\ell)^*(AR_{12})$ for $\ell=0,\ldots,4$, 
form a basis of $\boldsymbol{AR}(\mathcal W_{0,6})$.
\end{fact}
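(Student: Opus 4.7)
The plan is to reduce the Fact to two independent inputs: the dimension count from Proposition \ref{Prop:tololo}, and a direct orbit computation of the cyclic action of $\mathcal{C}$ on the index set of combinatorial ARs. Since $\dim\boldsymbol{AR}(\boldsymbol{\mathcal{W}}_{0,6})=10$, and the matrix $M_{AR}$ has already provided $10$ linearly independent combinatorial ARs $AR(1),\ldots,AR(10)$ (each row corresponding to some $AR_{ij}$ with the two zero coefficients identifying the pair $\{i,j\}$), the collection $\{AR(1),\ldots,AR(10)\}$ is automatically a basis. So the only thing left to verify is that this set is precisely the complement in $\{AR_{ij}\mid (i,j)\in I^2_\neq\}$ of the five ARs $(\mathcal{C}^\ell)^*(AR_{12})$ for $\ell=0,\ldots,4$.

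First I would make explicit the action of pullback by $\mathcal{C}$ on combinatorial ARs. From \eqref{Eq:tau-and-c}, $\mathcal{C}^*(\mathcal{F}_i)=\mathcal{F}_{c(i)}$ where $c=(1,2,3,6,4,5)$. Hence $\mathcal{C}^*$ sends the subweb $\boldsymbol{\mathcal{W}}_{\widehat{ij}}$ obtained by forgetting the $i$-th and $j$-th foliations to the subweb $\boldsymbol{\mathcal{W}}_{\widehat{c(i)c(j)}}$. Because the space of ARs of a quadrilateral $(n+1)$-subweb is $1$-dimensional, the pullback $\mathcal{C}^*(AR_{ij})$ must be proportional to $AR_{c(i)c(j)}$. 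By iteration, $(\mathcal{C}^\ell)^*(AR_{12})$ is a nonzero scalar multiple of $AR_{c^\ell(1)c^\ell(2)}$.

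Next I would compute the orbit of the pair $\{1,2\}$ under the first five powers of $c$:
\begin{align*}
c^0\cdot\{1,2\}=\{1,2\},\quad c^1\cdot\{1,2\}=\{2,3\},\quad c^2\cdot\{1,2\}=\{3,6\},\\
c^3\cdot\{1,2\}=\{4,6\},\quad c^4\cdot\{1,2\}=\{4,5\}.
\end{align*}
These five pairs are exactly the five pairs $(12), (23), (36), (46), (45)$ identified in the paragraph preceding the Fact as the indices $(i,j)$ for which $AR_{ij}$ does not appear among the rows of $M_{AR}$. In particular, the $(\mathcal{C}^\ell)^*(AR_{12})$ for $\ell=0,\ldots,4$ are, up to nonzero scalars, pairwise distinct combinatorial ARs and their complement in $\{AR_{ij}\}_{(i,j)\in I^2_\neq}$ consists of exactly the $10$ ARs $AR(1),\ldots,AR(10)$.

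I don't expect any serious obstacle here: the essential analytical content (nontriviality of the $AR_{ij}$'s, maximality of the rank, and linear independence of the $AR(a)$'s) has already been established by the explicit computations producing $M_{AR}$ and by Proposition \ref{Prop:tololo}. The Fact is then a straightforward combinatorial repackaging, obtained by noting that the five $\mathcal{C}$-iterates of $AR_{12}$ match, one-for-one and up to scalars, the five $AR_{ij}$'s that the matrix $M_{AR}$ happens to omit. The only mild point to keep in mind is the proportionality (versus equality) between $(\mathcal{C}^\ell)^*(AR_{12})$ and $AR_{c^\ell(1)c^\ell(2)}$, which is harmless since bases are defined only up to rescaling of their elements.
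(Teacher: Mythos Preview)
Your argument is correct and is essentially the same approach as the paper's: the paper records that the five missing pairs in $M_{AR}$ are $(12),(23),(36),(46),(45)$ and leaves it to the reader to observe that these are exactly the $c$-iterates of $\{1,2\}$, while you spell out this orbit computation and the proportionality $(\mathcal C^\ell)^*(AR_{12})\propto AR_{c^\ell(1)c^\ell(2)}$ explicitly. Nothing further is needed.
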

The interest of formulating things that way is that it naturally  asks whether  the following straightforward
 generalization holds true or not: 
\begin{question} For any odd integer $n\geq 2$, does the family of $AR_{ij}$'s for all pairs $(i,j)\in I^2_\neq $  
minus the $n+2$ abelian relations $(\mathcal C^\ell)^*(AR_{12})$ for $\ell=0,\ldots,n+1$, 
form a basis of $\boldsymbol{AR}(\boldsymbol{\mathcal W}_{0,n+3})$?
\end{question}

\paragraph{\bf The representation of $\mathfrak S_6$ on $\boldsymbol{AR}(\boldsymbol{\mathcal W}_{0,6})$.} 
Looking at  \eqref{Eq:tau-and-c} and because the space of ARs of $\boldsymbol{\mathcal W}_{\widehat{\imath \jmath}}$ is 1-dimensional for any $(i,j)\in I^2_\neq$,  it is clear that 
$$\mathcal T^*(AR_{ij})=\lambda_{ij} \,AR_{\tau(i)\tau(j)}
\qquad \mbox{ 
 and } \qquad 
\mathcal C^*(AR_{ij})=\gamma_{ij} \,AR_{c(i)c(j)}$$ 
for some non zero scalars 
$\lambda_{ij}$ and $\gamma_{ij}$ which are easy to make explicit using formulas \eqref{Eq:Pull-Backs}. 
One gets that relatively to the basis $(AR(a))_{a=1}^{10}$, the actions  
on $\boldsymbol{AR}(\boldsymbol{\mathcal W}_{0,6})$
induced by the pull-backs under $\mathcal T$ and $\mathcal C$ (defined in \eqref{Eq:TC}) are given by the two  following matrices: \sk
$$
T= \scalebox{0.7}{
$\begin{bmatrix}
{}^{}\hspace{0.1cm}1&1&-1&-1&0&0&0&0&0&0\hspace{0.1cm}{}^{}\\ 
{}^{}\hspace{0.1cm}0&-1&0&0&0&0&0&0&0&0\hspace{0.1cm}{}^{}\\ 
{}^{}\hspace{0.1cm}0&0&-1&0&0&0&0&0&0&0\hspace{0.1cm}{}^{}\\
{}^{}\hspace{0.1cm}0&0&0&-1&0&0&0&0&0&0\hspace{0.1cm}{}^{}\\
{}^{}\hspace{0.1cm}0&0&0&0&0&0&0&1&0&0\hspace{0.1cm}{}^{}\\
{}^{}\hspace{0.1cm}0&0&0&0&0&0&0&0&1&0\hspace{0.1cm}{}^{}\\
{}^{}\hspace{0.1cm}0&1&-1&-1&-1&1&1&1&-1&0\hspace{0.1cm}{}^{}\\
{}^{}\hspace{0.1cm}0&0&0&0&1&0&0&0&0&0\hspace{0.1cm}{}^{}\\
{}^{}\hspace{0.1cm}0&0&0&0&0&1&0&0&0&0\hspace{0.1cm}{}^{}\\
{}^{}\hspace{0.1cm}0&0&0&0&0&0&0&0&0&-1\hspace{0.1cm}{}^{}
\end{bmatrix}$}
\hspace{0.6cm}
\mbox{ and }\hspace{0.4cm}
C=  \scalebox{0.7}{
$
\begin{bmatrix}
{}^{}\hspace{0.1cm}0& 0& 1& 0& 0& 1& 0& 0& 0& 0\hspace{0.1cm}{}^{}\\
{}^{}\hspace{0.1cm}0& 0& 1& 0& 0& 0& 0& 0& 0& -1\hspace{0.1cm}{}^{}\\
{}^{}\hspace{0.1cm}0& 0& -1& 0& 0& 0& 0& 0& 0& 0\hspace{0.1cm}{}^{}\\
{}^{}\hspace{0.1cm}0& 0& -1& 0& 0& 0& 0& 0& 1& 0\hspace{0.1cm}{}^{}\\
{}^{}\hspace{0.1cm}0& 0& 0& -1& 0& 0& 0& 0& 0& 0\hspace{0.1cm}{}^{}\\
{}^{}\hspace{0.1cm}0& -1& 0& 0& 0& 0& 0& 0& 0& 0\hspace{0.1cm}{}^{}\\
{}^{}\hspace{0.1cm}-1& 0& 0& 0& 0& 0& 0& 0& 0& 0\hspace{0.1cm}{}^{}\\
{}^{}\hspace{0.1cm}0& 0& 0& 0& 0& 0& -1& 0& 0& 0\hspace{0.1cm}{}^{}\\
{}^{}\hspace{0.1cm}0& 0& 0& 0& -1& 0& 0& 0& 0& 0\hspace{0.1cm}{}^{}\\
{}^{}\hspace{0.1cm}0& 0& 0& 0& 0& 0& 0& 1& 0& 0\hspace{0.1cm}{}^{}
\end{bmatrix}$}\, .\bk 
$$

Their traces are given by 
$$
{\rm Tr}(T)=-2 \qquad \mbox{ and } \qquad 
{\rm Tr}(C)=-1 \, . 
$$

The representation of $\langle \mathcal T,\mathcal C
\rangle\simeq \mathfrak S_6$ on  the  vector space 
$\boldsymbol{AR}_C(\boldsymbol{\mathcal W}_{0,6})=\boldsymbol{AR}(\boldsymbol{\mathcal W}_{0,6})$ 
of dimension 10 
 is isomorphic to the one defined by the group morphism $\rho: \mathfrak S_6\rightarrow {\rm GL}_{10}$ charaterized by 
 $$\rho(\tau)=T\qquad \mbox{ and } \qquad \rho(c)=C\,.$$

 \begin{minipage}{100mm}
  \centering
\rotatebox{90}{\scalebox{1.4}{
$\begin{bmatrix}
 {}^{}\quad  0 & {\frac {v_2-u_2}{u_2 v_2   \left( u_2-1 \right)\left( v_2-1 \right)}}&0&{\frac {{ u_4}-1}{{ u_4}\,  \left( {u_4}-{v_4} \right) \left( {\it v_4}-1 \right) }}&{\frac {1-{ u_5}}{{ u_5}\, \left( {u_5}-{ v_5} \right)  \left( { v_5}-1 \right) }}& {\frac {1-{ u_6}}{{ u_6}\, \left( {u_6}-{v_6} \right) \left( {v_6}-1 \right) }} \quad 
\bk \\
 {}^{}\quad 0&{\frac {v_2-1}{ \left( u_2-1 \right)  \left( u_2-v_2 \right) v_2}}&{\frac {1-v_3}{ \left( u_3-1 \right)  \left( u_3-v_3 \right) v_3}}&0&{\frac {-1}{ \left( {\it u_5}-1 \right) \left( {\it v_5}-1 \right) {\it u_5}\,{\it v_5}}} &{\frac {1}{ \left( {\it v_6}-1 \right)  \left( {\it u_6}-1 \right) }}   \quad 
\bk \\
 {}^{}\quad  0&{\frac {v_2}{ u_2 \left( u_2-v_2 \right)  \left( v_2-1 \right) }}& {\frac {-v_3}{u_3 
\left( u_3-v_3 \right) 
\left( v_3-1 \right)   }}&{\frac {-1}{ u_{{4}}v_{{4}} \left( u_{{4}}-1 \right) \left( v_{{4}}-1 \right) }}&0&{\frac {1}{ u_{{6}} v_{{6}} \left( u_{{6}}-1 \right) \left( v_{{6}}-1 \right) }}  \quad 
\bk \\
 {}^{}\quad  0&{\frac {-1}{v_2  \left( u_2-v_2 \right)  \left( v_2-1 \right) }} & {\frac {1}{ v_3 \left( u_3-v_3 \right)  \left( v_3-1 \right)   }} & {\frac {1}{ \left( u_{{4}}-1 \right)  \left( v_{{4}}-1 \right)  }} & {\frac {-1}{ \left( u_{{5}}-1 \right)  \left( v_{{5}}-1 \right) }} &0  \quad 
\bk \\
 {}^{}\quad  
{\frac {v_1-1}{v_1 \left( u_1-1 \right)  \left( u_1-v_1 \right) }} & 0& {\frac {1-u_3}{u_3   \left( u_3-v_3 \right) \left( v_3-1 \right)}}& 0& {\frac {-v_{{5}}}{u_{{5}} \left( u_{{5}}-v_{{5}} \right) 
\left( v_{{5}}-1 \right) }} & {\frac {1}{v_{{6}} \left( u_{{6}}-v_{{6}} \right)  \left( v_{{6}}-1 \right) }}  \quad 
\bk \\
 {}^{}\quad  {\frac {v_1}{u_1 \left( v_1-1 \right)  \left( u_1-v_1 \right) }}&0&{\frac {-u_3}{ v_3\left( u_3-1 \right)  \left( u_3-v_3 \right) }}&{\frac {-v_{{4}}}{u_{{4}} \left( u_{{4}}-v_{{4}} \right) \left( v_{{4}}-1 \right) }} &0&{\frac {v_{{6}}}{u_{{6}}\left( u_{{6}}-v_{{6}} \right)  \left( v_{{6}}-1 \right)   }}  \quad 
\bk \\
 {}^{}\quad  {\frac {-1}{ v_1 \left( u_1-v_1 \right)  \left( v_1-1 \right)  }}&0&{\frac {1}{ u_3 \left( u_3-1 \right)   \left( u_3-v_3 \right)}}&{\frac {1}{v_{{4}}  \left( u_{{4}}-v_{{4}} \right) \left( v_{{4}}-1 \right)  }}&{\frac {-1}{v_{{5}}\left( u_{{5}}-v_{{5}} \right)   \left( v_{{5}}-1 \right)   }}&0   \quad 
\bk \\
 {}^{}\quad  {\frac {u_1-1}{u_1  \left( u_1-v_1 \right) \left( v_1-1 \right)  }}&{\frac {1-u_2}{u_2 \left( u_2-v_2 \right) \left( v_2-1 \right)   }}&0&0&{\frac {-u_{{5}}}{ \left( u_{{5}}-1 \right)  \left( u_{{5}}-v_{{5}} \right) v_{{5}}}}&{\frac {1}{u_{{6}} \left( u_{{6}}-v_{{6}} \right)  \left( u_{{6}}-1 \right) }}  \quad 
\bk \\
 {}^{}\quad  {\frac {u_1}{v_1 \left( u_1-1 \right)  \left( u_1-v_1 \right) }}&{\frac {-u_2}{v_2 \left( u_2-1 \right)  \left( u_2-v_2 \right) }}&0&{\frac {-u_{{4}}}{v_{{4}} \left( u_{{4}}-1 \right)  \left( u_{{4}}-v_{{4}} \right) }}&0&{\frac {u_{{6}}}{v_{{6}} \left( u_{{6}}-1 \right)  \left( u_{{6}}-v_{{6}} \right) }}  \quad 
\bk \\
 {}^{}\quad  {\frac {-1}{u_1v_1}}& {\frac {1}{u_2v_2}}& {\frac {-1}{u_3v_3}}& {\frac {1}{u_{{4}}v_{{4}}}}&0& 0  \quad 
\end{bmatrix}$}} 
\end{minipage}%

\label{Page:AR-matrix}

From the explicit expression for $T$ and $C$ above and considering basic facts of the 
 \href{https://groupprops.subwiki.org/wiki/Linear_representation_theory_of_symmetric_group:S6}{\it linear repre-}
  \href{https://groupprops.subwiki.org/wiki/Linear_representation_theory_of_symmetric_group:S6}{\it
 -sentation theory of $\mathfrak S_6$}, 
it is not difficult to  recognize  to which representation  $\rho$ corresponds.  
 First, elementary computations show that $T$ and $C$ do not have any  eigenvector in common. Hence if the representation under scrutiny is not  irreducible, then 
 it is necessarily the sum of two irreducible $\mathfrak S_6$-modules both of dimension 5.  Under this assumption, looking at the character table of the $\mathfrak S_6$-representations, one deduces from ${\rm Tr}(C)=-1$ that  
 the representation is the direct sum of the standard representation with 
 another representation with Young diagram either $[3^2]$ or $[2^3]$.  \sk

 In these cases, the trace of the matrix corresponding to a transposition would be $3-1=2$ and $3+1=4$ respectively, which would contradict ${\rm Tr}(T)=-2$. This implies that the representation $\rho$ is irreducible.  From ${\rm Tr}(T)=-2$, one deduces that it is the third exterior power of the standard representation of $\mathfrak S_{6}$. We have proved the
\begin{prop} 
As a $\mathfrak S_6$-module, the 10-dimensional space 
$\boldsymbol{AR}_C(\boldsymbol{\mathcal W}_{0,6})=\boldsymbol{AR}(\boldsymbol{\mathcal W}_{0,6})$ is irreducible with associated Young diagram $[31^3]$.
\end{prop}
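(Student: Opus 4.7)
The plan is to identify the 10-dimensional representation $\rho$ by elementary character theory, using as input the numerical data already produced: $\dim\rho=10$, $\mathrm{Tr}(\rho(\tau))=\mathrm{Tr}(T)=-2$ (for the transposition $\tau=(23)$), and $\mathrm{Tr}(\rho(c))=\mathrm{Tr}(C)=-1$ (for the 6-cycle $c=(123645)$). Since $\mathfrak S_6$ admits exactly two irreducible representations of dimension $10$, namely $[4,1^2]$ and $[3,1^3]$ (related by tensoring with the sign character), once irreducibility is established, distinguishing between them reduces to a single character computation.

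For the irreducibility step, I would split the discussion in two substeps. First, a direct inspection of the explicit matrices $T$ and $C$ shows that they share no common eigenvector, which excludes any $1$-dimensional subrepresentation. Since the irreducible representations of $\mathfrak S_6$ have dimensions $1,1,5,5,5,5,9,9,10,10,16$, the only remaining reducible possibility compatible with $\dim\rho=10$ and the absence of a $1$-dimensional summand is a direct sum of two $5$-dimensional irreducibles, chosen among $[5,1]$, $[2,1^4]$, $[3^2]$ and $[2^3]$. From the character table of $\mathfrak S_6$ one reads the values of these four characters on the conjugacy classes of $\tau$ and of $c$ and checks that no pair sums to $(-2,-1)$ on these two classes simultaneously, ruling out all reducible possibilities. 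A cleaner variant, avoiding case analysis, would be to compute $\mathrm{Tr}(\rho(\sigma))$ on one additional conjugacy class (for example via a product like $TC$) and verify directly that $\langle\chi_\rho,\chi_\rho\rangle=1$ using the orthogonality formula, which yields irreducibility at once.

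To pin down which $10$-dimensional irreducible $\rho$ is, I would use the natural models $[4,1^2]\simeq\wedge^2 V_{\mathrm{std}}$ and $[3,1^3]\simeq\wedge^3 V_{\mathrm{std}}$, where $V_{\mathrm{std}}$ is the standard 5-dimensional representation with $\chi_{V_{\mathrm{std}}}(\sigma)=|\mathrm{Fix}(\sigma)|-1$. The standard formulas
$$\chi_{\wedge^2 V}(\sigma)=\tfrac{1}{2}\bigl(\chi_V(\sigma)^2-\chi_V(\sigma^2)\bigr),\qquad \chi_{\wedge^3 V}(\sigma)=\tfrac{1}{6}\bigl(\chi_V(\sigma)^3-3\chi_V(\sigma)\chi_V(\sigma^2)+2\chi_V(\sigma^3)\bigr),$$
together with $\chi_{V_{\mathrm{std}}}(\tau)=3$, $\chi_{V_{\mathrm{std}}}(\tau^2)=\chi_{V_{\mathrm{std}}}(e)=5$ and $\chi_{V_{\mathrm{std}}}(c^k)=-1$ for $k=1,2,3$ (since $c^2$ has cycle type $(3,3)$ and $c^3$ has cycle type $(2,2,2)$, both fixed-point-free), yield $(\chi_{\wedge^2}(\tau),\chi_{\wedge^2}(c))=(2,1)$ and $(\chi_{\wedge^3}(\tau),\chi_{\wedge^3}(c))=(-2,-1)$. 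The latter pair matches the computed traces, so $\rho\simeq\wedge^3 V_{\mathrm{std}}$, whose Young diagram is $[3,1^3]$. The main delicate point in this plan is the irreducibility step; everything else is routine character bookkeeping.
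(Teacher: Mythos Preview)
Your proposal is correct and follows essentially the same route as the paper: both arguments rule out a $1$-dimensional summand by checking that $T$ and $C$ share no common eigenvector, then eliminate the remaining $5+5$ possibilities by comparing character values on the transposition and the $6$-cycle against the character table, and finally distinguish $[4,1^2]$ from $[3,1^3]$ via the trace on a transposition. Your write-up is slightly more explicit in the final identification step (using the exterior-power character formulas rather than just reading off the table), but the strategy is the same.
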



Proposition \ref{Prop:tololo} and the one just above contradict some of the claims in \cite{D}. The first thing one may think of for explaining this would be that there is a problem with the construction given by Damiano of Euler's abelian relation $\boldsymbol{\mathcal E}_3$ and that this AR actually does not exist.  It turns out that this is not the case, Damiano's construction of Euler's abelian relation is valid  for any $n\geq 2$: we will discuss this in depth in Section  \S\ref{S:Euler-AR}. According to us, he problem lies in the fact that, unaware of the dichotomy according to the parity of $n$ in what concerns the properties of web $\boldsymbol{\mathcal W}_{0,n+3}$, Damiano missed an elementary but crucial fact when he studied the $\mathfrak S_{n+3}$-module structure of the space of  combinatorial abelian relations $\boldsymbol{AR}\big( \boldsymbol{\mathcal W}_{0,n+3} \big)$. It is what we are going to discuss in the next section.

\newpage

\section{\bf The space of combinatorial abelian relations as a $\mathfrak S_{n+3}$-representation} 
\label{S:Conjectures-ARs}	
\label{SS:Conjectures-AR(W0n+3-as-a-Sn+3-module}	
We would like to get a better understanding of the space of combinatorial ARs of 
 $\boldsymbol{\mathcal W}_{0,n+3}$.  Damiano  used a powerful combinatorial and representation theoretic approach to do so. Using it,  he obtained  very interesting and almost complete results about the structure of $\boldsymbol{AR} (\boldsymbol{\mathcal W}_{0,n+3})$ as a $\mathfrak S_{n+3}$-module.  However, since he was not aware of the dichotomy between the cases when $n$ is even or odd 
  in what concerns the  $\mathfrak S_n$-module structure of $\boldsymbol{AR}\big(\boldsymbol{\mathcal W}_{0,n+3}\big)$, 
 he did not go all the way and missed a complete description of it.  On another hand, the proofs in Damiano's article \cite{D}  are very concise and/or a bit elliptical in some places.  More details are given in his thesis \cite{DThesis} but this one is less easily available.   For these reasons and especially for  the sake of completeness, we thought it  relevant  to take up  Damiano's approach with as much details as possible and it is the purpose of the current section. \sk

Most of the material below, the statements as the proofs, are due to Damiano and are taken with essentially no change either from the fourth chapter of Damiano's thesis \cite{DThesis} or from Section \S3 of the corresponding  paper \cite{D}. We only have added a few details 
in  some places. 
 Proofs have been included below mainly for the sake of completeness but also because these results are particularly important regarding the main purpose of this text and  are quite nice, as well as their proofs.  The unique genuine  novelty in this section is Lemma \ref{L:n-odd} which together with Damiano's result and although it is elementary, allows us to show that actually all ARs of $\boldsymbol{\mathcal W}_{0,n+3}$ are combinatorial when $n$ is odd.  
\sk


 The plan of the sequel is as follows: after having introduced some notations in \S\ref{SS:Some-Notations},  we state 
Theorem \ref{THM:AR-C(W)-S-n+3-module} which is   the main result of this section. Proofs  are given in the following subsections \S\ref{SS:lalala} and \S\ref{SS:Proof-of-THM:AR-C(W)-S-n+3-module}. 
Finally in \S\ref{SS:Components-of-ARs}, we discuss explicit formulas for the components of the combinatorial ARs of $\boldsymbol{\mathcal W}_{0,n+3}$, which will be used later on to make explicit the components of Euler's abelian relation in Section \S5 (see Proposition \ref{P:5.20} more specifically).




\subsection{Some Notations.}
\label{SS:Some-Notations}
Here, $n$ is a fixed integer bigger than or equal to 2 and 
 $\boldsymbol{I}_n$  stands for the set  of pairs $(i,j)$ with $1\leq i < j\leq n+3$. 
  We will take $\mathbf C$ as base field. 
Since we are only dealing with rational quantities, this choice actually is not relevant and we could have chosen to work over $\mathbf R$ instead (at the cost of some cosmetic changes
 that we will not mention).  \mk 
 
We use the coordinates system $(x_1,\ldots,x_n)$ associated to the birational map  $\psi : \mathbf C^n\dashrightarrow \mathcal M_{0,n+3},\, x=(x_i)_{i=1}^n\mapsto [0,1,\infty,x_1,\ldots,x_n]$ and we will work with the following cross-ratio
$$
{\rm cr}( z_1,z_2,z_3,z_4)=-\frac{(z_1-z_4)(z_2-z_3)}{(z_1-z_2)(z_3-z_4)}
$$
(chosen because of the normalization ${\rm cr}( 0,1 ,\infty , z)=z$ for any $z\in \mathbf C\setminus\{0,1\}$).
\sk 

We identify $\boldsymbol{\mathcal W}_{0,n+3}$ with its pull-back under $\psi$.  As first integrals in the coordinates $x_i$'s for the foliations of this web, we take the following rational maps (for $i$ ranging from 1 to $n$): 
\begin{align}
\label{Eq:U-i}
U_i(x)=&\,\,  \Big(x_j\Big)_{j\leq n, j\neq i} 
 && 
U_{n+1}(x)= \left(\frac{x_j-1}{x_n-1}\right)_{j\leq n-1}  \bk \\
U_{n+2}(x)=&\, \left(\frac{x_j}{x_n}\right)_{j\leq n-1}
&& U_{n+3}(x)= \left(\frac{x_j(x_n-1)}{x_n(x_j-1)}\right)_{j\leq n-1}\,. 
\nonumber
\end{align}

We will denote by $\mathcal F_i$ the foliation admitting $U_i$ as a first integral. For  $i=1,\ldots,n+3$, one sets 
\begin{equation}
\label{Eq:Omega-i}
\Omega_i=dU_{i,1}\wedge \ldots \wedge dU_{i,n-1}
\end{equation}
 where $U_{i,s}$ stands for the $s$-th component of $U_i$ for any $s\leq n-1$.  Then one has 
$\Omega_k=\wedge_{l=1,\ldots,n,\, l\neq k} dx_k$ for $k=1,\ldots,n$ hence these forms 
form a basis of the module of $(n-1)$-differential forms on any open domain of $\mathbf C^n$. 
\sk

We will use the following automorphisms of $ \mathcal M_{0,n+3}$: \sk
\begin{itemize}
\item 
\vspace{-0.2cm}
the  cyclic shift $\mathcal C: 
 \, [0,1,\infty,x_1,\ldots,x_n]\mapsto [1,\infty,x_1,\ldots,x_n,0]$; \sk 
 \item the transposition ${{\mathcal T}}: 
 \, [0,1,\infty,x_1,\ldots,x_n]\mapsto [0,\infty,1,x_1,\ldots,x_n]$.
\end{itemize}
\sk 

We denote by $C$ and $T$ the two birational maps corresponding to  $\mathcal C$ and $\mathcal T$ but expressed in the coordinates $x_i$'s (formally $C=\psi^{-1}\circ \mathcal C\circ \psi$ and similarly for $T$ with respect to $\mathcal T$).  One verifies easily that $C$ and $T$ are given by the following formulas: 
 \begin{equation}
 \label{Eq:R-C-1}
C(x)=\bigg( \, \frac{1-x_2}{x_1-x_2},  \frac{1-x_3}{x_1-x_3},\ldots, \frac{1-x_n}{x_1-x_n}  ,   \frac{1}{x_1}\, \bigg)
\qquad \mbox{and}\qquad 
T(x)=\bigg( \, \frac{x_1}{x_1-1}\, , \, \ldots \, , \,  \frac{x_n}{x_n-1}\, \bigg)\,\,  .
\end{equation}

\begin{lem}
 The maps $C$ and $T$ are automorphisms of $\boldsymbol{\mathcal W}_{0,n+3}$ and their actions on the foliations $\mathcal F_i$ are as follow: 
for any $i=1,\ldots,n+3$, one has 
$$
C^*\big(\mathcal F_i\big)=\mathcal F_{c(i)}
\qquad \mbox{ and } \qquad 
T^*\big(\mathcal F_i\big)=\mathcal F_{\tau(i)}\, 
$$
where $c$ stands for the $(n+3)$-cycle $(12\cdots (n+3))$ and $\tau$ for the transposition $((n+2)(n+3))$. 
\end{lem}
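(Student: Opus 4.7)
The plan is first to verify that the explicit formulas in \eqref{Eq:R-C-1} are genuinely the coordinate expressions $\psi^{-1}\!\circ\!\mathcal C\!\circ\!\psi$ and $\psi^{-1}\!\circ\!\mathcal T\!\circ\!\psi$ of the two moduli-space automorphisms, and then to read off the induced action on the foliations of $\boldsymbol{\mathcal W}_{0,n+3}$ from the permutation of the marked points that each of $\mathcal C$ and $\mathcal T$ induces on a normalized configuration $[0,1,\infty,x_1,\ldots,x_n]$.

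For the first step I would apply $\mathcal C$ to $\psi(x)=[0,1,\infty,x_1,\ldots,x_n]$, obtaining $[1,\infty,x_1,\ldots,x_n,0]$, and then renormalize by the unique M\"obius transformation $g(z)=(z-1)/(z-x_1)$ which sends $(1,\infty,x_1)$ to $(0,1,\infty)$. Applying $g$ to the remaining entries $x_2,\ldots,x_n,0$ produces exactly the tuple appearing in \eqref{Eq:R-C-1}. The analogous calculation for $\mathcal T$ uses the involution $h(z)=z/(z-1)$, which fixes $0$ and swaps $1$ with $\infty$, so that $h(x_j)=x_j/(x_j-1)$. Since $\mathcal C$ and $\mathcal T$ are induced by permutations of the $n+3$ marked points, they automatically pull back each forgetful fibration to another forgetful fibration, hence each foliation of $\boldsymbol{\mathcal W}_{0,n+3}$ to another such foliation. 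Both are therefore web-automorphisms, and only the induced permutation of foliations remains to be identified.

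The crucial combinatorial bookkeeping will be to determine, for every $i\in\{1,\ldots,n+3\}$, which marked point of the normalized configuration is being forgotten by the first integral $U_i$ of \eqref{Eq:U-i}. For $i\le n$ this is immediate from the shape of $U_i=(x_j)_{j\le n,\,j\ne i}$: it corresponds to forgetting the entry $x_i$, which sits in position $i+3$. For $i=n+1,n+2,n+3$ I would check by direct M\"obius renormalization that $U_{n+1}$, $U_{n+2}$ and $U_{n+3}$ correspond respectively to forgetting the entries $0$, $1$ and $\infty$, which sit in positions $1$, $2$, $3$ of the tuple. Writing $\rho$ for the cyclic shift $i\mapsto i+3 \bmod (n+3)$, this says precisely that $\mathcal F_i=\varphi_{\rho(i)}$ for every $i$, where $\varphi_j$ denotes the forgetful map associated to the $j$-th position of the configuration.

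The conclusion is then automatic: since $\mathcal C$ and $\mathcal T$ act on positions by the permutations $c=(1\,2\cdots n{+}3)$ and $(2\,3)$ respectively, one has $\mathcal C^*\varphi_j=\varphi_{c(j)}$ and $\mathcal T^*\varphi_j=\varphi_{(2\,3)(j)}$. Combining this with $\mathcal F_i=\varphi_{\rho(i)}$ and using that $c$ commutes with its own power $\rho=c^3$, one obtains
\[
\mathcal C^*(\mathcal F_i)=\varphi_{c\rho(i)}=\varphi_{\rho c(i)}=\mathcal F_{c(i)},
\]
and likewise $\mathcal T^*(\mathcal F_i)=\varphi_{(2\,3)\rho(i)}=\varphi_{\rho\tau(i)}=\mathcal F_{\tau(i)}$ with $\tau=\rho^{-1}(2\,3)\rho=((n{+}2)(n{+}3))$, the last equality being the standard formula for a conjugated transposition. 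There is no serious obstacle in this proof; the whole argument is routine, and the only point requiring some care is keeping track of the offset between the position indices of the configuration and the labelling of the foliations by the $U_i$'s.
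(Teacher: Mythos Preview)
Your proof is correct and complete. The paper does not give a proof of this lemma at all; it is stated as a routine fact and left to the reader. Your argument via the relabelling bijection $\rho=c^3$ between the foliation index $i$ and the position index in the configuration tuple is exactly the kind of verification the paper implicitly has in mind, and you have carried out the bookkeeping carefully (including the check that $U_{n+1},U_{n+2},U_{n+3}$ correspond to forgetting the entries $0,1,\infty$ in positions $1,2,3$, and the conjugation $\rho^{-1}(2\,3)\rho=((n{+}2)(n{+}3))$).
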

The group  generated by $C$ and $T$, denoted by $S_{n+3}$,  is isomorphic to $\mathfrak S_{n+3}$.
More precisely, for any $G$ in this group, let $\delta_G$ be the permutation of $\{1,\ldots,n+3\}$ such that $G^*(\mathcal F_i)=\mathcal F_{\delta_G(i)}$ for $i=1,\ldots,n+3$.  Then $\delta:S_{n+3}
\rightarrow \mathfrak S_{n+3}$, $G\mapsto \delta_G$ is an isomorphism.\footnote{For   $G,G'\in  S_{n+3}$, one has $\delta_{G'\circ G}=\delta_{G'}\cdot \delta_G$ where $\cdot$ denotes the product on $\mathfrak S_{n+3}$ given by applying the permutations according to their order of appearance from the right to the left, 
{\it i.e.}\,one has $(\delta_{G'}\cdot \delta_G)(k)=\delta_G(\delta_{G'}(k))$ for any $k$.}
We will denote by $G_\sigma$ the birational map of 
${\mathcal M}_{0,n+3}$ corresponding to $\sigma\in \mathfrak S_{n+3}$ up to this isomorphism, that is $$G_\sigma=\delta^{-1}(\sigma)\in {\bf Bir}\big({\mathcal M}_{0,n+3}\big)\, .$$
Note that $S_{n+3}$  identifies with the group of  birational automorphisms of 
$\boldsymbol{\mathcal W}_{0,n+3}$ (which coincides with the group of automorphisms of  ${\mathcal M}_{0,n+3}$). \sk 

The $\mathfrak S_{n+3}$-module structure of $\boldsymbol{AR}\big( 
\boldsymbol{\mathcal W}_{0,n+3} \big)$ is as follows: for any permutation 
$\sigma\in \mathfrak S_{n+3}$ and any abelian relation $A=(A^i\cdot \Omega_i)_{i=1}^{n+3}\in \boldsymbol{AR}\big( 
\boldsymbol{\mathcal W}_{0,n+3} \big)$, one has 
\begin{equation}
\label{Eq:Sigma.A}
\sigma\cdot A=G_\sigma^*\big( A\big) = \Big(  G_\sigma^*\big( 
A^i\Omega_i
\big)
\Big)_{i=1}^{n+3}\, . 
\end{equation}

 For any $k=1,\ldots,n+3$ and any  $(i,j)\in \boldsymbol{I}_n$, we denote by  
$\boldsymbol{\mathcal W}_{\widehat{k }}$
and  
$\boldsymbol{\mathcal W}_{\widehat{\imath \jmath}}$ 
the subwebs of $\boldsymbol{\mathcal W}_{0,n+3}$ defined by the first integrals $U_l$ for $l\in \{1,\ldots,n+3\}\setminus \{k\} $ and 
$l\in \{1,\ldots,n+3\}\setminus \{ i,j\} $ respectively. The corresponding spaces of abelian relations will be denoted by $\boldsymbol{A}_{\widehat{k}}$ and $\boldsymbol{A}_{\widehat{\imath \jmath}}$ respectively.  These vector spaces can naturally be seen as subspaces of 
$\boldsymbol{AR}( \boldsymbol{\mathcal W}_{0,n+3})$.


\subsection{The main theorem.}
\label{SS:lalala}
Here we aim to give an almost complete proof of the theorem below, which describes  
$\boldsymbol{AR}( \boldsymbol{\mathcal W}_{0,n+3})$ as a $\mathfrak S_{n+3}$-module. 
This result contradicts,   when $n$ is odd, one of the main results claimed by Damiano. 
 However the proof, whether $n$ is odd or not, 
  is essentially Damiano's one! The single novelties here are the few details we have added and also a  final argument, elementary but missed by Damiano, 
showing that all ARs of $  \boldsymbol{\mathcal W}_{0,n+3}$ are combinatorial when $n$ is odd.  \sk


The approach we will follow is  Damiano's and can be summarised as such:
\begin{itemize}
\item[$-$] 
\vspace{-0.2cm}
we start by considering the $(n+1)$-subwebs  $\boldsymbol{\mathcal W}_{\widehat{\imath \jmath}}$ and their abelian relations;\sk
\item[$-$] then we turn to the  $(n+1)$-subwebs $\boldsymbol{\mathcal W}_{\widehat{k}}$
whose structure of the space of its abelian relations will be determined as an $\mathfrak S_{n+2}$-module; 
\sk
\item[$-$] eventually we consider $\boldsymbol{AR}
\big( \boldsymbol{\mathcal W}_{0,n+3} 
\big) $  and establish Theorem \ref{THM:AR-C(W)-S-n+3-module} below. 
\end{itemize}
\mk 

For any pair $(i,j)$ in  $\boldsymbol{I}_n$, the space 
$\boldsymbol{A}_{\widehat{\imath \jmath}}=\boldsymbol{AR}
\big( \boldsymbol{\mathcal W}_{\widehat{\imath \jmath}} 
\big) $  is a subspace of dimension at most 1 of $
\boldsymbol{AR}( \boldsymbol{\mathcal W}_{0,n+3})$. Our goal here is first to give an effective construction of a non trivial element in 
$\boldsymbol{A}_{\widehat{\imath \jmath}}$. We will thus get a explicit family  of ARs generating $
\boldsymbol{AR}_C( \boldsymbol{\mathcal W}_{0,n+3})$ which will make it possible to study this $\mathfrak S_{n+3}$-module  more in depth.
\mk

We first consider some explicit scalar quantities $A_{n+2,n+3}^i$'s for $i=1,\ldots,n+3$: 
one has $A_{n+2,n+3}^{k}=0$ for $k=n+2,n+3$
and the other $A_{n+2,n+3}^i$'s are given  by the following formulas:  
\begin{align*}
A_{n+2,n+3}^{i}=   \frac{(-1)^{i-1}}{\prod_{k=1}^{n-1} \big(U_{i,k}-1\big)}  \quad \mbox{for }\; i=1,\ldots,n
\quad \qquad \mbox{ and } \qquad  \quad 
A_{n+2,n+3}^{n+1}=   \frac{(-1)^{n}}{\prod_{k=1}^{n-1} U_{n+1,k}}\, .
\end{align*}


Then it can be verified that the following $(n+3)$-tuple of differential forms 
$$AR_{n+2,n+3}=\left(A_{n+2,n+3}^i \cdot \Omega_i  \right)_{i=1}^{n+3}$$ 
is an abelian relation for 
$\boldsymbol{\mathcal W}_{\widehat{n+2,n+3}} $ ({\it i.e.}\, one has $\sum_{i=1}^{n+1} A_{n+2,n+3}^i\cdot \Omega_i=0$ as a rational $(n-1)$-form on $\mathbf C^n$). 
 Our approach to build the ARs of the other subwebs $\boldsymbol{\mathcal W}_{\widehat{\imath \jmath}}$ consists in taking the pull-backs of $AR_{n+2,n+3}$ under some birational isomorphisms the constructions of which are not difficult to show.
\sk 

One sets
\begin{itemize}
\item $T_{k-1,k}= \big(C\big)^{-k}\circ  T\circ  \big(C\big)^{k}$ for $k=2,\ldots,n+4$;    and $ T_{1,n+3}= T_{n+3,n+4}$;\sk
\item $ C_1={\bf I}{\rm d}$ and 
$ C_{l}= T_{l-1,l}\circ  T_{l-2,l-1}
\cdots \circ  T_{1,2}$ for $l=2,\ldots,n+3$;
\item $ T_{1,\ell}= \big(  C_{\ell-1}\big)^{-1}\circ  T_{\ell-1,\ell}\circ  C_{\ell-1}$ 
and $ T_{2,\ell}= 
 \big(  T_{1,2}\big)^{-1}\circ  T_{1,\ell}\circ  T_{1,2}$
for $\ell=3,\ldots,n+3$;\sk
\item $ T_{i,j}=  T_{2,j}\circ  T_{1,i}$ for any pair $(i,j)$ such that $3\leq i<j\leq n+3$. \mk 
\end{itemize}

It follows from elementary standard computations in $\mathfrak S_{n+3}$ that 
for any $(i,j)\in \boldsymbol{I}_n$, $ T_{i,j}$ is an involution such that 
$\delta(T_{i,j})=(1i)(2j)$. Consequently, for any such pair, the pull-back 
$$
AR_{i,j}=\Big(T_{i,j}\circ T_{n+2,n+3}\Big)^*\left( 
AR_{n+2,n+3}
\right) 
$$
is a non trivial abelian relation of $\boldsymbol{\mathcal W}_{\widehat{\imath \jmath}}$.  Thus any space $\boldsymbol{A}_{\widehat{\imath \jmath}}$ 
is spanned by $AR_{i,j}$ hence 
has dimension 1 and   $\{ AR_{{i,j}}\, \lvert \,  (i,j)\in \boldsymbol{I}_n\,\}$ is a generating family for the space of combinatorial ARs. Because the $AR_{{i,j}}$'s have been constructed effectively, this gives a way to investigate $\boldsymbol{AR}_C( \boldsymbol{\mathcal W}_{0,n+3})=
\big\langle  \, {AR}_{i,j}
\, \lvert \,   (i,j)\in \boldsymbol{I}_n
\big\rangle$ as a $\mathfrak S_{n+3}$-module. 
 \sk

 It will be useful to set  ${AR}_{k,l}={AR}_{l,k}$ for any $k,l=1,\ldots,n+3$ with $n+3\geq k>l\geq 1$. 
 \begin{center}
 $\star$
 \end{center}
 \sk

For $k\leq n+1$, one denotes by $\boldsymbol{I}_{n}^k$ the set of $(i,j)$'s in $\boldsymbol{I}_n$ such that $i=1,\ldots,k$ and $j=i+1,\ldots,n+2$, and one defines
 $$\boldsymbol{J}_n=\begin{cases} {}^{} \, \boldsymbol{I}_{n}^n
  \hspace{0.7cm} \mbox{when } n \mbox{ is even }; \\
{}^{} \, \boldsymbol{I}_{n}^{n+1}\hspace{0.4cm} \mbox{when } n \mbox{ is odd }.
 \end{cases}
$$
Remark that  one has  
$ \boldsymbol{I}_{n}^{n+1}=\boldsymbol{I}_{n}^n\cup \{ (n+1,n+2) \}$ and 
${\rm Card}\big(\boldsymbol{I}_{n}^{n+1}\big)= {\rm Card}\big(\boldsymbol{I}_{n}^n\big)+1=
(n+1)(n+2)/2$. \sk 

\vspace{-0.4cm}
{Our goal below is to prove the following}
\begin{thm} 
\label{THM:AR-C(W)-S-n+3-module}
The following  assertions hold true for any integer $n\geq 2$: 
\begin{enumerate}
\item[1.] If $n$ is odd, then:\sk
\begin{enumerate}
\item[--] the abelian relations $AR_{ij}$'s for $(i,j)\in \boldsymbol{J}_n$ 
form a basis of $\boldsymbol{AR}_C( \boldsymbol{\mathcal W}_{0,n+3})$ which  has dimension $(n+1)(n+2)/2$. Consequently 
$\boldsymbol{AR}( \boldsymbol{\mathcal W}_{0,n+3})=\boldsymbol{AR}_C( \boldsymbol{\mathcal W}_{0,n+3})$
 and  the web $\boldsymbol{\mathcal W}_{0,n+3}$ has maximal rank;
\mk 
\item[--] as a $\mathfrak S_{n+3}$-representation, the space $\boldsymbol{AR}( \boldsymbol{\mathcal W}_{0,n+3})=\boldsymbol{AR}_C( \boldsymbol{\mathcal W}_{0,n+3})$ is irreducible and isomorphic to the $n$-th wedge product of the standard $\mathfrak S_{n+3}$-representation (with associated Young symbol $[3,1^n]${\rm )}.\footnote{Remember that for $m\geq 3$ and any $p\in \{1,\ldots,m\}$, 
the $p$-th wedge product of the standard $\mathfrak S_m$-representation 
with Young symbol $[ m-1,1]$ is irreducible with  Young symbol 
$[m-p,1^p]$ (see Proposition 3.12 and Exercise 4.6 in \cite{FultonHarris}).}
\end{enumerate}
\mk 
\item[2.] When $n$ is even:  \sk
\begin{enumerate}
\item[--] the abelian relations $AR_{ij}$'s for $(i,j)\in \boldsymbol{J}_n$  
form a basis of $\boldsymbol{AR}_C( \boldsymbol{\mathcal W}_{0,n+3})$.  Thus this space has dimension $n(n+3)/2=(n+1)(n+2)/2-1$. 
\mk 
\item[--] 
as a $\mathfrak S_{n+3}$-representation, the space 
$\boldsymbol{AR}_C( \boldsymbol{\mathcal W}_{0,n+3})$
 of combinatorial ARs  
is irreducible  with associated Young symbol $\big[221^{n-1}\big]$.
\end{enumerate}
\end{enumerate}
\end{thm}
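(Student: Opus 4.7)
The plan is to follow Damiano's strategy, supplemented for the $n$ odd case by the elementary but decisive Lemma~\ref{L:n-odd}. The argument proceeds in three phases: a combinatorial dimension count, a representation-theoretic identification of the resulting $\mathfrak S_{n+3}$-module, and finally the closing step for $n$ odd.

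\textbf{Phase 1 (dimension count).} The family $\{AR_{ij}\}_{(i,j)\in \boldsymbol{I}_n}$ has cardinality $\binom{n+3}{2}$ and, by construction, spans $\boldsymbol{AR}_C\big(\boldsymbol{\mathcal W}_{0,n+3}\big)$. For each $k\in\{1,\dots,n+3\}$, the $\binom{n+2}{2}$ combinatorial ARs $AR_{ij}$ with $i,j\neq k$ all lie inside $\boldsymbol{A}_{\widehat{k}}=\boldsymbol{AR}\big(\boldsymbol{\mathcal W}_{\widehat k}\big)$. Applying Proposition~\ref{Prop:Damiano'sBound} to the $(n+2)$-subweb $\boldsymbol{\mathcal W}_{\widehat k}$ in dimension $n$ yields the bound $\dim \boldsymbol{A}_{\widehat k}\le n+1$, which forces at least $\binom{n+2}{2}-(n+1)=\binom{n+1}{2}$ explicit linear relations among the $AR_{ij}$ with $i,j\neq k$. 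Collecting these relations across all $k$ and carefully tracking their overlap (using the combinatorial/matroidal structure emphasised in \cite{DThesis}) produces an upper bound on $\dim \boldsymbol{AR}_C$ matching the claim: $n(n+3)/2$ when $n$ is even, $(n+1)(n+2)/2$ when $n$ is odd. Linear independence of the proposed basis $\{AR_{ij}\,:\,(i,j)\in \boldsymbol{J}_n\}$ is then established either by direct evaluation of the components at a well-chosen generic point of $\mathbf K^n$, or triangularly via the explicit construction through the involutions $T_{ij}$.

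\textbf{Phase 2 ($\mathfrak S_{n+3}$-module structure).} By \eqref{Eq:Sigma.A}, the action of any $\sigma\in\mathfrak S_{n+3}$ sends $AR_{ij}$ to a nonzero scalar multiple of $AR_{\sigma(i)\sigma(j)}$, so $\boldsymbol{AR}_C$ is a quotient of a "signed permutation" representation on the unordered pairs, modulo the relations of Phase 1. Computing the characters on the identity, a transposition and a 3-cycle using the explicit sign/scalar factors arising in pullback formulas like \eqref{Eq:Pull-Backs} suffices, by standard representation theory, to identify the irreducible quotient. For $n$ even, the surviving summand is $S^{[2,2,1^{n-1}]}$ of dimension $n(n+3)/2$ (the conjugate partition being $(n+1,2)$, giving the expected hook-length dimension). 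For $n$ odd, the opposite parity in the sign contributions singles out $S^{[3,1^n]}$ of dimension $(n+1)(n+2)/2=\binom{n+2}{2}$, which is exactly the $n$-th wedge power of the standard representation.

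\textbf{Phase 3 (closing for $n$ odd).} Because the dimension $\binom{n+2}{2}$ obtained in Phases 1--2 already saturates the upper bound of Proposition~\ref{Prop:Damiano'sBound} for a $(n+3)$-web in dimension $n$, Lemma~\ref{L:n-odd} now forces $\boldsymbol{AR}\big(\boldsymbol{\mathcal W}_{0,n+3}\big)=\boldsymbol{AR}_C\big(\boldsymbol{\mathcal W}_{0,n+3}\big)$ and hence maximality of the rank. For $n$ even the one-dimensional gap between $\dim \boldsymbol{AR}_C=n(n+3)/2$ and the bound $(n+1)(n+2)/2$ is to be filled by Euler's abelian relation $\boldsymbol{\mathcal E}_n$, which will be treated in Section~\ref{S:Euler-AR}. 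The main obstacle is Phase 2: tracking the rational scalar factors precisely enough to distinguish $[3,1^n]$ from $[2,2,1^{n-1}]$ uniformly in the parity of $n$, which is exactly the subtle point Damiano's original treatment missed and which is resolved here by Lemma~\ref{L:n-odd}.
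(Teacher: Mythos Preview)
Your proposal has a genuine structural gap in how the dimension count and Lemma~\ref{L:n-odd} interact, and this gap reflects a misunderstanding of the paper's logic.

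In Phase~1 you claim that ``collecting these relations across all $k$ and carefully tracking their overlap'' yields the parity-dependent upper bound $n(n+3)/2$ or $(n+1)(n+2)/2$ directly. This is not how the argument works, and it is unclear how it could: the relations coming from the various $\boldsymbol{A}_{\widehat k}$'s overlap in a complicated way, and no purely combinatorial bookkeeping singles out the parity. The paper instead proves a technical projective-geometry lemma (Lemma~\ref{Lem:TECH}) about configurations of points $q_{i,j}$ in $\mathbf P^N$ carrying a suitable $\mathfrak S_{n+3}$-action; this yields, via Corollary~\ref{C:M=n-or-M=n+1}, that $\dim\boldsymbol{AR}_C$ is \emph{one of two values}: either $(n+1)(n+2)/2$ or one less. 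At this stage the parity plays no role whatsoever.

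Consequently, your Phase~3 is backwards. You write that the dimension $\binom{n+2}{2}$ is ``obtained in Phases~1--2'' and that Lemma~\ref{L:n-odd} then ``forces'' $\boldsymbol{AR}=\boldsymbol{AR}_C$. In the paper the flow is the opposite: Lemma~\ref{L:n-odd} is precisely what \emph{decides} between the two possible dimensions when $n$ is odd, by exhibiting an $AR_{n+2,n+3}$ that is linearly independent from the $AR_{ij}$'s with $(i,j)\in\boldsymbol I_n^n$. Only after this does one conclude $\eta=n+1$, hence $\dim\boldsymbol{AR}_C=(n+1)(n+2)/2$, hence maximality of the rank. For $n$ even, the dichotomy is resolved in the other direction by exhibiting the non-combinatorial Euler AR.

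Your Phase~2 (direct character computation on a few conjugacy classes) might in principle work, but it is not the paper's route: the paper first proves irreducibility abstractly (again via Lemma~\ref{Lem:TECH} applied to a putative quotient), then uses that the restriction to $\mathfrak S_{n+2}={\rm Fix}(i)$ on $\boldsymbol A_{\widehat{\imath}}$ has Young symbol $[2,1^n]$ (Proposition~\ref{P:Damiani-4.1.4}) together with the branching rule to narrow the possibilities to $[3,1^n]$ or $[2,2,1^{n-1}]$; the dimension then decides.
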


\subsection{The subwebs  ${\mathcal W}_{\widehat{\imath}} $'s 
and their abelian relations}
\label{SS:lalala}
  Since $\mathfrak S_{n+3}$ acts transitively on the foliations of $ \boldsymbol{\mathcal W}_{0,n+3}$, any $(n+2)$-web  $\boldsymbol{\mathcal W}_{\widehat{\imath}} $ is isomorphic to 
  $\boldsymbol{\mathcal W}_{\widehat{n+3}} $. The latter is the linear web formed by the $n+2$ linear families of lines passing through $n+2$ points in general position in $\mathbf P^n$ 
  (which can be taken to be the points $p_k$ of \S\ref{SSS:W0n+3}, {\it cf.}\,\eqref{Eq:points-p-i}). \sk 
  
For any $i$,  the subgroup $\mathfrak S_{n+2}\simeq {\rm Fix}(i)< \mathfrak S_{n+3} $ acts on the space 
of combinatorial ARs 
$$ \boldsymbol{A}_C(i)
=\Big\langle  \, AR_{i,j}\, \big\lvert \, j=1,\ldots,n+3,\, j\neq i\, 
\Big\rangle 
\subset   
\boldsymbol{AR}\big(\boldsymbol{\mathcal W}_{\widehat{\imath}}\big)=
\boldsymbol{A}(i)\, . $$

 From the action 
  \eqref{Eq:Sigma.A} 
  of $\mathfrak S_{n+3}$ on $\boldsymbol{AR}_C\big(\boldsymbol{\mathcal W}_{0,n+3}\big)$, 
 one gets a  $\mathfrak S_{n+2}$-module structure  on $ \boldsymbol{A}_C(i)$, whose structure is made clear thanks to the following 
\begin{prop}[Damiano]  ${}^{} $ 
\label{P:Damiani-4.1.4}
\begin{enumerate}
\vspace{-0.15cm}
\item[1.]  The space $\boldsymbol{A}_C(i)$ has dimension $n+1$ and 
the abelian relations $AR_{i,j}$'s for $j=1,\ldots,n+3$ with  $j\neq i$  are in general position 
in it:  any set of $n+1$ of them form a basis of this space.
\mk
\item[2.]  Consequently, one has  $ \boldsymbol{A}_C(i)= \boldsymbol{A}(i)$ and 
$\boldsymbol{\mathcal W}_{\widehat{\imath}} $ has maximal rank $n+1$.
\mk 
\item[3.] As a $\mathfrak S_{n+2}$-representation, $\boldsymbol{A}(i)$ is irreducible with Young symbol $[2,1^n]$. 
\end{enumerate}
\end{prop}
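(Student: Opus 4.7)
My approach follows Damiano's. The starting point is to exploit the transitivity of the $\mathfrak{S}_{n+3}$-action on the foliations of $\boldsymbol{\mathcal W}_{0,n+3}$ to reduce to the case $i = n+3$. In that case, $\boldsymbol{\mathcal W}_{\widehat{n+3}}$ admits the concrete linear model \eqref{Eq:Exceptional-web}: it is the $(n+2)$-web on $\mathbf{P}^n$ formed by the pencils of lines through $n+2$ points $p_1, \ldots, p_{n+2}$ in general position. Specializing Damiano's bound \eqref{Eq:Bound-on-the-rank} to $(d, n) = (n+2, n)$ gives the upper bound ${\rm rk}(\boldsymbol{\mathcal W}_{\widehat{\imath}}) \leq n+1$.

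For point \textbf{1.}, I would use the \emph{support pattern} of the combinatorial ARs. Because $AR_{i,j}$ is (up to scalar) the unique AR of the quadrilateral subweb $\boldsymbol{\mathcal W}_{\widehat{\imath \jmath}}$, its $\Omega_k$-component is zero precisely when $k \in \{i, j\}$ and is otherwise an explicit non-vanishing rational multiple of $\Omega_k$ readable off from \eqref{Eq:QuadrilateralWeb}. Given a linear combination $\sum_{j \neq i} \lambda_j AR_{i,j} = 0$, matching $\Omega_k$-components for each $k \neq i$ yields $n+2$ linear equations in the $n+2$ unknowns $\lambda_j$, whose coefficient matrix has zero diagonal and otherwise non-zero entries. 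By the bound just recalled the solution space has dimension at least $1$; to see it is exactly $1$, I would extract a non-vanishing $(n+1) \times (n+1)$ minor from the explicit coefficients in \eqref{Eq:QuadrilateralWeb}, which amounts to the well-known linear independence of any $n+1$ of the pulled-back coefficients of the standard quadrilateral AR. This shows $\dim \boldsymbol{A}_C(i) = n+1$ with any $n+1$ of the $AR_{i,j}$'s forming a basis. Point \textbf{2.} then follows immediately from $(n+1) = \dim \boldsymbol{A}_C(i) \leq \dim \boldsymbol{A}(i) \leq n+1$.

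The subtlest step is the representation-theoretic identification in \textbf{3.}. The subgroup ${\rm Fix}(i) \simeq \mathfrak{S}_{n+2}$ permutes the lines $\mathbf{C} \cdot AR_{i,j}$ via $j \mapsto \sigma(j)$, but a sign appears on vectors, reflecting the alternating $(-1)^i$ factor in \eqref{Eq:QuadrilateralWeb}. By tracking the action \eqref{Eq:Sigma.A} on the normals $\Omega_k$ along a set of generators of $\mathfrak{S}_{n+2}$ (for instance the adjacent transpositions realized by the $T_{k-1,k}$'s with $k = 2, \ldots, n+2$), one verifies that $\sigma \cdot AR_{i,j} = {\rm sgn}(\sigma) \cdot AR_{i, \sigma(j)}$ for every $\sigma \in \mathfrak{S}_{n+2}$ and every $j \neq i$. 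Abstractly then, the formal $(n+2)$-dimensional space $V = \bigoplus_{j \neq i} \mathbf{C} \cdot AR_{i,j}$ carries the representation ${\rm sgn} \otimes \mathbf{C}\bigl[\mathfrak{S}_{n+2}/\mathfrak{S}_{n+1}\bigr] \simeq [1^{n+2}] \oplus [2, 1^n]$; and the unique linear relation from step 2 must, by $\mathfrak{S}_{n+2}$-invariance combined with the sign twist, have all its coefficients equal, so that the corresponding line is precisely the $[1^{n+2}]$-summand of $V$. The quotient $\boldsymbol{A}(i) = V / [1^{n+2}]$ is therefore irreducible with Young symbol $[2, 1^n]$, as claimed. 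The main obstacle I anticipate is establishing the sign formula $\sigma \cdot AR_{i,j} = {\rm sgn}(\sigma) \cdot AR_{i, \sigma(j)}$; once this has been checked on a set of generating transpositions via the alternating structure of \eqref{Eq:QuadrilateralWeb}, the decomposition follows mechanically by an elementary character computation.
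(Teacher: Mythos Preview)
Your approach is valid but differs from the paper's in both key steps.

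For \textbf{1.}, the paper does not compute any minors. Instead it works in $\mathbf P\big(\boldsymbol{A}(n{+}3)\big)$, letting $q_j=[AR_{j,n+3}]$, and argues by contradiction via the $\mathfrak S_{n+2}$-action: if the largest subset of $q_j$'s in general position in its span had size $\kappa+2<n+2$, then transpositions fixing those points would force the remaining $q_j$'s into the same span, eventually contradicting irreducibility of the induced $\mathfrak S_{n+1}$-action. This is slicker than your proposed minor computation, and it yields as a by-product the projective picture used for irreducibility in \textbf{3.} Your minor approach should work too, but you leave the actual nonvanishing unchecked.

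For \textbf{3.}, the paper first extracts irreducibility from the projective description just obtained (any $\mathfrak S_{n+2}$-action on $\mathbf P^n$ permuting $n{+}2$ points in general position is the projectivization of an irreducible $(n{+}1)$-dimensional representation), and then distinguishes $[n{+}1,1]$ from $[2,1^n]$ by a single character value: using Lemma~\ref{L:(i,j)-(k,l)} it gets ${\rm Tr}\big(M_{(1,2)}\big)=-(n-1)$. Your route via $V\simeq {\rm sgn}\otimes\mathbf C[\mathfrak S_{n+2}/\mathfrak S_{n+1}]$ is attractive, but note that the key identity $\sigma\cdot AR_{i,j}={\rm sgn}(\sigma)\,AR_{i,\sigma(j)}$ is \emph{not} what Lemma~\ref{L:(i,j)-(k,l)} gives you: that lemma only treats transpositions disjoint from $\{i,j\}$, whereas you need the case $(j,k)$ with $k\neq i,j$. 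In general the identity holds only after a suitable renormalization of the $AR_{i,j}$'s (fix one $AR_{i,j_0}$ and define the others via $\sigma$-pullbacks; well-definedness then follows from Lemma~\ref{L:(i,j)-(k,l)} applied to ${\rm Fix}(i)\cap{\rm Fix}(j_0)$). Once you insert that renormalization step, your identification of $V$ and hence of $\boldsymbol A(i)\simeq V/[1^{n+2}]\simeq[2,1^n]$ is correct and gives irreducibility for free, which is a genuine advantage over the paper's two-step argument.
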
 

 To prove this result, we will need the following lemma which will be used again at the end: 
 \begin{lem}
 \label{L:(i,j)-(k,l)}
 For any elements $(i,j)$ and $(k,j)$ of $ \boldsymbol{I}_n$ with $\{i,j\}\cap \{k,l\}=\emptyset$, one has 
 \begin{equation}
  (i,j)\cdot AR_{i,j}=(-1)^{n-1}\,AR_{i,j}
 \qquad  \quad 
 \mbox{ and } 
 \qquad   \quad 
  (k,l)\cdot AR_{i,j}=-AR_{i,j}\, .
 \end{equation}
 \end{lem}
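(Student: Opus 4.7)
My plan is to first reduce the problem to the pair $(i,j) = (n+2,n+3)$ by a conjugation argument, and then carry out two explicit computations. To begin with, for any transposition $\sigma$ preserving the set $\{i,j\}$ setwise (whether $\sigma = (i,j)$ itself or any disjoint $(k,l)$), the pull-back $\sigma \cdot AR_{i,j}$ remains supported on the foliations indexed by $\{1,\ldots,n+3\}\setminus\{i,j\}$, hence lies in the one-dimensional space $\boldsymbol{A}_{\widehat{\imath\jmath}}$ and must be a scalar multiple of $AR_{i,j}$. This guarantees the existence of the two scalars. For the reduction, I would choose $\pi \in S_{n+3}$ with $\delta_\pi(\{n+2,n+3\}) = \{i,j\}$, so that $\pi \cdot AR_{n+2,n+3}$ spans $\boldsymbol{A}_{\widehat{\imath\jmath}}$; a short associativity computation then shows that the scalars for $(i,j)$ and $(k,l)$ acting on $AR_{i,j}$ coincide with those of the conjugate transpositions $\pi^{-1}(i,j)\pi = (n+2,n+3)$ and $\pi^{-1}(k,l)\pi$ (a transposition in $\mathfrak S_{\{1,\ldots,n+1\}}$) acting on $AR_{n+2,n+3}$. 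Since all transpositions in $\mathfrak S_{\{1,\ldots,n+1\}}$ are mutually conjugate within this subgroup --- which itself stabilizes $\{n+2,n+3\}$ --- they all act on $AR_{n+2,n+3}$ by the same scalar. It therefore suffices to verify the two identities in the specific cases $\sigma = (n+2,n+3)$ and $\sigma = (1,2)$, acting on $AR_{n+2,n+3}$.

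For the first scalar, I would use $G_{(n+2,n+3)} = T$ given by $T(x) = \bigl(x_k/(x_k-1)\bigr)_{k=1}^n$, and compute the pull-back of just the first component $A^1_{n+2,n+3}\, \Omega_1 = \bigl(\prod_{k=2}^n(x_k-1)\bigr)^{-1} dx_2 \wedge \cdots \wedge dx_n$. The two elementary identities $T^*(1/(x_k-1)) = x_k - 1$ and $T^*(dx_k) = -dx_k/(x_k-1)^2$ combine to give $T^*(A^1_{n+2,n+3}\,\Omega_1) = (-1)^{n-1} A^1_{n+2,n+3}\, \Omega_1$, yielding the scalar $(-1)^{n-1}$.

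For the second, the automorphism $G_{(1,2)}$ is simply the coordinate swap $x_1 \leftrightarrow x_2$. Since $(1,2)$ fixes the index $n+1$, I examine the $(n+1)$-th component $A^{n+1}_{n+2,n+3}\,\Omega_{n+1}$ with $U_{n+1,j} = (x_j-1)/(x_n-1)$. For $n \geq 3$, $G_{(1,2)}^*$ merely permutes the first two components $U_{n+1,1}$ and $U_{n+1,2}$: it leaves the symmetric coefficient $A^{n+1}_{n+2,n+3} = (-1)^n/\prod_j U_{n+1,j}$ invariant, but flips the sign of the wedge product $\Omega_{n+1} = dU_{n+1,1} \wedge \cdots \wedge dU_{n+1,n-1}$ by antisymmetry. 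The resulting scalar is therefore $-1$; the case $n=2$ is a direct one-variable verification giving the same answer.

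I expect the only conceptually subtle step to be the reduction paragraph, where the right-to-left convention for the group action must be handled carefully to make sure that the abstract character values of the one-dimensional stabilizer representations transfer correctly between conjugate pairs. Once this is settled, the two remaining computations are purely elementary polynomial manipulations.
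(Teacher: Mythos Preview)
Your proposal is correct and follows essentially the same strategy as the paper: reduce to the specific pair $(i,j)=(n+2,n+3)$, $(k,l)=(1,2)$, observe that each transposition acts by a sign on the one-dimensional space $\boldsymbol{A}_{\widehat{\imath\jmath}}$, and determine that sign by an explicit computation on a single component. The only cosmetic differences are that the paper verifies both signs on the \emph{same} component (the third one, written as a wedge of logarithmic forms $d\hspace{0.03cm}{\rm Log}(x_\ell-1)$), whereas you use component~$1$ for $T$ and component~$n+1$ for $G_{(1,2)}$; and the paper is terser about the reduction, simply invoking the transitive $\mathfrak S_{n+3}$-action on the foliations rather than your more explicit conjugation argument.
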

 \begin{proof}
Considering the action of $\mathfrak S_{n+3}$ on the foliations of $
\boldsymbol{\mathcal W}_{0,n+3}$, one can restrict oneself to the case when $(i,j)=(n+2,n+3)$ and 
$(k,l)=(1,2)$. 

In the rational coordinates $x_1,\ldots,x_n$ we are working with,
 the birational maps associated to the two corresponding transpositions are  
given by 
$$G_{n+2,n+3}(x)=T(x)=\Big( {x_i}/({x_i-1}) \Big)_{i=1}^n 
\qquad \mbox{ and } \qquad 
G_{1,2}(x)=\big( x_2,x_1,x_3,\ldots,x_n\big)\, .$$ 

Let $\tau$ stand for $(n+2,n+3)$ or $(1,2)$.  Since $G_\tau$ leaves 
$\boldsymbol{\mathcal W}_{\widehat{\imath\jmath}} $ invariant and because $\tau$ is an involution, one has $\tau\cdot AR_{n+2,n+3}=G_{\tau}^*\big( AR_{n+2,n+3}\big)= \epsilon_\tau\, AR_{n+2,n+3}$ for  $\epsilon_\tau\in \{\pm 1\}$ to be determined. \sk 

To do so, it suffices to compare $G_\tau^*(A_{n+2,n+3}^3\, \Omega_3)$ with $A_{n+2,n+3}^3\Omega_3$, which is easy to do. 
Indeed, one verifies easily that $A_{n+2,n+3}^3\Omega_3$ is equal to the wedge product of the logarithmic 1-forms $d \hspace{0.03cm} {\rm Log}( x_\ell-1)$ for $\ell=1,2,\ldots,n$ distinct from 3: 
$$
A_{n+2,n+3}^3 \hspace{0.06cm} \Omega_3=d \hspace{0.03cm}{\rm Log}\big( x_1-1\big)\wedge d \hspace{0.03cm} {\rm Log}\big( x_2-1\big)\wedge 
\left( \, \bigwedge_{\ell=4}^n d  \hspace{0.03cm} {\rm Log}\big( x_\ell-1\big)
\,  \right) \, .
$$

 Since $G_{1,2}$ is just exchanging $x_1$ and $x_2$, one gets $(1,2)\cdot A_{n+2,n+3}^3\, \Omega_3=-A_{n+2,n+3}^3\, \Omega_3$ hence $\epsilon_{1,2}=-1$.  
Because  taking the pull-bak under $T$ consists in replacing $x_i$ by $x_i/(x_i-1)$, 
one has 
$$  {\rm Log}\left(\frac{x_i}{x_i-1}-1\right)={\rm Log}\left(\frac{1}{x_i-1}\right)=-{\rm Log}\big(x_i-1\big)$$
for any $i=1,\ldots,n$,  
which immediately gives us  that 
$T^*\big( A_{n+2,n+3}^3\, \Omega_3\big)=(-1)^{n-1}A_{n+2,n+3}^3\,  \Omega_3$. Hence one has 
 $\epsilon_{n+2,n+3}=(-1)^{n-1}$, which  finishes  the proof of the lemma.
 \end{proof}

 We now turn to the proof of the preceding proposition.   The one below is entirely taken from \cite{DThesis} ({\it cf.}\,\S{4.1.4} in it) and is reproduced here for the sake of completeness.  Next, in \S\ref{SSS:Alternative-Proof-of-Proposition- P:Damiani-4.1.4}, we will describe (without proof) a more geometric approach of Proposition \ref{P:Damiani-4.1.4} which is much more in the spirit of classical web geometry. 
  \begin{proof}[Proof of Proposition \ref{P:Damiani-4.1.4}]
  It suffices to prove the proposition for $i=n+3$.  To simplify the notation, we set  
  $\boldsymbol{A}=\boldsymbol{A}(n+3)=\boldsymbol{AR}\big( \boldsymbol{\mathcal W}_{\widehat{n+3}} \big)$  and 
  $\boldsymbol{A}_C=\boldsymbol{A}_C(n+3)=\boldsymbol{AR}_C\big( \boldsymbol{\mathcal W}_{\widehat{n+3}} \big)$.
For 
  $l=1,\ldots,n+2$, we denote by $q_l$ the point of 
  $\mathbf P(\boldsymbol{A})
 $  corresponding to $AR_{l,n+3}$. 
By definition,  $\mathbf P(\boldsymbol{A}_C)$ is the subspace of 
 $\mathbf P(\boldsymbol{A})$ spanned by all the $q_l$'s and 
   from Proposition \ref{Prop:Damiano'sBound}, we know that $\dim\big( \mathbf P(\boldsymbol{A}) \big)\leq n$.

 Given $\sigma\in \mathfrak S_{n+2}$ and $q=[ Q]\in  \mathbf P(\boldsymbol{A})
 $ with $Q\in \boldsymbol{AR}(\boldsymbol{\mathcal W})\setminus \{0 \}$, we set $\sigma\cdot q=[ \sigma \cdot Q]$. Then for any $i,j$ such that  $1\leq i<j\leq n+2$, 
 the transposition $\sigma=(i,j)$ exchanges $q_i$ and $q_j$ and admits all the other  
$q_l$'s (for $l\in \{1,\ldots,n+2\}$ distinct from $i$ and $j$)  as fixed points. 
 \sk 
 
 Let $\kappa \in \{1,\ldots,n\}$ be the largest integer satisfying the property
 that  there exists 
 a subset 
 $J=\{ {j_1},\ldots, j_{\kappa+2} \}\subset \{1,\ldots,n+2\}$ of cardinality $\kappa+2$ 
 such that 
the points  
$q_{j_1},  \ldots,q_{j_{\kappa+2}}$ are in general position in their span 
$\mathbf P(J)=\langle  q_{j}\, ,\, j\in J\rangle $ which is of dimension $\kappa$. If $\kappa=n$ then 1.\,is proved hence assume that it is not the case. 
  Then for any $i\in \{1,\ldots,n+2\}\setminus J$, the transposition $\sigma_i=(i,j_1)$ 
 lets $q_{j_s}$ fixed for  any $s=2,\ldots,\kappa+2$. Since these points form a projective frame of $\mathbf P(J)$ according to the definition of $J$, we have that $\sigma_i\cdot \mathbf P(J)=\mathbf P(J)$. On the other hand, $\sigma_i$ exchanges $q_i$ with $q_{j_1}$ hence we get that $q_i$ belongs to $\mathbf P(J)$. Because this applies to any $i$ not in $J$,  we obtain that $\mathbf P(J)
 =\mathbf P(\boldsymbol{A}_C)$.
Assume that $\kappa<n-1$. Then there exist $i,i'\in \{1,\ldots,n+2\}\setminus J$ distinct. 
On the one hand, the permutation $(i,i')$ lets  $q_j$ invariant for all $j\in J$. Since these points are in general position in $\mathbf P(J)
 $, it follows that $(i,i')$ acts as the identity on this space. 
 On the other hand,  $(i,i')$ exchanges $q_i$ and $q_{i'}$, which is a contradiction. Thus necessarily, one has $\kappa= n-1$ or $\kappa=n$.  \sk
 
 Since we are done in the latter case, let us assume than the former holds true: one has 
 $\kappa= n-1$.  We denote by $i$ the element of $\{1,\ldots,n+2\}$ not in $J$. Then $\mathfrak S_{n+1}\simeq {\rm Fix}_{\mathfrak S_{n+2}}(i)$ acts on $\mathbf P(J)
 =\mathbf P(\boldsymbol{A}_C)\simeq \mathbf P^{n-1}$ inducing the standard permutation action on the $n+1$ points $q_1,\ldots,\widehat{q_i},\ldots,q_{n+2}$. It is easily verified that 
 necessarily, the action on $\mathbf P^{n-1}$ comes from a linear action on a vector space of dimension $n$ which is irreducible. But this action also lets $q_i$ invariant, a contradiction. \sk 
 
We thus have proved 1. The second point follows immediately. \sk 

Let us turn to the structure of the space $ \boldsymbol{A}$ as a $\mathfrak S_{n+2}$-module. Picking a projective basis for $\mathbf P(\boldsymbol{A})$, 
say the $(n+1)$-tuple $(q_1,\ldots,q_{n+1})$,  we identify it with $\mathbf P^n$. From the arguments above, it follows that the image of 
$\mathfrak S_{n+2}$ in ${\rm PGL}_{n+1}(\mathbf C)$ corresponding to this identification can be described as the set of projective transformations leaving $\{q_i\}_{i=1}^{n+1}$ invariant and inducing a permutation of it.  From this, one gets easily that 
$ \boldsymbol{A}_{\widehat{n+3}}$  necessarily is an irreducible 
$\mathfrak S_{n+2}$-representation. Since moreover 
 it has dimension $n+1$, there are only two possibilities: either it is the standard representation or its dual (with respective Young symbols $[n+1,1]$ and $[2,1^{n}]$). 

 From above, we know  that the abelian relations $AR_{1,n+3}$ for $i=1,\ldots,n+1$ form a basis of 
$\boldsymbol{A}=\boldsymbol{A}_C$ which has  dimension $n+1$. In order to recognize it as a $\mathfrak S_{n+2}$-representation, we consider the matrix in this basis of the linear involution associated to the transposition $(1,2)$, denoted by $M_{(1,2)}$. From 
 Lemma \ref{L:(i,j)-(k,l)}, we have $(1,2)\cdot AR_{j,n+3}=-AR_{j,n+3}$ for any $j=3,\ldots,n+1$ on the one hand, whereas one has $(1,2)\cdot AR_{k,n+3}=\epsilon_k\,AR_{l,n+3}$ for $k,l$ such that $\{k,l\}=\{1,2\}$, with $\epsilon_k,\epsilon_l\in \{\pm 1\}$ on the other hand. 
It follows that ${\rm Tr}\big(M_{(1,2)} \big)=-(n-1)$.  Since the character map $\chi_{[n+1,1]}$ of the standard representation is such that $\chi_{[n+1,1]}(\sigma)=\lvert {\rm Fix}(\sigma)\lvert-1$ for any $\sigma\in \mathfrak S_{n+2}$ where ${\rm Fix}(\sigma)\subset \{1,\ldots,n+2\}$ stands for the set of fixed points of $\sigma$, we deduce that the space 
$\boldsymbol{A}(n+3)$ is not the standard representation but its dual, which concludes the proof of the proposition. 
 \end{proof}
 
\subsubsection{\bf An alternative view on Proposition \ref{P:Damiani-4.1.4}}
\label{SSS:Alternative-Proof-of-Proposition- P:Damiani-4.1.4}
 Here we would like to explain another way to establish Proposition \ref{P:Damiani-4.1.4} much more in the spirit of classical web geometry and which suggests an interesting possible development (see \S\ref{SS:Algebraization-(n+2)-webs} further). This approach is a rather straightforward generalization 
 for curvilinear $(n+2)$-web in dimension $n$ 
 of the classical linearization/algebraization of maximal rank planar 4-webs which goes back to works by Lie, Poincar\'e and Blaschke (for which we refer to \cite[\S4.3]{Coloquio}). \mk
 
We denote by $V$ the complex vector space $\mathbf C^{n+1}$ the projectivization of which is the ambiant projective space $\mathbf P^n$ we are working in:  $\mathbf P^n=\mathbf P(V)$. If $v_1,\ldots,v_{n+1}\in V$ stand for the vectors of the canonical basis of $V$ and $
v_{n+2}=(1,\ldots,1)\in V$, the. one has $p_i=[v_i]$ for $i=1,\ldots,n+2$ where $[\cdot]: V\setminus \{0\}\rightarrow \mathbf P^n$ stands for the standard projectivization map. 
We denote by $P$ the set of the $p_i$'s and by $\boldsymbol{\mathcal L \hspace{-0.05cm}\mathcal W}_{P}$ the $(n+2)$-linear web on $\mathbf P^n$ formed by the families of lines through the $p_i$'s.  It is nothing else but the subweb $\boldsymbol{\mathcal W}_{\widehat{n+3}} $ of $\boldsymbol{\mathcal W}_{0,n+3}$.  
\sk 

Here we continue to work with the notations of \S\ref{SS:Some-Notations}. In particular, 
 the rational coordinates $x_i$'s are those associated to the birational map $\mathbf P^n\dashrightarrow \overline{\mathcal M}_{0,n+3},\, (x_1,\ldots,x_n)\rightarrow [0,1, \infty,x_1,\ldots,x_n]$.  The subgroup ${\rm Fix}(3)\subset \mathfrak S_{n+3}$ is isomorphic to $\mathfrak S_{n+2}$ hence the latter group acts on $\mathbf P^n$ by birational maps which  (1) preserve the foliation by RNC of degree $n$ of $\boldsymbol{\mathcal W}_{0,n+3}$ (2) are birational automorphisms of $\boldsymbol{\mathcal L \hspace{-0.05cm}\mathcal W}_{P}$.  We thus have a representation 
\begin{equation}
\label{Eq:REP}
\mathfrak S_{n+2}\simeq {\rm Fix}(3)\rightarrow {\rm Bir}(\mathbf P^n)\, . 
\end{equation}

The transpositions $(i,j)$ with $3<i<j\leq n+3$, $(2,4)$ and $(1,2)$ generate ${\rm Fix}(3)$ 
and the corresponding Cremona transformations $G_\sigma$ for such any  transpositions $\sigma$ are easily seen to be given by 
$$
G_\sigma(x)= 
\begin{cases}\hspace{0.1cm} 
\big( \, x_{\sigma(k+3)-3}\, \big)_{k=1}^{n}  
\, \quad \mbox{ for }  \sigma=(i,j) \\ 
 \hspace{0.1cm} 
\big( 1-x_{k}\big)_{k=1}^{n}  
 \hspace{1cm}  \mbox{ for }  \sigma=(1,2) \\
  \hspace{0.1cm} 
\Big( \frac{1}{x_1}, \frac{x_2}{x_1},\ldots, 
\frac{x_n}{x_1}\Big) 
 \quad \mbox{ for }  \sigma=(2,4) \, .
\end{cases}
$$
These birational maps are projectivizations of linear automorphisms 
$\widehat{G}_\sigma$ 
of  $V$ which it is straightforward to make explicit.  Using their explicit form, it is not difficult to verify that 
\eqref{Eq:REP} is induced by a linear representation which is irreducible. Since moreover one has ${\rm Tr}(\widehat{G}_\sigma)=n-1$, we obtain that this representation is equivalent to the standard one. We thus have established the 
\mk

\noindent{\bf Fact:} {\it The representation \eqref{Eq:REP}  is linear. More precisely, it is the projectivization of a $\mathfrak S_{n+2}$-representation on $V$ which is irreducible and isomorphic to the standard representation}. 
\mk 

As in the preceding section, we denote by $\boldsymbol{A}$ the space of abelian relations $\boldsymbol{AR}\big(\boldsymbol{\mathcal L \hspace{-0.05cm}\mathcal W}_{P}\big) =\boldsymbol{AR}\big(\boldsymbol{\mathcal W}_{\widehat{n+3}}\big)$.
For any $i=1,\ldots,n+2$,  given $x\in \mathbf P^n$ generic, the map ${\rm ev}_i: \boldsymbol{A}\rightarrow \mathbf C, ( A_k\,\Omega_k)_{k=1}^{n+2}\mapsto A_i(x)$ is a non trivial linear form hence ${\rm ker}( {\rm ev}_i)$ is a hyperplane in $\boldsymbol{A}$. After projectivization, we get the {\it $i$-th canonical map} of the web under scrutiny, which by definition is the map 
$$
\xymatrix@R=0.1cm@C=0.4cm{
\kappa_i \, : \hspace{-0.5cm}  & 
 \mathbf P^n   \ar@{-->}[r] & \check{\mathbf P}^n
=\mathbf P\big( V^\vee\big) \\
& x \ar@{->}[r] & 
  \kappa_i(x)= \big[{\rm ker}( {\rm ev}_i) \big]\, . 
}
$$
Clearly, each $\kappa_i$ is constant along the leaves of the $i$-th foliation (which is the bundle $\boldsymbol{\mathcal L}_{p_i}$ 
of lines through $p_i$) and,  as a rational map, it factorizes through the map 
$U_i: \mathbf P^n\rightarrow \mathbf P^{n-1}$. The map $\kappa_i$ has maximal rank $n-1$ (generically), it is a canonical first integral for $\boldsymbol{\mathcal L}_{p_i}$ and (the closure of) its image is the hyperplane $H_i\subset \check{\mathbf P}^n$ which is dual to $p_i$: considering the $v_i$'s as the canonical coordinates on $V^{\vee}$, $H_i$ is cut out by $v_i=0$ for $i=1,\ldots,n+1$ and  by 
$v_{n+2}=\sum_{i=1}^{n+1}v_i=0$.
%
The union of the $H_i$'s, denoted by $H$, is a hypersurface of $\check{\mathbf P}^n$, of degree $n+2$, cut out by $v=v_1\cdots v_{n+2}=0$.

Any AR $(A_i\,\Omega_i)_{i=1}^{n+2}$ is such that $\sum_{i=1}^{n+2}A_i\,\Omega_i=0$ as a rational $(n-1)$-form. This relation is equivalent to $n$ linearly scalar identities, from which it can be deduced that generically, the $n+2$ points $\kappa_i(x)$'s actually span a line, that we denote by $\boldsymbol{\mathfrak L}(x)$: one has 
$$
\boldsymbol{\mathfrak L}(x)=\Big\langle \, \kappa_i(x)\, \big\lvert \, i=1,\ldots,n+2\Big\rangle \in G_1\big(\check{\mathbf P}^n\big)\, . 
$$
We then obtain a map $\boldsymbol{\mathfrak L}: \mathbf P^n\dashrightarrow G_1\big(\check{\mathbf P}^n\big)$ which allows to get an interesting geometric model of 
$
\boldsymbol{\mathcal L \hspace{-0.05cm}\mathcal W}_{P}$. Indeed, the closure $Z_n=\overline{\boldsymbol{\mathfrak L}(\mathbf P^n)}$ is a $n$-dimensional submanifold of the grassmannian variety of lines in $\check{\mathbf P}^n$ and the push-forward onto it 
of $\boldsymbol{\mathcal L \hspace{-0.05cm}\mathcal W}_{P}$
by $\boldsymbol{\mathfrak L}$ is defined by the incidence in 
$\check{\mathbf P}^n$
between the lines belonging to  $Z_n$ and the points of the hyperplanes $H_i$'s. In other terms, the following holds true: \mk \\
 \noindent{\bf Fact.} {\it The push-forward 
 of  $\boldsymbol{\mathcal L \hspace{-0.05cm}\mathcal W}_{P}$
 by $\boldsymbol{\mathfrak L}$ is the trace along $Z_n$ of the algebraic web $\boldsymbol{\mathcal W}_H$ {\rm :} one has}
 $$
 \boldsymbol{\mathfrak L}_*\Big( \boldsymbol{\mathcal L \hspace{-0.05cm}\mathcal W}_{P}\Big) =\big( 
 \boldsymbol{\mathcal W}_H\big) \big\lvert_{Z_n}\, .
 $$

Now, as it classically follows from Abel's theorem and its converse,  the ARs of  $\boldsymbol{\mathcal W}_H$ can be  described in terms of the abelian $(n-1)$-differential forms on $H$ ({\it cf.}\,\cite{Barlet,HenkinPassare}). 
 The maps $\rho_i:  G_1\big(\check{\mathbf P}^n\big) \dashrightarrow H_i,\, \ell \, 
 \mapsto \rho_i(\ell)$ where
$\rho_i(\ell)$ stands for the intersection point of a generic line $\ell$ in $\check{\mathbf P}^n$ with the hyperplane $H_i$  for each $i$, 
are natural rational first integrals of 
$\boldsymbol{\mathcal W}_H$. Moreover, the trace gives rise to a linear isomorphism 
$
\boldsymbol{H}^0\big( H,\omega_H^{n-1}\big)\longrightarrow  
\boldsymbol{AR}\big( \boldsymbol{\mathcal W}_H\big)$, $ \omega\longmapsto \big( \rho_i^*(\omega)\big)_{i=1}^{n+2}
$. Furthermore, it can be verified that the natural map $\boldsymbol{AR}\big( \boldsymbol{\mathcal W}_H\big)\rightarrow \boldsymbol{AR}\big( ( \boldsymbol{\mathcal W}_H)\lvert_{Z_n}\big)$ is injective, hence up to pull-back under $\boldsymbol{\mathfrak L}$,  we obtain a linear isomorphism of complex vector spaces
\begin{equation}
\label{Eq:SOS}
\boldsymbol{AR}\big( \boldsymbol{\mathcal L \hspace{-0.05cm}\mathcal W}_{P}\big)
\simeq   \boldsymbol{AR}\big( \boldsymbol{\mathcal W}_{H}\big)
\simeq  \boldsymbol{H}^0\big( H,\omega_H^{n-1}\big)
\, .
\end{equation}

On the other hand, we have the following short exact sequence of sheaves
\begin{equation}
\label{Eq:SES}
0 \rightarrow \Omega^n_{\check{\mathbf P}^n}\longrightarrow 
\Omega^n_{\check{\mathbf P}^n}\otimes \mathcal O_{\check{\mathbf P}^n}(H)
\stackrel{R}{\longrightarrow} \omega^{n-1}_{H} \rightarrow 0
\end{equation}
(where $\omega^{n-1}_{H}$ denotes the sheaf of abelian differentials 
of top degree on $H$ and $R$ is the morphism of sheaves induced by Poincar\'e residue, see \cite{HenkinPassare}). Since $\deg(H)=n+2$ and because $
\Omega^n_{\check{\mathbf P}^n}\simeq \mathcal O_{\check{\mathbf P}^n}(-n-1)$, we 
have $
\Omega^n_{\check{\mathbf P}^n}\otimes \mathcal O_{\check{\mathbf P}^n}(H)
\simeq \mathcal O_{\check{\mathbf P}^n}(1)
$. Since $\check{\mathbf P}^n=\mathbf P\big( V^\vee)$, $\boldsymbol{H}^0\big(\check{\mathbf P}^n, \mathcal O_{\check{\mathbf P}^n}(1)\big)$ naturally identifies 
with the dual of $V^{\vee}$. Injecting this into the first non trivial part of the long cohomology sequence associated to 
 \eqref{Eq:SES} 
gives rise to a sequence of natural linear isomorphisms
\begin{equation}
\label{Eq:QQQ}
V= \big(V^{\vee}\big)^\vee
= \boldsymbol{H}^0\Big(\check{\mathbf P}^n, \mathcal O_{\check{\mathbf P}^n}(1)\Big) 
\simeq
\boldsymbol{H}^0\Big(H , \omega^{n-1}_{H} \Big)\, . 
\footnotemark
\end{equation}
\footnotetext{The map $V\rightarrow \boldsymbol{H}^0\big(H , \omega^{n-1}_{H} \big)$ can be made explicit rather easily: the abelian differential on $H$ 
associated to $\overline{v}\in V$ is the Poincar\'e residue along $H$ of the global section 
$(\overline{v}/v)\sum_{i=1}^{n+1} (-1)^ i v_i dv_1\wedge \cdots \wedge \widehat{dv_i}\wedge \cdots \wedge dv_{n+1}$ of $\Omega^n_{\check{\mathbf P}^n}\otimes \mathcal O_{\check{\mathbf P}^n}(H)$.}
One verifies that the preceding isomorphisms actually are isomorphisms of 
$\mathfrak S_{n+2}$-modules, when the action on  $\boldsymbol{H}^0\big(H , \omega^{n-1}_{H} \big)$ is induced by push-forward by the birational maps $G_\sigma$ for $\sigma\in \mathfrak S_{n+2}$. Considering \eqref{Eq:SOS}, and because the action \eqref{Eq:Sigma.A} is induced by pull-backs under the $G_\sigma$'s, we get the 
\begin{prop}
The action of $\mathfrak S_{n+2}$ by push-forward on 
$\boldsymbol{AR}\big( \boldsymbol{\mathcal L \hspace{-0.05cm}\mathcal W}_{P}\big)$
is isomorphic to the action of 
$\mathfrak S_{n+2}$ on $V$ (hence is isomorphic to the standard representation). Consequently, the $\mathfrak S_{n+2}$-action \eqref{Eq:Sigma.A} on 
this space of ARs is the dual action (irreducible module with Young symbol $[2,1^n]${\rm )}.
\end{prop}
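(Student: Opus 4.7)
The plan is to verify that each of the natural isomorphisms in the chain
\[
V \;\simeq\; H^0\big(\check{\mathbf P}^n, \mathcal O_{\check{\mathbf P}^n}(1)\big) \;\simeq\; H^0\big(H,\omega^{n-1}_H\big) \;\simeq\; \boldsymbol{AR}\big(\boldsymbol{\mathcal W}_H\big) \;\simeq\; \boldsymbol{AR}\big(\boldsymbol{\mathcal L\hspace{-0.05cm}\mathcal W}_P\big)
\]
intertwines the natural push-forward $\mathfrak S_{n+2}$-actions on each space, and then to combine this with the preceding Fact identifying the linear action on $V$ as the standard representation. The conclusion for \eqref{Eq:Sigma.A} will then follow since that action is defined by pull-back, which is the contragredient of push-forward.

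The linear $\mathfrak S_{n+2}$-action on $V$ from the Fact induces by duality an action on $V^\vee$ under which $\check{\mathbf P}^n=\mathbf P(V^\vee)$ is an $\mathfrak S_{n+2}$-variety preserving the hypersurface $H=\bigcup_i H_i$: since $H_i$ is dual to $p_i$, the group permutes the $H_i$'s exactly as it permutes the $p_i$'s. All the sheaves appearing in \eqref{Eq:SES} are therefore canonically $\mathfrak S_{n+2}$-linearized, and the associated long exact cohomology sequence is $\mathfrak S_{n+2}$-equivariant by naturality, yielding the equivariance of \eqref{Eq:QQQ}. The trace isomorphism $H^0(H,\omega^{n-1}_H)\simeq \boldsymbol{AR}(\boldsymbol{\mathcal W}_H)$ is equivariant because $\mathfrak S_{n+2}$ permutes the $\rho_i$'s as it does the $H_i$'s and the trace is a symmetric sum over $i$. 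Finally, because the $\mathfrak S_{n+2}$-action on $\mathbf P^n=\mathbf P(V)$ is linear and permutes the pencils $\boldsymbol{\mathcal L}_{p_i}$, it permutes the canonical first integrals $\kappa_i$ accordingly, so the map $\boldsymbol{\mathfrak L}:\mathbf P^n\dashrightarrow G_1(\check{\mathbf P}^n)$ is equivariant and its pull-back intertwines the representations on the corresponding spaces of abelian relations.

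Once equivariance of the whole chain is in place, the push-forward representation on $\boldsymbol{AR}(\boldsymbol{\mathcal L\hspace{-0.05cm}\mathcal W}_P)$ is isomorphic to the standard representation on $V$ (with Young symbol $[n{+}1,1]$), and taking the contragredient gives the action of \eqref{Eq:Sigma.A} as the irreducible representation with Young symbol $[2,1^n]$. The main subtlety lies in the conversion between push-forward and pull-back at the last step; the cleanest safeguard is to cross-check against the direct trace computation ${\rm Tr}(M_{(1,2)})=-(n-1)$ obtained in the proof of Proposition \ref{P:Damiani-4.1.4}, whose value matches the character of $[2,1^n]$ on a transposition but not that of the standard representation.
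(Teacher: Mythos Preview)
Your proposal is correct and follows essentially the same approach as the paper: the paper's argument (given in the sentence immediately preceding the proposition) is precisely that the isomorphisms \eqref{Eq:QQQ} and \eqref{Eq:SOS} are $\mathfrak S_{n+2}$-equivariant for the push-forward action, and that \eqref{Eq:Sigma.A}, being defined by pull-backs, is therefore the contragredient. Your additional cross-check against the trace computation ${\rm Tr}(M_{(1,2)})=-(n-1)$ from the proof of Proposition~\ref{P:Damiani-4.1.4} is a helpful safeguard that the paper does not include.
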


In addition to giving a more conceptual explanation for  Proposition \ref{P:Damiani-4.1.4}.3, the material discussed above is interesting since it asks possibly fruitful questions, in two distinct directions at least: 
\begin{enumerate}
\item[$\bullet$] 
One may think that 
the construction of canonical maps and of a canonical geometric model considered above 
for $\boldsymbol{\mathcal L \hspace{-0.05cm}\mathcal W}_{P}$ 
could be generalized to any $(n+2)$-web in dimension $n$ with maximal $(n-1)$-rank. This suggests a possible approach for algebraizing such webs.  We will say a few words about this problem further in \S\ref{SS:Algebraization-(n+2)-webs}. 
\mk 
\item[$\bullet$] 
Another interesting question that emerges from the above considerations concerns the varieties $Z_n$. Given a generic point $p\in \mathbf P^n$, let $C_p$ be the rational normal curve in $\mathbf P^n$ passing through all the $p_i$'s and $p$.
 Denoting by $\mathscr L_p$ the (embedded) tangent line to $C_p$ at $p$, one gets a rational map $\mathscr L: \mathbf P^n\dashrightarrow G_1(\mathbf P^n),\, p\longmapsto \mathscr L_p$ whose image $\mathscr Z_n=\overline{\mathscr L(\mathbf P^n)}$ is a $n$-dimensional submanifold of the grassmannian variety of lines in $\mathbf P^n$.  For $n=3$ and up to an identification of $\mathbf P^n$ with its dual $\check{\mathbf P}^n$, 
  we have verified that $\mathscr Z_3$ coincides with $Z_3$ and that it is a degree six complex of lines in $\mathbf P^3$. Contrarily to what we expected,  
 we have not found any mention of it in the classical literature on line complexes. Is it really new? And in higher dimension, do $Z_n$ and $\mathscr Z_n$ coincide? In any case, we think that they are interesting subvarieties of grassmannian varieties of lines that deserve further study. 
\end{enumerate}


\subsection{Proof of Theorem \ref{THM:AR-C(W)-S-n+3-module}}
\label{SS:Proof-of-THM:AR-C(W)-S-n+3-module}
This subsection is devoted to proving Theorem \ref{THM:AR-C(W)-S-n+3-module}. 
 We have reproduced Damiano's arguments for completeness, with only minor modifications added  in order to make them clearer.  Our only significant (although elementary) contribution is Lemma \ref{L:n-odd} which definitively settles the case when $n$ is odd. \sk

We start with two technical lemmata taken from \cite[\S4]{DThesis}.   To state the first, one considers $n+2$ points $q_1,\ldots,q_{n+2}$  in general position in $\mathbf P^n$ 
together with  an irreducible linear $\mathfrak S_{n+2}$-action on this projective space which induces the natural action by permutations on the $q_i$'s ({\it i.e.}\,one has $\sigma\cdot q_i=q_{\sigma(i)}$ for every $i$ and every $\sigma\in \mathfrak S_{n+2}$).  Let $m\leq n+1$ be fixed. The subgroup $\cap_{s=m+2}^{n+2} {\rm Fix}(s)$
is naturally isomorphic to $\mathfrak S_{m}$ and will be denoted in the same way.
\begin{lem} 
\label{Lem:TECH0} 
For any $q\in \mathbf P^n\setminus \langle q_{m+1},\ldots,q_{n+2}\rangle$, one has 
$
\big\langle \, \mathfrak S_{m}\cdot q\, , \, q_{m+1}
\, , \, \ldots \, , \,  q_{n+2}\, 
\big\rangle=\mathbf P^n
$. 
\end{lem}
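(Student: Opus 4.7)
My plan is to lift the statement to the underlying $(n+1)$-dimensional vector space $V$ with $\mathbf{P}(V)=\mathbf{P}^n$, and to analyse the restriction of the irreducible $\mathfrak{S}_{n+2}$-representation $V$ to the subgroup $\mathfrak{S}_m$. After picking lifts $\tilde q_i\in V$ of the $q_i$, I set $\tilde L=\langle \tilde q_{m+1},\ldots,\tilde q_{n+2}\rangle$, which projectivises to $L$ and has dimension $n+2-m$ thanks to the general position hypothesis. Since each $q_i$ for $i>m$ is projectively fixed by $\mathfrak{S}_m$, each line $\mathbf{C}\tilde q_i$ is $\mathfrak{S}_m$-invariant, the action being by a character $\chi_i:\mathfrak{S}_m\to\mathbf{C}^*$. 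Using that the subgroup $\mathfrak{S}_{n+2-m}\subset\mathfrak{S}_{n+2}$ (permuting $\{m+1,\ldots,n+2\}$) commutes with $\mathfrak{S}_m$ and interchanges the $\tilde q_i$ (for $i>m$) up to scalars, I will deduce that all the $\chi_i$ coincide with a single character $\chi$, so that $\tilde L$ is contained in the $\chi$-isotypic component of $V|_{\mathfrak S_m}$.

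The heart of the proof is the following claim: $V/\tilde L$ is an irreducible $\mathfrak{S}_m$-representation of dimension $m-1$. Because $V$ is an irreducible $\mathfrak{S}_{n+2}$-module of dimension $n+1$, a glance at the dimensions of irreducibles (via the hook length formula) shows it must be isomorphic to either the standard representation $[n+1,1]$ or its sign-twist $[2,1^n]$. In each case I will iteratively apply the branching rule for symmetric groups (remove one corner box at a time) to obtain
\[
[n+1,1]\big|_{\mathfrak S_m}\;\cong\;(n+2-m)\,[m]\;\oplus\;[m-1,1]
\qquad\text{and}\qquad
[2,1^n]\big|_{\mathfrak S_m}\;\cong\;(n+2-m)\,[1^m]\;\oplus\;[2,1^{m-2}].
\]
In the first case $\chi$ is trivial, $\tilde L$ exhausts the trivial isotypic component by dimension count, and $V/\tilde L\cong [m-1,1]$; in the second $\chi$ is the sign character, $\tilde L$ equals the sign isotypic, and $V/\tilde L\cong [2,1^{m-2}]$. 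Both residual modules are irreducible of dimension $m-1$.

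Granted the claim, the conclusion is immediate. Let $\tilde q\in V$ lift $q$; the hypothesis $q\notin L$ gives $\tilde q\notin\tilde L$, so its image $\bar q\in V/\tilde L$ is nonzero. The $\mathbf{C}$-span of its $\mathfrak{S}_m$-orbit is a nonzero $\mathfrak{S}_m$-invariant subspace of the irreducible module $V/\tilde L$, hence equals the full quotient. Consequently $\langle\mathfrak{S}_m\cdot\tilde q,\,\tilde L\rangle=V$, which projectivises to the required equality. The main obstacle is the branching-rule computation: it is elementary but does force a case split between the two possibilities for $V$, and one must carry the recursion through carefully to identify which isotypic the subspace $\tilde L$ saturates. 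Everything else is essentially formal semisimple representation theory.
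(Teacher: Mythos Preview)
Your proof is correct and follows essentially the same route as the paper's: lift to the $(n+1)$-dimensional module $V$, note that $V$ must be $[n+1,1]$ or $[2,1^n]$, apply the branching rule, and use irreducibility of the residual piece to conclude that the orbit spans. The only organizational difference is that the paper first reduces by induction on $m$ to the case $m=n+1$ (so only one branching step is needed and only one point $q_{n+2}$ lies outside the orbit), whereas you perform the iterated branching down to $\mathfrak S_m$ directly; this forces you to add the (correct) observation that all the characters $\chi_i$ agree, which the paper's inductive reduction sidesteps.
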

\begin{proof}
By  induction on $m$, one can restrict oneself to only deal with the case when $m=n+1$. 

Let $V$ be the $\mathfrak S_{n+2}$-module of dimension $n+1$ whose projectivization is the considered $\mathfrak S_{n+2}$-action on $\mathbf P^n$. Since $V$ is irreducible and has dimension $n+1$, its Young symbol $\lambda$ is either $[n+1,1]$ or $[2,1^n]$. 
It then follows immediately from the branching rule (see \cite[Theorem 2.8.3]{Sagan}) that 
the restriction 
$V\hspace{-0.1cm}\downarrow_{\mathfrak S_{n+1}}$ is the sum 
of two irreducible $\mathfrak S_{n+1}$-representations, one 
with Young symbol $\lambda_-$ of dimension $n$, denoted by $V_-$, the other of dimension 1 denoted by $W$. Since $q_{n+2}$ is fixed by $\mathfrak S_{n+1}={\rm Fix}(n+2)$, this point is the projectivization $\mathbf P(W)$ of $W$. Any  $q\in \mathbf P^n\setminus \{q_{n+2}\}$  has a lift  in $V$ which can be written $\hat{q}_-+w$ with $\hat{q}_-\in V_-\setminus \{0\}$ and $w\in W$.  
Denote by ${q}_-$the image of $\hat{q}_-$ in $\mathbf P^n$. 

Clearly, one has $
\big\langle \, \mathfrak S_{n+1}\cdot q\, , \, q_{m+2}
\, 
\big\rangle=\big\langle \, \mathfrak S_{n+1}\cdot q_-\, , \, q_{m+2}
\, 
\big\rangle
$.  On the other hand, since $V_-$ is an irreducible $\mathfrak S_{n+1}$-module, 
the orbit $\mathfrak S_{n+1}\cdot q_-$ spans the projectivization  $\mathbf P(V_-)$ of $V_-$.   
Thus the linear span of 
$ \mathfrak S_{n+1}\cdot q$ and $q_{m+2}$ contains both $\mathbf P(V_-)$ and $\mathbf P(W)$ hence the lemma follows.  \end{proof}

The previous lemma will be used in the proof of the second one, which is the following: 
\begin{lem} 
\label{Lem:TECH}
Let $\{ \, q_{i,j}=q_{j,i}\, \lvert \,  (i,j)\in \boldsymbol{I}_n\, \}$ be a family of 
$(n+3)(n+2)/2$ 
points in a projective space $\mathbf P^N$. Assume that these points satisfy the following hypotheses: 
\begin{enumerate}
\item[$(H_1)$.] For any $i=1,\ldots,n+3$, the $n+2$ points $q_{i,j}$'s for $j\neq i$ 
are in general position in their span, which is a 
 linear subspace  of dimension $n$ of\, $\mathbf P^N$, denoted by  $\mathbf P^n_i$. 
\sk
\item[$(H_2)$.] For any ${(i,j)}\in \boldsymbol{I}_n$, the intersection of\, $\mathbf P^n_i$ with $ \mathbf P^n_j$ 
consists of the point $ q_{i,j}$;
\sk 
\item[$(H_3)$.]
There is a linear $\mathfrak S_{n+3}$-representation on 
$\mathbf P^N$  such that : \sk \\
{\rm (a).} for every $\sigma \in \mathfrak S_{n+3}$ and  any $(i,j)\in \boldsymbol{I}_n$, one has  $\sigma \, q_{i,j}=q_{\sigma i,\sigma j}$ ; \sk \\
{\rm (b).} the restricted action of $\mathfrak S_{n+2}\simeq {\rm Fix}_{\mathfrak S_{n+3}}(i)$ on $\mathbf P^n_i$ is irreducible for any $i\leq n+3$.\sk 
 \end{enumerate}
 
 Then one necessarily  has  $N\geq (n+2)(n+1)/2-2$.  
\end{lem}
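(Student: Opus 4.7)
The plan is to pass from projective to linear coordinates and to exploit the fact that the linear span $W$ of all the $q_{i,j}$ is a $\mathfrak S_{n+3}$-subrepresentation of $\mathbf C^{N+1}$ whose decomposition into irreducibles is sharply constrained. Concretely, for each $i\in\{1,\ldots,n+3\}$ let $V_i\subset\mathbf C^{N+1}$ denote the $(n+1)$-dimensional lift of $\mathbf P^n_i$, and set $W=V_1+\cdots+V_{n+3}$. The equivariance in $(H_3).(\mathrm{a})$ forces $\sigma\cdot V_i=V_{\sigma(i)}$ for every $\sigma\in\mathfrak S_{n+3}$, so $W$ is $\mathfrak S_{n+3}$-stable and, since $\mathfrak S_{n+3}$ acts transitively on $\{1,\ldots,n+3\}$, it is generated as a $\mathfrak S_{n+3}$-module by $V_1$ alone. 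By Frobenius reciprocity this yields a canonical surjection
\[
\mathrm{Ind}_{\mathfrak S_{n+2}}^{\mathfrak S_{n+3}} V_1 \twoheadrightarrow W.
\]

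Next I would unpack the left-hand side using symmetric group representation theory. By $(H_3).(\mathrm{b})$ the space $V_1$ is an irreducible $\mathfrak S_{n+2}$-module of dimension $n+1$, hence isomorphic to either the standard representation $[n+1,1]$ or to its sign twist $[2,1^n]$. In both cases, Pieri's branching rule decomposes the induced module into exactly three irreducible constituents whose partitions are obtained from that of $V_1$ by adding a single box (namely $[n+2,1]$, $[n+1,2]$, $[n+1,1,1]$ in the standard case, respectively their conjugates $[3,1^n]$, $[2,2,1^{n-1}]$, $[2,1^{n+1}]$ in the sign-twisted case), and an easy hook-length computation gives the respective dimensions $n+2$, $n(n+3)/2$, $(n+1)(n+2)/2$. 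Consequently $\dim W$ is a sum of a subset of $\{n+2,\,n(n+3)/2,\,(n+1)(n+2)/2\}$, and is nonzero since $V_1\subset W$.

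The decisive step, and the only place where $(H_2)$ is used in an essential way, is to exclude the possibility $\dim W = n+2$. If this held, each $V_i$ would be an $(n+1)$-dimensional subspace of the $(n+2)$-dimensional ambient $W$, and the standard intersection inequality would force $\dim(V_i\cap V_j)\geq 2(n+1)-(n+2)=n\geq 2$ for every $i\neq j$; but $(H_2)$ asserts that the projective intersection is a single point, i.e.\ $\dim(V_i\cap V_j)=1$, a contradiction. Therefore $\dim W\geq\min\{n(n+3)/2,\,(n+1)(n+2)/2\}=n(n+3)/2$, and hence
\[
N \;\geq\; \dim W - 1 \;\geq\; \frac{n(n+3)}{2}-1 \;=\; \frac{(n+2)(n+1)}{2}-2,
\]
as claimed. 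The main obstacle in the plan is correctly identifying the irreducible constituents of the induced representation and their dimensions; once that step is in place, the combinatorial input $(H_2)$ performs the crucial exclusion with essentially no further work.
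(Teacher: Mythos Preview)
Your proof is correct and takes a genuinely different route from the paper's. The paper argues at the projective level by building the flag $P_\ell=\langle\mathbf P^n_1,\ldots,\mathbf P^n_\ell\rangle$ incrementally, controlling $\dim(P_\ell\cap\mathbf P^n_{\ell+1})$ via the auxiliary Lemma~\ref{Lem:TECH0} together with a fixed-point-set argument for a transposition. Your approach lifts to $\mathbf C^{N+1}$, recognises $W=\sum_i V_i$ as a cyclic $\mathfrak S_{n+3}$-module, and invokes the multiplicity-free decomposition of $\mathrm{Ind}_{\mathfrak S_{n+2}}^{\mathfrak S_{n+3}} V_1$ given by Pieri's rule; the exclusion of the $(n+2)$-dimensional summand via $(H_2)$ then finishes the argument in one line. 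Your method is shorter, pinpoints the list of possible values of $\dim W$ directly as partial sums of $\{n+2,\,n(n+3)/2,\,(n+1)(n+2)/2\}$, and in fact uses only the dimension part of $(H_1)$ rather than the general-position clause. What the paper's incremental argument buys in exchange is an explicit projective frame for each $P_\ell$ (the $q_{i,j}$'s with $i\leq\ell$), which is exactly the content of Corollary~\ref{C:M=n-or-M=n+1}.2 and is used downstream to exhibit concrete bases of $\boldsymbol{AR}_C(\boldsymbol{\mathcal W}_{0,n+3})$; your approach yields the dimension bound and the representation-theoretic structure, but the explicit basis would have to be extracted separately.
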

\begin{proof}
There is no loss of generality by assuming that the ambiant projective space $\mathbf P^N$  
is spanned by the $q_{i,j}$'s, which we do in what follows. 

For $\ell\in \{ 1,\ldots,n+3\}$, we set 
$P_\ell=\big\langle \, \mathbf P^n_1,\ldots,\mathbf P^n_\ell\, \big\rangle\subset \mathbf P^N
$  and one denotes (a bit abusively) by $\mathfrak S_{n+3-\ell}$ the subgroup $\cap_{i=1}^\ell {\rm Fix}_{\mathfrak S_{n+3}}(i)=
\mathfrak S_{\{\ell+1,\ldots,n+3\}}
$ of $\mathfrak S_{n+3}$.  The $P_\ell$'s  are linear subspaces of $\mathbf P^N=P_{n+3}$
which form an increasing sequence (for the inclusion).   First remark that for any $\ell=1,\ldots,n+2$, since $q_{i,\ell+1}\in 
\mathbf P^n_i\subset 
 P_\ell$ for $i=1,\ldots,\ell$ and because these points are in general position in $\mathbf P^n_{\ell+1}$ (by $(H_1)$), 
 one has $\dim\,  \big( P_\ell\cap \mathbf P^n_{\ell+1}\big) \geq  \max(\ell-1,n) $.

One has  $\dim\,  \big( P_1\cap \mathbf P^n_{2}\big) = 0$  thanks to $(H_2)$ 
hence the set of indices $\ell$ such that  $\dim\,  \big( P_\ell\cap \mathbf P^n_{\ell+1}\big)= \ell-1 $ is not empty. Its maximal element, denoted by $L$, is a well defined element of $\{1,\ldots,n+1\}$.  We are going to  
prove that $P_{L+1}=\mathbf P^N$ with $L\geq n-1$, from which
it will be easy to get the looked sought-after minoration. \sk

For $\ell$ such that  $1\leq \ell\leq L$, the points $q_{1,\ell+1}, 
\ldots,q_{\ell,\ell+1}$  all belong to $P_{\ell+1}$. Since $\dim\big(P_{\ell}\cap \mathbf P^n_{\ell+1}\big)$ has dimension $\ell-1$  and because of $(H_1)$, we get that 
the points $ q_{i,\ell+1}$ for $i=\ell+1,\ldots,n+2$ are projectively linearly independent modulo $P_\ell$. By induction, we deduce that 
\begin{enumerate}
\item[1.] the points 
$q_{i,j}$'s for $i=1,\ldots,\ell$ and $j=i+1,\ldots,n+2$ form   a projective frame for $P_\ell$;
\sk 
\item[2.] hence one has 
$\dim(P_{\ell+1})=N(\ell)$ with 
$ N(\ell)=
   \Big[  \, 
 \big(n+1\big) +\big( n+1-1\big)+\cdots + \big(n+1-\ell\big)\, 
  \Big]-1$.
\end{enumerate}

It follows that in order to prove the proposition, one has to establish that $L\geq n-1$.
Assuming the contrary and setting $k=L+1$, one has $k\leq n-1$ and 
$\dim\,  \big( P_k\cap \mathbf P^n_{k+1}\big) >
 k-1$ hence there exists a point 
 $q\in P_{k}\cap \mathbf P^n_{k+1} \setminus \big\langle 
 q_{1,k+1},\ldots,q_{k,k+1}
 \big\rangle$. 
The group $\cap_{i=1}^{k+1} {\rm Fix}(i)\subset \mathfrak S_{n+2}$, denoted 
a bit abusively
 by $\mathfrak S_{n+3-(k+1)}$ here, acts on $\mathbf P^{n}_{k+1}$. 
Thanks to hypothesis $(H_3.b)$, Lemma \ref{Lem:TECH0} applies and gives us that 
 $\mathbf P^n_{k+1}$ is spanned by the union of $\{  q_{1,k+1},\ldots,q_{k,k+1}\}$ with the
 $\mathfrak S_{n+3-(k+1)}$-orbit of $q$: one has
  \begin{equation}
  \label{Eq:Pn-k+1}
  \Big\langle \, 
   \mathfrak S_{n+3-(k+1)}\cdot q \, 
 \, , \,   q_{1,k+1}
 \, , \,  
  \ldots
  \, , \, 
  q_{k,k+1}\, 
  \Big\rangle  =\mathbf P^n_{k+1}\, . 
\end{equation}
%

 Since $q$ belongs to $P_k$ which  is invariant under the action of $\mathfrak S_{n+3-(k+1)}
 =\cap_{i=1}^{k} {\rm Fix}(i)
 $, one 
 has that $\mathfrak S_{n+3-k}\cdot q\subset P_k$. 
 Combined with the fact that  $q_{i,k+1}\in \mathbf P^n_i\subset P_k$ for $i=1,\ldots,k$, 
it then  
follows from 
 \eqref{Eq:Pn-k+1} that  $\mathbf P^n_{k+1}\subset P_k$. 
On the other hand, for any  $l>k+1$, the permutation $(k+1,l)\in \mathfrak S_{n+3}$
 interchanges $\mathbf P^n_{k+1}$ and $\mathbf P^n_{l}$ while leaving $P_k$ fixed. 
 One deduces that $\mathbf P^n_l \subset P_{k}$ for any $l= k+1,\ldots,n+3$ hence $P_k=P_{n+3}=\mathbf P^N$.


 Let $F=\big\{ \, p\in \mathbf P^N
  \, \big\lvert \,\tau \cdot p=p\, \big\}\subset \mathbf P^N$ be the fixed point set
 of $\tau=(n+2,n+3)\in \mathfrak S_{n+3}$. 
 From $( H_1)$, it comes that 
  the set of $q_{i,j}$'s with $i=1,\ldots,k$ and $j=i+1,\ldots,n+2$ spans $P_k$. 
  But since any such point $q_{i,j}$  belongs to $F$ if $j\leq n+1$, it comes that 
\begin{equation}
\label{Eq:mopo} 
 \dim \big( P_k \big)\leq \dim \big( F \big)+
 {\rm Card}\Big(\, \big\{ \, 
 q_{1,n+2},\ldots,  q_{k,n+2} \, 
\big\}\,  \Big)
=  \dim \big( F \big)+k 
  \, .
  \end{equation}
  
 On the other hand,   since 
 $\tau\big(F\big)=F$ and 
 $\tau\big( \mathbf P^{n}_{n+3}\big)= \mathbf P^{n}_{n+2}$, it follows that $F\cap  \mathbf P^{n}_{n+3}
 =  \{ \,q_{n+2,n+3}\, \}$.  Hence $\dim\big( F  \cap \mathbf  P^{n}_{n+3}\big)=0$ so 
  ${\rm codim}_{\mathbf P^N}(F)\geq n$, which can also be written
\begin{equation}
\label{Eq:mopa}
\dim \big( F \big)+n \leq  \dim \big( P_k \big) \, .
  \end{equation}

  From   \eqref{Eq:mopo} and \eqref{Eq:mopa} together, one gets $n\leq k$ contradicting the assumption $L<n-1$. Thus necessarily $L\geq n$ which gives us the proposition. 
\end{proof}

This lemma has an  interesting consequences when applied to  $\mathbf P^N= \mathbf P\big(\boldsymbol{AR}_C(\boldsymbol{\mathcal W}_{0,n+3})\big) $ with 
$q_{i,j}=\mathbf P\big(\langle AR_{i,j}\rangle\big)$  for any $i, j=1,\ldots,n+3$ with $i\neq j$. 
For any subset $J\subset \{1,\ldots,n+3\}$, one sets $\boldsymbol{A}_J=\langle 
\hspace{0.15cm}  \boldsymbol{A}_{\widehat{\jmath}}\hspace{0.15cm} \lvert \, j\in J\rangle\subset \boldsymbol{AR}\big( \boldsymbol{\mathcal W}_{0,n+3}\big)$ 
and 
\begin{equation}
\label{Eq:M}
\eta=\min_{
\scalebox{0.7}{
$J\subset 
\{ 1,\ldots,n+3\} $ }}\Big\{ \hspace{0.15cm} \lvert \, J\, \lvert \hspace{0.15cm}\big\lvert 
\hspace{0.15cm}
\boldsymbol{A}_J=\boldsymbol{AR}_C\big(\boldsymbol{\mathcal W}_{0,n+3}\big)
\hspace{0.15cm}\Big\}\, . 
\end{equation}

Then, from the previous lemma as well as from  its proof, one deduces the 
\begin{cor} 
\label{C:M=n-or-M=n+1}
\begin{enumerate}
\item[]  ${}^{}$\hspace{-1.2cm} {\rm 1.}\hspace{0.13cm}Either $\eta=n$ or $\eta=n+1$. 
\mk
\item[2.]
In both cases, one has $\boldsymbol{A}_J=\boldsymbol{AR}_C\big(\boldsymbol{\mathcal  W}_{0,n+3}\big)$ for any subset $J$ of cardinality $\eta$ and the $ AR_{i,j}$'s for $ i=1,\ldots,\eta$ and $j=i+1,\ldots,n+2$ form a basis of 
$\boldsymbol{AR}_C\big(\boldsymbol{\mathcal  W}_{0,n+3}\big)$.
\mk
\item[3.]  Consequently,  $\boldsymbol{AR}_C\big(\boldsymbol{\mathcal  W}_{0,n+3}\big)$ has 
dimension $(n+1)(n+2)-1$ if $\eta=n$ and dimension one more when $\eta=n+1$. 
In the latter case, $\boldsymbol{\mathcal  W}_{0,n+3}$ has maximal rank hence necessarily 
$ \boldsymbol{AR}_C\big(\boldsymbol{\mathcal W}_{0,n+3}\big)=\boldsymbol{AR}\big(\boldsymbol{\mathcal  W}_{0,n+3}\big)$. 
\end{enumerate}
\sk 
\end{cor}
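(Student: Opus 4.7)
The plan is to apply Lemma \ref{Lem:TECH} to the projective space $\mathbf P^N = \mathbf P\big(\boldsymbol{AR}_C(\boldsymbol{\mathcal W}_{0,n+3})\big)$ equipped with the family of points $q_{i,j} = \big[AR_{i,j}\big]$ for $(i,j)\in\boldsymbol{I}_n$, and then to read off both the dichotomy $\eta\in\{n,n+1\}$ and the basis statement not only from the conclusion of that lemma, but also from the internal arguments used in its proof.

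First I would verify that the hypotheses $(H_1)$--$(H_3)$ of Lemma \ref{Lem:TECH} are met in this setting. Hypothesis $(H_1)$, which asserts that the $n+2$ points $q_{i,j}$ for $j\neq i$ are in general position inside $\mathbf P^n_i := \mathbf P(\boldsymbol{A}(i))$, is exactly Proposition \ref{P:Damiani-4.1.4}.1. Hypothesis $(H_2)$, $\mathbf P^n_i\cap\mathbf P^n_j = \{q_{i,j}\}$, reduces to the identity $\boldsymbol{A}(i)\cap\boldsymbol{A}(j) = \boldsymbol{A}_{\widehat{\imath\jmath}}$ combined with the fact that $\boldsymbol{A}_{\widehat{\imath\jmath}}$ is one-dimensional, spanned by the $AR_{i,j}$ constructed at the beginning of \S\ref{SS:lalala}. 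For $(H_3)$: the $\mathfrak S_{n+3}$-action \eqref{Eq:Sigma.A} sends each $AR_{i,j}$ to a nonzero scalar multiple of $AR_{\sigma(i),\sigma(j)}$ and therefore permutes the projective points $q_{i,j}$ in accordance with $(H_3.\mathrm a)$, while $(H_3.\mathrm b)$ is Proposition \ref{P:Damiani-4.1.4}.3.

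Having installed the lemma, I would extract two quantitative consequences from its proof. First, the distinguished integer $L\in\{1,\dots,n+1\}$ satisfies $L\geq n-1$, so $L\in\{n-1,n,n+1\}$. Second, as long as $\ell\leq L$ one has $\dim P_{\ell+1} = N(\ell)$, and moreover the points $q_{i,j}$ with $i\leq\ell$, $i+1\leq j\leq n+2$ are projectively linearly independent in $P_\ell$. A direct computation then gives $N(n-1) = n(n+3)/2 - 1$ and, crucially,
\[
N(n) \;=\; N(n+1) \;=\; \frac{(n+1)(n+2)}{2} - 1,
\]
the apparent equality reflecting that the dimension increment $n+1-\ell$ vanishes at $\ell = n+1$. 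I would then dispatch the three cases. When $L=n-1$, the inequality $\dim(P_n\cap\mathbf P^n_{n+1}) > n-1$ together with $\dim\mathbf P^n_{n+1}=n$ forces $\mathbf P^n_{n+1}\subset P_n$; applying the transpositions $(n+1,n+2)$ and $(n+1,n+3)$ of $\mathfrak S_{n+3}$, which preserve $P_n$ setwise, yields $\mathbf P^n_{n+2},\mathbf P^n_{n+3}\subset P_n$, whence $P_n = \mathbf P^N$, $\eta=n$, and $\dim_{\mathbf C}\boldsymbol{AR}_C = n(n+3)/2$. In the cases $L\in\{n,n+1\}$, the formula gives $\dim_{\mathbf C} P_{n+1} = (n+1)(n+2)/2$, which already saturates the upper bound on $\mathrm{rk}(\boldsymbol{\mathcal W}_{0,n+3})$ supplied by Proposition \ref{Prop:Damiano'sBound}; hence $P_{n+1}=\mathbf P^N$, so $\eta=n+1$, the web has maximal rank and $\boldsymbol{AR}_C = \boldsymbol{AR}$. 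In either case, the counting $\sum_{i=1}^{\eta}(n+2-i)$ of the $AR_{i,j}$'s appearing in item 2 equals the computed dimension exactly, so general position is upgraded automatically to a basis.

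The main subtlety, and the step I would want to check most carefully, is the arithmetic coincidence $N(n)=N(n+1)$: without it one might naively expect the case $L=n+1$ to produce a third possibility $\eta=n+2$, and it is precisely this coincidence, combined with the saturation of Damiano's rank bound, that collapses $L\in\{n,n+1\}$ into the single value $\eta=n+1$.
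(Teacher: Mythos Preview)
Your approach is correct and is exactly what the paper intends: it states the corollary as an immediate consequence of Lemma~\ref{Lem:TECH} \emph{and its proof}, and you have simply spelled out the case analysis on the integer $L$ together with the arithmetic $N(n)=N(n+1)=(n+1)(n+2)/2-1$. One small point you gloss over is the lower bound on $\eta$: from $P_n=\mathbf P^N$ (resp.\ $P_{n+1}=\mathbf P^N$) you get $\eta\le n$ (resp.\ $\eta\le n+1$), but to conclude equality and to obtain the ``for any subset $J$ of cardinality $\eta$'' clause of item~2 you should invoke the $\mathfrak S_{n+3}$-symmetry, which forces all $\boldsymbol{A}_J$ with $|J|$ fixed to have the same dimension and hence $\dim\boldsymbol{A}_J=N(|J|-1)+1<\dim\boldsymbol{AR}_C$ for $|J|<\eta$.
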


Another nice feature of Lemma \ref{Lem:TECH} is that it is rather easy to deduce from it a 
description 
of $ \boldsymbol{AR}_C\big(\boldsymbol{\mathcal W}_{0,n+3}\big)$ as a 
$\mathfrak S_{n+3}$-module: 
\begin{cor} 
\label{P:AR-C-as-a-S(n+3)-module}
\begin{enumerate}
\item[]  ${}^{}$\hspace{-1.3cm} {\rm 1.}\hspace{0.14cm}As a $\mathfrak S_{n+3}$-representation, $ \boldsymbol{AR}_C\big(\boldsymbol{\mathcal W}_{0,n+3}\big)$ is irreducible.
\mk 
\item[2.] Its Young symbol is $[221^{n-1}]$ if $\eta  =n$ and $[31^n]$  when $\eta =n+1$. 
\end{enumerate}
\end{cor}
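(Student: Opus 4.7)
\textbf{Proof proposal for Corollary \ref{P:AR-C-as-a-S(n+3)-module}.} The plan is to realize $\boldsymbol{AR}_C(\boldsymbol{\mathcal W}_{0,n+3})$ as a surjective quotient of an induced representation whose decomposition is completely explicit, and then use the dimension computed in Corollary \ref{C:M=n-or-M=n+1} to single out which irreducible summand we land on.

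First I would fix $i=n+3$ and regard $\boldsymbol{A}(n+3)$ as an $\mathfrak S_{n+2}$-submodule of $\boldsymbol{AR}_C(\boldsymbol{\mathcal W}_{0,n+3})$, where $\mathfrak S_{n+2}={\rm Fix}(n+3)\subset \mathfrak S_{n+3}$. By Proposition \ref{P:Damiani-4.1.4}.3 this submodule is irreducible with Young symbol $[2,1^n]$, while the $\mathfrak S_{n+3}$-orbit of the subspace $\boldsymbol{A}(n+3)$ is the whole family $\{\boldsymbol{A}(i)\}_{i=1}^{n+3}$ (since $\sigma \cdot \boldsymbol{A}(i)=\boldsymbol{A}(\sigma(i))$) and the latter generates $\boldsymbol{AR}_C(\boldsymbol{\mathcal W}_{0,n+3})$ by definition. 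Frobenius reciprocity therefore yields a surjective $\mathfrak S_{n+3}$-equivariant map
\begin{equation*}
q\,:\,{\rm Ind}_{\mathfrak S_{n+2}}^{\mathfrak S_{n+3}}[2,1^n]\,\twoheadrightarrow\,\boldsymbol{AR}_C\big(\boldsymbol{\mathcal W}_{0,n+3}\big).
\end{equation*}

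Next I would compute the decomposition of the source of $q$ by means of the classical Branching Rule (\cite{Sagan}, Theorem 2.8.3): an irreducible $\lambda\vdash n+3$ of $\mathfrak S_{n+3}$ contains $[2,1^n]$ in its restriction to $\mathfrak S_{n+2}$ (with multiplicity one) exactly when $\lambda$ is obtained from $[2,1^n]$ by adding a single box, so
\begin{equation*}
{\rm Ind}_{\mathfrak S_{n+2}}^{\mathfrak S_{n+3}}[2,1^n]\,\simeq\,[3,1^n]\,\oplus\,[2^2,1^{n-1}]\,\oplus\,[2,1^{n+1}],
\end{equation*}
with the three summands pairwise inequivalent. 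A direct application of the hook-length formula gives their dimensions: $(n+1)(n+2)/2$, $n(n+3)/2$ and $n+2$ respectively, summing to $(n+1)(n+3)$ as required for the induced module. Because $\boldsymbol{AR}_C(\boldsymbol{\mathcal W}_{0,n+3})$ is a quotient of a multiplicity-free sum of three inequivalent irreducibles, it is itself isomorphic to the direct sum of some subset of $\{[3,1^n],[2^2,1^{n-1}],[2,1^{n+1}]\}$.

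To finish I would plug in Corollary \ref{C:M=n-or-M=n+1}.3, which gives $\dim \boldsymbol{AR}_C=(n+1)(n+2)/2$ when $\eta=n+1$ and $\dim \boldsymbol{AR}_C=n(n+3)/2$ when $\eta=n$, and simply enumerate the non-empty subset-sums of $\{(n+1)(n+2)/2,\,n(n+3)/2,\,n+2\}$. A short case check shows that, for every $n\geq 2$, the value $(n+1)(n+2)/2$ is realized uniquely by the singleton $\{[3,1^n]\}$ and the value $n(n+3)/2$ is realized uniquely by $\{[2^2,1^{n-1}]\}$; the dimension $n+2$ and every two- or three-term combination are numerically incompatible with both. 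Consequently $\boldsymbol{AR}_C(\boldsymbol{\mathcal W}_{0,n+3})$ is a single irreducible summand of the induced module, with Young symbol $[3,1^n]$ if $\eta=n+1$ and $[2^2,1^{n-1}]$ if $\eta=n$, which is exactly the statement of the corollary. I expect no real obstacle: the only slightly delicate point is the numerical disjointness of the subset-sums, which is purely a verification in elementary arithmetic.
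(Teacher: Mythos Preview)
Your proof is correct and takes a genuinely different route from the paper's. The paper argues irreducibility by contradiction: assuming a maximal proper submodule $F\subset\boldsymbol{AR}_C$, it shows $F\cap\boldsymbol{A}(i)=\{0\}$ for every $i$, then checks that the projected points $q_{i,j}$ in $\mathbf P(\boldsymbol{AR}_C/F)$ again satisfy all the hypotheses of Lemma~\ref{Lem:TECH}, which forces $\dim(\boldsymbol{AR}_C/F)\geq (n+1)(n+2)/2$ and hence $F=0$ after a short extra argument. Only afterwards does the paper invoke branching to identify the Young symbol among $[3,1^n]$ and $[2^21^{n-1}]$ by dimension.

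Your argument bypasses the contradiction step entirely: realizing $\boldsymbol{AR}_C$ as a quotient of ${\rm Ind}_{\mathfrak S_{n+2}}^{\mathfrak S_{n+3}}[2,1^n]\simeq [3,1^n]\oplus[2^21^{n-1}]\oplus[2,1^{n+1}]$ and reading off the answer from the known dimension is cleaner and shorter. Both approaches ultimately rest on Corollary~\ref{C:M=n-or-M=n+1} (hence on Lemma~\ref{Lem:TECH}) for the dimension, so neither is more elementary in that respect; but yours avoids re-applying Lemma~\ref{Lem:TECH} a second time to the quotient. The price is that your route is less geometric: the paper's proof exhibits concretely why no invariant subspace can exist, whereas yours infers it from a numerical coincidence. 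One small point worth making explicit in your write-up is that the subset-sum verification, while elementary, does require $n\geq 2$ (for $n=1$ one has $(n+1)(n+2)/2=n+2$), which is of course the standing hypothesis.
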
 
\begin{proof}  Assume that $  \boldsymbol{A}_C=\boldsymbol{AR}_C\big(\boldsymbol{\mathcal W}_{0,n+3}\big)$
is not irreducible. Pick $F\subset  \boldsymbol{A}_C$, a non trivial and proper stable sub-$\mathfrak S_{n+3}$-module of maximal dimension. 
Since the $\mathfrak S_{n+3}$-orbit of any $\boldsymbol{A}_{\widehat{\imath}}$ coincides with the whole space $ \boldsymbol{AR}_C\big(\boldsymbol{\mathcal W}_{0,n+3}\big)$, 
we have that $F\cap \boldsymbol{A}_{\widehat{\imath}}=\{0\}$ for any $i=1,\ldots,n+3$, hence 
${\rm codim}(F)\geq n+1$. 

Let $\pi: \mathbf P \boldsymbol{A}_C
\setminus 
\mathbf PF\rightarrow \mathbf P(\boldsymbol{A}_C /F\big)=\mathbf P^M$ with $M=\dim \big(\boldsymbol{A}_C\big)-\dim(F)-1$. For any $i=1,\ldots,n+3$, since $\mathbf PF\cap \mathbf P \boldsymbol{A}_{\widehat{\imath}}=\emptyset$, it follows that 
$\pi\big( \mathbf P \boldsymbol{A}_{\widehat{\imath}}\big)$ has dimension $n$, 
which justifies denoting this subspace by $\mathbf P_{i}^n$. In particular, one has $M\geq n$. 
\sk

For any $i,j$ distinct, let us prove that $\mathbf P^n_{i}\cap \mathbf P^n_{j}=q_{i,j}=\pi\big(
p_{i,j}
\big)$ with $p_{i,j}=[AR_{i,j}]\in \mathbf P \boldsymbol{A}_C$.  Otherwise, there would exist another point $q$ in this intersection, distinct from 
$q_{i,j}$. 
We denote here  by $\mathfrak S_{n+2}$ (resp.\,$\mathfrak S_{n+1}$) the subgroup of $\mathfrak S_{n+3}$ formed by permutations fixing $i$ (resp.\,fixing both $i$ and $j$): one has 
$\mathfrak S_{n+1}={\rm Fix}(i)\cap {\rm Fix}(j)< \mathfrak S_{n+2}={\rm Fix}(i)<\mathfrak S_{n+3}$.  Note that the action of $\mathfrak S_{n+2}$ on $\mathbf P^n_{i}$ is irreducible with Young symbol $[2,1^n]$ according to Proposition \ref{P:Damiani-4.1.4}.3.  
On the other hand
$$\big[2,1^n\big]\, \big\lvert_{{\mathfrak S}_{n+1}}=\big[1^{n+1}\big]\oplus \big[2,1^{n-1}\big]$$
as it follows immediately from the classical branching rules for the representations of  symmetric groups (see \cite[\S2.8]{Sagan}). 
 This  means that the representation of $\mathfrak S_{n+1}$ on $\mathbf P^n_{i}$
induced by restriction  
  admits a unique fixed point, which is $q_{i,j}$ of course, and acts irreducibly on a supplementary hyperplane.  It follows that  
the orbit $\mathfrak S_{n+1}\cdot q$ spans a hyperplane in $\mathbf P^n_i$ which does not contain $q_{i,j}$ hence  one has 
$
\big\langle  q_{i,j}\, , \, \mathfrak S_{n+1}\cdot q
\big\rangle =\mathbf P^n_i
$ ({\it cf.}\,also Lemma \ref{Lem:TECH0}). Since all these arguments also apply when exchanging $i$ and $j$, one has 
$
\big\langle  q_{i,j}\, , \, \mathfrak S_{n+1}\cdot q
\big\rangle =\mathbf P^n_j 
$ as well,  from which it comes that $\mathbf P^n_i=\mathbf P^n_j$.  Since all $\mathbf P^n_i$'s coincide and because their union spans $\mathbf P^M$, we get that $M=n$, hence that ${\rm codim}(F)=n+1$. But this is impossible: indeed, from our assumption of the maximality of $F$ (as a proper submodule of $\boldsymbol{A}_C$), it comes that $\boldsymbol{A}_C/F$ is an irreducible $\mathfrak S_{n+3}$-representation of dimension $n+1$ and such a thing does not exist when $n\geq 2$. \sk 

From the above, it comes that $\mathbf P^M$ and the points $q_{i,j}$ with $(i,j)\in \boldsymbol{I}_n$ satisfy all the assumptions of Lemma \ref{Lem:TECH} which therefore applies and gives us that $M\geq (n+1)(n+2)-2$. Since $F$ has been assumed to be non trivial and proper, the single possibility is that $\boldsymbol{A}_C$ and $F$ have dimension $(n+1)(n+2)/2$ and 1 respectively. But since any $\mathfrak S_{n+3}$-representation is completely reducible, $F$ must admit a complementary invariant subspace of dimension $(n+1)(n+2)-1>1=\dim(F)$, contradicting the maximality of $\dim(F)$ among the non trivial proper sub-representations of $\boldsymbol{A}_C$. Thus  there is no such a sub-representation, which proves the first point.\mk 

Let $[\lambda]$ be the Young diagram of 
$\boldsymbol{AR}_C\big(\boldsymbol{\mathcal W}_{0,n+3}\big)$ as a $\mathfrak S_{n+3}$-module, for a partition $\lambda$ of  $ n+3$.  Since  $\mathfrak S_{n+2}={\rm Fix}(n+3)$ acts irreducibly on 
$\boldsymbol{A}_{\widehat{n+3}}$ with Young symbol $[2,1^n]$ according to the third point of Proposition \ref{P:Damiani-4.1.4}, it comes that $[\lambda]\, \big\lvert_{\mathfrak S_{n+2}}$ admits 
$[2,1^n]$ as one of its irreducible factors. 
Since $[\lambda]$ is obtained by adding one box to $[2,1^n]$, 
we deduce that there are only two possibilities for the former Young symbol: it is either 
$[3,1^n]$ which has dimension $(n+1)(n+2)/2$, or $[2,2,1^n]$, which is of dimension one less. This finishes the proof of the proposition. 
\end{proof}

\begin{lem} 
\label{L:n-odd}
When $n$ is odd,  $AR_{n+2,n+3}$ does not belong 
to $\Big\langle \, AR_{i,j} \hspace{0.1cm} \big\lvert 
\, \scalebox{0.85}{\begin{tabular}{l}
$i=1,\ldots,n$  \vspace{-0.09cm}
\\
${}^{}$  \hspace{-0.3cm} $j=i+1,\ldots,n+2$
\end{tabular}} \hspace{-0.05cm}\Big\rangle$.
%
\end{lem}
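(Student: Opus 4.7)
My plan is to argue by contradiction, using the action of the transposition $\tau=(n+2,n+3)\in\mathfrak S_{n+3}$ combined with Proposition \ref{P:Damiani-4.1.4}. The crucial observation is that when $n$ is odd, Lemma \ref{L:(i,j)-(k,l)} gives $\tau\cdot AR_{n+2,n+3}=(-1)^{n-1}AR_{n+2,n+3}=AR_{n+2,n+3}$, whereas $\tau\cdot AR_{i,j}=-AR_{i,j}$ for every pair $\{i,j\}$ disjoint from $\{n+2,n+3\}$. So a supposed expansion should be symmetrized against $\tau$ to yield something incompatible with the general-position statement of Proposition \ref{P:Damiani-4.1.4}.

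Concretely, I would write a hypothetical expansion as $AR_{n+2,n+3}=L+M$, where $L=\sum_{(i,j)\in \boldsymbol I_n^n,\, j\le n+1}\lambda_{i,j}AR_{i,j}$ and $M=\sum_{i=1}^n \lambda_{i,n+2}AR_{i,n+2}$, and apply $\tau$. Since $\tau$ sends each pair $\{i,n+2\}$ with $i\le n$ to $\{i,n+3\}$, one has $\tau\cdot AR_{i,n+2}=\epsilon_i AR_{i,n+3}$ for some sign $\epsilon_i\in\{\pm 1\}$; meanwhile $\tau\cdot L=-L$ by the remark above. Using $n$ odd to get $\tau\cdot AR_{n+2,n+3}=AR_{n+2,n+3}$ and adding the transformed equation to the original cancels $L$ and yields $2\,AR_{n+2,n+3}=M+M'$, where $M\in \boldsymbol A_{\widehat{n+2}}$ and $M'=\sum_{i=1}^n \lambda_{i,n+2}\,\epsilon_i\, AR_{i,n+3}\in\boldsymbol A_{\widehat{n+3}}$.

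The next step is to observe that $AR_{n+2,n+3}$ itself lies in $\boldsymbol A_{\widehat{n+2}}$ (it is an abelian relation of $\boldsymbol{\mathcal W}_{\widehat{n+2,n+3}}\subset\boldsymbol{\mathcal W}_{\widehat{n+2}}$), so $M'=2\,AR_{n+2,n+3}-M\in \boldsymbol A_{\widehat{n+2}}\cap \boldsymbol A_{\widehat{n+3}}=\boldsymbol A_{\widehat{n+2,n+3}}=\langle AR_{n+2,n+3}\rangle$. Hence $M'=\beta\,AR_{n+2,n+3}$ for some scalar $\beta$; the case $\beta=0$ is ruled out by the linear independence of $AR_{1,n+3},\dots,AR_{n,n+3}$ (these are $n$ of the $n+2$ vectors in general position in $\boldsymbol A(n+3)$), which would force all $\lambda_{i,n+2}=0$, hence $M=0$, and then $2\,AR_{n+2,n+3}=0$, absurd. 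Therefore $\beta\ne 0$ and
$$AR_{n+2,n+3}\;=\;\beta^{-1}M'\;\in\;\bigl\langle AR_{1,n+3},\dots,AR_{n,n+3}\bigr\rangle.$$

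The final step — really the crux, though it is purely formal once Proposition \ref{P:Damiani-4.1.4} is in hand — is to derive a contradiction from this containment. The general-position statement of that proposition means that the unique (up to scalar) linear relation among the $n+2$ vectors $AR_{i,n+3}$, $i=1,\dots,n+2$, must involve all of them with nonzero coefficients; otherwise some $n+1$ of them would fail to form a basis of $\boldsymbol A(n+3)$. Consequently, in the basis $(AR_{i,n+3})_{i=1}^{n+1}$ of $\boldsymbol A(n+3)$, the vector $AR_{n+2,n+3}$ has a nonzero coefficient on $AR_{n+1,n+3}$, contradicting its alleged membership in $\langle AR_{1,n+3},\dots,AR_{n,n+3}\rangle$. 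The parity hypothesis enters only at the moment of adding the two equations: for $n$ even one would have $\tau\cdot AR_{n+2,n+3}=-AR_{n+2,n+3}$, and the same calculation produces merely $0=M+M'$, which is consistent (and must be, in view of Theorem \ref{THM:AR-C(W)-S-n+3-module}.2).
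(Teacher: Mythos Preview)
Your proof is correct and takes a genuinely different route from the paper's. Both argue by contradiction, invoke Lemma~\ref{L:(i,j)-(k,l)}, and finish by appealing to the general-position statement of Proposition~\ref{P:Damiani-4.1.4}, but they diverge in how the ``bad'' coefficients are eliminated.

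The paper applies, one at a time, the transpositions $(k,l)$ with $1\le k<l\le n+1$: each such $(k,l)$ fixes $AR_{k,l}$ (since $(-1)^{n-1}=1$) but negates $AR_{n+2,n+3}$, and the uniqueness of the hypothetical expansion (drawn from Corollary~\ref{C:M=n-or-M=n+1}) then forces $c_{k,l}=0$. What remains is a relation $AR_{n+2,n+3}=\sum_{i=1}^n c_{i,n+2}\,AR_{i,n+2}$ inside $\boldsymbol A(n+2)$, which contradicts Proposition~\ref{P:Damiani-4.1.4}. Your argument instead uses only the single transposition $\tau=(n+2,n+3)$, symmetrizes to kill the entire block $L$ at once, and then exploits the elementary identification $\boldsymbol A(n+2)\cap\boldsymbol A(n+3)=\boldsymbol A_{\widehat{n+2,n+3}}=\langle AR_{n+2,n+3}\rangle$ to land in $\boldsymbol A(n+3)$, where the same proposition yields the contradiction. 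Your approach is slightly more economical in that it avoids any appeal to Corollary~\ref{C:M=n-or-M=n+1}; the paper's approach is more transparent about which individual coefficients vanish and why.
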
 

\begin{proof}
Assume that there exist scalars $c_{i,j}$
for $(i,j)\in \boldsymbol{I}_n^n$ such that 
\begin{equation}
\label{Eq:An+2,n+3}
A_{n+2,n+3}=\sum_{(i,j)\in \boldsymbol{J}_n} c_{i,j}\, AR_{i,j} 
 \, .
\end{equation}
From the second point of Corollary \ref{C:M=n-or-M=n+1}, it follows that this  relation  is necessarily unique. 

On the other hand,  for any $(k,l)$ with $k=1,\ldots,n$ and $l=k+1,\ldots,n+1$, it follows from 
Lemma \ref{L:(i,j)-(k,l)}  that  
$(k,l)\cdot A_{n+2,n+3}=-A_{n+2,n+3}$ and $(k,l)\cdot A_{k,l}=(-1)^{n-1} A_{k,l}=A_{k,l}$ (since $n-1$ is even).  Applying $(k,l)$ to \eqref{Eq:An+2,n+3} would give another such identity   if $c_{k,l}$ were not zero, and this is not possible by the argument given just above. We thus get that all the constants $c_{i,j}$ are zero for $i=1,\ldots,n$ and $j=1,\ldots,n+1$. 

Hence  \eqref{Eq:An+2,n+3} actually is  written 
$$
A_{n+2,n+3}=\sum_{i=1}^n c_{i,n+2}\, AR_{i,n+2} \, . 
$$
and such a relation does not exist since, as it follows from 
the first point of Proposition \ref{P:Damiani-4.1.4}, the combinatorial 
ARs appearing in it form a free family.  Thus there is no identity of the form 
\eqref{Eq:An+2,n+3}, which gives us the lemma. 
\end{proof}

With the previous results at hand, it is then easy to establish Theorem \ref{THM:AR-C(W)-S-n+3-module}: 
\begin{proof}[Proof of Theorem \ref{THM:AR-C(W)-S-n+3-module}.]
When $n$ is odd, it follows from Lemma \ref{L:n-odd} 
that  the  integer $\eta$ defined in \eqref{Eq:M} is equal to $n+1$.  Hence 
$ \boldsymbol{AR}_C\big(\boldsymbol{\mathcal W}_{0,n+3}\big)$ has dimension $(n+1)(n+2)/2$  
from which it follows that  $\boldsymbol{\mathcal W}_{0,n+3}$ has maximal rank with all its ARs combinatorial.  Then Corollary \ref{P:AR-C-as-a-S(n+3)-module}.2 allows us to conclude.\sk 

To prove the theorem in case when $n$ is even, it suffices to prove that $\eta=n$, which would follow from exhibiting a non trivial abelian relation which is not combinatorial. We will show that the so-called Euler's abelian relation does the job, hence the theorem follows. 
 \end{proof}


\subsection{} 
\label{SS:Components-of-ARs}
Leaving aside the question of the structure of $ \boldsymbol{AR}_C( \boldsymbol{\mathcal W}_{0,n+3})$ as a $\mathfrak S_{n+3}$-module, it is not difficult to describe another rather efficient computational approach for building the combinatorial abelian relations. To this end, we first need to set some notation which will be also be used later on. \sk 

Given $x=(x_1,\ldots,x_n)$, we use the shorthand $x'=(x_2,\ldots,x_n)=U_1(x)$; we 
introduce two new but fixed variables $x_{n+1}=0$ and $x_{n+2}=1$; we set $\tilde x=(x_1,\ldots,x_n,x_{n+1},x_{n+2})$; and for $i,j$ such that $i=1,\ldots,n+1$ and $j=i+1,\ldots,n+2$, we set $\tilde x_{\widehat{\imath\jmath}}=(x_s)_{s=1, s\neq i,j}^{n+2}$. 
We allow ourselves to combine all these notation:  for instance, 
for $i,j$ as above  $\tilde x'_{\widehat{\imath \jmath}}$ stands for the $(n-1)$-tuple obtained by removing the first, the $i$th and the $j$-th entries to $\tilde x$.
Since we will always deal with pairs of indices $(i,j)$ with $2\leq i\leq n$, 
there will be no ambiguity with this notation in what follows.  We denote by $B_{n}$ the set of pairs we will work with: 
$$
B_{n}=\Big\{ \, \big( i,j\big) \hspace{0.1cm} \big\lvert 
 \hspace{0.1cm} i=2,\ldots,n,\, j=i+1,\ldots,n+2\, 
\Big\}\, .
$$

For any $N>0$, we consider the following homogeneous polynomial 
of $z\in \mathbf C^N$: 
\begin{align*}
M_{0,N}(z)= & \, \prod_{1\leq p<q\leq N}\big( z_p-z_q\big) 
\, .
\end{align*}

\begin{rem}
For $N=n+3$, one has $M_{0,N}(\tilde x)=- \prod_{1\leq i\leq n} x_i(x_i-1) \prod_{1\leq i<j\leq n}(x_i-x_j)$ for any $x\in \mathbf C^n$ hence the equation $M_{0,N}(\tilde x)=0$ 
 cuts out the braid arrangement $A_{n+3}$ in $\mathbf C^n$, 
whose complement $ \mathbf C^n\setminus A_{n+3}$ 
  is isomorphic to  the moduli space $\mathcal M_{0,n+3}$  (see \eqref{Eq:M0n-Cn-An}).  This explains the notation 
  $M_{0,N}=M_{0,n+3}$ for the polynomial above.
\end{rem}
 \sk
 
We define rational functions $F_0$ and $F_{ij}$ for   $(i,j)\in B_{n}$, of $x'$, by setting 
\begin{align}
\label{Eq:F0-Fij}
F_0(x')=\frac{1}{x_2\ldots x_n}
\qquad   \mbox{ and } \qquad  
F_{ij}(x')= \Big( \tilde x_i-\tilde x_j\Big)^{n-1} \frac{
M_{0,n-1}\Big( \tilde x'_{\widehat{\imath \jmath}} \Big)
}{M_{0,n+1}\big(\tilde x'\big)}\quad \mbox{for any }\, (i,j)\in B_n\, .
\end{align}
 
 Then we set ${B}_{n}(x')=\{\, F_{ij}\,\}_{(i,j)\in B_n}$ and 
\begin{align}
\label{Eq:mathfrak-Bn}
\boldsymbol{\mathfrak B}_n(x')=
\begin{cases}
 \hspace{0.1cm} {B}_n(x') 
  \hspace{1.4cm} \mbox{ if } n \mbox{ is even}\,; \vspace{0.1cm}\\
    \hspace{0.1cm} {B}_n(x')  \cup 
    \big\{ \, 
 F_0
\, 
\big\}
  \hspace{0.2cm} \mbox{ if } n \mbox{ is odd}\,.
\end{cases}
\end{align}

The cardinal of $\boldsymbol{\mathfrak B}_n(x')$ is 
$(n-1)(n+2)/2$ for $n$ even and 
$n(n+1)/2=(n-1)(n+2)/2+1$ otherwise. In view of 
describing the components  of the ARs of $\boldsymbol{\mathcal W}_{0,n+3}$, 
we set 
\begin{equation}
\label{Eq:yolop}
\begin{tabular}{l}
${}^{}$  \hspace{-1.4cm} $\bullet$ $V_1(x)=U_1(x)=x'=(x_2,\ldots,x_n)$  and $\Pi_1=\Omega_1=dx_2\wedge \cdots\wedge dx_n$; 
\vspace{0.15cm}\\
${}^{}$  \hspace{-1.4cm} $\bullet$ $V_i(x)=\big(C^{\circ (i-1)}\big)^*\big(V_1\big)$ and 
$\Pi_i=\big(C^{\circ (i-1)}\big)^*\big(\Pi_1\big)$
for $i=2,\ldots,n+3$;
\vspace{0.15cm}\\
${}^{}$  \hspace{-1.4cm} $\bullet$ $\boldsymbol{\mathfrak B}_n(V_i)=
\big(C^{\circ (i-1)}\big)^*\Big(\boldsymbol{\mathfrak B}_n(x')  \Big)
=\big\{ \,F\circ V_i\,\big\}_{ F\in \boldsymbol{\mathfrak B}_n(x')}$ 
for $i=1,\ldots,n+3$.
\end{tabular}
\end{equation}

By considering their respective poles, one gets easily that the elements of 
$\boldsymbol{\mathfrak B}_n(x')$ are linearly independent.  Then, by direct elementary but computational checks in what concerns the second point, we prove the 
\begin{prop} 
\label{P:La-Base}
{\rm 1.}  For any $n\geq 2$, the elements of 
$\boldsymbol{\mathfrak B}_n(x')$ are linearly independent over $\mathbf C$; 
\sk 

\noindent {\rm 2.} The following holds true for any $n\leq 12$: for any $i=1,\ldots,n+3$, the space 
$ \boldsymbol{AR}_C( \boldsymbol{\mathcal W}_{0,n+3})[i]$
 of $V_i$-th components of combinatorial abelian relations 
admits $\big\{ \, F_i\, \Pi_i\, \lvert \, F_i\in \boldsymbol{\mathfrak B}_n(V_i)\, \big\}$
 as a basis. 
%
\end{prop}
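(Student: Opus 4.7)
The plan is to handle the two parts separately. Part (1) is a pole-analysis argument; part (2) reduces, via a dimension count, to verifying that each element of $\boldsymbol{\mathfrak{B}}_n(V_i)\cdot \Pi_i$ actually arises as the $V_i$-th component of a combinatorial abelian relation, which is the computational heart of the statement.

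For part (1), note that all functions $F_{ij}$ with $(i,j)\in B_n$ share the common denominator $M_{0,n+1}(\tilde x')$, so their linear independence is equivalent to that of the polynomial numerators
\[
N_{ij}(\tilde x')\;=\;\big(\tilde x_i-\tilde x_j\big)^{n-1}\, M_{0,n-1}\!\big(\tilde x'_{\widehat{\imath\jmath}}\big).
\]
The key observation is that $N_{ij}$ vanishes on the hyperplane $\{\tilde x_i=\tilde x_j\}$ to order at least $n-1$, whereas $N_{kl}$ with $\{k,l\}\neq\{i,j\}$ vanishes on that same hyperplane to order at most $1$ (it vanishes simply exactly when $\{k,l\}\cap\{i,j\}=\emptyset$, and not at all when $\{k,l\}\cap\{i,j\}$ has one element). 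Thus, starting from a hypothetical relation $\sum c_{ij}F_{ij}=0$ and restricting to the generic point of $\{\tilde x_i=\tilde x_j\}$ (or, equivalently, taking residues of suitable multiples), one reads off that $c_{ij}=0$ for every pair in $B_n$, then for $F_0$ (when $n$ is odd) one uses that it is the unique element of $\boldsymbol{\mathfrak B}_n(x')$ whose polar divisor is supported only on the coordinate hyperplanes $\{x_p=0\}$, with no pole along any $\{\tilde x_p=\tilde x_q\}$ with $p,q\geq 2$.

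For part (2), I would first establish the upper bound on $\dim \boldsymbol{AR}_C(\boldsymbol{\mathcal W}_{0,n+3})[i]$ by a dimension count. The linear map ``take the $V_i$-th component'' from $\boldsymbol{AR}_C(\boldsymbol{\mathcal W}_{0,n+3})$ to $\boldsymbol{AR}_C(\boldsymbol{\mathcal W}_{0,n+3})[i]$ has kernel containing $\boldsymbol{A}_C(i)$, since any combinatorial AR of the form $AR_{ij}$ has vanishing $i$-th component. By Proposition~\ref{P:Damiani-4.1.4} this subspace has dimension $n+1$, so using the values of $\dim \boldsymbol{AR}_C$ provided by Theorem~\ref{THM:AR-C(W)-S-n+3-module} one obtains
\[
\dim \boldsymbol{AR}_C(\boldsymbol{\mathcal W}_{0,n+3})[i]\;\leq\;
\begin{cases}\,n(n+1)/2 & n\text{ odd},\\[2pt] \,(n-1)(n+2)/2 & n\text{ even},\end{cases}
\]
which matches $|\boldsymbol{\mathfrak B}_n|$ in both parities.

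By equivariance under the cyclic action $C$, it suffices to handle $i=1$. The main obstacle, and the reason the proof is only announced for $n\leq 12$, is to exhibit, for each $F\in \boldsymbol{\mathfrak B}_n(x')$, an explicit combinatorial abelian relation of $\boldsymbol{\mathcal W}_{0,n+3}$ whose $V_1$-component equals $F\cdot\Pi_1$ up to a non-zero scalar. A natural candidate generating set is provided by the abelian relations $AR_{kl}$ for $(k,l)\in B_n$, supplemented by $AR_{n+2,n+3}$ in the odd case, constructed as in \S\ref{SS:lalala} via the maps $T_{k,l}$. One computes their $V_1$-components in the coordinates $x_2,\ldots,x_n$ by pulling back the explicit formula for $A_{n+2,n+3}^1$ through the appropriate compositions of Cremona transformations $G_\sigma$, and matches the resulting rational functions against $F_{kl}$ (resp.\ $F_0$). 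This matching is an elementary but lengthy symbolic calculation, which we carry out with a computer algebra system for each $n\leq 12$; combined with the linear independence proven in part (1) (transported through $(C^{i-1})^*$) and the dimension bound above, it establishes that $\{F\cdot\Pi_i\mid F\in \boldsymbol{\mathfrak B}_n(V_i)\}$ is a basis of $\boldsymbol{AR}_C(\boldsymbol{\mathcal W}_{0,n+3})[i]$, and in particular forces $\ker=\boldsymbol{A}_C(i)$.
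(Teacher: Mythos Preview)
Your approach is correct and matches the paper's own (terse) justification: part~(1) by inspection of poles, part~(2) by direct symbolic computation for each $n\leq 12$; your dimension count via $\ker(\mathrm{proj}_i)\supseteq\boldsymbol{A}_C(i)$ (in fact equality, since any AR with vanishing $i$-th slot is an AR of $\boldsymbol{\mathcal W}_{\widehat{\imath}}$ and Proposition~\ref{P:Damiani-4.1.4} gives $\boldsymbol{A}(i)=\boldsymbol{A}_C(i)$) is a clean organizing device that the paper does not spell out. One small caveat: the one-to-one correspondence you posit between the $V_1$-component of $AR_{kl}$ and $F_{kl}$ up to scalar is stronger than what the proposition asserts and need not hold literally; it is safer to verify only that the $V_1$-components of a spanning family of combinatorial ARs with nonzero first slot all lie in $\langle\boldsymbol{\mathfrak B}_n(x')\rangle$, which together with part~(1) and your dimension bound already forces the basis claim.
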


%
%
%
%
%

The interest of this result is threefold: first and of course, we conjecture that it holds true in full generality. Second, it indicates that $ \boldsymbol{AR}_C( \boldsymbol{\mathcal W}_{0,n+3})$ can be determined by pure linear algebra in the finite dimensional direct sum $\oplus_{i=1}^{n+3} 
\boldsymbol{\mathfrak B}_n(V_i)$. This elementary fact opens the door for the possibility to arrive at an explicit closed formula for (a multiple of) the non trivial abelian relation $AR_{i,j}  $ of 
$\boldsymbol{AR}_{\widehat{\imath \jmath}}$ discussed just above, this for any $(i,j)\in 
\boldsymbol{I}_n$. Finally, as we will see  in the next section, the proposition above and more generally the notation and formulas of the current subsection are useful  to explicitly describe Euler's abelian relation. 

\begin{rem}
\label{Eq:-OK-with-other-C}
%
One can verify that, if taking instead of the map $C$ in \eqref{Eq:R-C-1} 
the cyclic birational automorphism of  $\boldsymbol{\mathcal W}_{0,n+3}$ given in the coordinates $x_i$'s by 
$$\tilde C(x)=\left( \, \frac{x_n}{x_n-x_{i-1}} \,\right)_{i=1}^n$$ 
(with the convention that $x_{0}=1$), then 
all the preceding definitions make sense  and all the subsequent results (especially the preceding proposition) hold true mutatis mutandis.  This will be used later on in \S\ref{SSSS:ici}.
\end{rem}

%
%

\newpage
\section{\bf On Euler's abelian relation.}
\label{S:Euler-AR}
In this section, we discuss Euler's AR. 
Since every AR of $\boldsymbol{\mathcal W}_{0,n+3}$ is combinatorial hence rational when $n$ is odd, it is certain that there is something wrong in Damiano's claims \eqref{Eq:Direct-Sum} about this particular abelian relation for this parity of $n$. In order to make everything as clear as possible, in this section we present  a thorough study of this AR and before that, of its construction. \sk 

Following Damiano but giving more details, we first describe the 
construction of a new abelian relation  $\boldsymbol{\mathcal E}_n$ for $\boldsymbol{\mathcal W}_{0,n+3}$ using Gelfand-MacPherson's theory applied to a characteristic class on 
 the {\it oriented} Grassmannian variety $G_2^{or}(\mathbf R^{n+3})$ 
  of oriented 2-planes in $\mathbf R^{n+3}$.  
In \S\ref{SS:En-integral-representation}, we give a rather concise integral formula for the components of $\boldsymbol{\mathcal E}_n$ which we use to determine the fourth-order jet of  $\boldsymbol{\mathcal E}_3$ at a specific point.  Then in \S\ref{SSS:Dihedral-Invariance-Properties}, we discuss some invariance properties satisfied by $\boldsymbol{\mathcal E}_n$ for a certain dihedral action preserving a particular connected component of  $G_2^{or}(\mathbf R^{n+3})$ (its {\it positive part $G_2^{or}(\mathbf R^{n+3})^{>0}$}) and explain how two non trivial functional identities satisfied by the components of $\boldsymbol{\mathcal E}_n$ can be deduced from them. 

After having completely explicited $\boldsymbol{\mathcal E}_3$ in \S\ref{SSS:tilde-e2}, we turn to 
the invariant properties of $\boldsymbol{\mathcal E}_n$ with respect to the birational action of the whole symmetric group $\mathfrak S_{n+3}$ on $\mathcal M_{0,n+3}$ in \S\ref{SSS:Sn+3-Invariance-Properties}. We show that things actually are more subtile than as described by Damiano, the main and crucial fact being that, contrarily to the case when $
n$ is even, when $n$ is odd Euler's abelian relation is actually  not canonically defined on each connected component of ${\mathcal M}_{0,n+3}$, but only up to sign. 
We explain that this fact, that does not seem really significant at first glance, actually has the consequence that Damiano's construction of a 1-dimensional $\mathfrak S_{n+3}$-representation associated to Euler's abelian relation is irremediably flawed when $n$ is odd. 

Finally, in \S\ref{SSS:En-n-odd} and \S\ref{SS:En-n-even}, which concern the cases when $n$ is odd and even respectively, we give two closed formulas for the components of Euler's abelian relation. Both formulas are conjectural in full generality but are proven to  hold true indeed for $n$ less than or equal to 12 (by means of formal computations).

\subsection{On Gelfand-MacPherson's theory of generalized dilogarithm forms.} 
\label{S:Euler-AR}
Euler's abelian relation is discussed in \cite[\S5]{D} (see also \cite[Chap.\,6]{DThesis}). Its construction relies on the general theory of 'generalized dilogarithm forms' exposed in \cite{GelfandMacPherson}.  Gelfand and MacPherson worked with usual grassmannian varieties $G_N(\mathbf R^{N+M})$ whereas Damiano's chose to do so with oriented grassmannians 
$ G_N^{or}(\mathbf R^{N+M})$, moreover in the specific case when $N=2$ and $M=n+1$. 
In order to show more clearly  where  the problem in Damiano's construction of a polylogarithmic AR for $\boldsymbol{\mathcal W}_{0,n+3}$ 
lies, we give below an overview of the corresponding constructions.   For  proofs, we refer the reader to the two previously cited articles. 

\subsubsection{} 
\label{SSS:GrassmannianStuff}
In ordre to consider both cases together, namely the standard and the oriented grassmannians and associated configuration spaces,  we agree that $*$ either stands for nothing or for ${\rm `or'}$, the former notation referring of course to the standard case and 
the latter to the oriented one. Considering this, we set ${\rm O}^{or}_m(\mathbf R)={\rm SO}_m(\mathbf R)$ for any $m\geq 1$.  \sk 

Below, {\it `plane'} means a subvector space of dimension 2, and when $*={\rm or}$, a {\it `$*$-plane'} is a 2-plane provided with an orientation . We will denote a $*$-plane by $\xi^*$ 
whereas $\xi$ will stand for the associated (non oriented) plane. 

\begin{itemize}
\item[$-$] $G^*_2(\mathbf R^{n+3})={\rm O}^{*}_{n+3}(\mathbf R)/\big( {\rm O}^{*}_{2}(\mathbf R)
\times  
{\rm O}^{*}_{n+1}(\mathbf R)
\big) $ is the grassmannian of $*$-planes in $\mathbf R^{n+3}$; 
\sk 
\item[$-$] ${\rm GL}_{n+3}(\mathbf R)$ acts on $\mathbf R^{n+3}$ hence on 
$G^*_2(\mathbf R^{n+3})$ as well. This action is not effective: since the center  $\mathbf R^* {\rm Id}$ acts trivially, this action factors through $\mathbf P{\rm GL}_{n+3}(\mathbf R)= {\rm GL}_{n+3}(\mathbf R)/
( \mathbf R^* {\rm Id})$;
\sk
\item[$-$]  $\widehat G^*_2(\mathbf R^{n+3})$ stands for the dense open subset  of {\it generic} $*$-planes  in $\mathbf R^{n+3}$, which are those which 
do not contain any coordinate line and which are  not contained in any coordinate hyperplane;
\sk
\item[$-$]  $\widetilde H
\subset {\rm GL}_{n+3}(\mathbf R)$  denotes the abelian subgroup of diagonal matrices and $H=H/ ( \mathbf R^* {\rm Id})$ stands for its image in $\mathbf P{\rm GL}_{n+3}(\mathbf R)$. The latter group is a maximal torus 
isomorphic to $( \mathbf R^*)^{n+2}$ (in natural but non unique  ways);
\sk
\item[$-$] $\widetilde H_0$ and $H_0$ are the connected components of the identity of $\widetilde H$  and 
$H$ respectively (thus $H_0\simeq ( \mathbf R_{>0})^{n+2}$ but  again not in a canonical way);
\sk 
\item[$-$] $\widetilde K$ is the subgroup of $\widetilde H$ formed by the  matrices with diagonal coefficients in $\{\pm 1\}\simeq \mathbf Z_2$ and $K$ stands for its  image in $H$.  One has 
$K\simeq (\mathbf Z_2)^{n+2}$ and since $H$ is commutative, we have that $H\simeq H_0\times K\simeq (\mathbf R_{>0})^{n+2}\times (\mathbf Z_2)^{n+2}$;
\sk 
\item[$-$] we define $C_2^{*}(n+3)=\widehat G^*_2(\mathbf R^{n+3})/ H$ 
(resp.\,$EC_2^{*}(n+3)=\widehat G^*_2(\mathbf R^{n+3})/ H_0$) as the space of 
{\it $*$-configurations} (resp. {\it enhanced $*$-configurations});
\sk 
\item[$-$] given an oriented 2-plane $\xi^{or}$, we denote by $\check{\xi}^{or}$ the same plane but endowed with the opposite orientation, and by $\xi$ the standard plane (disregarding the orientations); 
\sk 
\item[$-$]
 The map $D:\xi^{or}\rightarrow D(\xi^{or})=\check{\xi}^{or}$ is an involutive isomorphism  which is the deck transformation of the 2-1 (universal) covering $ \nu : 
G^{or}_2(\mathbf R^{n+3})\rightarrow  G_2(\mathbf R^{n+3})$, $\xi^{or}\mapsto \xi$.
\end{itemize}

The spaces, the quotients maps by $H$ and $H_0$ and the coverings induced by $\nu$ discussed above  
all  fit into the following commutative diagram : 
\begin{equation}
\label{Eq:Diagrammm}
\begin{tabular}{c}
  \xymatrix@R=0.4cm@C=1.5cm{ 
   \ar@/_4pc/@{->}[dddd]_{\pi_H}  \widehat {G}_{2}^{or}\big(\mathbf R^{n+3}\big) 
\ar@{->}[dd]_{\pi_{0}}  \ar@{->}[r]^{\nu
 } & \widehat {G}_{2}\big(\mathbf R^{n+2}\big)\, 
\ar@{->}[dd]^{\pi_{0}}
 \ar@/^4pc/@{->}[dddd]^{\pi_H}
\\ & \\
EC_2^{or}(n+3) \ar@{->}[dd]_{\kappa} 
 \ar@{->}[r]^{ \hspace{0.1cm} 
\nu
 }  & 
 EC_2(n+3)  
 \ar@{->}[dd]^{\kappa} 
\\ & \\
C^{or}_2(n+3)  \ar@{->}[r]_{ \hspace{0.1cm}\nu} 
& C_2(n+3) \, . 
  }
  \end{tabular}
\end{equation}

In this diagram, he horizontal maps all are  non ramified 2-1 coverings induced by $\nu$ (hence are denoted by the same notation),  the maps $\pi_0$ and $\kappa$ correspond to quotienting by $H_0$ and  $K$ respectively, hence the compositions $\pi_H=\kappa\circ \pi_0$ are the maps induced by quotienting by $H$. 
\sk 

As is well-known,  $C^*_2(n+3)$  naturally identifies with  the space $(\mathbf P^1_{\mathbf R}\big)^{n+3}- \Delta$ of $(n+3)$-tuple of pairwise distinct points on the real projective line\footnote{Here $\Delta$ stands for the union of all the small diagonals 
in 
$\big(\mathbf P^1_{\mathbf R}\big)^{n+3}$.}, quotiented by ${\rm GL}_2^*(\mathbf R)/(\mathbf R_{>0} {\bf I}_2)$ 
where we agree that ${\rm GL}_2^{or}(\mathbf R)={\rm GL}_2^{+}(\mathbf R)=\{ g\in {\rm GL}_2(\mathbf R)\, \lvert \, {\rm det}(g)>0\,\}$. 
In the standard case when $*$ is empty, one recovers $\mathcal M_{0,n+3}(\mathbf R)$. When $*=or$,  the quotient $C^{or}_2(n+3)=\widehat G^{or}_2(\mathbf R^{n+3})/ H$ 
can be seen 
 as the set of $(n+3)$-tuples of pairwise distinct points on $ \mathbf P^1_{\mathbf R}\simeq \mathbf S^1=\{ z \in \mathbf C \, , \, \lvert z\lvert=1 \} \subset \mathbf C$ modulo the action of the subgroup ${\rm Mob}^+(\mathbf S^1)$ of Mo\"ebius transformations preserving the standard orientation of $\mathbf S^1$.  Hence it appears  natural to denote this space of oriented configurations by $\mathcal M_{0,n+3}^{\,or}(\mathbf R)$.  The map 
 $$\nu : 
 \mathcal M_{0,n+3}^{\,or}(\mathbf R)\rightarrow \mathcal M_{0,n+3}(\mathbf R)$$
  is simply given by identifying configurations when reversing the orientation of $ \mathbf P^1_{\mathbf R}\simeq \mathbf S^1$. \sk 

To explain what an enhanced configuration is, we denote by ${\mathbb S}^1\subset \mathbf R^2$ the circle from which $\mathbf P^1_{\mathbf R}=\mathbf S^1$ is obtained by identifying diametrically opposite points.  Then an `enhanced configuration' (that is, an element of 
$EC_2(n+3)$) 
 is a projective equivalence class of a $(n+3)$-tuple $(c_i)_{i=1}^{n+3}\in \mathbf P^1_{\mathbf R}-\Delta$ enhanced by a choice of a lift $\tilde c_i\in{\mathbb S}^1$ of $c_i$ for each $i=1,\ldots,n+3$.  Of course, there is a similar description of what  an  `enhanced oriented configuration'  is, 
 we leave it to the interested reader to elaborate on this. 
   \sk
 
 Recall (see \S\ref{SSS:Real-Locus} above)  that $\mathcal M_{0,n+3}(\mathbf R)$ is not connected and that its connected components $\mathcal M(\boldsymbol{\sigma})$ are first, biunivocally labeled by classes $\boldsymbol{\sigma}\in \mathfrak K_{n+3}=\mathfrak S_{n+3}/D_{0,n+3}$ and second, all isomorphic to  $\mathcal M(\boldsymbol{1})=\mathcal M_{0,n+3}^{>0}(\mathbf R)$.  Our purpose below is to explain Damiano's construction of an AR on any component $\mathcal M(\boldsymbol{\sigma})$ and to describe some of its properties. We will denote by $\mathcal E_n(\boldsymbol{\sigma})$ this AR which is an element of $\boldsymbol{AR}(\boldsymbol{\sigma})$. The question of the nature of the 
$\mathcal E_n(\boldsymbol{\sigma})$'s and how these ARs are related is crucial and will be discussed. But in a first step, we will only consider the case of the positive part 
$\mathcal M_{0,n+3}^{>0}(\mathbf R)$ and of the AR $\mathcal E_n(\boldsymbol{1})$ it carries, that we will denote by $\mathcal E_n^{>0}$ to simplify.

\subsubsection{\bf Euler's differential form}
When $n$ is even, one could work with usual grassmannians of 2-planes, which 
are precisely the manifolds used by Gelfand and MacPherson 
to develop their theory of higher polylogarithmic forms and of the differential relations they satisfy. However this is not possible when $n$ is odd (see Remark \ref{Rem:n-odd-vs-n-even} below for an explanation of this) and it is then necessary to work at the level of  the oriented  grassmannian. In order to give a presentation independent of the parity of $n$, we will 
 place ourselves within this `oriented framework'.\sk

We assume below that $\mathbf R^{n+3}$ is endowed with its standard euclidean structure: $e_1,\ldots,e_{n+3}$ are the elements of the canonical basis, that is $e_i=(\delta_{ij})_{j=1}^{n+3}$ for $i=1,\ldots,n+3$. One denotes by $x_1,\ldots,x_{n+3}$  the corresponding standard coordinates, and one has 
$$
\big(x,y\big)=\sum_{k=1}^{n+3} x_ky_k
\qquad \mbox{ and } \qquad \lvert x\lvert=\sqrt{(x,x)}
=\sqrt{ \, (x_1)^2+\cdots+ (x_{n+3})^2}
$$
for any two elements $x=(x_k)_{k=1}^{n+3}$ and $y=(y_k)_{k=1}^{n+3}$ of $ \mathbf R^{n+3}$.  Finally, we will denote by $\mathbf R^{n+2}_i$ the coordinate hyperplane in $
\mathbf R^{n+3}$ cut out by the equation $x_i=0$. 

\mk 
 
 Let $\mathcal T^{or}$ be the tautological bundle over $G_2^{or}(\mathbf R^{n+3})$. Its Euler's class ${\rm E}_n^{or}={\rm E}(\mathcal T^{or})$ is a non trivial element in $\boldsymbol{H}^2( G_2^{or}(\mathbf R^{n+3}),\mathbf R)$.
%
%
%
 Since the oriented grassmannian is a symmetric space, there exists a unique ${\rm SO}_{n+2}(\mathbf R)$-invariant global differential  2-form $E_n^{or}$ on it such that $[E_n^{or}]={\rm E}(\mathcal T^{or})$.  To simplify, we will no longer write the 
 superscript ${}^{or}$ and just write $E_n$ in what follows.\sk

 We consider the Stiefel manifold $\boldsymbol{\mathbb S}_2^{or}(\mathbf R^{n+3})$ of oriented 2-frame in $\mathbf R^{n+3}$:  it is the manifold of dimension $2n+3$ 
 formed by orthogonal 2-frames in $\mathbf R^{n+3}$: 
$$
\boldsymbol{\mathbb S}_2^{or}\big(\mathbf R^{n+3}\big)=\left\{\, 
\big( e_1,e_2  \big)\in \big(\mathbf R^{n+3}\big)^2\hspace{0.1cm} 
\Big\lvert 
\hspace{0.1cm} \lvert e_1\lvert = \lvert e_2\lvert=1\mbox{ and } \, \langle e_1,e_2\rangle=0
\, \right\}\, .$$
 It comes with a natural map $\mathcal S: \boldsymbol{\mathbb S}_2^{or}(\mathbf R^{n+3})\rightarrow G_2^{or}(\mathbf R^{n+3})$, $(e_1,e_2)\mapsto e_1\wedge e_2$  
 which makes of the Stiefel variety a ${\rm SO}_2(\mathbf R)$-bundle over the oriented grassmannian.   The interest of considering the Stiefel manifold is that there is 
 a simple explicit formula for 
the   pull-back of $E_n$ under $\mathcal S$. 

Indeed, for $i=1,2$, let $de_i$ be the $\mathbf R^{n+3}$-valued 1-form whose components are the exterior derivatives of the scalar components of $e_i$: if $e_i=(e_i^k)_{k=1}^{n+3}$, then $de_i=(de_i^k)_{k=1}^{n+3}$.  Then the componentwise wedge product of $de_1$ with $de_2$ is a multiple of the pull-back of 
$E_n$ under $\mathcal S$ since
\begin{equation}
\label{Eq:de1-wedge-de2}
\mathscr E_n = de_1\wedge de_2=\sum_{k=1}^{n+3} de_1^k \wedge de_2^k= \frac{-1\,}{2\pi}\, \mathcal S^*\Big( E_n \Big)
\end{equation}
 ({\it cf.}\,\cite[\S6.2]{DThesis} or \cite[\S6]{D} for more details and references). 
 Since we are only interested in $ E_n $ up to a non zero multiple, it will be more convenient to work with $\mathscr E_n
 \in  \Gamma\Big( \Omega^2(\boldsymbol{\mathbb S}_2^{or}\big(\mathbf R^{n+3}\big)\big)\Big)
$ in what follows.\sk 

  Formula  \eqref{Eq:de1-wedge-de2} above gives us an effective way 
to compute $\mathscr E_n$. Indeed,   let $\gamma: U\rightarrow G_2^{or}(\mathbf R^{n+3})$ be a smooth map where $U$ stands for an open domain in an affine space of dimension $N$.  
Given affine coordinates $u_1,\ldots,u_N$ on $U$, the question is to give an explicit formula for (a multiple of) $\gamma^*(E_n^{or})$ in the $u_i$'s. We assume that 
$\gamma$ is given by $u\mapsto \gamma_1(u)\wedge \gamma_2(u)$ where both $\gamma_1$ and $\gamma_2$ are smooth maps from $U$ to $\mathbf R^{n+3}$ such that 
$\gamma_1(u)$ and $\gamma_2(u)$ are not colinear for any $u\in U$.  
Denote by $GS$ the Gram-Schmidt orthogonalization process, defined here as the map
\begin{align*}
GS: \, \Omega^{n+3} & \longrightarrow  \boldsymbol{\mathbb S}_2^{or}\big(\mathbf R^{n+3}\big)\\
\big( x,y\big) & \longmapsto  \left( \frac{x}{\lvert x\lvert}\, , \, 
 \frac{\tilde y}{\lvert \tilde y\lvert}
\right) 
\end{align*}
where $\Omega^{n+3}$ stands for the set of pairs  of two linearly independent vectors of $\mathbf R^{n+3}$ and where for such a pair $(x,y)$, we have set $\tilde y=y-\big((x,y)/\lvert x\lvert^2\big)\, x$. \sk 

Then $  \Gamma= GS\circ \gamma$ is a lift of $\gamma$ to the Stiefel variety, {\it i.e.} the following diagram is commutative
\begin{equation*}
\label{Diag:GS-circ-gamma}
\xymatrix@R=0.8cm@C=2cm{
&   \ar@{->}[d]^{\mathcal S}  \boldsymbol{\mathbb S}_2^{or}\big(\mathbf R^{n+3}\big)  \\
 U 
  \ar@/^/@{->}[ru]^{\Gamma}
   \ar@{->}[r]_{\gamma} &  G_2^{or}(\mathbf R^{n+3})
}
\end{equation*}
hence we obtain that $\Gamma^* (\mathscr E_n)$ 
coincides with $-1/(2\pi)$ times the pull-back of $E_n$ under $\gamma$: one has
\begin{equation}
\Gamma^* (\mathscr E_n)=\frac{-1\,}{2\pi}\, \gamma^*\Big( E_n \Big)\, . 
\end{equation}

We use the material of the previous paragraph in the somehow `most general map $\gamma$' in order to deduce some formulas for (pull-backs of) Euler's form which will be used later on. The map we are referring to here, denoted by $\xi$,  is just  the map associating to a $2\times (n+3)$ matrix $M$ the oriented 2-plane  $\xi(M)$ spanned by the first and the second rows of $M$, denoted  by $x(M)$ and $y(M)$ respectively: one has 
$\xi(M)=x(M)\wedge y(M)$.  For $x,y\in \mathbf R^{n+3}$ we denote by $M_{x,y}$ the matrix such that $x(M_{x,y})=x$ and $y(M_{x,y})=y$ and we set $\xi(x,y)=\xi(M_{x,y})\in G_2^{or}(\mathbf R^{n+3})$.   The map $\xi$ under scrutiny here is defined on the subset $\Omega_2\subset {\rm M}_{2\times (n+3)}(\mathbf R)$ of rank 2 matrices: 
\begin{align}
\label{Eq:XI}
\xi : \, \Omega_2 & \longrightarrow  G_2^{or}(\mathbf R^{n+3})\\
M=\Big[ \hspace{-0.15cm}
\begin{tabular}{c}
$x$
\vspace{-0.15cm}
\\
$y$
\end{tabular}
\hspace{-0.15cm}
 \Big] & \longmapsto \xi(M)=\xi(x,y)=x\wedge y\, . 
 \nonumber
\end{align}

From $\gamma : \Omega_2\rightarrow G_2^{or}(\mathbf R^{n+3})$, we build other such maps by setting for any $M_{x,y}\in \Omega_2$
$$\check{\gamma}(M_{x,y})={\gamma}(M_{x,y}) 
\qquad \mbox{ and }\qquad 
\gamma_G(M_{x,y})=\gamma(M_{x,y}\cdot G)\, $$ 
 where $G$ stands for a previously given constant
element of the linear group ${\rm GL}_{n+3}(\mathbf R)$. For $\epsilon=(\epsilon_k)_{k=1}^{n+3}\in \{ \pm 1\}$, we denote by $D_\epsilon$ the diagonal matrix with $\epsilon_k$ for its $k$-th coefficient, that is $D_\epsilon={\rm diag}(\epsilon_1,\ldots,\epsilon_{n+3})
\in {\rm GL}_{n+3}(\mathbf R) $ and we set 
$\gamma_\epsilon= \gamma_{D_\epsilon}$. 
With the preceding notation, one can state some transformation formulas for (the pull-backs of) Euler's form  that will prove to be important in the sequel.

\begin{lem} 
\label{Lem:Properties-En}
 The 2-form $E_n$ satisfies the following transformation formulas: 
\begin{enumerate}
\item[{\rm 1.}] $
\check{\gamma}^*\big( E_n\big)=-  {\gamma}^*\big( E_n\big)$; 
\sk 
\item[{\rm 2.}] 
$ \gamma_\epsilon^*\big( E_n\big)={\gamma}^*\big( E_n\big)$ 
for any $\epsilon\in \{ \pm 1\}^{n+3}$;
\sk 
\item[{\rm 3.}] 
$ \gamma_G^*\big( E_n\big)={\gamma}^*\big( E_n\big)$ 
for any $G\in {\rm SO}_{n+3}(\mathbf R)$.
\end{enumerate}
\end{lem}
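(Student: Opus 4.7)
The strategy is to work on the Stiefel manifold $\boldsymbol{\mathbb S}_2^{or}(\mathbf R^{n+3})$ via the Gram--Schmidt lift $\Gamma=GS\circ\gamma$ and exploit the explicit formula $\mathcal S^*(E_n)=-2\pi\,\mathscr E_n$ from \eqref{Eq:de1-wedge-de2}, where $\mathscr E_n=\sum_{k=1}^{n+3}de_1^k\wedge de_2^k$. All three statements will follow by reducing them to trivial symmetry properties of this bilinear expression, combined with the fact that Gram--Schmidt commutes with the right action of the orthogonal group. Throughout, for any smooth map $M\mapsto(x(M),y(M))$ into rank-$2$ matrices, one has $GS(x\cdot g,y\cdot g)=GS(x,y)\cdot g$ for any $g\in {\rm O}_{n+3}(\mathbf R)$, simply because Gram--Schmidt only uses the euclidean inner product, which $g$ preserves.

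\emph{Proof of 2 and 3 together.} I would in fact establish the stronger statement that $E_n$ is invariant under the full orthogonal group ${\rm O}_{n+3}(\mathbf R)$ acting on $G_2^{or}(\mathbf R^{n+3})$ by $\xi^{or}=e_1\wedge e_2\mapsto g(e_1)\wedge g(e_2)$. Given $g\in {\rm O}_{n+3}(\mathbf R)$, write $e_i'=e_i\cdot g$, i.e.\ $e_i'^k=\sum_j g_{jk}e_i^j$. Then
\[
\sum_{k=1}^{n+3}de_1'^{\,k}\wedge de_2'^{\,k}=\sum_{j,j'}\Bigl(\sum_{k}g_{jk}g_{j'k}\Bigr)de_1^{\,j}\wedge de_2^{\,j'}=\sum_{j,j'}(gg^{T})_{jj'}\,de_1^{\,j}\wedge de_2^{\,j'}=\sum_{j}de_1^{\,j}\wedge de_2^{\,j},
\]
using $gg^T=I_{n+3}$. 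Hence $\mathscr E_n$ is ${\rm O}_{n+3}$-invariant on the Stiefel manifold. Since $\Gamma_{g}=GS\circ\gamma_g=(GS\circ\gamma)\cdot g$ by the commutation remark above, and $\mathcal S$ is a submersion, the equality $\Gamma_g^*\mathscr E_n=\Gamma^*\mathscr E_n$ descends to $\gamma_g^*E_n=\gamma^*E_n$. Specialising to $g=D_\epsilon$ (for which $gg^T=I$ because $\epsilon_k^2=1$) gives 2; specialising to $g=G\in{\rm SO}_{n+3}(\mathbf R)$ gives 3.

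\emph{Proof of 1.} The row-swap $\tau:M_{x,y}\mapsto M_{y,x}$ sends the oriented plane $x\wedge y$ to $y\wedge x=-x\wedge y$, which is precisely the image under the deck transformation $D$ of the covering $\nu:G_2^{or}(\mathbf R^{n+3})\to G_2(\mathbf R^{n+3})$. So $\check\gamma=D\circ\gamma$ and it suffices to show $D^*E_n=-E_n$. Consider the involution $\sigma:(e_1,e_2)\mapsto(e_2,e_1)$ on $\boldsymbol{\mathbb S}_2^{or}(\mathbf R^{n+3})$; by construction $\mathcal S\circ\sigma=D\circ\mathcal S$. On one hand, directly from the definition,
\[
\sigma^*\mathscr E_n=\sum_{k=1}^{n+3}de_2^{\,k}\wedge de_1^{\,k}=-\mathscr E_n.
\]
On the other hand, using $\mathcal S^*E_n=-2\pi\,\mathscr E_n$ and the compatibility $\mathcal S\circ\sigma=D\circ\mathcal S$,
\[
\mathcal S^*(D^*E_n)=\sigma^*(\mathcal S^*E_n)=-2\pi\,\sigma^*\mathscr E_n=2\pi\,\mathscr E_n=-\mathcal S^*E_n.
\]
Since $\mathcal S$ is a submersion, this forces $D^*E_n=-E_n$, and therefore $\check\gamma^*E_n=\gamma^*D^*E_n=-\gamma^*E_n$.

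\emph{Expected obstacle.} There is no serious difficulty: everything reduces to the antisymmetry of $\mathscr E_n$ under row-swap and to $gg^{T}=I$ under column action by an orthogonal matrix. The only small point requiring care is the precise identification of the row-swap $\tau$ with the deck transformation $D$ at the level of the oriented Grassmannian, which is clear from the defining formula $\xi(x,y)=x\wedge y$.
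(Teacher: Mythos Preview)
Your proof is correct and follows essentially the same approach as the paper's: both reduce everything to the explicit Stiefel formula $\mathscr E_n=\sum_k de_1^k\wedge de_2^k$, using that the swap $(e_1,e_2)\mapsto(e_2,e_1)$ is the lift of $D$ (for point~1) and that orthogonal column-action preserves $\mathscr E_n$ (for points~2 and~3). The only cosmetic difference is that the paper invokes the defining ${\rm SO}_{n+3}$-invariance of $E_n$ directly for point~3, whereas you re-derive it from the formula and in doing so obtain the full ${\rm O}_{n+3}$-invariance that subsumes point~2 as well.
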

\begin{proof} As the notation suggests, one has $\check{\gamma}=D\circ \gamma$ where 
$D$ is the change of  orientation. Since $D$ has for $\mathcal S$-equivariant lift the exchange map $(e_1,e_2)\rightarrow (e_2,e_1)$ on the Stiefel manifold, {\rm 1.} follows from $de_2\wedge de_1=-de_1\wedge de_2$ together with \eqref{Eq:de1-wedge-de2}. The second point is a direct consequence of this formula as well.  As for {\rm 3.}, it follows immediatly from the ${\rm SO}_{n+3}(\mathbf R)$-invariance of $E_n$.
\end{proof}

For $i\in \{1,\ldots,n+3\}$, the natural inclusion $\mathbf R^{n+2}_i\subset \mathbf R^{n+3}$ induces a natural embedding  $\iota_i : G^{or}_2(\mathbf R^{n+2}_i)
\hookrightarrow G^{or}_2(\mathbf R^{n+3})$ which is such that the following lemma holds true: 
\begin{lem}  
\label{L:Pull-backs}
For any $k\geq 1$, denote by $E_n^k$ the $k$-th wedge product of $E_n$. 
\vspace{-0.1cm}
\begin{enumerate}
\item[{\rm 1.}]  The pull-back $\iota_i^*\big( E_n)$ coincides with the Euler form of 
the tautological bundle of $G^{or}_2(\mathbf R^{n+2}_i)$.

\sk 
\item[{\rm 2.}]  
The pull-back $\iota_i^*\big( E_n^n)$ is a ${\rm SO}_{n+2}$-invariant volume form on $G^{or}_2(\mathbf R^{n+2}_i)$.
\end{enumerate}
\end{lem}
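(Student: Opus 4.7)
The plan is to deduce both parts from two general principles: naturality of the Euler class under pullback, and uniqueness (up to scalar) of invariant differential forms of a given degree on an irreducible compact symmetric space.

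First I would handle part 1. The inclusion $\mathbf R^{n+2}_i\hookrightarrow \mathbf R^{n+3}$ induces a tautological identification of oriented rank 2 bundles $\iota_i^*\mathcal T^{or}\simeq \mathcal T_i^{or}$, where $\mathcal T_i^{or}$ denotes the tautological bundle over $G^{or}_2(\mathbf R^{n+2}_i)$: indeed, the fiber of $\iota_i^*\mathcal T^{or}$ over an oriented 2-plane $\xi^{or}\subset \mathbf R^{n+2}_i$ is $\xi^{or}$ itself viewed inside $\mathbf R^{n+3}$, which is exactly the fiber of $\mathcal T_i^{or}$ at $\xi^{or}$ (with the same orientation, up to a global sign convention corresponding to the identification of the ambient orientations, as flagged already by Damiano in \S\ref{SS:Damiano's-work}). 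By naturality of the Euler class of an oriented real rank 2 bundle, $[\iota_i^*E_n]$ therefore represents the Euler class of $\mathcal T_i^{or}$.

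Next I would pin down that $\iota_i^*E_n$ is the canonical invariant representative of this class, i.e.\,coincides with the Euler form of $\mathcal T_i^{or}$. For this, observe that $\iota_i$ is equivariant for the standard block-diagonal embedding ${\rm SO}_{n+2}(\mathbf R)\hookrightarrow {\rm SO}_{n+3}(\mathbf R)$ stabilizing $e_i$. Since $E_n$ is ${\rm SO}_{n+3}(\mathbf R)$-invariant, the pullback $\iota_i^*E_n$ is ${\rm SO}_{n+2}(\mathbf R)$-invariant. Because $G^{or}_2(\mathbf R^{n+2}_i)$ is an irreducible compact symmetric space, the space of ${\rm SO}_{n+2}(\mathbf R)$-invariant 2-forms on it is one-dimensional; in particular it contains a unique (up to scalar) closed invariant 2-form, which must be a constant multiple of the Euler form. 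Matching cohomology classes via the argument of the preceding paragraph fixes this multiple, proving part 1.

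Part 2 then follows almost formally. From part 1, $\iota_i^*(E_n^n) = (\iota_i^*E_n)^n$ is the $n$-th wedge power of the Euler form of $\mathcal T_i^{or}$; it is manifestly ${\rm SO}_{n+2}(\mathbf R)$-invariant and has total degree $2n = \dim G^{or}_2(\mathbf R^{n+2}_i)$, so it is a top-degree invariant form. To see it is nowhere-vanishing, it suffices, by invariance and transitivity of the ${\rm SO}_{n+2}(\mathbf R)$-action, to check non-vanishing of its cohomology class; this is precisely the statement (used already by Damiano and recalled in \S\ref{SS:Damiano's-work}, applied here with $n$ replaced by $n-1$) that the top power of the Euler class of the tautological rank 2 bundle on the oriented Grassmannian of 2-planes pairs non-trivially with the fundamental class. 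The conclusion that $\iota_i^*(E_n^n)$ is an invariant volume form follows.

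The only genuine subtlety I anticipate is the sign/orientation bookkeeping at the end of part 1: comparing the orientation that $G^{or}_2(\mathbf R^{n+2}_i)$ inherits from $G^{or}_2(\mathbf R^{n+3})$ via $\iota_i$ with its intrinsic orientation may introduce a global $\pm 1$, as Damiano explicitly notes. This sign is irrelevant for part 2 (volume forms are defined only up to sign) and does not affect any of the applications in the remainder of the section, but one must keep track of it whenever part 1 is used to compare Euler forms of different dimensions.
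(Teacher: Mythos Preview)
Your argument is correct and follows the paper's approach almost exactly: naturality of the Euler class plus ${\rm SO}_{n+3}$-invariance of $E_n$ for part~1, then non-triviality of the top power of the Euler class in cohomology for part~2. One small slip worth fixing: when $n=2$ the space $G^{or}_2(\mathbf R^{4})\cong S^2\times S^2$ is \emph{not} an irreducible symmetric space and its space of invariant $2$-forms is two-dimensional, so your one-dimensionality claim fails there; the clean replacement is that on any compact symmetric space invariant forms are harmonic, so uniqueness of the invariant representative of a given cohomology class follows from Hodge theory, which is all you actually need.
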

\begin{proof} Since $\iota_i^*(\mathcal T^{or}) $ coincides with the tautological bundle on $G^{or}_2(\mathbf R^{n+2}_i)$, the first point 
follows from the naturality of the Euler's class and of the ${\rm SO}_{n+3}$-invariance of the representative $E_n$.

The second point follows from the first combined with all the following facts: 
$i.$ $G^{or}_2(\mathbf R^{n+2})$ is orientable and compact hence admits a necessarily non exact volume form; and $ii.$ the top degree cohomology space $\boldsymbol{H}^{2n}\big( 
G^{or}_2(\mathbf R^{n+2}), \mathbf R
\big)\simeq \mathbf R$ is generated over $\mathbf R$ by (the class of) the
${\rm SO}_{n+2}$-invariant form $E_{n-1}^{n}$.
\end{proof}

With the constructions and results discussed above at hand, one can now follow Damiano's construction of an AR from a power of the Euler class $E_n$.  It is the subject of the next subsection.

\subsubsection{\bf The positive Eulerian abelian relation} 
\label{SSS:positive-Euler-AR}
Here, we review Damiano's construction (which itself heavily relies on  Gelfand-MacPherson theory \cite{GelfandMacPherson}) of the Euler abelian relation for $\boldsymbol{\mathcal W}_{0,n+3}$ on the positive part $\mathcal M_{0,n+3}^{>0}$, which is the privileged component of the moduli space $\mathcal M_{0,n+3}$ we will work on. We will discuss later the Euler's abelian relations on the other components $\mathcal M(\boldsymbol{\sigma})$. \mk

Let $U\subset  \mathbf R^{n}$ be the set of $n$-tuples $u=(u_i)_{i=1}^n\in \mathbf R^n$ such that $1<u_1<u_2<\cdots <u_n$. The map 
\begin{align}
\label{Eq:Varphi-U}
\varphi : U\,  &\longrightarrow \mathcal M_{0,n+3}^{\, or,\, >0} \\
u & \longmapsto \Big[\,  \infty\, ,\,0\, ,\,-1\, ,\,-u_1\, ,\,-u_2\, ,\, \ldots \, ,\,  -u_n\, \Big]
\nonumber
\end{align}
can be seen as a global isomorphism hence the $u_i$'s form a  global coordinate system on $\mathcal M_{0,n+3}^{>0}$,  which is the one we will work with.  \sk 

Recall that a matrix $M\in {\rm M}_{2,n+3}(\mathbf R)$ is said {\it `positive'} if all its $2\times 2$ minors are positive. We then define the {\it `positive oriented grassmannian'} 
$G^{or}_2(\mathbf R^{n+2})^{>0}$
as the image by the map \eqref{Eq:XI} of the open subset  $\Omega_2^{>0}
\subset \Omega_2$  of positive matrices: one has  
\begin{equation}
\label{Eq:PositiveOrientedGrassmannian}
G^{or}_2(\mathbf R^{n+2})^{>0}=\xi\Big( \Omega_2^{>0} \Big)\, .
\end{equation}
One verifies that the positive grassmannian $G^{or}_2(\mathbf R^{n+2})^{>0}$ 
is
 \begin{itemize}
\item[$-$] 
 included in 
$\widehat G^{or}_2(\mathbf R^{n+2})$;
\sk
\item [$-$]  
 stable under the action of the 
`positive part'  $H_0$ of the Cartan torus $H\subset {\rm SL}_{n+3}(\mathbf R)$;
\sk 
\item [$-$] 
such that 
the quotient of it by $H$ is $ \mathcal M_{0,n+3}^{>0}$, {\it i.e.}\,one has 
({\it cf.}\,diagram \eqref{Eq:Diagrammm}):
$$  \pi_H \Big( G^{or}_2\big(\mathbf R^{n+2})^{>0}\Big)=\mathcal M_{0,n+3}^{>0}\, . $$
 \end{itemize}

We consider the following map 
\begin{align}
\label{Eq:Map-M}
M \hspace{0.15cm} : \hspace{0.15cm}U\hspace{0.15cm}  &\longrightarrow  \hspace{0.15cm} \Omega_2^{>0}  \\
u & \longmapsto M_u=
\begin{bmatrix}
\, 1 &  0  &  -1  & -u_1  &   -u_2 & \cdots & -u_n \, \\
\, 0 &  1  &  {}^{} \, 1  & 1  &  1 & \cdots & 1 \, 
\end{bmatrix}
\nonumber
\end{align}
which is such that  (1) the composition $\gamma=\xi\circ M: U\rightarrow G^{or}_2(\mathbf R^{n+2})$ takes values into the positive grassmannian;  and (2) makes the following diagram commutative: 
%
\begin{equation}
\label{Diag:gamma-triangle}
\begin{tabular}{c}
\xymatrix@R=0.8cm@C=2cm{
  &   \ar@{->}[d]^{ \pi_H}  
G_2^{or}\big(\mathbf R^{n+3}\big)^{>0}  \\
 U   
   \ar@{->}[ru]^{\rotatebox{24}{\scalebox{0.7}{\begin{tabular}{c} 
   $\gamma=\xi\circ M$
   \vspace{-0.8cm}
  \end{tabular} 
   }}}
   \ar@{->}[r]_{\varphi} &  
   \mathcal M_{0,n+3}^{>0}\, .
}
\end{tabular}
\end{equation}

It is not difficult to give an explicit formula for 
$\pi= \varphi^{-1}\circ\pi_H : G_2^{or}\big(\mathbf R^{n+3}\big)^{>0}\rightarrow U$. 
  Given $\xi^{or}\in G_2^{or}\big(\mathbf R^{n+3}\big)^{>0}$, one has $\xi^{or}=\xi_{x,y} =\xi\big(\big[ \hspace{-0.15cm}
\scalebox{0.9}{\begin{tabular}{c}
 $x$
\vspace{-0.15cm}
\\
$y$
\end{tabular}}
\hspace{-0.15cm}
 \big]\big)$ for $x,y\in \mathbf R^{n+3}$ such that $x_iy_j-y_ix_j>0$ for any $i,j$ such that $1\leq i<j \leq n+3$. In particular, the submatrix 
 $\big[ \hspace{-0.15cm}
\scalebox{0.8}{
\begin{tabular}{c}
 $x_1$  $x_2$
\vspace{-0.15cm}
\\
$y_1$  $y_2$
\end{tabular}}
\hspace{-0.15cm}
 \big]$ is invertible, hence one has 
   $\big[ \hspace{-0.15cm}
\scalebox{0.8}{
\begin{tabular}{c}
 $x_1$  $x_2$
\vspace{-0.15cm}
\\
$y_1$  $y_2$
\end{tabular}}
\hspace{-0.15cm}
 \big]\cdot  \big[ \hspace{-0.15cm}
\scalebox{0.8}{
\begin{tabular}{c}
 $x$
\vspace{-0.15cm}
\\
${}^{}$\hspace{-0.2cm} $y$
\end{tabular}}
\hspace{-0.15cm}
 \big]=
  \big[ \hspace{-0.15cm}
\scalebox{0.8}{
\begin{tabular}{c}
 $1$  $0$ $\tilde x_3$ $\cdots$ $\tilde x_{n+3}$
\vspace{-0.1cm}
\\
${}^{}$\hspace{-0.2cm}  $0$  $1$ $\tilde y_3$ $\cdots$ $\tilde y_{n+3}$
\end{tabular}}
\hspace{-0.15cm}
 \big]$ for some $\tilde x_k$'s and $\tilde y_k$'s with $k=3,\ldots,n+3$, which are explicit rational expressions in the original $x_i$'s and $y_i$'s.  Notice that, since all $2\times 2$ minors are positive, 
 one necessarily has $\tilde x_k<0$ and $\tilde y_k>0$ for any $k=3,\ldots,n+3$. Then setting 
$h_1=-\tilde x_3$, $h_2=\tilde y_3$ and $h_k=\tilde y_3/\tilde y_k$ for all such $k$, one has 
$$
\begin{bmatrix}
h_1^{-1} & 0\\
0 & h_2^{-1}
\end{bmatrix}\cdot 
\begin{bmatrix}
1 & 0 & \tilde x_3 & \cdots & \tilde x_{n+3} \\
0 &1 & \tilde y_3 & \cdots & \tilde y_{n+3} 
\end{bmatrix}\cdot 
\begin{bmatrix}
{h_1} & &  0\\
 & \ddots & \\
 0& & {h_{n+3}}
\end{bmatrix}
= 
\begin{bmatrix}
1 & 0 &-1 & -\frac{\tilde x_4\tilde y_3}{\tilde y_4\tilde x_3}   & \cdots & -\frac{\tilde x_{n+3}\tilde y_3}{\tilde y_{n+3} \tilde x_3} \\
0 &1 & 1 &  1 & \cdots & 1
\end{bmatrix}
$$
from which it follows that the map $\pi$ under scrutiny is given in coordinates by 
\begin{align}
\label{Eq:Pi}
\pi \, : \, G_2^{or}\big(\mathbf R^{n+3}\big)^{>0}& \longrightarrow U \\
\xi_{x,y} & \longmapsto  
\left( \, \frac{\tilde x_4\tilde y_3}{\tilde y_4\tilde x_3}\, ,\, \ldots\, ,\,   
\frac{\tilde x_{n+3}\tilde y_3}{\tilde y_{n+3}\tilde x_3}\right)
\nonumber 
\end{align}
(it is easy and left to the reader to show that this is well-defined and is the right formula). 
\begin{center}
$\star$
\end{center}


Recall the way of parametrizing the $H_0$-orbit of any generic oriented 2-plane $\xi^{or}$ 
by means of the hypersimplex  $\Delta_{2}^{n+3}=\big\{ \, 
(t_i)_{i=1}^{n+3}\in ]0,1[^{n+3}\, \big\lvert \, \sum_{i=1}^{n+3} t_i=2\, 
\big\}$ (see \cite{GelfandMacPherson}): the map 
$$h:  \Delta_{2}^{n+3}\longrightarrow (\mathbf R_{>0})^{n+3},\, t=\big(t_i\big)_{i=1}^{n+3} \longmapsto h(t)= \Big( t_1/(1-t_i)\Big)_{i=1}^{n+3}$$ 
is such that the image of 
\begin{align*}
 Dh={\rm Diag}\circ h\, :\hspace{0.15cm}   \Delta_{2}^{n+3}& \longrightarrow {\bf P}{\rm SL}_{n+3}(\mathbf R)\\ 
t& \longmapsto {\rm Diag}( h(t)\big)
=\scalebox{0.8}{$\begin{bmatrix}
\frac{t_1}{1-t_1} & &  0\\
 & \ddots & \\
 0& & \frac{t_{n+3}}{1-t_{n+3}}
\end{bmatrix}$}
\end{align*}
 is equal to the positive component $H_0$ of the Cartan torus 
 $H\subset  {\bf P}{\rm SL}_{n+3}(\mathbf R)$.

Using  $Dh$ and the maps  $\xi$ and $M$ defined in \ref{Eq:XI} and \eqref{Eq:Map-M}, one constructs
\begin{equation}
\label{Diag:PhiU}
\xymatrix@R=0.2cm@C=1.65cm{
\Phi_U = \xi\circ\big( M\cdot Dh\big)  \hspace{0.15cm} : \hspace{0.2cm} U\times \Delta_2^{n+3}   \hspace{0.15cm}
   \ar@{->}[r]
    &  
     \hspace{0.15cm} G_2^{or}\big(\mathbf R^{n+3}\big)^{>0}
     \hspace{2cm} {}^{ } 
     \\
{}^{} \hspace{4.4cm}   \big( u,t\big)   \hspace{0.05cm}   \ar@{->}[r]  &  \hspace{0.15cm}  \xi\Big( M_u\cdot 
Dh(t)\Big) \hspace{2cm} {}^{ }
}
\end{equation}
which can easily be proven to be an isomorphism (from the direct product $U\times \Delta_2^{n+3}   $ onto 
the whole positive oriented grassmannian $G_2^{or}\big(\mathbf R^{n+3}\big)^{>0}$). 
Let 
\begin{equation}
\label{Eq:delta-projection}
\delta:  \, U\times \Delta_2^{n+3}  \longrightarrow U\, 
\end{equation}
be the projection onto the first factor.  
 Since $E_n$ is a global smooth 2-form on 
the oriented grassmannian,  for any $k\geq 1$, its $k$-th wedge product $E_n^k$ is integrable along the fibers of the $H_0$-action. Consequently, for any such $k$ the corresponding $2k$-form 
$\widetilde E_n^k=\big(\Phi_U\big)^*(E_n^k)$ on $U\times \Delta_2^{n+3}  $ admits a push-forward by $\delta$ which we will denote by 
$$
 \mathscr E_n^k=\delta_*\big(\, \widetilde E_n^k\,  \big)\, .
$$
It is a smooth $(2k-{n+2})$-form on $U$, where we use the convention that a $\ell$-differential form  
is 0 when $\ell$ is negative.  Two important properties of these forms are given in the following: 
\begin{prop}[\cite{D}] 
\label{P:Properties-Enk}
 1. The $(n-2)$-form $ \mathscr E_n^n$ is trivial: one has $\mathscr E_n^n=0$; \sk 

2. The $n$-form $ \mathscr E_{n}^{n+1}$  is non zero and can be written 
\begin{equation}
\label{Eq:e-m}
\mathscr E_{n}^{n+1}(u)=
e_{n}(u) \, du_1\wedge \ldots \wedge du_n
\end{equation}
 for a non vanishing global analytic function $e_n$ on $U$.
\end{prop}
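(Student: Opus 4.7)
My strategy is to exploit the fact that $\Phi_U : U \times \Delta_2^{n+3} \to G_2^{or}(\mathbf R^{n+3})^{>0}$ is a diffeomorphism, so that the fiber integration $\mathscr E_n^k = \delta_*(\Phi_U^*(E_n^k))$ computes, up to the identification $\varphi : U \simeq \mathcal M_{0,n+3}^{>0}$, the pushforward of $E_n^k$ along the $H_0$-orbits in the positive oriented grassmannian. In degree $2k$, a generic fiber of $\pi_H$ has dimension $n+2$, so I get a form of degree $2k-(n+2)$ on $U$, which matches the advertised degrees $n-2$ (for $k=n$) and $n$ (for $k=n+1$).

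For Part~2, the idea is short. The form $E_n^{n+1}$ is an $\mathrm{SO}_{n+3}$-invariant differential form of top degree $2(n+1)$ on the connected compact symmetric space $G_2^{or}(\mathbf R^{n+3})$, so it is a nonzero scalar multiple of the canonical invariant volume form, in particular nowhere vanishing. Pulling back by the diffeomorphism $\Phi_U$ yields a nowhere vanishing top-degree form on $U\times \Delta_2^{n+3}$, which I can write uniquely as $f(u,t)\,du_1\wedge\cdots\wedge du_n\wedge d\sigma$ with $d\sigma$ a positive volume form on the open simplex and $f$ of constant sign. Integrating along the fiber gives $e_n(u)=\int_{\Delta_2^{n+3}} f(u,t)\,d\sigma$, which is analytic and nonvanishing because $f$ does not change sign and the fiber has positive volume; writing $\mathscr E_n^{n+1}=e_n(u)\,du_1\wedge\cdots\wedge du_n$ is then automatic by degree.

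For Part~1, the vanishing of $\mathscr E_n^{n}$, I plan to argue cohomologically combined with a Stokes/boundary analysis. First observe that the positive grassmannian $G_2^{or}(\mathbf R^{n+3})^{>0}$ is a trivializable $H_0$-bundle over the contractible positive part $\mathcal M_{0,n+3}^{>0}\simeq U$, hence itself contractible. Therefore $E_n^n|_{G_2^{or}(\mathbf R^{n+3})^{>0}}$ is exact, and I can write $\Phi_U^*(E_n^n)=d\beta$ for some explicit primitive $\beta$ (for instance, using the Chern--Weil transgression of $E_n$ against a Gram--Schmidt connection adapted to the $H_0$-action). Applying Stokes' theorem along the fibers of $\delta$ replaces $\mathscr E_n^n$ with an integral of $\beta$ over $\partial\Delta_2^{n+3}$. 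The boundary strata correspond to degenerations where some $t_i\to 0$, which geometrically push the 2-plane into a coordinate hyperplane $\mathbf R^{n+2}_i$; by Lemma~\ref{L:Pull-backs}, the restriction of $E_n$ to $G_2^{or}(\mathbf R^{n+2}_i)$ is the Euler form of one lower rank, and each boundary contribution factors through a fiber integration of $E_{n-1}^{n-1}$ on a $(2n)$-dimensional grassmannian along an $(n+1)$-dimensional orbit, giving a form of negative degree $2(n-1)-(n+1)=n-3$, and for reasons of $\mathrm{SO}_{n+2}$-invariance these contributions cancel in pairs.

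The main obstacle will be making the boundary cancellation in Part~1 rigorous: the boundary of $\Delta_2^{n+3}$ has several types of strata (corresponding to $t_i\to 0$ and $t_i\to 1$), and orientations must be tracked carefully, invoking the sign change $\check{\gamma}^*(E_n)=-\gamma^*(E_n)$ of Lemma~\ref{Lem:Properties-En}.1. If the Stokes argument proves awkward, the fallback is to invoke directly the general result of \cite{GelfandMacPherson} (recalled in \S\ref{SS:Damiano's-work}) that $\widetilde{\boldsymbol{e}}_{n}^k$ vanishes for every $k\ne n+1$, which covers exactly the case $k=n$ at hand.
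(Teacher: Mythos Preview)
Your argument for Part~2 is correct and is essentially what the paper does: it invokes the same fact that $E_n^{n+1}$ is an invariant volume form on the compact oriented grassmannian (this is exactly the content of Lemma~\ref{L:Pull-backs}.2 one dimension up), and the non-vanishing of $e_n$ follows from fiber-integrating a form of constant sign.

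Your primary approach to Part~1, however, has a genuine gap. Writing $\Phi_U^*(E_n^n)=d\beta$ on the contractible positive grassmannian and applying Stokes along the fibers yields
\[
\mathscr E_n^n \;=\; \delta_*(d\beta)\;=\; d\big(\delta_*\beta\big)\;\pm\;(\delta^\partial)_*(\tau^*\beta),
\]
which shows that $\mathscr E_n^n$ is \emph{exact up to a boundary contribution}, not that it vanishes. You would need both $d(\delta_*\beta)=0$ and the boundary term to vanish, and neither is established. Your degree count for the boundary pieces is also off: $2(n-1)-(n+1)=n-3$ is non-negative for every $n\ge 3$, so nothing is forced to vanish by degree except when $n=2$. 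Moreover, what restricts to the boundary strata is the primitive $\beta$, not $E_{n-1}^{n-1}$; there is no reason the transgression form $\beta$ inherits the clean restriction property of Lemma~\ref{L:Pull-backs}.1. The asserted ``cancellation in pairs by $\mathrm{SO}_{n+2}$-invariance'' is not justified and I do not see how to make it work: the boundary faces $t_i\to 0$ and $t_i\to 1$ are of different combinatorial types and are not related by any obvious involution compatible with $\beta$.

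The paper itself does not give an argument for Part~1 at all: it simply records that the vanishing is obtained by a direct computation in Damiano's thesis. Your fallback of invoking the general Gelfand--MacPherson vanishing $\widetilde{\boldsymbol e}_n^k=0$ for $k\ne n+1$ is therefore on exactly the same footing as the paper's proof, and is the right move here. I would recommend dropping the Stokes sketch and going straight to the citation.
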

\begin{proof}
The first point is proved through a direct computation by Damiano (see \cite[\S6.3]{DThesis}). 
The proof of 2. relies on the very same arguments used to establish the second point of
Lemma \ref{L:Pull-backs}. 
\end{proof}

In order to build the Eulerian AR discussed here, it is necessary to extend $\Phi_U$ to the product $U\times \overline{\Delta}_2^{n+3}$ where $\overline{\Delta}_2^{n+3}$ stands for the closure of the hypersimplex:
$$
\overline{\Delta}_2^{n+3}=\Big\{ \, \big(t_1,\ldots,t_{n+3}\big)\in [0,1]^{n+3} \hspace{0.15cm}
\Big\lvert \hspace{0.15cm} 
\begin{tabular}{l}
 $t_1+\cdots +t_{n+3}=2$
\end{tabular}
\Big\}\, .
$$

In the classical case,  that is when the target is the usual grassmannian, 
the extension property of the corresponding map $\nu\circ \Phi_U: U\times \Delta_2^{n+3}  
\rightarrow G_2\big(\mathbf R^{n+3}\big)$ 
 is well-known ({\it cf.}\,\cite[Prop.\,2.3.2]{GelfandMacPherson}) but since the map takes values into the oriented grassmannian in the case under scrutiny, we believe that it has to be justified, which is easy to do.
\begin{prop}
\label{Prop:bar-Phi-U}
\begin{enumerate}
\item[{\rm 1.}] The map $\Phi_U$ in \eqref{Diag:PhiU} admits a unique continuous extension 
\begin{equation}
\label{Diag:PhiUbar}
\xymatrix@R=0.2cm@C=1.65cm{
\overline{\Phi}_U   \hspace{0.15cm} : \hspace{0.2cm} U\times \overline{\Delta}_2^{n+3}   \hspace{0.15cm}
   \ar@{->}[r]
    &  
     \hspace{0.15cm} G_2^{or}\big(\mathbf R^{n+3}\big)\,.
     }
\end{equation}

\item[{\rm 2.}]  Moreover, this extension $\overline{\Phi}_U$ takes values into the totally non negative oriented grassmannian $G_2^{or}\big(\mathbf R^{n+3}\big)^{\geq 0}$ and is actually 
smooth as a map between manifolds with corners.
\end{enumerate}
\end{prop}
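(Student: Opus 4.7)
My plan is to lift the question to an explicit polynomial level using Plücker coordinates: I will replace $\Phi_U$ by an everywhere non‑vanishing polynomial map into $\Lambda^2 \mathbf{R}^{n+3}$ which, on the interior $U\times\Delta_2^{n+3}$, agrees with the Plücker image of $\Phi_U$ up to a \emph{positive} scalar factor, and then take $\overline{\Phi}_U$ to be its projectivization into $G_2^{or}(\mathbf{R}^{n+3})$.

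\textbf{Step 1 (Plücker coordinates of $\Phi_U$).} For $(u,t)\in U\times\Delta_2^{n+3}$, the Plücker coordinates of the oriented plane $\Phi_U(u,t)=\xi(M_u\cdot Dh(t))$ are the $2\times 2$ minors of $M_u\cdot Dh(t)$, which factor as
$p_{ab}(u,t)=\lambda_a\lambda_b\cdot m_{ab}(u)$
for $1\leq a<b\leq n+3$, where $\lambda_i=t_i/(1-t_i)$ and $m_{ab}(u)$ denotes the $(a,b)$-minor of $M_u$. All these quantities are strictly positive on $U\times\Delta_2^{n+3}$ because $M_u$ is a positive matrix.

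\textbf{Step 2 (polynomial rescaling).} I define
\[
\overline{\Psi}(u,t)\;:=\;\sum_{1\leq a<b\leq n+3}\Bigl(t_a\,t_b\!\!\prod_{k\neq a,b}(1-t_k)\Bigr)\,m_{ab}(u)\;e_a\wedge e_b\;\in\;\Lambda^2\mathbf{R}^{n+3}.
\]
This is polynomial in $(u,t)$, hence smooth on $U\times\overline{\Delta}_2^{n+3}$, with all coefficients non‑negative there. On the interior, $\overline{\Psi}(u,t)=\bigl(\prod_k(1-t_k)\bigr)\cdot p(M_u Dh(t))$, i.e.\ a positive rescaling of the Plücker image of $\Phi_U(u,t)$; so if $\overline{\Psi}$ is everywhere non‑zero, its projectivization yields the sought extension.

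\textbf{Step 3 (non‑vanishing on the closure).} This is the heart of the argument. Since $m_{ab}(u)>0$, we need to show that for every $(u,t)\in U\times\overline{\Delta}_2^{n+3}$ there exists a pair $(a,b)$ with $t_a\,t_b\prod_{k\neq a,b}(1-t_k)>0$, equivalently $t_a,t_b>0$ and $t_k<1$ for all $k\neq a,b$. Set $I_1=\{i:t_i=1\}$, $I_0=\{i:t_i=0\}$, $I_\circ=\{i:0<t_i<1\}$. The constraint $\sum_i t_i=2$ with $t_i\in[0,1]$ forces $|I_1|\leq 2$, and when $|I_1|<2$ the remaining equality $\sum_{I_\circ}t_i=2-|I_1|$ together with $t_i\in(0,1)$ on $I_\circ$ yields $|I_\circ|\geq 3-|I_1|$. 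A short three‑case analysis on $|I_1|\in\{0,1,2\}$ then produces the required pair: take $(a,b)=I_1$ when $|I_1|=2$; take $a$ the unique element of $I_1$ and $b$ any element of $I_\circ$ (non‑empty by the dimension count) when $|I_1|=1$; take any pair $(a,b)\subset I_\circ$ when $|I_1|=0$.

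\textbf{Step 4 (conclusion).} The Plücker quadric (set of decomposable 2‑vectors) is closed in $\Lambda^2\mathbf{R}^{n+3}$, and $\overline{\Psi}$ lies in it on the dense subset $U\times\Delta_2^{n+3}$, hence on the whole closure. By Step 3, $\overline{\Psi}$ never vanishes, so
\[
\overline{\Phi}_U\,:\,U\times\overline{\Delta}_2^{n+3}\longrightarrow G_2^{or}(\mathbf{R}^{n+3}),\quad (u,t)\longmapsto \bigl[\,\overline{\Psi}(u,t)\,\bigr]
\]
is well‑defined and agrees with $\Phi_U$ on the interior. Continuity is clear; smoothness as a map from a manifold with corners to the smooth manifold $G_2^{or}(\mathbf{R}^{n+3})$ follows by working in local Plücker charts of the target, where the relevant denominators $\overline{\Psi}_{a_0b_0}$ can be chosen non‑vanishing at any given boundary point thanks to Step 3. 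Uniqueness of the extension comes from density of $\Delta_2^{n+3}$ in $\overline{\Delta}_2^{n+3}$. Finally, the non‑negativity of all the coefficients of $\overline{\Psi}$ in the basis $\{e_a\wedge e_b\}$ places the image of $\overline{\Phi}_U$ into the totally non‑negative oriented grassmannian $G_2^{or}(\mathbf{R}^{n+3})^{\geq 0}$.

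\textbf{Main obstacle.} The delicate point is Step~3, and more specifically the case $|I_1|=1$: there a single diagonal entry of $Dh(t)$ blows up, and the rescaling by $\prod_k(1-t_k)$ introduced in Step~2 is designed precisely to tame this blow‑up, but a priori it could also annihilate \emph{every} Plücker coordinate and lose the plane altogether. It is the constraint $\sum t_i=2$ (forcing $|I_\circ|\geq 2$ in that case) that guarantees enough surviving minors to recover a well‑defined, positively oriented limiting plane. The remaining steps are essentially formal once this non‑vanishing is secured.
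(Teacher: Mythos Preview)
Your argument is correct and takes a genuinely different route from the paper. The paper does not compute anything: it invokes the corresponding extension result for the \emph{non-oriented} Grassmannian, namely Proposition~2.3.2 of Gelfand--MacPherson, and then lifts it to the oriented setting via the 2--1 covering $\nu:G_2^{or}(\mathbf{R}^{n+3})\to G_2(\mathbf{R}^{n+3})$, after checking that $\nu$ restricts to an isomorphism of manifolds with corners between the two totally non-negative parts. Your approach, by contrast, is direct and self-contained: you clear the denominators $\prod_k(1-t_k)$ to obtain a polynomial map $\overline{\Psi}$ into $\Lambda^2\mathbf{R}^{n+3}$, and then the whole question reduces to the elementary combinatorial check that on $\overline{\Delta}_2^{n+3}$ one can always find a pair $(a,b)$ with $t_a,t_b>0$ and $t_k<1$ for $k\neq a,b$, which you dispatch cleanly via the case split on $|I_1|$. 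What the paper's route buys is brevity and a conceptual link to the classical Gelfand--MacPherson theory; what yours buys is an explicit formula for $\overline{\Phi}_U$ in Pl\"ucker coordinates and independence from the cited reference, which is arguably preferable here since the oriented refinement is precisely the novelty the paper is trying to justify. One cosmetic remark: ``Pl\"ucker quadric'' is a slight abuse when $n+3>4$ (the decomposables are cut out by several quadrics), but you only use that this locus is closed, which is of course true.
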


This proposition is rather direct consequence of the following lemma: 
\begin{lem}
\begin{enumerate}
\item[{\rm 1.}]
 The preimage of $G_2(\mathbf R^{n+3})^{>0}$ by the covering $\nu: 
 G_2^{or}(\mathbf R^{n+3})\rightarrow 
 G_2(\mathbf R^{n+3})$ is the disjoint union of the positive and the negative oriented grassmannians: one has 
 $$
 \nu^{-1}\left( 
 G_2\big(\mathbf R^{n+3}\big)^{>0}
 \right)=G_2^{or}\big(\mathbf R^{n+3}\big)^{>0}
 \sqcup 
 G_2^{or}\big(\mathbf R^{n+3}\big)^{<0}\, .
 $$
 
 \item[{\rm 2.}]
 The map \eqref{Diag:PhiU} is a smooth diffeomorphism (in particular, it is surjective);\mk 
 \item[{\rm 3.}] The isomorphism between $G_2^{or}\big(\mathbf R^{n+3}\big)^{>0}$ and 
 its non oriented counterpart $G_2\big(\mathbf R^{n+3}\big)^{>0}$ induced by the restriction of $\nu$ 
  (again denoted by $\nu$ to simplify) extends to an isomorphism of smooth varieties with corner
 $$ 
 \nu : G_2^{or}\big(\mathbf R^{n+3}\big)^{\geq 0} 
\longrightarrow  
 G_2\big(\mathbf R^{n+3}\big)^{\geq 0}\, . $$ 
\end{enumerate}
\end{lem}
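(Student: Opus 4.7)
For part \textnormal{1}, the plan is to start from the definition \eqref{Eq:PositiveOrientedGrassmannian}: $G_2^{or}(\mathbf R^{n+3})^{>0}=\xi(\Omega_2^{>0})$. Since a matrix $M\in\Omega_2^{>0}$ has in particular nonzero $2\times2$ minors, its image $\xi(M)$ lies in $\widehat G_2^{or}(\mathbf R^{n+3})$ and similarly for $\check{\xi}(M)=\xi(M')$, where $M'$ is obtained from $M$ by swapping its two rows; note that $M'$ has all minors negative, so $\check{\xi}(M)\in G_2^{or}(\mathbf R^{n+3})^{<0}$, the image of the set $\Omega_2^{<0}$ of matrices with all minors negative. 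Conversely, any $M\in{\rm M}_{2,n+3}(\mathbf R)$ of rank~$2$ whose image in $G_2(\mathbf R^{n+3})$ is totally positive satisfies either $M\in\Omega_2^{>0}$ or $M\in\Omega_2^{<0}$ (changing a basis of the underlying plane multiplies all minors by the determinant of the change-of-basis, which has constant sign), and these cases correspond to opposite orientations. Combined with the obvious fact that $G_2^{or}(\mathbf R^{n+3})^{>0}$ and $G_2^{or}(\mathbf R^{n+3})^{<0}$ are disjoint (all minors of a representative matrix being of strictly opposite signs), this proves the disjoint union decomposition.

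For part \textnormal{2}, I would argue that $\Phi_U$ is both smooth and bijective, with smooth inverse. Smoothness is immediate since $\xi$, $M$ and $Dh$ are all smooth on their domains (note that $t_i\in\,]0,1[$ on $\Delta_2^{n+3}$, so $h(t)$ is well-defined). For bijectivity, the idea is to use the formula \eqref{Eq:Pi} for the map $\pi=\varphi^{-1}\circ\pi_H$ to construct an explicit inverse: given $\xi^{or}=\xi_{x,y}\in G_2^{or}(\mathbf R^{n+3})^{>0}$, the discussion leading to \eqref{Eq:Pi} provides a canonical representative of $\xi^{or}$ by a matrix of the form $M_u$ with $u=\pi(\xi^{or})\in U$, obtained from $M_{x,y}$ by multiplying on the left by an element of ${\rm SL}_2(\mathbf R)$ and on the right by a diagonal matrix $D={\rm Diag}(h_1,\ldots,h_{n+3})$ with positive entries. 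The residual positive diagonal scaling defines a unique element $h\in H_0$, which by the standard bijection between $H_0$ and $\Delta_2^{n+3}$ (the Gelfand--MacPherson coordinates, used implicitly in the construction of $Dh$) corresponds to a unique $t\in\Delta_2^{n+3}$. Verifying that $(u,t)\mapsto\Phi_U(u,t)=\xi^{or}$ and that the assignment $\xi^{or}\mapsto(u,t)$ is smooth (which is clear from the explicit formulas) will complete the proof. The only slightly delicate point is to check that the map $\Delta_2^{n+3}\to H_0$, $t\mapsto Dh(t)$, is indeed a diffeomorphism modulo the center of ${\rm PGL}_{n+3}(\mathbf R)$; this is a classical computation using the constraint $\sum_i t_i=2$.

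For part \textnormal{3}, the plan is to extend the explicit description obtained in part \textnormal{2} to the closure. The boundary $\overline{\Delta}_2^{n+3}\setminus\Delta_2^{n+3}$ corresponds to degenerations where some $t_i$ tend to $0$ or to $1$; in either case, the corresponding diagonal entry of $D(t)$ degenerates, but the image $\xi(M_u\cdot D(t))$ remains a well-defined element of $G_2^{or}(\mathbf R^{n+3})$ provided the resulting $2\times(n+3)$ matrix remains of rank $2$, which is guaranteed because $M_u$ is itself of rank $2$ for $u\in U$. A direct computation in each local chart shows that the resulting minors vary smoothly and remain nonnegative, so that $\overline{\Phi}_U$ takes values in $G_2^{or}(\mathbf R^{n+3})^{\geq 0}$ and the map is smooth as a map of manifolds with corners. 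The main obstacle here, and arguably the most technical step of the whole lemma, is to keep track of the corner structure uniformly in the combinatorial type of the boundary face of $\overline{\Delta}_2^{n+3}$; this can be handled by working face by face, using the fact that each face of $\overline{\Delta}_2^{n+3}$ corresponds to a specific degeneration pattern of the $t_i$'s and hence to a well-understood stratum of $G_2^{or}(\mathbf R^{n+3})^{\geq 0}$ (the positroid stratification). The surjectivity onto $G_2^{or}(\mathbf R^{n+3})^{\geq 0}$ and bijectivity then follow from the corresponding statement in the non-oriented case via part~\textnormal{1} applied strata-by-strata.
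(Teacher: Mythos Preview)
Your treatment of parts 1 and 2 is fine and matches the spirit of the paper, which simply declares them ``obvious'' and ``hardly more difficult to prove'' respectively; your explicit inverse via \eqref{Eq:Pi} is exactly the right idea.

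For part 3, however, you have drifted off target. The statement to be proved is that the restriction of the covering map $\nu$ extends to an isomorphism $G_2^{or}(\mathbf R^{n+3})^{\geq 0}\to G_2(\mathbf R^{n+3})^{\geq 0}$ of manifolds with corners. You instead set out to extend $\Phi_U$ to $U\times\overline{\Delta}_2^{n+3}$ and to analyse its image face by face via the positroid stratification; but that is precisely the content of Proposition~\ref{Prop:bar-Phi-U}, which the present lemma is meant to \emph{support}, not the other way around. The paper's argument is much shorter and avoids this circularity: once you know from part~1 that $\nu$ restricts to a diffeomorphism $G_2^{or}(\mathbf R^{n+3})^{>0}\to G_2(\mathbf R^{n+3})^{>0}$, and from part~2 that the positive oriented Grassmannian is parametrised by $U\times\Delta_2^{n+3}$, you can simply invoke \cite[Prop.~2.3.2]{GelfandMacPherson}, which already establishes the corner structure on the \emph{non-oriented} side $G_2(\mathbf R^{n+3})^{\geq 0}$, and transport it back through $\nu$. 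Your final sentence does gesture at this (``follow from the corresponding statement in the non-oriented case''), but by then you have already committed to the harder direct route, whose ``face by face'' analysis you leave as a vague promise. Dropping that detour and appealing to Gelfand--MacPherson from the outset gives a clean two-line proof of part~3.
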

\begin{proof}
The first point is obvious and the second is hardly more difficult to prove.  The third point 
follows from the first two together with \cite[Prop.\,2.3.2]{GelfandMacPherson}. 
\end{proof}

\begin{rem} 
That 
$\Phi_U$ is an isomorphism has the following interesting corollary: 
since $U\simeq 
 \mathcal M_{0,n+3}^{>0}$ is known to be isomorphic to (the interior of) a polyhedron, the so-called $n$-th associahedron ${\rm Ass}_n$, 
 \eqref{Diag:PhiU}
 provides a description  of 
 the positive grassmannian of 2-planes in $\mathbf R^{n+3}$ 
as the product of (the interiors of) two polyhedra: one has 
 $$G_2\big(\mathbf R^{n+3}\big)^{>0} \simeq {\rm Ass}_n\times \Delta_2^{n+3}\, .$$
 As far as we are aware of, such a polyhedral description of  $G_2\big(\mathbf R^{n+3}\big)^{>0}$ is not mentioned in the literature yet, although it is quite likely that it is known by specialists.
\end{rem}

From the second point of Proposition \ref{Prop:bar-Phi-U}, one deduces that 
$\widetilde E_n^k$ admits a unique smooth extension $\overline{ E}_n^k$ to  $U\times \overline{\Delta}_{2}^{n+3}$, which is such that 
$$
\overline{ E}_n^k=\big( \overline{\Phi}_U\big)^*\big( E_n^k\big)\, .
$$

Considering the boundary $\partial {\Delta}_{2}^{n+3}=\overline{\Delta}_{2}^{n+3}\setminus {\Delta}_{2}^{n+3}$, 
the natural inclusion $\partial {\Delta}_{2}^{n+3}\subset \overline{\Delta}_{2}^{n+3}$ gives rise to an 
injective morphism of bundles over $U$ denoted by $\tau: U\times \partial {\Delta}_{2}^{n+3}\hookrightarrow  U\times \overline{\Delta}_{2}^{n+3}$.  The 
map $\delta^\partial = \delta\circ \tau$ is nothing else than 
the  projection map 
$ U\times \partial {\Delta}_{2}^{n+3}\rightarrow U$. Then, 
from the Stokes-type formula for integration along fibers with boundary, 
we get that the following identity (which is an equality between differential $(n-1)$-forms on $U$) is satisfied: 
\begin{equation}
\label{Eq:dE-n-n}
(-1)^{n+2} d \mathscr E_n^n= \big(\delta^{\partial}\big)_*\Big( \tau^*( \mathscr E_n^n\big)\Big)\, .
\end{equation}

Let us now explain how this relation gives rise to an AR for the web $\boldsymbol{\mathcal W}_{0,n+3}$ on $\mathcal M_{0,n+3}^{>0}\simeq U$.  First, it follows from Proposition \ref{P:Properties-Enk}.1 that the  RHS in \eqref{Eq:dE-n-n} is 0.    Secondly, 
for $i=1,\ldots,n+3$ and $\upsilon=0,1$, let  
$d_i^\upsilon: \mathbf R^{n+2} \hookrightarrow  \mathbf R^{n+3}$ 
be the affine map  associating 
$ (t_1 ,\ldots,t_{i-1},,\upsilon,t_i,\ldots,t_{n+2})$ 
to any 
$(n+2)$-tuple $(t_s)_{s=1}^{n+2}$ (`$\upsilon$ is inserted at the $i$-th position').  
Then the restriction of this map 
to $$\overline{\Delta}_{2-\upsilon}^{n+2}= \left\{\, (s_k)_{k=1}^{n+2}
\in [0,1]^{n+2} 
\, \big\lvert \, 
\sum_k s_k=2-\upsilon \, 
\right\}$$ induces a linear inclusion 
whose image, denoted by 
$\overline{\Delta}_{2-\upsilon}^{n+2}(i)$,  is the facet of $ \overline\Delta_{2}^{n+3}$ obtained by intersecting it with the affine hyperplane  cut out by $x_i=\upsilon$:
one has  
$$ 
\overline{\Delta}_{2-\upsilon}^{n+2} \stackrel{\sim}{\longrightarrow} 
\overline{\Delta}_{2-\upsilon}^{n+2}(i)=
d_i^\upsilon\Big(\overline{\Delta}_{2-\upsilon}^{n+2} \Big) =  \overline\Delta_{2}^{n+3} 
 \cap \big\{ \, x_i=\upsilon\,  \big\}\subset \overline\Delta_{2}^{n+3} \subset \mathbf R^{n+3}\, .  $$ 

The codimension 1 boundary of $\overline\Delta_{2}^{n+3}$ is exactly the disjoint union of 
the interiors ${\Delta}_{2-\upsilon}^{n+2}(i)$ of the (hyper)simplices $\overline{\Delta}_{2-\upsilon}^{n+2}(i)$ for $i=1,\ldots,n+3$ and $\upsilon=0,1$. 

An important fact 
from which a crucial property satisfied by $\mathcal E_n^n$ will be deduced  below 
is the following homological relation  
$$
\partial \bigg[ \, \overline\Delta_{2}^{n+3} \bigg]
=\sum_{i=1}^{n+3} (-1)^i \, \big(d_i^1\big)_*
 \bigg[ \, \overline\Delta_{1}^{n+2}\bigg]-
 \sum_{i=1}^{n+3} (-1)^i \, \big(d_i^0\big)_*
 \bigg[ \, \overline\Delta_{2}^{n+2}\bigg]
$$
which holds true in the relative homology group $\boldsymbol{H}_{n+3}\big( \overline\Delta_{2}^{n+3}, \partial \Delta_{2}^{n+3},\mathbf Z\big)$ ({\it cf.}\,\cite[Prop.\,2.1.3]{GelfandMacPherson}).\sk 

Since the facets $\overline{\Delta}_{1}^{n+2}(i)$ for $i=1,\ldots,n+3$  (all isomorphic to the $(n+1)$-dimensional simplex $\Delta_1^{n+2}$) do not play any role in building Euler's abelian relation, we will no longer consider them in what follows and we will write $d_i=d_i^0$ from now on.

Combining the above homological relation with \eqref{Eq:dE-n-n} and Proposition \ref{P:Properties-Enk}.1, we obtain that 
%
%
%
\begin{equation}
\label{Eq:dE-n-n-2}
0 = \sum_{i=1}^{n+3} (-1)^i \big( \delta_i\big)_*\Big( \tau_i^*\big(\mathscr E_n^n\big) \Big) 
\end{equation}
where 
$\tau_i: U\times {\Delta}_{2}^{n+2}(i) \subset U\times \partial {\Delta}_{2}^{n+3}$ is the morphism of trivial bundles over $U$ induced by the inclusion ${\Delta}_{2}^{n+2}(i) \subset \partial {\Delta}_{2}^{n+3}$ and where $\delta_i=\delta\circ \tau_i: U\times {\Delta}_{2}^{n+2}(i)\rightarrow  U$  is the natural projection.\sk 

In order of interpreting \eqref{Eq:dE-n-n-2} as an abelian relation, one has now to express each term 
$\big( \delta_i\big)_*\big( \tau_i^*\big(\mathscr E_n^n\big) \big) $ in a different way.  The key point to do this is Lemma \ref{L:Pull-backs}.1, which says that  
up to the natural identification of $G_2^{or}\big( \mathbf R^{n+2}_i\big)$ with
$G_2^{or}\big( \mathbf R^{n+2}\big)$, the restriction of $E_n$ along $G_2^{or}\big( \mathbf R^{n+2}_i\big)$ coincides with the invariant representative $E_{n-1}$ of the Euler class of the tautological bundle on $G_2^{or}\big( \mathbf R^{n+2}\big)$. 

To build Euler's AR, we then consider the following diagram
\begin{equation}
\label{Diag:Gros}
\begin{tabular}{c}
\xymatrix@R=0.9cm@C=1.3cm{
U\times \overline{\Delta}_2^{n+3}   \ar@{->}[r]^{
\overline{\Phi}_U 
} & G_2^{or}\big( \mathbf R^{n+3}\big)^{\geq 0} &  \\
 U\times \overline{\Delta}_2^{n+2}(i)  
 \ar@{_{(}->}[u]_{\tau_i}
 \ar@{->}[d]_{\delta_i}
  \ar@{->}[r]^{
\overline{\Phi}_U 
} & 
\incl[u]
G_2^{or}\big( \mathbf R^{n+2}_i\big)^{\geq 0} 
 & G_2^{or}\big( \mathbf R^{n+2}\big)^{\geq 0} 
  \ar@{->}[d]_{\pi'} 
      \ar@{>}[l]_{\mu_i}^{\sim}
 &  
U'\times \overline{\Delta}_2^{n+2} 
 \ar@{->}[d]^{\delta'}  \ar@{->}[l]_{\hspace{0.4cm}\overline{\Phi}_{U'}}
\\
U  \ar@{->}[rr]^{\psi_i} & 
&  U'\ar@{=}[r]
& U'
  \ar@{->}[lu]_{\gamma'}
}
\end{tabular}
\end{equation}
where
\begin{itemize}
\item $U'$ stands for the set of $(n-1)$-tuples $u'=(u'_k)_{k=1}^n\in \mathbf R^{n-1}$ such that $1<u'_1<\cdots < u'_{n-1}$;
\mk 
\item  $\gamma'$, $\pi'$, $\overline{\Phi}_{U'}$  and $\delta'$ correspond  respectively to the maps 
  \eqref{Diag:gamma-triangle},  
\eqref{Eq:Pi}, 
\eqref{Diag:PhiUbar}
and \eqref{Eq:delta-projection}   above 
%
but in the case when the dimension has been taken to be $n-1$ instead of $n$;
\mk  
\item the map $\mu_i$ is the restriction of the natural identification between 
$G_2^{or}\big( \mathbf R^{n+2}\big)$
and 
 $ G_2^{or}\big( \mathbf R^{n+2}_i\big)$ induced by the linear map 
$d_i: \mathbf R^{n+2}\stackrel{\sim}{\rightarrow} {\rm Im}(d_i)= \mathbf R^{n+2}_i\subset \mathbf R^{n+3}$;
\mk  
\item the maps $\psi_i: U\rightarrow U'$ are given by  
$$
\psi_{1}(u)=\left( \, \frac{u_1(u_j-1)}{u_j(u_1-1)} \, 
\right)_{j=2}^n\, ,\qquad 
\psi_{2}(u)=\left( \, \frac{u_j-1}{u_1-1} \, 
\right)_{j=2}^n
\, ,\qquad 
\psi_{3}(u)=\left(\,  \frac{u_j}{u_1}\, 
\right)_{j=2}^n
$$
and  $\psi_{3+i}(u)=\big(u_1,\ldots, \widehat{u_i},\ldots,u_n\big)$ for $i=1,\ldots,n$. 
\mk 
\item  for $i=1,\ldots,n+3$ and $u'\in U'$, $\gamma_i(u')$ is  the oriented 2-plane in $\mathbf R^{n+2}_i\subset \mathbf R^{n+3}$ associated to the  $2\times (n+3)$ matrix obtained from $M_{u'}\in {\rm Mat}_{2,n+2}(\mathbf R)$ (see \eqref{Eq:Map-M}) 
by inserting in it the zero column at the $i$-th place.
\end{itemize}

By  straightforward computations (left to the reader), it can be verified that 
\eqref{Diag:Gros} is commutative. Combining this with the fact recalled in the paragraph just before this diagram, one deduces the 
\begin{prop}
\label{P:RA-EULER-a}
\begin{enumerate}
\item[]  
${}^{}$\hspace{-1cm}{\rm 1.} 
One has 
$
\big( \delta_i\big)_*\big( \tau_i^*\big(\mathscr E_n^n\big) \big)
= \psi_i^* \big( 
\mathscr E_{n-1}^n
\big)
$\, 
 for  $i=1,\ldots,n+3$.
\mk 
\item[{\rm 2.}] Consequently 
 \eqref{Eq:dE-n-n-2}
can be written 
$
0 = \sum_{i=1}^{n+3} (-1)^i \psi_i^* \big( 
\mathscr E_{n-1}^n
\big)
$. 
\mk 
\item[{\rm 3.}] Since $\mathscr E_{n-1}^n$ never vanishes (according to Proposition \ref{P:Properties-Enk}.2)
it follows that 
\begin{equation}
\label{Eq:E_n->0}
\boldsymbol{\mathcal E}_n^{>0}=\bigg( 
\, (-1)^{i-1}\, \psi_i^* \left( 
\mathscr E_{n-1}^n \right)
\, \bigg)_{i=1}^{n+3}
\end{equation}
is a complete hence in particular non-trivial  abelian relation for $\boldsymbol{\mathcal W}^{\, >0}_{0,n+3}$.
\end{enumerate}
\end{prop}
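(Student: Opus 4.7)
My plan is to reduce Part~1 to three clean ingredients and then derive Parts~2 and~3 essentially by substitution. The three ingredients are: (i) a commutativity/Cartesian-square analysis of the lower-left portion of diagram~\eqref{Diag:Gros}; (ii) the Euler-form compatibility proved in Lemma~\ref{L:Pull-backs}.1, which says that $E_n$ restricted to $G_2^{or}(\mathbf R^{n+2}_i)$ and then transported by $\mu_i$ agrees with $E_{n-1}$ on $G_2^{or}(\mathbf R^{n+2})$; and (iii) naturality (base change) of integration along the fiber. All three are standard when set up properly, so the work is really in the bookkeeping.

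For Part~1, I would first exhibit, for each $i=1,\ldots,n+3$, a smooth map
\[
\widetilde\psi_i \, : \, U\times \overline{\Delta}_2^{n+2}(i) \longrightarrow U'\times \overline{\Delta}_2^{n+2}
\]
given on the base by $\psi_i$ and on fibers by $d_i^{-1}$, i.e.\ by erasing the (vanishing) $i$-th coordinate of the face $\overline{\Delta}_2^{n+2}(i)$. The point is that $\widetilde\psi_i$ makes a Cartesian square with $\delta_i$ and $\delta'$, and that the enlarged diagram
\[
\iota_i\circ \mu_i\circ \overline{\Phi}_{U'}\circ \widetilde\psi_i \,=\, \overline{\Phi}_U\circ \tau_i
\]
commutes; both checks are direct from the explicit formulas for $M_u$, $Dh(t)$ and the $\psi_i$'s. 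Then one has the chain
\[
\tau_i^*\big(\mathscr E_n^n\big) \,=\, \big(\overline{\Phi}_U\circ \tau_i\big)^*\big(E_n^n\big) \,=\, \widetilde\psi_i^*\,\overline{\Phi}_{U'}^*\,(\iota_i\circ\mu_i)^*(E_n^n) \,=\, \widetilde\psi_i^*\,\overline{\Phi}_{U'}^*\big(E_{n-1}^n\big),
\]
using Lemma~\ref{L:Pull-backs}.1 in the last equality. Pushing forward by $\delta_i$ and invoking the base-change identity $(\delta_i)_*\circ \widetilde\psi_i^* = \psi_i^*\circ \delta'_*$ (valid because $\widetilde\psi_i$ restricts to a diffeomorphism fiberwise over $\psi_i$) yields exactly $\psi_i^*(\mathscr E_{n-1}^n)$, which is Part~1.

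Part~2 is then immediate: substitute the identity of Part~1 into equation~\eqref{Eq:dE-n-n-2}. For Part~3, the non-triviality of each component $(-1)^{i-1}\psi_i^*(\mathscr E_{n-1}^n)$ follows from the nowhere-vanishing of $\mathscr E_{n-1}^n$ on $U'$ (Proposition~\ref{P:Properties-Enk}.2), combined with the fact that each $\psi_i$ is a submersion onto $U'$, which is visible from the explicit formulas listed after diagram~\eqref{Diag:Gros}. To see this is really an abelian relation of $\boldsymbol{\mathcal W}_{0,n+3}^{\,>0}$, I would verify that each $\psi_i$ factors through a first integral of the $i$-th foliation: comparing the formulas for the $\psi_i$'s with those in \eqref{Eq:U-i}, after the Möbius change of variable converting the parametrization~\eqref{Eq:Varphi-U} to~\eqref{Eq:Psi-n}, shows that $\psi_i$ is (up to a relabeling of foliations) the $i$-th forgetful map, so $\psi_i^*(\mathscr E_{n-1}^n)\in u_i^*(\Omega^{n-1})$.

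The main obstacle, and where care is required, is the sign/orientation bookkeeping in Part~1: Lemma~\ref{L:Pull-backs}.1 gives $(\iota_i\circ\mu_i)^*(E_n)=E_{n-1}$ only after fixing the orientation of $\mathbf R^{n+2}_i$ induced from the inclusion $d_i^0$, and if one picks the `wrong' sign one only recovers the identity up to $(-1)^{?}$ (cf.\ Lemma~\ref{Lem:Properties-En}). This is what dictates the sign $(-1)^{n+2}$ in \eqref{Eq:dE-n-n} and, downstream, the sign $(-1)^{i-1}$ in \eqref{Eq:E_n->0}. A secondary technicality is justifying base change across the face $\overline{\Delta}_2^{n+2}(i)$ in the manifold-with-corners setting; this is harmless once one uses the smooth extension $\overline{\Phi}_U$ furnished by Proposition~\ref{Prop:bar-Phi-U}.2, so that $\mathscr E_n^n$ and its restriction to faces are both smooth and Stokes-type fiber integration over the compact simplicial fibers applies without issue.
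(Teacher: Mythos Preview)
Your proof is correct and follows essentially the same approach as the paper: the paper simply asserts that diagram~\eqref{Diag:Gros} commutes ``by straightforward computations (left to the reader)'' and then combines this with Lemma~\ref{L:Pull-backs}.1 to conclude. Your construction of the map $\widetilde\psi_i$ and the base-change argument make explicit precisely the content of that commutativity, so your write-up is a fleshed-out version of what the paper leaves implicit.
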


The construction above gives indeed an AR on $U\simeq \mathcal M_{0,n+3}^{\, >0}$ but several questions about it remain to be answered: 
\begin{equation}
\label{Eq:Remain-2-be-answered}
\begin{tabular}{l}
$i.$ {\it Does the construction above of $\boldsymbol{\mathcal E}_n^{>0}$ depend on some choices? If yes, on which ones} \\ 
${}^{ }$ \hspace{0.1cm} {\it and to what extent?}\mk \\
$ii.$  {\it Following a completely similar approach, one can construct an Eulerian 
abelian  re-}\\
${}^{ }$ \hspace{0.1cm} 
 {\it lation
$\boldsymbol{\mathcal E}_n(\boldsymbol{\sigma})$ for  $ \boldsymbol{\mathcal W}_{0,n+3}$ on the  component $\mathcal M(\boldsymbol{\sigma})$ of  $ \mathcal M_{0,n+3}(\mathbf R)$ for any 
$\boldsymbol{\sigma}\in \mathfrak K_{n+3}$  ({\it cf.}} \\
 ${}^{ }$ \hspace{0.1cm} 
  {\it 
 \S\ref{SSS:Real-Locus}). How are the 
$\boldsymbol{\mathcal E}_n(\boldsymbol{\sigma})$'s related, especially with respect 
to the birational action}\\
${}^{ }$ \hspace{0.1cm} 
 {\it  of $\mathfrak S_{n+3}$ on the  real moduli space  $ \mathcal M_{0,n+3}(\mathbf R)$?}
\end{tabular}
\end{equation}
%

We will come back to these important points further on but 
we will first discuss how to make 
$\boldsymbol{\mathcal E}_n^{>0}$ explicit by giving formulas for its components. 
Since  all are pull-backs of the form 
$\mathcal E_{n-1}^n$  on $U'\simeq \mathcal M_{0,n+2}^{\, >0}$, this amounts to studying the 
function $e_{n-1}$ defined in Proposition \ref{P:Properties-Enk}.2.

\subsubsection{\bf An integral formula for $e_n$}
\label{SS:En-integral-representation}
A multivariable integration scheme for computing $e_{n-1}$ has already been described by Damiano in \cite[\S6]{D} (more details are given in \cite[\S6]{DThesis}). But it is  not quite explicit and the author was able to use it only in the case when $n=2$, recovering the well-known \href{https://mathworld.wolfram.com/RogersL-Function.html}{\it Rogers' dilogarithm}. 
Our goal below is to go a bit further by describing a slight simplification of Damiano's integration scheme, which 
gives rise to a more explicit integral formula for  $e_{n-1}$ that we will use to get effective informations about $e_{n-1}$ 
for higher but still small $n$ (namely $n=2,3$ and $4$). 
\sk 

In what follows,  $m$ is an integer bigger than or equal to $1$. 
We do not make any assumption about the parity of $m$.  The integer $m$ has to be thought of as $n$ shifted by 1, namely $m=n-1$, this just in order to simplify the writing.  We use below the same notation as above, but in which $n$ has been replaced by $m$. For instance, $U$ here stands for 
the set of $m$-tuples $u=(u_1,\ldots,u_m)\in \mathbf R^m$ such that $1<u_1<\cdots u_{m-1}<u_m$, etc.
\sk

Our main goal here is to give a nice closed formula for the function $e_m(u)$  defined 
 by equality \eqref{Eq:e-m}.  The main facts we use for this purpose are first that $E_m^{m+1}$ coincides (up to sign) with an invariant volume form on $G_2^{or}(\mathbf R^{m+3})$ ({\it cf.}\,Lemma \ref{L:Pull-backs}.2) and secondly that there is a simple  formula for such a volume form when working with  a natural parametrization. 
\mk 

We use the notation introduced circa  \eqref{Eq:XI}. For $M\in \Omega_2$ (that is, $M$ is a $2\times (m+3)$-matrix of rank 2), we write $M=[M_{12},N]$ where $M_{12}$ is the square $2\times 2$ matrix obtained by considering the first 2 columns of $M$ while $N$ is the one formed by the $n+1$ others.  
Then 
$ \Omega_2'=\{ \, M\, \lvert \, {\rm det}(M_{12})> 0\}$ is an open subset of $\Omega_2$ 
and the map $A : \Omega_2'\rightarrow M_{2,m+1}(\mathbf R),\, M\mapsto M_{12}^{-1}\cdot N$ is a surjective ${\rm GL}_2^{>0}(\mathbf R)$-fibration.  The injection 
$B: {M}_{2,m+1}(\mathbf R) \hookrightarrow \Omega_2',\, N\mapsto \big[ {\rm Id}_2,N\big]$ is such that $A\circ B$ is the identity. 
Denoting again by  $\xi$ the restriction 
of the map \eqref{Eq:XI} to $\Omega_2'$, we define a  map $ \Xi: {M}_{2, m+1}(\mathbf R) \rightarrow G_{2}^{or}(\mathbf R^{m+3})$ by requiring that the following diagram commutes: 
\begin{equation}
\label{Diag:gogogo}
\begin{tabular}{c}
\xymatrix@R=1.3cm@C=1.3cm{
\Omega_2'  \ar@{->}[rr]^{\xi}   
\ar@{->}[d]^{A}
&  &G_2^{or}\big( \mathbf R^{m+3}\big) \\
 {M}_{2,m+1}(\mathbf R) 
   \ar@/_1pc/@{->}[rru]_{\quad \Xi} 
   \ar@/^2pc/@{->}[u]^{B}
  & &
}
\end{tabular}
\end{equation}
More prosaically, $\Xi$ is noting else but the map that associates to the $2\times (m+1)$ matrix whose 
two lines correspond to the vectors $(x_s)_{s=0}^m,(y_s)_{s=0}^m\in \mathbf R^{m+1}$, the oriented 2-plane  of $\mathbf R^{m+3}$ directly spanned by $(1,0,x_0,\ldots,x_m)$ and $(0,1,y_0,\ldots,y_m)$, taken in this order. It is well-known that $\Xi$ induces a coordinates chart on $G_2^{or}\big( \mathbf R^{m+3}\big)$, the corresponding coordinates being the $x_s$ and $y_s$ for $s=0,\ldots,m$. 
We will denote by ${\rm dVol}(N)$ the standard Euclidean volume in these coordinates, {\it i.e.}
%
%
$${\rm dVol}(N)=\big( dx_0\wedge dy_0\big)  \wedge 
\big( dx_1\wedge dy_1\big)
\cdots 
\wedge \big( dx_m\wedge dy_m\big)\, .$$ 
The formula in the coordinates $x_s,y_s$ 
for the pull-back under 
$\Xi$ 
of an invariant volume form on $G_2^{or}\big( \mathbf R^{m+3}\big)$ 
 is well known:\footnote{This formula is a particular case of a more general one holding true 
 for some  affine parametrizations (in terms of Jordan structures) of a vast class of homogeneous varieties, see Proposition X.6.3 in \cite{Bertram}. We 
 have not been able to locate a classical reference for this formula in the case of real grassmannians.}
\begin{lem} 
There exists a non zero constant $C_m$ such that\footnote{One has  $C_m=(m+1)!$  but knowing the exact value of this constant is actually not relevant for our purpose hence we will not give a proof of this.} 
\begin{equation}
\label{Eq:Xi^*Em-m+1}
\Xi^*\Big( E_m^{m+1}\Big)=C_m \cdot  \frac{{\rm dVol}(N)}{
{\rm det}\Big( \, 
{\rm Id}_2+N \,{}^t\hspace{-0.03cm}N \, \Big)^{\frac{m+3}{2}}
}
\, .
\end{equation}
\end{lem}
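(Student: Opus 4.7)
The strategy would be to identify both sides, up to a non-zero scalar, with the pull-back under $\Xi$ of the essentially unique ${\rm SO}_{m+3}$-invariant top-degree differential form on the oriented grassmannian $G_2^{or}(\mathbf R^{m+3})$. First I would recall that since ${\rm SO}_{m+3}$ is connected and the isotropy group ${\rm SO}_2\times {\rm SO}_{m+1}$ at any base-point is itself connected, the action of the isotropy on the top of the cotangent space at this point is trivial; hence the space of smooth ${\rm SO}_{m+3}$-invariant top-degree forms on $G_2^{or}(\mathbf R^{m+3})$ is one-dimensional, and any two non-zero such forms are proportional. Now $\Xi^*(E_m^{m+1})$ is the restriction to the chart of the pullback of such a form, and this form is non-zero by Lemma \ref{L:Pull-backs}.{\rm 2}; thus it suffices to prove that the right-hand side of the formula is also the restriction to $\Xi({\rm Mat}_{2,m+1}(\mathbf R))$ of (the pull-back of) an ${\rm SO}_{m+3}$-invariant top-degree form.

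The plan for this is to realize the right-hand side as (a constant multiple of) the Riemannian volume form of the canonical ${\rm SO}_{m+3}$-invariant metric $g$ on $G_2^{or}(\mathbf R^{m+3})$; the invariance of this volume form is then automatic. Identifying the tangent space at $\Xi(N)$ with ${\rm Mat}_{2,m+1}(\mathbf R)$ via the differential of $\Xi$, a standard computation --- using that the rows of $[{\rm Id}_2\,|\,N]$ have Gram matrix $A:={\rm Id}_2+N\,{}^tN$, and that the induced Gram structure on the orthogonal complement is $B:={\rm Id}_{m+1}+{}^tN\cdot N$ --- yields the well-known expression
\[
\Xi^* g\;=\;{\rm tr}\Big(\,A^{-1}\cdot dN\cdot B^{-1}\cdot {}^t\hspace{-0.02cm}(dN)\,\Big)\,.
\]
Viewing the $2(m+1)$ differentials $(dN)_{ij}$ as a basis of cotangent vectors, the Gram matrix of $\Xi^*g$ in this basis is the Kronecker product $A^{-1}\otimes B^{-1}$, whose determinant equals $\det(A)^{-(m+1)}\det(B)^{-2}$. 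The key simplification then comes from Sylvester's identity $\det({\rm Id}_p+XY)=\det({\rm Id}_q+YX)$ applied to $X=N$, $Y={}^tN$, which gives $\det(A)=\det(B)$ and hence $\det(\Xi^*g)=\det(A)^{-(m+3)}$. Taking the square root, the Riemannian volume form of $g$ in the chart $\Xi$ is proportional to $\det({\rm Id}_2+N\,{}^tN)^{-(m+3)/2}\,{\rm dVol}(N)$, which is, up to sign, the desired right-hand side.

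Putting these two steps together, both $\Xi^*(E_m^{m+1})$ and the RHS are pull-backs under $\Xi$ of non-zero ${\rm SO}_{m+3}$-invariant top-degree forms on $G_2^{or}(\mathbf R^{m+3})$, hence they differ by a non-zero constant $C_m$, which is the content of the lemma. The only genuinely delicate step in this plan will be the Gram-matrix / Kronecker-product bookkeeping needed to read off the exponent $(m+3)/2$; everything else reduces to the standard construction of an invariant metric on a reductive homogeneous space combined with the classical Sylvester determinant identity.
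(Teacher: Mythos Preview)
Your argument is correct. The paper does not give its own proof of this lemma: it simply records the formula and, in a footnote, points to Proposition~X.6.3 of Bertram's book \cite{Bertram}, where a more general version (for affine charts of a broad class of homogeneous spaces via Jordan structures) is established. Your route is therefore genuinely different in that it is self-contained: you identify the right-hand side as the Riemannian volume of the canonical ${\rm SO}_{m+3}$-invariant metric in the chart $\Xi$, and then invoke the one-dimensionality of the space of invariant top forms. This has the advantage of making transparent exactly where the exponent $(m+3)/2$ comes from (the Kronecker determinant $\det(A^{-1}\otimes B^{-1})=\det(A)^{-(m+1)}\det(B)^{-2}$ together with Sylvester's identity $\det A=\det B$), whereas the cited reference packages this inside a general Jordan-theoretic formalism. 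The only point worth tightening in a write-up is the verification that $\Xi^*g=\mathrm{tr}(A^{-1}\,dN\,B^{-1}\,{}^t(dN))$, which follows from the Woodbury-type identity $B^{-1}=\mathrm{Id}_{m+1}-{}^tN\,A^{-1}N$; once that is checked, the rest is straightforward linear algebra.
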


It is easy to get a nice integral formula for $e_m$ from  \eqref{Eq:Xi^*Em-m+1}. 
We list below the facts and notation we need to establish the sought-after formula: 
\begin{itemize} 
\item one denotes by $k=(k_1,k_2)$ (resp.\,by $h=(h_1,\ldots,h_m)$) an element of 
$\big(\mathbf R_{>0}\big)^{2}$ (resp.\,of $\big(\mathbf R_{>0}\big)^{m}$). 
By means of the map $(k,h)\mapsto (k_1,k_2,1,h_1,\ldots,h_m)$,  we 
naturally 
identify $\big(\mathbf R_{>0}\big)^{2}\times \big(\mathbf R_{>0}\big)^{m}=\big(\mathbf R_{>0}\big)^{2+m}$ 
with the open subset of $(m+3)$-tuples in $\mathbf R^{m+3}$ 
 with positive coordinates and such that the third coordinate is 1;\footnote{The fact that we have chosen to normalize the third coordinate is not important at all. We could have decided to normalize any other coordinate with essentially no change.}
\sk 
\item we will consider the following map which is an isomorphism:
\begin{equation}
\label{Eq:U-R-isom}
\Delta_2^{m+3} \longrightarrow \big(\mathbf R_{>0}\big)^{2+m},\, 
 \big(t_i\big)_{i=1}^{m+3} \longmapsto 
\left(  
\frac{t_s(t_3-1)}{t_3(t_s-1)}
\right)_{s=1}^{m+3}\, ;
\end{equation}
\item we set $dk=dk_1\wedge dk_2$, $dh=
 dh_1\wedge  \cdots \wedge dh_{m}$
and $du=
 du_1\wedge \cdots \wedge du_m$;
 \sk 
\item  the notation $k,h>0$ will be used to mean that the pair $(k,h)$ varies in $\big(\mathbf R_{>0}\big)^{2+m}$;\sk 
\item  the following map  is an isomorphism onto its image which is an open subset in 
$\Omega_2$: 
\begin{align}
F: \, 
U\times 
\big(\mathbf R_{>0}\big)^{2+m} 
 & \longrightarrow M_{2,n+3}(\mathbf R)\\
\big(u,k, {h}) & \longmapsto 
\begin{bmatrix}
\sqrt{k_1} & 0 & -1 & -u_1 \sqrt{h_1} & \cdots &   - u_m  \sqrt{h_{m}}\, \\
0 & \sqrt{k_2} & 1 & \sqrt{h_1}  & \cdots &  \sqrt{h_{m}}
\end{bmatrix}
\nonumber 
\end{align}
\item $X$ and $Y$ stand for the line vectors of the matrix $F(u,k,h)$; one has:
\begin{align*}
X=& \, \Big(\sqrt{k_1}, 0 , -1 ,  -u_1 \sqrt{h_1},  \ldots ,   - u_m  \sqrt{h_{m}}\, \Big)\\
\mbox{ and  } \quad 
Y=& \, \Big(\, 0 ,  \sqrt{k_2} , 1,  \sqrt{h_1}  ,  \ldots ,   \sqrt{h_{m}}\, \Big)\, ; 
\end{align*}
\item for $(u,k,h)\in U\times \big(\mathbf R_{>0}\big)^{2+m}$, we denote by 
$S_{u,k,h}$ the 
 matrix $F(u,k,h)\cdot F(u,k,h)^t$; 
 \sk 
 \item   one has 
$$ S_{u,k,h} =\begin{bmatrix}
X^2 & XY \\
XY & Y^2
\end{bmatrix}
$$
with 
\begin{equation}
\label{Eq:X2-Y2-XY}
X^2= 1+k_1+\sum_{i=1}^m h_i\, {u_i}^2, \qquad 
XY= -1-\sum_{i=1}^m h_i\, u_i
\qquad \mbox{ and } \qquad 
Y^2=1+k_2+\sum_{i=1}^m h_i\, .
\end{equation}
\end{itemize}

\begin{prop}
\label{P:Prop-em-integral-formula}
\begin{enumerate}
\item[{\rm 1.}] 
Up to the isomorphism \eqref{Eq:U-R-isom} between $\Delta_2^{m+3}$ and $(\mathbf R_{>0})^{2+m} $,  the map $$\xi\circ F : 
U\times 
(\mathbf R_{>0})^{2+m}   \rightarrow G_2^{or}(\mathbf R^{m+3})$$ identifies with  the 
 parametrization \eqref{Diag:PhiU} of $G_2^{or}(\mathbf R^{m+3})^{>0}$. \mk 
\item[{\rm 2.}]  In particular, the projection $(u,k,h)\rightarrow u$ corresponds to quotienting by the   torus action. \mk
\item[{\rm 3.}] There exists a non zero scalar $\lambda_m$ such that the pull-back  of $E_m^{m+1}$ under $\xi\circ F$  is  given by 
$$
\big( \xi\circ F\big)^*\Big( E_m^{m+1}\Big)= \lambda_m\, 
{\rm det}\left(S_{u,k,h}  \right) ^{-\frac{m+3}{2}}
\,{dk\wedge dh\wedge du}\, .
$$
\item[{\rm 4.}] Consequently, up to multiplication by a non zero constant,  
one has 
\begin{equation}
\label{Eq:e-m(u)}
e_m(u)=\int_{ k,h>0 }
{\Big(\,  X^2\cdot Y^2-(XY)^2 \, \Big)^{-\frac{m+3}{2}}} {dk\wedge dh}
\end{equation}
for any $u\in U$,   where $X^2,Y^2$ and $XY$ are the expressions given in \eqref{Eq:X2-Y2-XY} above.
\end{enumerate}
\end{prop}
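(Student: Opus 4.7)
Parts (1) and (2) are essentially structural. The key observation is that $F(u,k,h) = M_u \cdot D_{k,h}$ where $D_{k,h} = \mathrm{diag}(\sqrt{k_1},\sqrt{k_2},1,\sqrt{h_1},\ldots,\sqrt{h_m})$ belongs to the positive torus $H_0$ (with the third coefficient normalized to $1$). Thus $\xi\circ F$ realizes $G_2^{or}(\mathbf R^{m+3})^{>0}$ as a trivial $H_0$-bundle over $U \simeq \mathcal M_{0,m+3}^{>0}$ whose projection onto $U$ is exactly the torus quotient map $\pi_H$ in diagram \eqref{Diag:gamma-triangle}. The isomorphism \eqref{Eq:U-R-isom} is then the change of coordinates on the fiber relating the parametrization of $H_0$ in terms of the hypersimplex variables $t$ (as in $\Phi_U$) to the parametrization $(k,h)$ inherent to $F$; both describe the same map to $G_2^{or}(\mathbf R^{m+3})^{>0}$. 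Once this is established, part (2) is automatic.

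The main content is (3). The plan is to use the factorization $\xi = \Xi \circ A$ coming from diagram \eqref{Diag:gogogo}, so that $\xi \circ F = \Xi \circ G$ with $G = A \circ F$. Writing $F(u,k,h) = [M_{12},\,N_F]$ with $M_{12} = \mathrm{diag}(\sqrt{k_1},\sqrt{k_2})$, the map $G$ has value $N = M_{12}^{-1}N_F$, whose entries are $N_{1,0}=-1/\sqrt{k_1}$, $N_{2,0}=1/\sqrt{k_2}$, and $N_{1,i}=-u_i\sqrt{h_i/k_1}$, $N_{2,i}=\sqrt{h_i/k_2}$ for $i=1,\ldots,m$. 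Two elementary computations then produce the result. First, since $[I_2,N] = M_{12}^{-1}F$, one has
\[
I_2 + N\cdot{}^tN \;=\; M_{12}^{-1}\,(F\cdot{}^t F)\,{}^tM_{12}^{-1} \;=\; M_{12}^{-1}\,S_{u,k,h}\,{}^tM_{12}^{-1},
\]
so taking determinants yields $\det(I_2+N\cdot{}^tN) = \det(S_{u,k,h})/(k_1k_2)$. Second, one computes $G^*(\mathrm{dVol}(N))$ by expanding each $dN_{j,i}$ in the variables $u,k,h$; the wedge $dN_{1,0}\wedge dN_{2,0}$ consumes all $dk_1,dk_2$ contributions, forcing each subsequent $dN_{1,i}\wedge dN_{2,i}$ to contribute only its $du_i\wedge dh_i$-component. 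A straightforward bookkeeping gives
\[
G^*(\mathrm{dVol}(N)) \;=\; \frac{\varepsilon_m}{2^{m+2}\,(k_1k_2)^{(m+3)/2}}\, dk\wedge dh\wedge du
\]
for some sign $\varepsilon_m\in\{\pm 1\}$. Plugging these into $\Xi^*(E_m^{m+1}) = C_m\,\mathrm{dVol}(N)/\det(I_2+N\cdot{}^tN)^{(m+3)/2}$ from \eqref{Eq:Xi^*Em-m+1}, the factors of $(k_1k_2)^{(m+3)/2}$ cancel, leaving precisely
\[
(\xi\circ F)^*\bigl(E_m^{m+1}\bigr) \;=\; \lambda_m\,\det(S_{u,k,h})^{-(m+3)/2}\,dk\wedge dh\wedge du,
\]
with $\lambda_m = C_m\varepsilon_m/2^{m+2}\neq 0$.

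Finally, (4) follows by pushing forward along the projection $(u,k,h)\mapsto u$. Because $\xi\circ F$ and $\Phi_U$ realize the same fibration of $G_2^{or}(\mathbf R^{m+3})^{>0}$ over $U$ (by (1) and (2)) and \eqref{Eq:U-R-isom} does not involve $u$, integration along fibers yields the same form on $U$ in both parametrizations. Consequently, using \eqref{Eq:e-m},
\[
e_m(u)\, du \;=\; \delta_*\!\bigl((\xi\circ F)^*E_m^{m+1}\bigr) \;=\; \lambda_m\!\left(\int_{k,h>0}\det(S_{u,k,h})^{-(m+3)/2}\,dk\wedge dh\right) du,
\]
which is the announced formula \eqref{Eq:e-m(u)} up to the non-zero constant $\lambda_m$. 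The main obstacle in executing this plan is the careful bookkeeping in the $\mathrm{dVol}(N)$ expansion: one must verify that only the $du_i\wedge dh_i$-terms survive the wedge with $dk_1\wedge dk_2$, and correctly aggregate the resulting power of $k_1k_2$ so that it matches the $(m+3)/2$ exponent coming from the Gram determinant.
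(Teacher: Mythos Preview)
Your proposal is correct and follows essentially the same approach as the paper. Your map $G=A\circ F$ is exactly the map the paper calls $f$, and both proofs hinge on the same two identities---$\det(I_2+N\,{}^tN)=\det(S_{u,k,h})/(k_1k_2)$ and $f^*(\mathrm{dVol}(N))$ being a constant times $(k_1k_2)^{-(m+3)/2}\,dk\wedge dh\wedge du$---combined with formula \eqref{Eq:Xi^*Em-m+1}; the only cosmetic difference is that you compute the volume pullback by direct expansion of the wedges while the paper invokes an ``easy recurrence on $m$''.
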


\begin{proof} Both the two first points are easy to establish (this is left to the reader). Since 4.\,follows immediatly from 3., we will only discuss the latter.  \sk 

For $(u,k,h)\in U\times (\mathbf R_{>0})^{2+m}$, we denote by $\widetilde F(u,k,h)$  the $2\times (m+1)$ submatrix of $F(u,k,h)$ 
given by its last $m+1$ columns and we 
consider the map 
\begin{align*}
f : U\times (\mathbf R_{>0})^{2+m}& \rightarrow M_{2,m+1}(\mathbf R)\\
(u,k,h)& \longmapsto \Big[ 
\scalebox{0.7}{
\begin{tabular}{cc}
$1/\sqrt{k_1}$ & \\
0 & $1/\sqrt{k_2}$
\end{tabular}}
\Big]\cdot \widetilde F(u,k,h)=
\begin{bmatrix}
{-1}/{\sqrt{k_1}} &  -u _1\sqrt{h_1/k_1} &  \cdots &  -u _m\sqrt{h_m/k_1} \\
\hspace{0.3cm} {1}/{\sqrt{k_2}} & \hspace{0.3cm}\sqrt{h_1/k_2} & \cdots  & \hspace{0.3cm} \sqrt{h_m/k_2}
\end{bmatrix}\, . 
\end{align*}

The two following identities hold true: 
\begin{itemize}
\item   $f^*\Big({\rm det}\big( 
{I}_2+N \,{}^t\hspace{-0.03cm}N \big)\Big)
={\rm det}\big( S_{u,k,h} \big)/(k_1k_2)$; 
\item $f^*\big({\rm dVol}(N)\big)=
\lambda'_m \cdot(k_1k_2)^{-(m+3)/2} dk\wedge dh\wedge du$ for a certain non zero constant $\lambda'_m$. 
\end{itemize}
(The first identity is elementary and the second can be obtained by an easy recurrence on $m$).  \sk 

Since $\xi\circ F=\Xi\circ f$, point 3.\,follows 
from \eqref{Eq:Xi^*Em-m+1} combined with the two preceding identities. \end{proof}

\begin{rem} 
There is no point in justifying the convergence of the integral in 
\eqref{Eq:e-m(u)}: it follows immediately from the fact that $E_m^{m+1}$ is a global smooth form on $G_2^{or}(\mathbf R^{m+3})$ which is compact. 
\end{rem}

The interest of \eqref{Eq:e-m(u)} lies in the fact that it is an explicit and closed formula. This offers a way to study Euler's abelian relation more concretely. We use this formula below to describe explicitly Gelfand-MacPherson's computation  leading to Rogers dilogarithm in the case $n=2$. Then we discuss the case when $n=3$ and give the first terms of the Taylor series  of $e_2$ that can be computed from \eqref{Eq:e-m(u)}.

\paragraph{\bf Computation of $e_1$: Rogers dilogarithm} 
In this case, the preceding proposition gives us that for any $u\in ]1,+\infty[$, one has  
\begin{align*}
e_1(u)= & \, \int
\hspace{-0.2cm}
\int
\hspace{-0.2cm}
\int_{k_1,k_2,h_1=0}^{+\infty}
\frac{dk_1dk_2dh_1}{\Big( \alpha_0 -2 \alpha_1 u +\alpha_2 u^2 \Big)^{2}}
\end{align*}
with $\alpha_0=  k_1k_2+h_1(k_1+1)+k_2$, $
\alpha_1= h_1$ and 
$ \alpha_2= h_1(1+k_2)$. 
\mk 

By successive direct computations, we get that 
\begin{align*}
e_1(u)= & \, 
\int \hspace{-0.2cm}
\int_{k_1,k_2=0}^{+\infty}
\frac{dk_1dk_2}{
\Big(k_1k_2 + k_1 + k_2\Big)\Big( \big(k_2 +1\big)  u^2- 2u + (k_1 + 1)\Big)
}
\\
= & \, 
\int \hspace{-0.2cm}
\int_{k_1,k_2=1}^{+\infty}
\frac{dk_1dk_2}{
\Big( k_1k_2-1\Big)\Big( k_2   u^2- 2u + k_1\Big)}
\\
= & \, 
\int_{k_2=1}^{+\infty}
\frac{ \log \Big( 
k_2/(k_2-1)
\Big) 
+\log \Big( 
k_2 u^2 - 2u + 1
\Big) 
}{ \big(u\, k_2-1 \big)^2} 
{dk_2}\\
= & \hspace{0.25cm}
2\left( \frac{\log(u)}{u-1}-
\frac{\log(u-1)}{u}\right)= 4\,{\mathcal R}'(u)
\end{align*}
with ${\mathcal R}(u)=R\left(  \frac{u-1}{u}\right)$ for any $u>1$, where $R$ is Rogers dilogarithm \eqref{Eq:R}.
\mk 

It would be interesting to study \eqref{Eq:e-m(u)} further and in particular to see  whether the explicit computation above can be generalized for $n>2$. Our few preliminary attempts in this direction have not been successful and it might be the case that this is something difficult. However, again by means of explicit computations (performed on a computer algebra system), we have been able to extract interesting information about $e_{n-1}$ for $n=3,4$. We present some of the results we have obtained in the following two paragraphs.

\paragraph{\bf On the Taylor expansion of $e_2$} 
We consider here the case $n=3$ which corresponds to the function $e_2$. 
This function takes as arguments elements of $U\subset ]1,+\infty[^2$, that we will denote  here by $(u_2,u_3)$ (hence $u_2$ and $u_3$  are such that $1<u_2<u_3$).

For such a pair $(u_2,u_3)$, 
modulo an elementary and unimportant change of the integration scheme, 
\eqref{Eq:e-m(u)} reads in explicit form
\begin{equation}
\label{Eq:e-2}
e_2(u_1,u_2)=\int_{h_2,\ldots,h_5>0}
B(u,h)
^{-5/2} dh
\end{equation}
with $dh=dh_2\wedge \ldots \wedge dh_5$ and 
where $B(u,h)$ is the following polynomial of degree 2 in $u_1 $ and $u_2$
$$
B(u,h)=\Big( \beta_{11} \,u_1^2-2 h_4h_5u_1u_2+\beta_{22} \,u_2^2\Big) -2 h_3
\Big( 
h_4 \,u_1+h_5 \,
u_2\Big) +\beta_0
$$
with  $\beta_{11}=  h_4(h_2 + h_3 + h_5)$, 
$\beta_{22}= h_5(h_2 + h_3 + h_4) $
 and $\beta_0=   (1+h_3)(1+h_2 + h_4 + h_5)-1$.

Even with the help of a symbolic integration software, we have not been able to compute the quadruple multiple integral above for $u_1,u_2)$ arbitrary in $U$.  However, 
it is not the same if working with jets of finite order at a suitable base-point in $U$, 
such as 
$$u^*=
 (2,3)\, .$$

Let 
$\varepsilon_2(u,h)$ be the integrand in \eqref{Eq:e-2}, which is obviously analytic at $u^*$. We denote by 
$\varepsilon_2(u,h)=\sum_{ w=0}^{+\infty} \sum_{\lvert \ell\lvert=w}
\varepsilon_2^{\ell}(h) (u-u^*)^{\ell}$
  its Taylor expansion at $u^*$, where we use the notation  $(u-u^*)^{\ell}=(u_1-2)^{\ell_1}(u_2-3)^{\ell_2}$ for any $\ell=(\ell_1,\ell_2)\in \mathbf N^2$.  Setting $B(h)=B(u^*,h)$, one can verify that there are polynomials $ P^{\ell}(h)$  for any $\ell$\footnote{One can give an explicit formula for $ P^{\ell}(k,h)$ but since it is a bit involved and not relevant for the discussion here we will not elaborate on this.} such that $
\varepsilon_2^{\ell}(h)={ P^{\ell}(h)}/{ B(h)^{\frac{5}{2}+w}}
$ for any $w\geq 0$ and any  $\ell$ such that $\lvert \ell\lvert=\ell_1+\ell_2=w$. 
With some work (left to the reader), one can justify the exchange of the integration and the summation below
$$
e_2(u_1,u_2)
=
\int_{h>0}   \Bigg[ \sum_{\substack{w\geq 0\\ \lvert \ell\lvert=w} }
\varepsilon_2^{\ell}(h) (u-u^*)^{\ell}\Bigg] \,  dh
=\sum_{\substack{w\geq 0\\ \lvert \ell\lvert=w} }
 \Bigg[
\int_{h>0}  
\varepsilon_2^{\ell}(h) dh\Bigg]  \, (u-u^*)^{\ell}\,. 
$$

The interest of doing this lies in the fact that we have been able to compute in closed form 
the integrals $\int_{h>0}  
\varepsilon_2^{\ell}(h)\,  dh$, at least for the pairs $\ell$ of weight $\lvert \ell\lvert$ small enough and we have been able to get that way the first terms of the Taylor expansion of $e_2$ at $u^*$: 
\begin{align*}
\frac{4}{\pi}\,  e_2(u)=
\frac{2}{9}-& \left( \frac{\mathit{(u_1-2)}}{9}+\frac{5\,  (\mathit{u_2}-3)}{27}
\right)
+  \frac{(\mathit{u_1} -2)^{2}}{18}+\frac{5\,  (\mathit{u_1} -2) (\mathit{u_2} -3)}{54}+\frac{19\,  (\mathit{u_2} -3)^{2}}{162}
 \\
-& \left( \frac{(\mathit{u_1} -2)^{3}}{36}+\frac{5 \, (\mathit{u_1} -2)^{2} (\mathit{u_2} -3)}{108}+\frac{19\,  (\mathit{u_1} -2) (\mathit{u_2} -3)^{2}}{324}+\frac{65 \, (\mathit{u_2} -3)^{3}}{972}
\right) \\
 +&  \, \frac{(\mathit{u_1} -2)^{4}}{72}+\frac{5\,  (\mathit{u_1} -2)^{3} (\mathit{u_2} -3)}{216}+\frac{19\,  (\mathit{u_1} -2)^{2} (\mathit{u_2} -3)^{2}}{648}+\frac{65\,  (\mathit{u_1} -2) (\mathit{u_2} -3)^{3}}{1944}
 \\
 +&\,  \frac{211\,  (\mathit{u_2} -3)^{4}}{5832}
+ O\Big( \big(u-u^*\big)^5
\Big) \, .
\end{align*}

We observe that, up to the fourth order,  
the 
RHS of this equality coincides with the Taylor series at $u^*$ of 
$$
\frac{8}{3\,u_1u_2(u_2-1)}=
\frac{2}{9}\sum_{w =0}^{+\infty} \sum_{l=0}^w (-1)^w 
\frac{\big( 
3^{l + 1} - 2^{l + 1}
\big) 
}{2^{w}\,3^{l}}
\big(u_1 - 2\big)^{w - l}\big(u_2 - 3\big)^l\,.
$$
This coincidence could leads us to think that we have identically 
\begin{equation}
\label{Eq:e2=1/truc}
e_2(u_1,u_2)=
\frac{(2\pi /3)}{ u_1u_2(u_2-1)}
\end{equation}
and we will see a bit further that this is indeed the case.

\subsubsection{\bf Invariance properties of  the function $e_n$: some transformation formulas} 
\label{SSS:Dihedral-Invariance-Properties}
The positive grassmannian $G_2^{or}(\mathbf R^{n+3})^{>0}$ admits dihedral symmetries. 
Our goal here is to explain how one can deduce from them some quite strong invariance properties for the function $e_{n-1}$, this for any $n\geq 2$. \sk

\vspace{-0.3cm}
Euler's abelian relation $ \boldsymbol{\mathcal E}_n^{>0}$ (see \eqref{Eq:E_n->0}) is defined on the 
positive part $\mathcal M_{0,n+3}^{\, >0}$.  As explained in \S\ref{SSS:Real-Locus}, this space admits a dihedral group of symmetries and a natural question is about the behavior of 
$ \boldsymbol{\mathcal E}_n^{>0}$ with respect to them and what can be deduced from that for $e_{n-1}$. \mk

The subgroup $D_{0,n+3}$ of $\mathfrak S_{n+3}$ letting $\mathcal M_{0,n+3}^{\, >0}$ invariant is generated by the maps $C$ and $R$, respectively induced by 
\begin{itemize}
\item the shift $\tilde C :  (\mathbf P^1)^{n+3}\rightarrow (\mathbf P^1)^{n+3},\, 
(z_i)_{i=1}^{n+3}\mapsto (z_{n+3},z_1,\ldots,z_{n+2})$ which is of order $n+3$; \sk 
\item  the involutive transformation $\tilde R : (\mathbf P^1)^{n+3}\rightarrow (\mathbf P^1)^{n+3},\, (z_i)_{i=1}^{n+3}\mapsto (z_1,z_{n+3},z_{n+2},\ldots,z_3,z_2)$ which, as an automorphism of a regular $(n+3)$-gon,   corresponds to the  reflection with respect to the dihedral axis joining the center on the $(n+3)$-gon to that of its  vertices labeled by 1. 
\end{itemize}

We set $n'= \lceil (n+3)/2\rceil$. 
The two permutations of $\{1,\dots,n+3\}$ corresponding to  $C$ and $R$ 
 are the $(n+3)$-cycle $c=(1 \ldots n+3)$ and the product of $n'-1$ transpositions given by  $r=(2,n+3)(3,n+2)\cdots (n',n'+1) $ when $n$ is even and $r=(2,n+3)(3,n+2)\cdots (n',n'+2)$ otherwise (see Figure \ref{Fig:n+3-gon} below).  In particular, if ${\bf sgn}: \mathfrak S_{n+3}\rightarrow \{ \pm 1\, \}$ denotes the signature, then one has
 \begin{equation}
\label{Eq:Signature-C-r} 
 {\bf sgn}(c)=(-1)^n \qquad 
 \mbox{ and } 
 \qquad  
  {\bf sgn}(c)=(-1)^{n'-1}\, .
 \end{equation}
 
\begin{figure}[h!]
\centering
\includegraphics[width=90mm]{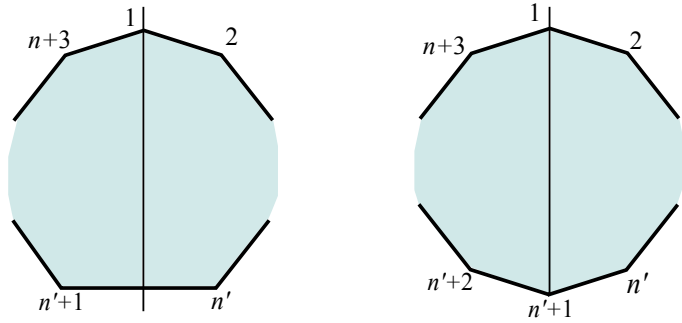}
\vspace{-0.3cm}
\caption{Axial symmetry of the $(n+3)$-gon when $n$ is even (left) and odd (right).}
\label{Fig:n+3-gon}
\end{figure}

\noindent{\bf Warning:} {\it The transformation denoted by $C$ here corresponds to the inverse of the one unfortunately denoted the same in \S\ref{SS:Some-Notations} which moreover is expressed there with respect to another coordinates system on $\mathcal M_{0,n+3}$ than the one used here.\footnote{How the $x_i$'s of  \S\ref{SS:Some-Notations} and the $u_k$'s of this section are related  can be summarized  by the  equality $[0,1,\infty,x_1,\ldots,x_n]=[\infty,0,-1,-u_1,\ldots,-u_n]$ (in $\mathcal M_{0,n+3}$) which has to be thought of as identically satisfied.} We apologize to the reader for this inconvenience.}
\mk

The map \eqref{Eq:Varphi-U} gives rise to an isomorphism $U\rightarrow \mathcal M_{0,n+3}^{\, >0}$ 
again denoted by $\varphi$.  It is straightforward (and left to the reader) to express  $C$ and $R$ 
in the affine coordinates $u_1,\ldots,u_n$ on $U\subset ]1,+\infty[^n$ explicitly. Denoting (a bit abusively) by the same notation the automorphisms of $U$ corresponding to 
$R$ and $C$  (given by conjugating them by $\varphi$) 
and agreeing that $u_{0}=1$ and $u_{-1}=0$, 
one has 
$$C(u)=\left( \, \frac{u_n}{u_n-u_{i-1}} \,\right)_{i=1}^n
\qquad \quad \mbox{ and } 
\qquad  \quad 
R(u)=\left( \, \frac{u_n-u_{n-i-1}}{u_n-u_{n-1}} \,\right)_{i=1}^n\, . 
$$ 

Actually, for our purpose here, we do not have to only consider $C$ and $R$, which are automorphisms of $\mathcal M_{0,n+3}^{\, >0}$, but some lifts of these to the oriented 
grassmannian $G_2^{or}({\mathbf R}^{n+3})$ as well, and there is a subtlety about this. 
This subtle point has not be considered in \cite{D} although it is quite relevant regarding the invariant properties of 
$ \boldsymbol{\mathcal E}_n^{>0}$ with respect to the action of $\mathfrak S_{n+3}$.
\sk 

Actually 
the lifts to the oriented grassmannian of automorphisms  of 
$\mathcal M_{0,n+3}$ considered by Damiano (either in \cite{D} or in \cite{DThesis}) are the simplest/most naive ones: 
given a permutation $\sigma\in \mathfrak S_{n+3}\simeq {\rm Aut}(\mathcal M_{0,n+3})$, 
the lift over it that Damiano considers is the automorphism $\widetilde \sigma \in {\rm Aut}\big(G_2^{or}({\mathbf R}^{n+3})\big)$ induced by the natural linear action of $\sigma$ on $\mathbf R^{n+3}$, namely 
$$\sigma\cdot x=\sigma \cdot  \big(x_i\big)_{i=1}^{n+3}=\big( x_{\sigma(i)}\big)_{i=1}^{n+3}\, .$$

The nice feature of the naive lifting map $\sigma\mapsto \tilde \sigma$ considered by Damiano is that it gives rise to  a group  monomorphism $\mathfrak S_{n+3}={\rm Aut}\big(\mathcal M_{0,n+3}\big) \rightarrow {\rm Aut}\big( G_2^{or}(\mathbf R^{n+3})\big)$. 
But the problem with it is  clearly shown when considering the case of $C$: using the notations \eqref{Eq:Map-M} and \eqref{Diag:gamma-triangle},  
the associated permutation 
$c=(1\ldots n+3)$ is such that Damiano's naive lift $\tilde c$ satisfies 
$$\tilde c\cdot \gamma(u)=\xi\Big(\,  \tilde c\cdot M_u\, \Big)=
\xi\left( \, 
\begin{bmatrix}
- u_n &  1 &  0  &  -1  & -u_1  &   \cdots  & -u_{n-1} \\
1 & 0 &  1  &  {}^{} \, 1  & 1  &   \cdots   & 1 
\end{bmatrix}
\, 
\right)
$$  
for any $u\in U$.  Clearly, some of the $2\times 2$ minors of $\tilde c\cdot M_u$ are positive, whereas others (namely all those involving the first column of $\tilde c\cdot M_u$) are negative. This has the consequence that the matrix $\tilde c\cdot M_u$ is not merely positive, which translates into the fact that with Damiano's choice for the lift $\tilde c$, the image of $\tilde c\cdot \gamma$ into the oriented grassmannian  of 2-planes in $\mathbf R^{n+3}$ does not coincide with  the positive part
$ G_2^{or}(\mathbf R^{n+3})^{>0}$.  Actually, the image 
$\tilde c\cdot \gamma(U)$, which is $\tilde c\cdot G_2^{or}(\mathbf R^{n+3})^{>0}$,   is disjoint from the positive part of the oriented grassmannian: one has 
$$ 
\tilde c \left(G_2^{or}\big(\mathbf R^{n+3}\big)^{>0}\, \right)  \cap 
G_2^{or}\big(\mathbf R^{n+3}\big)^{>0} =\emptyset \, . $$ 

In order to study the invariant properties of $ \boldsymbol{\mathcal E}_n^{>0}$ with respect to the action of $C$, one has to look for a lift $\hat c$  making the following diagram 
commutative: 
\begin{equation}
\label{Eq:hat-c}
\begin{tabular}{c}
\xymatrix@R=1.1cm@C=1.1cm{
G_2^{or}\big(\mathbf R^{n+3}\big)^{>0}  \ar@{->}[r]^{\hat c} & G_2^{or}\big(\mathbf R^{n+3}\big)^{>0}
 \\
 \ar@{->}[u]^{\gamma}
U \ar@{->}[r]^{C} & U
\ar@{->}[u]_{\gamma}
}
\end{tabular}
\end{equation} 
In particular,  the lift $\hat c$ must let invariant the positive grassmannian, 
which precisely Damiano's lift $\tilde c$ does not do. \sk

\paragraph{\bf Invariance properties of Euler's abelian relation $\mathcal E_n^{>0}$ with respect to 
pull-back under $C$.}
\label{Par:CCCCC*}
The relevant lift of $c$ to deal when considering the notion of positivity on the oriented grassmannian is well-known: it is the cyclic automorphism $\hat c$ of $G_2^{or}\big(\mathbf R^{n+3}\big)$ 
induced by the linear isomorphism of $\mathbf R^{n+3}$ given by 
\begin{equation}
\label{Eq:hat-cc}
 \big(x_1,\ldots,x_{n+3}\big)\mapsto \big(-x_{n+3}, x_1,\ldots,x_{n+2}\,\big)\,
\end{equation}
whose matrix  (written by blocs) with respect to the standard basis of $\mathbf R^{n+3}$ is 
$$
\begin{bmatrix}
0 & {\rm I}_{n+2}\\
-1 & 0  
\end{bmatrix}\, .
$$
This matrix belongs to  
${\rm O}_{n+3}(\mathbf R)$ and has determinant equal to $(-1)^{n-1}$.   
Hence $\hat c$ is not always in the special orthogonal group but it is easily verified (using \eqref{Eq:de1-wedge-de2} for instance) that  one has 
$\hat c^*(E_n)=E_n$  in any case.  It follows that $E_n^n$ is left invariant by the lift 
$\hat c$ as well. 
Consequently, it follows that the pull-back of  Euler's abelian relation 
$ \boldsymbol{\mathcal E}_n^{>0}$ under $C$  
  coincides with $\boldsymbol{\mathcal E}_n^{>0}$ or with its opposite, according 
whether   $\hat c$ preserves the orientation of a generic $H_0$-orbit in 
$G_2^{or}(\mathbf R^{n+3})^{>0}$.\footnote{We recall that $H_0\simeq (\mathbf R_{>0})^{n+2}$ stands for the positive part of the diagonal Cartan torus $H\subset {\rm SL}(\mathbf R^{n+3})$.} 

In order to determine the action of $\hat c$ on the orientation of the $H_0$-orbits of elements in 
$G_2^{or}(\mathbf R^{n+3})^{>0}$, let us consider the following parametrization of the  positive part of this oriented grassmannian
\begin{align}
\label{Al:lala}
U\times   (\mathbf R_{>0}\big)^{2}\times (\mathbf R_{>0}\big)^{n} & \longrightarrow  G_2^{or}(\mathbf R^{n+3})^{>0}\\
(u,k,h)
=\Big( \,(u_i)_{i=1}^n , \big(k_1,k_2\big), \big( h_i\big)_{i=1}^n 
\,\Big) 
& \longmapsto 
\begin{bmatrix}
k_1 & 0 &  -1 & -u_1  h_1 & \ldots & -u_n h_n
\\ 
0 & k_2 & {}^{} \hspace{0.16cm} 1 & {}^{} \hspace{0.16cm} h_1 & \ldots & {}^{} \hspace{0.16cm} h_n
\end{bmatrix}\,
\nonumber
\end{align}
(where $U$ is the set of $n$-tuples $(u_i)_{i=1}^n\in \mathbf R^n$ such that $1<u_1<\ldots u_{n-1}<u_n$).  This map is an isomorphism hence the $u_i$, $k_s$ and $h_i$ for $i=1,\ldots,n$ and $s=1,2$ form a system of global coordinates on $G_2^{or}(\mathbf R^{n+3})^{>0}$, which is particularly useful for our purpose since relatively to these coordinates, quotienting by the $H_0$-action corresponds exactly to the standard projection $(u,k,h)\mapsto u$ ({\it cf.}\,Proposition \ref{P:Prop-em-integral-formula} above).  Let $\hat C$ be the automorphism of $U\times   (\mathbf R_{>0}\big)^{2}\times (\mathbf R_{>0}\big)^{n}$ obtained from the automorphism  $\hat c$ of $G_2^{or}(\mathbf R^{n+3})^{>0}$ 
by means of the identification induced by \eqref{Al:lala}. 
It is not difficult to give an explicit formula for $\hat C$. 
  Indeed setting $u_0=1$ 
 and denoting by $c_i(u)=u_n/(u_n-u_{i-1})$ the $i$-th component of $C(u)$  for $i=1,\ldots,n$, 
straightforward computations (the details of which are left to the reader) give us that 
$$\hat C (u,k,h)=\Big( \hat u ,\hat k,  \hat h\Big)$$
 with 
\begin{align*}
\hat u = & \, C(u)= \big(c_i(u)\big)_{i=1}^n
=\left( 
\frac{u_n}{u_n-u_{i-1}}\right)_{i=1}^n \,, 
\\
\hat k=& \,\Big( \, \hat k_1\, ,\,  \hat k_2\, \Big) = \left( \frac{h_n}{k_2}, \frac{k_1}{k_2u_n}\right)\\
 \mbox{and }\quad \hat h=&\, 
\Big(  \, \hat h_i\,
\Big)_{i=1}^n 
 =
 \left( \,  \frac{ 1}{k_2\,c_1(u)} \,  ,\,  
 \frac{h_1}{k_2\, c_2(u) }\, , \, \ldots \, ,\,  
 \frac{h_{n-1}}{k_2\, c_n(u)} \, \right)\, .
\end{align*} 
The implicitly chosen (because natural) orientation on the $H_0$-orbits of elements of $G_2^{or}(\mathbf R^{n+3})^{>0}$
corresponds to the $(n+2)$-form $dk\wedge dh=dk_1\wedge d k_2\wedge dh_1\wedge \cdots \wedge dh_n$ in the coordinates $u_i, k_s$ and $h_i$ ($i=1,\ldots,n$, $s=1,2$). 
Using the expressions above for the components of $\hat k$ and $\hat h$, one easily computes 
what is the $dk\wedge dh$-component of the pull-back of this form under $\hat C$: setting 
$\partial k\wedge \partial h=\partial_{k_1}\wedge \partial_{k_2}\wedge \partial_{h_1}\wedge \cdots \wedge \partial_{h_n}$, one obtains that 
$$
\big( \partial k\wedge \partial h \big) 
\,\scalebox{1.4}{$\lrcorner$}\, 
\hat C^* \Big( \, dk \wedge dh\, \Big)= 
(-1)^{n}
\frac{\prod_{i=0}^{n-1}(u_n-u_i) }{k_2^{n+3}u_n^{n+1}}
\, .
$$

Since $u\in U$, one has $u_n-u_i>0$ for any $i=0,\ldots,n-1$ hence the sign of the above expression is $(-1)^n$.  On the other hand, 
 $\hat c$ lets $E_n^n$ invariant as we have seen above, hence we deduce the 
\begin{prop}
\label{P:C^*Euler-AR}
The positive Euler's abelian relation is stable up to multiplication by $(-1)^n$ under pull-back by $C$, {\it i.e.} identically on $U$, one has
$$
C^*\Big(  \boldsymbol{\mathcal E}_n^{>0} \Big) =(-1)^n\,   \boldsymbol{\mathcal E}_n^{>0}\, .
$$
\end{prop}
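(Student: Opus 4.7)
The three ingredients assembled in the preceding discussion already give the result; my plan is to make the logical dependencies explicit. First I would verify that $\hat{c}$ as defined by \eqref{Eq:hat-cc} is the correct lift, i.e.\ that diagram \eqref{Eq:hat-c} commutes. This amounts to a direct matrix computation: right-multiplying the matrix $M_u$ of \eqref{Eq:Map-M} by the block matrix of $\hat{c}$ yields a $2\times(n+3)$ matrix whose entries are obtained by a cyclic shift of the columns of $M_u$ with one column sign-reversal; after normalizing by a suitable positive diagonal matrix in $H_0$ to restore the standard form $M_{u'}$, one reads off $u'=C(u)$.

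Next I would establish $\hat{c}^*(E_n)=E_n$, which immediately yields $\hat{c}^*(E_n^n)=E_n^n$. The matrix of $\hat{c}$ lies in $O_{n+3}(\mathbf{R})$ (with determinant $(-1)^{n-1}$), and lifts to the Stiefel variety by simultaneously applying this orthogonal matrix $A$ to both frame vectors $(e_1,e_2)$. Using the explicit formula \eqref{Eq:de1-wedge-de2}, one computes
\[
d(Ae_1)\wedge d(Ae_2)=\sum_{j,l}(AA^T)_{jl}\,de_1^j\wedge de_2^l=\sum_j de_1^j\wedge de_2^j=\mathscr{E}_n,
\]
since $AA^T=I$.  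Hence $\mathscr{E}_n$, and therefore its descent $E_n$, is invariant under $\hat{c}$, regardless of the sign of $\det(A)$.

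Third, I would do the explicit Jacobian computation on $H_0$-orbits in the coordinates $(u,k,h)$ provided by the parametrization \eqref{Al:lala}, in which the $H_0$-quotient $\pi_H$ is simply $(u,k,h)\mapsto u$.  A direct calculation produces the formulas for $\hat{C}(u,k,h)=(C(u),\hat{k},\hat{h})$ displayed in the paragraph preceding the Proposition, and then gives
\[
(\partial k\wedge\partial h)\,\lrcorner\,\hat{C}^*(dk\wedge dh)=(-1)^n\,\frac{\prod_{i=0}^{n-1}(u_n-u_i)}{k_2^{n+3}u_n^{n+1}},
\]
a quantity of sign $(-1)^n$ on all of $U\times(\mathbf{R}_{>0})^{2+n}$.

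Finally, I would conclude by appealing to the naturality of integration along fibers: since $\mathcal{E}_n^{>0}$ is manufactured from $E_n^n$ on $G_2^{or}(\mathbf{R}^{n+3})^{>0}$ via the Stokes-type boundary integration scheme of \eqref{Eq:dE-n-n-2}, and since $\hat{c}$ covers $C$, preserves $E_n^n$, and multiplies the orientation of every $H_0$-fiber by $(-1)^n$, the Euler abelian relation on $U$ inherits this global sign under $C^*$. Concretely, the $(n-1)$-form identity $\sum_{i=1}^{n+3}(-1)^{i-1}\psi_i^*(\mathscr{E}_{n-1}^n)=0$ is obtained by integrating $\overline{\Phi}_U^*(E_n^n)$ against the relative fundamental cycle $[\overline{\Delta}_2^{n+3}]$, so $C^*$ applied component-wise transforms this identity by $(-1)^n$ as claimed.

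The main obstacle is the bookkeeping in the last step: $\hat{c}$ induces a cyclic shift of the codimension-one facets $\overline{\Delta}_2^{n+2}(i)$, and one must check that this shift, combined with the alternating signs $(-1)^{i-1}$ in the definition of $\mathcal{E}_n^{>0}$ and the facet-wise orientation effect of $\hat{c}$, compounds to produce exactly $(-1)^n$. The signature of the cycle $c$ is $(-1)^n$ by \eqref{Eq:Signature-C-r}, and this precisely cancels the $(-1)^{n+2}=(-1)^n$ sign picked up when cyclically reindexing the alternating sum, leaving only the fiber-orientation sign $(-1)^n$ from step three. Once this cancellation is verified carefully, the proposition follows.
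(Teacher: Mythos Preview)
Your proposal is correct and follows essentially the same route as the paper: verify that $\hat c$ is the right lift (diagram \eqref{Eq:hat-c}), check $\hat c^*(E_n)=E_n$ via \eqref{Eq:de1-wedge-de2}, compute the fiber-orientation Jacobian in the $(u,k,h)$ coordinates of \eqref{Al:lala} to extract the sign $(-1)^n$, and conclude by naturality of integration along fibers.

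The one place you diverge is your final ``main obstacle'' paragraph. The paper does not track facets individually at all: it argues directly that since the whole construction $E_n^n \leadsto \boldsymbol{\mathcal E}_n^{>0}$ (pull-back by $\overline\Phi_U$, then push-forward along the $H_0$-fibers via Stokes) is natural with respect to the commuting square \eqref{Eq:hat-c}, and since $\hat c^*(E_n^n)=E_n^n$, the only sign entering is the one coming from the change of fiber orientation. Your facet-by-facet bookkeeping---matching the signature of $c$ against a reindexing sign in the alternating sum---is therefore unnecessary, and as written the cancellation you assert is more heuristic than justified (you do not really explain why the signature of $c$ is the relevant quantity for the facet-orientation effect, nor why the reindexing of $(-1)^{i-1}$ contributes exactly $(-1)^{n+2}$). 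If you want to keep this paragraph, you would need to make that sign chase rigorous; but the cleaner route is simply to invoke naturality of the full construction, as the paper does, and drop the facet analysis entirely.
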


When $n$ is even, we will state a more general result whose proof is  much more conceptual 
and which admits  the above as a particular case (see Proposition \ref{P=sign(sigma)} further below). \sk

In any case, the preceding proposition has two interesting outcomes.  The first is that one recovers the fact that 
$\boldsymbol{\mathcal E}_n^{>0}$ is entirely known as soon as one, say the first,  of its components is known. Indeed, the map $U_{n+3}(u)=(u_1,\ldots,u_{n-1})$ is a first integral of the $(n+3)$-th foliation of 
$\boldsymbol{\mathcal W}_{0,n+3}^{\, >0}$ on $U$ (that foliation corresponding to forgetting the $(n+3)$-th point on ${\mathcal M}_{0,n+3}^{\, >0}$. 

For $i=0,\ldots,n+2$, the rational map 
 $$ U_{n+3-i}=\big(C^{\circ i} \big)^*\big(U_{n+3}\big)  $$
 (which is just given by taking the first $n-1$ components of
 the map obtained by composing $C$ $i$-th times) 
 is a rational first integral of the $(n+3-i)$-th foliation of $\boldsymbol{\mathcal W}_{0,n+3}^{\, >0}$ (corresponding to forgetting the $(n+3-i)$-th point on $\mathcal M_{0,n+3}^{>0}$).  
 For any $i$, denote by $U_i^k$ for $k=1,\ldots,n-1$ the components of $U_i$ 
 and set $dU_i=dU_i^1\wedge \cdots \wedge dU_i^{n-1}$. 
  Since the 
 $U_{n+3}$-component of $\boldsymbol{\mathcal E}_n^{>0}$ is 
 $$e_{n-1}\big(U_{n+3}\big) 
 \, dU_{n+3}
 =
 e_{n-1}(u_1,\ldots,u_{n-1}) du_1\wedge \cdots \wedge du_{n-1}\,,$$  
 it follows from 
 the above proposition  that $\boldsymbol{\mathcal E}_n^{>0}$ 
 is equivalent to the fact that the differential relation 
 \begin{equation}
\label{Eq:AR-Euler-cyclique}
0= 
\sum_{i=1}^{n+3} {(-1)^{i\, n}} \, e_{n-1}(U_i)\,  dU_i^1\wedge \cdots \wedge dU^{n-1}_i
%
%
\end{equation} 
holds true identically for any 
$n$-tuple of real numbers $(u_1,\ldots,u_n)\in U$.
\begin{exm} 
\label{Ex:n=2}
When $n=2$, the maps $U_i$ are given by 
$$
U_5=u_1
\, , \quad  
U_4=\frac{u_2}{u_2 - 1}
\, , \quad 
U_3=\frac{u_2}{u_1} 
\, , \quad  
U_2= \frac{u_2-1}{u_2 - u_1}
   \quad  \mbox{ and } \quad 
U_1=   \frac{u_1(u_2-1)}{u_2( u_1-1)}\, .
$$

Up to integration, 
\eqref{Eq:AR-Euler-cyclique} is  equivalent to the fact that the relation
\begin{equation}
\label{Eq:rm R}
{\rm R}(u_1)
+{\rm R}\left( \frac{u_2}{u_2 - 1} \right) 
+{\rm R}\left(
\frac{u_2}{u_1} \right) 
+{\rm R}\left(
 \frac{u_2-1}{u_2 - u_1} \right) 
+  {\rm R}\left(\frac{u_1(u_2-1)}{u_2( u_1-1)} \right) =0
\end{equation}
holds true for any $u_1,u_2$ such that $1<u_1<u_2$, where 
${\rm R}$ stands for the function 
defined by $$
{}^{} \qquad 
{\rm R}(u)=\frac{1}{2}\int_u^{+\infty}  \left(
\frac{\log(u-1)}{u}-\frac{\log(u)}{u-1}
 \right) du +\frac{{}^{}\hspace{0.15cm} \pi^2}{10} 
 \qquad \quad \big(\, u>1\, \big)\,.\footnotemark
 $$
 \footnotetext{The constant $\pi^2/10$ in the definition of ${\rm R}$  is just a normalization added in order that the 
  RHS of \eqref{Eq:rm R}
be zero.}

 It can be verified that given $u>1$, one has 
${\rm R}(u)=R_1\big(  (u-1)/u
\big)$ where $R_1$ stands for the following version of Rogers dilogarithm: 
$R_1(x)={\bf L}i_2(x) + \log(x)\log(1 - x)/2 - \pi^2/15$ for $x\in ]0,1[$. 

One checks easily  that \eqref{Eq:rm R} is indeed left invariant by the 
5-cyclic  birational map 
 $$
 C(u_1,u_2)=\left( \frac{u_2}{u_2-1}, \frac{u_2}{u_2-u_1}\right)
 \, .$$
\end{exm}
The second interesting consequence of Proposition \ref{P:C^*Euler-AR} is a nice transformation formula for the function $e_{n-1}$ appearing in the components of $\boldsymbol{\mathcal E}_n^{>0}$.   Recall the rational functions $\psi_i$ introduced just before 
Proposition \ref{P:RA-EULER-a}, which   are first integrals for $\boldsymbol{\mathcal W}_{0,n+3}^{>0}$ as well. With respect to these, the 
last two components of $\boldsymbol{\mathcal E}_n^{>0}$ in 
\eqref{Eq:dE-n-n-2} are written  respectively 
\begin{align}
\label{Eq:f-n+2}
 &f_{n+2}(u_1,\ldots,u_{n-2},u_n\big) \cdot du_1\wedge \cdots \wedge d u_{n-2}\wedge du_n
\\
 \mbox{ and } \qquad 
& f_{n+3}(u_1,\ldots, u_{n-1}\big) \cdot du_1\wedge \cdots \wedge d u_{n-2}\wedge d u_{n-1}
\nonumber
\end{align}
for two functions $f_{n+2}$ and $f_{n+3}$ globally defined (and analytic) on $U$.

On the other hand, for any $i=1,\ldots,n+3$,  the $i$-th component
$(-1)^{i-1}\psi_i^*\big( \mathcal E_{n-1}^n\big)$ 
of $\boldsymbol{\mathcal E}_n^{>0}$ can also be written $\big( \delta_i\big)_*\big( \tau_i^*\big(\mathscr E_n^n\big) \big)$. 
 Then following carefully the computations of \S\ref{SS:En-integral-representation} 
for these expressions when $i$ is $n+2$ or $n+3$ (with $m=n-1$), we obtain that the following holds true:
\begin{fact}  As functions of $n-1$ variables,   one has (up to multiplication by a non zero constant)
\begin{equation}
\label{Eq:f-n+2-f-n+3-e-n-1}
f_{n+2}=f_{n+3}=e_{n-1}
\end{equation}
where   $e_{n-1}$ stands of course for 
the function defined in 
\eqref{Eq:e-m(u)}  (in the case when $m=n-1${\rm )}. 
\end{fact}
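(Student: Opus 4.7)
The plan is to deduce this directly from Proposition \ref{P:RA-EULER-a}, which identifies the $i$-th component of Euler's abelian relation as $\psi_i^*\bigl(\mathscr{E}_{n-1}^n\bigr)$ rather than having to redo the integration-along-the-fibers computation ``by hand'' for the two specific values $i=n+2$ and $i=n+3$. By construction $\mathscr{E}_{n-1}^n$ is a top-degree form on the open subset $U'\subset \mathbf{R}^{n-1}$ of strictly increasing $(n-1)$-tuples, and by definition \eqref{Eq:e-m} it equals
\[ \mathscr{E}_{n-1}^n(v)=e_{n-1}(v)\, dv_1\wedge\cdots \wedge dv_{n-1} \]
(up to the fixed nonzero constant coming from $C_{n-1}$ in Proposition \ref{P:Prop-em-integral-formula}). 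Once this form appears via pullback through a coordinate projection, the coefficient of the resulting top-degree form on $U$ is literally $e_{n-1}$ evaluated at the appropriate coordinates, and the claim follows.

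First I would unpack the two maps $\psi_{n+2}$ and $\psi_{n+3}$ from the list preceding diagram \eqref{Diag:Gros}: these are exactly the maps $\psi_{3+i}$ with $i=n-1$ and $i=n$ respectively, so they coincide with the linear coordinate projections
\[ \psi_{n+2}(u)=(u_1,\ldots,u_{n-2},u_n) \qquad \text{and} \qquad \psi_{n+3}(u)=(u_1,\ldots,u_{n-1}). \]
Both take values in $U'$ (using $1<u_1<\cdots<u_{n-2}<u_{n-1}<u_n$), and they are submersions whose fibers are respectively the coordinate lines $\{u_j=\text{const}\,(j\neq n-1)\}$ and $\{u_j=\text{const}\,(j\neq n)\}$. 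Consequently pulling back $\mathscr{E}_{n-1}^n$ under either map is a trivial coordinate substitution: one obtains
\begin{align*}
\psi_{n+2}^*\bigl(\mathscr{E}_{n-1}^n\bigr) &= e_{n-1}(u_1,\ldots,u_{n-2},u_n)\, du_1\wedge\cdots\wedge du_{n-2}\wedge du_n, \\
\psi_{n+3}^*\bigl(\mathscr{E}_{n-1}^n\bigr) &= e_{n-1}(u_1,\ldots,u_{n-1})\, du_1\wedge\cdots\wedge du_{n-2}\wedge du_{n-1}.
\end{align*}

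Next I would compare these two expressions with the forms \eqref{Eq:f-n+2} that define $f_{n+2}$ and $f_{n+3}$. Up to the sign $(-1)^{i-1}$ in \eqref{Eq:E_n->0} (which is absorbed into the ``non zero constant'' allowed by the statement) the first formula above is precisely $f_{n+2}(u_1,\ldots,u_{n-2},u_n)\,du_1\wedge\cdots\wedge du_{n-2}\wedge du_n$, so reading off coefficients gives $f_{n+2}(v_1,\ldots,v_{n-1})=e_{n-1}(v_1,\ldots,v_{n-1})$ as functions on $U'$. The same reasoning applied to $\psi_{n+3}$ yields $f_{n+3}=e_{n-1}$ as functions on $U'$. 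Both identifications only use the equality of the differential forms and the fact that $\{du_1\wedge\cdots\wedge du_{n-2}\wedge du_k\}_{k=n-1,n}$ are nonzero in the appropriate coordinate rings.

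The only delicate point I foresee is keeping track of the global nonzero multiplicative constant: strictly speaking, Proposition \ref{P:RA-EULER-a} is stated with an implicit overall normalization coming from Lemma \ref{L:Pull-backs}.1 (the identification of $\iota_i^*(E_n)$ with the invariant representative $E_{n-1}$ of Euler's class on $G_2^{or}(\mathbf{R}^{n+2})$) together with the constant $C_{n-1}$ of \eqref{Eq:Xi^*Em-m+1}. Since these constants are independent of $i$ (they depend only on $n$ and on the chosen representative of Euler's class) they produce the single scalar prefactor by which $f_{n+2}$ and $f_{n+3}$ may differ from $e_{n-1}$, but the prefactor is the same for both components, proving the chain of equalities $f_{n+2}=f_{n+3}=e_{n-1}$ up to a common multiplicative constant. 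I expect no genuine obstacle here; the whole statement is essentially a bookkeeping consequence of Proposition \ref{P:RA-EULER-a} once one observes that $\psi_{n+2}$ and $\psi_{n+3}$ are nothing but linear projections forgetting one coordinate.
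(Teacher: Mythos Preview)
Your argument is correct and is essentially the same as the paper's: the paper's one-line justification (``following carefully the computations of \S\ref{SS:En-integral-representation} for these expressions when $i$ is $n+2$ or $n+3$'') amounts precisely to what you do, namely combining Proposition~\ref{P:RA-EULER-a}.1 with the identity $\mathscr{E}_{n-1}^n=e_{n-1}(v)\,dv_1\wedge\cdots\wedge dv_{n-1}$ from \eqref{Eq:e-m} and the fact that $\psi_{n+2},\psi_{n+3}$ are coordinate projections. If anything, your write-up is slightly cleaner since you invoke Proposition~\ref{P:RA-EULER-a} directly rather than pointing back to the integral computations; note only that the signs $(-1)^{n+1}$ and $(-1)^{n+2}$ in \eqref{Eq:E_n->0} are opposite, so the ``common multiplicative constant'' you mention for $f_{n+2}$ and $f_{n+3}$ actually differs by a sign --- still nonzero, hence harmless for the statement as phrased.
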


On the other hand, one has:
\begin{align}
\label{Eq:C^*}
 C^*\Big(  \, du_1\wedge \cdots \wedge d u_{n-1} \, 
\Big) =   &\, 
\, d\left(\frac{u_n}{u_n-1}\right) \wedge 
d\left(
 \frac{u_n}{u_n-u_1} \right) \wedge \cdots \wedge 
d \left(\frac{u_n}{u_n-u_{n-2}}\right)\\
 = &\,  (-1)^{n-1} \frac{  \big( u_n\big)^{n-2} }{ 
 \prod_{i=0}^{n-2}\big(u_n-u_i\big)^2 }
 \cdot \Big(  du_1\wedge \cdots \wedge d u_{n-2} \wedge d u_{n} \Big) \, .
 \nonumber
\end{align}

Combining \eqref{Eq:f-n+2}, \eqref{Eq:f-n+2-f-n+3-e-n-1} and \eqref{Eq:C^*}, we deduce the 
\begin{prop}
\label{Prop:C*E}
The function $e_{n-1}$   satisfies the following relation:  
\begin{equation}
\label{Formula:C*E}
e_{n-1}\left(\frac{u_n}{u_n-u_0}, \frac{u_n}{u_n-u_1},\ldots,\frac{u_n}{u_n-u_{n-2}}\right)
= 
\frac{
 \prod_{i=0}^{n-2}\big(u_n-u_i\big)^2 }{
(u_n)^{n-2} }\cdot 
e_{n-1}\Big( u_1,\ldots, u_{n-2}, 
 u_n\Big)
\end{equation}
for any real numbers $u_0,u_1,\ldots,u_{n-2}, u_n$ such that $1=u_0<u_1<\ldots <u_{n-2}<u_n$.
\end{prop}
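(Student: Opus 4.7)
The plan is to extract the proposition directly by matching the $(n+2)$-th and $(n+3)$-th components of $\boldsymbol{\mathcal{E}}_n^{>0}$ against those of $C^{*}(\boldsymbol{\mathcal{E}}_n^{>0})$, using Proposition \ref{P:C^*Euler-AR}. Essentially every ingredient is already in place, and the task is bookkeeping of signs and Jacobian factors.

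First I would record how $C$ permutes the foliations of $\boldsymbol{\mathcal{W}}_{0,n+3}^{\,>0}$. Since $U_{n+3-i}=(C^{\circ i})^{*}(U_{n+3})$, the birational map $C$ sends the leaves of the $(n+2)$-th foliation to those of the $(n+3)$-th, so for the decomposition $\boldsymbol{\mathcal{E}}_n^{>0}=(\omega_i)_{i=1}^{n+3}$ we have $C^{*}(\omega_{n+3})$ lying along the $(n+2)$-th foliation. Proposition \ref{P:C^*Euler-AR} then forces, after reading off the component along $U_{n+2}$, the identity
\begin{equation*}
C^{*}(\omega_{n+3})=(-1)^{n}\,\omega_{n+2}.
\end{equation*}

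Next I would compute the two sides of this identity explicitly in the coordinates $u_1,\dots,u_n$. Using \eqref{Eq:f-n+2} and \eqref{Eq:f-n+2-f-n+3-e-n-1}, the right-hand side is, up to a global non-zero constant,
\begin{equation*}
e_{n-1}\bigl(u_1,\dots,u_{n-2},u_n\bigr)\, du_1\wedge\cdots\wedge du_{n-2}\wedge du_n .
\end{equation*}
For the left-hand side, the first $n-1$ components of $C(u)$ are $u_n/(u_n-u_{i-1})$ for $i=1,\dots,n-1$, so
\begin{equation*}
C^{*}(\omega_{n+3})=e_{n-1}\!\left(\tfrac{u_n}{u_n-u_0},\dots,\tfrac{u_n}{u_n-u_{n-2}}\right)\cdot C^{*}\!\bigl(du_1\wedge\cdots\wedge du_{n-1}\bigr),
\end{equation*}
and the Jacobian factor is exactly the one computed in \eqref{Eq:C^*}, namely $(-1)^{n-1}\,u_n^{n-2}/\prod_{i=0}^{n-2}(u_n-u_i)^2$ times $du_1\wedge\cdots\wedge du_{n-2}\wedge du_n$. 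Equating the two expressions and cancelling the common $(n-1)$-form yields Proposition \ref{Prop:C*E} up to an overall sign $(-1)^{n}\cdot(-1)^{n-1}=-1$ which is absorbed into the global normalisation constant (and in any case both sides of the claimed formula should be positive-valued on $U$, which fixes the sign unambiguously).

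The only subtle point — and the one I would be careful about — is to check that the normalisation of $e_{n-1}$ implicit in Fact \eqref{Eq:f-n+2-f-n+3-e-n-1} is really the same on both sides, i.e.\ that the multiplicative constant relating $f_{n+2}$ to $f_{n+3}$ is $1$ and not merely nonzero. This follows by tracing through the derivation of \eqref{Eq:f-n+2-f-n+3-e-n-1}: both components arise by pulling back the same form $\mathscr{E}_{n-1}^{\,n}$ along the analogous maps $\psi_{n+2}$ and $\psi_{n+3}$, whose construction is symmetric in the two indices. Once this is granted, the sign and scaling match and the claimed identity on $U=\{1<u_1<\cdots<u_n\}$ follows. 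No other step is genuinely difficult; everything else is the combination of Proposition \ref{P:C^*Euler-AR}, Fact \eqref{Eq:f-n+2-f-n+3-e-n-1} and formula \eqref{Eq:C^*}.
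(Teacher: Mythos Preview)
Your proposal is correct and follows essentially the same approach as the paper: both combine Proposition~\ref{P:C^*Euler-AR}, the identification $f_{n+2}=f_{n+3}=e_{n-1}$ from \eqref{Eq:f-n+2-f-n+3-e-n-1}, and the Jacobian computation \eqref{Eq:C^*} to extract the functional relation by comparing the $(n+2)$-th and $(n+3)$-th components of $\boldsymbol{\mathcal E}_n^{>0}$. Your treatment of the sign and normalisation constant is in fact more careful than the paper's, which simply says ``combining \eqref{Eq:f-n+2}, \eqref{Eq:f-n+2-f-n+3-e-n-1} and \eqref{Eq:C^*}'' and leaves the reader to check that the constants match.
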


\noindent {\bf Example \ref{Ex:n=2} (continued).} 
{\it Up to multiplication by a non zero constant, one has $e_1(u)=\log(u-1)/u-\log(u)/(u-1)$ 
 for any $u>1$.  Given such a $u$, 
one has  
\begin{align}
\nonumber
e_1\left( \frac{u}{u-1}\right)=&\,  -\frac{(u-1)\log(u-1)}{u}-(u - 1)\big( 
\log\left(u\right)-
\log\left({u-1}\right) \big)
\\
\label{EQ:n=22}
=&\, \left(-\frac{u-1}{u}  +(u-1) \right)  \log(u-1)
-(u - 1){
\log\left(u\right)}\\ \nonumber
=&\, (u-1)^2 \frac{ \log(u-1) }{u} 
-(u - 1)^2
\frac{
\log\left(u\right)}{u-1}
= 
(u-1)^2 e_1(u)\, .
\end{align}}

\paragraph{\bf Invariance properties of Euler's abelian relation $\mathcal E_n^{>0}$ with respect to 
pull-back under $R$.}
The case of the automorphism $R$ of $\mathcal M_{0,n+3}^{>0}$ is handled by similar but 
somewhat 
 more subtile 
arguments that we are going to discuss below. The lift $\hat r$ of $R$  that we consider here is the  automorphism of the oriented grassmannian induced by the linear map 
$(x_i)_{i=1}^{n+3}\mapsto \big(-x_1, x_{n+3}, x_{n+2},\ldots,x_2,x_1\big)$
whose matrix  (written by blocs) with respect to the standard basis of $\mathbf R^{n+3}$ is 
$$
N_n=
\begin{bmatrix}
-1 & 0\\
0 & {\rm J}_{n+2}   
\end{bmatrix}\,
$$
where ${\rm J}_{n+2}$ stands for the anti-diagonal $(n+2)\times (n+2)$ matrix. The matrix $N_n$  is orthogonal with determinant $\delta_n=(-1)^{\lfloor  n/2\rfloor}$ thus does not always belong to ${\rm SO}_{n+3}(\mathbf R)$. Nevertheless, in any case (using for instance \eqref{Eq:de1-wedge-de2}) 
it can be proved that 
$\hat r$ lets the Euler form $E_n$ invariant: one has $\hat r^*(E_n)=E_n$.  But contrarily to $\hat c$, the map $\hat r$ does not give rise to an automorphism of the positive part of the oriented grassmannian since, 
as it can be verified straightforwardly, 
 one has 
$$ \hat r \left(  G_2^{or}\big(\mathbf R^{n+3}\big)^{>0}
\right) =G_2^{or}\big(\mathbf R^{n+3}\big)^{<0}\, .
$$
In order to land into $G_2^{or}\big(\mathbf R^{n+3}\big)^{>0}
$, it is necessary to post compose $\hat r$ with the change of orientation map $D$ defined in 
\S\ref{SSS:GrassmannianStuff}. One then obtains a commutative diagram
\begin{equation}
\label{Eq:hat-c}
\begin{tabular}{c}
\xymatrix@R=1cm@C=1.1cm{
G_2^{or}\big(\mathbf R^{n+3}\big)^{>0}  \ar@{->}[r]^{D\circ \hat r} & G_2^{or}\big(\mathbf R^{n+3}\big)^{>0}
 \\
 \ar@{->}[u]^{\gamma}
U \ar@{->}[r]^{R} & U
\ar@{->}[u]_{\gamma}
}
\end{tabular}
\end{equation}

Because $D^*(E_n)=-E_n$  (by Lemma \ref{Lem:Properties-En}.1), it follows that 
one has 
\begin{equation}
\label{Eq:Dr*(En)}
(D\circ \hat r)^*(E_n^n)=(-1)^n E_n^n\, . 
\end{equation} 

Now one has to determine the action of $D\circ \hat r$ on the orientation of the $H_0$-orbits in 
$G_2^{or}\big(\mathbf R^{n+3}\big)^{>0}$.  We will proceed as in the case of $C$, by 
considering the map $\tilde R$ from $U\times (\mathbf R_{>0})^2\times 
 (\mathbf R_{>0})^n$ into itself corresponding to $D\circ \hat r$. 
In the coordinates $u,k$ and $h$ on $U\times (\mathbf R_{>0})^2\times 
 (\mathbf R_{>0})^n$ considered above, one has
 $$
 \tilde R(u,k,h)=\Big( \, \tilde u \, , \, \tilde k  \, , \, \tilde h\, \Big) 
 $$
with 
\begin{align*}
 \tilde u = R(u)
=\left( 
\frac{u_n-u_{n-1-i}}{u_n-u_{n-1}}\right)_{i=1}^n \, , 
\quad 
 \tilde k= 
 \left( \frac{k_1}{h_{ n-1}(u_n-u_{n-1})} \, , \, \frac{h_n}{h_{n-1}}\right) 
 \quad 
 \mbox{and }\quad \tilde  h=
 \left( \,  \frac{ h_{n-1-i}}{h_{n-1}} \, 
\, \right)_{i=1}^n
\end{align*} 
 where we use the following notation: $u_{-1}=0$, $u_0=1$, $h_0=1$  and $h_{-1}=k_2$.



Using the expressions above for the components of $\tilde k$ and $\tilde h$, one easily computes   
what is the $dk\wedge dh$-component of the pull-back of this form under $\tilde R$: 
it is given by  
$$
\big( \partial k\wedge \partial h \big) 
\,\scalebox{1.4}{$\lrcorner$}\, 
\tilde R^* \Big( \, dk \wedge dh\, \Big)= 
\frac{(-1)^{\lfloor n/2\rfloor+1} }{(h_{n-1})^{n+3}(u_n-u_{n-1})}
\, .
$$
The sign of the above expression on 
$U\times (\mathbf R_{>0})^{n+2}$ is that of  $(-1)^{\lfloor n/2\rfloor+1} $. Together with 
\eqref{Eq:Dr*(En)} and because $n+\lfloor n/2\rfloor+1-\lfloor (n-1)/2\rfloor=2\lfloor n/2\rfloor+2$ is even 
for any $n\geq 2$, this gives us the 
\begin{prop}
\label{P:R^*Euler-AR}
Identically on $U$, one has 
\begin{equation}
\label{Eq:R^*Euler-AR-odd}
R^*\Big(  \boldsymbol{\mathcal E}_n^{>0} \Big) =(-1)^{\lfloor (n-1)/2\rfloor} \, \,   \boldsymbol{\mathcal E}_n^{>0}\, .
\end{equation}
\end{prop}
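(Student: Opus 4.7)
The plan is to mirror exactly the strategy just used for Proposition \ref{P:C^*Euler-AR}, carefully tracking the sign contributions from two independent sources. First, I would lift $R$ to the oriented grassmannian via the orthogonal map $\hat r$ of matrix $N_n=\mathrm{diag}(-1,J_{n+2})$, and verify directly (on the matricial parametrization $\gamma$) that $\hat r$ interchanges the positive and negative parts: $\hat r\big(G_2^{or}(\mathbf R^{n+3})^{>0}\big)=G_2^{or}(\mathbf R^{n+3})^{<0}$. Consequently, the lift that fits into a commutative square analogous to \eqref{Eq:hat-c} and that we must work with is $D\circ\hat r$, not $\hat r$. Since $\hat r$ is induced by an orthogonal transformation, formula \eqref{Eq:de1-wedge-de2} (or a direct invariance argument) gives $\hat r^*(E_n)=E_n$, while Lemma \ref{Lem:Properties-En}.1 yields $D^*(E_n)=-E_n$; combining them furnishes the first sign contribution, namely $(D\circ\hat r)^*(E_n^n)=(-1)^n E_n^n$, as already recorded in \eqref{Eq:Dr*(En)}.

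The second contribution comes from the effect of $D\circ\hat r$ on the orientation of the $H_0$-fibers inside $G_2^{or}(\mathbf R^{n+3})^{>0}$. Working in the coordinates $(u,k,h)$ provided by \eqref{Al:lala}, I would compute the representation $\tilde R$ of $D\circ\hat r$ by applying $N_n$ to the matrix $F(u,k,h)$ and then left-multiplying by a suitable element of $H_0$ so as to normalize the first two columns back into the standard form of \eqref{Al:lala}. A direct bookkeeping of this normalization produces, with the conventions $u_{-1}=0$, $u_0=1$, $h_0=1$, $h_{-1}=k_2$, the explicit formulas
$$\tilde u=R(u),\quad \tilde k=\Big(\tfrac{k_1}{h_{n-1}(u_n-u_{n-1})},\tfrac{h_n}{h_{n-1}}\Big),\quad \tilde h=\big(h_{n-1-i}/h_{n-1}\big)_{i=1}^n.$$
An elementary wedge computation then yields
$$(\partial k\wedge\partial h)\,\scalebox{1.4}{$\lrcorner$}\,\tilde R^*(dk\wedge dh)=\frac{(-1)^{\lfloor n/2\rfloor+1}}{(h_{n-1})^{n+3}(u_n-u_{n-1})},$$
whose sign on the domain $U\times(\mathbf R_{>0})^{n+2}$ is exactly $(-1)^{\lfloor n/2\rfloor+1}$.

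Combining the two contributions by the same diagram chase that concluded the proof of Proposition \ref{P:C^*Euler-AR}, the overall multiplier in $R^*(\boldsymbol{\mathcal E}_n^{>0})$ is $(-1)^n\cdot(-1)^{\lfloor n/2\rfloor+1}$, and a quick parity check shows that $n+\lfloor n/2\rfloor+1-\lfloor(n-1)/2\rfloor=2\lfloor n/2\rfloor+2$ is even, identifying this sign with $(-1)^{\lfloor(n-1)/2\rfloor}$, as claimed. The only real technical hurdle I anticipate is step two: since $N_n$ simultaneously negates the first coordinate and fully reverses the last $n+2$, normalizing $N_n\cdot F(u,k,h)$ back into the standard positive form demands strict index discipline, and it is at that step that the parity $\lfloor n/2\rfloor+1$ of the reversal of the $h$-variables (together with the swap between $k_2$ and the relevant $h_i$) enters. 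Once the formulas for $\tilde R$ are secured the rest is bookkeeping.
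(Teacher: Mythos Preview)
Your proposal is correct and follows essentially the same approach as the paper: the same lift $\hat r$ with matrix $N_n$, the same recognition that one must post-compose with $D$ to land back in the positive grassmannian, the same two sign contributions $(-1)^n$ from $(D\circ\hat r)^*(E_n^n)$ and $(-1)^{\lfloor n/2\rfloor+1}$ from the fiberwise orientation, and the same parity identity to combine them. Even the explicit formulas for $\tilde R$ and the Jacobian computation coincide with what the paper records.
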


\noindent {\bf Example \ref{Ex:n=2} (finished).} 
{\it When $n=2$, the birational involution $R$ is given by 
$$R(u_1,u_2)=\left( \frac{u_2-1}{u_2-u_1}, \frac{u_2}{u_2-u_1}\right)\, .$$ 
A straightforward verification gives that it lets 
the relation \eqref{Eq:rm R} entirely invariant.}

The ingredients needed in order to get a transformation formula for $e_{n-1}$ from 
\eqref{Eq:R^*Euler-AR-odd} are similar but not completely the same according to the parity of $n$.  We now set $m=\lfloor n/2\rfloor$. 

Let us first discuss the case when $n$ is odd.  In this situation, $R$ lets invariant the $((n+5)/2)$-th foliation of $\boldsymbol{\mathcal W}_{0,n+3}^{>0}$. This foliation admits the map $\psi_{(n+5)/2}(u)
$ as first integral whose components are the $u_i$'s for $i=1,\ldots,n$ distinct from $(n-1)/2$. 
The component of $\boldsymbol{\mathcal E}_n^{>0}$ with respect to this first integral can be seen to be a multiple of 
\begin{equation}
\label{Eq:momo}
e_{n-1}\Big( u_1,\ldots,u_{(n-3)/2},
\reallywidehat{ { u_{(n-1)/2}}}, u_{(n+1)/2}, \ldots,u_n\Big)
\, \Omega_{{(n+5)/2}}
\end{equation}
with 
$ \Omega_{{(n+5)/2}}=\wedge_{i\neq 
(n-1)/2} 
 du_i=du_1\wedge \cdots \wedge du_{(n-3)/2} \wedge du_{(n+1)/2}\wedge \cdots \wedge du_n$. 

By a direct computation, one gets that 
\begin{equation}
\label{Eq:R*}
R^*\Big( 
\Omega_{{(n+5)/2}}
\Big) =\frac{(-1)^{ \lfloor n/2 \rfloor}}{\big(u_n-u_{n-1} \big)^n} \hspace{0.15cm} \Omega_{{(n+5)/2}}\, .
\end{equation}


Let us now deal with the case when $n$ is even. Setting 
$m=n/2$, one has  $n'= \lceil (n+3)/2\rceil=m+2$ and 
the birational involution  $R$ exchanges the $n'$-th and the $(n'+1)$-th foliations of $\boldsymbol{\mathcal W}_{0,n+3}^{>0}$ (see Figure \ref{Fig:n+3-gon}). The two corresponding first integrals $\psi_{n'}$ and $\psi_{n'+1}$ have for components the $u_i$'s for 
$i=1,\ldots,n$   distinct from $m-1$ and $m$ respectively.  Accordingly, we set $\Omega_{n'}=   \wedge_{i\neq 
m-1} 
 du_i$  and $\Omega_{n'+1}=  \wedge_{i\neq 
m} 
 du_i$. Then, up to a common multiple, 
the components of $\boldsymbol{\mathcal E}_n^{>0}$ with respect to these first integrals can be seen to be 
\begin{align}
\label{Eq:momo-even}
e_{n-1}&\Big( u_1,\ldots,
\reallywidehat{  {\, u_{m-1}\, }},  \ldots,u_n\Big)
\, \Omega_{n'} \\
\mbox{and} \quad 
  -e_{n-1}&\Big( u_1,\ldots, 
\reallywidehat{ \,u_{m}\,} , \ldots,u_n\Big)
\, \Omega_{n'+1}\, .
\nonumber
\end{align}
 On the other hand, a direct computation gives us that when $
 n$ is (even and) strictly bigger than 2, one has 
\begin{equation}
\label{Eq:R*II}
R^*\Big( 
\Omega_{n'}
\Big) =\frac{(-1)^{ \lfloor n/2 \rfloor}}{\big(u_n-u_{n-1} \big)^n} \hspace{0.15cm} \Omega_{n'+1}
\end{equation}
whereas when $n=2$, 
one has $n'=3$ and 
the following relation holds true: 
\begin{equation}
\label{Eq:R*2}
R^*\Big( 
\Omega_{3}
\Big) =\frac{-du_2}{\big(u_2-1 \big)^2} \, 
=\frac{-1}{\big(u_2-1 \big)^2} \,
\Omega_{4}\, .
\end{equation}

We recall that  $m=\lfloor n/2\rfloor\geq 1$. 
We now have everything at hand to get another transformation formula satisfied by $e_{n-1}$. Indeed, from \eqref{Eq:R^*Euler-AR-odd} together with 
\eqref{Eq:momo} and \eqref{Eq:R*} when $n$ is odd, and with 
\eqref{Eq:momo-even} and \eqref{Eq:R*II} (or \eqref{Eq:R*2} if $n=2$) when $n$ is even, we deduce the 
\begin{prop} 
\label{P:Formula-R}
 {\rm 1.} The function $e_{1}$   satisfies the following relation for any $x_2>1$
\begin{equation}
\label{Eq:tokolo}
\frac{e_1\left( 
\frac{x_2}{x_2-1}
\right)}{(x_2-1)^2}=e_1(x_2)\, .
\end{equation}
{\rm 2.}
For any $n$ strictly bigger than 2, the function $e_{n-1}$   satisfies the  relation:  
\begin{equation}
\label{Formula:R*E-even-odd}
 \frac{e_{n-1}\left(  
\frac{u_n-u_{n-2}}{u_n-u_{n-1}},\ldots, 
\reallywidehat{ 
\frac{u_n-u_{m}}{u_n-u_{n-1}}
}, \ldots, 
\frac{u_n-1}{u_n-u_{n-1}}, \frac{u_n}{u_n-u_{n-1}}
\right)}{\big(u_n-u_{n-1} \big)^n}=  
e_{n-1}\Big( u_1,\ldots,
\reallywidehat{u_{m}}, \ldots,u_n\Big)
\end{equation}
for any $u_1,\ldots, u_{m-1} , \reallywidehat{u_{m}},u_{m+1}, \ldots , u_{n}$ such that $1<u_1<\ldots <
u_{m-1}<u_{m+1}<\ldots
<u_n$.
\end{prop}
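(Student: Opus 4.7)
The plan is to derive both transformation formulas by comparing, component by component, the two sides of the identity $R^*(\boldsymbol{\mathcal E}_n^{>0}) = (-1)^{\lfloor (n-1)/2 \rfloor} \boldsymbol{\mathcal E}_n^{>0}$ supplied by Proposition \ref{P:R^*Euler-AR}. Since $R$ corresponds under $\varphi$ to the involution $r\in \mathfrak S_{n+3}$ described in Figure \ref{Fig:n+3-gon}, pulling back $\boldsymbol{\mathcal E}_n^{>0}$ permutes its $(n+3)$ components via $r$. The proof in each case amounts to reading the resulting identity on the components singled out by the fixed points or the single orbit of the involution $r$, and then inserting the explicit pull-back formulas already recorded above for the normalizations $\Omega_{(n+5)/2}$, $\Omega_{n'}$ and $\Omega_{n'+1}$.

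For $n$ odd, first I would single out the foliation whose first integral $\psi_{(n+5)/2}$ is fixed by $R$, so that the corresponding component is mapped to itself (up to the global sign $(-1)^{\lfloor (n-1)/2\rfloor}$). This component is, by \eqref{Eq:momo}, a multiple of $e_{n-1}(u_1,\ldots,\widehat{u_{(n-1)/2}},\ldots,u_n)\,\Omega_{(n+5)/2}$. Substituting the pull-back formula \eqref{Eq:R*}, namely $R^*(\Omega_{(n+5)/2})=(-1)^{\lfloor n/2\rfloor}(u_n-u_{n-1})^{-n}\,\Omega_{(n+5)/2}$, and dividing out the common normalization of the two sides yields \eqref{Formula:R*E-even-odd} after verifying that the total sign $(-1)^{\lfloor n/2\rfloor+\lfloor (n-1)/2\rfloor}=(-1)^{n-1}=+1$ for $n$ odd.

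For $n$ even with $n>2$, I would instead single out the two foliations interchanged by $r$, namely the $n'$-th and $(n'+1)$-th (with $n'=m+2$, $m=n/2$). Equation \eqref{Eq:momo-even} displays the corresponding two components of $\boldsymbol{\mathcal E}_n^{>0}$; applying $R^*$ to the $(n'+1)$-th component and using \eqref{Eq:R*II} produces a differential form proportional to $\Omega_{n'+1}$ times an explicit rational factor, which, equated to the sign-twisted $n'$-th component of $\boldsymbol{\mathcal E}_n^{>0}$, yields \eqref{Formula:R*E-even-odd}. The case $n=2$ is handled in the same way, except that the pull-back formula \eqref{Eq:R*2} replaces \eqref{Eq:R*II} and one recovers \eqref{Eq:tokolo} directly.

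The main obstacle is purely one of sign bookkeeping: three sources of signs must be tracked simultaneously, namely the $(-1)^{i-1}$ appearing in Proposition \ref{P:RA-EULER-a}.3 that defines the $i$-th component of $\boldsymbol{\mathcal E}_n^{>0}$, the factor $(-1)^{\lfloor n/2\rfloor}$ coming from the pull-back of the normalization $\Omega_{(n+5)/2}$ or $\Omega_{n'}$, and the global sign $(-1)^{\lfloor (n-1)/2\rfloor}$ of Proposition \ref{P:R^*Euler-AR}; checking that their product is $+1$ in both parities — using $\lfloor n/2\rfloor+\lfloor (n-1)/2\rfloor=n-1$ and the sign $(-1)^{i-1}=(-1)^{j-1}$ whenever $r$ exchanges indices $i$ and $j$ (they have the same parity) — is elementary but must be done carefully. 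Once this is settled, the equality of the two expressions for the same component of $\boldsymbol{\mathcal E}_n^{>0}$ gives the stated identities on the coefficient functions $e_{n-1}$.
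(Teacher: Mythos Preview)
Your approach is correct and essentially the same as the paper's: combine Proposition~\ref{P:R^*Euler-AR} with the explicit component formulas \eqref{Eq:momo}, \eqref{Eq:momo-even} and the pull-back formulas \eqref{Eq:R*}, \eqref{Eq:R*II}, \eqref{Eq:R*2}, then read off the identity on the distinguished component(s). One small slip in your sign bookkeeping: for $n$ even, the indices $n'$ and $n'+1$ exchanged by $r$ have \emph{opposite} parity (their sum is $n+5$, which is odd), not the same parity as you claim; the required extra $-1$ that makes the total sign come out $+1$ is precisely the explicit minus sign in front of the $(n'+1)$-th component in \eqref{Eq:momo-even}, which you do cite.
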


Remark that in the case when $n=2$, the two functional relations  
given by Proposition \ref{Prop:C*E} and Proposition \ref{P:Formula-R}
 that $e_1$ satisfies  actually coincide 
(equations \eqref{EQ:n=22} and \eqref{Eq:tokolo} are the same!).


\subsubsection{\bf Euler's abelian relation ${\mathcal E}_3^{>0}$.}
\label{SSS:tilde-e2}
Let us write down \eqref{Formula:C*E} and \eqref{Formula:R*E-even-odd} explicitly  when $n=3$: the two corresponding formulas are respectively 
\begin{equation}
\label{Eq:RC-n=3}
\frac{ 
e_2\left( \frac{u_3}{u_3-1 }, 
 \frac{u_3}{u_3-u_1 }
\right)\, u_3 }{(u_3-1)^2(u_3-u_1)^2} =  e_2(u_1,u_3)
\qquad \mbox{ and }\qquad 
\frac{e_2\left(  \frac{u_3-1}{u_3-u_2} , 
 \frac{u_3}{u_3-u_2}
\right)
}{(u_3 - u_2)^3}=e_2(u_2,u_3)\, , 
\end{equation}
and these two equalities  are satisfied for any $u_1,u_2,u_3$ such that $1<u_1<u_2<u_3$. \sk

We have determined above the first terms of the Taylor expansion of $e_2$ at a given base point $u^*$ and noticed that, up to multiplication by a non zero constant,  these terms were the same as those of the Taylor expansion at $u^*$ of the rational function 
$$
\tilde e_2: 
(u_1,u_2)\mapsto \frac{1}{u_1u_2(u_2-1)}\, . 
$$ As elementary verifications show, the function $\tilde e_2$ satisfies both identities 
\eqref{Eq:RC-n=3}
 as well.  That is not a coincidence. \mk 
 
 For $i=1,\ldots,n+3$, denote by $\boldsymbol{AR}( \boldsymbol{\mathcal W}_{0,n+3}^{>0})[i]$ the subspace of $\psi_i^*\Omega^{n-1}$ spanned by the $i$-th components 
of the ARs of  $\boldsymbol{\mathcal W}_{0,n+3}^{>0}$, with respect to the first integrals $\psi_i$. From the results of \S\ref {SS:Components-of-ARs}, one gets that 
 $\boldsymbol{AR}( \boldsymbol{\mathcal W}_{0,n+3}^{>0})[n+3]$ is exactly the 
 space of 2-forms $F(u_1,u_2) du_1\wedge du_2$  where $F$ ranges in the vector space 
of rational functions  spanned by the set 
 $$\mathfrak B_3(u_1,u_2)=
 \left\{\, F_0(u_1,u_2)=\frac{1}{u_1u_2}\, , \, F_{ij}(u_1,u_2)\, \Big\lvert  \begin{tabular}{l}
 $i=2,3$, \, $j=i+1,\ldots,5$
 \end{tabular}
 \right\}  \subset \mathbf Q(u_1,u_2)$$
defined in  \eqref{Eq:mathfrak-Bn}.  It is straightforward to verify that the function 
$\tilde e_2$ defined above  first   belongs to $\langle \mathfrak B_3(u_1,u_2)\rangle $; and secondly and most importantly, is the unique element  of this space (up to multiplication by a non zero constant) satisfying the same identities as 
\eqref{Eq:RC-n=3}.  Then taking into account the determination of the order three jet of $e_2$ at $u^*$ computed above, we deduce that  equality 
\eqref{Eq:e2=1/truc} is indeed indentically satisfied on $U\subset  ]1,+\infty[^2$. 

It is  just a computational matter to write down \eqref{Eq:AR-Euler-cyclique}
in explicit form: Euler's abelian relation  $\boldsymbol{\mathcal E}_3^{>0}$ corresponds to the following identity
\begin{equation}
\label{Eq:E3-explicit}
 \sum_{i=1}^6 (-1)^ i \mathcal E_{3,i}=0
\end{equation}
where the six 2-forms $\mathcal E_{3,i}=\big(C^{\circ 6-i}\big)^*\big(\mathcal E_{3,6}\big)$'s are given by 
$$
\mathcal E_{3,6}=
\frac{ du_1\wedge du_2}{u_1u_2(u_2-1)}\, , \qquad 
\mathcal E_{3,5}= \frac{ du_1\wedge du_3}{u_1u_3(u_3-1)}
\, , \qquad  
\mathcal E_{3,4}=
\frac{ du_2\wedge du_3}{u_2u_3(u_3-1)}
$$
and 
 \begin{align*}
 \mathcal E_{3,3}= & \, 
 \frac{-du_1\wedge du_2}{u_2(u_3 + u_1)}+ \frac{du_1\wedge du_3}{u_3(u_3-u_1)}-\frac{du_2\wedge du_3}{u_2u_3(u_3-u_1)}
\\
\mathcal E_{3,2}= &\, 
\frac{du_1\wedge du_2}{(u_2 - 1)(u_3  u_1)}-\frac{du_1\wedge du_3}{
(u_3 - 1)(u_3 - u_1) }+\frac{(u_1-1)\, du_2\wedge du_3}{
(u_2-1)(u_3-1)(u_3-u_1)
}
\\
 \mathcal E_{3,1}= &\, 
\frac{-u_3\, du_1\wedge du_2}{u_1u_2(u_2-1)(u_3-u_1)}+\frac{du_2\wedge du_3}{u_1(u_3-1)(u_3-u_1)}
- \frac{(u_1-1)\, du_2\wedge du_3}{u_2(u_2-1)(u_3-1)(u_3-u_1)}\,.
\end{align*}

Obviously, $\boldsymbol{\mathcal E}_3^{>0}$ is rational.  Actually, 
since all the ARs of $\boldsymbol{\mathcal W}_{0,6}$ are combinatorial, one can state the following striking result: 
\begin{prop}
Euler's abelian relation $\boldsymbol{\mathcal E}_3^{>0}$ is combinatorial:   one has
$$\boldsymbol{\mathcal E}_3^{>0} \in \boldsymbol{AR}_C\big( 
\boldsymbol{\mathcal W}_{0,n+3}
\big)\, .$$
\end{prop}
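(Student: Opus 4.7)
The plan is that almost all the work has been done already and the proof reduces to two short observations. First, the explicit expressions for the components $\mathcal E_{3,i}$ displayed just above the proposition are manifestly rational 2-forms in the coordinates $u_1,u_2,u_3$. Hence the 6-tuple $\boldsymbol{\mathcal E}_3^{>0}$ extends from the positive part $\mathcal M_{0,6}^{\,>0}$ to a globally defined rational abelian relation on the complex moduli space $\mathcal M_{0,6}$, yielding an element of $\boldsymbol{AR}(\boldsymbol{\mathcal W}_{0,6})$. Second, by Proposition~\ref{Prop:tololo} (established in Section~\ref{S:n=3} via the algebraic geometry of Segre's cubic $\boldsymbol{S}$ and its Fano surface) one has the equality $\boldsymbol{AR}(\boldsymbol{\mathcal W}_{0,6}) = \boldsymbol{AR}_C(\boldsymbol{\mathcal W}_{0,6})$, so every abelian relation of $\boldsymbol{\mathcal W}_{0,6}$ is combinatorial, and the statement follows.

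In more detail, I would first invoke Proposition~\ref{P:Properties-Enk}.2 (guaranteeing $\mathscr E_2^3\neq 0$) together with Proposition~\ref{P:RA-EULER-a}.3 to ensure $\boldsymbol{\mathcal E}_3^{>0}$ is a nontrivial element of $\boldsymbol{AR}(\boldsymbol{\mathcal W}_{0,6}^{\,>0})$. Then I would point out that rationality is the only subtle point, and justify it by reviewing the derivation given just above the proposition: the identification $e_2(u_1,u_2)=\mathrm{const}/(u_1u_2(u_2-1))$ rested on (i) computing the fourth-order jet of $e_2$ at $u^*=(2,3)$ from the integral formula \eqref{Eq:e-m(u)}; (ii) observing that this jet matches that of the proposed rational expression; and (iii) noting that within the finite-dimensional space $\langle \mathfrak B_3(u_1,u_2)\rangle$ of $U_6$-components of combinatorial ARs (Proposition~\ref{P:La-Base}), the two transformation identities \eqref{Eq:RC-n=3} coming from Propositions~\ref{Prop:C*E} and \ref{P:Formula-R} leave only a one-dimensional solution space. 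Once $e_2$ is pinned down as a rational function, the remaining components $\mathcal E_{3,i}$ are obtained as cyclic pullbacks $\bigl(C^{\circ (6-i)}\bigr)^*\!\mathcal E_{3,6}$ and are therefore rational as well.

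The main (and essentially only) conceptual obstacle is to certify that no transcendental terms lurk in $\mathcal E_{3,i}$; but precisely because $e_2$ is determined by its transformation identities inside the rational space $\langle \mathfrak B_3\rangle$, there is no room for such terms. One could alternatively bypass even this check by noting that any abelian relation of $\boldsymbol{\mathcal W}_{0,6}$ — rational or not — must, by Proposition~\ref{Prop:tololo}, already be a combinatorial one; so the combinatorial nature of $\boldsymbol{\mathcal E}_3^{>0}$ is forced by the mere fact that it is a (nonzero) abelian relation. This second route is shorter but less informative, since writing out the explicit components makes it visible how $\boldsymbol{\mathcal E}_3^{>0}$ decomposes in the explicit basis of $\boldsymbol{AR}_C(\boldsymbol{\mathcal W}_{0,6})$ constructed in \S\ref{SS:ARW06-as-a-S6-module}, an expansion that will be useful for the $\mathfrak S_6$-equivariance analysis elsewhere in the paper.
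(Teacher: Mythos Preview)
Your proposal is correct and matches the paper's approach exactly: the paper does not give a separate proof but simply observes, in the sentence preceding the proposition, that $\boldsymbol{\mathcal E}_3^{>0}$ is visibly rational from the explicit formulas for the $\mathcal E_{3,i}$'s and that, since all ARs of $\boldsymbol{\mathcal W}_{0,6}$ are combinatorial (Proposition~\ref{Prop:tololo}), the claim follows. Your ``second route'' is in fact the paper's entire argument; the rationality step is informative but, as you note, logically dispensable once one knows $\boldsymbol{AR}(\boldsymbol{\mathcal W}_{0,6})=\boldsymbol{AR}_C(\boldsymbol{\mathcal W}_{0,6})$.
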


\subsubsection{\bf  Euler's abelian relations and their behavior with respect to the 
action of $\mathfrak S_{n+3}$} 
\label{SSS:Sn+3-Invariance-Properties}
We now want discuss the  questions 
\eqref{Eq:Remain-2-be-answered} raised above. 
\sk

Let us start by considering the first one:  {\it `Does the construction given above of $\boldsymbol{\mathcal E}_n^{>0}$ depend on some choices? If yes, on which ones?'} 
The main object we used 
to construct $\boldsymbol{\mathcal E}_n^{>0}$  on 
${\mathcal M}_{0,n+3}^{>0}$ in section \S\ref{SSS:positive-Euler-AR} is the  map 
$\gamma=\xi\circ M: U\rightarrow G_2^{or}(\mathbf R^{n+3})^{>0}$ defined in diagram 
\eqref{Diag:gamma-triangle}.  Up to the identification \eqref{Eq:Varphi-U} between $U$ and $\mathcal M_{0,n+3}^{>0}$, $\gamma$  corresponds to a section over 
$\mathcal M_{0,n+3}^{>0}$ of the natural map $\widehat{G}_2^{or}(\mathbf R^{n+3})\rightarrow \mathcal M_{0,n+3}$ obtained by composing the arrows from the top left corner to the bottom right corner in diagram \eqref{Eq:Diagrammm}.  Considering this diagram is helpful to better understand $\boldsymbol{\mathcal E}_n^{>0}$: 
it shows that from an intrinsic point of view,  $\boldsymbol{\mathcal E}_n^{>0}$ 
is not defined on $\mathcal M_{0,n+3}^{>0}$ but 
rather on one of the two components 
over $\mathcal M_{0,n+3}^{>0}$ in $\mathcal M_{0,n+3}^{\, or}(\mathbf R)$.

Let us write 
$$
\nu^{-1}\Big( \mathcal M_{0,n+3}^{>0}\Big)=
\mathcal M_{0,n+3}^{\, or,\, >0} \sqcup \mathcal M_{0,n+3}^{\, or\, , <0}\, 
$$
this notation being justified by the fact that one has $ 
\mathcal M_{0,n+3}^{\, or,\, > 0}(\mathbf R)= G_2^{or}(\mathbf R^{n+3})^{>0} / H
\subset \mathcal M_{0,n+3}^{\, or} (\mathbf R)
$ and similarly for the negative setting (corresponding to formally replacing $>$ by $<$). 
Actually \eqref{Eq:Varphi-U} is an identification between $U$ and 
$\mathcal M_{0,n+3}^{\, or,\, >0}$ and $\gamma$ corresponds to a section of 
$$\pi_H: 
\widehat{G}_2^{or}(\mathbf R^{n+3})\longrightarrow \mathcal M_{0,n+3}^{\, or}(\mathbf R)
$$ 
over the latter component.  Since $H\simeq (\mathbf R_{>0})^{n+2}\times (\mathbf Z_2)^{n+2}$, there is no canonical choice for such a section: the section $\gamma$ we have considered is the one landing in $G_2^{or}(\mathbf R^{n+3})^{>0}$. Any other section is of the form $\gamma_{\underline{\varepsilon}}=\xi\circ M_{\underline{\varepsilon}}$ 
for $\underline{\varepsilon}=(\varepsilon_i)_{i=1}^{n+2}\in \{ \pm 1\, \}^{n+3}$
 where $M_{\underline{\varepsilon}}: U\rightarrow \Omega_2$ is the map 
 $u\mapsto M_u\cdot {\rm Diag}(\varepsilon_1,\ldots,\varepsilon_{n+3})$, 
{\it i.e.}\,for $u\in U$,  $M_{\underline{\varepsilon}}(u)$ is the matrix whose  $i$-th column is $\varepsilon_i$ times the corresponding column of  $M_u$  ({\it cf.}\,\eqref{Eq:Map-M}).  For each section $\gamma_{\underline{\varepsilon}}$, similar arguments  to those in \S\ref{SSS:positive-Euler-AR} apply. But in view of  Lemma \ref{Lem:Properties-En}.2, the Eulerian AR one constructs using $\gamma_{\underline{\varepsilon}}$ actually coincides with the one obtained 
by means of $\gamma$, which is $\boldsymbol{\mathcal E}_{n}^{>0}$.  This shows that this AR is well-defined on  $\mathcal M_{0,n+3}^{\, or,\, >0}$. 

But a totally similar construction can be made on the other component $\mathcal M_{0,n+3}^{\, or,\, <0}$ of  the inverse image of 
$\mathcal M_{0,n+3}^{\, >0}$ by $\nu$.
 One obtains a well-defined Eulerian AR on this component, that we will denote accordingly by 
$\boldsymbol{\mathcal E}_{n}^{<0}$.  In order to construct it in coordinates, that is on $U$, one just has to consider the map $\check{\gamma}=D\circ \gamma : U\rightarrow 
G_2^{or}(\mathbf R^{n+3})^{<0}$. Since the pull-back under $D$ of Euler's 2-form $E_n$ is $-E_n$, we obtain that $D^*(E_n^n)=(-1)^n E_n^n$, which gives us the following 
\begin{lem} 
\label{Lem:+-}
Let $\nu^{>0}$ and $\nu^{<0}$ be the restrictions of $\nu$ to $\mathcal M_{0,n+3}^{\, or,\, >0}$ and $\mathcal M_{0,n+3}^{\, or,\, <0}$ respectively.

1.  One has $\big( \nu^{>0}\big)_*\big( \boldsymbol{\mathcal E}_{n}^{>0}\big)=(-1)^n \big(\nu^{<0}\big)_*\big( \boldsymbol{\mathcal E}_{n}^{<0}\big)$. 

2. Consequently, Euler's abelian relation $\boldsymbol{\mathcal E}_{n}^{>0}$   constructed on 
$\mathcal M_{0,n+3}^{>0}$ in \S\ref{SSS:positive-Euler-AR} is 
\begin{itemize}
\item[$a.$] 
\vspace{-0.2cm}
well-defined when $n$ is even;\sk
\item[$b.$]  only defined up to sign if $n$ is odd. 
\end{itemize}
\end{lem}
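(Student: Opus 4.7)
The plan is to reduce both parts of the lemma to the single equality
\[
D^{*}\boldsymbol{\mathcal E}_n^{<0} \;=\; (-1)^n\,\boldsymbol{\mathcal E}_n^{>0},
\]
where $D$ is the deck transformation of the double cover $\nu$, regarded as an isomorphism $\mathcal M_{0,n+3}^{\,or,\,>0} \xrightarrow{\sim} \mathcal M_{0,n+3}^{\,or,\,<0}$. Granted this identity, point 1 follows by a direct bookkeeping computation: $\nu^{>0}$ and $\nu^{<0}$ are bijections onto $\mathcal M_{0,n+3}^{>0}$ satisfying $\nu^{<0}\circ D = \nu^{>0}$ on $\mathcal M_{0,n+3}^{\,or,\,>0}$, so that writing $\sigma^{>0} = (\nu^{>0})^{-1}$ one finds
\[
(\nu^{<0})_{*}\boldsymbol{\mathcal E}_n^{<0} = (\sigma^{>0})^{*}\bigl(D^{*}\boldsymbol{\mathcal E}_n^{<0}\bigr) = (-1)^n\,(\sigma^{>0})^{*}\boldsymbol{\mathcal E}_n^{>0} = (-1)^n\,(\nu^{>0})_{*}\boldsymbol{\mathcal E}_n^{>0},
\]
which is the claim. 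Point 2 is then formal: for $n$ even the two push-forwards coincide, so the AR descends canonically to $\mathcal M_{0,n+3}^{>0}$; for $n$ odd they differ by a global sign, and the descent is only well-defined modulo $\pm 1$, the ambiguity being exactly the choice of which component above $\mathcal M_{0,n+3}^{>0}$ to work on.

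For the central equality, the strategy is to run the Gelfand--MacPherson construction of \S\ref{SSS:positive-Euler-AR} in parallel on the two oriented components. A section of $\pi_H$ over $\mathcal M_{0,n+3}^{\,or,\,<0}$ analogous to $\gamma$ is furnished by $\check{\gamma} = D\circ \gamma$, and the analogues of the maps $\Phi_U$, $\overline{\Phi}_U$, $\tau_i$, $\delta_i$ entering diagram \eqref{Diag:Gros} are obtained by post-composing with $D$. The only input to the construction that fails to commute with $D$ is the Euler form itself: Lemma \ref{Lem:Properties-En}.1 gives $D^{*}(E_n) = -E_n$, so that $D^{*}(E_n^n) = (-1)^n E_n^n$. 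Integration along the hypersimplex fibers, the Stokes identity \eqref{Eq:dE-n-n}, the vanishing of $\mathscr E_n^n$ from Proposition \ref{P:Properties-Enk}.1, and the restriction identity of Lemma \ref{L:Pull-backs}.1 are all natural with respect to $D$; consequently every stage of the construction of $\boldsymbol{\mathcal E}_n^{<0}$ differs from the corresponding stage for $\boldsymbol{\mathcal E}_n^{>0}$ by the single global factor $(-1)^n$ inherited from $E_n^n$, and the pulled-back identity $D^{*}\boldsymbol{\mathcal E}_n^{<0} = (-1)^n\,\boldsymbol{\mathcal E}_n^{>0}$ follows.

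The main bookkeeping point I expect to require care is independence of the construction of $\boldsymbol{\mathcal E}_n^{<0}$ from the particular choice of section $\check{\gamma}$, so that the recipe does produce the intrinsic Euler relation on $\mathcal M_{0,n+3}^{\,or,\,<0}$ and not merely one of many possible representatives. This reduces to the $\widetilde K$-invariance of $E_n$ recorded in Lemma \ref{Lem:Properties-En}.2 combined with the fact that fiber integration against the $H_0$-action depends only on the underlying orbit; together these show that different sections give the same form on $\mathcal M_{0,n+3}^{\,or,\,<0}$. Once this independence is confirmed, everything else is routine tracking of a sign through the chain of natural constructions.
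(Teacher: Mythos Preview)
Your proposal is correct and follows essentially the same argument as the paper. The paper's proof (given in the paragraph immediately preceding the lemma) is exactly the observation you make: construct $\boldsymbol{\mathcal E}_n^{<0}$ using the section $\check{\gamma}=D\circ\gamma$, invoke Lemma~\ref{Lem:Properties-En}.1 to get $D^*(E_n^n)=(-1)^n E_n^n$, and let this sign propagate through the fiber-integration construction; the independence from the choice of section is handled via Lemma~\ref{Lem:Properties-En}.2, just as you indicate.
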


Albeit rather elementary, this result is of crucial importance regarding the birational 
action of $\mathfrak S_{n+3}$ on the abelian relations of 
$\boldsymbol{\mathcal W}_{0,n+3}$. \sk 

A first remark, is that when $n$ is odd, the notation $\boldsymbol{\mathcal E}_n^{>0}$ is a bit misleading since it may let the reader think that 
there is a more natural/canonical Eulerian AR among $\boldsymbol{\mathcal E}_n^{>0}$
 and $-\boldsymbol{\mathcal E}_n^{>0}$. As it can be verified easily (using one of the two formulas \eqref{P:C^*Euler-AR} or \eqref{P:R^*Euler-AR} for instance) this is not the case: there is no non arbitrary way to distinguish one from the other.    As we will see below, this has non trivial consequences regarding the invariance properties 
satisfied (or not) by Euler's abelian relation on $\mathcal M_{0,n+3}^{or,>}(\mathbf R)$.  
Note however that 
in spite of this, it does not make  null and void some invariance results stated in
\S\ref{SSS:Dihedral-Invariance-Properties} (namely Proposition \ref{P:C^*Euler-AR} and Proposition \ref{P:Formula-R}) 
 when $n$ is odd: 
 it would just be necessary to make the statements of these propositions more precise in this case, which 
is easy to do and  
is left to the reader.

Secondly, the construction above is in no way specific to the positive component of $\mathcal M_{0,n+3}(\mathbf R)$. It can be generalized to any component of it: for any $\boldsymbol{\sigma}\in \mathfrak K_{n+3}$, we construct an Eulerian AR  on $\mathcal M(\boldsymbol{\sigma})$
(of the web $\boldsymbol{\mathcal W}_{0,n+3}$) which is well-defined, but only up to sign when $n$ is odd. In any case, we denote by $\boldsymbol{\mathcal E}_n^{\boldsymbol{\sigma}}$ this (when $n$ is even) or one of these two (when $n$ is odd) abelian relations, the choice of $\boldsymbol{\mathcal E}_n^{\boldsymbol{\sigma}}$ instead of its opposite 
being 
 arbitrary when $n$
 is odd. The span $\langle  \boldsymbol{\mathcal E}_n^{\boldsymbol{\sigma}}\rangle$  is a well-defined  1-dimensional vector subspace of the space $\boldsymbol{AR}(\boldsymbol{\sigma})$ of abelian relations of the web 
$\boldsymbol{\mathcal W}_{0,n+3}^{\, \boldsymbol{\sigma}}$. 
Since 
the  permutations in $ \mathfrak S_{n+3}$ induce isomorphisms between the
$\boldsymbol{AR}(\boldsymbol{\sigma})$'s and because these 
spaces are pairwise non-identical, it is necessary to have a fixed way to identify  them all to one of them, 
say $\boldsymbol{AR}(\boldsymbol{1})
=\boldsymbol{AR}\big(\boldsymbol{\mathcal W}_{0,n+3}^{\, >0}\big)$, by means of fixed 
isomorphisms $\Xi^{\boldsymbol{\sigma}}: \boldsymbol{AR}(\boldsymbol{\sigma})\rightarrow \boldsymbol{AR}(\boldsymbol{1})$, one  for each  $\boldsymbol{\sigma}$. 

Of course, it is natural to assume that these isomorphisms satisfy the following properties: first, one should have $\Xi^{\boldsymbol{1}}={\rm Id}$. Secondly,
since the space $\boldsymbol{AR}_{R}\big(\boldsymbol{\mathcal W}_{0,n+3}\big)$
of rational ARs of $\boldsymbol{\mathcal W}_{0,n+3}$ (on the whole moduli space 
${\mathcal M}_{0,n+3}(\mathbf R)$ or even on its complexification ${\mathcal M}_{0,n+3}$) naturally embeds into $\boldsymbol{AR}(\boldsymbol{\sigma})$ for any 
$\boldsymbol{\sigma}$, a natural property that the isomorphisms  $\Xi^{\boldsymbol{\sigma}}$ must all satisfy is that they coincide with the identity when taking their restriction along $\boldsymbol{AR}_{R}\big(\boldsymbol{\mathcal W}_{0,n+3}\big)$. 
Finally, if one expects the Euler ARs to give rise to a 1-dimensional representation,
then each $\Xi^{\boldsymbol{\sigma}}(  \boldsymbol{\mathcal E}_n^{\boldsymbol{\sigma}})$ must be a non trivial multiple of $ \boldsymbol{\mathcal E}_n^{>0}$ for any $\boldsymbol{\sigma}$ (which makes sense even in the case when $n$ is odd).

To summarize, in order that the action of $\mathfrak S_{n+3}$ on the space(s) of abelian relations of the web(s) under scrutiny give rise to a representation with the expected properties, it is necessary that the  $\Xi^{\boldsymbol{\sigma}}$'s satisfy the following properties:
\begin{equation}
\label{Eq:Proprio-Xi-sigma}
\begin{tabular}{l}
$(i).$
\hspace{0.1cm}
 one has \,   $  \Xi^{\boldsymbol{1}}={\rm Id}_{
\boldsymbol{AR}(\boldsymbol{1})}
$;
\sk 
\\ 
${}^{}$ \hspace{-0.3cm} $(ii).$ \, for any 
$ \boldsymbol{\sigma}\in 
  \mathfrak K_{n+3}$, one has \, 
  $\Xi^{\boldsymbol{\sigma}}\lvert_{
 \scalebox{0.6}{$
 \boldsymbol{AR}_{R}\big(\boldsymbol{\mathcal W}_{0,n+3}\big)$}}={\rm Id}
_{
 \scalebox{0.6}{$
 \boldsymbol{AR}_{R}\big(\boldsymbol{\mathcal W}_{0,n+3}\big)$}} $;\sk \\ 
 ${}^{}$ \hspace{-0.3cm}$(iii).$ \, for any 
$ \boldsymbol{\sigma}\in 
  \mathfrak K_{n+3}$, there exists $\lambda^{\boldsymbol{\sigma}}\neq 0$ such that 
  $\Xi^{\boldsymbol{\sigma}}\big( 
 \boldsymbol{\mathcal E}_n^{\boldsymbol{\sigma}} 
  \big)=
 \lambda^{\boldsymbol{\sigma}}\,  \boldsymbol{\mathcal E}_n^{>0}$.
\end{tabular}
\end{equation}

We are going to going to consider the two cases according to the parity of $n$ separately, this because there exist isomorphisms satisfying  \eqref{Eq:Proprio-Xi-sigma} when $n$ is even, whereas this does not seem to be the case when n is odd (which we prove explicitly when n is equal to 3)
\begin{center}
$\star$
\end{center}

\paragraph{\bf Case when $n$ is even.} 
When $n$ is even, essentially all the results claimed in \cite{D} are indeed satisfied. Actually in this case, it is not necessary to work with oriented grassmannians but 
everything can be obtained within the (let say) `classical' Gelfand-MacPherson theory developed in \cite{GelfandMacPherson}. For this reason, 
the treatment below of the case when $n$ is even is rather concise.  
\sk 

We then assume that $n\geq 2$ is even: hence $m=n/2$ is a positive integer.  
In this case, one has $E_n^n=\big(E_n^2\big)^m$ and Euler's AR can be constructed from 
$P_1=E_n^2$. This invariant 4-form represents the first Pontryagin class of the oriented tautological bundle  $\mathcal T^{or}$ hence is the pull-back under the 2-to-1 projection 
$G_2^{or}(\mathbf R^{n+3})\rightarrow G_2(\mathbf R^{n+3})$ of the 
 invariant  4-form on $G_2(\mathbf R^{n+3})$, denoted by the same notation, and which 
represents the first Pontryagin class of $\mathcal T$. Then the $2n$-form $P_1^m$ on 
$G_2(\mathbf R^{n+3})$ is `leading' according to the terminology of Gelfand-MacPherson's paper ({\it cf.}\,\S1.3.4 therein).   Then from 
\cite[\S1.3.2]{GelfandMacPherson}, one constructs the abelian relation 
$ \boldsymbol{\mathcal E}_n^{\boldsymbol{\sigma}} $ for any $\boldsymbol{\sigma}$, 
which is well-defined on any component $\mathcal M(\boldsymbol{\sigma})$ 
 of $\mathcal M_{0,n+3}(\mathbf R^{n+3})$.  This explains  more conceptually the point {\it 2.a.}\,of Lemma \ref{Lem:+-}.

But the results of \cite{GelfandMacPherson} can also be used to investigate the  invariant properties of the Euler ARs.  Contrarily to when dealing with oriented grassmammians, in which case the choices of lifts of permutations can be a bit subtle, here we will only consider the naive lifts $\tilde \sigma \in {\rm Aut}\big( G_2(\mathbf R^{n+3})\big)$ of permutations $\sigma\in \mathfrak S_{n+3}\simeq {\rm Aut}\big( \mathcal M_{0,n+3}\big)$, whose  definition we recall:  $\tilde \sigma$ is the automorphism of the grassmannian induced by the linear map $\mathbf R^{n+3}\rightarrow 
\mathbf R^{n+3}$, $ (x_i)_{i=1}^{n+3}\mapsto 
(x_{\sigma(i)})_{i=1}^{n+3}$, also denoted by $\tilde \sigma$.  It is well-known that 
the latter is an orthogonal transformation. On the other hand, it is proved in   \cite[Corollary 3.25]{GelfandMacPherson}  that the 4-form $P_1$  on $G_2(\mathbf R^{n+3})$ not only is left invariant under the action of ${\rm SO}_{n+3}(\mathbf R)$,  but also under that of the full orthogonal group 
${\rm O}_{n+3}(\mathbf R)$. Thus one has $\tilde \sigma_*( P_1)=P_1$ hence 
$\tilde \sigma_*( P_1^m)=P_1^m$  for any permutation $\sigma$.   
On the other hand, in the case when $\sigma$ is a transposition, one verifies easily that 
 $\tilde \sigma$ reverses the natural orientation of the $H_0$-orbits. 
 
From all the preceding considerations, one deduces the 
\begin{prop} 
\label{P=sign(sigma)}
For any  ${\sigma}\in \mathfrak S_{n+3}$: 
\begin{enumerate}  
 \item[1.]
 \vspace{-0.15cm}
 one has  $\tilde \sigma^*\big(   \boldsymbol{\mathcal E}_n^{\boldsymbol{\sigma}}
 \big)  = {\bf sgn}(\sigma) \, \boldsymbol{\mathcal E}_n^{>0}
$;
\mk 
 \item[2.] Euler's abelian relation  $ \boldsymbol{\mathcal E}_n^{\boldsymbol{\sigma}}$ does not belong to $\boldsymbol{AR}_C({\boldsymbol{\mathcal W}}_{0,n+3})$ hence 
 \begin{equation}
 \label{Eq:AR_C+E-sigma}
 \boldsymbol{AR}({\boldsymbol{\sigma}})=\boldsymbol{AR}_C\big({\boldsymbol{\mathcal W}}_{0,n+3}\big)\oplus 
 \big\langle   \boldsymbol{\mathcal E}_n^{\boldsymbol{\sigma}}\big\rangle \, .
 \end{equation}
\end{enumerate}
 \end{prop}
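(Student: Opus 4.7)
The two statements correspond to two distinct arguments: Part 1 is a sign-tracking computation in the construction of $\boldsymbol{\mathcal{E}}_n^{\boldsymbol{\sigma}}$, while Part 2 is a non-rationality statement that, combined with Corollary \ref{C:M=n-or-M=n+1}, will force the direct sum decomposition.

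\textbf{Proof plan for Part 1.} Since $\text{sgn}$ is a character and $\sigma \mapsto \tilde\sigma^*$ is contravariant-multiplicative, I would first reduce to the case of a transposition. For $\sigma = (i,j)$, the lift $\tilde\sigma \in {\rm O}_{n+3}(\mathbf{R})$ is a reflection (determinant $-1$). The construction of Euler's AR on $\mathcal{M}(\boldsymbol{\sigma})$ proceeds by (i) lifting to the grassmannian via a section $\gamma^{\boldsymbol{\sigma}}$, (ii) integrating $P_1^m = E_n^n$ along $H_0$-orbits with their natural orientation, and (iii) applying the Stokes-type argument of \S\ref{SSS:positive-Euler-AR}. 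Invoking the full orthogonal invariance of $P_1$ (Corollary 3.25 of \cite{GelfandMacPherson}), $\tilde\sigma$ preserves $P_1^m$. Hence the pull-back $\tilde\sigma^*(\boldsymbol{\mathcal{E}}_n^{\boldsymbol{\sigma}})$ is built from the same invariant form but on the section $\tilde\sigma\circ\gamma^{\boldsymbol{\sigma}}$, which by \S5.1.5 is equivalent (up to independence of the section) to $\gamma$ itself; the sole net effect is the action of $\tilde\sigma$ on the orientation of the generic $H_0$-orbit. A direct computation in the parametrization \eqref{Al:lala} (analogous to but simpler than the one producing the factor $(-1)^n$ for $\hat{c}$ in \S5.1.4 page \pageref{Par:CCCCC*}) shows that a transposition reverses this orientation, contributing a sign $-1 = \text{sgn}((i,j))$. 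This yields $\tilde\sigma^*(\boldsymbol{\mathcal{E}}_n^{\boldsymbol{\sigma}}) = -\boldsymbol{\mathcal{E}}_n^{>0}$, and the general case follows by writing $\sigma$ as a product of transpositions.

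\textbf{Proof plan for Part 2.} The route is to establish $\boldsymbol{\mathcal{E}}_n^{\boldsymbol{\sigma}} \notin \boldsymbol{AR}_C(\boldsymbol{\mathcal{W}}_{0,n+3})$, and then invoke Corollary \ref{C:M=n-or-M=n+1}. For the non-combinatoriality, I would use the fact (recorded in the introduction and verified explicitly in \S\ref{SS:En-n-even} up to $n = 12$) that for $n$ even the components of $\boldsymbol{\mathcal{E}}_n^{\boldsymbol{\sigma}}$ involve genuinely transcendental (logarithmic) terms: this can be read from the integral formula \eqref{Eq:e-m(u)}, which for $n = 2$ recovers a derivative of Rogers' dilogarithm and more generally cannot define a rational function of $u$ because of its asymptotic behavior at the walls of $U$. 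Since every $AR_{i,j}$ is rational by construction (pull-back of $AR_{n+2,n+3}$ under the birational map $T_{i,j}\circ T_{n+2,n+3}$), any linear combination is rational, so a non-rational AR cannot lie in $\boldsymbol{AR}_C$. Consequently $\boldsymbol{AR}_C \cap \langle\boldsymbol{\mathcal{E}}_n^{\boldsymbol{\sigma}}\rangle = 0$, so the sum is direct. By Corollary \ref{C:M=n-or-M=n+1} the invariant $\eta$ equals either $n$ or $n+1$, and exhibiting the non-combinatorial AR $\boldsymbol{\mathcal{E}}_n^{\boldsymbol{\sigma}}$ forces $\eta = n$; hence $\dim\boldsymbol{AR}_C = (n+1)(n+2)/2 - 1$. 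Adding the one-dimensional $\langle\boldsymbol{\mathcal{E}}_n^{\boldsymbol{\sigma}}\rangle$ reaches the Damiano upper bound $(n+1)(n+2)/2$ of Proposition \ref{Prop:Damiano'sBound}, forcing equality $\boldsymbol{AR}(\boldsymbol{\sigma}) = \boldsymbol{AR}_C \oplus \langle\boldsymbol{\mathcal{E}}_n^{\boldsymbol{\sigma}}\rangle$ as claimed.

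\textbf{Main obstacle.} The delicate point is the non-rationality of $e_{n-1}$ for all even $n \geq 2$, since in full generality I do not have a closed form. Two fallback routes deserve mention: (a) a representation-theoretic argument using Part 1, noting that if $\boldsymbol{\mathcal{E}}_n^{\boldsymbol{\sigma}}$ were combinatorial, then the sign representation of $\mathfrak{S}_{n+3}$ would be a subrepresentation of $\boldsymbol{AR}_C$, and the transformation formulas of Lemma \ref{L:(i,j)-(k,l)} together with a character computation on the generating set $\{AR_{i,j}\}$ can be used to rule this out; (b) a direct argument via the transformation formulas \eqref{Formula:C*E} and \eqref{Formula:R*E-even-odd}, showing no rational function in the relevant span $\langle \boldsymbol{\mathfrak{B}}_n(V_i)\rangle$ satisfies both dihedral identities jointly. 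Either route should close the gap, but the first (character-theoretic) argument is likely the cleanest and most uniform across even $n$.
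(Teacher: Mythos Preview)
Your treatment of Part 1 is correct and coincides with the paper's: full ${\rm O}_{n+3}(\mathbf R)$-invariance of $P_1$ (hence of $P_1^m=E_n^n$) combined with the fact that a transposition reverses the natural orientation of the $H_0$-orbits.

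For Part 2, your primary route via non-rationality of $e_{n-1}$ carries exactly the gap you identify: the logarithmic closed form \eqref{Eq:Euler-varepsilon} is only verified for $n\leq 12$, and nothing in the integral representation \eqref{Eq:e-m(u)} by itself rules out rationality for general even $n$. The paper does not attempt this and instead takes precisely your fallback (a), but in a sharper form that requires no further character computation. By Part 1, if $\boldsymbol{\mathcal E}_n^{>0}$ lay in $\boldsymbol{AR}_C(\boldsymbol{\mathcal W}_{0,n+3})$ then $\langle\boldsymbol{\mathcal E}_n^{>0}\rangle$ would be a $\mathfrak S_{n+3}$-invariant line inside $\boldsymbol{AR}_C$. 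But Corollary~\ref{P:AR-C-as-a-S(n+3)-module}.1 has already established, \emph{unconditionally} on the value of $\eta$, that $\boldsymbol{AR}_C(\boldsymbol{\mathcal W}_{0,n+3})$ is an irreducible $\mathfrak S_{n+3}$-module; since its dimension exceeds $1$, it contains no invariant line. This is the whole argument: you do not need to compute characters on the $AR_{i,j}$'s or invoke Lemma~\ref{L:(i,j)-(k,l)} again, because the irreducibility was proved independently in \S\ref{SS:Proof-of-THM:AR-C(W)-S-n+3-module}. The decomposition \eqref{Eq:AR_C+E-sigma} then follows from the dimension count, exactly as you say.
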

\begin{proof} The first point  follows from the arguments given just before the proposition so let us deal with the second. 
 If  $\boldsymbol{\mathcal E}_n^{\boldsymbol{\sigma}}$ belongs to
$\boldsymbol{AR}_C\big({\mathcal W}_{0,n+3}\big)$ for one 
permutation ${\sigma}$, this holds true for all according to 1. 
We thus have a line $\langle  \boldsymbol{\mathcal E}_n^{>0} \rangle
= \langle  \boldsymbol{\mathcal E}_n^{>0} \rangle$ in the space of combinatorial ARs, which is invariant by $\mathfrak S_{n+3}$. But this is 
impossible since, as proved by Damiano (see Corollary \ref{P:AR-C-as-a-S(n+3)-module}.1 here),  
$\boldsymbol{AR}_C\big({\boldsymbol{\mathcal W}}_{0,n+3}\big)$ is an irreducible 
$\mathfrak S_{n+3}$-representation. 
 Thus for any $\boldsymbol{\sigma}$, one has 
 $\boldsymbol{\mathcal E}_n^{\boldsymbol{\sigma}}\not \in \boldsymbol{AR}_C\big({\boldsymbol{\mathcal W}}_{0,n+3}\big)$ and the decomposition in direct sum \eqref{Eq:AR_C+E-sigma} follows from dimensional considerations. 
\end{proof}

The first point of this proposition answers 
question \eqref{Eq:Remain-2-be-answered}.$ii.$
 when $n$ is even.\mk 
\begin{rem}
\label{Rem:n-odd-vs-n-even}
{\rm 1.} When $n$ is odd, one cannot do as above and get an $(n-1)$-abelian relation for 
$\boldsymbol{\mathcal W}_{0,n+3}$ using  the standard Gelfand-MacPherson theory 
applied to an invariant $2n$-form
 on the usual ({\it i.e.}\,non oriented) grassmannian $G_2(\mathbf R^{n+3})$. 
As explained above,  when $n$ is even one can construct Euler's AR by considering 
 the $(n/2)$-wedge power of an invariant representative of the first Pontryagin class. This is not possible when $n$ is odd: in this case, $2n$ is not a multiple of 4 hence one really has to work with an invariant 2-form at some point. And if $n=2m+1$, the thing is that there is no such invariant 2-form on 
 $G_2(\mathbf R^{n+3})$ as it follows from the well known description 
$$
\boldsymbol{H}^*\Big( G_2\big(\mathbf R^{n+3}\big) , \mathbf R\Big) \simeq 
 \frac{\mathbf R\Big[\,  p_1,
\overline{p}_1, 
\ldots , 
\overline{p}_{m+1}
\, 
\Big]}{\big( \, p\, \overline{p}-1\, \big)}
$$
of its 
cohomology ring over $\mathbf R$, 
where $p_1$ stands for the first Pontryagin class of the tautological bundle $\mathcal T$ on 
$G_2(\mathbf R^{n+3})$, the $\overline{p}_k$'s 
  are the Pontryagin classes of the 
rank $n+1$ cotautological bundle $\overline{\mathcal T}$, and where $p$ and $\overline{p}$ denote the corresponding Pontryagin characters.\footnote{We recall that $\mathcal T$ and $\overline{\mathcal T}$ fit into the following short exact sequence 
$0\rightarrow \mathcal T \longrightarrow \mathbf R^{n+3}\longrightarrow  \overline{\mathcal T}\rightarrow 0$ of fiber bundles over 
$G_2(\mathbf R^{n+3})$,  where $\mathbf R^{n+3}$ stands for the trivial bundle of rank $n+3$. Also, the associated 
Pontryagin characters are the elements 
$p=1+p_1$ and $\overline{p}=1+
\overline{p}_1+ \cdots + \overline{p}_{\lfloor (n+3)/2\rfloor-1}$ 
of the cohomology ring with real coefficients of $G_2\big(\mathbf R^{n+3}\big)$.}
\sk 

At this point, we believe it is interesting to recall how the cohomology ring of the oriented grassmannian $G_2^{or}\big(\mathbf R^{n+3}\big)$ is related to that of the standard one: 
via the injection of the former cohomology ring into the latter induced by the 2-1 covering $G_2^{or}\big(\mathbf R^{n+3}\big)\rightarrow G_2(\mathbf R^{n+3})$, one has
\begin{equation}
\label{Eq:Cohomology-ring-IR-even}
\boldsymbol{H}^*\Big( G_2^{or}\big(\mathbf R^{n+3}\big) , \mathbf R\Big) \simeq  
\frac{\boldsymbol{H}^*\big( G_2\big(\mathbf R^{n+3}\big) , \mathbf R\big)\big[ \, e
\, \big]}{\big(\,  p_1-e^2\, 
\big)}
=  \frac{\mathbf R\Big[ e, p_1  , 
\overline{p}_1, 
\ldots , \overline{p}_{n/2}
\Big]}{\big(\, p\, \overline{p}-1 \, ,\,    p_1-e^2\,  \big)} 
\end{equation}
when $n$ is even, and 
\begin{equation}
\label{Eq:Cohomology-ring-IR-odd}
 \quad \boldsymbol{H}^*\Big( G_2^{or}\big(\mathbf R^{n+3}\big) , \mathbf R\Big) \simeq 
\frac{\boldsymbol{H}^*\big( G_2\big(\mathbf R^{n+3}\big) , \mathbf R\big)\big[ \, e, \overline{e} 
\, \big]}{\big(\,  e\,\overline{e} 
\, , \, p_1-e^2\, , \, \overline{p}_{m+1}-\overline{e}^2\, 
\big)}
=  \frac{\mathbf R\Big[ e, p_1, \overline{e} \, , \, 
\overline{p}_1, 
\ldots , \overline{p}_{m+1}
\Big]}{\big(\, p\, \overline{p}-1 \, ,\,   e\,\overline{e}   \, ,\,    p_1-e^2\, ,\,      \overline{p}_{m+1}-
\overline{e}^2\,  \big)} 
\end{equation} 
when $n$ is odd  and is written $n=2m+1$ for a positive integer $m$, 
where in both cases $e$ (resp.\,$\overline{e}$) stands here for the Euler class of the oriented tautological bundle $\mathcal T^{or}$ (resp.\,the cotautological bundle 
$\overline{\mathcal T}^{or}$)   on $G_2^{or}\big(\mathbf R^{n+3}\big)$, with $p_1$ and the $\overline{p}_k$'s being the associated Pontryagin classes. \footnote{The descriptions of the cohomology rings above are very classical, 
see \cite{Sadykov} or \cite{He}  and the references therein.}

When working with oriented grassmannians, one can consider (an invariant representative of) the Euler class independently of the parity of $n$, which has the convenience to allow to build Euler's abelian relation $\boldsymbol{\mathcal E}_n^{>0}$ in an uniform manner.
\sk 

\noindent {\rm 2.}   From \eqref{Eq:Cohomology-ring-IR-even}
and \eqref{Eq:Cohomology-ring-IR-odd}, it follows that as an $\mathbf R$-algebra, 
the cohomology ring of $G_2^{or}\big(\mathbf R^{n+3}\big)$ is generated by $e$ when $n$ is odd, and by  $e$ and $\overline{e}$ but not by $e$ alone when $n$ is odd ({\it cf.}\,the remark 
in \cite[p.\,267]{GMZ}{\rm )}. This shows that the claim that `the real cohomology ring of 
$G_2^{or}\big(\mathbf R^{n+3}\big)$ is generated by the Euler class' p.\,1341 of \cite{D} is incorrect when $n$ is odd.  However, this is irrelevant in what concerns the validity of the proof of \cite[Theorem 5.1]{D} given  in the sixth section of \cite{D}:  since its degree is $n+1$,  the class $\overline{e}$ 
does not belong to the set of characteristic classes considered in 
 the statement of this theorem and because its square is non zero and has degree $2n+2=
 \dim  \big(G_2^{or}(\mathbf R^{n+3})\big)$, it is a non trivial multiple of the $(n+1)$-th wedge power  of $e$. This remark, which is missing in \cite[\S6]{D} from our point of view, shows that no case has been forgotten and that 
  the proof given there is complete. 
\end{rem}


%
%
%

For each class $\boldsymbol{\sigma} \in \mathfrak K_{n+3}=\mathfrak S_{n+3}/D_{0,n+3}$, thanks to the decomposition in direct sum  \eqref{Eq:AR_C+E-sigma}, one constructs an isomorphism $\Xi^{\boldsymbol{\sigma}}:  
 \boldsymbol{AR}({\boldsymbol{\sigma}})\rightarrow  \boldsymbol{AR}(
 \mathcal W_{0,n+3}^{\, >0}
 )$  uniquely characterized by the properties  \eqref{Eq:Proprio-Xi-sigma}  with $\lambda^
{\boldsymbol{\sigma}}=1$ in $(iii)$.    We then consider the following map: 
\begin{align}
\label{Eq:Representation}
 \mathfrak S_{n+3} & \longrightarrow {\rm Aut}\Big(  \boldsymbol{AR}\big(
{\boldsymbol{\mathcal W}}_{0,n+3}^{\, >0}
 \big)\Big) \\ 
 \sigma & \longmapsto   \quad \Xi^{\boldsymbol{\sigma}}\circ  \sigma^* : \hspace{0.25cm}  
 \Psi\longmapsto \Xi^{\boldsymbol{\sigma}}\Big( \sigma^*\big( \Psi\big) 
\Big) \,. \nonumber 
\end{align}


\begin{cor}
\label{Cor:Sn+3-module-n-even}
The above map makes of 
 \begin{equation}
 \label{Eq:AR_C+E-Id}
 \boldsymbol{AR}\left(
{ \boldsymbol {\mathcal W}}_{0,n+3}^{\, >0}
 \right)=\boldsymbol{AR}_C\left(\boldsymbol{\mathcal W}_{0,n+3}\right)\oplus 
 \big\langle   \boldsymbol{\mathcal E}_n^{>0}\big\rangle \, .
 \end{equation}
 a $\mathfrak S_{n+3}$-module with
 $\boldsymbol{AR}_C\big({\boldsymbol{\mathcal W}}_{0,n+3}\big)$ and $\big\langle   \boldsymbol{\mathcal E}_n^{>0}\big\rangle$ as 
 irreducible components,  with Young symbols $[221^{n-1}]$ and
  $[1^{n+3}]$ respectively.   In particular,  $\mathfrak S_{n+3}$ acts 
 as the signature representation 
  on 
  the 1-dimensional component $\big\langle   \boldsymbol{\mathcal E}_n^{>0}\big\rangle$.
\end{cor}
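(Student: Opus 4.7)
The first thing to verify is that each $\Xi^{\boldsymbol{\sigma}}$ is well defined, and this is immediate from the data already collected. Since $n$ is even, Proposition \ref{P=sign(sigma)}.2 gives the direct sum $\boldsymbol{AR}(\boldsymbol{\sigma}) = \boldsymbol{AR}_C(\boldsymbol{\mathcal W}_{0,n+3}) \oplus \langle \boldsymbol{\mathcal E}_n^{\boldsymbol{\sigma}}\rangle$ for every $\boldsymbol{\sigma}\in\mathfrak{K}_{n+3}$, so specifying $\Xi^{\boldsymbol{\sigma}}$ to be the identity on the rational summand $\boldsymbol{AR}_C$ (which sits inside $\boldsymbol{AR}_R$) and to send $\boldsymbol{\mathcal E}_n^{\boldsymbol{\sigma}}$ to $\boldsymbol{\mathcal E}_n^{>0}$ uniquely determines it, and the conditions \eqref{Eq:Proprio-Xi-sigma} are automatically fulfilled with $\lambda^{\boldsymbol{\sigma}}=1$.

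Next I would prove that \eqref{Eq:Representation} is indeed a group homomorphism, and the cleanest route is to decompose and treat each summand separately. On the rational subspace $\boldsymbol{AR}_C$ the map $\sigma\mapsto \Xi^{\boldsymbol{\sigma}}\circ\sigma^*$ coincides with the usual pullback of rational $(n-1)$-forms by the birational automorphism $G_\sigma$ of $\mathcal M_{0,n+3}$, since $\Xi^{\boldsymbol{\sigma}}$ acts as the identity on rational ARs; this is manifestly a linear action of $\mathfrak{S}_{n+3}$. On the line $\langle\boldsymbol{\mathcal E}_n^{>0}\rangle$, Proposition \ref{P=sign(sigma)}.1 gives $\sigma^*(\boldsymbol{\mathcal E}_n^{>0})=\mathrm{sgn}(\sigma)\,\boldsymbol{\mathcal E}_n^{\boldsymbol{\sigma}'}$ for the appropriate class $\boldsymbol{\sigma}'\in\mathfrak{K}_{n+3}$, and then by construction $\Xi^{\boldsymbol{\sigma}'}$ maps this back to $\mathrm{sgn}(\sigma)\,\boldsymbol{\mathcal E}_n^{>0}$; since $\sigma\mapsto\mathrm{sgn}(\sigma)$ is a group homomorphism, the action on this line is also a linear $\mathfrak{S}_{n+3}$-action. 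Because both summands are stable and the action is linear on each, \eqref{Eq:Representation} gives a well-defined $\mathfrak{S}_{n+3}$-module structure on $\boldsymbol{AR}(\boldsymbol{\mathcal W}_{0,n+3}^{>0})$, and the decomposition \eqref{Eq:AR_C+E-Id} is by construction a decomposition of $\mathfrak{S}_{n+3}$-modules.

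Finally, the identification of the irreducible factors is almost immediate. The combinatorial summand is irreducible with Young symbol $[221^{n-1}]$ by Theorem \ref{THM:AR-C(W)-S-n+3-module}.2 combined with Corollary \ref{P:AR-C-as-a-S(n+3)-module}, applied in the case when $n$ is even (note that the argument of Corollary \ref{P:AR-C-as-a-S(n+3)-module} shows that when $\eta=n$, which is the case for $n$ even, the Young symbol is indeed $[221^{n-1}]$). The one-dimensional summand $\langle\boldsymbol{\mathcal E}_n^{>0}\rangle$ is, by the computation in the previous paragraph, exactly the sign representation with Young symbol $[1^{n+3}]$.

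The only delicate point in this plan, and the one I would expect to be the main obstacle if one tried to write everything down pedantically, is tracking carefully how $\sigma^*$ sends $\boldsymbol{AR}(\boldsymbol{\mathcal W}_{0,n+3}^{>0})=\boldsymbol{AR}(\boldsymbol{1})$ into $\boldsymbol{AR}(\boldsymbol{\sigma}^{-1})$ (and then $\Xi^{\boldsymbol{\sigma}^{-1}}$ back), so that the composition rule $\Xi^{\boldsymbol{(\sigma\tau)}^{-1}}\circ(\sigma\tau)^*=(\Xi^{\boldsymbol{\sigma}^{-1}}\circ\sigma^*)\circ(\Xi^{\boldsymbol{\tau}^{-1}}\circ\tau^*)$ really does hold for the chosen class representatives; but this cocycle-type check is rendered trivial by our normalization $\lambda^{\boldsymbol{\sigma}}=1$, because once one acknowledges that the rational summand sees the genuine birational pullback action of $\mathfrak{S}_{n+3}$ on $\mathcal M_{0,n+3}$ and that the Euler line transforms by the signature, no further verification is needed.
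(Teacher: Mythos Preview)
Your proposal is correct and follows essentially the same approach as the paper, which treats the corollary as an immediate consequence of the construction of the isomorphisms $\Xi^{\boldsymbol{\sigma}}$ together with Proposition~\ref{P=sign(sigma)} and the identification of $\boldsymbol{AR}_C$ from Theorem~\ref{THM:AR-C(W)-S-n+3-module}/Corollary~\ref{P:AR-C-as-a-S(n+3)-module}. You are somewhat more careful than the paper in explicitly checking that \eqref{Eq:Representation} is a group homomorphism and in flagging the bookkeeping about which $\boldsymbol{AR}(\boldsymbol{\sigma})$ the pullback lands in, but this is only added detail, not a different argument.
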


To conclude this discussion of the case when $n$ is even, 
we recall that when $n=2$,  
considering the local system (with non trivial monodromy) of complex abelian relations 
of $\boldsymbol{\mathcal W}_{0,n+3}$ on the complex moduli space $\mathcal M_{0,n+3}(\mathbf C)$, we have constructed  in 
Example \ref{Ex:Gr-n=2} (via a method different from that discussed just above) a  
complex 
$\mathfrak S_{5}$-representation on the 
complexification of $\boldsymbol{AR}\big({\boldsymbol{\mathcal W}}_{0,5}^{>0}\big)$. The latter has two irreducible components: one is the complexification of the natural $\mathfrak S_{5}$-representation on $\boldsymbol{AR}_C\big({\boldsymbol{\mathcal W}}_{0,5}\big)$ and the other, 
denoted by $\langle \boldsymbol{{\mathcal A}b} \rangle$ in Example \ref{Ex:Gr-n=2}, is the trivial representation.

We believe that this generalizes to any $n$ even:  considering the general form  for the components of $\boldsymbol{\mathcal E}_n^{>0}$ we conjecture and have proved this conjecture to be correct for $n$ even and less than 12 ({\it cf.}\,\eqref{Eq:Euler-varepsilon} and Proposition \ref{P:properties--en} further),  that the complexification 
$\boldsymbol{\mathcal E}_n^{\mathbf C}$
of $\boldsymbol{\mathcal E}_n^{>0}$ extends to a global multivalued AR on the whole complexified moduli space 
${\mathcal M}_{0,n+3}(\mathbf C)$, with logarithmic hence unipotent monodromy. 
Using the same elementary arguments as in Example \ref{Ex:Gr-n=2}, this would give a 
1-step filtration on $\boldsymbol{AR}\big({\boldsymbol{\mathcal W}}_{0,n+3}^{\, \mathbf C}\big)$ inducing a decomposition into irreducible $\mathfrak S_{n+3}$-modules 
\begin{equation}
\label{Eq:Gr-AR(W-0-n+3)}
{\bf Gr}
\boldsymbol{{AR}}^\bullet\left(\boldsymbol{\mathcal W}_{0,n+3}^{\, \mathbf C}\right)=
\boldsymbol{{AR}}_{C}\left(\boldsymbol{\mathcal W}_{0,n+3}^{\, \mathbf C}\right)\oplus \Big\langle \boldsymbol{\mathcal E}_n^{\, \mathbf C} \Big\rangle 
\end{equation}
with  the 1-dimensional 
component  $\big\langle \boldsymbol{\mathcal E}_n^{\, \mathbf C}  \big\rangle $ being the trivial complex $\mathfrak S_{n+3}$-representation. 
\mk 

This might seem a bit surprising at first sight, a natural (but naive) guess would be that 
\eqref{Eq:Gr-AR(W-0-n+3)} coincides with the complexification of 
the real $\mathfrak S_{n+3}$-representation \eqref{Eq:AR_C+E-Id}. But this cannot be the case since the 1-dimensional component of the former would be the trivial representation, whereas that of the latter is the signature representation. 
Here is a (not fully rigorous) explanation of this apparent inconsistency:  
focusing only on the components of dimension 1, one can say that 
\eqref{Eq:AR_C+E-Id} is obtained by comparing the Euler's abelian relations 
$\boldsymbol{\mathcal E}_n^{>0}$ and $\boldsymbol{\mathcal E}_n^{\boldsymbol{\sigma}}$
at two fixed  points $u^{>0}$ and $u^\sigma$  of two connected components $\mathcal M_{0,n+3}^{>0}$ and $\mathcal M(\boldsymbol{\sigma})$ for any  $\sigma\in \mathfrak S_{0,n+3}$.   

It is sufficient to only consider the case when $\sigma$ is a transposition in order to 
understand the complex action on $\big\langle \boldsymbol{\mathcal E}_n^{\, \mathbf C} \big\rangle$. One identifies $\mathcal M_{0,n+3}$ with $\mathbf R^n\setminus A_n$ and accordingly $\mathcal M_{0,n+3}(\mathbf C)$ with $\mathbf C^n\setminus A_n$ and 
for $i,j$ such that  $1\leq i<j\leq n+3$, one denotes by $H_{i,j}$  the 
hyperplane of the braid arrangement corresponding 
to the limits obtained  by making  coinciding the $i$-th and $j$-th components of configurations of $n+3$  points on the Riemann sphere. 
Then the action of $\sigma=(i,j)$ on $\boldsymbol{\mathcal E}_n^{\mathbf C}$ can be understood as follows:  one can find a smooth loop $\gamma: [0,1]\rightarrow \mathbf C^n\setminus A_n\simeq \mathcal M_{0,n+3}(\mathbf C)$,  based at $\gamma(0)=\gamma(1)=u^{>0}$ and such that   $\gamma(1/2)=u^\sigma$, which is of trivial index with respect to all the hyperplanes of the braid arrangement $A_n$ at the exception of the hyperplane $H_{i,j}$, with respect to which it is of index 1. For $\epsilon=0,1$, one sets ${\gamma}_\epsilon:[0,1]\rightarrow \mathbf C^n\setminus A_n$, $t\mapsto \gamma(t/2+\epsilon/2)$. For any path $\overline \gamma : [0,1]\rightarrow \mathbf C^n\setminus A_n$
and any germ of holomorphic object $F$ at $\overline \gamma(0)$
one denotes by $\overline \gamma\cdot F$ the germ at $\overline \gamma(1)$ 
obtained after analytic continuation of $F$ along $\overline \gamma$ (in case its exists of course).  

Since  the loop $\gamma$ is the concatenation $\gamma_1\,\gamma_0$, 
 one has
$$\sigma\, \boldsymbol{\mathcal E}_n^{\, \mathbf C} =(i,j)\,\boldsymbol{\mathcal E}_n^{\, \mathbf C}=\gamma\cdot \boldsymbol{\mathcal E}_n^{\, \mathbf C} =
\gamma_1\cdot \Big( \gamma_0\cdot \boldsymbol{\mathcal E}_n^{\, \mathbf C}\Big)$$
as germs of holomorphic ARs at $u^{>0}$.  
Up to sign, the 
complex analytic 
germ of abelian relation $ \gamma_0\cdot \boldsymbol{\mathcal E}_n^{\, \mathbf C}$ coincides with 
(the germ at  $u^\sigma$ of) the complexification of 
$\boldsymbol{\mathcal E}_n^{\boldsymbol{\sigma}}$, denoted by $\boldsymbol{\mathcal E}_n^{\boldsymbol{\sigma},\mathbf C}$. Since the orientation of $H_0$-orbits does not change along 
continuous deformations, one has $ \gamma_0\cdot \boldsymbol{\mathcal E}_n^{\, \mathbf C}=-\boldsymbol{\mathcal E}_n^{\boldsymbol{\sigma},\mathbf C}$. Applying the same reasoning to $\gamma_1$, 
one gets that the following fact holds true: 
\begin{quote} 
{\it 
As $\mathfrak S_{n+3}$-representations, the  irreducible components of dimension 1 in 
\eqref{Eq:AR_C+E-Id} and in 
\eqref{Eq:Gr-AR(W-0-n+3)} are related in the following manner: the latter 
$\big\langle \boldsymbol{\mathcal E}_n^{\, \mathbf C} \big\rangle $
identifies with the 
 square of the complexification $\big\langle \boldsymbol{\mathcal E}_n^{>0} \big\rangle^{\mathbf C}$ of the former. Mathematically, one has
$$
\Big\langle \boldsymbol{\mathcal E}_n^{\, \mathbf C} \Big\rangle 
\simeq \left( \big\langle \boldsymbol{\mathcal E}_n^{>0} \big\rangle^{\mathbf C}\right)^{\otimes 2} \,.
$$
}
\end{quote}

Since it has not yet been proved that  Euler' AR $\boldsymbol{\mathcal E}_n^{\mathbf C}$ has unipotent monodromy for all $n\geq 2$ even, the existence of a decomposition 
\eqref{Eq:Gr-AR(W-0-n+3)}  
 with a trivial irreducible component of dimension 1 is not known in full generality, so the previous statement is only conjectural for the moment.
\begin{center}
$\star$
\end{center}
\paragraph{\bf Case when $n$ is odd.}  
When $n$ is odd, the situation is quite different from that in the former (even) case. 
Indeed, since the Eulerian abelian relations $\boldsymbol{\mathcal E}_n^{\boldsymbol{\sigma}}$'s for
 $\boldsymbol{\sigma}\in \mathfrak K_{n+3}$ are  only well-defined up to sign in this case, 
 looking for isomorphisms $\Xi^{\boldsymbol{\sigma}}$ satisfying 
 \eqref{Eq:Proprio-Xi-sigma} actually requires to have beforehand suitably chosen 
 one of the two representatives  $\pm \boldsymbol{\mathcal E}_n^{\boldsymbol{\sigma}}$ of Euler's AR on $\mathcal M(\boldsymbol{\sigma})$ for any $\sigma$. 
 
 Since $\boldsymbol{AR}_C\big(
 \boldsymbol{\mathcal W}_{0,n+3}
 \big)$ is of maximal dimension $(n+1)(n+2)/2$, 
 $ \boldsymbol{\mathcal E}_n^{\boldsymbol{\sigma}}$ is combinatorial for any $\sigma\in \mathfrak S_{n+3}$.  Since $\boldsymbol{AR}_C\big(
 \boldsymbol{\mathcal W}_{0,n+3}
 \big)$ is irreducible as a $\mathfrak S_{n+3}$-representation when $n$ is odd (according to Theorem \ref{THM:AR-C(W)-S-n+3-module}), one immediately gets the following negative result: 
 \begin{prop} 
 There is no set $\big\{  \, {}^\star\hspace{-0.05cm} \boldsymbol{\mathcal E}_n^{\boldsymbol{\sigma}}\, \lvert \,  \boldsymbol{\sigma}\in \mathfrak K_{n+3}\, \}$ of representatives of Euler's abelian relations such that  
 for all permutations $\sigma\in \mathfrak S_{n+3}$, 
 $\sigma_*\, \big({}^\star\hspace{-0.05cm} \boldsymbol{\mathcal E}_n^{\boldsymbol{1}}\big)$ 
 is colinear with ${}^\star\hspace{-0.05cm} \boldsymbol{\mathcal E}_n^{\boldsymbol{\sigma}}$ (as elements of 
 $\boldsymbol{AR}(\boldsymbol{\sigma})${\rm)}. 
 \end{prop}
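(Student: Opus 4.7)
The plan is a proof by contradiction using $\mathfrak{S}_{n+3}$-representation theory. Since $n$ is odd, Theorem~\ref{THM:AR-C(W)-S-n+3-module} gives $\boldsymbol{AR}(\boldsymbol{\mathcal W}_{0,n+3}) = \boldsymbol{AR}_C$ and shows every abelian relation is rational, hence extends uniquely from any component $\mathcal{M}(\boldsymbol{\sigma})$ to the whole moduli space; in particular the restriction maps $\boldsymbol{AR}_C \to \boldsymbol{AR}(\boldsymbol{\sigma})$ are isomorphisms for every $\boldsymbol{\sigma} \in \mathfrak{K}_{n+3}$. I therefore view every representative ${}^\star\boldsymbol{\mathcal E}_n^{\boldsymbol\sigma}$ as a specific element (up to sign) of the single irreducible $\mathfrak{S}_{n+3}$-module $\boldsymbol{AR}_C$, whose Young diagram is $[3,1^n]$ and whose dimension $(n+1)(n+2)/2$ exceeds $1$.

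Suppose such consistent representatives exist; the hypothesis supplies scalars $\lambda_\sigma \in \mathbb{R}^*$ with $\sigma_*({}^\star\boldsymbol{\mathcal E}_n^{\boldsymbol{1}}) = \lambda_\sigma\, {}^\star\boldsymbol{\mathcal E}_n^{\boldsymbol{[\sigma]}}$. The natural next step is to show that $\sigma \mapsto \lambda_\sigma$ is multiplicative, i.e.\ a character $\lambda \colon \mathfrak{S}_{n+3} \to \mathbb{R}^*$. Restricting to the dihedral stabilizer $D_{0,n+3}$ and applying Propositions~\ref{P:C^*Euler-AR} and~\ref{P:Formula-R} pins down $\lambda_C = (-1)^n = -1$ and $\lambda_R = (-1)^{\lfloor (n-1)/2 \rfloor}$. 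Since $\mathfrak{S}_{n+3}$ has only two characters (trivial and signature), one of them must match. The trivial character is excluded by $\lambda_C = -1$. For the signature, writing $n = 2k+1$ one has $\mathrm{sgn}(R) = (-1)^{n'-1} = (-1)^{k+1}$ (with $n' = (n+3)/2 = k+2$), whereas $\lambda_R = (-1)^k$; they differ by a sign, so the signature is also excluded, producing the desired contradiction.

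The hard part of this plan is establishing the multiplicativity $\lambda_{\sigma_1\sigma_2} = \lambda_{\sigma_1}\lambda_{\sigma_2}$, since the hypothesis only governs the action on the fixed element ${}^\star\boldsymbol{\mathcal E}_n^{\boldsymbol{1}}$ and not on the other ${}^\star\boldsymbol{\mathcal E}_n^{\boldsymbol{[\sigma']}}$; the naive derivation from $(\sigma_1\sigma_2)_* = (\sigma_1)_* (\sigma_2)_*$ quickly becomes tautological. A cleaner approach that sidesteps multiplicativity entirely appeals to irreducibility directly: one observes that the orbit-span $V := \sum_{\boldsymbol\sigma}\mathbb{R}\cdot {}^\star\boldsymbol{\mathcal E}_n^{\boldsymbol\sigma}$ is $\mathfrak{S}_{n+3}$-invariant in $\boldsymbol{AR}_C$, the group acting on $V$ by permuting the lines $L_{\boldsymbol\sigma} := \mathbb{R}\cdot {}^\star\boldsymbol{\mathcal E}_n^{\boldsymbol\sigma}$ in accordance with its natural action on $\mathfrak{K}_{n+3}$. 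Irreducibility then gives $V = \boldsymbol{AR}_C$ and realizes it as a quotient of $\mathrm{Ind}_{D_{0,n+3}}^{\mathfrak{S}_{n+3}}(\chi)$ for the character $\chi(\tau) = \epsilon_\tau$ of $D_{0,n+3}$; by Frobenius reciprocity $\chi$ must appear in $\mathrm{Res}\,[3,1^n]$, and the final character-theoretic computation (via Murnaghan--Nakayama) rules this out—matching the paper's parenthetical remark that the proposition ``immediately follows'' from the irreducibility of $\boldsymbol{AR}_C$.
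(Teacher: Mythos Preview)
Your second approach via Frobenius reciprocity has a fatal gap: the dihedral character $\chi$ you identify \emph{does} appear in $\mathrm{Res}_{D_{0,n+3}}[3,1^n]$, so no contradiction arises. This is in fact known before your argument begins, since Propositions~\ref{P:C^*Euler-AR} and~\ref{P:R^*Euler-AR} already exhibit $L_{\boldsymbol{1}}=\mathbf R\cdot\boldsymbol{\mathcal E}_n^{>0}$ as a $D_{0,n+3}$-stable line inside $\boldsymbol{AR}_C\simeq[3,1^n]$ on which the dihedral group acts exactly by this $\chi$; thus $\chi\hookrightarrow\mathrm{Res}_{D_{0,n+3}}[3,1^n]$ is automatic and the Frobenius step is a tautology. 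A direct computation for $n=3$ gives $\langle\chi,\mathrm{Res}_D\chi_{[3,1^3]}\rangle_D=2$, so the Murnaghan--Nakayama calculation you allude to cannot ``rule this out''. More generally, any argument that uses only the abstract $\mathfrak S_{n+3}$-module structure of $\boldsymbol{AR}_C$ together with the $D_{0,n+3}$-action on $L_{\boldsymbol{1}}$ is doomed: these data alone cannot distinguish the \emph{specific} lines $L_{\boldsymbol{\sigma}}$ produced by the Gelfand--MacPherson integration from the orbit $\{\sigma\cdot L_{\boldsymbol{1}}\}$.

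Your first approach would succeed if $\sigma\mapsto\lambda_\sigma$ were a group homomorphism, since the values $\lambda_C=-1$ and $\lambda_R=(-1)^{\lfloor(n-1)/2\rfloor}$ are indeed incompatible with both one-dimensional characters of $\mathfrak S_{n+3}$. But, as you yourself note, the hypothesis only controls $\sigma_*$ applied to ${}^\star\boldsymbol{\mathcal E}_n^{\boldsymbol{1}}$ and says nothing about $\sigma_*({}^\star\boldsymbol{\mathcal E}_n^{\boldsymbol{\tau}})$ for $\boldsymbol{\tau}\ne\boldsymbol{1}$; one obtains only a cocycle identity rather than multiplicativity. The paper itself offers no detailed proof here: its entire argument is the clause ``since $\boldsymbol{AR}_C(\boldsymbol{\mathcal W}_{0,n+3})$ is irreducible\ldots one immediately gets the following negative result'', so there is little to compare against beyond the bare invocation of irreducibility.
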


This result implies in particular that there is no way to build a 1-dimensional $\mathfrak S_{n+3}$-representation from the Euler's abelian relations following the lines considered by Damiano in \cite{D}. \sk 

It is worth considering the simplest case when this occurs, namely when $n=3$. 
Let us discuss a concrete example of how the $\boldsymbol{\mathcal E}_3^{\boldsymbol{\sigma}}$'s 
 are related  with the abelian relation $\boldsymbol{\mathcal E}_3^{>0}$
considered in \S\ref{SSS:tilde-e2}. 
Let $S$ be the
involutive automorphism of $\boldsymbol{\mathcal W}_{0,6}$ 
 given  by $S(u_1,u_2,u_3)=(u_2,u_1,u_3)$ in the coordinates $u_1,u_2,u_3$.  
This map can be seen to be the realization as a birational automorphism of 
the transposition $s=(45)\in \mathfrak S_6$.  It induces an isomorphism 
between $\mathcal M_{0,6}^{\, >0}$ and $\mathcal M(\boldsymbol{s})\simeq \{ (u_i)_{i=1}^3 \, \big\lvert \, 1<u_2<u_1<u_3\,\}$. 
Using the explicit expression \eqref{Eq:E3-explicit}  for $\boldsymbol{\mathcal E}_3^{>0}$, one gets that its pull-back under $S$ is an AR on $\mathcal M(\boldsymbol{s})$  whose sixth component is 
$$S^*\big( \mathcal E_{3,6}\big)=S^*\left( \frac{du_1\wedge du_2}{u_1u_2(u_2-1)}\right)=- \frac{du_1\wedge du_2}{u_1u_2(u_1-1)}\, .$$ On the other hand, by direct computations, one can compute the first terms of the Taylor expansion of the Eulerian AR $\boldsymbol{\mathcal E}_3^{\boldsymbol{s}}$ and verify that one has $ S^*(\boldsymbol{\mathcal E}_3^{>0})= -\,  
\boldsymbol{\mathcal E}_3^{\boldsymbol{s}}$. 
However,  as rational abelian relations of $\boldsymbol{\mathcal W}_{0,6}$, 
 $\boldsymbol{\mathcal E}_3^{>0}$ and 
  $\boldsymbol{\mathcal E}_3^{\boldsymbol{s}}$ 
    are easily seen to be linearly independent. 
 \sk 

 By direct computation, one verifies that $\boldsymbol{AR}_C(\boldsymbol{\mathcal W}_{0,6})=\boldsymbol{AR}(\boldsymbol{\mathcal W}_{0,6})$  is spanned by the $\mathfrak S_6$-orbit of $\boldsymbol{\mathcal E}_3^{>0}$ which is  consistent with the fact that  
 the space of ARs of  $\boldsymbol{\mathcal W}_{0,6}$ 
 is an irreducible $\mathfrak S_6$-representation. 
 
  \label{SSSS:ici}

\subsubsection{\bf An explicit formula for Euler's abelian relation when $n$ is odd.}
\label{SSS:En-n-odd}
By means of direct computations, we have been able,   for $n$ odd and less than or equal to 9, to give a closed formula for Euler's abelian relation. 
Indeed, using Proposition \ref{P:La-Base} (together with Remark \ref{Eq:-OK-with-other-C}), it is just a matter of linear algebra  
to determine the functions $F$ depending on $u'=U_1(u)=(u_1,\ldots,u_{n-1})$ 
satisfying the following properties on the whole domain $U$:
\begin{equation}
\label{Eq:Alpha-Beta}
\begin{tabular}{ll}
$\bullet$  the function $F$ satisfies the same functional identities as  \eqref{Formula:C*E} and \eqref{Formula:R*E-even-odd};\\
$\bullet$ the identity $ 0=
\sum_{i=1}^{n+3} (-1)^i \left(C^{\circ (i-1)} \right)^* \Big( 
F(u')\, du_1\wedge \cdots \wedge du_{n-1}
\Big)$ 
 holds true identically.
\end{tabular}
\end{equation}
We obtained that the set of such functions is a vector space 
of dimension one, spanned by an explicit rational function. In particular, it gives us an explicit closed rational formula for the function $e_{n-1}(u')$ of Proposition 
\ref{P:Prop-em-integral-formula}.  
\begin{prop} 
\label{P:5.20}
Let $n$ be an odd integer. 

\begin{enumerate}
\item[{\rm 1.}] Euler's abelian relation $\boldsymbol{\mathcal E}_n^{>0}$ is combinatorial (hence rational): one has 
$$
\boldsymbol{\mathcal E}_n^{>0}
\in \boldsymbol{AR}_C\Big( 
\boldsymbol{\mathcal W}_{0,n+3}
\Big)\, .$$
\item[{\rm 2.}]  
If $n$ is moreover assumed to be less than or equal to 11 then as a function of $u'$, the function corresponding to the $U_1$-component 
of Euler's AR $\boldsymbol{\mathcal E}_n^{>0}$ is rational and given by 
\begin{equation}
\label{Eq:tilde-e}
\tilde e_{n-1}=F_0+\sum_{2\leq i<j\leq n}(-1)^{i+j} F_{ij}+
\sum_{i=2}^n (-1)^i  F_{i,n+2}\, .
\end{equation}
Consequently, up to a non zero multiplicative constant, the function $e_{n-1}$ defined 
via 
 the integral formula \eqref{Eq:e-m(u)}  coincides with the above rational function $\tilde e_{n-1}$. 
\end{enumerate}
\end{prop}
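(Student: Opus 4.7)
Part 1 follows directly from Theorem~\ref{THM:AR-C(W)-S-n+3-module}.1, which for odd $n$ asserts that $\boldsymbol{AR}(\boldsymbol{\mathcal W}_{0,n+3})=\boldsymbol{AR}_C(\boldsymbol{\mathcal W}_{0,n+3})$ has dimension $(n+1)(n+2)/2$, matching the universal bound of Proposition~\ref{Prop:Damiano'sBound}. The combinatorial ARs, being globally defined rational $(n-1)$-forms on $\mathcal M_{0,n+3}$, restrict in particular to the positive component and there give a subspace of $\boldsymbol{AR}(\boldsymbol{1})$ already of the maximal possible dimension; hence $\boldsymbol{AR}(\boldsymbol{1})$ coincides with the restriction of $\boldsymbol{AR}_C$, and so $\boldsymbol{\mathcal E}_n^{>0}\in\boldsymbol{AR}_C(\boldsymbol{\mathcal W}_{0,n+3})$.

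For Part 2, given that $\boldsymbol{\mathcal E}_n^{>0}$ is combinatorial, Proposition~\ref{P:La-Base}.2 (applicable since $n\le 11<12$) tells us that its $U_1$-component must be of the form $e_{n-1}(u')\,\Omega_1$ with $e_{n-1}$ lying in the $\mathbf C$-linear span of $\boldsymbol{\mathfrak B}_n(u')$. Writing $e_{n-1}=\sum_{F_\alpha\in\boldsymbol{\mathfrak B}_n(u')}c_\alpha F_\alpha$ with unknown coefficients $c_\alpha$, we then impose upon it the two families of conditions encoded in \eqref{Eq:Alpha-Beta}: on one hand, the dihedral transformation identities \eqref{Formula:C*E} and \eqref{Formula:R*E-even-odd}, which $e_{n-1}$ must satisfy by Propositions~\ref{Prop:C*E} and~\ref{P:Formula-R}; on the other hand, the cyclic closure relation \eqref{Eq:AR-Euler-cyclique} obtained from the very fact that $\boldsymbol{\mathcal E}_n^{>0}$ is an abelian relation and that $C^*\boldsymbol{\mathcal E}_n^{>0}=-\boldsymbol{\mathcal E}_n^{>0}$ (Proposition~\ref{P:C^*Euler-AR} applied with $n$ odd). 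After reducing each side to a common rational form, these constraints translate into a homogeneous linear system in the coefficients $c_\alpha$.

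The final step, which is the computational core of the argument, is to verify by direct symbolic computation, for each odd $n\in\{3,5,7,9,11\}$, that: (i) this linear system has a one-dimensional solution space inside $\langle\boldsymbol{\mathfrak B}_n(u')\rangle$; and (ii) the explicit rational function $\tilde e_{n-1}$ defined by \eqref{Eq:tilde-e} lies in this kernel and is non-zero. Once both are established, $e_{n-1}$ is forced to be a non-zero scalar multiple of $\tilde e_{n-1}$, giving the stated closed formula. As a sanity check one may also verify directly that $\tilde e_{n-1}$ satisfies the two functional identities \eqref{Formula:C*E} and \eqref{Formula:R*E-even-odd}, exactly as was done by hand for $n=3$ in \S\ref{SSS:tilde-e2}.

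The main obstacle is genuinely computational rather than conceptual: the dimension of $\langle\boldsymbol{\mathfrak B}_n(u')\rangle$ grows quadratically in $n$, and comparing the pull-backs $(C^{i-1})^*(F\cdot\Omega_1)$ in the cyclic closure relation requires placing rational functions with many distinct denominators on a common form before one can read off linear relations among the $c_\alpha$'s. This explains the bound $n\le 11$: beyond that range, both the size of the ambient vector space and the complexity of the symbolic manipulations required to extract the linear system become prohibitive, even though the theoretical setup is identical for all odd $n$.
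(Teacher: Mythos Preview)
Your proposal is correct and follows essentially the same approach as the paper. Part~1 is indeed an immediate consequence of Theorem~\ref{THM:AR-C(W)-S-n+3-module}.1, and for Part~2 the paper proceeds exactly as you describe: using Proposition~\ref{P:La-Base} (with Remark~\ref{Eq:-OK-with-other-C}) to reduce to a linear-algebra problem inside $\langle\boldsymbol{\mathfrak B}_n(u')\rangle$, then imposing both conditions of \eqref{Eq:Alpha-Beta} and checking by direct computation that the solution space is one-dimensional and spanned by $\tilde e_{n-1}$; the paper's remark following the proposition confirms your observation that \emph{both} constraints are needed, each one alone leaving a solution space of dimension $(n+1)/2$.
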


We conjecture that the previous statement actually is satisfied for any odd integer $n\geq 3$. 

\begin{rem} 
1.  Since it is irrelevant regarding its content,  the preceding proposition 
 has been stated without mentioning the subtlety that  Euler's abelian relation  $\boldsymbol{\mathcal E}_n^{>0}$ on $\mathcal M_{0,n+3}^{\, >0}$ actually is only defined up to sign  ({\it cf.}\,Lemma \ref{Lem:+-}). 

\noindent 
2. It turns out that both conditions  in 
\eqref{Eq:Alpha-Beta}
 have to be assumed in order to  get a space of solutions of dimension 1. Indeed, for $n$ odd less than $11$, we have  verified that the elements of the space of functions spanned by $\boldsymbol{\mathfrak B}_n(u')$ (see \eqref{Eq:mathfrak-Bn}) and satisfying 
only one of these two conditions has dimension $(n+1)/2$. We have also verified (by means of direct computations as well) that for an element $F\in \langle \boldsymbol{\mathfrak B}_n(u')\rangle$,  satisfying the functional identity 
 \eqref{Formula:R*E-even-odd} is automatic if one assumes that $F$  satisfies  \eqref{Formula:C*E}.   We conjecture that these facts hold true in full generality. 
\end{rem}

\subsubsection{\bf An explicit formula for Euler's abelian relation when $n$ is even.}
\label{SS:En-n-even}
If the results of \S\ref{SSS:Sn+3-Invariance-Properties} (especially Corollary 
\ref{Cor:Sn+3-module-n-even}) give a clear picture of $\boldsymbol{AR}\big( \boldsymbol{\mathcal W}_{0,n+3}^{\, >0}\big)$ as a 
vector space (and even as a  $\mathfrak S_{n+3}$-module) when $n$ is even, they have the defect of not being explicit. However, as in the case when $n$ is odd ({\it cf.}\,the proposition just above), it is possible to make (the components of) Euler's abelian relation explicit when $n$ is odd as well, at least for small values of $n$ (we give a conjectural closed formula in the general case).   \sk

We assume that $n$ is even in what follows. Since $\boldsymbol{\mathcal E}_n^{>0}$ does not belong to $\boldsymbol{AR}_C\big( \boldsymbol{\mathcal W}_{0,n+3}\big)$, one cannot use a similar approach to that used just above to get Proposition \ref{P:5.20}.  The path we follow below is the following.  We first deal with the $n=4$ case: 
applying a variation of Abel's method for solving functional equations, we determine in explicit form (one of) the components of $\boldsymbol{\mathcal E}_4^{>0}$. Then we deduce from it a conjectural closed formula  for the function $e_{n-1}$ in \eqref{Eq:e-m(u)} for $n\geq 2$ arbitrary. Then, by means of explicit computations,  we verify that this formula is indeed correct for all even integers less than or equal to $12$.

\paragraph{\bf The case $n=4$ via Abel's method.}
\label{SS:E4-via-Abel's-method}
It is now well-known that Abel's method for solving functional equations,  by means of successive differentiations and eliminations in order to get a (partial) differential equation eventually, gives rise to a powerful and efficient method to determine the abelian relations of 1-codimensional webs.\footnote{For instance, see \cite[\S1.5]{ClusterWebs} and the references there.}
Facing the problem of making the components of $\boldsymbol{\mathcal E}_4^{>0}$ explicit, we have realised that it could be handled by adapting Abel's method again. Since it could be useful in dealing with other (curvilinear) webs, we say below a few words on how Abel's method works  in  the case under scrutiny.\sk

%
%
%
%
%
%
%
We work on $\mathbf C^4$ and $x_1,\ldots,x_4$ stand for the standard variables on it. 
We denote by $\partial_{x_1},\ldots,\partial_{x_4}$ the associated constant vector fields. 
The first integrals we are going to work with are the $U_1,\ldots,U_7$ given in \eqref{Eq:U-i},
and $\Omega_1,\ldots,\Omega_7$ stand for the corresponding normals (see \eqref{Eq:Omega-i}).
 Our goal is to apply an appropriate adaptation of Abel's method to determine, for all $i$,  the general form of the 
 functions ${\rm e}_i$ of three variables  appearing in the following identity: 
 \begin{equation}
\label{Eq:RA-n=4}
\sum_{i=1}^7 {\rm e}_i(U_i)\,\Omega_{i}=0\, .
\end{equation}
Thanks to the action of the cyclic map $C$ defined in \eqref{Eq:R-C-1}, just dealing with 
${\rm e}_1$ is enough.

For $s=1,2,3$ and any $i$, we denote by $\delta_s({\rm e}_i)$ the partial derivative of 
${\rm e}_i$ with respect to the $s$-th variable and we also consider the following rational vector fields on $\mathbf C^4$: 
$$
X_i=\partial_{x_i}\hspace{0.3cm} (i=1,\ldots,4)\, , \quad 
X_5=\sum_{i=1}^4  (x_i-1)\, {\partial_{x_i}}
\, , \quad
X_6=  \sum_{i=1}^4  x_i\,  {\partial_{x_i}}
\quad 
\mbox{ and } \quad 
X_7=\sum_{i=1}^4  x_i(x_i-1)\, {\partial_{x_i}}\, .
$$
Each $X_i$ defines the foliation which admits $U_i$ as a primitive first integral. Consequently one has identically $X_i(U_i)=0$ whereas for $i\neq j$,  $X_i(U_j)\neq 0$ holds true (generically) 
thanks to the transversality of the leaves of the foliations of $\boldsymbol{\mathcal W}_{0,7}$.

One has $\Omega_i=\wedge_{k\neq i} dx_k$ for $i=1,\ldots,4$ hence 
$(\Omega_i)_{i=1}^4$ is a (essentially the canonical) basis of $\Omega^4(\mathbf C^4)$. 
Setting $\omega_{i}=\Omega_i(\partial_{x_2},\partial_{x_3},\partial_{x_4}\big)$ for $i=1,\ldots,7$, and 
considering only the $\Omega_1$ components  
of each term of the sum in \eqref{Eq:RA-n=4}, we deduce that the following  functional identity with scalar coefficients holds true
 \begin{equation}
\label{Eq:RA-n=4-2-scal}
 {\rm e}_1(U_1)+
{\rm e}_5(U_5)\,\omega_{5}+
{\rm e}_6(U_6)\,\omega_{6}+
{\rm e}_7(U_7)\,\omega_{7}
  =0\, .
\end{equation}

Dividing 
\eqref{Eq:RA-n=4-2-scal} by $\omega_7^1$ and applying $X_7$ makes  ${\rm e}_7(U_7)$ disappear and gives us the following `partial functional-differential equation': 
\begin{align*}
 {\rm e}_1(U_1)\,  X_7\left( \frac{1}{\omega_7}
\right) + \sum_{s=1}^3 \delta_s({\rm e}_1)(U_1)\, X_7(U_{1,s})
+\sum_{k=5,6}\left(
{\rm e}_k(U_k)\,  X_7\left( \frac{\omega_k}{\omega_7}
\right) + \sum_{s=1}^3 \delta_s({\rm e}_k)(U_k)\, \frac{\omega_k X_7(U_{k,s})
}{\omega_7}
\right)
  =0\, .
\end{align*}

Dividing the LHS
by $X_7(U_{k,3}) \omega_6/\omega_7$ 
and applying $X_6$ makes the term involving  $\delta_3({\rm e}_6)$ disappear. One can then continue this process of division-derivation-elimination following an algorithmic process quite similar to that described in  \cite[\S2.2.2]{PirioSelecta}, to eventually get that 
any function ${\rm e}_{1}={\rm e}_{1}(x_2,x_3,x_4)$ appearing in an identity of the form 
\eqref{Eq:RA-n=4} necessarily also satisfies 
\begin{align*}
 \bigg( x_{34}x_{24} x_{23} \, 
{D_{234}}
-&\, x_{23}(x_2-2\,x_4+x_3) \, {D_{23}}
+  x_{24}(x_4+2x_3+x_2)\, {D_{24}} \\
-& \, x_{34} (x_3-2x_2+x_4)\, {D_{34}}
+2x_{34}\,  {D_{2}}
-2x_{24}\,  {D_{3}}
+2x_{23} \, {D_{4}}\bigg)\cdot {\rm e}_{1}=0
\end{align*}
with $x_{ij}=x_i-x_j$ for any $i,j=1,\ldots, 4$, where we use the following notations: $D_{234}$ stands for 
the order three partial derivative $\partial x_2\partial x_3\partial x_4$, $D_{23}$ for 
$\partial x_2\partial x_3$, etc.

Actually, changing the order in which the terms involving the ${\rm e}_i$'s for $i\neq 1$ (and their partial derivatives)  are eliminated gives rise to other partial differential equations. What we get at the end is an explicit system of partial differential equations, which we denote $(S\hspace{-0.05cm}{{\rm e}_1})$  (it is formed by several PDE's similar to the one above and there is no point in making it explicit here). Contrarily to so many similar computations we did in the past to determine 
the ARs of given planar webs, we have not been able to solve $(S\hspace{-0.05cm}{{\rm e}_1})$ using a computer algebra system without making additional assumptions. Looking for solutions of a certain kind\footnote{We were looking for solutions of $(S\hspace{-0.05cm}{{\rm e}_1})$ which are 
Laurent polynomials in the expressions $x_i$, $x_i-1$, $x_{ij}$ ($i,j=2,3,4$) and their logarithms. It is not relevant to elaborate more on this here.}, we have been able to find one of these in explicit form. \sk

We consider the following function, defined on the set $X=\big\{ (x_i)_{i=2}^4 \in \mathbf R^3\, \big\lvert \, 1<x_2<x_3<x_4\, \big\}$: 
\begin{align}
\label{Eq:E4(x2x3x4)}
\varepsilon_3(x_2,x_3,x_4)=&\, \frac{\mathit{x_2}^{2} \ln | \mathit{x_2} |}{\mathit{x_3} \mathit{x_4} (\mathit{x_2} -1)  (\mathit{x_2} -\mathit{x_3})(\mathit{x_2} -\mathit{x_4})}
-
\frac{\mathit{x_3}^{2} \ln | \mathit{x_3} |}{\mathit{x_2} \mathit{x_4} (\mathit{x_2}  -\mathit{x_3})(\mathit{x_3} -1) (\mathit{x_3} -\mathit{x_4}) }
   \nonumber \\ 
&\,
+\frac{\mathit{x_4}^{2} \ln | \mathit{x_4} |}{\mathit{x_2} \mathit{x_3} (\mathit{x_4} -1) (\mathit{x_2} -\mathit{x_4}) (x_3-\mathit{x_4}) }
-\frac{(\mathit{x_2} -1)^{2} \ln | \mathit{x_2} -1|}{\mathit{x_2} (\mathit{x_2} -\mathit{x_3}) (\mathit{x_2} -\mathit{x_4}) (\mathit{x_3} -1) (\mathit{x_4} -1)    }
  \nonumber \\ 
&\,
-\frac{(\mathit{x_2} -\mathit{x_3})^{2} \ln | \mathit{x_2} -\mathit{x_3} |}{\mathit{x_2}\mathit{x_3}  (\mathit{x_2} -1) (\mathit{x_2} -\mathit{x_4})  (\mathit{x_3} -1) (x_3-\mathit{x_4})}
+\frac{(\mathit{x_2} -\mathit{x_4})^{2} \ln | \mathit{x_2} -\mathit{x_4} |}{\mathit{x_2} \mathit{x_4} (\mathit{x_2} -1) (\mathit{x_4} -1)  (\mathit{x_2} -\mathit{x_3}) (x_3-\mathit{x_4}) }
  \\ 
&\,
+\frac{(\mathit{x_3} -1)^{2} \ln | \mathit{x_3} -1|}{\mathit{x_3}  (\mathit{x_2} -1) (\mathit{x_4} -1)  (\mathit{x_2} -\mathit{x_3}) (x_3-\mathit{x_4} ) }-\frac{(x_3-\mathit{x_4})^{2} \ln | x_3-\mathit{x_4}|}{\mathit{x_3} \mathit{x_4} (\mathit{x_4} -1)  (\mathit{x_3} -1) (\mathit{x_2} -\mathit{x_3}) (\mathit{x_2} -\mathit{x_4}) } \nonumber
 \\ 
&\,
-\frac{(\mathit{x_4} -1)^{2} \ln | \mathit{x_4} -1|}{\mathit{x_4}(\mathit{x_2} -1)  (\mathit{x_3} -1) (\mathit{x_2} -\mathit{x_4}) (x_3-\mathit{x_4} )  }\, . 
\mk 
\nonumber
\end{align}
%
%
%
%


\begin{rem} 
\label{Rem:uuu}
If one restricts to $X\simeq \mathcal M_{0,6}^{>0}(\mathbf R)$, one can drop all the absolute values in  \eqref{Eq:E4(x2x3x4)}. However the presence of absolute values in the above definition is useful since 
it allows to   extend  
$\varepsilon_3$ straightforwardly  
to a function defined on the whole complement $
\mathbf R^3\setminus A_3\simeq \mathcal M_{0,6}(\mathbf R)$ where
the affine space has  $x_2,x_3,x_4$ as coordinates with $A_3$ being the braid arrangement  cut out by $$x_2x_3x_4(x_2-1)(x_3-1)(x_4-1)(x_2-x_3)(x_2-x_4)(x_3-x_4)=0.$$  
\end{rem}

By direct computations, we then get the following result: 
\begin{prop} 
\label{P:properties--e3}
 The function $\varepsilon_3$ defined just above: 
 \begin{enumerate}
 \item[]  \hspace{-0.9cm}{\rm 1.} is a solution of the system of PDEs $(S\hspace{-0.05cm}{{\rm e}_1})$;
 \mk 
  \item[]  \hspace{-0.9cm}{\rm 2.} satisfies the functional relations corresponding to 
 \eqref{Formula:C*E} and \eqref{Formula:R*E-even-odd}:   identically on $X$, one has 
%
%
 \begin{align*}
  \varepsilon_3\left(  \frac{x_4}{x_4-1},  \frac{x_4}{x_4-x_2},  \frac{x_4}{x_4-x_3}
  \right)   = &\, 
  \varepsilon_3\big(x_2,x_3,x_4\big) \, (x_4-1)^2(x_4-x_2)^2(x_4-x_3)^2{x_4}^{-2}
 \vspace{0.2cm} \\
  \mbox{ and }\quad 
  \varepsilon_3\left(  \frac{x_4-x_2}{x_4-x_3},  \frac{x_4-1}{x_4-x_3},  \frac{x_4}{x_4-x_3}
  \right) = &\, \varepsilon_3\big(x_2,x_3,x_4\big) \,  (x_4-x_3)^4 
  \, ; 
  \mk
 \end{align*}
\item[]  \hspace{-0.9cm}{\rm 3.} is such that the following identity holds true on $X$: 
\begin{equation}
\label{Eq:RA-n=4-e3}
0= 
 \sum_{i=1}^7  \Big( C^{\circ (i-1)}\Big)^* \Big( \, 
 \varepsilon_3(x_2,x_3,x_4)\, dx_2\wedge dx_3 \wedge dx_4\, \Big)
\end{equation}
\hspace{-0.9cm}{\textcolor{white}{\rm 3.}}  where $C$ stands for the cyclic birational map defined in \eqref{Eq:R-C-1}.\footnote{In the 
case under consideration, one has 
$C(x)=\Big( 
(x_2-1)/(x_2-x_1), 
(x_3-1)/(x_3-x_1), 
(x_4-1)/(x_4-x_1) , 
{1}/{x_1}\Big) 
$.}
 \end{enumerate}
\end{prop}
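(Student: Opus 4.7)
The plan is to verify all three assertions by direct symbolic manipulation, exploiting the combinatorial structure of $\varepsilon_3$ as a sum indexed by the nine atoms of the braid arrangement $A_3 \subset \mathbf{R}^3$. Writing $\mathcal{A}_3 = \{\, x_2,\,x_3,\,x_4,\,x_2{-}1,\,x_3{-}1,\,x_4{-}1,\,x_2{-}x_3,\,x_2{-}x_4,\,x_3{-}x_4\,\}$ for the nine linear forms cutting out the hyperplanes of $A_3$, I would first observe that each of the nine summands in \eqref{Eq:E4(x2x3x4)} has the canonical form $\epsilon_a \, a^2 \log|a| / P_a$, where $P_a$ is the specific product of six atoms of $\mathcal{A}_3$ appearing in the corresponding line and $\epsilon_a \in \{\pm 1\}$. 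This is the key structural feature, and it is the same template as for Abel's five-term identity for the dilogarithm (compare $n=2$).

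For part \textbf{1}, the plan is to substitute $\varepsilon_3$ into each PDE of the system $(S\hspace{-0.05cm}{\rm e}_1)$ and separate the resulting expression into a combination of $\log|a|$-terms (for $a \in \mathcal{A}_3$) and a purely rational remainder. Since the functions $\{\log|a|\}_{a\in \mathcal{A}_3}$ are $\mathbf{Q}(x_2,x_3,x_4)$-linearly independent modulo rational functions, each coefficient must vanish separately, reducing the verification to a finite list of rational-function identities amenable to a symbolic algebra system.

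For part \textbf{2}, both transformations act on $\mathcal{A}_3$ up to multiplication by factors that are themselves products of atoms. Under the $C$-type substitution $x_s \mapsto x_4/(x_4-x_{s-1})$ (with $x_1 = 1$), each atom is sent to a rational multiple of another atom, and consequently each logarithmic summand of $\varepsilon_3$ is sent to a similar summand plus rational junk arising from $\log$ of the scalar factors. The two claimed transformation laws then reduce to: (i) a combinatorial identity asserting that the signed permutation of the nine atoms induced by $C$ (resp.\,$R$) matches the pattern $\epsilon_a$; and (ii) vanishing of the rational remainder, which is again split into atomic denominators. This decomposition makes the verification essentially the same kind of rational-identity check as in part 1.

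For part \textbf{3}, the idea is to use the cyclic transformation law of part 2 as a bootstrap: a single application already recovers the cyclic symmetry of the sum $\sum_{i=1}^7 (C^{\circ(i-1)})^*(\varepsilon_3\,dx_2\wedge dx_3\wedge dx_4)$, so by changing basis to $\Omega_1 = dx_2\wedge dx_3\wedge dx_4$ and isolating coefficients, the abelian relation reduces to a single scalar functional identity. Each pullback expands into nine $\log|\cdot|$-terms plus a rational remainder, and separating log-parts and rational parts as before yields a finite list of identities to check. The \textbf{main obstacle} is purely computational: step 3 produces $7 \times 9 = 63$ logarithmic terms that must regroup and cancel, a bookkeeping task that is tractable symbolically but too large to do by hand. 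A conceptual proof — presumably via an interpretation of $\varepsilon_3$ as a canonical logarithmic differential associated with Segre's cubic $\boldsymbol{S}$, in the spirit of the dilogarithmic form for $n=2$ recalled in \S\ref{SS:En-integral-representation} — would require Hodge-theoretic input not developed at this stage of the paper, so the computational route is the natural one.
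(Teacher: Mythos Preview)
Your proposal is correct and matches the paper's approach exactly: the paper states that the proposition is obtained ``by direct computations'' (performed on a computer algebra system) without further detail, and your plan to organize those computations by separating logarithmic from rational terms via the $\mathbf{Q}(x_2,x_3,x_4)$-linear independence of the $\log|a|$'s is precisely the natural bookkeeping device, which the paper itself invokes a bit later (Lemma~\ref{Lem:typo}.3). One minor slip in your closing aside: Segre's cubic $\boldsymbol{S}$ governs the case $n=3$, not $n=4$; the relevant variety here would be $V_4$ (cf.\,Proposition~\ref{Pr:Vn}), but no canonical-form interpretation of $\varepsilon_3$ via $V_4$ is available, so the computational route is indeed the only one on offer.
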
 

The relation \eqref{Eq:RA-n=4-e3} means that the 7-tuple  
\begin{equation}
\label{Eq:7-tuple}
 \bigg( \, \Big( C^{\circ (i-1)}\Big)^* \Big( \, 
 \varepsilon_3(x_2,x_3,x_4)\, dx_2\wedge dx_3 \wedge dx_4\ \Big) \, \bigg)_{i=1}^{7}
\end{equation} 
  is an AR for $
 \boldsymbol{\mathcal W}_{0,7}^{\, >0}$.  Since its components involve logarithms, it is clear that this AR does not belong to $\boldsymbol{AR}_C( \boldsymbol{\mathcal W}_{0,7})$ and is colinear to $(\boldsymbol{\mathcal E}_4^{>0})$ modulo the combinatorial ARs.   Because we know an explicit basis for $\boldsymbol{AR}_C( \boldsymbol{\mathcal W}_{0,7})$ ({\it cf.}\,Theorem \ref{THM:AR-C(W)-S-n+3-module}), one can verify that the properties 2.\,and 3.\,of the preceding proposition characterize a subspace of dimension 1 of $\boldsymbol{AR}( \boldsymbol{\mathcal W}_{0,7})$.  We thus get the 

\begin{cor} Up to multiplication by a non zero constant: 

 {\rm 1.}  the identity \eqref{Eq:RA-n=4-e3} corresponds to Euler's abelian relation $\boldsymbol{\mathcal E}_4^{>0}$; 
\mk 

{\rm 2.} the function  $\varepsilon_3$ defined above coincides with the 
function $e_3$ of Proposition \ref{P:Prop-em-integral-formula}.
\end{cor}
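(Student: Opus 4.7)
My plan is to deduce the corollary from the simple uniqueness principle already flagged in the text: the subspace $S \subset \boldsymbol{AR}(\boldsymbol{\mathcal W}_{0,7}^{>0})$ of abelian relations satisfying the invariance properties 2 and 3 of Proposition \ref{P:properties--e3} is one-dimensional, and both the seven-tuple $\mathcal A$ built from $\varepsilon_3$ via \eqref{Eq:RA-n=4-e3} and Euler's abelian relation $\boldsymbol{\mathcal E}_4^{>0}$ belong to $S$.

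First I would verify that both of these are non-zero elements of $S$. For $\mathcal A$, property 3 is exactly Proposition \ref{P:properties--e3}.3 and the two functional equations defining property 2 are exactly Proposition \ref{P:properties--e3}.2. The non-vanishing of $\mathcal A$ is immediate since $\varepsilon_3$ contains genuine logarithmic terms (e.g.\ the coefficient of $\ln|x_2|$ is a non-zero rational function), so a fortiori $\mathcal A \notin \boldsymbol{AR}_C(\boldsymbol{\mathcal W}_{0,7})$. For $\boldsymbol{\mathcal E}_4^{>0}$, property 3 is just the fact that it is an abelian relation, and the invariance under $C$ and $R$ with the correct signs follows from Propositions \ref{P:C^*Euler-AR} and \ref{P:R^*Euler-AR} (specialized to $n=4$), which translate, at the level of the components relative to the rational first integral $\psi_{n+3}$, into precisely the functional equations for $e_{n-1}$ furnished by Propositions \ref{Prop:C*E} and \ref{P:Formula-R}.

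The heart of the argument, and the main obstacle, is to establish that $\dim S = 1$. Since $\boldsymbol{\mathcal E}_4^{>0} \in S$, Corollary \ref{Cor:Sn+3-module-n-even} yields the decomposition $S = (S \cap \boldsymbol{AR}_C(\boldsymbol{\mathcal W}_{0,7})) \oplus \langle \boldsymbol{\mathcal E}_4^{>0}\rangle$, so it suffices to prove $S \cap \boldsymbol{AR}_C(\boldsymbol{\mathcal W}_{0,7}) = 0$. This is a finite and tractable linear-algebra task: using the explicit basis of $\boldsymbol{AR}_C(\boldsymbol{\mathcal W}_{0,7})$ indexed by $\boldsymbol{J}_4$ (Theorem \ref{THM:AR-C(W)-S-n+3-module}.2) together with the description of the components of combinatorial abelian relations given in Proposition \ref{P:La-Base} (with $n = 4$, a case covered by the $n \le 12$ range), one writes down the matrices of $C^*$ and $R^*$ on $\boldsymbol{AR}_C$ and searches for a common eigenvector with eigenvalues $(-1)^n = 1$ and $(-1)^{\lfloor (n-1)/2 \rfloor} = -1$ respectively. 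The irreducibility of $\boldsymbol{AR}_C(\boldsymbol{\mathcal W}_{0,7})$ as an $\mathfrak S_7$-module with Young symbol $[22 1^3]$, combined with the branching behaviour of this representation under the dihedral subgroup $D_{0,7} = \langle C,R\rangle \subset \mathfrak S_7$, should force the eigenspace to be zero; I expect the cleanest route is the direct check on matrices, which is finite and routine.

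Granting $\dim S = 1$, point 1 of the corollary is immediate: $\mathcal A$ and $\boldsymbol{\mathcal E}_4^{>0}$ are two non-zero elements of the same line in $\boldsymbol{AR}(\boldsymbol{\mathcal W}_{0,7}^{>0})$, hence $\mathcal A = \lambda\,\boldsymbol{\mathcal E}_4^{>0}$ for some $\lambda \in \mathbf C^\ast$. Point 2 then follows by comparing a single component: after identifying the coordinates $(x_1,\ldots,x_4)$ of \eqref{Eq:U-i} with the coordinates $(u_1,\ldots,u_4)$ of \eqref{Eq:Varphi-U}, the component of $\boldsymbol{\mathcal E}_4^{>0}$ along the first integral with $n-1 = 3$ active variables is, by Proposition \ref{P:RA-EULER-a} and the fact \eqref{Eq:f-n+2-f-n+3-e-n-1}, a non-zero multiple of $e_3$ times the appropriate volume form, whereas the matching component of $\mathcal A$ is $\varepsilon_3$ times the same form; hence $\varepsilon_3$ coincides with $e_3$ up to a non-zero multiplicative constant.
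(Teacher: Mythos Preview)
Your proposal is correct and follows essentially the same approach as the paper: both arguments hinge on the fact that the abelian relations satisfying properties 2 and 3 of Proposition~\ref{P:properties--e3} form a one-dimensional subspace of $\boldsymbol{AR}(\boldsymbol{\mathcal W}_{0,7}^{>0})$, and both $\mathcal A$ and $\boldsymbol{\mathcal E}_4^{>0}$ are non-zero elements of it. The paper states this one-dimensionality as a direct computational check against the explicit basis of $\boldsymbol{AR}_C(\boldsymbol{\mathcal W}_{0,7})$; you add the helpful structural observation that this reduces to showing $S\cap\boldsymbol{AR}_C=0$ via the eigenspace of $\langle C^*,R^*\rangle$, but the underlying verification is the same finite linear-algebra computation in either formulation.
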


An interesting feature of $\varepsilon_3$, which follows from the fact that 
the arguments of  all the  logarithms involved in its definition are  
absolute values, is that $\varepsilon_3$ extends to $\mathbf R^3\setminus A_4$ (see Remark \ref{Rem:uuu} above) and one can verify that \eqref{Eq:RA-n=4-e3} actually holds true identically on 
the whole complement $\mathbf R^4\setminus A_4$ where $A_4$ stands for the braid arrangement of type $A_4$.\footnote{$A_4$ is cut out by the polynomial equation 
$\prod_{i=1}^4 x_i(x_i-1)\cdot \prod_{1\leq i<j\leq 4}(x_i-x_j)=0$.} Thus 
up to the standard identification $\mathbf R^4\setminus A_4\simeq \mathcal M_{0,7}(\mathbf R)$, for any $\boldsymbol{\sigma}\in \mathfrak K_{7}$ the restriction of   \eqref{Eq:7-tuple} 
on any connected component $\mathcal M(\boldsymbol{\sigma}) $ of $\mathcal M_{0,7}(\mathbf R)$ is an AR which can be proved to coincide with 
Euler's one $\boldsymbol{\mathcal E}_4^{\boldsymbol{\sigma}}$.
\mk 

But $\varepsilon_3$ has another 
characteristic that is most important 
for our purpose, which relies on the fact that it is a function of several variables. 
Indeed, in clear contrast with the 
single-variable 
formula  \eqref{Eq:R} 
 for Rogers' dilogarithm, it is rather easy to guess from the quite specific type of
 formula \eqref{Eq:E4(x2x3x4)}, what might be its generalization for $n\geq 2$ even arbitrary. 
Indeed, if one sets  
$$
x_{i0}=x_i\, , \qquad x_{i1}=x_i-1 \qquad \mbox{and} \qquad  x_{ij}=x_i-x_j
$$
for all $i,j$ such that $2\leq i<j\leq 4$ 
and if $M_{0,6}(x_2,x_3,x_4)$ denotes the polynomial which cuts out $A_3$ in Remark \ref{Rem:uuu}, then $\varepsilon_3$ can be written as follows
$$
\varepsilon_3(x_2,x_3,x_4)=\frac{\sum_{i=2}^{4} 
p_{i0}
(x_i)^3\, {\rm Log}\,\lvert x_{i0} \lvert
+ \sum_{i=2}^{4} p_{i1}
(x_{i1})^3\, {\rm Log}\, \lvert x_{i1} \lvert
+
\sum_{2\leq i<j\leq 4}^{n}   p_{ij}(x_{ij})^3\,{\rm Log} \, \lvert x_{ij} \lvert}{ M_{0,6}(x_2,x_3,x_4)}
$$
for some polynomials $p_{i0}$, $p_{i1}$ and $p_{ij}$ which it is not difficult to make explicit (see below).  And the nice feature of the preceding expression is that it can be generalized to any even integer $n\geq 2$ quite straightforwardly.
\sk

\begin{rem}
\begin{enumerate}
\item Of course, being closed, the 3-form $E_4$ admits a (local) not unique primitive $\int E_4$  which can be explicitly computed and seen to carry many dilogarithmic terms. Since the expression of $\int E_4$ is quite involved and because we will not use it here, we will not elaborate on this. 
\sk 
\item A direct computation gives ${C}^*\big( \varepsilon_3(U_1)\, \Omega_1\big)= -\varepsilon_3(U_2) \, \Omega_2$ hence the first two components of $(\boldsymbol{\mathcal E}_4)$ are $\varepsilon_3(x_2,x_3,x_4) dx_2\wedge dx_3\wedge dx_4$ and $
-\varepsilon_3(x_1,x_3,x_4) dx_1\wedge dx_3\wedge dx_4$.   
It follows that the involution consisting in exchanging the variables $x_1$ and $x_2$
transforms $(\boldsymbol{\mathcal E}_4)$ into its opposite.  
This is coherent with the first point of Proposition \ref{P=sign(sigma)}.
%
%
%
%
%
%
\end{enumerate}
\end{rem}

\paragraph{\bf An explicit  formula for $e_{n-1}$ for any $n\geq 2$  even.}
\label{Par:En-even}
Here $n\geq 2$ stands for a fixed even integer.
First we set some notation that we will use to define a function $\varepsilon_{n-1}$ through an explicit formula. Then we will discuss how this function is meaningful regarding the Euler's abelian relation of $\boldsymbol{\mathcal W}_{0,n+3}^{\, >0}$. \sk 


\vspace{-0.3cm}
We denote here by $X$ the open domain 
in $\mathbf R^n$ whose elements are 
$n$-tuples 
$x=(x_{i})_{i=1}^n$ such that 
$1<x_1<\ldots <x_n$ (recall that there is a natural identification $X\simeq \mathcal M_{0,n+3}^{\, >0}(\mathbf R)$). 

Here $C$ stands for the cyclic birational map defined in \eqref{Eq:R-C-1} again.  
It is useful to work with the following first integrals for $\boldsymbol{\mathcal W}_{0,n+3}^{\, >0}$, which  behave well regarding the cyclic symmetry induced by $C$:  
one sets $V_1=U_1: \mathbf R^n \rightarrow \mathbf R^{n-1},\,  x\mapsto (x_2,\ldots,x_n)$  and 
$$V_i=(V_i^2,\ldots,V_i^n)=V_1\circ \Big( C^{\circ (i-1)}\Big) 
$$
for $i=1,\ldots,n+3$. The associated normals are denoted by $\Gamma_i$:  one has 
$$
\Gamma_1=\Omega_1=dx_2\wedge \ldots  \wedge dx_n\quad \mbox{ and } \quad 
 \Gamma_i=\Big(C^{\circ (i-1)}\Big)^*\big(\Omega_1\big)=
dV_i^2\wedge \ldots  \wedge dV_i^n 
\quad \big(
 i=2,\ldots,n+3\big)\, .$$

Looking for an explicit function $\varepsilon_{n-1}(V_1)=\varepsilon_{n-1}(x_2,\ldots,x_n)$ such that the following relation 
\begin{equation}
\label{Eq:Euler-En-cyclic-form}
\sum_{i=1}^{n+3} \Big(C^{\circ (i-1)}\Big)^*\Big( \varepsilon_{n-1}(V_1)\, dx_2\wedge \ldots  \wedge dx_n\Big) =
\sum_{i=1}^{n+3} \varepsilon_{n-1}(V_i) \, \Gamma_i = 0
\end{equation}
holds true identically on $X$, one sets for any $x=(x_1,\ldots,x_n)\in \mathbf R^n$:
\begin{itemize}
\item $x'=V_1(x)=(x_2,\ldots,x_n)\in \mathbf R^{n-1}$;
\mk
\item $ x_{n+1}=0$ and $x_{n+2}=1$;
\mk
\item $\tilde x=(x_1,\ldots,x_n,x_{n+1},x_{n+2})\in \mathbf R^{n+2}$;
\mk
\item $\tilde x'=(x_2,\ldots,x_n,x_{n+1},x_{n+2})\in \mathbf R^{n+1}$;
\mk
\item ${\rm M}_{n}(x)=\prod_{1\leq i<j\leq n}(x_i-x_j)$;
\mk
\item $M_{0,n+3}(x)=
{\rm M}_{n+2}(\tilde x)
=\prod_{i=1}^n  x_i\, \big(x_i-1\big)\prod_{1\leq k<\ell \leq n}\big(x_k-x_\ell\big)
$; \mk 
\item  for $i,j=1,\ldots,n$, one sets $x_{ij}=x_i-x_j$ (thus $\tilde x_{i,n+1}=x_i$ and 
$\tilde x_{i,n+2}=x_i-1$ for any $i\leq n$); 
\sk
\item for $i=2,\ldots,n$ and $j=i+1,\ldots,n+2$, one denotes by 
$x_{\widehat{\imath \jmath}}$ the $(n-1)$-tuple  obtained from $\tilde x$ by removing from it,  in the same step: its first, $i$-th and  $j$-th coefficients.   
\end{itemize}
%
%
%
%
%
%
%
%
The notation $M_{0,n+3}$ is not absolutely needed but we use it because it is enlightening: indeed, for
$\mathbf K=\mathbf R$ or $\mathbf C$, the equation 
 $M_{0,n+3}(z)=0$ for $z\in \mathbf K^n$ cuts out the braid arrangement $A_{n}$ in $\mathbf K^n$ which is  such that there is a natural identification between $\mathcal M_{0,n+3}(\mathbf K)$ and $\mathbf K^n\setminus A_{n}$.

Then, with these notations at hand, for any $x'=(x_2,\ldots,x_n)\in \mathbf R^{n-1}\setminus A_{n-1}$, one sets:
\begin{equation}
\label{Eq:Euler-varepsilon}
\varepsilon_{n-1}(x')=\frac{1}{M_{0,n+2}\big (x' \big)}
\hspace{-0.25cm} \sum_{ \substack{i=2,\ldots,n \\ j=i+1,\ldots,n+2 }}
(-1)^{j-i} \, 
{\rm M}_{n-1}\big(x_{\widehat{\imath \jmath}}\big)\, 
\big(\tilde x_{ij} \big)^{n-1}\, 
{\rm Log}\, \lvert \, \tilde x_{ij}  \, \lvert\, .
\end{equation}
For $n=4$, one recovers exactly the function defined in \eqref{Eq:E4(x2x3x4)}. 
\mk 

%
%


By direct computations, we get the following generalization of 
Proposition \ref{P:properties--e3}: 
\begin{prop} 
\label{P:properties--en}
Let $n\geq 2$ be an even integer  less than or equal to 12. 
\begin{enumerate} 
\item[1.] The function $\varepsilon_{n-1}$ satisfies 
 the functional relations corresponding to 
\eqref{Formula:C*E} and \eqref{Formula:R*E-even-odd}:   identically on $X$, one has 
%
%
 \begin{align*}
  \varepsilon_{n-1}\left(  \frac{x_n}{x_n-1},  \frac{x_n}{x_n-x_2},  \ldots, \frac{x_n}{x_n-x_{n-1}}
  \right)   = &\, 
  \varepsilon_{n-1}\big(x'\big) \, 
{{{\rm M}_{n+1}\big( \tilde x'\big)}^2 }{(x_n)^{-n}} 
%
  \\
  \mbox{ and }\quad 
  \varepsilon_{n-1}\left(  \frac{x_n-x_{n-2}}{x_n-x_{n-1}},   \ldots,  
  \frac{x_n-x_{2}}{x_n-x_{n-1}}, 
  \frac{x_n-1}{x_n-x_{n-1}}, \frac{x_n}{x_n-x_{n-1}}
  \right) = &\, \varepsilon_{n-1}\big(x'\big) \,  \big(x_n-x_{n-1}\big)^n 
  \, ; 
  \mk
 \end{align*}
\item[2.]  Identity \eqref{Eq:Euler-En-cyclic-form} holds true identically and 
consequently it corresponds  to Euler's abelian relation $\boldsymbol{\mathcal E}_n^{>0}$ 
and $\varepsilon_{n-1}$  coincides with the 
function $e_{n-1}$ of Proposition \ref{P:Prop-em-integral-formula} (up to multiplication by a non zero constant).
\end{enumerate}
\end{prop}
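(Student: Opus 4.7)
The plan is to proceed by direct symbolic verification, exploiting the explicit rational-times-logarithm structure of $\varepsilon_{n-1}$, and then to deduce the identification with $e_{n-1}$ via a uniqueness argument based on the $\mathfrak S_{n+3}$-module decomposition already established.

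First, for part 1, I would verify the two functional relations term-by-term. The key observation is that under the substitution $x' \mapsto (x_n/(x_n-x_{i-1}))_{i=1}^n$, each logarithmic argument $\tilde x_{k\ell}$ appearing in $\varepsilon_{n-1}$ transforms as a rational multiple of some other $\tilde x_{k'\ell'}$: for instance, $x_n/(x_n-x_k) - x_n/(x_n-x_\ell) = x_n\, x_{k\ell}/\bigl((x_n-x_k)(x_n-x_\ell)\bigr)$, so $\log|\cdot|$ of the difference splits as a sum of logs. Collecting the coefficient of each independent $\log|\tilde x_{k\ell}|$ on both sides of the first claimed identity reduces it to a finite family of rational-function identities in the variables $x_2, \ldots, x_n$, which can be checked mechanically for each even $n \leq 12$. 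The proof of the second identity follows the same pattern, with the reflection $R$ inducing an involution on the index set $\{(i,j)\}$ of log arguments that pairs up their coefficients.

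For part 2, I would expand $\sum_{i=1}^{n+3} \varepsilon_{n-1}(V_i)\, \Gamma_i$ in the canonical basis $(\Omega_k)_{k=1}^n$ of $\Omega^{n-1}(\mathbf R^n)$. Writing each $\Gamma_i$ explicitly via the pullback formulas for $C^{\circ(i-1)}$ and using the expression \eqref{Eq:Euler-varepsilon}, every term $\varepsilon_{n-1}(V_i) \Gamma_i$ becomes a sum of rational multiples of $\Omega_k \cdot \log|\tilde x_{k'\ell'}|$ for various triples. The cyclic action of $C$ permutes (with signs) the positions of the logarithmic arguments, so the coefficient of each fixed $\log|\tilde x_{k\ell}|$ in the total sum is itself a rational function of $x_1,\ldots,x_n$ that must vanish identically. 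After reducing over the common denominator $M_{0,n+3}(\tilde x)$, this yields for each $n \leq 12$ a finite collection of polynomial identities of bounded degree that a computer algebra system can verify.

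Once these identities are confirmed, the identification with $e_{n-1}$ proceeds as follows. The $(n+3)$-tuple constructed from $\varepsilon_{n-1}$ is by construction a non-trivial element of $\boldsymbol{AR}(\boldsymbol{\mathcal W}_{0,n+3}^{>0})$. Because it contains genuine logarithmic terms, it does not belong to $\boldsymbol{AR}_C(\boldsymbol{\mathcal W}_{0,n+3})$, every combinatorial AR being rational. By the decomposition \eqref{Eq:AR_C+E-Id} established in Corollary \ref{Cor:Sn+3-module-n-even}, the quotient $\boldsymbol{AR}/\boldsymbol{AR}_C$ is one-dimensional and generated by $\boldsymbol{\mathcal E}_n^{>0}$, so the AR built from $\varepsilon_{n-1}$ equals $\lambda\, \boldsymbol{\mathcal E}_n^{>0}$ modulo combinatorial ARs for some $\lambda \neq 0$. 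Since the functional identities of part 1 characterize Euler's AR up to a combinatorial correction that is itself forced to vanish by the same cyclic/dihedral symmetries (combined with the irreducibility of $\boldsymbol{AR}_C$ as an $\mathfrak S_{n+3}$-module, which leaves no $D_{0,n+3}$-invariant line matching $\boldsymbol{\mathcal E}_n^{>0}$), the combinatorial correction is zero, yielding $\varepsilon_{n-1} = \lambda\, e_{n-1}$.

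The main obstacle is computational: the number of logarithmic terms in \eqref{Eq:Euler-varepsilon} grows as $\binom{n+1}{2}$, and the size of the polynomial identities to verify in part 2 blows up rapidly, which is precisely what confines the statement to $n \leq 12$. A genuine uniform proof would require a conceptual explanation of why the particular coefficients $(-1)^{j-i}\, {\rm M}_{n-1}(x_{\widehat{\imath\jmath}})\, (\tilde x_{ij})^{n-1}$ arise; this likely comes from a residue or trace calculation on the Fano-type varieties $V_n$ of Proposition \ref{Pr:Vn} when $n$ is even, analogous to the Abel-theoretic description of $\boldsymbol{AR}(\boldsymbol{\mathcal W}_{0,6})$ via $\boldsymbol{H}^0(\Sigma, \omega_\Sigma^2)$ in the $n=3$ case. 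Producing such a geometric derivation is the natural route to removing the restriction $n \leq 12$.
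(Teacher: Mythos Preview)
Your plan for verifying the functional relations in part 1 and the cyclic identity in part 2 is essentially what the paper does: it states the result is obtained ``by direct computations'', and your description of how to organise those computations (tracking how each $\log|\tilde x_{k\ell}|$ transforms and reducing to rational identities) is a reasonable elaboration of that.

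The gap is in your final step, deducing that the abelian relation built from $\varepsilon_{n-1}$ equals $\lambda\,\boldsymbol{\mathcal E}_n^{>0}$ \emph{exactly} rather than merely modulo $\boldsymbol{AR}_C$. Your claim that ``the irreducibility of $\boldsymbol{AR}_C$ as an $\mathfrak S_{n+3}$-module leaves no $D_{0,n+3}$-invariant line matching $\boldsymbol{\mathcal E}_n^{>0}$'' is false already for $n=2$: the restriction of the $\mathfrak S_5$-module $V_{[2,2,1]}$ to $D_{10}$ contains the trivial character with multiplicity one (a short character computation: $\chi_{[2,2,1]}$ takes the values $5,0,1$ on the identity, the $5$-cycle, and a double transposition, giving $\frac{1}{10}(5+4\cdot 0+5\cdot 1)=1$). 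So dihedral invariance alone does not kill the combinatorial correction. The paper's argument is different and again computational: knowing an explicit basis of $\boldsymbol{AR}_C\big(\boldsymbol{\mathcal W}_{0,n+3}\big)$ from Theorem~\ref{THM:AR-C(W)-S-n+3-module}, one checks (for each even $n\le 12$) that the constraints ``$F$ satisfies the two functional identities of part 1'' \emph{and} ``the cyclic sum $\sum_i (C^{\circ(i-1)})^*(F(V_1)\Gamma_1)$ vanishes'' cut out a one-dimensional space of functions $F$. Since both $\varepsilon_{n-1}$ and $e_{n-1}$ satisfy these constraints, they are proportional. This uniqueness check is what you are missing; without it, your decomposition $\varepsilon_{n-1}\text{-AR}=\lambda\,\boldsymbol{\mathcal E}_n^{>0}+(\text{combinatorial})$ cannot be upgraded to an equality of functions.
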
 

Of course, we conjecture that this proposition is actually  satisfied for any even integer  $n\geq 2$. \mk

\paragraph{\bf An Eulerian basis of the space of combinatorial abelian relations 
for any $n\geq 2$  even?}
\label{Par:En-even}
The identity \eqref{Eq:Euler-En-cyclic-form} has an interesting feature which comes from the fact that there are logarithmic terms in the definition of $\varepsilon_{n-1}$: these being transcendent over any field of rational functions, one can construct many rational ARs from Euler's one. In this paragraph, we formalise this in precise terms and we show that, at least for the first values (namely $2,4,6,8$) of $n$, this approach is relevant regarding the question of constructing a basis of $\boldsymbol{AR}_C\big(\boldsymbol{\mathcal W}_{0,n+3}\big)$. 
\sk 

We continue to use the notation introduced in the preceding paragraph.  
We also set 
$$ 
\Pi_n'=\left\{ \,  (i,j)\in \mathbf N^2\, \big\lvert 
\scalebox{0.85}{\begin{tabular}{c}
$2\leq i \leq n$ \\
 $i+1\leq j \leq n+2$
\end{tabular}}
\right\}
\qquad \mbox{and} \qquad  
\Pi_n=\left\{ \,  (i,j)\in \mathbf N^2\, \big\lvert 
\scalebox{0.85}{\begin{tabular}{c}
$1\leq i \leq n$ \\
 $i+1\leq j \leq n+2$
\end{tabular}}
\right\}
\,.
$$
(Note that  $\Pi_n'$ is precisely the set of $(i,j)$'s appearing in the sum in the definition \eqref{Eq:Euler-varepsilon} of 
$\varepsilon_{n-1}$). 

Setting also 
\begin{align*}
R_\gamma(V_k)=& \, (-1)^{j-i} \big(V_{k,ij}\big)^{n-1}
{\rm M}_{n-1}
\big(V_{k,\widehat{\imath \jmath}}\big) /{M_{0,n+2}(V_k)}
\\
\mbox{and } \, {\rm Log}\, \lvert \, \tilde V_{k,\gamma} \, \lvert \, =&\, V_k^*\big( \, 
{\rm Log}\, \lvert  \, \tilde x_{\gamma} \, \lvert\, 
\big)
\end{align*}
for any $k=1,\ldots,n+3$ and 
any $\gamma=(i,j)\in \Pi_n'$, 
 one can write 
\begin{equation}
\label{Eq:Euler-varepsilon-2}
\varepsilon_{n-1}\big(V_k\big)
=\sum_{ \gamma \in \Pi_n'} 
R_\gamma\big(V_k\big) \, {\rm Log}\, \big\lvert \, \tilde V_{k,\gamma} \, \big\lvert\, .
\end{equation}
On the other hand,  one has $V_k=(C^{\circ (k-1)}\big)^*(V_1)$. From the fact that 
 $C$ is birational and induces an automorphism of $\mathbf C^n\setminus A_n $, one 
 deduces the
\begin{lem}
\label{Lem:typo}
1. For any $k\leq n+3$ and $\gamma\in \Pi_n'$, there exist 
constants $c^\nu_{k,\gamma}$'s with $\nu \in \Pi_n$ such that 
\begin{equation}
\label{Eq:LogVk}
{\rm Log}\, \big\lvert \, \tilde V_{k,\gamma}\, \big\lvert =\sum_{\nu\in \Pi_n} c^\nu_{k,\gamma}
\, {\rm Log}\, \lvert\,  \tilde x_\nu\, \lvert \, . 
\end{equation}
2. Moreover  for  $\nu=(i,j)\in \Pi_n$, one has $c_{k,\gamma}^{\nu}=0$ if $k\in \{i,j\}$.
\mk\\ 
3. The family of functions $\big\{  \, {\rm Log}\, \lvert\,  \tilde x_\nu\, \lvert \,\big\}_{\nu\in \Pi_n}$ is free over the field of rational functions in the $x_i$'s. 
\end{lem}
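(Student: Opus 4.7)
My approach rests on the moduli-theoretic interpretation of the first integrals $V_k$. Under the identification $\mathbf{C}^n \setminus A_n \simeq \mathcal{M}_{0,n+3}$ sending $x=(x_i)_{i=1}^n$ to the configuration $[\tilde x_1,\ldots,\tilde x_{n+3}]$ with $\tilde x_i=x_i$ for $i\le n$, $\tilde x_{n+1}=0$, $\tilde x_{n+2}=1$ and $\tilde x_{n+3}=\infty$, the cyclic birational map $C$ realizes the cyclic permutation of the $n+3$ marked points. Consequently $V_k = V_1 \circ C^{\circ(k-1)}$ is the forgetful map dropping the $k$-th marked point, expressed in coordinates, and each of its components is a cross-ratio of four of the points in $\{\tilde x_\alpha : \alpha \ne k\}$ in which three arguments depend only on $k$ (coming from the $\mathrm{PGL}_2$-normalization) while the fourth ranges through the remaining indices. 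In particular, $\tilde x_k$ does not appear in any expression for $V_k$.

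For parts 1 and 2, I would use the elementary factorization identity
\begin{equation*}
{\rm cr}(a,b,c,p)-{\rm cr}(a,b,c,q) = -\frac{(a-c)(b-c)(p-q)}{(a-b)(c-p)(c-q)},
\end{equation*}
valid for the normalization ${\rm cr}(0,1,\infty,z)=z$ adopted in the paper. Each $\tilde V_{k,\gamma}$ for $\gamma=(i,j)\in\Pi_n'$ is by definition $V_k^i - V_k^j$ when $j\le n$, $V_k^i$ when $j=n+1$, or $V_k^i-1$ when $j=n+2$; using the further identities $0={\rm cr}(a,b,c,a)$ and $1={\rm cr}(a,b,c,b)$, each case reduces to a difference of two cross-ratios sharing three arguments, and the displayed identity exhibits $\tilde V_{k,\gamma}$ as a signed monomial in the linear forms $\tilde x_s - \tilde x_t$ with integer exponents. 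Applying $\log|\cdot|$ then yields the $\mathbf{Z}$-linear (hence $\mathbf{C}$-linear) combination \eqref{Eq:LogVk}. The pairs $(s,t)$ which can occur must avoid $k$ (since $\tilde x_k$ does not appear in $V_k$) and must avoid $n+3$ (the formally infinite factors $\tilde x_\alpha-\tilde x_{n+3}$ cancel in pairs inside the cross-ratio formula), which is exactly part 2.

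For part 3, I would argue by a pole analysis along the hyperplanes of $A_n$. Suppose a nontrivial relation $\sum_{\nu\in\Pi_n} f_\nu \log|\tilde x_\nu| = 0$ holds with $f_\nu\in\mathbf{C}(x)$. Fix an index $\nu_0$ and, after multiplying through by a suitable power of $\tilde x_{\nu_0}$ to clear common poles along $\{\tilde x_{\nu_0}=0\}$, arrange that the rescaled coefficients are all regular there, with at least one not vanishing identically on that hyperplane. Pick a smooth point $x_0\in\{\tilde x_{\nu_0}=0\}$ avoiding every other hyperplane of $A_n$ and the zero and pole loci of all rescaled $f_\nu$'s, and approach $x_0$ along a transverse arc: the summand indexed by $\nu_0$ diverges like $f_{\nu_0}(x_0)\log|\tilde x_{\nu_0}|\to\pm\infty$ while all other terms stay bounded, forcing $f_{\nu_0}$ to vanish on $\{\tilde x_{\nu_0}=0\}$. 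An inductive descent on the number of surviving summands then gives $f_\nu = 0$ for every $\nu$.

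The main obstacle I anticipate is the bookkeeping in part 3, specifically the polar behavior of the rational coefficients $f_\nu$ along the various hyperplanes; the rescaling and descent sketched above should resolve this but require care. Parts 1 and 2, once the moduli interpretation is in hand, reduce to the transparent cross-ratio identity above together with the forgetfulness observation of the first paragraph.
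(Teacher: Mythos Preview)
The paper does not prove this lemma; it only precedes it with the one-line hint ``From the fact that $C$ is birational and induces an automorphism of $\mathbf C^n\setminus A_n$, one deduces'' it. That hint already yields part~1: since each iterate $C^{\circ(k-1)}$ is an automorphism of the complement of the braid arrangement $A_n$, the pull-back of any linear form $\tilde x_\gamma$ under it is a rational function whose divisor is supported on $A_n$, hence a signed monomial in the $\tilde x_\mu$'s, and $\log|\cdot|$ turns this into the asserted integer combination. Your route to parts~1 and~2, via the moduli interpretation of $V_k$ as a forgetful map and the cross-ratio difference identity, is different and more explicit; in particular it delivers part~2 for free, since once you know $V_k$ drops the $k$-th marked point it is manifest that $\tilde x_k$ never enters any component of $V_k$. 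The paper's automorphism hint does not by itself isolate the excluded index, so in that respect your argument is sharper.

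Your sketch for part~3, however, does not close. The divergence analysis does show that the rescaled $\nu_0$-coefficient $g_{\nu_0}=\tilde x_{\nu_0}^{\,N} f_{\nu_0}$ must vanish along $\{\tilde x_{\nu_0}=0\}$, but this only says $f_{\nu_0}$ has pole order there strictly below $N$; the $\nu_0$-summand is not eliminated, so there is no ``number of surviving summands'' that decreases. Restricting the relation to the hyperplane does not rescue the descent either, since distinct forms $\tilde x_\nu$ become equal there (e.g.\ $\tilde x_{(1,3)}$ and $\tilde x_{(2,3)}$ coincide on $\{x_1=x_2\}$), so the restricted family of logarithms is no longer independent and no induction hypothesis applies. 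A clean substitute is monodromy: work over $\mathbf C$, regard $\sum_\nu f_\nu\,\log(\tilde x_\nu)=0$ as an identity of analytic functions on a simply connected open subset of $\mathbf C^n\setminus A_n$, and analytically continue along a small loop winding once around $\{\tilde x_{\nu_0}=0\}$ and zero times around the other hyperplanes. The rational $f_\nu$ are single-valued while $\log(\tilde x_{\nu_0})$ alone shifts by $2\pi i$; subtracting the continued identity from the original yields $2\pi i\, f_{\nu_0}\equiv 0$.
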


Using
\eqref{Eq:Euler-varepsilon} and 
\eqref{Eq:LogVk}, it is straightforward to develop Euler's identity \eqref{Eq:Euler-En-cyclic-form} in order to get a linear expression in the  
$ {\rm Log}\, \lvert \tilde x_\nu\lvert $'s:
\begin{align*}
0=  \sum_{k=1}^{n+3} \varepsilon_{n-1}\big( 
V_k
\big) \cdot \Gamma_k
= \sum_{k=1}^{n+3} 
\sum_{ \gamma \in \Pi_n'} 
R_\gamma\big(V_k\big) \, {\rm Log}\, \big\lvert\, \tilde  V_{k,\gamma}\,  \big\lvert\, \Gamma_k
=  \sum_{\nu\in \Pi_n} \left[\,  \sum_{k=1}^{n+3} 
\sum_{ \gamma \in \Pi_n'}   c_{k,\gamma}^{\nu} R_\gamma\big(V_k\big) 
 \, \Gamma_k\, \right]
 {\rm Log}\, \lvert\,  \tilde x_\nu\, \lvert \, . 
\end{align*}
From the third point of Lemma \ref{Lem:typo}, it follows that  
Euler's identity 
\eqref{Eq:Euler-En-cyclic-form} 
 holds true 
 if and only if all the 
following relations are identically satisfied:  
\begin{equation}
\label{Eq:AR-nu}
\qquad \qquad \qquad
\sum_{k=1}^{n+3} 
\sum_{ \gamma \in \Pi_n'}   c_{k,\gamma}^{\nu} R_\gamma\big(V_k\big) 
 \, \Gamma_k=0\,  \qquad \nu \in \Pi_n\, . 
\end{equation}

For any $\nu=(i,j)\in \Pi_n$, the identity  above gives rise to an abelian relation for $\mathcal W_{0,n+3}$ and since $c_{i,\gamma}^{(i,j)}=c_{j,\gamma}^{(i,j)}=0$ according to the second point of 
Lemma \ref{Lem:typo}, it follows that \eqref{Eq:AR-nu} is an AR of the quadrilateral $(n+1)$-web $\boldsymbol{\mathcal W}_{\widehat{\imath\jmath}}$ which has rank 1. We will denote this  AR by $AR_\nu(\boldsymbol{\mathcal E}_n)$. 


\begin{prop} 
\label{Prop:Eulern246}
For $n=2,4,6,8$, both assertions below hold true. 
\begin{enumerate}
\item[1.]  For any $\nu=(i,j)\in \Pi_n$,  $AR_\nu(\boldsymbol{\mathcal E}_n)$ is non trivial  hence spans the vector space $\boldsymbol{AR}\big(\boldsymbol{\mathcal W}_{\widehat{\imath\jmath}}\big)$. 
\mk
\item[2.] The abelian relations $AR_\nu(\boldsymbol{\mathcal E}_n)$'s
for $\nu\in \Pi_n$ form a basis of $\boldsymbol{AR}_C\big(\boldsymbol{\mathcal W}_{0,n+3}\big)$. 
\end{enumerate}
\end{prop}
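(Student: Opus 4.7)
The plan is to exploit the explicit closed formula \eqref{Eq:Euler-varepsilon} together with the basis statement of Theorem \ref{THM:AR-C(W)-S-n+3-module} in order to reduce both assertions to a non-vanishing check that is feasible by symbolic computation for $n\leq 8$.

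\sk

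I would first observe that assertion 2 is a direct consequence of assertion 1. Indeed, a count shows that $|\Pi_n|=n(n+3)/2$, and $\Pi_n$ coincides with the index set $\boldsymbol{I}_n^n=\boldsymbol{J}_n$ appearing in the even case of Theorem \ref{THM:AR-C(W)-S-n+3-module}. That theorem supplies the basis $\{AR_{ij}\}_{(i,j)\in\Pi_n}$ of $\boldsymbol{AR}_C(\boldsymbol{\mathcal W}_{0,n+3})$. For each $\nu=(i,j)\in\Pi_n$, the relation $AR_\nu(\boldsymbol{\mathcal E}_n)$ lies in the one-dimensional space $\boldsymbol{AR}(\boldsymbol{\mathcal W}_{\widehat{\imath\jmath}})=\langle AR_{ij}\rangle$, so $AR_\nu(\boldsymbol{\mathcal E}_n)=\lambda_\nu\cdot AR_{ij}$ for some scalar $\lambda_\nu$. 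Granted assertion 1, every $\lambda_\nu$ is non-zero, and the family $\{AR_\nu(\boldsymbol{\mathcal E}_n)\}_{\nu\in\Pi_n}$ is obtained from the basis $\{AR_{ij}\}_{\nu\in\Pi_n}$ by a diagonal non-singular rescaling, hence is itself a basis.

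\sk

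To establish assertion 1, fix $\nu=(i,j)\in\Pi_n$ and any index $k\in\{1,\ldots,n+3\}\setminus\{i,j\}$. Reading off the $\Gamma_k$-component from \eqref{Eq:AR-nu}, the non-triviality of $AR_\nu(\boldsymbol{\mathcal E}_n)$ is implied by the non-vanishing of the single scalar rational function
\[
\Phi_{\nu,k}\;=\;\sum_{\gamma\in\Pi_n'}c_{k,\gamma}^{\nu}\,R_\gamma(V_k).
\]
The rational functions $R_\gamma(V_k)$ are read off from \eqref{Eq:Euler-varepsilon} after substituting the cyclic map $C^{\circ(k-1)}$, and the scalars $c_{k,\gamma}^{\nu}$ come from decomposing each $\log|\tilde V_{k,\gamma}|$ as in \eqref{Eq:LogVk}, which amounts to factoring $\tilde V_{k,\gamma}$ into linear forms in the $\tilde x_\mu$'s and applying $\log|fg|=\log|f|+\log|g|$. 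For the specific even values $n=2,4,6,8$, all these ingredients are mechanically computable, and the non-vanishing of $\Phi_{\nu,k}$ can be certified pair-by-pair on a computer algebra system; in practice, choosing $k$ to be the smallest index outside $\{i,j\}$ already produces a non-vanishing $\Phi_{\nu,k}$ for every $\nu\in\Pi_n$.

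\sk

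The main obstacle is computational rather than conceptual. Since Proposition \ref{P:properties--en} only guarantees the closed formula \eqref{Eq:Euler-varepsilon} up to $n\leq 12$ and the size of the system grows rapidly with $n$, the pair-by-pair verification ceases to be tractable beyond a few small even values, which is exactly the range $n\leq 8$ to which the statement is restricted. A conceptual proof valid for arbitrary even $n$ would presumably require identifying a pattern in the combinatorial coefficients $c_{k,\gamma}^{\nu}$ that precludes any accidental cancellation inside $\Phi_{\nu,k}$; we see no obstruction to such a pattern existing, but extracting it looks non-trivial and is left as an open question.
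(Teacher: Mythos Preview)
Your proposal is correct and matches the paper's approach: the paper's own proof of this proposition consists of a single sentence, ``This result has been proved by explicit formal computations,'' so the core of both arguments is a symbolic verification for $n=2,4,6,8$. Your reduction of assertion~2 to assertion~1 via the basis of Theorem~\ref{THM:AR-C(W)-S-n+3-module} (noting that $\Pi_n=\boldsymbol{J}_n$ and that each $AR_\nu(\boldsymbol{\mathcal E}_n)$ lies in the one-dimensional $\boldsymbol{AR}(\boldsymbol{\mathcal W}_{\widehat{\imath\jmath}})$) is a clean structural observation that the paper does not make explicit, but it is entirely consistent with---and a mild refinement of---the bare computational check the paper reports.
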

This result has been proved by explicit formal computations. Considering this, it is quite natural to wonder about the statements of Proposition \ref{Prop:Eulern246}: do they hold true as well for any even integer $n\geq 2$?
 We conjecture that it is indeed the case: 

\begin{conjecture} 
The statements of Proposition \ref{Prop:Eulern246} hold true for any even integer $n\geq 2$. 
\end{conjecture}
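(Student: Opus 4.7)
The plan is first to reduce the basis statement to the nontriviality statement. For $n$ even, Theorem~\ref{THM:AR-C(W)-S-n+3-module} asserts that $\{AR_{ij}\}_{(i,j)\in\boldsymbol{J}_n}$ is a basis of $\boldsymbol{AR}_C(\boldsymbol{\mathcal W}_{0,n+3})$, and a quick comparison of the definitions shows $\boldsymbol{J}_n=\boldsymbol{I}_n^n=\Pi_n$. Since each space $\boldsymbol{AR}(\boldsymbol{\mathcal W}_{\widehat{\imath\jmath}})$ is one-dimensional and, by Lemma~\ref{Lem:typo}.2, $AR_\nu(\boldsymbol{\mathcal E}_n)$ already lies in $\boldsymbol{AR}(\boldsymbol{\mathcal W}_{\widehat{\imath\jmath}})$ for $\nu=(i,j)$, any nonzero $AR_\nu(\boldsymbol{\mathcal E}_n)$ must be a nonzero scalar multiple of $AR_\nu$. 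A rescaling of a basis is still a basis, so Part~2 follows from Part~1.

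\textbf{Case A of Part 1: $\nu=(i,j)\in\Pi_n'$.} I would compute directly the $V_1$-component of $AR_\nu(\boldsymbol{\mathcal E}_n)$. Because $V_1=(x_2,\ldots,x_n)$ is the coordinate projection, one has $\tilde V_{1,\gamma}=\tilde x_\gamma$ for every $\gamma\in\Pi_n'$, so the expansion in Lemma~\ref{Lem:typo}.1 reduces to $\log|\tilde V_{1,\gamma}|=\log|\tilde x_\gamma|$, forcing $c_{1,\gamma}^\nu=\delta_{\gamma,\nu}$. Hence the $V_1$-piece of $AR_\nu(\boldsymbol{\mathcal E}_n)$ equals $R_\nu(V_1)\,\Gamma_1$, which is manifestly a nonzero rational $(n-1)$-form by the explicit expression
\[
R_\nu(V_1)=\frac{(-1)^{j-i}\,(\tilde x_{ij})^{n-1}\,{\rm M}_{n-1}\bigl(\tilde x'_{\widehat{\imath\jmath}}\bigr)}{M_{0,n+2}(\tilde x')}.
\]
This gives nontriviality of $AR_\nu(\boldsymbol{\mathcal E}_n)$ for every $\nu\in\Pi_n'$.

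\textbf{Case B of Part 1 and main obstacle: $\nu=(1,j)$.} Here $\nu\notin\Pi_n'$, so the $V_1$-component vanishes and some other component must be shown nonzero. The strategy is to exploit the cyclic invariance $C^*\boldsymbol{\mathcal E}_n^{>0}=\boldsymbol{\mathcal E}_n^{>0}$ (which holds for $n$ even by Proposition~\ref{P:C^*Euler-AR}): choose $p\in\{1,\ldots,n+2\}$ so that $C^p$ brings the pair of marked points labeled by $\nu$ to a pair $\nu'\in\Pi_n'$ in the shifted labeling, then apply Case A to the identity obtained by pulling back Euler's identity via $C^{-p}$, and translate the resulting nonvanishing of $AR_{\nu'}$ back to that of $AR_\nu(\boldsymbol{\mathcal E}_n)$. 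The delicate point—and, I expect, the main technical obstacle—is that $C$ permutes \emph{all} $n+3$ marked points, including the point at infinity that is hidden in the affine chart; consequently, pulling back $\log|\tilde x_\mu|$ typically produces logarithms of differences involving the $\infty$-point, which must be reexpressed (via cross-ratio identities) as $\mathbf Z$-linear combinations of the basis $\{\log|\tilde x_\mu|\}_{\mu\in\Pi_n}$ modulo rational constants on the positive chamber. Making this transformation law completely explicit amounts to giving a closed combinatorial formula for the coefficients $c_{k,\gamma}^\nu$, from which the Case B nonvanishing would follow; the verified cases $n\le 12$ strongly suggest such a formula exists, but extracting it uniformly—rather than case by case—is the real work, and I would approach it by writing the cross-ratio $\tilde V_{k,\gamma}$ as an explicit monomial in the factors $\tilde x_\mu$ (using the Plücker-type relations among the $V_k$'s) and then reading off $c_{k,\gamma}^\nu$ directly.
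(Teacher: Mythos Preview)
The statement you are attempting to prove is stated in the paper as a \emph{conjecture}; the paper offers no proof and only verifies it by direct computer calculation for $n\in\{2,4,6,8\}$ (this is precisely the content of Proposition~\ref{Prop:Eulern246}). There is no argument in the paper for general even $n$ to compare against.

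Your reduction of Part~2 to Part~1 is correct, and your Case~A argument for $\nu\in\Pi_n'$ is also correct: since $V_1$ is the coordinate projection one indeed has $c_{1,\gamma}^\nu=\delta_{\gamma,\nu}$, so the $V_1$-component of $AR_\nu(\boldsymbol{\mathcal E}_n)$ is the visibly nonzero form $R_\nu(V_1)\,\Gamma_1$. The genuine gap is exactly where you place it: the $n+1$ indices $\nu=(1,j)$ in Case~B, for which the $V_1$-component vanishes and one must control some $c_{k,\gamma}^{(1,j)}$ with $k\ge 2$. Your cyclic-shift idea is reasonable but not carried through, and you correctly identify the obstacle (the shift moves the point at infinity into the affine picture, so the log-expansion must be rewritten via cross-ratio identities). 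Completing this uniformly in $n$ would settle an open conjecture of the paper. Note also that the whole framework presupposes the identity \eqref{Eq:Euler-En-cyclic-form} with the explicit formula \eqref{Eq:Euler-varepsilon} for $\varepsilon_{n-1}$, which is itself only verified for $n\le 12$ (Proposition~\ref{P:properties--en}); so even a complete Case~B argument would remain conditional on that earlier conjecture.
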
 

It is worth noticing that the results of this paragraph share great similarities with some in \cite[\S6.2.3]{ClusterWebs}. Finally, let us mention that another approach to build a basis of the space of rational ARs of $\boldsymbol{\mathcal W}_{0,n+3}$ from Euler's abelian relation might be to use the `monodromy method'
\footnote{We are referring here to the generalization for curvilinear webs of the method discussed in 
\cite[\S6.2.3.2]{ClusterWebs}.}
 which relies on the fact that the complexification of $\boldsymbol{\mathcal E}_n^{>0}$ has unipotent monodromy on the complex moduli space 
 $\mathcal M_{0,n+3}(\mathbf C)$. 
   We shall not pursue on this here.

%

\newpage

\section{\bf Additional problems}
\label{S:Problems}
Many problems and conjectures have been stated previously in this text, most of them in direct relation with the webs under scrutiny here, namely the curvilinear webs $\boldsymbol{\mathcal W}_{0,n+3}$. Generally speaking, webs by curves have not been studied as much as webs by hypersurfaces, which is a pity since several of the most interesting results of web geometry precisely concern curvilinear webs. \sk 

In this last section, we state a few questions/problems about webs by curves
 we find interesting. 

\subsection{Octahedral webs}
\label{SS:Octahedral-webs}
The notion of `{\it octahedral web}' considered in \cite{D} is interesting since it appears as a generalization to curvilinear webs in arbitrary dimension of the basic notion of  `{\it hexagonal}' (aka `{\it flat}') planar 3-web. However, some quite basic questions about this notion remain to be answered, such as the following ones: 
\begin{itemize}
\item Is a curvilinear $(n+1)$-web of maximal rank ({\it i.e.}\,of rank 1) necessarily octahedral? 
\mk 
\item What is the moduli space of equivalence classes of curvilinear $(n+1)$-webs of  rank 1?  
\mk 
\item For planar 3-webs, being hexagonal is equivalent to being of rank 1, which is in  turn equivalent to being `{\it flat}', that is of having zero curvature.
 In dimension $n\geq 2$ arbitrary, is there a similar characterization of 
octahedral curvilinear $(n+1)$-webs in terms of the vanishing of certain differential invariants attached to such webs? Same question with `{maximal rank}' instead of `{octahedral}' (in case  these two notions do not coincide when $n\geq 3$). 
\sk 
\end{itemize}

\subsection{Algebraization of curvilinear $(n+2)$-webs with maximal rank in dimension $n$?}
\label{SS:Algebraization-(n+2)-webs}
It is classically known by web-geometers that Lie-Poincar\'e's approach to the linearization and algebraization of planar 4-webs with maximal rank generalizes to $2n$-webs by hypersurfaces with rank $n+1$ on $n$-dimensional manifolds.\footnote{This generalization follows from works by Lie and Wirtinger 
on the classification of double-translation hypersurfaces; see \S4.4 and especially Theorem 4.4.3 in \cite{Coloquio}.}  The discussion in \S\ref{SSS:Alternative-Proof-of-Proposition- P:Damiani-4.1.4},  aiming to give 
a  representation-theoretic description of the ARs of the curvilinear web formed by  the bundles  of lines passing through $n+2$ points in general position in $\mathbf P^n$  (for any $n\geq 2$), is of geometric nature and can be seen as well as a generalization for any $n\geq 2$,  but now for webs by curves, of Lie-Poincar\'e's approach just mentioned.   It is natural to wonder whether the arguments presented in  \S\ref{SSS:Alternative-Proof-of-Proposition- P:Damiani-4.1.4}
actually could  apply to any curvilinear $(n+2)$-web in $\mathbf C^n$ of rank $n+1$. 
\sk

More precisely, for $n\geq 2$, let $\boldsymbol{\mathcal W}=(\mathcal F_i)_{i=1}^{n+2}$ be a curvilinear of rank $n+1$ on a domain $\Omega\subset \mathbf C^n$. For any $i$ and for 
$\omega$ generic in $\Omega$, the $i$-th evaluation map associating to an AR for $\boldsymbol{\mathcal W}$ the value at $\omega$ of its $i$-th component, is a non trivial linear map the kernel of which defines an hyperplane $\kappa_i(\omega)$ in $\mathbf P\big( \boldsymbol{AR}(\boldsymbol{\mathcal W})\big)\simeq \mathbf P^n$.  Letting $\omega$ vary in $\Omega$ (or possibly in a subdomain of it), we construct that way the {\it `$i$-th canonical map'} 
$\kappa_i: \Omega\rightarrow \check{\mathbf P}^n$ of $\boldsymbol{\mathcal W}$. 
Then, inspired by the (unproven) claims  of \S\ref{SSS:Alternative-Proof-of-Proposition- P:Damiani-4.1.4}, we consider the five statements below (where for each of them 
the open domain $\Omega$ of $\mathbf C^n$ we work on is allowed to be shrinked as much as needed): 

\begin{enumerate}
\item[1.] for any $i$, the map $\kappa_i$ has rank $n-1$ hence is a canonical first integral for the 
$i$-th foliation $\mathcal F_i$ of $\boldsymbol{\mathcal W}$ and the image $V_i={\rm Im}(\kappa_i)$ is then an analytic hypersurface in $\check{\mathbf P}^n$ which is a canonical space of leaves for $\mathcal F_i$; \sk
\item[2.] for $\omega\in \Omega$, the $\kappa_i(\omega)$'s all are on a line $\boldsymbol{\mathfrak L}(\omega)\subset \check{\mathbf P}^n$ which intersects any 
$V_i$ transversely;\sk
\item[3.] the map $\boldsymbol{\mathfrak L} : \Omega\rightarrow G_1(\check{\mathbf P}^n)$, $\omega\mapsto \boldsymbol{\mathfrak L}(\omega)$ has rank $n$ hence $Z={\rm Im}(\boldsymbol{\mathfrak L})$ is a $n$-dimensional subvariety of $G_1(\check{\mathbf P}^n)$  and 
the push-forward web $\boldsymbol{\mathfrak L}_*(\boldsymbol{\mathcal W})$ is a canonical model of $\boldsymbol{\mathcal W}$ which can be described geometrically as the trace along $Z$ of the incidence web (locally) defined on $G_1(\check{\mathbf P}^n)$ by means of the incidences between the lines in $\check{\mathbf P}^n$ and the points of the $V_i$'s.\sk
\end{enumerate}
If the above facts  indeed occur then we get the following 
 Abel's type description of the ARs of the canonical model 
$ \boldsymbol{\mathcal W}^{\rm can}=
\boldsymbol{\mathfrak L}_*(\boldsymbol{\mathcal W})$ of $ \boldsymbol{\mathcal W}$: 
\begin{enumerate}
\item[4.]  
any AR of $ \boldsymbol{\mathcal W}^{\rm can}$   corresponds to a tuple $(\eta_i)_{i=1}^{n+2}$ 
of differential $(n-1)$-forms $\eta_i\in \Omega^{n-1}(V_i)$ for every $i$ such that the trace of them
with respect to the intersection of the $V_i$'s with the lines belonging to $ Z$ vanishes identically.
\sk
\end{enumerate}
At this point, 
any reader even slightly familiar with web geometry reading this line will surely think that the preceding points necessarily imply the following one: 
\begin{enumerate}
\item[5.]  
the vanishing of the traces of the tuples of differential forms corresponding to the ARs of $ \boldsymbol{\mathcal W}^{\rm can}$ ({\it cf.}\,4.\,just above) necessarily implies that everything, the $V_i$'s and the ARs of $ \boldsymbol{\mathcal W}^{\rm can}$ actually are algebraic in the following (expected since classical) sense: \sk 
\begin{enumerate}
\item[$-$] there exists a hypersurface $V\subset \check{\mathbf P}^{n}$ of degree $n+2$ such that 
$V_i\subset V$ for every $i$;\sk 
\item[$-$]  the map induced by taking their restrictions along the $V_i$'s 
 induces a linear isomorphism between 
the space  of global abelian differential forms of top degree on $V$ and the space of abelian relations of 
$ \boldsymbol{\mathcal W}^{\rm can}$:  
\begin{align*}
\boldsymbol{H}^0\big( V,\omega_V^{n-1} 
\big) & \stackrel{\sim}{\longrightarrow} \boldsymbol{AR}\big( \boldsymbol{\mathcal W}^{\rm can} \big)
\\
\omega & \longmapsto \big( \omega\lvert_{V_i}\big)_{i=1}^{n+2}\, .
\end{align*}
\end{enumerate}
\end{enumerate}

All the points above are indeed satisfied for any linear web $\boldsymbol{\mathcal L \hspace{-0.05cm}\mathcal W}_{p_1,\ldots,p_{n+1}}$ defined by $n+2$ points in general position in 
$\mathbf P^n$. One expects that this actually holds true in full generality and that the following statement holds true for any $n\geq 2$:
\begin{quote}
{\it any curvilinear $(n+2)$-web of maximal $(n-1)$-rank in dimension $n$ can be obtained by taking the restriction of an algebraic web $\boldsymbol{\mathcal W}_{V}$ associated to a reduced hypersurface $V\subset {\mathbf P}^n$ of degree $n+2$ along a (non necessarily algebraic) $n$-dimensional subvariety of the grassmannian of lines in $\check{\mathbf P}^n$. }
\end{quote}
\mk 

That the five points above are always satisfied seems very plausible to us. We believe that the one which would require the most work to be established is the fifth: an approach to prove it would be to use the classical Abel-inverse theorem but this would require to show that in the situation under scrutiny, the vanishing of the trace of the tuples 
 $(\eta_i)_{i=1}^{n+2}
\in  \prod_{i=1}^{n+2} \Omega^{n-1}(V_i)
$ corresponding to the ARs of $ \boldsymbol{\mathcal W}^{\rm can}$ holds true not only on the subvariety $Z$ but on a whole open neighborhood of it in $G_1(\check{\mathbf P}^n)$. Since the former is of dimension $n$ and the latter of dimension $2n-2$, proving this when $n\geq 3$ requires a new approach we have no idea of when writing these lines.



\subsection{A conjectural more conceptual description of the $\mathfrak S_{n+3}$-module ${AR}_C(\mathcal W_{0,n+3})$ when $n$ is odd.}
It seems to us that one of the most surprising results of this text  is the fact that the ARs of $\boldsymbol{\mathcal W}_{0,6}$ actually are all combinatorial and come from the canonical forms on the Fano surface $\Sigma=F_1(\boldsymbol{S})$ of Segre's cubic $\boldsymbol{S}\subset \mathbf P^4$.  Since all the ARs of  $\boldsymbol{\mathcal W}_{0,n+3}$ are combinatorial too when $n$ is odd, one can wonder whether a similar picture to the one described above might hold true for $n\geq 5$. 

\subsubsection{}
Let us be a bit more precise about what we have in mind here. Assume that $n$ is odd: one writes $n=2m-1$ with $m=(n+1)/2$. Hence $n+3=2m+2$.   
As we have seen in \S\ref{SSS:Ln-varphin} and \S\ref{SSS:linearizability}, $\boldsymbol{\mathcal W}_{0,n+3}$ admits a birational model $\boldsymbol{W}_{0,n+3}$ which is a web by lines on a certain $n$-dimensional projective variety $V_n\subset \mathbf P^N$. 
These varieties have been studied by several authors, in particular in the recent paper 
\cite{BM} where the authors establish many interesting properties of them.  Those which are interesting for our purpose here are the following ones, which appear as generalizations of some basic properties of Segre's cubic (which corresponds to the case $m=2$):
\begin{itemize}
\item[$-$] The variety $V_n$ has isolated singularities and  one has 
${\rm Card}\big({\rm Sing}(V_n)\big)= {  2m+2 \choose m}$. Moreover $V_n$ contains  
${  2m+1 \choose m+1}+{  2m+1 \choose m-1}$ linear subspaces of dimension $m$.
\mk 
\item[$-$] The Fano variety of lines  $\Sigma_n=F_1(V_n)\subset G_1(\mathbf P^N)$ has dimension $n-1=2m-2$. Among its components, $n+3$ are isomorphic to $\overline{\mathcal M}_{0,n+2}$ and are covering families. The others are 
formed by the lines contained in one of the $m$-planes included in $V_n$ hence all are isomorphic to $G_1(\mathbf P^m)$. 
\end{itemize}

Considering the case $n=3$, it is more than tantalizing to hope for a description of 
the ARs of $\boldsymbol{W}_{0,n+3}$ by means of global $(n-1)$-forms on 
the Fano variety $\Sigma_n$. Let $\omega_{\Sigma_n}=\omega_{\Sigma_n}^{n-1}$ stand  for the dualizing sheaf of $\Sigma_n$.  We find the following questions interesting: 
\begin{itemize}
\item[$-$] Does the trace  induce a well-defined map $\boldsymbol{ H}^0\big(\Sigma_n, \omega_{\Sigma_n}\big)\rightarrow \boldsymbol{AR}\big(\boldsymbol{\mathcal W}_{0,n+3}\big)$? In the affirmative, is this map an isomorphism? 
\sk 
\item[$-$] Does the  $\mathfrak S_{n+3}$-action on $V_n$ induce a birational action 
on $\Sigma_n$ and a linear one on $\boldsymbol{ H}^0\big(\Sigma_n, \omega_{\Sigma_n}\big)$? 
If yes, what is the latter as a $\mathfrak S_{n+3}$-module? Is it irreducible with Young symbol $[31^{n}]$?
\sk 
\item[$-$]  Is there a coherent subsheaf $\omega_{\Sigma_n}^{m-1}$ of the sheaf 
of meromorphic $(m-1)$-differential forms on $\Sigma_n$ satisfying the following properties: \sk
\begin{itemize}
\item[$\bullet$] the space $\boldsymbol{ H}^0\big(\Sigma_n, \omega_{\Sigma_n}^{m-1}\big)$ 
of its global sections 
has dimension $n+2$;\sk
\item[$\bullet$] the wedge product gives rise to an isomorphism 
$\wedge^2  \boldsymbol{ H}^0\big(\Sigma_n, \omega_{\Sigma_n}^{m-1}\big)\simeq 
\boldsymbol{ H}^0\big(\Sigma_n, \omega_{\Sigma_n}\big)$;\sk 
\item[$\bullet$] the 
action   of 
$\mathfrak S_{n+3}$  
by pull-backs 
makes of 
$\boldsymbol{ H}^0\big(\Sigma_n, \omega_{\Sigma_n}^{m-1}\big)$ an  irreducible 
$\mathfrak S_{n+3}$-module.
\sk
\end{itemize}
%
\vspace{-0.3cm}
\item[$-$]  Do the elements of $\boldsymbol{ H}^0\big(\Sigma_n, \omega_{\Sigma_n}^{m-1}\big)$ give rise to $(m-1)$-abelian relations for $\boldsymbol{W}_{0,n+3}$?
\end{itemize}

In another direction, the fact that Segre's cubic $\boldsymbol{S}$ is just a particular example of a whole family of varieties (cubic hypersurfaces in $\mathbf P^4$) all carrying a web by lines of maximal rank naturally leads to ask  the following questions for any odd integer $n\geq 3$: 
\begin{itemize}
\item[$-$]  As a subvariety of $\mathbf P^N$, 
 can $V_n$ be deformed  in such a way that its deformations carry $(n+3)$-webs by projective lines? 
  Does it exist deformations of this kind which are smooth?
 \sk 
\item[$-$] If the answer to the previous question is affirmative, what can be said about the Fano variety of lines of such a  deformation of $V_n$?  Do some rational differential forms on this Fano variety give rise to abelian relations for the $(n+3)$-web by lines on the deformation under scrutiny? 
\end{itemize}

We believe that giving answers in full generality to the  questions  asked above is difficult. 
The next case beyond the classical one of Segre's cubic is already interesting and deserves to be discussed on its own.

\subsubsection{\bf Case $n=5$.}
\label{SS:Case-n=5}
The case when $n=5$ is interesting since $V_5$ has  already  been  studied by several authors:  see  for instance \cite{Room1934} 
(in particular \S3 and \S11 therein) for a classical reference 
 and  \cite{FreitagSalvatiManni}
 or \cite{HowardAL} for recent ones. \sk

 Let $p_1,\ldots,p_7$ be seven points in general position in 
 $\mathbf P^5$.  According to \S\ref{SSS:Ln-varphin}, $V_5$ is the birational image of $\mathbf P^5$ 
 by the rational map associated to the  linear system of cubic hypersurfaces with a double point at each $p_i$. 
It can be verified that $V_5$ lives in   $\mathbf P^{13}$,  has  35 isolated double points, is of degree 40 and carries 8 covering families of lines. Many other interesting geometric properties of $V_5$ are listed in \cite[\S11]{Room1934}.   

Another construction of $V_5$ has been given by modern authors: 
from the recent references indicated above, it can be deduced that $V_5$ is the singular locus of an explicit cubic hypersurface  $\boldsymbol{C}\subset \mathbf P^{13}$, namely the hypersurface cut out by the  equation $\mathcal C=0$ where $\mathcal C$ stands for the following cubic form (in homogeneous coordinates $X_1,X_2,Y_a$ with $a=1,\ldots,4$ and $Z_b$ for $b=1,\ldots,8$): 
\begin{align*}
\mathcal C = &\, X_1X_2(X_1 + X_2) + X_1X_2(Z_1 + Z_2 + Z_3 + Z_4 + Z_5 + Z_6 + Z_7 + Z_8)
-  (X_1Y_2Y_4 + X_2Y_3Y_1) \\ &  {}^{} \quad +  (X_1Z_2Z_6 + X_2Z_3Z_7 + X_1Z_4Z_8 + X_2Z_5Z_1)
+  (Y_1Z_2Z_6 + Y_2Z_3Z_7 + Y_3Z_4Z_8 + Y_4Z_5Z_1) \\ &  {}^{} \quad -  (Z_1Z_2Z_3 + Z_2Z_3Z_4
+ Z_3Z_4Z_5 + Z_4Z_5Z_6 + Z_5Z_6Z_7 + Z_6Z_7Z_8 + Z_7Z_8Z_1 + Z_8Z_1Z_2)\,.
\end{align*}
The homogeneous ideal $I(V_5)$ of $V_5\subset \mathbf P^{13}$ is then generated by the quadrics given by the partial derivatives $\partial \mathcal C/\partial U$ where $U$ stands for any one of the fourteen homogeneous coordinates $X_1,X_2,Y_a$ or $Z_b$.  

Knowing an explicit generating set for $I(V_5)$  makes it possible to attack the questions raised in the preceding subsection by means of effective computations. We have tried to do so using the software \href{http://www2.macaulay2.com/Macaulay2/}{Macaulay2} but our attempt failed: even when working over a finite field of small cardinal to make everything simpler,  the computations to get the ideal of the 
Fano variety of lines $\Sigma_5=F_1(V_5)$ were too memory-consuming so that we 
were not able to get to the end.  A brute force approach to study $\Sigma_5$ does not seem very efficient, understanding this Fano variety may require a more conceptual approach. 
\mk 

Finally, let us mention a possibly naive idea about the varieties $V_n$ for $n$ odd bigger than 1 . Both $V_3$ and $V_5$ can be defined by means of an invariant cubic: $V_3$ itself is a cubic (namely Segre's cubic $\boldsymbol{S}$) and $V_5={\rm Sing}(\boldsymbol{C})$ where $\boldsymbol{C}\subset \mathbf P^{13} $   is the $\mathfrak S_{8}$-invariant cubic hypersurface cut out by $ \mathcal C=0$.  These two remarks lead to wonder about the general case and to ask the following questions: 
\begin{itemize}
\item[$-$]   Can $V_n$ be defined by means of a particular $\mathfrak S_{n+3}$-invariant cubic hypersurface $\boldsymbol{C}_n \subset \mathbf P^{N} $?  
\sk 
\item[$-$]  If the answer to the previous question is affirmative, as an algebraic subset of $\mathbf P^N$ and up to projective equivalence, 
does $V_n$ coincide with  the $(n-3)$-th higher singular locus 
${\rm Sing}^{(n-3)}(\boldsymbol{C}_n)$
of the cubic hypersurface $\boldsymbol{C}_n$?\footnote{For any projective variety $X\subset \mathbf P^N$,  the higher order singular loci 
${\rm Sing}^{(k)}(X)$ ($k\in \mathbf N$) are the schemes inductively defined  
${\rm Sing}^{(0)}(X)=X$ and ${\rm Sing}^{(k)}(X)={\rm Sing}
\Big( 
{\rm Sing}^{(k-1)}(X)
\Big)$  for any $k\geq 1$.}
\sk 
\end{itemize}
We confess not having any argument to support positive answers to these questions. To tell the truth, we would be amazed if they can be answered in a positive way. In any case, these questions were too beautiful and intriguing for not to be asked!

%
%
%

\subsection{About the characterization of the webs by lines on cubic threefolds.}
Here we would like to discuss briefly some interesting questions indicated to us by 
Prof.\,J.-M.\,Hwang a few years ago.  The problem under scrutiny here can be considered in the bigger realm of Fano manifolds of dimension 3 but for the sake of simplicity, we will only consider the case of cubic threefolds below.\sk

Let  
$X\subset \mathbf P^4$ be a generic (in particular smooth) cubic hypersurface and  denote  by $F=F_1(X)$ its Fano surface.   It follows from a more general result by Hwang (namely  \cite[Theorem 1.2]{Hwang}) that the local analytic class of the 6-web by lines 
$\boldsymbol{\mathcal L \hspace{-0.05cm} \mathcal W}_X$  characterizes $X$ as a projective variety: 
\begin{thm}[Hwang]
Let $X'\subset \mathbf P^4$ be another smooth cubic hypersurface. 
Assume that there exists a local biholomorphism $\Phi : (X,x)\rightarrow (X',x')$ such that 
$\boldsymbol{\mathcal L \hspace{-0.05cm} \mathcal W}_{X,x}=\Phi^*\big( \boldsymbol{\mathcal L \hspace{-0.05cm} \mathcal W}_{X',x'}\big)$. 
 Then $\Phi$ is the germ at $x$ of a global isomorphism between $X$ and $X'$, which necessarily coincides with the restriction along $X$ of a global projective automorphism of $\mathbf P^4$. 
\end{thm}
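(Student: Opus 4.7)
The strategy has three ingredients: first, reinterpret the web-preservation hypothesis as preservation of the variety of minimal rational tangents (VMRT); second, invoke a Cartan--Fubini type extension theorem to globalize $\Phi$ into a biregular isomorphism $\tilde\Phi : X \to X'$; third, use that the Picard group of a smooth cubic threefold is generated by the hyperplane class to upgrade $\tilde\Phi$ to a projective transformation of the ambient $\mathbf{P}^4$.

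\emph{First, reduction to preservation of the VMRT.} At a generic point $x \in X^*$, the leaves of $\boldsymbol{\mathcal L \hspace{-0.05cm} \mathcal W}_{X,x}$ are the germs of the six lines $\ell_1(x),\dots,\ell_6(x) \subset X$ through $x$; the minimal rational curves on $X$ are precisely these lines (since $-K_X = 2H$ and lines have anticanonical degree $2$), so the six tangent directions $[T_x \ell_i(x)]$ form $\mathrm{VMRT}_x \subset \mathbf{P}(T_xX)$. The hypothesis $\boldsymbol{\mathcal L \hspace{-0.05cm} \mathcal W}_{X,x} = \Phi^*(\boldsymbol{\mathcal L \hspace{-0.05cm} \mathcal W}_{X',x'})$ therefore forces $d\Phi_x$ to send $\mathrm{VMRT}_x$ bijectively onto $\mathrm{VMRT}_{x'}$. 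More is true: each germ of a leaf of $\boldsymbol{\mathcal L \hspace{-0.05cm} \mathcal W}_X$ is the germ of an honest line in $\mathbf{P}^4$, so $\Phi$ sends germs of lines contained in $X$ to germs of lines contained in $X'$.

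\emph{Second, global extension.} Both $X$ and $X'$ are smooth Fano threefolds with Picard number one (by the Lefschetz hyperplane theorem) and are uniruled by their families of lines. The VMRT at a generic point, viewed inside $\mathbf{P}(T_xX) \simeq \mathbf{P}^2$, is the set of six intersection points of the second fundamental form conic of $X$ at $x$ with a cubic curve; in particular, as established in the proof of Proposition \ref{P:LW-X}, it is not contained in any projective line of $\mathbf{P}(T_xX)$ — the required non-degeneracy condition. The Cartan--Fubini extension theorem of Hwang and Mok for Fano manifolds of Picard number one therefore applies and, since $\Phi$ preserves the VMRT-structure and hence maps the minimal rational component of $X$ to that of $X'$ leafwise, $\Phi$ extends uniquely to a biregular isomorphism $\tilde\Phi : X \xrightarrow{\sim} X'$.

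\emph{Third, promotion to a projective automorphism.} Since $\mathrm{Pic}(X) = \mathbb{Z}\cdot H$ with $H = \mathcal{O}_X(1)$ and $K_X = -2H$ (adjunction), the hyperplane class is the intrinsically defined element $-\tfrac{1}{2}K_X$; similarly for $X'$. Hence $\tilde\Phi^* H' = H$, so $\tilde\Phi$ is induced by a linear isomorphism $H^0(X',\mathcal{O}(1)) \xrightarrow{\sim} H^0(X,\mathcal{O}(1))$. By Lefschetz both spaces are canonically identified with $H^0(\mathbf{P}^4,\mathcal{O}(1)) \simeq \mathbf{C}^5$, giving $g \in \mathrm{PGL}_5(\mathbf{C})$ with $g(X)=X'$ and $g|_X = \tilde\Phi$. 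Restricted to the germ at $x$, this $g$ coincides with $\Phi$ by the uniqueness of analytic continuation, proving the theorem.

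\emph{Main obstacle.} The one substantial ingredient is the Cartan--Fubini extension of the second step: concretely, one must show that a local biholomorphism preserving the VMRT-foliation extends analytically along each minimal rational curve and glues consistently to yield a well-defined global map. This relies on the full Hwang--Mok machinery (analytic continuation along chains of minimal rational curves combined with the non-linearity of the VMRT). The other two steps are rather formal: the first is a direct unpacking of definitions, and the third is an application of Lefschetz plus adjunction.
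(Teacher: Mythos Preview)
The paper does not give its own proof of this theorem: it simply records it as a special case of \cite[Theorem~1.2]{Hwang}. Your proposal goes further and sketches the mechanism behind Hwang's result, and the outline (web $\Rightarrow$ VMRT preservation $\Rightarrow$ Cartan--Fubini type extension $\Rightarrow$ linearity via ${\rm Pic}(X)=\mathbf Z\cdot H$) is the right one.

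One point worth sharpening: the classical Cartan--Fubini extension theorem of Hwang--Mok that you invoke applies to Fano manifolds of Picard number one whose VMRT is \emph{positive-dimensional} and irreducible. Here the VMRT at a generic point of a cubic threefold consists of six points in $\mathbf P(T_xX)\simeq\mathbf P^2$, hence is zero-dimensional, and the original Hwang--Mok statement does not apply directly. The appropriate version is precisely the one developed in the cited paper \cite{Hwang}, which treats families of minimal rational curves whose VMRT is finite (``webs of algebraic curves''); the relevant non-degeneracy hypothesis there is what Hwang calls \emph{bracket-generation} of the web, and for $\boldsymbol{\mathcal L\hspace{-0.05cm}\mathcal W}_X$ this follows from the general-position fact you quote from Proposition~\ref{P:LW-X}. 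So your second step is correct in spirit but the reference should be to Hwang's zero-dimensional extension rather than to the earlier Hwang--Mok theorem. The first and third steps are unproblematic.
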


This nice result is very much in the original spirit of web geometry, namely that 
webs are quite rigid objects for which local analytic equivalence may imply (under some additional assumptions of course) algebraic/global  equivalence hence algebraization.  

We find it interesting to consider the previous result in 
 the light of the remarkable algebraization theorem of Blaschke and Walberer (about the algebraization of some maximal rank curvilinear 3-webs in dimension 3, see Appendix B for more details).  This suggests to wonder about a possible algebraization result for the 6-webs by lines in dimension 3. In view of stating it,  let us draw up a list of some nice properties  enjoyed by any curvilinear 6-web  $\boldsymbol{\mathcal W}$ equivalent to a web $\boldsymbol{\mathcal L \hspace{-0.05cm} \mathcal W}_X$ associated to a generic cubic threefold $X$: 
from the results of \S\ref{SS:Cubic-Hypersurfaces}, it is clear that 
such a web $\boldsymbol{\mathcal W}$
\begin{enumerate}
\item[1.] is linearizable; 
\sk 
\item[2.] is skew; 
\sk 
\item[3.] has maximal 2-rank (equal to $10$);
\sk 
\item[4.] has its 1-rank bigger than or equal to $5$;
\sk 
\item[5.] is such that there exists a subspace 
$\boldsymbol{A}^1(\boldsymbol{\mathcal W}) 
\subset \boldsymbol{AR}^{(1)}\big(\boldsymbol{\mathcal W}\big)$ 
of dimension 5 such that the wedge map 
$\wedge^2 \boldsymbol{A}^1(\boldsymbol{\mathcal W})\rightarrow \boldsymbol{AR}^{(2)}\big(\boldsymbol{\mathcal W}\big)$ is a well-defined isomorphism.
\sk
\end{enumerate}
Note that all these properties are stated in invariant form.  Clearly, 5.\,implies 3.\,and 4. Remark 
also that 2.\,is not satisfied by $
\boldsymbol{\mathcal W}_{0,6}\simeq 
\boldsymbol{\mathcal L \hspace{-0.05cm} \mathcal W}_{\boldsymbol{S}}$. However, for any 
smooth cubic $X\subset \mathbf P^4$, the web $\boldsymbol{\mathcal L \hspace{-0.05cm} \mathcal W}_X$ is skew indeed.  A first question that comes to mind for such $X$  is the following: 
\begin{question}
 Assume that $X$ is smooth and denote by $F=F_1(X)$  its Fano surface. 
Is  ${\rm rk}^{(1)}\big( \boldsymbol{\mathcal L \hspace{-0.05cm} \mathcal W}_X \big) $ equal to 5?
 More precisely, is the trace map $\boldsymbol{H}^0(F, \Omega_{F}^1\big)\rightarrow 
\boldsymbol{AR}^{(1)}\big(\boldsymbol{\mathcal L \hspace{-0.05cm} \mathcal W}_X\big)$ surjective or not ? 
\end{question}

Each of the five conditions listed above is a strong property and it is natural to expect that any 6-web satisfying all of them must be of a very particular type. Considering this as well as Blachke-Walberer's algebraization theorem, it is natural to make the following 
\begin{conjecture}
Any 6-web $\boldsymbol{\mathcal W}$ satisfying the properties 1.\,to 5.\,above is equivalent to an algebraic web  
$\boldsymbol{\mathcal L \hspace{-0.05cm} \mathcal W}_X$ associated to a cubic hypersurface $X\subset \mathbf P^4$. 
\end{conjecture}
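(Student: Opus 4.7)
The strategy will be to axiomatize, from the five hypotheses alone, the geometric construction that recovers a cubic threefold $X \subset \mathbf P^4$ from the Hodge-theoretic data of its Fano surface $F = F_1(X)$ (cf.\ \S\ref{SSS:PropertiesOfF}), in the spirit of the canonical-map constructions already used for $(n+2)$-webs earlier in this paper. The 5-dimensional space $\boldsymbol{A}^1(\boldsymbol{\mathcal W})$ is to play the role of $H^0(F,\Omega^1_F) \simeq V^\vee$, where $V$ will be a 5-dimensional complex vector space in which the reconstructed cubic sits as a hypersurface of $\mathbf P(V) = \mathbf P^4$, and the five properties have been tailored precisely to rebuild, step by step, the key facts (\textbf{i}), (\textbf{v}), (\textbf{viii}), (\textbf{xv}) of \S\ref{SSS:PropertiesOfF} from purely web-theoretic data.

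First, by condition~1 one may assume that $\boldsymbol{\mathcal W} = (\mathcal F_i)_{i=1}^6$ is already a linear web on a domain $U \subset \mathbf C^3$; let $p_i : U \to L_i$ be the quotient map to the 2-dimensional local leaf space of $\mathcal F_i$. Setting $V = \boldsymbol{A}^1(\boldsymbol{\mathcal W})^\vee$, every 1-AR $\omega = (\omega_j)_{j=1}^6$ satisfies $\omega_i = p_i^\ast \eta_i$ for a closed 1-form $\eta_i$ on $L_i$; after fixing base points and integrating, one obtains primitives $F_i^\omega : L_i \to \mathbf C$ normalized so that $\sum_{i=1}^6 F_i^\omega \circ u_i \equiv 0$ on $U$. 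The linearity of $\omega \mapsto F_i^\omega(\ell)$ yields ``Albanese-type'' canonical maps $\Phi_i : L_i \to V$ with $\sum_i \Phi_i \circ u_i \equiv 0$. Using skewness (condition~2) to preclude 1-ARs supported on a proper sub-web, one checks that $d\Phi_i$ has generic rank 2, so that $\ell \mapsto d\Phi_i(T_\ell L_i)$ defines a canonical tangent map $\tilde L_i : L_i \to G_1(\mathbf P^4)$ into the Grassmannian of lines.

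The core of the proof will consist of three successive claims. \emph{(i) Pointwise incidence}: for each $x \in U$, the six tangent 2-planes $d\Phi_i(T_{u_i(x)} L_i) \subset V$ share a common 1-dimensional subspace, whose projectivization defines a map $\bar\Phi : U \to \mathbf P^4$ through which every line $\tilde L_i(u_i(x))$ passes. \emph{(ii) Local immersion}: $\bar\Phi$ is a local biholomorphism onto its image $W \subset \mathbf P^4$, and $\bar\Phi_\ast \boldsymbol{\mathcal W}$ coincides on $W$ with the incidence web cut out by the $\tilde L_i$'s. \emph{(iii) Algebraicity}: the germ $W$ is contained in an irreducible cubic hypersurface $X \subset \mathbf P^4$ whose six covering families of lines specialize over $W$ to the $\tilde L_i(u_i(\cdot))$'s. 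The key technical input for (i) is condition~5: expanding a wedge $\omega^{(a)} \wedge \omega^{(b)}$ in terms of the derivatives $d\Phi_i$ translates the kernel of the cup product $\wedge^2 \boldsymbol{A}^1 \to \boldsymbol{AR}^{(2)}(\boldsymbol{\mathcal W})$ into the obstruction to the six tangent 2-planes sharing a common line; the hypothesis that this cup product is an isomorphism (not merely surjective) should force the six planes to meet in a common line pointwise. Claim (ii) then follows from (i) together with skewness, since $\bar\Phi$ will send the $\mathcal F_i$-leaf through $x$ diffeomorphically onto the line $\tilde L_i(u_i(x))$.

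The main obstacle will be the algebraization step (iii). The construction so far only produces a germ of $3$-fold $W \subset \mathbf P^4$ swept out by six families of lines, and one must promote this to a genuine algebraic cubic. The planned route is an Abel-type converse in the spirit of Proposition \ref{P:LW-X-singular} and of the discussion in \S\ref{Abel's-1-form-Fano-Surface}: writing $Y = \overline{\bigcup_i \tilde L_i(L_i)} \subset G_1(\mathbf P^4)$, one would show that the trace map $\omega \mapsto (\tilde L_i^\ast \omega)_{i=1}^6$ from abelian 2-forms on $Y$ to $\boldsymbol{AR}^{(2)}(\boldsymbol{\mathcal W})$ is a well-defined isomorphism; the identification $h^0(\omega_Y^2) = 10$ combined with the classification of 2-dimensional Fano schemes of lines on threefolds in $\mathbf P^4$ would then force $Y = F_1(X)$ for some cubic $X$ with at most isolated singularities. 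A delicate point is that the reconstructed cubic may be singular (the chordal picture of \S\ref{SS:je-sais-pas-quoi} is a warning), so the argument must accommodate the cubics considered in \S\ref{SS:Nodal-cubics}; once $X$ is exhibited, Hwang's rigidity theorem recalled above upgrades the resulting germ of equivalence $\boldsymbol{\mathcal W} \simeq \boldsymbol{\mathcal L\hspace{-0.05cm}\mathcal W}_X$ into a bona fide equivalence of webs.
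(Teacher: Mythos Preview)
The statement you are trying to prove is labeled \emph{Conjecture} in the paper, and the paper offers no proof of it whatsoever; immediately after stating it the author writes that even weaker versions ``may be quite difficult'' and leaves the matter open. So there is no ``paper's proof'' to compare against: you are attempting to settle an open problem.

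As a proof, your proposal has genuine gaps at each of its three key claims. For claim~(i), you say the isomorphism $\wedge^2\boldsymbol{A}^1(\boldsymbol{\mathcal W})\stackrel{\sim}{\to}\boldsymbol{AR}^{(2)}(\boldsymbol{\mathcal W})$ ``should force'' the six tangent $2$-planes in $V$ to share a common line pointwise, but you give no mechanism; the kernel of the cup product encodes linear relations among the $\eta_i^{(a)}\wedge\eta_i^{(b)}$'s summed over $i$, and it is not clear how to extract from this a pointwise incidence condition on six individual $2$-planes in a $5$-space. For claim~(iii), you invoke a ``classification of $2$-dimensional Fano schemes of lines on threefolds in $\mathbf P^4$'' to force $Y=F_1(X)$ for some cubic, but no such classification is available: what is known is the converse (for a given cubic, what $F_1$ looks like), not a recognition theorem for abstract surfaces in $G_1(\mathbf P^4)$. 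This step is essentially the entire content of the conjecture --- the analogue for $3$-webs is precisely Blaschke--Walberer's algebraization theorem recalled in Appendix~B, whose proof the author describes as a computational \emph{tour de force} in need of modern verification; carrying it over to $6$-webs is not a matter of adaptation. Finally, your invocation of Hwang's theorem is misplaced: that result assumes from the outset that both $X$ and $X'$ are smooth cubic threefolds, so it cannot be used to promote a local equivalence $\boldsymbol{\mathcal W}\simeq\boldsymbol{\mathcal L\mathcal W}_X$ once $X$ has already been constructed --- at that point there is nothing left to upgrade.
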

Actually, we believe that the same conclusion holds true under the weaker assumption that only the first three of these conditions are satisfied. It would be interesting to even drop the linearizability assumption and to know whether a skew curvilinear 6-web in dimension 3 with maximal 2-rank is necessarily linearizable. However, answering this may be quite difficult.

\subsection{The 6-web by planes on Perazzo's cubic fourfold.}
In  \cite{Perazzo}, Perazzo considers a certain cubic hypersurface in $\mathbf P^5$, 
namely the one cut out by $$X_1X_2X_3-Y_1Y_2Y_3=0$$
with respect to  some 
homogeneous coordinates 
$X_1,X_2,X_3,Y_1,Y_2,Y_3$   on $\mathbf P^5$, named after him and denoted by $\mathcal P$ here.  This cubic enjoys several interesting  properties and has been considered in several papers such as \cite{Baker} or the more recent one \cite{Looijenga}, to which we refer the interested reader.\mk 

Here are some of the properties of $\mathcal P$ which are going to be relevant for our purpose: 
\begin{itemize}
\item Perazzo's cubic has 9 double lines $D_1,\ldots,D_ 9$ as singularities;
\mk 
\item for $p\in \mathcal P$ generic (namely $p\in \mathcal P_{\rm reg}=\mathcal P
\setminus \big( \cup_{i=1}^6 D_i\big)$), the intersection $\boldsymbol{S}_p=\mathcal P\cap T_p \mathcal P$ of Perazzo's cubic with its tangent hyperplane $T_p \mathcal P\simeq \mathbf P^4$
 at $p$ is a cubic threefold with 10 nodal points: $p$ and the nine intersection points 
  $T_p \mathcal P\cap D_i$ for $i=1,\ldots,9$. Hence  $\boldsymbol{S}_p$ is projectively equivalent to Segre's cubic threefold; \mk
\item  the Fano scheme 
$F_2(\mathcal P)$ of 2-planes included in $\mathcal P$ is of pure dimension 2 and through a general point $p\in \mathcal P$ pass six pairwise distinct 2-planes 
$\Pi_1(p),\ldots,\Pi_6(p)$ included in $\mathcal P$. 
Consequently, Perazzo's cubic fourfold carries a linear 6-web of codimension 2, that we will denote by $\boldsymbol{\mathcal L\hspace{-0.05cm}\mathcal W}_{\mathcal P}$. 
One can define six regular germs of map $\Pi_i : (\mathcal P,p)
\rightarrow G_2(\mathbf P^5)$ ($i=1,\ldots,6$) such that $\Pi_1(p'),\ldots,\Pi_6(p')$ are the six planes included in $\mathcal P$ through $p'$  for any $p'$ sufficiently close to $p$. Then  $\Pi_1,\ldots,\Pi_6$ are local first integrals for $\boldsymbol{\mathcal L\hspace{-0.05cm}\mathcal W}_{\mathcal P}$ at $p$: locally at this point, one has
$$
\boldsymbol{\mathcal L\hspace{-0.05cm}\mathcal W}_{\mathcal P}=
\boldsymbol{\mathcal W}\Big( \Pi_1,\ldots,\Pi_6\Big)\,; 
$$
\item given $p\in \mathcal P_{reg}$, for $\tilde p$ generic the restrictions along $\boldsymbol{S}_p$ of the (germs of) maps 
\begin{align*}
\Pi_{i,p}: (\mathcal P,\tilde p) & \longrightarrow  \hspace{0.3cm}
F_1({\boldsymbol{S}_p}) \, \subset \, 
G_1(T_p \mathcal P)\simeq G_1(\mathbf P^4)\\
 q & \longmapsto \hspace{0.2cm}  \Pi_i(q)\cap T_p \mathcal P
\end{align*}
 give  rise to local first integrals for the linear web 
$\boldsymbol{\mathcal L\hspace{-0.05cm} \mathcal W}_{\boldsymbol{S}_p}$
carried by Segre's cubic $\boldsymbol{S}_p$. Consequently:  one has
$
\big( \boldsymbol{\mathcal L\hspace{-0.05cm}\mathcal W}_{\mathcal P}\big)\lvert_{{\boldsymbol{S}_p}}  =
\boldsymbol{\mathcal L\hspace{-0.05cm} \mathcal W}_{\boldsymbol{S}_p}$
  locally at $p$ on 
$\boldsymbol{S}_p$; and the (germs at the points $\Pi_{i,p}(\tilde p)$'s of the) Fano surface $F_1({\boldsymbol{S}_p})$ can be taken as  `the' space(s) of leaves of the foliations of  $\boldsymbol{\mathcal L\hspace{-0.05cm}\mathcal W}_{\mathcal P}$  locally at $\tilde p$ on 
${\mathcal P}$. 
\mk 
\end{itemize}

We set $x_5={x_1x_2x_3}/{x_4}$ and $x_6=x_4$ and see both $x_5$ and $x_6$ as rational functions in the $x_k$'s with $k=1,\ldots,4$.  Then the following map is a birational affine parametrization of Perazzo's cubic
\begin{equation*}
\mathscr P: \quad \mathbf C^4  \dashrightarrow \mathcal P\subset \mathbf P^5\, , \quad 
(x_i)_{i=1}^4   \longmapsto 
\big[ x_s\big]_{s=1}^6
\end{equation*}
and the pull-back $\mathscr P^*\big( \boldsymbol{\mathcal L\hspace{-0.05cm}\mathcal W}_{\mathcal P}\big)$ (that we will still denote by $\boldsymbol{\mathcal L\hspace{-0.05cm}\mathcal W}_{\mathcal P}$, at bit abusively), can be described quite explicitly. Indeed  one verifies immediately that the six 2-planes included in $\mathscr P$ passing through a generic point of $\mathcal P$  are  cut out by the systems of equations 
\begin{equation}
\label{Eq:2-plane-nu}
 X_1/Y_{\nu(1)}=\lambda_1\, , \quad  X_2/Y_{\nu(2)}=\lambda_2\quad 
 \mbox{ and } \quad  X_3/Y_{\nu(3)}=\lambda_1\lambda_2
 \end{equation}
  for some generic parameters $\lambda_1,\lambda_2\in \mathbf C$, where $\nu$ ranges in  the set $\mathfrak S_3$ of permutations of $\{1,2,3\}$. \mk 
  
  For each $\nu \in \mathfrak S_3$, the pull-back under $\mathscr P$ of the foliations on 
$\mathcal P$   
  by  the 2-planes with equations \eqref{Eq:2-plane-nu}, denoted by $\mathcal F_\nu$,  admits the rational map $x\dashrightarrow  (x_1/x_{3+\nu(1)}, x_2/x_{3+\nu(2)})$ as first integral.  After some elementary simplifications,  we obtain that the following very simple rational functions can be taken as first integrals for $ \boldsymbol{\mathcal L\hspace{-0.05cm}\mathcal W}_{\mathcal P}$ on $\mathbf C^4$: 
 \begin{align*}
\Pi_{\boldsymbol{1}}= & \left(\, x_3 \, , \, \frac{x_1}{x_4} \right)  
 && \Pi_{(23)} =
  \left(\, {x_2} \, , \, \frac{x_1}{x_4}\,\right)
  && \Pi_{{(1,2)}}= 
\left(\, x_3\, , 
 \,\frac{x_2}{x_4}\,
 \right) 
   \bk \\
 \Pi_{(13)} =  &\, 
\left(\,{x_1}\, , \,\frac{x_3}{x_4}\,\right)
&&
\Pi_{(123)}= 
 \left(\, {x_2}\,, \,\frac{x_3}{x_4}\, \right)
&& \Pi_{(132)} =  \left(\, x_1\, , \, \frac{x_2}{x_4}\, \right) \, .\sk
\end{align*}

Remark that $\Pi_{\boldsymbol{1}}$ and $\Pi_{(23)} $ both have $x_1/x_4$ as a component. Thus the tangent distribution $\langle T_{\mathcal F_{\boldsymbol{1}}}, T_{\mathcal F_{(23)}}\rangle$  is not of dimension 4 but is the integrable 3-dimensional one  defined by $d(x_1/x_4)=0$.  This shows that $ \boldsymbol{\mathcal L\hspace{-0.05cm}\mathcal W}_{\mathcal P}$, which  is a 6-web of codimension 2 on a space of dimension 4, does not satisfy the general position hypothesis usually assumed when working classically with webs of this kind. 
Consequently the classical results about the ranks of webs of this type ({\it cf.}\,the material of 
Chapter 8 of \cite{Goldberg}) do not apply to 
$ \boldsymbol{\mathcal L\hspace{-0.05cm}\mathcal W}_{\mathcal P}$.  Considering the remarkable properties enjoyed by $  \boldsymbol{W}_{0,6}=\boldsymbol{\mathcal L\hspace{-0.05cm}\mathcal W}_{\boldsymbol{S}}$, one can wonder about the status of their direct generalizations to the web by planes associated to Perazzo's cubic fourfold. Here is a sample of questions we find interesting: 
\begin{questions}
 Let $p$ be a regular point of $\mathcal P$, set $F=F_1(\boldsymbol{S}_p)$ and let $k$ stand for 1 or 2. 
\begin{enumerate}
\item[1.] What precisely is the Fano scheme of planes $F_2\big(\mathcal P\big)\subset G_2\big(\mathbf P^5\big)$? How is it related to the Fano surface of lines  $F_1(\boldsymbol{S}_p)\subset  
G_1\big(\mathbf P^4\big)$ of Segre's cubic?
\mk
\item[2.] What is the $k$-rank of $ \boldsymbol{\mathcal L\hspace{-0.05cm}\mathcal W}_{\mathcal P}$?\mk  
\item[3.] Does the following assertion hold true: `for any  $\eta\in \boldsymbol{H}^0\big(
F, \omega^k_F
\big)$, the trace ${\rm Tr}(\eta)$, locally defined as ${\rm Tr}(\eta)=\sum_{i=1}^6 \Pi_{i,p}^*(\eta)$, vanishes identically'?  \mk 
\item[4.] If the answer to the previous question is affirmative, what can be said about the then well-defined associated linear map 
$
\boldsymbol{H}^0\big(F, \omega^k_F
\big) \rightarrow  \boldsymbol{AR}^{(k)}\big( 
\boldsymbol{\mathcal L\hspace{-0.05cm}\mathcal W}_{\mathcal P}\big),\, \eta\mapsto 
\big( \Pi_{i,p}^*(\eta)\big)_{i=1}^6$? 
Is it surjective? Is it an isomorphism?
\mk 

\end{enumerate}
\end{questions}

\newpage

\section*{\bf Appendix A: the abelian 1-ARs of $\mathcal W_{{0,6}}$.}
In \S\ref{S:n=3} (see Proposition  \ref{P:LW-X} more specifically),  we have translated into terms of webs   
some classical results about smooth cubic threefolds and their Fano surface.  
We then specialized some of these results 
to the case of Segre's cubic $\boldsymbol{S}$ 
 to obtain a conceptual way to describe the 2-abelian relations of $
  \boldsymbol{\mathcal L\hspace{-0.05cm}\mathcal W}_{\boldsymbol{S}} \simeq 
 \mathcal W_{{0,6}}$: the suitable version of `Abel's theorem' holds true for Segre's cubic and its Fano surface $\Sigma=F_1(\boldsymbol{S})$ as well and  gives us a natural linear isomorphism $\boldsymbol{H}^0( \Sigma , \omega^2_{\Sigma}\big)\simeq \boldsymbol{AR}^{(2)}
\big( 
\boldsymbol{\mathcal L\hspace{-0.05cm}\mathcal W}_{X}
\big)$.  Then in \S\ref{SSS:2-ARs-W-06}, by means of direct computations, we gave another 
much more down to earth and elementary proof of this result . \sk

For a smooth cubic $X\subset \mathbf P^4$, the 1-ARs of $\boldsymbol{\mathcal L\hspace{-0.05cm}\mathcal W}_{\boldsymbol{S}}$ all come from  holomorphic 1-forms on the corresponding Fano surface $F_1(X)$: see the content of Proposition  \ref{P:LW-X}
corresponding to $k=1$.  
It is natural to  wonder whether this can be specialized to the pair $(\boldsymbol{S},\Sigma)$ as well.   This question has already be discussed before (page \pageref{Abel's-1-form-Fano-Surface}) in the more general context of cubic threefolds with isolated singularities. A strategy relying on deformations is evokated page  \pageref{Colino-Desingularization} where it is mentioned that it applies to Segre's cubic. 
This approach relies on some unpublished results due to Collino. 

In this Appendix, we are going to explain 
the main points of the abstract approach via deformations in order to construct 1-ARs for 
$\boldsymbol{\mathcal L\hspace{-0.05cm}\mathcal W}_{\boldsymbol{S}}$ 
(identified with $\boldsymbol{\mathcal W}_{0,6})$ from 
the global {\it abelian} differential 1-forms on a certain modification $\widetilde \Sigma$ of $\Sigma$. Because we find it justified and funny, 
the ARs obtained that way will be called   `{\it Abelian 1-ARs}'. 
Although conceptually interesting, the lack 
of explicitness of the first construction  of these ARs 
is  not fully satisfying. 
We will remedy to this in a second stage.   Proceeding as in \S\ref{SSS:2-ARs-W-06}, using explicit computations and some basic facts of the theory of representations of 
$\mathfrak S_6$, we will construct  an explicit basis of a 5-dimensional subspace of 
$\boldsymbol{AR}^{(1)}
\big( 
\boldsymbol{\mathcal W}_{0,6}
\big)$ which naturally corresponds to the space of abelian 1-forms on $\widetilde \Sigma$. 
 \begin{center}
$\star$
\end{center}

Before entering into the considerations below, let us recall the following elementary fact about the space of 1-ARs of $\boldsymbol{\mathcal W}_{0,6}$. We know that this web is totally  non skew: two distinct of its foliations are tangent to a foliation of codimension 1 (admitting 
 for a first integral a forgetful map $\mathcal M_{0,6}\rightarrow \mathcal M_{0,4}\simeq \mathbf P^1\setminus \{0,1,\infty\}$).  Its follows that any of its 3-subwebs, and even more so 
 $\boldsymbol{\mathcal W}_{0,6}$ itself, has a space of 1-ARs of infinite dimension.  
The abelian relations we are interested in in this Appendix are those coming 
from particular global rational 1-forms on the Fano surface of Segre's cubic. Thus these are 1-ARs  of a very specific kind. In particular, they span a space of finite dimension which makes undertaking their study more reasonable. 
\subsubsection*{\bf A.1. An abstract construction of the 
Abelian 1-ARs of $\boldsymbol{\mathcal W}_{0,6}$}
We freely use below the results of \cite{Collino2} (especially from \S2.3 and  \S3 therein) to which we refer the reader for more details.\footnote{Note however that some of the results 
we use are not fully proved in \cite{Collino2}.}
We explain how the approach discussed page \pageref{Abel's-1-form-Fano-Surface} 
above can be applied to get some 1-ARs for $\boldsymbol{\mathcal W}_{0,6}$.

Let $S$ and $T$ be two homogeneous cubic forms in five variables  such that $S=0$ cuts out  $\boldsymbol{S}$ and where $T$ is sufficiently generic so that $X_t=\{S+t T=0\}\subset \mathbf P^4$ be an equation of a smooth cubic hypersurface for all $t\in \mathbf A$ such that $0<
\lvert t\lvert\,  <\hspace{-0.1cm}<1$.   For any $t$, one denotes by $F_t$ the Fano surface of lines in $X_t$ and 
$\mathscr F\subset G_1(\mathbf P^4)\times \mathbf A^1$  stands for the total space of the associated family of Fano surfaces.  We denote by 
$f$ the restriction along $\mathscr F$ of the second projection 
$G_1(\mathbf P^4)\times \mathbf A^1\rightarrow 
\mathbf A^1$.

Our goal is to establish a property (namely, the vanishing of the trace) of some global rational 1-forms on $F_0=\Sigma$ by using the fact that it can be obtained as a degeneration of 
the smooth Fano surfaces $F_t$ (with $0<
\lvert t\lvert\,  <\hspace{-0.1cm}<1$). For this reason, we are only interested in the geometry of the affine pencil $\mathscr F$ on a vertical (analytic) neighbourhood of 
the fiber $\Sigma=f^{-1}(0)$.  According to \cite[Prop.\,2.4]{Collino2}, near $t=0$ the pencil 
$\mathscr F$ is singular exactly at the points of $\Sigma$ denoted by 
$\ell_{ij,kl}$ above (and $L[(ij),(kl)]$ in Collino's paper). Moreover, 
these points, which are 45 in number, all  are ordinary quadratic singularities.  Then Collino explains that when performing a particular small blow-up at each of these isolated singular points, one gets a modified family 
$$ \tilde f= f\circ \nu \, : \, \widetilde{\mathscr F} 
\stackrel{\nu} {\rightarrow}  \mathscr  F 
\stackrel{f}{\rightarrow}
 \mathbf A
 $$
 enjoying nice properties. Indeed,  locally near the origin of the base, $ 
  \widetilde{\mathscr F} {\rightarrow}
 \mathbf A
$  is a semi-stable degeneration of surfaces: the total space $  \widetilde{\mathscr F}$ is 
smooth and the central fiber $\widetilde \Sigma= \tilde f^{-1}(0)=\nu^{-1}(\Sigma)$ is 
a union of smooth (rational) surfaces which have normal crossings. 
The surface $\widetilde \Sigma$ is a birational modification of $\Sigma$ whose abelian forms can be studied via deformations from those of the smooth  fibers $F_t$ 
for $t\neq 0$ sufficiently close to $0\in \mathbf A$, which was a priori not  doable  for $\Sigma$, this  surface not having normal crossings (see  \S\ref{SSS:Fano-Segre-cubic}). 

As mentioned in \S\ref{SSS:Fano-Segre-cubic}, the covering families of lines on $\boldsymbol{S}$ give rise to six components of $\Sigma$ isomorphic to $\overline{\mathcal M}_{0,5}$, denoted by $\Sigma(i)$ for $i=1,\ldots,6$.  If 
$\widetilde \Sigma(i)$ stands for the strict transform of  $\Sigma(i)$ in $  \widetilde{\mathscr F}$ under 
$\nu$, then it can by verified that $\nu: \widetilde \Sigma(i)\rightarrow  \Sigma(i)
\simeq 
\overline{\mathcal M}_{0,5}$ is an isomorphism. Let $\psi_i$ be the rational map associating  
to a generic point of $\boldsymbol{S}$ the line of $\Sigma(i)
$ passing through it. Up to the natural identification of $\boldsymbol{S}$ and $ \Sigma(i)$
with $\overline{\mathcal M}_{0,6}$  and $\overline{\mathcal M}_{0,5}$ respectivement, 
$\psi_i:
\boldsymbol{S}\dashrightarrow 
\Sigma(i)
$ corresponds to the map $\overline{\mathcal M}_{0,6}\dashrightarrow \overline{\mathcal M}_{0,5}$ consisting in forgetting the $i$-th point.  
\sk

The family $ 
  \widetilde{\mathscr F} {\rightarrow}
 \mathbf A
$  being a semi-stable deformation of $\widetilde \Sigma$, the general theory of 
\cite{Friedman} applies locally on the pointed germ of curve $\Delta=(\mathbf A,0)$: 
denoting by $\Omega^1_{\widetilde{\mathscr F}}\big({\rm Log}(\widetilde \Sigma\big) \big)$ the associated relative log complex, one sets 
$\Lambda^1_{\widetilde \Sigma}=
\Omega^1_{\widetilde{\mathscr F}}\big({\rm Log}(\widetilde \Sigma\big) \big)\big\lvert_{ 
\widetilde \Sigma}$. 
This sheaf of meromorphic 1-forms on $\widetilde \Sigma$ 
enjoys nice properties which we will now describe. We denote by $\partial \widetilde \Sigma$ the 1-boundary of $\widetilde \Sigma$, defined as the union of the irreducible curves
lying in the 
intersection  of two distinct components of $\widetilde \Sigma$.   For any irreducible component $\widetilde F$, one sets $\partial \widetilde F=\widetilde F\cap \partial \widetilde \Sigma$ and 
$\Omega^1_{\widetilde F}\big( 
{\rm Log}(\partial \widetilde F)
\big)$ stands for the sheaf of meromorphic forms on $\widetilde F$, holomorphic 
on $\widetilde F\setminus \partial \widetilde F$ and 
 with logarithmic poles along $\partial \widetilde F$.  
For any such $\widetilde F$, we denote the natural inclusion by $a: \widetilde F\hookrightarrow \widetilde \Sigma$. 
Then  from the general theory of Friedman, 
  one gets that $ \Lambda^1_{\widetilde \Sigma}$: 
\begin{enumerate}
\item[$(i).$]  is a locally free sheaf of $\mathcal O_{\widetilde \Sigma}$-modules;
\sk 
\item[$(ii).$] coincides with the sheaf 
 of abelian 1-forms on 
$\widetilde \Sigma$: one has $\Lambda^1_{\widetilde \Sigma}=\omega^1_{\widetilde \Sigma}$;
\sk 
\item[$(iii).$]  is such that there is an embedding of sheaves $\Lambda^1_{\widetilde \Sigma}
\hookrightarrow \bigoplus_{  \widetilde F} 
  a_* \Big(\Omega^1_{\widetilde F}\big( 
{\rm Log}(\partial \widetilde F)
\big)\Big)$ 
where the sum 
is taken over all the irreducible components $\widetilde F$ of 
$\widetilde \Sigma$ in the target sheaf.  
\end{enumerate}

But much more can be said in the  case under scrutiny. Indeed, from  \cite[\S3]{Collino2}, one gets: 
\begin{enumerate}
\label{(iv)}
\item[$(iv).$]  For any $i$, the pair $\big(\, \widetilde \Sigma(i)\, , \, \partial \widetilde \Sigma(i)\, \big)$ 
is isomorphic to $\big(\, \overline{\mathcal M}_{0,5}, \partial \overline{\mathcal M}_{0,5}\, \big)$ hence through $a$, one gets a map 
$\boldsymbol{H}^0\big({\widetilde \Sigma},  \omega^1_{\widetilde \Sigma}\big)
\rightarrow 
\boldsymbol{H}^0\big( 
\, \overline 
{\mathcal M}_{0,5}, 
\Omega^1_{\overline{\mathcal M}_{0,5}}\big({\rm Log}(
 \partial \overline{\mathcal M}_{0,5}
\big) \big)$ which is an isomorphism;
\mk 
\item[$(v).$] 
\vspace{-0.25cm}
Consequently, one has $h^0\big( \widetilde \Sigma, \omega^1_{\widetilde \Sigma}\big)=5$  
({\it cf.}\,the proof of \cite[Prop.\,3.1]{Collino2}
\sk
\item[$(vi).$] Hence 
$\nu_*\Big(\Omega^1_{\widetilde{\mathscr F}}\big({\rm Log}(\widetilde \Sigma\big) \big)\Big)$ is a locally free sheaf of rank 5 on $(\mathbf A,0)$. 
\sk 
\item[$(vii).$] 
The $\mathfrak S_6$-action lifts to $\tilde \Sigma$ and acts preserving $ \omega^1_{\widetilde \Sigma}$. Consequently $\boldsymbol{H}^0\big( \tilde \Sigma, \omega^1_{\widetilde \Sigma}
\big)$  naturally carries a structure of $\mathfrak S_6$-module.
\end{enumerate}

We obtain that the points {\bf 2.}\,when  $k=1$ and {\bf 3.b} of Proposition \ref{P:LW-X} 
hold true in the case when $X$ is specialized to Segre's hypercubic $\boldsymbol{S}$.
\begin{prop} 
{\rm 1.} For any $\eta\in \boldsymbol{H}^0\Big(\,  \widetilde \Sigma, \omega^1_{\widetilde \Sigma}\, \Big)$, its trace ${\rm Tr}(\eta)=\sum_{i=1}^{5} \psi_i^*(\eta)$ vanishes identically. 
\sk
\begin{enumerate}
\item[{\rm 2.}]
Hence there is an injective well-defined map 
\begin{align}
\label{Eq:Tr-(1)}
{\rm Tr}^{(1)}: \, \boldsymbol{H}^0\Big( \, \widetilde \Sigma \, , \,  \omega^1_{\widetilde \Sigma}\, \Big)  \rightarrow 
\boldsymbol{AR}^{(1)}\big( \boldsymbol{\mathcal W}_{0,6}\big)\, , \quad 
\eta  \longmapsto \Big( \psi_i^*\big(\eta\big)\Big)_{i=1}^6\, .
\end{align}
\sk
\item[{\rm 3.}] The image $\boldsymbol{AR}_{ab}^{(1)}
\big( \boldsymbol{\mathcal W}_{0,6}\big)={\rm Im}\big( 
{\rm Tr}^{(1)}
\big)$ is a subspace of dimension 5 of
$\boldsymbol{AR}^{(1)}
\big( \boldsymbol{\mathcal W}_{0,6}\big)$
  such that the wedge 
map $\wedge ^2 \boldsymbol{AR}_{ab}^{(1)}
\big( \boldsymbol{\mathcal W}_{0,6}\big)\rightarrow 
\boldsymbol{AR}^{(2)}
\big( \boldsymbol{\mathcal W}_{0,6}\big)$ is a well-defined isomorphism. 
\mk 
\item[{\rm 4.}] For any $i$,
the $i$-th projection  $\boldsymbol{AR}^{(1)}_{ab}
\big( \boldsymbol{\mathcal W}_{0,6}\big)
\rightarrow  \boldsymbol{AR}_{ab}^{(1)}
\big( \boldsymbol{\mathcal W}_{0,6}\big)[i]$ is an isomorphism where 
$\boldsymbol{AR}^{(1)}_{ab}
\big( \boldsymbol{\mathcal W}_{0,6}\big)[i]$ stands for the space of 
$i$-th components of the abelian 1-ARs of  $\boldsymbol{\mathcal W}_{0,6}$.  
\mk
\item[{\rm 5.}] 
 The map ${\rm Tr}^{(1)}$ induces an isomorphism of 
$\mathfrak S_6$-representations $
\boldsymbol{H}^0\big( \omega^1_{\widetilde \Sigma}\big)  \simeq 
\boldsymbol{AR}^{(1)}_{ab}\big( \boldsymbol{\mathcal W}_{0,6}\big)$.  
Hence $\boldsymbol{AR}^{(1)}_{ab}\big( \boldsymbol{\mathcal W}_{0,6}\big)$ is an irreducible $\mathfrak S_6$-module with associated Young symbol $\big[33\big]$.
\end{enumerate}
\end{prop}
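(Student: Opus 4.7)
The plan is to mirror, one derivative degree lower, the deformation argument used in the proof of Proposition \ref{P:LW-X-singular} for abelian 2-forms, but applied now to the resolved family $\tilde f\colon \widetilde{\mathscr F}\to \Delta$ rather than to the singular family $\mathscr F\to \Delta$. First, for point 1, I take $\eta\in \boldsymbol{H}^0(\widetilde\Sigma,\omega^1_{\widetilde\Sigma})$ and invoke property $(vi)$: the sheaf $\nu_*\bigl(\Omega^1_{\widetilde{\mathscr F}}(\mathrm{Log}(\widetilde\Sigma))\bigr)$ is locally free of rank $5$, and its fiber at $0$ is $\boldsymbol{H}^0(\widetilde\Sigma,\omega^1_{\widetilde\Sigma})$ by $(ii)$. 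Hence $\eta$ extends to an analytic section $\tilde\eta$ of this sheaf on a neighbourhood of $0\in\Delta$. For $t\neq 0$ sufficiently small, $\widetilde\Sigma$ is the only log divisor of $\widetilde{\mathscr F}$, so $F_t$ does not meet it and the restriction $\tilde\eta_t:=\tilde\eta|_{F_t}$ is a genuine holomorphic $1$-form on the smooth Fano surface $F_t$. By Proposition \ref{P:LW-X}.2 applied with $k=1$ to the smooth cubic $X_t$, one has $\sum_{i=1}^{6}\ell_{i,t}^*(\tilde\eta_t)\equiv 0$ on $X_t$. Since the local first integrals $\ell_{i,t}$ specialize, via $\nu$, to the maps $\psi_i$ at $t=0$, and since the trace depends analytically on $t$, specialization gives $\mathrm{Tr}(\eta)=\sum_{i=1}^{6}\psi_i^*(\eta)=0$.

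Point 2 then follows at once: the map \eqref{Eq:Tr-(1)} is well-defined by point 1. For injectivity, suppose $\mathrm{Tr}^{(1)}(\eta)=0$, i.e.\ $\psi_i^*(\eta)=0$ for every $i$. Each $\psi_i\colon \mathcal M_{0,6}\dashrightarrow \widetilde\Sigma(i)$ is a dominant rational map (a forgetful fibration) between varieties of dimensions $3$ and $2$ respectively, hence has generic rank $2$, so $\psi_i^*$ is injective on rational differential forms. Therefore $\eta|_{\widetilde\Sigma(i)}=0$ for every $i$; by property $(iv)$ the restriction $\boldsymbol{H}^0(\widetilde\Sigma,\omega^1_{\widetilde\Sigma})\to \boldsymbol{H}^0(\widetilde\Sigma(i),\Omega^1_{\widetilde\Sigma(i)}(\mathrm{Log}\,\partial\widetilde\Sigma(i)))$ is an isomorphism, so $\eta=0$. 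Point 4 is an immediate consequence: the $i$-th projection of $\boldsymbol{AR}^{(1)}_{ab}$ is onto its image by definition, and the same argument (injectivity of $\psi_i^*$ plus $(iv)$) shows it is injective.

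For point 3, combining point 2 with $(v)$ gives $\dim\boldsymbol{AR}^{(1)}_{ab}=5$, hence $\dim\wedge^2\boldsymbol{AR}^{(1)}_{ab}=10=\dim\boldsymbol{AR}^{(2)}(\boldsymbol{\mathcal W}_{0,6})$, the last equality coming from Proposition \ref{P:2-rank-W06}. For well-definedness of the wedge, I note that on each $\widetilde\Sigma(i)$ the product $\omega^1_{\widetilde\Sigma}\otimes \omega^1_{\widetilde\Sigma}\to \omega^2_{\widetilde\Sigma}$ is well-defined on sections, so if $\alpha=\mathrm{Tr}^{(1)}(a)$ and $\beta=\mathrm{Tr}^{(1)}(b)$, then $(\alpha_i\wedge\beta_i)_{i=1}^6=(\psi_i^*(a\wedge b))_{i=1}^6$, which by Proposition \ref{P:2-rank-W06} (or equivalently by running the deformation argument with $\tilde\alpha_t\wedge\tilde\beta_t\in \boldsymbol{H}^0(F_t,\Omega^2_{F_t})$ and invoking Proposition \ref{P:LW-X}.3.a at $t\neq 0$) is an element of $\boldsymbol{AR}^{(2)}(\boldsymbol{\mathcal W}_{0,6})$. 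Since both spaces are $10$-dimensional, it suffices to prove injectivity of $\wedge^2\boldsymbol{H}^0(\widetilde\Sigma,\omega^1_{\widetilde\Sigma})\to \boldsymbol{H}^0(\widetilde\Sigma,\omega^2_{\widetilde\Sigma})$; this is done locally on $\widetilde\Sigma(6)\simeq \overline{\mathcal M}_{0,5}$ using the classical fact that on the del Pezzo quintic the wedge product $\wedge^2 \boldsymbol{H}^0(\Omega^1(\mathrm{Log}\,\partial))\to \boldsymbol{H}^0(\Omega^2(\mathrm{Log}\,\partial))$ is an isomorphism, combined with $(iv)$ applied in both degrees.

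Finally for point 5, property $(vii)$ makes $\boldsymbol{H}^0(\widetilde\Sigma,\omega^1_{\widetilde\Sigma})$ an $\mathfrak S_6$-module and $\mathrm{Tr}^{(1)}$ equivariant, so the linear isomorphism of point 2 is an isomorphism of $\mathfrak S_6$-modules. To identify the irreducible type, I restrict to $\mathfrak S_5=\mathrm{Stab}_{\mathfrak S_6}(6)$: by $(iv)$, the restricted representation is isomorphic to $\boldsymbol{H}^0(\overline{\mathcal M}_{0,5},\Omega^1(\mathrm{Log}\,\partial\overline{\mathcal M}_{0,5}))$, which is classically known to be the irreducible $\mathfrak S_5$-module with Young symbol $[3,2]$. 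Among the four $5$-dimensional irreducible $\mathfrak S_6$-representations ($[5,1]$, $[3,3]$, $[2,2,2]$, $[2,1^4]$), the branching rule pins down $[3,3]$ as the unique one whose restriction to $\mathfrak S_5$ equals $[3,2]$, which concludes the identification. The hard part of the whole argument is the first paragraph: specifically, justifying that the fiber at $0$ of $\nu_*\bigl(\Omega^1_{\widetilde{\mathscr F}}(\mathrm{Log}\,\widetilde\Sigma)\bigr)$ really coincides with $\boldsymbol{H}^0(\widetilde\Sigma,\omega^1_{\widetilde\Sigma})$ (a base-change statement relying on $(v)$ and $(vi)$), and that the trace operation commutes with specialization at $t=0$ through the small resolution $\nu$.
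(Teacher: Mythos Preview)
Your arguments for points 1, 2, 4 and 5 follow essentially the same path as the paper's sketch: the deformation argument via the local freeness of $\nu_*\bigl(\Omega^1_{\widetilde{\mathscr F}}(\mathrm{Log}\,\widetilde\Sigma)\bigr)$, the injectivity coming from $(iv)$, and the identification of the $\mathfrak S_6$-type via the branching rule through the $\mathfrak S_5$-module $[3,2]$. These are exactly the ingredients the paper uses.

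Where your proposal diverges from the paper is point 3, and there your argument has a genuine gap. You claim a ``classical fact'' that on $\overline{\mathcal M}_{0,5}$ the wedge product
\[
\wedge^2\,\boldsymbol H^0\bigl(\Omega^1(\mathrm{Log}\,\partial)\bigr)\;\longrightarrow\;\boldsymbol H^0\bigl(\Omega^2(\mathrm{Log}\,\partial)\bigr)
\]
is an isomorphism. But this is false on dimension grounds: since $K_{\overline{\mathcal M}_{0,5}}+\partial\overline{\mathcal M}_{0,5}\sim -K_{\overline{\mathcal M}_{0,5}}$, one has $\Omega^2(\mathrm{Log}\,\partial)\simeq -K$ and hence $h^0\bigl(\Omega^2(\mathrm{Log}\,\partial)\bigr)=6$ on the quintic del Pezzo, while $\dim\wedge^2\boldsymbol H^0\bigl(\Omega^1(\mathrm{Log}\,\partial)\bigr)=\binom{5}{2}=10$. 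Equivalently, the Pl\"ucker span of each component $\Sigma(i)$ in $\mathbf P^9$ is only a $\mathbf P^5$, so the restriction $\boldsymbol H^0(\Sigma,\omega^2_\Sigma)\to\boldsymbol H^0(\Sigma(i),\omega^2_\Sigma|_{\Sigma(i)})$ has a four-dimensional kernel; restricting to a single component cannot detect injectivity of the global wedge map.

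This is precisely why the paper writes, for the isomorphism assertion in point 3, that it has ``no conceptual argument to offer'' and instead establishes it via the explicit computations carried out in the second part of the appendix (the determination of the basis in Table~\ref{T:1-RAs} and the verification that the wedge of these 1-ARs span the full $\boldsymbol{AR}^{(2)}$). Your conceptual shortcut does not go through; you need either those explicit computations or a different global argument that does not factor through a single del Pezzo component.
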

\begin{proof}[Sketch of a proof]
Since the sheaf in $(v).$\,above is locally free, one can argue using a simple
deformation argument, 
similarly as in the proof 
of Proposition \ref{P:LW-X-singular} page \pageref{zolo}. 
The points 1., 2.\,and the first part of 3.\,(namely that $\wedge ^2 \boldsymbol{AR}_{ab}^{(1)}
\big( \boldsymbol{\mathcal W}_{0,6}\big)\rightarrow 
\boldsymbol{AR}^{(2)}
\big( \boldsymbol{\mathcal W}_{0,6}\big)$ is  indeed well-defined)  of the Proposition follow rather directly from this.  The point 4.\,is a direct translation into terms of 
$\boldsymbol{\mathcal W}_{0,6}$ of the point $(iv)$ above.

As for 5., the first part is easy and left to the reader. To get the second part, for any $i$ we consider  the induced representation of 
$\mathfrak S_5={\rm Fix}(i)\subset \mathfrak S_6$ on  
$\boldsymbol{H}^0\big(\widetilde \Sigma, \omega^1_{\widetilde \Sigma}\big)$. 
Then it can be verified that the map in $(iv).$ is an isomorphism of $\mathfrak S_5$-representation.  According to Lemma 2.1 of \cite{DolgachevFarbLooijenga}, 
$\boldsymbol{H}^0\big(\overline {\mathcal M}_{0,5}, 
\Omega^1_{\overline{\mathcal M}_{0,5}}\big({\rm Log}(
 \partial \overline{\mathcal M}_{0,5}
\big) \big)$ is an irreducible $\mathfrak S_5$-module, with associated Young symbol $[32]$. Considering the branching rules for representations of the symmetric groups, one deduces that  $
\boldsymbol{H}^0\big( \omega^1_{\widetilde \Sigma}\big) $ is an irreducible 
$\mathfrak S_6$-module with Young symbol $[33]$, which gives us the fifth point.  
\sk 

The single point remaining to be proved is that the wedge map of 3.\,is an isomorphism. 
Unfortunately, we do not have a conceptual argument to offer for this. This will follow from our explicit computations in the next part of this appendix. 
\end{proof}

Although interesting,  this is the expected result, which somewhat reduces its relevance. 
 Below we will recover most of its content by means of an elementary explicit approach. However, taking an abstract approach naturally leads to ask several questions, that we find interesting.
\begin{questions}
\begin{enumerate}
\item[1.] We have given a description of some 1-ARs of $\boldsymbol{\mathcal W}_{0,6}$ by means of the abelian 1-forms on $\widetilde \Sigma$. But this birational model of 
$\Sigma$ relies on some choices (namely, the way the singular points of 
$\mathcal F$ near $t=0$ are blown up) hence is non canonical. 
It would be interesting to have a more intrinsic description of the 1-ARs of $\boldsymbol{\mathcal W}_{0,6}$ under scrutiny. For any element $\tilde \omega 
\in 
\boldsymbol{H}^0\big( \widetilde \Sigma , \omega^1_{\widetilde \Sigma}\big) $, the push-forward $\omega=\nu_*(\tilde \omega )$ is a rational 1-form on $\Sigma$ whose restriction
 on each 
$\Sigma(i)$ is holomorphic on  $\Sigma(i)\setminus \partial \Sigma(i)\simeq \mathcal M_{0,5} $ with logarithmic singularities along 
$\partial \Sigma(i)=\partial \overline{ \mathcal M}_{0,5}$. Is such an $\omega$ an abelian differential on $\Sigma$? If so, does the trace of any abelian 1-form on $\Sigma$ vanish? Or in other terms, does $\nu_*$ give rise to an isomorphism $
\boldsymbol{AR}^{(1)}_{ab}
\big( \boldsymbol{\mathcal W}_{0,6}\big) \simeq 
\boldsymbol{H}^0\big( \widetilde \Sigma , \omega^1_{\widetilde \Sigma}\big) \rightarrow 
\boldsymbol{H}^0\big(\Sigma,\omega^1_\Sigma\big)
$?
\sk 
\item[2.]  According to \cite{DolgachevFarbLooijenga} (see the paragraph just before Lemma 2.1 therein), the pull-back under the composition 
${\Sigma}(i)\subset \Sigma\subset G_1\big(\mathbf P^4\big)$ of the 
dual of the tautological bundle  $\mathcal T$ on the grassmannian $G_1\big(\mathbf P^4\big)$ identifies with 
$\Omega^1_{{\Sigma}(i)}\big( {\rm Log}(\partial {\Sigma}(i))\big)$ which 
coincides with the restriction of $ \omega_{\widetilde \Sigma}^1$ on 
${\Sigma}(i)$. It is then  natural to wonder whether the isomorphism
$\big(\mathcal T^\vee\big)\lvert_{\Sigma_{reg}}\simeq \Omega^1_{\Sigma_{reg}}$ 
of  \cite[Theorem 1.10.ii]{AltmanKleiman} extends to an identification 
$\big(\mathcal T^\vee\big)\lvert_{\Sigma}\simeq \omega^1_{\Sigma}$  
on the whole $\Sigma$, although this Fano surface is singular. 
\end{enumerate}
\end{questions}

\subsubsection*{\bf A.2. An explicit description of the 
abelian 1-ARs of $\boldsymbol{\mathcal W}_{0,6}$}
We now turn to another approach to describe the abelian 1-ARs of $\boldsymbol{\mathcal W}_{0,6}$ which is much more elementary and explicit than the previous one. \mk 

The birational model of $\boldsymbol{\mathcal W}_{0,6}$ we are going to work with is the following: relatively to some rational coordinates $x_1,x_2,x_3$, we identify $\boldsymbol{\mathcal W}_{0,6}$ with the web defined by the six following rational first integrals: $U_1=(x_2,x_3)$, $U_2=(x_1,x_3)$, $U_3=(x_1,x_2)$ and 
$$
U_4=\left(\frac{\mathit{x_1} -1}{\mathit{x_3} -1},\frac{\mathit{x_2} -1}{\mathit{x_3} -1}
\right)\, , \qquad 
U_5=\left(
\frac{\mathit{x_1}}{\mathit{x_3}},\frac{\mathit{x_2}}{\mathit{x_3}}\right)
\quad 
\mbox{ and }
\quad 
U_6=\left(
\frac{\mathit{x_1} (\mathit{x_3} -1)}{\mathit{x_3} (\mathit{x_1} -1)},\frac{\mathit{x_2} (\mathit{x_3} -1)}{\mathit{x_3} (\mathit{x_2} -1)}
\right)\, .
$$
We denote by $U_{i,1}$ and $U_{i,2}$  the components of $U_i$. 
Recall that for any $n\geq 2$,  $A_n$ stands for the braid arrangement in $\mathbf C^n$ which is such that there is a natural identification 
\begin{equation}
\label{Eq:M0n-Cn-An}
 \mathbf C^n\setminus A_n\simeq \mathcal M_{0,n+3}
\end{equation} 
  Then each $U_i$ induces a regular map $\mathbf C^3\setminus A_3\rightarrow \mathbf C^2\setminus A_2$ which corresponds to the forgetful mapping $\mathcal M_{0,6}\rightarrow \mathcal M_{0,5}$ modulo the preceding identifications. 
 
Up to the identification \eqref{Eq:M0n-Cn-An} when $n=2$, the 
5-tuple of rational 1-forms in two variables
$$\big(\boldsymbol{\alpha}_s\big)_{s=1}^5=\left(\, 
\frac{du}{u}\, , \, \frac{dv}{v}
\, , \, \frac{du}{u-1}\, , \, \frac{dv}{v-1}\, ,  \, \frac{du-dv}{u-v}\, 
\right)
$$
is a basis of a complex vector space denoted by $\boldsymbol{{\rm Log}\Omega}^1_{{0,5}}$, 
which is naturally isomorphic to the space of 
global holomorphic 1-forms on ${\mathcal M}_{0,5}$ with logarithmic poles 
along $\partial \overline{\mathcal M}_{0,5}$.
 
Consequently,  for any $i=1,\ldots,6$, the 5-tuple of rational 1-forms in $x_1,x_2$ and $x_3$ 
$$ 
\big(\omega_{i,s}\big)_{s=1}^5 =
\big(U_i^*( \boldsymbol{\alpha}_s) \big)_{s=1}^5=
 \left( \, 
\frac{dU_{i,1}}{U_{i,1}}\, ,\, 
\frac{dU_{i,2}}{U_{i,2}}\, ,\, 
\frac{dU_{i,1}}{U_{i,1}-1}\, ,\, 
\frac{dU_{i,2}}{U_{i,2}-1}\, ,\, 
\frac{dU_{i,1}-dU_{i,2}}{U_{i,1}-U_{i,2}}
\, 
\right)$$
is a basis of a vector space  denoted by $U_i^*\boldsymbol{{\rm Log}\Omega}^1_{{0,5}}$ 
identifying with $\psi_i^*\Big( \boldsymbol{H}^0\big(\Sigma(i), \Omega^1_{ \Sigma(i)} 
\big(  {\rm Log}( \partial \Sigma(i)  \big) \big)\Big)$. It follows that the abelian 1-ARs of $\boldsymbol{\mathcal W}_{0,6}$ have to be looked for inside the space 
$$
\boldsymbol{{\rm Log}{\mathcal A\mathcal R}}^{(1)}=\left\{  \big( c_{i,s}\big)_{(i,s)\in J}\in \big( \mathbf C^5\big)^6\hspace{0.15cm}\Big\lvert 
\hspace{0.15cm} \sum_{i=1}^6 \sum_{s=1}^5 c_{i,s} \omega_{i,s}=0
\hspace{0.15cm}
\right\}
$$
where $J$ stands for the set of indices $J=\big\{ (i,s)\, \big\lvert \, i=1,\ldots,6, \, s=1,\ldots,5\, \big\}$.

\begin{prop}
\begin{enumerate}
\item[1.] The space $\boldsymbol{{\rm Log}{\mathcal A\mathcal R}}^{(1)}$ has dimension 21.\mk 
\item[2.] As a $\mathfrak S_6$-module, its decomposition into irreducibles is $
\boldsymbol{{\rm Log}{\mathcal A\mathcal R}}^{(1)}=V_{(3,2,1)}\oplus  V_{(3,3)}$.\mk 
\item[3.] The 5-dimensional component $V_{(3,3)}$ of $\boldsymbol{{\rm Log}{\mathcal A \mathcal R}}^{(1)}$ is precisely the image by the trace map ${\rm Tr}^{(1)}$ of the space of abelian differentials on 
$\widetilde \Sigma$: one has 
$V_{(3,3)}= 
{\rm Tr}^{(1)}\Big( \boldsymbol{H}^0\big(\widetilde \Sigma,\omega^1_{\widetilde \Sigma}\big)\Big)$.   
\mk
\item[4.]
A basis for $V_{(3,3)}=\boldsymbol{AR}^{(1)}_{ab}
\big( \boldsymbol{\mathcal W}_{0,6}\big)
$  can be explicited: for instance, 
such a basis is provided by the five 1-ARs associated to the explicit functional relations of Table \ref{T:1-RAs} below. 
\end{enumerate}
\end{prop}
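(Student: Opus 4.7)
The four assertions can be attacked in sequence, with (1) and (2) being essentially a finite linear algebra computation, (3) a clean consequence of the preceding proposition, and (4) the one requiring genuine input.

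For (1), I would fix a convenient common denominator for the thirty rational $1$-forms $\omega_{i,s}$ and expand each of them in a basis of the $\mathbf{C}(x_1,x_2,x_3)$-module $\Omega^1_{\mathbf{C}(x_1,x_2,x_3)}$. Writing each $\omega_{i,s}$ in the form $\big(A_{i,s}\, dx_1+B_{i,s}\, dx_2+C_{i,s}\, dx_3\big)/D$ for a common polynomial denominator $D$ cutting out (a power of) the braid arrangement $A_3$, one obtains three polynomials $\sum c_{i,s} A_{i,s}$, $\sum c_{i,s} B_{i,s}$, $\sum c_{i,s} C_{i,s}$ which must vanish. After clearing denominators these become three finite systems of linear equations in the thirty scalar unknowns $c_{i,s}$ (one equation per monomial of the numerator), and it suffices to compute the rank of the resulting matrix. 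I expect a direct computer-algebra check to yield rank $9$, so $\dim \boldsymbol{{\rm Log}\mathcal{A}\mathcal{R}}^{(1)}=21$; the numerical coincidence $21=16+5=\dim V_{(3,2,1)}+\dim V_{(3,3)}$ is the first piece of evidence that the decomposition announced in (2) is correct.

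For (2), the symmetric group $\mathfrak S_6$ acts on $\boldsymbol{{\rm Log}\mathcal{A}\mathcal{R}}^{(1)}$ by pull-back under the birational automorphisms $G_\sigma$ of $\mathcal{M}_{0,6}$ permuting the $\mathcal{F}_i$ and acting on each $U_i^*\boldsymbol{{\rm Log}\Omega}^1_{0,5}$ by a five-dimensional representation (analogous to what was done in \S\ref{SS:ARW06-as-a-S6-module} for $\boldsymbol{AR}(\boldsymbol{\mathcal{W}}_{0,6})$). Concretely, I would take the two generators $\mathcal{T}$ (a transposition) and $\mathcal{C}$ (a $6$-cycle) of \S\ref{SS:ARW06-as-a-S6-module} and compute their matrices on a chosen basis of $\boldsymbol{{\rm Log}\mathcal{A}\mathcal{R}}^{(1)}$ obtained from step (1); this reduces (2) to computing the two characters $\chi(\mathcal{T})$ and $\chi(\mathcal{C})$ and comparing with the character table of $\mathfrak{S}_6$. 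The only $21$-dimensional sum of two irreducibles whose character can match both numbers is $[3,2,1]\oplus[3,3]$ (dimensions $16$ and $5$), so this identifies the decomposition.

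Assertion (3) is then essentially free: by the previous proposition, the map ${\rm Tr}^{(1)}$ embeds $\boldsymbol{H}^0(\widetilde \Sigma,\omega^1_{\widetilde \Sigma})$ into $\boldsymbol{AR}^{(1)}(\boldsymbol{\mathcal{W}}_{0,6})$ as a $5$-dimensional irreducible $\mathfrak S_6$-submodule with Young symbol $[3,3]$, and its image manifestly lies in $\boldsymbol{{\rm Log}\mathcal{A}\mathcal{R}}^{(1)}$ because $\widetilde \Sigma(i)$ is isomorphic to $(\overline{\mathcal{M}}_{0,5},\partial \overline{\mathcal{M}}_{0,5})$ and the pull-back of a logarithmic form under each $\psi_i$ belongs to $U_i^*\boldsymbol{{\rm Log}\Omega}^1_{0,5}$. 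Since $V_{(3,2,1)}$ and $V_{(3,3)}$ are non-isomorphic, the $V_{(3,3)}$-isotypic component of $\boldsymbol{{\rm Log}\mathcal{A}\mathcal{R}}^{(1)}$ is unique, and equality $V_{(3,3)}={\rm Tr}^{(1)}\big(\boldsymbol{H}^0(\widetilde \Sigma,\omega^1_{\widetilde \Sigma})\big)$ follows.

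For (4), I would produce five concrete $6$-tuples of logarithmic $1$-forms, for instance by starting from the classical $5$-term logarithmic identity $\log(x)-\log(y)-\log(x/y)=0$ and its $\mathfrak S_5$-translates on $\mathcal{M}_{0,5}$, and pulling back each such relation along the six forgetful maps $\psi_i$: each logarithmic $1$-form on $\overline{\mathcal{M}}_{0,5}$ satisfying a trace vanishing on some boundary stratum lifts via ${\rm Tr}^{(1)}$ to an element of $\boldsymbol{AR}^{(1)}_{ab}$, and five such lifts (one per element of a basis of $H^0(\overline{\mathcal{M}}_{0,5},\Omega^1(\log\partial))$, {\it cf.}\,$(iv)$ page \pageref{(iv)}) are candidates to span $V_{(3,3)}$. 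Once the five explicit ARs are written down, their linear independence is verified by restricting to a single component (using assertion {\rm 4.} of the preceding proposition) and doing a $5\times 5$ determinant computation. The main obstacle, which is computational rather than conceptual, is twofold: (i) ensuring that the candidate five $1$-forms on $\widetilde \Sigma$ really do satisfy the gluing conditions along $\partial \widetilde \Sigma$ so that they are genuine global abelian differentials --- for this one uses the explicit description of the $21$ components of $\Sigma$ in \S\ref{SSS:Fano-Segre-cubic} and tracks residues along the boundary curves $R(k,ij)$ --- and (ii) writing the resulting five ARs in a form concise enough to tabulate, which essentially means finding the right $\mathfrak S_6$-orbit representatives of the logarithmic $5$-term identity on $\mathcal{M}_{0,5}$.
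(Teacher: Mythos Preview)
Your approach to (1) and (3) is essentially the paper's. For (1) the paper phrases things via the short exact sequence
\[
0\rightarrow \boldsymbol{{\rm Log}{\mathcal A\mathcal R}}^{(1)}\longrightarrow \bigoplus_{i=1}^6 U_i^*\big(\boldsymbol{{\rm Log}\Omega}^1_{0,5}\big)\stackrel{\tau}{\longrightarrow} \boldsymbol{{\rm Log}\Omega}^1_{0,6}\rightarrow 0
\]
and checks $\tau$ is surjective (so $30-9=21$), but this is the same rank computation you describe. For (3) your argument is exactly the paper's.

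For (2) there is a gap: knowing the character on only a transposition and a $6$-cycle, together with the dimension, does \emph{not} force the decomposition to be $V_{(3,2,1)}\oplus V_{(3,3)}$, and you give no justification for assuming a priori that the module is a sum of exactly two irreducibles. (Even granting from (3) that $V_{(3,3)}$ is a summand, the $16$-dimensional complement could a priori split further.) The paper avoids this by computing ${\rm Tr}(M_\sigma)$ for a full set of conjugacy class representatives (all ten nontrivial classes, see Table~\ref{Table=Tr(Msigma)}) and then reading off $\chi^{(1)}=\chi_{(3,3)}+\chi_{(3,2,1)}$ directly from the character table. Your method is salvageable, but you would either need more character values or an independent irreducibility argument for the complement.

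For (4) your route is genuinely different and harder than necessary. You propose to build the five ARs from logarithmic identities on $\mathcal M_{0,5}$ and then check by hand the gluing/residue conditions along the boundary curves of $\widetilde\Sigma$; as you note, this requires tracking how a candidate form on one $\widetilde\Sigma(i)$ extends to the other twenty components. The paper bypasses this entirely: having already computed all $720$ matrices $M_\sigma$ for step (2), it simply forms the isotypic projector
\[
\Pi_{\chi_{(3,3)}}=\frac{\chi_{(3,3)}(\mathbf 1)}{|\mathfrak S_6|}\sum_{\sigma\in\mathfrak S_6}\chi_{(3,3)}(\sigma^{-1})\,M_\sigma
\]
and reads off a basis of $V_{(3,3)}$ from its columns. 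This is mechanical once (2) is done, requires no understanding of $\widetilde\Sigma$, and produces the explicit Table~\ref{T:1-RAs} directly. Your approach has the merit of being geometrically motivated, but the projector method is both shorter and more robust.
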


Let us discuss how this proposition can be established.  Our proof relies on some explicit computations (performed on Maple) that we will not give here\footnote{We can make the corresponding Maple worksheets available to anyone requesting them.} but only describe. 
\begin{proof}
We set $\zeta_i=x_i$  for $i=1,2,3$ and $\zeta_4=0$, $\zeta_5=1$ and $\zeta_6=\infty$.  
For $k$ ranging from 1 to 3 and $l$ from $k+1$ to 5, we set $\zeta_{kl}=\zeta_k-\zeta_l$ and 
$\eta_{k,l}=d{\rm Log} \zeta_{kl}= (d\zeta_k-d\zeta_l)/(\zeta_k-\zeta_l)$.  The $\eta_{k,l}$'s are rational logarithmic 1-forms on $\mathbf C^3$ which are linearly independent over $\mathbf C$. Their span is  a complex vector space of dimension $9$, denoted by $\boldsymbol{{\rm Log}\Omega}^1_{0,6}$,  which is naturally isomorphic to 
$$
 \boldsymbol{H}^0\left(\overline{\mathcal M}_{0,6}, \Omega^1_{ \overline{\mathcal M}_{0,6}} 
\Big(  {\rm Log}\big( \, \partial \overline{\mathcal M}_{0,6}\, \big) \Big)\right) \, . 
$$

By definition of $\boldsymbol{{\rm Log}{\mathcal A\mathcal R}}^{(1)}$, one has a short complex of vector spaces
\begin{equation}
\label{Eq:SES-C-ev}
0\rightarrow 
\boldsymbol{{\rm Log}{\mathcal A\mathcal R}}^{(1)}\longrightarrow \oplus_{i=1}^6 U_i^*
\Big(\boldsymbol{{\rm Log}\Omega}^1_{\mathcal M_{0,5}}\Big)
\stackrel{\tau}{\longrightarrow} \boldsymbol{{\rm Log}\Omega}^1_{\mathcal M_{0,6}}\rightarrow 0
\end{equation}
where $\tau$ stands for the map given by $ (\omega_i)_{i=1}^6 \mapsto \sum_{i=1}^6 \omega_i$. By plain 
 linear algebra computations, it can be verified that the sequence  \eqref{Eq:SES-C-ev} is exact ({\it i.e.}\,$\tau$ is surjective) hence $\dim_{\mathbf C} \boldsymbol{{\rm Log}{\mathcal A\mathcal R}}^{(1)}=21$.  From now on, we will work with a fixed basis of $\boldsymbol{{\rm Log}{\mathcal A\mathcal R}}^{(1)}$,  denoted by $\mathfrak B$. 
\sk 

In order to prove the second point of the proposition, one starts by determining the character of $\boldsymbol{{\rm Log}{\mathcal A\mathcal R}}^{(1)}$ as a representation of $\mathfrak S_{6}$.  
We describe how we have proceeded. 
the following is a  complete set of representatives of the non trivial conjugacy classes of 
$\mathfrak S_{6}$:  
\begin{align*}
\mathscr S= \Big\{ \, 
 (12)\, , \, (12)(34)
\, , \, 
 & (12)(34)(56)
\, , \, 
(123)
\, , \, 
(123)(45)
\, , \,  \\
&(123)(456)
\, , \,  
 (1234)
\, , \, 
(1234)(56)\, , \,  (12345) \, , \,  (123456)\,  \Big\}\, . 
\end{align*}

We will use the following composition of group morphism
\begin{equation}
\label{Eq:pi-I-J}
\xymatrix@R=0.1cm@C=0.4cm{
\mathfrak S_6  \ar@{->}[r]& {\bf Bir}(\mathbf C^3)
\ar@{->}[r]&{\rm Aut}\Big( \boldsymbol{{\rm Log}{\mathcal A\mathcal R}}^{(1)}\Big)
\ar@{->}[r]& {\rm GL}_{21}\big(\mathbf C\big)
 \\
\sigma  \ar@{->}[r] & G_\sigma  \ar@{->}[r]  &G_\sigma^* 
\ar@{->}[r]  & M_\sigma
}
\end{equation}
with $G_\sigma$  being the birational map defined in  \eqref{Eq:G-sigma} and 
where 
$M_\sigma$ stands for the matrix of $G_\sigma^*$ acting on
$ \boldsymbol{{\rm Log}{\mathcal A\mathcal R}}^{(1)}$ expressed 
 in the basis $\mathfrak B$ ({\it i.e.}\,$M_\sigma={\rm Mat}_{\mathfrak B}\big(G_\sigma^* \big)$).
\mk

For any permutation $\sigma$, it is just a computational matter (that we have handled using Maple) to get an explicit expression for the birational map $G_\sigma$, hence for the matrix $M_\sigma$, from which one can take the trace ${\rm Tr}(M_\sigma)$. 
For each $\sigma$ in the set $\mathscr S$, $G_\sigma$ and ${\rm Tr}(M_\sigma)$ are given in Table \ref{Table=Tr(Msigma)} below.


\begin{table}[!h]
\rotatebox{00}{\scalebox{0.8}{ 
\begin{tabular}{|c|c|c|c|c|c|}
\hline
\begin{tabular}{c} \vspace{-0.3cm}\\
$\boldsymbol{\sigma}$ \vspace{0.1cm}
\end{tabular}
& 
$(12)$ 
& 
$(12)(34) $ 
& 
$(12)(34)(56)$ 
& 
$(123)$ 
& 
$(123)(45)$ 
 \\ \hline   
\begin{tabular}{c} \vspace{-0.3cm}\\
$\boldsymbol{G_\sigma}$ \vspace{0.1cm}
\end{tabular}
& 
$\Big(  \frac{x_i}{x_i-1}
\Big)_{i=1}^3$
&
$\Big(  {x_1}\, , \, 
\frac{x_1(x_2-1)}{x_2-x_1}\,, \, 
\frac{x_1(x_3-1)}{x_3-x_1}\,
\Big)$
& 
$\Big(  {x_1}\, , \, 
\frac{x_1(x_3-1)}{x_3-x_1}\,, \, 
\frac{x_1(x_2-1)}{x_2-x_1}\,
\Big)$
& 
$\Big(  \frac{x_i-1}{x_i}
\Big)_{i=1}^3$
& 
$ \Big(  \frac{x_1-1}{x_1},  \frac{x_3-1}{x_3}, 
 \frac{x_2-1}{x_2} \Big)$
\\ \hline  
\begin{tabular}{c} \vspace{-0.3cm}\\
$\boldsymbol{{\rm Tr}\big(M_\sigma\big)}$ \vspace{0.1cm}
\end{tabular}
& 1
& 1
& -3
& -1
& 1 
\\ \hline    \hline
\begin{tabular}{c} \vspace{-0.3cm}\\
$\boldsymbol{\sigma}$ \vspace{0.1cm}
\end{tabular}
& 
$(123)(456)$ 
& 
$(1234) $ 
& 
$(1234)(56)$ 
& 
$(12345)$ 
& 
$(123456)$ 
 \\ \hline   
 \begin{tabular}{c} \vspace{-0.3cm}\\
$\boldsymbol{G_\sigma}$ \vspace{0.1cm}
\end{tabular}
& 
$ \Big(  \frac{x_2-1}{x_2},  \frac{x_3-1}{x_3}, 
 \frac{x_1-1}{x_1} \Big)$
&
$ \Big(  \frac{1}{x_1},  \frac{x_2-1}{x_2-x_1}, 
 \frac{x_3-1}{x_3-x_1} \Big)$
& 
$ \Big(  \frac{1}{x_1},  \frac{x_3-1}{x_3-x_1}, 
 \frac{x_2-1}{x_2-x_1} \Big)$
&
$ \Big(  \frac{1}{x_2},  \frac{x_1-x_2}{x_2(x_1-1)}, 
 \frac{x_3-x_2}{x_2(x_3-1)} \Big)$
&
$ \Big(  \frac{1-x_2}{x_1-x_2},  \frac{1-x_3}{x_1-x_3}, 
 \frac{1}{x_1} \Big)$
\\ \hline  
\begin{tabular}{c} \vspace{-0.3cm}\\
$\boldsymbol{{\rm Tr}\big(M_\sigma\big)}$ \vspace{0.1cm}
\end{tabular}
& 2
& -1
& -1
& 0   
& 0
\\ \hline   
\end{tabular}
}}\bk 
\caption{}
\label{Table=Tr(Msigma)}
\end{table}

Table \ref{Table=Tr(Msigma)} characterizes entirely the character of 
$\boldsymbol{{\rm Log}{\mathcal A\mathcal R}}^{(1)}$ as a $\mathfrak S_6$-module, denoted by 
$\chi^{(1)}$.  We write 
 $\chi^{(1)}=\sum_{\lambda \vdash 6} n_\lambda\,  \chi_{\lambda}$ for some 
non negative integers $n_\lambda $,  
where $\chi_\lambda$ stands for the character of the irreducible $\mathfrak S_6$-representation $V_\lambda$ associated to  the partition $\lambda$. Using \href{https://groupprops.subwiki.org/wiki/Linear_representation_theory_of_symmetric_group:S6}{the character table of $\mathfrak S_6$} \href{https://groupprops.subwiki.org/wiki/Linear_representation_theory_of_symmetric_group:S6}{-representations}, it is not difficult to conclude that necessarily 
$\chi^{(1)}=\chi_{(3,3)}\oplus \chi_{(3,2,1)}$.  
 The second point of the proposition follows immediately from that. 
Since $V_{(3,2,1)}$ has dimension 16 and because ${\rm Tr}^{(1)}\big( \boldsymbol{H}^0\big(\widetilde \Sigma,\omega^1_{\widetilde \Sigma}\big)\big)$ is a $\mathfrak S_6$-submodule of $\boldsymbol{{\rm Log}{\mathcal A\mathcal R}}^{(1)}$ of dimension 5, 
we get point 3.
\mk

The preceding arguments used to show that $
\boldsymbol{AR}_{ab}^{(1)}\big( 
\boldsymbol{\mathcal W}_{0,6}\big)=V_{(3,3)}$ are fully non constructive and it is interesting and natural to seek for a basis of this 5-dimensional space.  Such a basis is provided by the 
1-ARs associated to the five functional identities of Table \ref{T:1-RAs} below.  We were able to determine these explicit functional relations using the following approach: first, 
thanks to formula \eqref{Eq:G-sigma}, the group morphism $\mathfrak S_{6}\rightarrow
{\bf Bir}(\mathbf C^3)$, $\sigma\mapsto  G_\sigma$ can be made explicit.  
Then using Maple's {\it `DifferentialGeometry'} package, one can compute the action of 
the pull-back by $G_\sigma$ on  $\boldsymbol{Log{\mathcal A\mathcal R}}^{(1)}$ with respect to the basis $\mathfrak B$. Eventually we   computed (on a computer) the $21\times 21$ square matrix $M_\sigma$ for any of the $6!=720$ permutations  $\sigma$ element of  $ \mathfrak S_6$.   

Next, for $\chi=\chi_{(3,3)}$ or $\chi=\chi_{(3,2,1)}$, we have  considered the matrix
$$
\Pi_{\chi}=\frac{\chi(
{\bf 1})}{\lvert \mathfrak S_6\lvert}
\sum_{\sigma\in \mathfrak S_6} \chi\big(\sigma^{-1}\big) \, M_\sigma \in {\rm Mat}_{21}(\mathbf C)
$$
(where ${\bf 1}$ stands for the identity element of $\mathfrak S_6$). From
a classical result of 
 the general theory of representations of finite group (see \cite[Chap.\,XVIII,\S4]{Lang}  for instance), we know that $\Pi_{\chi}$  is the matrix in the basis $\mathfrak B$ of the projection map 
$ \boldsymbol{{\rm Log}{\mathcal A}}^{(1)}=V_{(3,2,1)}\oplus  V_{(3,3)}\rightarrow V_\chi$.  
Since $\chi_{(3,3)}$ was known and because all the $M_\sigma$ were previously computed, we have been able to construct $\Pi_{\chi_{(3,3)}}$ explicitly.  Considering  suitable linear combinations of its columns led us to the 1-ARs associated to the five 
functional relations of Table \ref{T:1-RAs}.
\end{proof}

\begin{rem}
1. Using the same computational approach as in the preceding proof, one can show that 
the character $\chi_9$ of the 9-dimensional 
$\mathfrak S_6$-representation $\boldsymbol{{\rm Log}\Omega}^1_{0,6}$ is given by the following table, 
where the symbols in the first line stand for the possible decompositions into cycles with pairwise distinct supports of the 
conjugacy classes in $\mathfrak S_6$ (that is $\langle 2\rangle $ means a cycle of length 2, $\langle 2  3\rangle$ stands for a product of  a 2-cycle with a 3-cycle, etc.): \bk 
$$
\begin{tabular}{|c|c|c|c|c|c|c|c|c|c|c|}\hline
$\boldsymbol{\sigma}$ & $\langle 2\rangle$ & 
$\langle 2  2\rangle $ & 
$ \langle 2 2 2\rangle$ 
& $ \langle 3\rangle$ &  
$ \langle 2 3\rangle$ 
& $ \langle 3 3\rangle$ & 
$\langle 4\rangle$ 
& 
$ \langle 2 4\rangle$ 
& 
$\langle 5\rangle$ & 
$\langle 6\rangle$
 \\ \hline   
$\boldsymbol{\chi_9(\sigma)}$ & $3$&  $1$&$ 3$& $0$ & $0$ & $0$ & $-1$ & $1$ & $-1$& $0 $
  \\ \hline   
\end{tabular}\bk 
$$
We see that $\chi_9$ coincides with the character associated to the partition $(4,2)$, from which it follows that $\boldsymbol{{\rm Log}\Omega}^1_{0,6}$ is isomorphic to the corresponding Specht module $V_{(4,2)}$. \mk\\ 
\end{rem}


\begin{table}[h!]
\scalebox{1.08}{
\begin{tabular}{l}
${}^{}$\hspace{-0.4cm} $0= \,  \ln \Big(U_{1,1}\Big)+\ln \left( \frac{U_{2,1}}{U_{2,2}}\right)-\ln \Big(U_{3,2}\Big)+\ln \left( \frac{U_{4,1}-U_{4,2}}{U_{4,1}(U_{4,2}-1)} \right)-\ln \Big(U_{5,1}\Big)+\ln 
\left( \frac{U_{6,2}-1}{U_{6,1}-U_{6,2}}\right)$\bk 
\\ 
${}^{}$ \hspace{-0.4cm} $0= \ln \Big(U_{1,2}\Big)-\ln \Big(U_{2,2}\Big)+\ln \left( \frac{U_{3,1}}{U_{3,2}}
\right)
+\ln \left( \frac{U_{4,2} (U_{4,1}-1)}{U_{4,1}(U_{4,2}-1)}
\right)
+\ln \left( \frac{U_{5,2}}{U_{5,1}}
\right)
+\ln \left( \frac{U_{6,2}-1}{U_{6,1}-1}
\right)$  \bk \\ 
${}^{}$ \hspace{-0.4cm} $0=   \ln \big(U_{1,1}-1\big)
+\ln \left( \frac{U_{2,1}-1}{U_{2,2}-1}\right)
-\ln \big(U_{3,2}-1\big)
-\ln \big(U_{4,1}\big)
+\ln \left( \frac{U_{5,1}-U_{5,2}}{U_{5,1}(U_{5,2}-1)}\right)
+\ln \left( \frac{U_{6,1}(U_{6,2}-1)}{ U_{6,1}-U_{6,2}}\right)$ 
\bk 
\\ 
${}^{}$ \hspace{-0.4cm} $0=  \ln \big(U_{1,2}-1\big)-\ln \big(U_{2,2}-1\big)
+\ln \left( \frac{U_{3,1}-1}{U_{3,2}-1}\right)
+\ln \left( \frac{U_{4,2}}{U_{4,1}}\right) 
+\ln \left( \frac{U_{5,2}(U_{5,1}-1)}{U_{5,1}(U_{5,2}-1)}\right)
+\ln \left( \frac{U_{6,1}(U_{6,2}-1)}{U_{6,2}(U_{6,1}-1)}\right)$
\bk 
\\ 
${}^{}$ \hspace{-0.8cm} $-I\pi 
= 
\ln \Big(U_{1,1}-U_{1,2}\Big)
+\ln \left( \frac{U_{2,1}-U_{2,2}}{
U_{2,2}(U_{2,2}-1)}\right)
+\ln \left( \frac{U_{3,1}-U_{3,2}}{U_{3,2}(U_{3,2}-1)}\right)
+\ln \left(\frac{U_{4,2}}{U_{4,1}(U_{4,2}-1)}\right)
+\ln \left(\frac{U_{5,2}}{ U_{5,1}(U_{5,2}-1)}\right) 
$ \bk \\ 
${}^{}$ \hspace{10cm}$ +\ln \left(\frac{U_{6,1} (U_{6,2}-1)}{ 
(U_{6,1}-1)(U_{6,1}-U_{6,2})}\right)$. \bk
\end{tabular}}
\caption{A basis (in functional form) of the space of 1-abelian relations of
$\boldsymbol{\mathcal W}_{0,6}$ coming from abelian 1-differentials on 
the birational model $\widetilde \Sigma$ of the 
the Fano surface 
$\Sigma$ 
 of Segre's cubic $\boldsymbol{S}$.}
 \label{T:1-RAs}
\end{table}
%
%
%
%

\noindent {\it 2.   For any $s=1,\ldots,5$,  let $\boldsymbol{\mathcal AR}_s \in 
 \boldsymbol{AR}^{(1)}_{ab}\big(  \boldsymbol{\mathcal W}_{0,6}  \big)
$
   be the 1-AR of $\boldsymbol{\mathcal W}_{0,6}$ corresponding to the functional relation   of the $s$-th line in Table  \ref{T:1-RAs}.   
Each    $\boldsymbol{\mathcal AR}_s$ is a 6-tuple  $(\boldsymbol{ar}_{s,i}(U_{i,1},U_{i,2}))_{i=1}^6$ of functions of two variables, such that the sum $\sum_{i=1}^6 
\boldsymbol{ar}_{s,i}(U_{i,1},U_{i,2})$ is (locally) constant.
For any $i$, let   $\Psi_i : \boldsymbol{AR}^{(1)}_{ab}\big(  \boldsymbol{\mathcal W}_{0,6}  \big) 
\rightarrow \boldsymbol{{\rm Log}\Omega}^1_{0,5}
$ 
be the linear map obtained as the following composition 
$$
  \xymatrix@R=0.4cm@C=0.7cm{ 
 \boldsymbol{AR}^{(1)}_{ab}\big(  \boldsymbol{\mathcal W}_{0,6}  \big) 
\ar@{->}[r] &
 \boldsymbol{H}^0\Big(\, \widetilde \Sigma\, ,\, \omega^1_{\widetilde \Sigma}\, \Big)
 \ar@{->}[r] &
  \boldsymbol{H}^0\left(\,  \Sigma(i)\, ,\,  \Omega^1_{\Sigma(i)}\Big( {\rm Log}\big(\partial \Sigma(i)\big)
 \Big)  \right)  \eq[d] &  \\
 & &  \boldsymbol{H}^0\left(\overline{\mathcal M}_{0,5}, \Omega^1_{ \overline{\mathcal M}_{0,5}} 
\Big(  {\rm Log}\big( \partial \overline{\mathcal M}_{0,5}\big) \Big)\right) 
\ar@{->}[r]  & 
\boldsymbol{{\rm Log}\Omega}^1_{0,5}
}
$$
where the first map is the inverse of \eqref{Eq:Tr-(1)},  the second is that of point $(iv)$ page 
\pageref{(iv)}, the third (vertical) and the last maps being the natural ones respectively induced by the isomorphisms of pairs $\big(\, \Sigma(i), \partial  \Sigma(i)\, \big)\simeq \big( 
\overline{\mathcal M}_{0,5},  \partial \overline{\mathcal M}_{0,5}\, \big) \simeq \big( \, \mathbf C^2, A_2\big)$.  Then $\Psi_i$ is an isomorphism which 
is fully characterized by the relations 
$\Psi_i\big(\boldsymbol{\mathcal AR}_s\big)=
d\big(\boldsymbol{ar}_{s,i}(u,v)\big)\in \boldsymbol{{\rm Log}\Omega}^1_{0,5}$ for  any $s$.
 In particular, one has $\Psi_1\big(\boldsymbol{\mathcal AR}_s\big)=\boldsymbol{\alpha}_s$ for any $s=1,\ldots,5$. 
}

\newpage

\newpage
\section*{\bf Appendix B: the 1-ARs of  the 3-web of lines on the chordal cubic.}
In this appendix, we study the 
1-ARs of the web of lines on the chordal cubic $\mathscr C$ (already discussed in \S\ref{SS:je-sais-pas-quoi} above) and explain how this web can be seen as a particular case of a nice family of curvilinear 3-webs studied by Blaschke and Walberer. We will take advantage of this to recall some of their results which are among the most striking ones in web geometry although not very well known by modern geometers. 

\subsection*{\bf B.1.  the 1-ARs of  the 3-web of lines on the chordal cubic.}
As explained in \S\ref{SS:je-sais-pas-quoi}, 
the web by lines on $\mathscr C$ is degenerated: 
$ \boldsymbol{\mathcal L\hspace{-0.05cm}\mathcal W}_{\boldsymbol{\mathscr C}}$ is a skew 3-web, and not a 6-web as for most of the cubic threefolds. However, one can ask about the ARs of this web. It turns out that concerning 1-ARs, this web is not less interesting than those associated to smooth cubic hypersurfaces.  We use below the notations introduced  in \S\ref{SS:je-sais-pas-quoi}: 
$U''_+$, $U''_-$ and $U'$ stand for the explicit rational first integrals of (a birational model of) $\boldsymbol{\mathcal L\hspace{-0.05cm}\mathcal W}_{\hspace{-0.05cm}{\mathscr C}}$ given in \eqref{Eq:UUU} and $X''_{+}$, $X''_{-}$ and $X'$ denote the associated vector fields defined just after. 
\sk 

According to a classical bound due to Kh\"aler,\footnote{That this bound is due to Kh\"aler is mentioned in \cite[Satz $\boldsymbol{S}_3$]{Blaschke-1-Rank}. A published proof is given in \cite[\S2]{BlaschkeK}.} one has 
${\rm rk}^{(1)}\big(  \boldsymbol{\mathcal L\hspace{-0.05cm}\mathcal W}_{\boldsymbol{\mathscr C}}\big)\leq 5$ and the question is whether this  is actually an equality. 
This can be answered by determining an explicit basis for $\boldsymbol{AR}^{(1)}\big( \boldsymbol{\mathcal L\hspace{-0.05cm}\mathcal W}_{\boldsymbol{\mathscr C}} \big)$.  Since 1-ARs can be integrated into functional identities, one has to find a basis of 
non trivial 3-tuples  of holomorphic functions of two variables $(M,N,R)$ such that the functional relation
\begin{equation}
\label{E:RAmunurho}
M\big(U''_+\big)+N\big(U''_-\big)+R\big(U'\big)=0
\end{equation}
is identically satisfied.
In \S\ref{SS:En-n-even}, we  briefly explained how 
Abel's method described in \cite{PirioSelecta} for determining in an effective way the ARs of planar webs can be generalized to the determination of the 2-ARs of curvilinear webs (in dimension 3). 
It turns out that Abel's method generalizes quite straightforwardly to the case of the 1-ARs of such webs as well, and we will apply this 
quite effectively 
to the web under scrutiny. 

We denote  the components of $U_-''$ by $U_{-i}''$ with $i=1,2$ and $N_i$ stands for the partial derivative of $N$ with respect to the $i$-th variable.  
Then one proceeds as follows for determining the solutions $(M,N,R)$ of \eqref{E:RAmunurho}: one first applies $X'$ to the LHS of 
\eqref{E:RAmunurho} in order to eliminate $R(U')$. 
Next one divides by $X'(U''_{-2})$ and  applies $X''_-$ : this kills the remaining $N_2(U_-'')$. Similarly, one divides 
   the expression just obtained by the coefficient of $N_1(U''_-)$ in it and then one applies $X_-''$ again: we have eliminated all the terms in $N$ and $R$ (and in their partial derivatives) thus what remains can be seen as a PDE in $M(U''_+)$ with variable  coefficients. This gives us 
 a  system of linear PDEs of the third order in $M$ which is not difficult to solve with the help of a computer algebra system.
 
  One gets that, up to the addition of a complex constant,   any function $M$ appearing in an 1-AR 
 \eqref{E:RAmunurho} is  necessarily a linear combination of the five following functions $M_i=M_i(u_1,u_2)$:  
 $$  M_1=\frac{1}{u_1-u_2} \, , \quad 
   M_2=\frac{2 u_1 + u_2}{u_1-u_2}    
   \, , \quad   M_3=\frac{1}{u_1(u_1-u_2)} \, , \quad 
M_4=  \frac{u_1u_2}{u_1-u_2}\quad \mbox{ and } \quad M_5= \frac{u_1u_2^2}{u_1-u_2}\, .
  $$
  One verifies that the same holds true for any function $N$ appearing in 
 \eqref{E:RAmunurho} :  up to a constant, it is a linear combination of the five previous functions. \mk 
  
Similarly, one obtains that, up to a constant,   any function $R=R(t_1,t_2)$ appearing in an identity 
 \eqref{E:RAmunurho}  is a linear combination of the following five rational functions $R_i=R_i(t_1,t_2)$: 
 $$  
   R_1=\frac{2(t_1+t_2)}{(t_1-t_2)^2} 
    \, , \hspace{0.2cm}  
    R_2= \frac{4\big(t_1^2 + t_1t_2 + t_2^2\big)}{(t_1-t_2)^2} \, , \hspace{0.2cm}
    R_3=\frac{4}{(t_1-t_2)^2}  
     \, , \hspace{0.2cm}
R_4=  \frac{2t_1t_2(t_1+t_2)}{(t_1-t_2)^2}
 \, ,  \hspace{0.2cm}  R_5=\frac{4t_1^2t_2^2}{(t_1-t_2)^2} \, .   $$
  Then from elementary computations, one obtains that 
  the functional identity  
  \begin{equation}
 M_i\big(U''_+\big)+M_i\big(U''_-\big)+R_i\big(U'\big)=0
\end{equation}
holds true  identically  
 for any $i=1,\ldots,5$.  One then verifies that the five tuples $(M_i,M_i,R_i)$ are linearly 
 independent. Considering K\"ahler's bound above, this implies that 
 ${\rm rk}^{(1)}\big( \boldsymbol{\mathcal L\hspace{-0.05cm}\mathcal W}_{{\mathscr C}}
 \big)=5
 $ hence $\boldsymbol{\mathcal L\hspace{-0.05cm}\mathcal W}_{{\mathscr C}}$ has maximal 1-rank.\sk 
 
   Considering Proposition A.1, it is natural to ask whether the 1-ARs of this web can be described by means of some abelian differential 1-forms  of the Fano surface $F_1({\mathscr C})$ (or possibly of a suitable birational model of it) or not. But to do so one has to face some difficulties: 
 \begin{itemize}
 \item first, since $F_1(\mathscr C)$ is not reduced as a scheme, what could be an abelian differential on it is not clear, at least to the author;
 \footnote{However, the contents 
 \cite{AL} may allow this point to be answered.} 
  \sk
  \item  assuming that the first point might be settled, one expects an abelian 1-form on $F_1(\mathscr C)$ to deform into a holomorphic 1-form on the Fano surfaces $F_1(X_t)$ of 
 any 
  regular analytic 
 family of smooth hypercubics $X_t\subset \mathbf P^4$ (with $t\in (\mathbf C^*,0)$)
 degenerating onto $X_0=\mathscr C$; 
 \sk
   \item finally, for a smoothing family as just above, one has to understand how the  $6$-webs $\boldsymbol{\mathcal L\hspace{-0.05cm}\mathcal W}_{X_t}$ for  $t\in (\mathbf C^*,0)$ degenerate 
   to the 3-web $\boldsymbol{\mathcal L\hspace{-0.05cm}\mathcal W}_{{\mathscr C}}$ when $t\rightarrow 0$.
 \sk
  \end{itemize}
Addressing these points here would require too much space. Instead, we will 
 discuss in the next sub-appendix below another way to understand  the ARs of 
 $\boldsymbol{\mathcal L\hspace{-0.05cm}\mathcal W}_{{\mathscr C}}$, 
 by considering this web from another point of view. \sk

 Finally, note that since the 2-rank of any skew curvilinear 3-web in dimension 3 is zero according to \cite[${\mathbf S}_{23}$]{Blaschke-1-Rank}, 
 there is no point in considering the 2-ARs of $\boldsymbol{\mathcal L\hspace{-0.05cm}\mathcal W}_{{\mathscr C}}$  hence  we will not talk more about it.

\subsection*{\bf B.2.  Blaschke-Walberer theory and its application to ${\mathcal L\hspace{-0.05cm}\mathcal W}_{{\mathscr C}}$}
We briefly discuss the beautiful theory established  in  \cite{BW} before explaining in {\bf B.2.2} how   $\boldsymbol{\mathcal L\hspace{-0.05cm}\mathcal W}_{{\mathscr C}}$  can be studied using it.  In the last and short subsection {\bf B.2.3}, we will finish 
by describing another approach to study this web, which is similar to that considered in {\bf A.1} and relies on results due to Collino as well. 
 
\subsubsection*{\bf B.2.1.  Blaschke-Walberer theory}
 In \cite{BW}, Blaschke and Walberer associate a curvilinear 3-web to a sufficiently generic  cubic hypersurface and study its properties,  in particular its 1-ARs.  \sk

 \vspace{-0.5cm}
 Their construction is as follows: 
let  $X\subset \mathbf P^4$ be a given 
 irreducible cubic hypersurface and denote by $F=F_1(X)\subset G_1(\mathbf P^4)$ its Fano surface. In the following we do not consider $F$ with its structure of scheme: we actually work with the underlying reduced surface $F_{red}$, that we will still denote abusively by $F$ to simplify the writing. By definition, the associated (labeled) {\bf `triangle variety'}
 $\overline{\Delta}=\overline{{\Delta}(X)}$
 is  the closed algebraic subvariety of $F^3$ formed by 
 `triangles included in $X$', namely 
 triples $T=(\ell_1, \ell_2,\ell_3)$ of lines included in $X$ such that there exists a 2-plane $\pi \in G_2(\mathbf P^4)$ 
 satisfying  $X\cdot\pi=\ell_1+\ell_2+\ell_3$ (equality between 1-cycles on $X$).  
 Such a triangle $T=(\ell_i)_{i=1}^3$ is `{\bf non-degenerate}' if the $\ell_i$'s are pairwise distinct with their union in $\mathbf P^4$ abstractly isomorphic to the model triangle
  formed by three  non concurrent lines  in $\mathbf P^2$ (see the picture below).

\begin{figure}[h!]
\centering
\includegraphics[width=30mm]{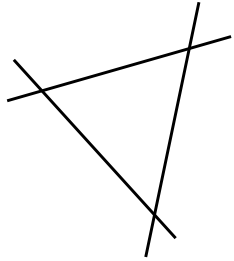}
\caption{A non degenerate triangle in the plane.}
\label{fig:method-triangle}
\end{figure}

One verifies that $\overline{\Delta}\subset F^3$ is  of dimension 3 and that the non-degenerate triangles form a Zariski open subset, denoted by 
${\Delta}={\Delta}(X)$, that we will also call abusively the `triangle variety' of $X$ when it is not empty.  By restriction, the natural projections $F^3\rightarrow F$ onto the three distinct factors give rise to three dominant rational maps $P_i: \overline{\Delta}\dashrightarrow F$ which define three foliations by algebraic curves on $\Delta$. These foliations form what we call {\bf Blaschke-Walberer triangle web} on $\Delta$, denoted by 
\begin{equation}
\label{Def:DW-X}
\boldsymbol{\Delta\hspace{-0.05cm}\mathcal W}_{\hspace{-0.03cm}X}=
\boldsymbol{ \mathcal W}\big( \, P_1\, , \, P_2\, , \, P_3\, \big) \, .
\end{equation}

Let us say that  $X$ is {\bf $\boldsymbol{\mathcal W}$-admissible} if for $T\in \overline{\Delta}$ generic: 
(1) $T$ is a non-degenerate triangle, {\it i.e.}\, ${\Delta}$ is non empty (hence it is dense in $\overline{\Delta}$);   (2)  the tangent spaces at $T$ of the level subsets of the $p_i$'s through this point are in general position,  {\it i.e.}\,$\boldsymbol{\Delta\hspace{-0.05cm}\mathcal W}_{\hspace{-0.03cm}X}$ is a genuine curvilinear web on $\Delta$ at $T$; and (3) this web is (generically) skew.

\begin{exm}
{\rm 1.} A smooth cubic threefold is $\boldsymbol{\mathcal W}$-admissible (see the proposition just below).\sk

\noindent {\rm 2.}  On the contrary, Segre's cubic $\boldsymbol{S}$ is not $\boldsymbol{\mathcal W}$-admissible.
 This can be verified easily using its model in $\mathbf P^3$ described in \S\ref{SSS:W0n+3} (see especially \eqref{Eq:Exceptional-web}). 
Let us describe the triangles in $\boldsymbol{S}\simeq \mathbf P^3$ passing through a generic point $q$. Such a triangle is determined by two distinct lines passing through $q$. In the model we are working with, two such lines correspond to the lines 
$\ell_s(q)=\langle q,p_s\rangle\subset  \mathbf P^3$ for $s=1,2$ say.  The third edge  of the triangle determined by $(\ell_1(q),\ell_2(q))$ is the 
line $l_3(q)=\langle q,p_1,p_2\rangle \cap \Pi_{123}$ 
with  $ \Pi_{123}=\langle p_3,p_4,p_5\rangle\simeq \mathbf P^2$. 
The map $q\mapsto l_3(q)$ is the map $P_3$ appearing in \eqref{Def:DW-X}.  It takes values into the dual projective plane 
$\check{\Pi}_{123}$ of lines contained in 
${\Pi}_{123}$. 

Noticing that any line $l_3(q)$ passes through the intersection point 
 of   
$\langle p_1,p_2\rangle$ with ${\Pi}_{123}$, we get that actually ${\rm Im}(P_3)$ is a line in $\check{\Pi}_{123}$. This map is then of rank 1 and its generic fiber is a surface (actually, the fiber of $P_3$ through $q$ is clearly the 2-plane $\langle q,p_1,p_2\rangle$). Thus $P_3$ defines a foliation by 2-planes,  hence 
$\boldsymbol{\Delta\hspace{-0.05cm}\mathcal W}_{\hspace{-0.03cm}\boldsymbol{S}}$ 
is not a curvilinear web, which shows that $\boldsymbol{S}$ is not 
$\boldsymbol{\mathcal W}$-admissible. 

\noindent {\rm 3.} 
But the property to be $\boldsymbol{\mathcal W}$-admissible for a cubic threefold is not directly related to the fact that it is singular or not. For instance, the chordal cubic $\mathscr C$ is 
$\boldsymbol{\mathcal W}$-admissible although having a singular locus of dimension 1. 
\end{exm}

Remark that one could have worked without labeling the edges of the triangles included in $X$. This would have given a more intrinsic construction of a variety of `unlabeled triangles' and of a non-ordered version of $\boldsymbol{\Delta\hspace{-0.05cm}\mathcal W}_{\hspace{-0.03cm}X}$. Since we are interested in local analytic properties of this web, it is pointless and actually much more convenient to work on $\Delta_X\subset F^3$  since on it, the web we are interested in is  defined by global rational first integrals. \sk

\begin{prop} Let $X$ be a smooth cubic threefold in $\mathbf P^4$. 
\label{P:BW-X-smooth}
\begin{enumerate}
\item[1.]  
\vspace{-0.15cm}
The hypersurface $X$ is  $\boldsymbol{\mathcal W}$-admissible. Moreover, 
the skew 3-web $\boldsymbol{\Delta\hspace{-0.05cm}\mathcal W}_{\hspace{-0.03cm}X}$ is linearizable.
\mk
\item[2.]   The first trace map ${\rm Tr}^{(1)}: \omega\mapsto \sum_{i=1}^3 P_i^*(\omega)$  induces a well-defined  isomorphism 
\begin{align*}
{\rm Tr}^{(1)}\, :\, \boldsymbol{H}^0\left(F, \Omega^1_{F}\right)& \longrightarrow \boldsymbol{AR}^{(1)}\big( \boldsymbol{\Delta\hspace{-0.05cm}\mathcal W}_{X} \big)\,. 
\end{align*}
\item[3.]  
Consequently ${\rm rk}^{(1)}\big(  \boldsymbol{\Delta\hspace{-0.05cm}\mathcal W}_{X}\big)=h^0\big(F,\Omega^1_{F}\big)=5$ hence the 3-web  
$\boldsymbol{\Delta\hspace{-0.05cm}\mathcal W}_{X}$
has maximal 1-rank.
  \end{enumerate}
\end{prop}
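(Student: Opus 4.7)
My plan is to verify admissibility first, then construct the trace map using the key Abel-type property {\bf xvi} of the Fano surface, deduce from Kähler's bound that the trace is an isomorphism and the 1-rank is maximal, and finally address linearizability. The logical core will be a translation-invariance argument for pull-backs of holomorphic 1-forms along the Albanese map.

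For admissibility, I would argue that non-degeneracy of a generic triangle follows from {\bf ix}: a degenerate triangle $(\ell,\ell,\ell')$ forces $\ell$ to be a line of the second type, and these form only the 1-dimensional curve $C\subset F$, so the locus of degenerate triangles is at most 2-dimensional inside the 3-dimensional $\overline{\Delta}$ and $\Delta$ is Zariski-dense. Curvilinearity, the general-position condition on the three tangent distributions of the $\mathcal F_i$'s, and skewness are all Zariski-open properties on $\Delta$, hence can each be checked at one convenient specific smooth $X$ (for instance the Fermat cubic) and then automatically hold in an open neighbourhood of such a cubic in the moduli of smooth cubic threefolds.

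For the trace map (point 2), my argument would be as follows. By {\bf viii}, any $\omega \in \boldsymbol{H}^0(F, \Omega^1_F)$ is the pull-back $\omega = a^*\tilde\omega$ of a unique translation-invariant form $\tilde\omega$ on $\mathrm{Alb}(F)$. Translation-invariance yields the crucial identity $\sum_i f_i^*\tilde\omega = (f_1+f_2+f_3)^*\tilde\omega$ for any smooth maps $f_i \colon \Delta \to \mathrm{Alb}(F)$. Applied to $f_i = a \circ P_i$, property {\bf xvi} gives $f_1+f_2+f_3 = \alpha_1$ constant, so the right-hand side vanishes, and $(P_i^*\omega)_{i=1}^3$ is a bona fide element of $\boldsymbol{AR}^{(1)}(\boldsymbol{\Delta\mathcal W}_X)$. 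Injectivity of the resulting map is immediate since each $P_i$ is a submersion at a generic point, so $P_i^*\omega = 0$ forces $\omega = 0$. Combining with Kähler's bound ${\rm rk}^{(1)}(\boldsymbol{\Delta\mathcal W}_X) \leq 5$ for skew curvilinear 3-webs in dimension 3, the 5-dimensional injection $\boldsymbol{H}^0(F,\Omega^1_F) \hookrightarrow \boldsymbol{AR}^{(1)}(\boldsymbol{\Delta\mathcal W}_X)$ is forced to be an isomorphism, simultaneously establishing points 2 and 3.

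Linearizability is the remaining claim in point 1 and the step I expect to be the main obstacle. The natural strategy is to exploit the Albanese description once more: the map $\Phi = (a \circ P_1, a \circ P_2) \colon \Delta \to \mathrm{Alb}(F) \times \mathrm{Alb}(F) \simeq \mathbf C^{10}$ is a local immersion at a generic $T_0$ which realizes the three foliations of $\boldsymbol{\Delta\mathcal W}_X$ as pull-backs of the three translation-invariant (affine-linear) foliations on $\mathrm{Alb}(F)^2$ defined respectively by $x=\mathrm{const}$, $y=\mathrm{const}$, and $x+y=\mathrm{const}$, the last equivalence using {\bf xvi} a second time. One would then compose $\Phi$ with a Zariski-generic linear projection $p \colon \mathbf C^{10} \to \mathbf C^3$ for which $p \circ \Phi$ is a local diffeomorphism, producing coordinates on $\Delta$ in which each foliation is cut out by affine functions. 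The delicate point, which is the main obstacle, is to verify that the 1-dimensional leaves so obtained are genuine affine lines in $\mathbf C^3$ and not merely curves cut on the (a priori curved) image of $\Phi$; circumventing this may require replacing the naive projection argument by a more intrinsic construction, for instance by identifying a canonical model of the web through the Plücker image of $\Delta$ inside $G_2(\mathbf P^4) \hookrightarrow \mathbf P^9$, or by invoking the classical Blaschke--Walberer algebraization theorem (alluded to earlier in the paper) which would reduce linearizability to the maximal-rank conclusion already obtained.
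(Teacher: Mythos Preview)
Your treatment of points 2 and 3 is correct and coincides with the paper's: the vanishing of ${\rm Tr}^{(1)}(\omega)$ is exactly a restatement of property~{\bf xvi} via the identity $\sum_i (a\circ P_i)^*\tilde\omega=(a\circ P_1+a\circ P_2+a\circ P_3)^*\tilde\omega$ for translation-invariant $\tilde\omega$, and coupling injectivity with K\"ahler's bound forces the isomorphism.

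There are however two gaps in your approach to point~1. The minor one concerns skewness: arguing that skewness is Zariski-open and checking it at the Fermat cubic only yields the conclusion on a Zariski-open subset of the moduli of smooth cubics, not for \emph{every} smooth $X$ as the statement requires. The paper instead reduces to the skewness of $\boldsymbol{\mathcal L\hspace{-0.05cm}\mathcal W}_X$ (already established for all smooth $X$) via the corner maps $c_k:\Delta_X\dashrightarrow X$: two of the three foliations of $(c_k)_*(\boldsymbol{\Delta\hspace{-0.05cm}\mathcal W}_X)$ are literally foliations by lines of $X$, so skewness of the 6-web implies skewness of the 3-web.

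The major gap is linearizability, and you correctly sense that your Albanese-plus-projection strategy does not work: after a generic linear projection $p:\mathbf C^{10}\to\mathbf C^3$, the leaves of the three affine foliations on $\mathbf C^{10}$ restricted to the curved image $\Phi(\Delta)$ become \emph{curves}, not lines, in $\mathbf C^3$. Your fallback to Blaschke--Walberer's algebraization theorem is circular (that theorem outputs an equivalence with some $\boldsymbol{\Delta\hspace{-0.05cm}\mathcal W}_{X'}$, not a linearization). The paper's construction is quite different and direct: one uses the \emph{span map} $\Xi:T\mapsto \pi(\langle T\rangle)$, where $\langle T\rangle\in G_2(\mathbf P^4)$ is the 2-plane containing the triangle and $\pi:\mathbf P^4\dashrightarrow\mathbf P^3$ is a generic linear projection, so that $\Xi$ lands in $\check{\mathbf P}^3$. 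Fixing one edge $\ell_i$ forces $\langle T\rangle\supset\ell_i$, hence $\pi(\langle T\rangle)\supset\pi(\ell_i)$, hence $\Xi(T)$ lies on the dual line $\pi(\ell_i)^\vee\subset\check{\mathbf P}^3$. Thus each foliation of $\Xi_*(\boldsymbol{\Delta\hspace{-0.05cm}\mathcal W}_X)$ is by genuine projective lines.
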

\begin{proof}[Proof (sketched)]
Assuming that $X$ is smooth, one verifies easily that $\Delta_X$ is non empty and generically reduced and the fact that $\boldsymbol{\Delta\hspace{-0.05cm}\mathcal W}_{X}$ is a genuine web can be obtained using the same arguments as in the  proof of Lemma \ref{L:X-6lines-PG}. 
As for the skewness of $\boldsymbol{\Delta\hspace{-0.05cm}\mathcal W}_{X}$, it will be established  later (see Lemma \ref{Lem:lemo}.2).  And a nice geometric argument gives its linearizability. 
For $T\in \Delta_X$, one denotes by $\ell_i^T\in F$ its edges (with $i=1,2,3$) and one sets $\langle T\rangle$ for the 2-plane in $\mathbf P^4$ spanned by the $\ell_i^T$'s.  We 
fix a generic linear projection $\pi: \mathbf P^4\dashrightarrow \mathbf P^3$. 
Then for $T_0\in \Delta_X$ generic, one considers 
the germ of analytic map $\Xi : (\Delta_X,T_0)\rightarrow  \check{\mathbf P}^3 : 
\, T \mapsto \pi(\langle T\rangle)$. One verifies that it is a local biholomorphism. Moreover, 
for a generic line $\ell\in G_1({\mathbf P}^4)$,  $\pi(\ell)$ is a line in $\mathbf P^3$ whose projective dual, denoted by 
$\pi(\ell)^\vee$, 
 is a line in  $\check{\mathbf P}^3$  as well. For any $T=(\ell_i)_{i=1}^3 \in 
 (\Delta_X,T_0)$, the three lines $\pi(\ell_i)^\vee$ for $i=1,2,3$ are concurrent at  $\Xi(T)$, and 
 are easily seen to be the leaves of the push-forward of 
$ \boldsymbol{\Delta\hspace{-0.05cm}\mathcal W}_{X}$
by 
$\Xi$.  Thus $\Xi_*\big(  \boldsymbol{\Delta\hspace{-0.05cm}\mathcal W}_{X}\big)$ is a germ of linear 3-web on $\check{\mathbf P^3}$ at $\Xi(T_0))$ hence,  in particular,  $\boldsymbol{\Delta\hspace{-0.05cm}\mathcal W}_{X}$ is linearizable. 
\sk 

The third and last points of the proposition follow immediately from the second which itself is a simple rephrasing of the last statement of 
\eqref{Eq:a-Isom}
in terms of webs.
\end{proof}
\sk 

A remarkable result obtained by Blaschke and Walberer in their paper is that the 
web-theoretic content of the preceding proposition actually holds true for any cubic threefold  
as soon as its web of triangles  is $\boldsymbol{\mathcal W}$-admissible: 


\begin{prop}[Blaschke-Walberer] 
\label{Prop:Blaschke-Walberer}
Let $X$ be an irreducible  $\boldsymbol{\mathcal W}$-admissible cubic in $\mathbf P^4$. 
\begin{enumerate}
\item[1.]  
\vspace{-0.15cm}
There exists a certain 5-dimensional space $\boldsymbol{H}_F$ of rational 1-forms on $F$ whose traces vanish. This gives rise to an isomorphism ${\rm Tr}^{(1)}\, :\, \boldsymbol{H}_F \longrightarrow \boldsymbol{AR}^{(1)}\big( \boldsymbol{\Delta\hspace{-0.05cm}\mathcal W}_{X} \big)\,. $
\sk 
\item[2.]  
Consequently ${\rm rk}^{(1)}\big(  \boldsymbol{\Delta\hspace{-0.05cm}\mathcal W}_{X}\big)=\dim \big( \boldsymbol{H}_F \big)=5$ hence the 3-web  
$\boldsymbol{\Delta\hspace{-0.05cm}\mathcal W}_{X}$
has maximal 1-rank.
  \end{enumerate}
\end{prop}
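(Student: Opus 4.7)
The plan is to adapt the deformation strategy used in the proof of Proposition \ref{P:LW-X-singular}, combined with Kähler's bound. Let $X \subset \mathbf P^4$ be an irreducible $\mathcal W$-admissible cubic. First I would choose a smoothing family $\pi: \mathcal X \to B$ over a small 1-dimensional analytic base $B = (\mathbf C, 0)$ with $X_0 = X$ and $X_b$ smooth for every $b \neq 0$. Because $\mathcal W$-admissibility is an open condition (non-degeneracy of the generic triangle, genuine curvilinear character and skewness of the associated 3-web are all open properties), we may shrink $B$ so that each $X_b$ is $\mathcal W$-admissible. The relative Fano surface $\mathcal F \to B$ and the relative triangle variety $\mathcal T \to B$ then exist and the three projections $P_i^{(b)}: \Delta(X_b) \dashrightarrow F_b$ vary analytically with $b$.

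Next I would construct the space $\boldsymbol{H}_F$ as the limit, as $b \to 0$, of the 5-dimensional spaces $\boldsymbol{H}^0(F_b, \Omega^1_{F_b}) \simeq V^\vee$ given by Proposition \ref{P:BW-X-smooth} for $b \neq 0$. Concretely, the restriction map from $V^\vee = \boldsymbol{H}^0(G_1(\mathbf P^4), \mathcal T^\vee)$ to rational 1-forms on $F_b$ induced by the tangent bundle theorem deforms analytically in $b$, and I would define $\boldsymbol{H}_F$ as the image of $V^\vee$ in $\Omega^1_{\mathbf C(F)}$ obtained in the limit. For $\eta \in \boldsymbol{H}_F$ coming from some $\overline v \in V^\vee$, the corresponding analytic family $\{\eta_b\}$ of 1-forms on $F_b$ satisfies ${\rm Tr}^{(1)}(\eta_b) = \sum_{i=1}^3 (P_i^{(b)})^*(\eta_b) = 0$ for $b \neq 0$ by Proposition \ref{P:BW-X-smooth}.2, and since the trace is a rational 1-form on $\Delta(X_b)$ depending analytically on $b$, specialization at $b = 0$ yields ${\rm Tr}^{(1)}(\eta) = 0$. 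This gives the well-defined linear map ${\rm Tr}^{(1)}: \boldsymbol{H}_F \to \boldsymbol{AR}^{(1)}(\boldsymbol{\Delta\mathcal W}_X)$.

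Injectivity of ${\rm Tr}^{(1)}$ on $\boldsymbol{H}_F$ should follow from an argument analogous to Lemma \ref{L:X-6lines-PG}: the restriction of the Plücker embedding to the union $F^{\mathrm{gen}}$ of the irreducible components of $F$ met by the generic triangle spans a non-degenerate subvariety of $\mathbf P^9$, forcing the map $V^\vee \to \bigoplus \Omega^1_{\mathbf C(F_j)}$ (summing over those components) to be injective, hence so is ${\rm Tr}^{(1)}$. Once injectivity is established, one has $5 = \dim \boldsymbol{H}_F \leq {\rm rk}^{(1)}(\boldsymbol{\Delta\mathcal W}_X) \leq 5$ by Kähler's majoration for skew curvilinear 3-webs in dimension 3, and both inequalities must be equalities. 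The isomorphism ${\rm Tr}^{(1)}: \boldsymbol{H}_F \simeq \boldsymbol{AR}^{(1)}(\boldsymbol{\Delta\mathcal W}_X)$ and the maximality of the 1-rank follow at once.

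The main obstacle will be the rigorous definition and analysis of $\boldsymbol{H}_F$ when $F_1(X)$ is singular or non-reduced (as happens precisely for the chordal cubic $\mathscr C$ which motivates the statement): in such situations the naive sheaf of 1-forms on $F$ may not be 5-dimensional, and one likely has to work either on a semi-stable desingularization $\widetilde{\mathcal F} \to \mathcal F$ in the spirit of Collino's construction for Segre's cubic discussed in \S\ref{S:n=3}, or to define $\boldsymbol{H}_F$ directly as the $(1,0)$-piece of the limit Hodge structure on $\boldsymbol{H}^1(F_b, \mathbf C)$ for $b \to 0$. An alternative, much more hands-on route (which is Blaschke and Walberer's original strategy in \cite{BW}) would bypass deformation theory entirely and produce five explicit generators of $\boldsymbol{H}_F$ by an Abel-type method applied directly to a cubic equation defining $X$, exactly as was done in \textbf{B.1} for the chordal cubic; making this construction uniform over all $\mathcal W$-admissible cubics is technically delicate but feasible.
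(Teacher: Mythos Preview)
The paper does not give its own proof of this proposition: it attributes the result to Blaschke and Walberer and, in the paragraph following the statement, describes their method in \cite{BW} as six pages of explicit algebraic manipulation starting from a cubic equation for $X$ and producing five explicit rational 1-forms $\mathfrak u_i$ (formula~(68) in \cite{BW}) whose traces are shown to vanish. So the ``paper's proof'' is exactly the hands-on computational route you mention only in your final sentence.

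Your main proposal, the deformation argument, is a genuinely different approach, and the paper itself flags precisely why it is problematic. In the paragraph ``On Abel's theorem for abelian 1-forms on the Fano surface'' (following Proposition~\ref{P:LW-X-singular}), the author discusses at length the obstacles to running the deformation strategy for 1-forms: it is not known whether $(\omega^1_{\mathcal F/B})|_{F_0}=\omega^1_{F_0}$, whether $h^0(F_0,\omega^1_{F_0})=5$, or whether the tangent bundle theorem $\Omega^1_{F^*_0}\simeq(\mathcal T^\vee)|_{F^*_0}$ extends across the singularities of $F_0$. These are exactly the ingredients your construction of $\boldsymbol H_F$ as the image of $V^\vee$ requires, and the paper leaves them open. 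Your injectivity argument via Lemma~\ref{L:X-6lines-PG} also needs care: that lemma uses the identification $\omega^2_F\simeq\mathcal O_F(1)$, whereas for 1-forms you would need the (unestablished) analogue via $\mathcal T^\vee$.

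In short, you have correctly identified the obstacle, but your primary route runs straight into difficulties the paper explicitly declines to resolve; the actual proof in \cite{BW} is the explicit-computation alternative you relegate to a closing remark.
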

We think that this result deserves some comments. 
It is proved in the sixth and  last section of \cite{BW} by means of a rather explicit approach. Very roughly, from a  cubic equation defining a $\boldsymbol{\mathcal W}$-admissible cubic  $X\subset \mathbf P^4$ and after six pages of explicit algebraic computations, the authors build rational 1-forms $\mathfrak u_i$ for $i$ ranging from 1 to 5 (see formula (68) in \cite{BW}) which they show to give rise to five linearly independent 1-abelian relations for $ \boldsymbol{\Delta\hspace{-0.05cm}\mathcal W}_{X}$. 
They do not establish that the  corresponding integrals $\int \mathfrak u_i$ are precisely the `{\it Abelsche  Integrale erster Gattung}'\footnote{Translated by `{\it Abelian integrals of the first kind}' into English.} of the Fano surface of lines included in $X$, but this is strongly suggested at the very end. Thanks to 
Proposition \ref{P:BW-X-smooth}, we know that it is indeed the case, at least when $X$ is smooth.\sk

Although computational, Blaschke and Walberer's approach is very interesting according to us, essentially for two reasons: first, being algebraic in nature, it allows a priori to handle the case of singular cubics as well; the proof of the second point of Proposition \ref{P:BW-X-smooth} relies on point {\bf xvi.} of \S\ref{SSS:PropertiesOfF} (with $k=1$), the only modern proof of which we are aware of (p.\,332 of \cite{ClemensGriffiths}) relies on a basic but non-explicit principle of complex analysis;\footnote{Namely, the maximal modulus principle in complex analysis which admits as a direct corollary that any holomorphic map from a simply connected compact  manifold into a complex torus is necessarily constant.} this leads us to the second reason, which is that the approach in \cite[\S6]{BW} gives rise to explicit algebraic formulas for the abelian 1-forms on $F_1(X)$ which is interesting and has nothing comparable in modern literature to our knowledge.   Since one can take great benefit from explicit formulas for the holomorphic or abelian differentials on a projective variety\footnote{As a example of such formulas, one can think to the description of the abelian differentials on a complete intersection $V\subset \mathbf P^N$ by means of successive Poincar\'e residues of some rational forms on  $\mathbf P^N$.} 
it would be very interesting to revisit the results of Blaschke and Walberer by taking a more modern (and therefore perhaps more accurate and rigourous) approach.  
\mk 

But the most striking result in \cite{BW} is actually not the preceding proposition but rather its following converse which can be considered as one of the most interesting results in what concerns the algebraization of webs\footnote{In \cite{CChern}, Chern writes  that \cite{BW} `{\it is perhaps Blaschke's deepest paper}' in web geometry.}



\noindent{\bf Blaschke-Walberer's algebraization theorem.} {\it 
Let $\boldsymbol{\mathcal W}$ be a skew curvilinear 3-web in $\mathbf C^3$ with maximal 1-rank. Then $\boldsymbol{\mathcal W}$ is equivalent to the triangle web $\boldsymbol{\Delta\hspace{-0.05cm}\mathcal W}_{X}$ of a cubic hypersurface  $X\subset \mathbf P^4$.}

The proof of this theorem consists in many pages of normalisations and computations that we do not have considered in detail yet. Our impression is that this is a genuine  computational {\it tour de force}. Still,  as for the previous result,
we believe it would be interesting and even necessary to go over Blaschke-Walberer's proof again, with the modern standard of mathematical rigour in order to certify that any of their arguments or computations is indeed correct.

Our opinion that the content of \cite{BW} should be revisited should not lead one to believe that we doubt the validity of the above results: this is not the case.  In the next subsection we use them to get another and better understanding of 
$\boldsymbol{\mathcal L\hspace{-0.05cm}\mathcal W}_{\mathscr C}$ and of its 1-ARs.
\sk 

Ending this sub-section, it is interesting to point out that a notion of `triangle varieties' generalizing  the one discussed here has been recently considered by algebraic geometers for hyper-K\"ahler manifolds (see \cite{Bazhov,Voisin}) and first results show that it is a relevant notion in what regards the  study of these varieties. It would be interesting to figure out whether this notion might give relevant outputs in web geometry as well.

\subsubsection*{\bf B.2.2. Another point of view on ${\mathcal L\hspace{-0.05cm}\mathcal W}_{\mathscr C}$}
The key idea here is rather simple and relies on the fact that ${\mathcal L\hspace{-0.05cm}\mathcal W}_{\mathscr C}$ can be interpreted as 
$\boldsymbol{\Delta\hspace{-0.05cm}\mathcal W}_{\mathscr C}$. 
Blaschke-Walberer theory will then apply and 
Proposition \ref{Prop:Blaschke-Walberer} will provide an 
 interpretation of 
the 1-ARs of this web.  \mk 

Let $X$ be an irreducible cubic threefold in $\mathbf P^4$ containing a non degenerate triangle. 
By definition, for $k=1,2,3$, its {\bf $\boldsymbol{k}$-th corner map} is the rational map $c_k : \Delta_X\dashrightarrow X$ such that for any 
generic triangle $T=(\ell_1,\ell_2,\ell_3) \in \Delta_X$, $c_k(T)$ is equal to the intersection point of $\ell_i$ with $\ell_j$ if $i$ and $j$ are such that $\{i,j,k\}=\{1,2,3\}$.
From the obvious fact that a generic triangle is determined by two of its edges, 
one easily gets that each $c_k$ is dominant onto $X$ and generically finite. Consequently, given a generic base point $T^*=(\ell_1^*,\ell_2^*,\ell_3^*)$ and setting $x^*=c_k(T^*)$, each corner map $c_k$ induces a local biholomorphism $c_k: (\Delta_X,T^*)\rightarrow (X,x^*)$ which can be used to locally push-forward $\boldsymbol{\Delta\hspace{-0.05cm}\mathcal W}_{X}$ onto $X$.

Assume that $i,j,k$ are as above.   For a triangle $T=(\ell_s)_{s=1}^3$
 sufficiently close to $T^*$ in $\Delta_X$ (and in particular non degenerate), the leaves 
of 
$\boldsymbol{\Delta\hspace{-0.05cm}\mathcal W}_{X}$ passing through it, 
denoted by  $\mathcal L_m(T)$, 
 are the sets of triangles $T'=(\ell'_s)_{s=1}^3$ such that $\ell_m'=\ell_m$ for $m=1,2,3$. Since 
 $c_k(T')\in \ell_i\subset X$, we deduce that $c_k\big( \mathcal L_i(T)
 \big)\subset \ell_i$ and similarly $c_k\big( \mathcal L_j(T)
 \big)\subset \ell_j$.  It follows that 
 $(c_k)_*\big( \boldsymbol{\Delta\hspace{-0.05cm}\mathcal W}_{X} \big)$ is a germ of 3-web on $X$ at $x^*$, two of the local foliations of which are formed by lines included in $X$ hence form a 2-subweb of 
 $\boldsymbol{\mathcal L\hspace{-0.05cm}\mathcal W}_{X}$.  
 This gives us the 
 \begin{lem} 
 \label{Lem:lemo}
If $\boldsymbol{\mathcal L\hspace{-0.05cm}\mathcal W}_{X}$ is skew then so is $\boldsymbol{\Delta\hspace{-0.05cm}\mathcal W}_{X}$. In particular, this applies when $X$ is smooth.  
 \end{lem}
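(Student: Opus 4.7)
The plan is to exploit the local biholomorphisms given by the corner maps $c_k : \Delta_X \dashrightarrow X$ (for $k=1,2,3$) constructed in the paragraph preceding the lemma, and use them to transport the skewness hypothesis from $\boldsymbol{\mathcal L\hspace{-0.05cm}\mathcal W}_{X}$ down to $\boldsymbol{\Delta\hspace{-0.05cm}\mathcal W}_{X}$. Recall that a curvilinear 3-web is skew if and only if no pair of its three foliations spans an integrable 2-distribution, equivalently, if and only if each of the three 2-subwebs fails to be tangent to a codimension-1 foliation.

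The key geometric observation already recorded in the excerpt is the following: for any $k$ and any generic $T^* = (\ell_1^*, \ell_2^*, \ell_3^*) \in \Delta_X$ with $x^* = c_k(T^*)$, the corner map induces a local biholomorphism $(\Delta_X, T^*) \to (X, x^*)$ under which the two foliations $\mathcal L_i$ and $\mathcal L_j$ of $\boldsymbol{\Delta\hspace{-0.05cm}\mathcal W}_X$ (where $\{i,j,k\} = \{1,2,3\}$) are pushed forward to two of the six linear foliations composing $\boldsymbol{\mathcal L\hspace{-0.05cm}\mathcal W}_X$ at $x^*$, namely the foliations by the lines of $X$ through the two points $\ell_i^* \cap \ell_k^*$ and $\ell_j^* \cap \ell_k^*$. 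Skewness being a local analytic property, invariant under biholomorphism, the assumed non-integrability of this pair inside $\boldsymbol{\mathcal L\hspace{-0.05cm}\mathcal W}_X$ immediately yields the non-integrability of the pair $(\mathcal L_i, \mathcal L_j)$ inside $\boldsymbol{\Delta\hspace{-0.05cm}\mathcal W}_X$ at $T^*$.

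The second step is simply to observe that by letting $k$ vary in $\{1,2,3\}$, one obtains in this way \emph{all three} 2-subwebs of $\boldsymbol{\Delta\hspace{-0.05cm}\mathcal W}_X$: the 2-subweb $(\mathcal L_2, \mathcal L_3)$ is handled by $c_1$, the 2-subweb $(\mathcal L_1, \mathcal L_3)$ by $c_2$, and the 2-subweb $(\mathcal L_1, \mathcal L_2)$ by $c_3$. This proves skewness of $\boldsymbol{\Delta\hspace{-0.05cm}\mathcal W}_X$ at a generic triangle, as required.

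Finally, for the second assertion, when $X$ is smooth the web $\boldsymbol{\mathcal L\hspace{-0.05cm}\mathcal W}_X$ is skew by the first point of Proposition~\ref{P:LW-X}; moreover $X$ is $\boldsymbol{\mathcal W}$-admissible by Proposition~\ref{P:BW-X-smooth}.1, so the triangle variety is non-empty and the corner maps are defined. The possible obstacle I foresee is mostly bookkeeping: one needs to make sure that the pairs of foliations of $\boldsymbol{\mathcal L\hspace{-0.05cm}\mathcal W}_X$ appearing at $x^* = c_k(T^*)$ really are in general position (so that they give a genuine 2-subweb whose non-integrability is meaningful). But this is precisely the content of the general-position argument already established in the proof of Proposition~\ref{P:LW-X}.1, so no additional work is required.
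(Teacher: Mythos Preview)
Your proof is correct and follows essentially the same route as the paper: the paragraph immediately preceding the lemma already shows that $(c_k)_*$ sends the pair $(\mathcal L_i,\mathcal L_j)$ to a 2-subweb of $\boldsymbol{\mathcal L\hspace{-0.05cm}\mathcal W}_X$, and you correctly spell out the step the paper leaves implicit, namely that letting $k$ run over $\{1,2,3\}$ covers all three 2-subwebs of $\boldsymbol{\Delta\hspace{-0.05cm}\mathcal W}_X$. One small imprecision: your identification of the push-forward foliations as ``the foliations by the lines of $X$ through the two points $\ell_i^*\cap\ell_k^*$ and $\ell_j^*\cap\ell_k^*$'' is not accurate (the leaves at nearby points of $X$ do not all pass through a fixed point); all that matters, and all the paper uses, is that the leaves are lines contained in $X$, hence the pair is a 2-subweb of $\boldsymbol{\mathcal L\hspace{-0.05cm}\mathcal W}_X$.
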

 Note that the first statement in this lemma holds true even if the lines trough a general point of $X$ come with multiplicities that is, even if $\boldsymbol{\mathcal L\hspace{-0.05cm}\mathcal W}_{X}$ is a $k$-web with $k<6$. 
 This  remark applies in the case of the chordal cubic. Although $\boldsymbol{\mathcal L\hspace{-0.05cm}\mathcal W}_{\mathscr C}$ is only a 3-web, it has been verified  that this web is skew in \S\ref{SS:je-sais-pas-quoi} (more precisely, see \eqref{Eq:LW-C--skew}). From the 
  preceding lemma, we get that  the triangle web 
 $\boldsymbol{\Delta\hspace{-0.05cm}\mathcal W}_{\mathscr C}$ is skew, hence 
 Proposition \ref{Prop:Blaschke-Walberer} applies and gives us a description of the 1-ARs in terms of rational 1-differentials on the Fano surface $F_1(\mathscr C)$.
 \sk
 
 At this point, since both  $\boldsymbol{\mathcal L\hspace{-0.05cm}\mathcal W}_{\mathscr C}$ and $\boldsymbol{\Delta\hspace{-0.05cm}\mathcal W}_{\mathscr C}$ are skew curvilinear 3-web with maximal 1-rank constructed from the same cubic hypersurface, one can wonder how these two webs are related. It turns out that they are actually the same as an easy computational verification shows:
\begin{lem} 
\label{Lem:Corner-map-Chordal-Cubic}
 In the case of the chordal cubic $\mathscr C$, any corner map $c_k$ gives rise to an equivalence
$$
\boldsymbol{\Delta\hspace{-0.05cm}\mathcal W}_{\mathscr C}
\stackrel{\sim}{\longrightarrow}
 (c_k)_*\big(
\boldsymbol{\Delta\hspace{-0.05cm}\mathcal W}_{\mathscr C}
\big)=\boldsymbol{\mathcal L\hspace{-0.05cm}\mathcal W}_{\mathscr C}\, .$$
\end{lem}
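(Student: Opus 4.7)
The plan is to combine the general structural observation made just before the lemma with an explicit local computation using the affine parametrization $\mu$ of $\mathscr{C}$.

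Recall that for any cubic threefold $X$ and any corner map $c_k$ with $\{i,j,k\}=\{1,2,3\}$, a leaf of the $i$-th foliation $\mathcal{F}_i$ of $\boldsymbol{\Delta\hspace{-0.05cm}\mathcal W}_X$ consists of triangles $T'=(\ell'_1,\ell'_2,\ell'_3)$ with $\ell'_i$ fixed, and hence maps under $c_k(T')=\ell'_{i'}\cap \ell'_{j'}$ (with $\{i',j',k\}=\{1,2,3\}$) into the fixed line $\ell'_i\subset X$. Consequently both $(c_k)_*(\mathcal{F}_i)$ and $(c_k)_*(\mathcal{F}_j)$ are line foliations of $X$, so $(c_k)_*(\boldsymbol{\Delta\hspace{-0.05cm}\mathcal W}_X)$ is a $3$-web containing two of the line foliations of $\boldsymbol{\mathcal L\hspace{-0.05cm}\mathcal W}_X$. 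Specializing to $X=\mathscr{C}$ and noting that $\boldsymbol{\mathcal L\hspace{-0.05cm}\mathcal W}_{\mathscr{C}}$ itself is a $3$-web, with only the three line foliations whose leaves at a generic $c\in\mathscr{C}$ are the lines $\ell'(c),\ell''_+(c),\ell''_-(c)$ described just after \eqref{Eq:mu}, it is enough to prove that the third pushed foliation $(c_k)_*(\mathcal{F}_k)$ is again a line foliation of $\mathscr{C}$. By pigeonhole it will then have to be the one remaining among $\ell',\ell''_+,\ell''_-$.

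To carry this out I would parametrize, for a fixed generic edge $\ell_k$, the triangles of $\Delta_{\mathscr{C}}$ having $\ell_k$ as an edge: they correspond bijectively to $2$-planes $\pi\subset\mathbf P^4$ containing $\ell_k$ for which the residual conic $\pi\cap\mathscr{C}-\ell_k$ splits as a sum of two distinct lines. This is a codimension-one condition on the $\mathbf P^2$ of planes through $\ell_k$, producing a $1$-parameter family along which the vertex $c_k(T)=\ell_i\cap \ell_j$ traces a curve in $\mathscr{C}$. Using the parametrization $\mu$ of \eqref{Eq:mu}, the defining equation \eqref{Eq:Chordal-cubic-equation}, and the explicit first integrals \eqref{Eq:UUU}, one can write this curve in the coordinates $(s,t_1,t_2)$ and verify by direct computation that the level sets of the corresponding first integral cut out leaves coinciding with one of the three branches $\ell'(c),\ell''_+(c),\ell''_-(c)$ through $c=c_k(T)$.

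The principal obstacle is that $F_1(\mathscr{C})=F'\cup F''$ is reducible and non-reduced, with $F'$ entering the Fano scheme with multiplicity $4$, so that triangles in $\mathscr{C}$ split into several types according to how the three edges distribute between $F'$ and $F''$. One must first identify which types give a generically reduced $3$-dimensional component of $\Delta_{\mathscr{C}}$ on which $\boldsymbol{\Delta\hspace{-0.05cm}\mathcal W}_{\mathscr{C}}$ is a genuine skew $3$-web, and then repeat the vertex computation above for each such type. A more conceptual alternative, avoiding these case distinctions, would be to exploit the fact that both $\boldsymbol{\mathcal L\hspace{-0.05cm}\mathcal W}_{\mathscr{C}}$ and $\boldsymbol{\Delta\hspace{-0.05cm}\mathcal W}_{\mathscr{C}}$ are skew curvilinear $3$-webs of maximal $1$-rank canonically associated to the same cubic $\mathscr{C}$, and to invoke a uniqueness refinement of the Blaschke-Walberer algebraization theorem to force them to coincide.
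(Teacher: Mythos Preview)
The paper gives no proof of this lemma: it simply says ``The proof is left to the reader,'' having just remarked that the two webs ``are actually the same as an easy computational verification shows.'' Your plan is therefore exactly in the spirit of what the paper intends: use the general observation (valid for any cubic) that two of the three foliations of $(c_k)_*\big(\boldsymbol{\Delta\hspace{-0.05cm}\mathcal W}_{\mathscr C}\big)$ are line foliations of $\mathscr C$, and then check by a direct computation in the coordinates $(s,t_1,t_2)$ that the remaining one is too. This is the natural approach and matches the paper's implicit one.

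Two remarks. First, your worry about the reducibility and non-reducedness of $F_1(\mathscr C)$ is legitimate in principle but largely evaporates in practice: to prove a local equivalence of webs you only need to work near a generic non-degenerate triangle, and the explicit parametrization $\mu$ together with the first integrals \eqref{Eq:UUU} let you write down such triangles and track the corner map without ever invoking the global scheme structure of $\Delta_{\mathscr C}$. A single generic component suffices. Second, the alternative via a ``uniqueness refinement'' of Blaschke--Walberer is much more speculative than the direct computation: the algebraization theorem as stated only produces \emph{some} cubic, and turning that into a clean identification of the two webs on $\mathscr C$ would require additional arguments the paper does not supply. Stick with the explicit verification.
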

The proof is left to the reader. Note that this result is specific to the case of $\mathscr C$. In general, the push-forward by a corner map $c_k$ of the foliation induced by $P_k$ is not linear on the considered cubic.\sk 

 Taking the pull-backs under a corner map of the total derivatives of the functional identities \eqref{E:RAmunurho}, one gets a rational basis of $\boldsymbol{AR}^{(1)}\big(  \boldsymbol{\mathcal L\hspace{-0.05cm}\mathcal W}_{\mathscr C} \big) $ which enjoys the interesting feature of being completely explicit.  On the other hand, 
the first point of Proposition \ref{Prop:Blaschke-Walberer} offers another view on the 1-ARs of $\boldsymbol{\Delta\hspace{-0.05cm}\mathcal W}_{\mathscr C}$, more abstract  and conceptual but not really explicit. It would be interesting to relate more concretely these two descriptions of the abelian relations under scrutiny and to get a description of them which would be intrinsic and conceptual as well as explicit.
 
 In the next subsection, we describe another approach to describe the 1-ARs of  $
\boldsymbol{\Delta\hspace{-0.05cm}\mathcal W}_{\mathscr C}\simeq  
 \boldsymbol{\mathcal L\hspace{-0.05cm}\mathcal W}_{\mathscr C}$ which is conceptual and can be made explicit but lacks being intrinsic.


\subsubsection*{\bf B.2.3. Deforming ${\mathscr C}$ following Collino to better understand the 1-ARs of ${\mathcal L\hspace{-0.05cm}\mathcal W}_{\mathscr C}$}
The same approach as the one described in Appendix A.1 in the case of Segre's cubic can be implemented in the case of the chordal cubic as well. And again, all the results needed to justify it can be found in a paper by Collino, namely in \cite{Collino1}. 
Since this is quite similar to what has been done in A.1, we will be rather quick 
and will not give any details. \sk 

Let $C$ be the cubic form appearing in the equation \eqref{Eq:Chordal-cubic-equation} of  ${\mathscr C}$. Given another generic cubic form $G$, one considers the pencils of cubic hypersurfaces $g : \mathcal G\subset \mathbf P^4\times \mathbf A^1\rightarrow \mathbf A^1 $ whose fiber at $s\in \mathbf A$ is $\mathcal G_s=\{ \, C+s^2 G=0\}\subset \mathbf P^4$.\footnote{See around (1.4) in \cite{Collino1} for an explanation of the choice of $s^2$ as a deformation parameter}  
Let $h: \mathcal H\subset G_1(\mathbf P^4)\times \mathbf A^1\rightarrow \mathbf A^1 $ be the associated family of Fano surfaces.  For $t$ with a non zero but very small modulus, the cubic threefold $G_s=g^{-1}(s)$ is smooth hence $F_s=F(G_s)=h^{-1}(s)$ is a smooth irreducible surface with $h^0(F_s,\Omega^1_{F_s})=5$.  The cohomology spaces $\boldsymbol{H}^1( F_s, \mathbf C)$'s for $s\in (\mathbf C^*,0)$ form a variation of pure Hodge structures and the basic question here is how its degeneration as the deformation parameter $s$ goes to the origin. This is nicely answered in \cite{Collino1}, to which we refer the reader for details. \sk 

Recall that the singular set  of ${\mathscr C}$ is a rational quartic curve that we denote by $\Gamma$. 
For $s\neq 0$ sufficiently close to the origin, the cubic $G_s$ 
intersects this curve  in 12 points which,  as $t\rightarrow 0$, converge onto 12 pairwise distinct limit points on $\Gamma$.  Let $K\rightarrow \Gamma$ be the 2-to-1 covering of $\Gamma$ ramified at the 12 limit points on $ \Gamma$: $K$ is a hyperelliptic curve of genus 5.  
Recall that $F_0=F(\mathscr C)$ has two irreducible components, the symmetric product $F'=\Gamma^{[2]}$ and another one denoted by  $F''$ 
(described in \S\ref{SS:je-sais-pas-quoi}).  Let 
 $\widetilde{\mathcal H}$ be the proper transform of the family of Fano surfaces 
 $\mathcal H\subset G_1(\mathbf P^4)\times \mathbf A^1$ by the blow-up of 
$G_1(\mathbf P^4)\times \mathbf A^1$ along $F'\times \{0\}$ and denote by 
$\tilde h: \widetilde{\mathcal H}\rightarrow \mathbf A^1$ the natural projection. Then according to  \cite[Prop.\,2.1]{Collino1}, $\widetilde{\mathcal H}\rightarrow \mathbf A^1$ resolves the singularities of $h: {\mathcal H}\rightarrow \mathbf A^1$ 
along the fiber over 0; and the central fiber $\widetilde F_0=\tilde h^{-1}(0)$ is a reduced divisor with normal crossing with two irreducible components: one has $\widetilde F_0=F''\cup K^{[2]}$ and the intersection curve $F''\cap K^{[2]}$ corresponds to a conic in $F''\simeq \mathbf P^2$ and to the $g^1_2$ in the symmetric product $K^{[2]}$ associated to the covering $K\rightarrow \Gamma\simeq \mathbf P^1$. 

Using the resolution $\widetilde{\mathcal H}\rightarrow \mathbf A^1$, Collino proved the following 
%

\begin{prop}[Proposition 2.2.1 in \cite{Collino1}] 
${}^{}$

\vspace{-0.15cm}
\begin{enumerate}
\item[{\rm 1.}] The limiting Hodge structure at the origin of the $\boldsymbol{H}^1\big( F_s, \mathbf C\big)$'s for $s\in (\mathbf C^*,0)$ exists and  is equal to the one of $\boldsymbol{H}^1\big(  K^{[2]} , \mathbf C\big)$. In particular, this limit  is pure and of dimension 5. 
\mk
\item[{\rm 2.}] The family of associated Albanese varieties has good reduction 
at $0$ 
and the special fiber 
$\widetilde A_0$
at this point is the Albanese variety of $K^{[2]}$, namely 
the Jacobian variety 
$J(K)$ of $K$. 
\end{enumerate}
\end{prop}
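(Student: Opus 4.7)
The plan is to compute the limiting mixed Hodge structure (LMHS) on $H^1$ via Schmid--Steenbrink theory applied to the semistable resolution $\tilde h: \widetilde{\mathcal H}\to \mathbf A^1$ described just above the proposition. Because $\widetilde F_0 = F''\cup K^{[2]}$ is a reduced simple normal crossing divisor whose two components are smooth surfaces meeting transversally along the smooth rational curve $D=F''\cap K^{[2]}$, the LMHS $(H^1_{\lim},W_\bullet,F^\bullet)$ can be read off from Deligne's weight spectral sequence, whose $E_1$-page is assembled from the cohomologies of the strata $\widetilde F_0^{(k)}$ of the stratification by $k$-fold intersections.

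I will then carry out this computation to show that $H^1_{\lim}$ is pure of weight $1$ and isomorphic to $H^1(K)$. The graded piece $\mathrm{Gr}^W_0 H^1_{\lim}$ equals the $H^1$ of the dual complex of $\widetilde F_0$, which has two vertices joined by a single edge and is thus contractible, giving $\mathrm{Gr}^W_0 H^1_{\lim}=0$. The piece $\mathrm{Gr}^W_1 H^1_{\lim}$ equals $\ker\bigl(H^1(F'')\oplus H^1(K^{[2]})\to H^1(D)\bigr)$; using $H^1(F'')=H^1(\mathbf P^2)=0$ and $H^1(D)=H^1(\mathbf P^1)=0$ this becomes all of $H^1(K^{[2]})$. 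Finally, the putative weight-$2$ piece would require nonzero terms from triple intersections (empty here) or from $H^0(D)(-1)$-type contributions, but the latter are absorbed into $H^2_{\lim}$ rather than $H^1_{\lim}$, so $\mathrm{Gr}^W_2 H^1_{\lim}=0$. Combining these and invoking the classical isomorphism $H^1(K^{[2]})\cong H^1(K)$ induced by the Abel--Jacobi map $K^{[2]}\to J(K)$ finishes part $1$, with the dimension check $2g(K)=10=\dim H^1(F_s,\mathbf C)$.

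For part $2$, purity of $H^1_{\lim}$ is equivalent, via the Clemens--Schmid exact sequence, to the vanishing of the logarithm of monodromy $N$ on $H^1_{\lim}$, which in turn is equivalent to good reduction of the associated family of polarized abelian varieties $\{\mathrm{Alb}(F_s)\}_{s\neq 0}$: the N\'eron model is then an abelian scheme over $(\mathbf A^1,0)$ whose special fibre $\widetilde A_0$ carries a principal polarization and satisfies $H^1(\widetilde A_0,\mathbf Q)\cong H^1_{\lim}$. Since an abelian variety is determined by its $H^1$ as a polarized rational Hodge structure, and since the Abel--Jacobi map provides an identification $H^1(K^{[2]})\cong H^1(J(K))$ of polarized Hodge structures, one concludes $\widetilde A_0\cong J(K)$.

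The main difficulty I foresee lies in the polarization compatibility at the final step: while the isomorphism $H^1_{\lim}\cong H^1(K)$ of Hodge structures falls out of the weight spectral sequence essentially for free, matching the principal polarization of $\widetilde A_0$ (inherited from the polarization of $\mathrm{Alb}(F_s)$, itself governed by the intersection form on $F_s$ together with the tangent bundle theorem) with the theta polarization on $J(K)$ requires tracking how algebraic cycles degenerate. Concretely one expects to produce explicit $1$-cycles on $\widetilde F_0$, supported on or near the double curve $D$, whose classes specialize the intersection form correctly; the $g^1_2$-structure on $D$ (which encodes the hyperelliptic double cover $K\to \Gamma$ and thereby the Jacobian $J(K)$) is what makes this computation tractable, but verifying its compatibility with the degenerating cup-product on $F_s$ is the step that genuinely uses the detailed geometry of Collino's resolution.
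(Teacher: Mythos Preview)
The paper does not give its own proof of this proposition: it is stated verbatim as Proposition~2.2.1 of \cite{Collino1} and used as a black box (see the sentence ``Using the resolution $\widetilde{\mathcal H}\rightarrow \mathbf A^1$, Collino proved the following'' immediately preceding the statement). So there is no in-paper argument to compare your proposal against; what you have written is a plausible reconstruction of Collino's proof, and the Steenbrink weight spectral sequence is indeed the natural tool here.

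Your outline is essentially correct. A small wording caution: the formula you give for $\mathrm{Gr}^W_1 H^1_{\lim}$ as a kernel of a restriction map is not literally the $E_2$-term of the weight spectral sequence (the general formula involves both Gysin and restriction maps), but in this particular case the vanishing of $H^1(F'')$, $H^1(D)$ and of the triple intersections collapses everything so that your conclusion $H^1_{\lim}\cong H^1(K^{[2]})$ is correct, and the argument via the dual complex for $\mathrm{Gr}^W_0=0$ (hence $\mathrm{Gr}^W_2=0$ by the $N$-symmetry) is clean. For part~2 your concern about the polarization is well placed: purity of $H^1_{\lim}$ gives unipotent monodromy with $N=0$, hence good reduction and an identification of the special fibre with $J(K)$ \emph{up to isogeny} from the rational Hodge structure alone; pinning down the isomorphism (or at least the integral lattice) requires the integral Mayer--Vietoris, which here also goes through since $H^1(\mathbf P^2,\mathbf Z)=0$ and $H^0(D,\mathbf Z)$ is hit surjectively. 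The finer matching of polarizations is precisely the part of Collino's argument that uses the concrete description of $D$ as the $g^1_2$ inside $K^{[2]}$, and you are right to flag it as the one genuinely geometric step.
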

For $s\in \mathbf C$, denote by $P_{i,s}: \Delta(G_s)\dashrightarrow F_s$ (for $i=1,2,3$) the three rational maps defining 
$\boldsymbol{\Delta\hspace{-0.05cm}\mathcal W}_{G_s}$ ({\it cf.}\,\eqref{Def:DW-X}).  Each $\{ P_{i,s}\}_{ s\in \mathbf C}$ is a smooth family of  maps from which we get that 
the triangles webs $\boldsymbol{\Delta\hspace{-0.05cm}\mathcal W}_{G_s}$'s  form a family of skew curvilinear 3-webs which is smooth at the origin (verification left to the reader). For any $s$ such that $G_s$ is smooth, the Albanese map ${\rm alb}_{F_s}: F_s\rightarrow {\rm Alb}(F_s)$ is such that $\Psi_s= \sum_{i=1}^3 {\rm alb}_{F_s}(P_{i,s}): \Delta(G_s)\rightarrow {\rm Alb}(F_s)$ is constant.  Up to some choice of a smooth family of base points that we will not detail here, one deduces from the second point of the proposition above that the $\Psi_s$'s extend at the origin to a smooth family of maps, with 
$$\Psi_0=
\sum_{i=1}^3 {\rm alb}_{\widetilde F_0}\big(P_{i,0}\big): \Delta(\mathscr C)\longrightarrow 
{\rm Alb}\big( 
 \widetilde F_0\big)=
\widetilde A_0
\simeq J(K)$$
 being constant as well. In terms of webs, this gives us the
\begin{cor}
The trace induces an isomorphism between 
the space of global 1-forms on $\widetilde A_0 \simeq J(K)$, which is isomorphic to $\boldsymbol{H}^0\big(K,\Omega_K^1\big)$,  and the 
space of 1-ARs of 
$\boldsymbol{\Delta\hspace{-0.05cm}\mathcal W}_{\mathscr C}\simeq 
\boldsymbol{\mathcal L\hspace{-0.05cm}\mathcal W}_{\mathscr C}$: 
\begin{equation}
\label{Eq:Isom-H0(Omega1K)-AR1}
\boldsymbol{H}^0\Big(K,\Omega_K^1\Big)\simeq 
\boldsymbol{H}^0\Big(\, \widetilde A_0,\Omega_{\widetilde A_0}^1\, \Big) \stackrel{\sim}{\longrightarrow}
\boldsymbol{AR}^{(1)}\Big( 
\boldsymbol{\Delta\hspace{-0.05cm}\mathcal W}_{\mathscr C}
\Big)\, .
\end{equation}
\end{cor}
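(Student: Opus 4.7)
The plan is to establish the isomorphism \eqref{Eq:Isom-H0(Omega1K)-AR1} by a deformation argument based on Collino's proposition stated just above. For any $s \in (\mathbf{C}^*, 0)$ sufficiently small, the cubic threefold $G_s$ is smooth, and Proposition \ref{P:BW-X-smooth} gives an isomorphism $\mathrm{Tr}^{(1)}_s : \boldsymbol{H}^0(F_s, \Omega^1_{F_s}) \stackrel{\sim}{\longrightarrow} \boldsymbol{AR}^{(1)}(\boldsymbol{\Delta\hspace{-0.05cm}\mathcal W}_{G_s})$, both sides being of dimension $5$. The strategy is to specialize this family of isomorphisms at $s=0$, taking advantage of the fact that although $F_0 = F_1(\mathscr C)$ is singular, the corresponding Albanese variety admits the good reduction $\widetilde{A}_0 \simeq J(K)$ of dimension $5$.

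First, I would make the trace map on $\boldsymbol{H}^0(\widetilde{A}_0, \Omega^1_{\widetilde{A}_0})$ explicit via the Albanese factorization. For $s \neq 0$, the fact that $\Psi_s = \sum_{i=1}^3 \mathrm{alb}_{F_s}\circ P_{i,s}$ is constant (which is the content of point \textbf{xvi} of \S\ref{SSS:PropertiesOfF}, here adapted to the triangle web and underpinning Proposition \ref{P:BW-X-smooth}) implies that $\sum_{i=1}^3 (\mathrm{alb}_{F_s}\circ P_{i,s})^*(\tilde\omega) = 0$ for every $\tilde\omega \in \boldsymbol{H}^0(\mathrm{Alb}(F_s), \Omega^1_{\mathrm{Alb}(F_s)})$. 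By the second part of Collino's proposition, the composites $\mathrm{alb}_{F_s} \circ P_{i,s}$ form a smooth analytic family over $(\mathbf{A}^1, 0)$ (once a smooth family of base points has been fixed). Passing to the limit at $s=0$ yields maps $\alpha_{i,0}: \Delta(\mathscr C) \to \widetilde{A}_0 \simeq J(K)$ with $\sum_{i=1}^3 \alpha_{i,0} = \mathrm{const}$, and therefore $\sum_{i=1}^3 \alpha_{i,0}^*(\tilde\omega) = 0$ for every $\tilde\omega \in \boldsymbol{H}^0(\widetilde A_0, \Omega^1_{\widetilde A_0}) \simeq \boldsymbol{H}^0(K, \Omega^1_K)$. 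This produces the well-defined linear map $\boldsymbol{H}^0(K, \Omega^1_K) \to \boldsymbol{AR}^{(1)}(\boldsymbol{\Delta\hspace{-0.05cm}\mathcal W}_{\mathscr C})$ that we wish to identify with the trace.

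Next, I would conclude by a dimension count combined with semicontinuity. Injectivity at $s=0$ follows by specialization: the spaces $\boldsymbol{H}^0(F_s, \Omega^1_{F_s})$ extend to a locally free sheaf of rank $5$ over a neighborhood of $0$ (this is precisely the first part of Collino's proposition); the trace is fiberwise injective for $s \neq 0$, so by upper semicontinuity of the kernel it remains injective at $s=0$. For surjectivity, both sides are of dimension $5$: on one hand, $\dim \boldsymbol{H}^0(K, \Omega^1_K) = g(K) = 5$ since $K$ is a hyperelliptic curve of genus $5$; on the other hand, by the explicit Abel-method computation carried out in Appendix B.1 (combined with Lemma \ref{Lem:Corner-map-Chordal-Cubic} identifying $\boldsymbol{\Delta\hspace{-0.05cm}\mathcal W}_{\mathscr C}$ with $\boldsymbol{\mathcal L\hspace{-0.05cm}\mathcal W}_{\mathscr C}$), one has $\mathrm{rk}^{(1)}(\boldsymbol{\Delta\hspace{-0.05cm}\mathcal W}_{\mathscr C}) = 5$.

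The main obstacle will be to rigorously justify the continuity of the pull-back operation and the semicontinuity of the kernel of the trace through the degeneration $s \to 0$. Since $F_0$ is non-reduced and reducible, the naïve pull-back $P_{i,0}^*$ is not directly meaningful, and the argument must be carried out on Collino's resolved family $\widetilde{\mathcal H} \to \mathbf{A}^1$, whose central fiber $\widetilde F_0$ has normal crossings, so that the relative sheaf of logarithmic differentials is locally free and its fiber at $0$ computes the limit Hodge filtration. Converting this Hodge-theoretic input into a coherent family of $1$-abelian relations for the associated triangle webs requires a delicate analytic interpretation of the pull-back of forms near the exceptional divisor. An alternative and arguably cleaner route would be to verify the conclusion directly by matching the abstract description above with the explicit basis of $\boldsymbol{AR}^{(1)}(\boldsymbol{\mathcal L\hspace{-0.05cm}\mathcal W}_{\mathscr C})$ constructed in Appendix B.1: one would check that these five explicit $1$-abelian relations correspond, via the $\mathrm{alb}\circ P_{i,s}$ factorization in the limit $s \to 0$, to a basis of $\boldsymbol{H}^0(K, \Omega^1_K)$, thereby bypassing the sheaf-theoretic subtleties altogether.
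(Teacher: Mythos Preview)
Your approach is essentially the same as the paper's: the paragraph immediately preceding the corollary establishes that the family $\Psi_s=\sum_{i=1}^3 {\rm alb}_{F_s}\circ P_{i,s}$ extends smoothly to $s=0$ with $\Psi_0$ constant (using Collino's good-reduction result), and the corollary is then stated as the web-theoretic translation of this fact, with the dimension match $g(K)=5={\rm rk}^{(1)}(\boldsymbol{\mathcal L\hspace{-0.05cm}\mathcal W}_{\mathscr C})$ coming from the explicit computation in B.1. Your additional care about injectivity via semicontinuity and your honest discussion of the sheaf-theoretic subtleties near the exceptional divisor are appropriate elaborations of details the paper deliberately leaves implicit.
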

Since $\boldsymbol{\Delta\hspace{-0.05cm}\mathcal W}_{\mathscr C}\simeq 
\boldsymbol{\mathcal L\hspace{-0.05cm}\mathcal W}_{\mathscr C}$, this corollary provides a conceptual description of the 1-ARs of $\boldsymbol{\mathcal L\hspace{-0.05cm}\mathcal W}_{\mathscr C}$. However, 
although interesting 
this description has the disadvantage  of not being canonical: 
getting a smoothing of $C$ depends of the pencil of cubics considered. Indeed, as explained in \cite[\S1]{Collino1}, another choice for this pencil would give another limit configuration of 12 points on  $\Gamma$ hence another 
hyperelliptic genus 5 ramified covering $K'\rightarrow \Gamma$, in general non isomorphic to $K$. This would give another isomorphism 
$\boldsymbol{H}^0\big(K',\Omega_{K'}^1\big)\simeq 
\boldsymbol{AR}^{(1)}\big( 
\boldsymbol{\Delta\hspace{-0.05cm}\mathcal W}_{\mathscr C}
\big)$, which shows to what extent \eqref{Eq:Isom-H0(Omega1K)-AR1} is not canonical.

\newpage



\end{document}